\documentclass [leqno,11pt]{amsart}     
\usepackage{amssymb,amsmath,latexsym,amsfonts,amsbsy,amsthm,mathtools,graphicx,color,subeqnarray,cases,bm}      
\usepackage{float}   
\usepackage{hyperref}       
\usepackage{scalerel,stackengine}           
\usepackage{subcaption}                
\stackMath          
\newcommand\reallywidehat[1]{%          
\savestack{\tmpbox}{\stretchto{%   
  \scaleto{%
    \scalerel*[\widthof{\ensuremath{#1}}]{\kern-.6pt\bigwedge\kern-.6pt}%
    {\rule[-\textheight/2]{1ex}{\textheight}}%WIDTH-LIMITED BIG WEDGE
  }{\textheight}% 
}{0.5ex}}% 
\stackon[1pt]{#1}{\tmpbox}%  
}

\definecolor{myback}{RGB}{204,232,207} 
 
\usepackage{cancel}
\usepackage[dvipsnames]{xcolor}
\numberwithin{equation}{section}

\allowdisplaybreaks

\let\d=\delta

\let\f=\frac

\let\pa=\partial

\def\eqdef{\buildrel\hbox{\footnotesize def}\over =}

\newcommand{\beq}{\begin{equation}}
\newcommand{\eeq}{\end{equation}}
\newcommand{\ben}{\begin{eqnarray}}
\newcommand{\een}{\end{eqnarray}}
\newcommand{\beno}{\begin{eqnarray*}}
\newcommand{\eeno}{\end{eqnarray*}}

\newtheorem{theorem}{Theorem}[section]

\newtheorem{lemma}[theorem]{Lemma}

\newtheorem{remark}[theorem]{Remark}

\begin{document}
\title{Neutral curves and traveling waves in plane Poiseuille flow}
\author{Hui Li}
\address[H. Li]{School of Mathematics, Sichuan University, Chengdu 610064, P. R. China.}
\email{lihui92@scu.edu.cn}

\author{Yuxi Wang}
\address[Y. Wang]{School of Mathematics, Sichuan University, Chengdu 610064, P. R. China.}
\email{wangyuxi@scu.edu.cn}

\author{Zhifei Zhang}
\address[Z. Zhang]{School of  Mathematical Sciences, Peking University, Beijing 100871, P. R. China.}
\email{zfzhang@math.pku.edu.cn}

\begin{abstract}
We study the spectrum of the Orr--Sommerfeld operator associated with the incompressible plane Poiseuille flow in the high-Reynolds-number regime. In the Tollmien--Schlichting eigenvalue region, we prove the existence and uniqueness of the lower and upper branches of the neutral curve. For each sufficiently small fixed wavenumber $\alpha$, the viscosities corresponding to the lower and upper neutral branches satisfy $\nu\sim |\alpha|^7$ and $\nu\sim |\alpha|^{11}$, respectively. Equivalently, for each sufficiently small viscosity $\nu$, the corresponding lower and upper neutral wavenumbers satisfy $\alpha^2\sim \nu^{2/7}$ and $\alpha^2\sim \nu^{2/11}$, respectively. We also establish the simplicity of the neutral eigenvalues and verify the transversal crossing condition on both neutral branches. The proof is based on a boundary-adapted version of the Rayleigh--Airy iteration scheme, together with precise expansions and refined estimates for the correction terms and their parameter derivatives. These spectral results verify
the assumptions required in the classical Hopf bifurcation framework of Joseph--Sattinger \cite{JS1972} and Iooss \cite{Iooss1972}, and hence yield traveling-wave solutions bifurcating from the plane Poiseuille flow at the neutral points.
\end{abstract}
\maketitle 

\setcounter{tocdepth}{1}
{\small\tableofcontents}

\section{Introduction}
We consider the two-dimensional incompressible Navier--Stokes equations in the channel $(x,y)\in\mathbb R\times[-1,1]$ , driven by a constant body force,
\begin{equation}\label{eq-NS}
  \left\{
    \begin{array}{l}
      \pa_t U+U\cdot\nabla U-\nu\Delta U+\nabla P=2\nu \bm e_1,\\
      \nabla\cdot U=0,\\
      U(t,x,\pm1)=0,
    \end{array}
  \right.
\end{equation}
where $\bm e_1=(1,0)^\top$.

This system admits the steady solution
\begin{align*}
  U=\left(u_p(y),0\right)^\top,\quad P=0,
\end{align*}
where $u_p=1-y^2$ is the plane Poiseuille flow.

Let $\Omega=\partial_x U^{(2)}-\partial_y U^{(1)}$ be the vorticity. Taking the curl of \eqref{eq-NS} removes the pressure and gives
\begin{align*}
  \pa_t\Omega+U\cdot\nabla \Omega-\nu\Delta\Omega=0.
\end{align*} 

Writing $u=U-\left(u_p,0\right)^\top$, $\omega=\Omega-\omega_p=\Omega-2y$, we obtain the perturbation equation
\begin{align}\label{NS-pert-nonlinear}
  \pa_t\omega+u_p\pa_x \omega-u_p''\pa_x\phi-\nu\Delta\omega=-u\cdot\nabla\omega,
\end{align}
where $\phi$ is the stream function associated with $\omega$,
\begin{align*}
  \phi=\Delta^{-1}\omega, \quad u=\left(-\pa_y\phi,\pa_x\phi\right).
\end{align*}

For a nonzero wave number $\alpha$, we take the
normal-mode ansatz
\begin{align*}
  \phi(t,x,y)=\phi_\alpha(y)e^{\lambda t+i\alpha x},
\end{align*}
where  $\lambda=\lambda_r+i\lambda_i$.

The linearization of \eqref{NS-pert-nonlinear} around the plane Poiseuille flow then gives
\begin{align}\label{eq-Orr-Sommerfeld-ori}
  \left(u_p+\frac{\lambda}{i\alpha}\right)\left(\pa_y^2-\alpha^2\right)\phi_\alpha-u_p''\phi_\alpha-\frac{\nu}{i\alpha}\left(\pa_y^2-\alpha^2\right)^2\phi_\alpha=0,
\end{align}
with the no-slip boundary condition
\begin{align}\label{bc-ori}
  \phi_\alpha(\pm1)=0,\quad\phi'_\alpha(\pm1)=0.
\end{align}

For $\alpha>0$, set
\begin{align*}
  \varepsilon=\frac{\nu}{\alpha},\quad c=c_r+ic_i=\frac{-\lambda}{i\alpha}.
\end{align*}
Then \eqref{eq-Orr-Sommerfeld-ori} becomes the so-called Orr-Sommerfeld equation
\begin{align}\label{eq-Orr-Sommerfeld}
  i\varepsilon\left(\pa_y^2-\alpha^2\right)^2\phi_\alpha+\left(u_p-c\right)\left(\pa_y^2-\alpha^2\right)\phi_\alpha-u_p''\phi_\alpha=0.
\end{align}

For $\alpha<0$, by taking the complex conjugate of \eqref{eq-Orr-Sommerfeld-ori}, we have
\begin{align*}
  \left(u_p+\frac{\bar\lambda}{i|\alpha|}\right)\left(\pa_y^2-\alpha^2\right)\bar\phi_\alpha-u_p''\bar\phi_\alpha-\frac{\nu}{i|\alpha|}\left(\pa_y^2-\alpha^2\right)^2\bar\phi_\alpha=0.
\end{align*}
Then by taking
\begin{align*}
  \varepsilon=\frac{\nu}{|\alpha|},\quad c=\frac{-\bar\lambda}{i|\alpha|},
\end{align*}
we obtain
\begin{align}\label{eq-Orr-Sommerfeld-<0}
  i\varepsilon\left(\pa_y^2-\alpha^2\right)^2\bar\phi_\alpha+\left(u_p-c\right)\left(\pa_y^2-\alpha^2\right)\bar\phi_\alpha-u_p''\bar\phi_\alpha=0.
\end{align}

Thus, for both $\alpha>0$ and $\alpha<0$, the Orr--Sommerfeld equation has the same form. The case $\alpha<0$ is obtained from the case $\alpha>0$ by complex conjugation, and the corresponding eigenfunctions are conjugate to each other. 

The above Orr-Sommerfeld equation was introduced by Sommerfeld \cite{Sommerfeld1908} and Orr \cite{Orr1907} to study the stability of shear flows. It reduces the linear stability problem for shear flows to the spectral properties of the Orr--Sommerfeld operator
\begin{align}\label{eq-O-S-operator}
  Orr_{\alpha,\nu}(\omega_\alpha)=i\varepsilon\left(\pa_y^2-\alpha^2\right)\omega_\alpha+u_p\omega_\alpha-u_p''\Delta_\alpha^{-1}\omega_\alpha.
\end{align}
Here $\Delta_\alpha=(\pa_y^2-\alpha^2)$, and $\Delta_\alpha^{-1}\omega_\alpha=\phi_\alpha$ with boundary condition \eqref{bc-ori}.

Equivalently, the Orr--Sommerfeld equation can be written as
\begin{align*}
  Orr_{\alpha,\nu}(\omega_\alpha)=c\omega_\alpha.
\end{align*}
If $c$ is an eigenvalue of $Orr_{\alpha,\nu}$, then there exists a normal mode of the form 
\begin{align*}
  \phi(t,x,y)=\phi_\alpha(y)e^{i|\alpha|(x-ct)}.
\end{align*} 
For both $\alpha>0$ and $\alpha<0$, $c_i>0$ corresponds to linear growth,
$c_i<0$ corresponds to decay, and $c_i=0$ corresponds to a neutral mode.

In this paper, we study the spectral properties of the Orr--Sommerfeld operator for the plane Poiseuille flow, derive the neutral curves, and prove the bifurcation of traveling waves from the neutral curve.

\subsection{Background}

The study of hydrodynamic stability goes back to Reynolds' experiment in 1883, where the transition from laminar to turbulent motion was
observed as the Reynolds number $Re$ increases.  To understand such instability phenomena, Sommerfeld and Orr considered shear flows and derived the linearized equation, namely the equation later known as the Orr--Sommerfeld equation \cite{Sommerfeld1908,Orr1907}. Thus, the linear stability problem for a viscous shear flow can be formulated in terms of the spectral properties of the corresponding Orr--Sommerfeld operator.

Plane Poiseuille flow is one of the simplest shear flows, but it already exhibits a rich stability structure. The spectral instability of the Orr--Sommerfeld operator for plane Poiseuille flow at large Reynolds numbers was studied in the classical works of Heisenberg \cite{Heisenberg1924} and C. C. Lin \cite{LinCC1955}. Since the Poiseuille profile has no inflection point, Rayleigh's criterion implies inviscid spectral stability. Hence the Orr--Sommerfeld instability of plane Poiseuille flow is a genuinely viscous and boundary-induced phenomenon. Closely related viscous instability waves in boundary-layer flows were studied by Tollmien \cite{Tollmien1929} and Schlichting \cite{Schlichting1933}. The terminology Tollmien--Schlichting (T--S) waves originates from this boundary-layer theory, and it is also commonly used for the corresponding viscous unstable modes in channel flows. The set of parameters for which $c_i=0$ forms the neutral curve in the $(\alpha,Re)$-plane. In \cite{LinCC1955}, C. C. Lin derived the large-Reynolds-number asymptotic behavior of both the upper and lower branches of the neutral curve. Subsequent numerical studies computed this curve with high accuracy. In particular, Orszag determined the critical Reynolds number for plane Poiseuille flow to be approximately $Re_c = 5772.22$, above which unstable viscous modes of T--S type appear \cite{Orszag1971}. However, the laminar-to-turbulent transition observed in experiments may occur at Reynolds numbers well below this linear critical value. This indicates that the Orr--Sommerfeld linear stability theory alone does not fully describe the transition process, and motivates the study of nonlinear mechanisms near the neutral curve.

From the rigorous mathematical point of view, Grenier--Guo--Nguyen \cite{GGN16adv} proved the spectral instability of a class of symmetric shear flows in a two-dimensional channel at high Reynolds number. Their work introduced a Rayleigh--Airy iteration scheme and gave a rigorous
construction of unstable T--S modes. Related results for boundary-layer profiles were also obtained in \cite{GGN16duke}. Recently, Chen--Wu--Zhang proved the existence of neutral T--S modes for boundary-layer flows \cite{CWZ2026}; see also \cite{BG23-LWR} for related results. For unstable and neutral modes in compressible shear flows, we refer the reader to \cite{Kagei2015,YZ2023,YAZ2023,MWWZ2024,MWWZ2026}. 

After the development of the linear theory, attention naturally turned to the nonlinear behavior in the early stage of transition, see \cite{Stuart1960,Watson1960,DR1981}. In the classical work \cite{ReynoldsPotter1967}, Reynolds--Potter emphasized two fundamental questions in the nonlinear stability theory of parallel flows: whether a flow that is stable to infinitesimal disturbances may be unstable to disturbances of finite amplitude, and what finite-amplitude equilibrium flow may develop from an initial instability.  Since traveling-wave solutions form one of the simplest classes of finite-amplitude nonlinear solutions, they have become important objects in the study of such nonlinear instability phenomena. This point of view is also in line with the role of Hopf bifurcation emphasized by Ruelle--Takens \cite{RuelleTakens1971}: bifurcating time-periodic states may arise from a steady flow and may subsequently undergo further instabilities leading to more complicated flow dynamics.

In 1971--1972, Yudovich \cite{Iudovich1971}, Joseph--Sattinger \cite{JS1972}, and Iooss \cite{Iooss1972} developed Hopf-type bifurcation theory for the incompressible Navier--Stokes equations in two-dimensional channel domains. These results provide sufficient conditions for the existence of traveling-wave solutions bifurcating from shear flows under suitable spectral assumptions on the linearized Navier--Stokes operator. These assumptions are precisely the eigenvalue simplicity and the eigenvalue transversal crossing conditions (see Remark \ref{rmk-spect-assum}). As noted by Chen--Price in \cite{ChenPrice1999}, \emph{``these conditions are difficult to verify.''} The works of Joseph--Sattinger \cite{JS1972} and Iooss \cite{Iooss1972} also studied the stability of the traveling-wave solutions. For plane Poiseuille flow, Chen--Joseph analyzed the bifurcation type along the neutral curve by numerical methods  \cite{CJ1973}; see also the recent numerical work of Bian--Dai--Grenier \cite{BDG2026}. Related Hopf bifurcation results under similar spectral assumptions in other settings can be found in \cite{Galdi2016} for exterior domains and in \cite{BianGrenierIooss2025} for boundary-layer flows.

For compressible plane Poiseuille flow, Kagei--Nishida first studied the spectral properties of the linearized operator and established an instability criterion \cite{Kagei2015}. They later proved the existence of traveling waves bifurcating from compressible plane Poiseuille flow \cite{Kagei2019}. However, since the Mach number in the above works is required to satisfy a certain lower-bound condition, the resulting traveling waves belong to a genuinely compressible regime and are different from those arising in the incompressible setting. For mathematical results on Hopf bifurcation in other fluid models, we refer the reader to \cite{KloedenWells1983,INT1978,CI1985,CDI1987,ChenPrice1999}. We also
mention related traveling-wave results based on Crandall--Rabinowitz type
bifurcation methods; see, for instance, \cite{li2011resolution,LinZeng2011,castroLear2024} and the references therein.

In the present paper, we study the spectral properties of the Orr--Sommerfeld operator associated with the incompressible plane Poiseuille flow. We prove the existence and uniqueness of the lower and upper branches of the neutral curve in the large-Reynolds-number regime. Moreover, we verify the spectral conditions required in the Hopf bifurcation framework of Joseph--Sattinger and Iooss. As a consequence, we obtain the existence of traveling-wave solutions bifurcating from the plane Poiseuille flow at points on the neutral curve.

\subsection{Main results}
For $\varepsilon=\frac{\nu}{|\alpha|}$ sufficiently small, we study the Orr--Sommerfeld eigenvalue problem for the plan Poiseuille flow in the T--S eigen region $\mathbb H$, 
\begin{equation}\label{eq-region-H}
  \mathbb H=
  \left\{
  \left(c_r,c_i,\alpha^2,\nu\right)
  \;\middle|\;
  \begin{aligned}
    &-\frac{\varepsilon^{\frac12}}{2}\le c_i\le 2\max(M,100)\varepsilon^{\frac13},
    \quad
    \frac{\varepsilon^{\frac13}}{10^3}\le c_r\le 10^3 \varepsilon^{\frac15},\\
    &\frac{\varepsilon^{\frac13}}{10^3|\ln \varepsilon|}
    \le \alpha^2\le
    10^3|\ln \varepsilon| \varepsilon^{\frac15}
  \end{aligned}
  \right\}.
\end{equation}
where $M$ is a big enough constant fixed in Remark \ref{rmk:Airy-class-est}.
\begin{theorem}\label{thm1}
  For each $\alpha$ sufficiently small, there exists a unique eigenvalue curve $\left(c_r(\nu), c_i(\nu)\right)$ for the even modes in the T-S eigen region $\mathbb H$. Moreover, for every $\nu$ on this curve, the eigenvalue $c(\nu)$ is simple.

  There exist four positive $O(1)$ quantities 
  \begin{align*}
    J_{\nu,-}\left(\alpha\right)<\widetilde J_{\nu,-}\left(\alpha\right),\quad \widetilde J_{\nu,+}\left(\alpha\right)<J_{\nu,+}\left(\alpha\right),
  \end{align*}
  such that
  \begin{align*}
  -\frac{|\alpha|^3}{2}=c_i \left( \widetilde J_{\nu,-}(\alpha)|\alpha|^7 \right)< c_i\left(\nu\right)< 0=c_i \left(J_{\nu,-}(\alpha)|\alpha|^7 \right), \text{ for }J_{\nu,-}(\alpha)|\alpha|^7<\nu< \widetilde J_{\nu,-}(\alpha)|\alpha|^7,
\end{align*}
  \begin{align*}
  c_i\left(\nu\right)>0, \text{ for }J_{\nu,+}(\alpha)|\alpha|^{11}<\nu< J_{\nu,-}(\alpha)|\alpha|^7,
\end{align*}
  \begin{align*}
  -\frac{|\alpha|^5}{2}=c_i \left( \widetilde J_{\nu,+}(\alpha)|\alpha|^{11} \right)< c_i\left(\nu\right)< 0=c_i \left(J_{\nu,+}(\alpha)|\alpha|^{11} \right), \text{ for }\widetilde J_{\nu,+}(\alpha)|\alpha|^{11}<\nu< J_{\nu,+}(\alpha)|\alpha|^{11}.
\end{align*}
Moreover, 
\begin{align*}
  c_r(\nu)\sim\alpha^2,\ c_r \left(J_{\nu,-}(\alpha)|\alpha|^{7}\right)\sim  \nu^{\frac{2}{7}},\ c_r \left(J_{\nu,+}(\alpha)|\alpha|^{11}\right)\sim  \nu^{\frac{2}{11}},
\end{align*} 
and
\begin{align*}
  c_i(\nu)\sim \nu^{\frac{1}{2}}|\alpha|^{-\frac{3}{2}}, \text{ for }|\alpha|^{11}\ll \nu\ll |\alpha|^{7}.
\end{align*}
At the lower and upper neutral points, it holds that
\begin{align}
  \pa_{\nu}c_r(\nu)|_{\nu=J_{\nu,-}(\alpha)|\alpha|^7}\sim|\alpha|^{-5},\quad \pa_{\nu}c_i(\nu)|_{\nu=J_{\nu,-}(\alpha)|\alpha|^7}\sim-|\alpha|^{-5},\label{est-ci-nu-low}\\
  \pa_{\nu}c_r(\nu)|_{\nu=J_{\nu,+}(\alpha)|\alpha|^{11}}\sim|\alpha|^{-7},\quad \pa_{\nu}c_i(\nu)|_{\nu=J_{\nu,+}(\alpha)|\alpha|^{11}}\sim|\alpha|^{-7}.\label{est-ci-nu-up}
\end{align}
\end{theorem}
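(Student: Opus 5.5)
Since the problem and the boundary conditions are invariant under $y\mapsto -y$, I will restrict to even stream functions on $[0,1]$ with $\phi'(0)=\phi'''(0)=0$ and $\phi(1)=\phi'(1)=0$. The eigenvalue problem then reduces to a scalar dispersion relation. The first step is to build two independent even solutions of \eqref{eq-Orr-Sommerfeld} by a boundary-adapted Rayleigh--Airy iteration. The slow mode $\phi_s$ starts from the Rayleigh solution of $(u_p-c)\Delta_\alpha\phi-u_p''\phi=0$, regularized near the critical layer $y_c$ with $u_p(y_c)=c$, so $1-y_c\sim c$; it is then corrected by the viscous term. The fast mode $\phi_f$ is localized at $y=1$ and is built from the Airy function $Ai(e^{i\pi/6}\delta^{-1}(y-y_c))$ with $\delta=(\varepsilon/u_p'(y_c))^{1/3}$. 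On $\mathbb H$ one has $|u_p'(y_c)|\sim c_r^{1/2}$.

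The eigenvalue condition is $\phi_s(1)\phi_f'(1)-\phi_s'(1)\phi_f(1)=0$, which I write as
\begin{align*}
\frac{\phi_s(1)}{\phi_s'(1)}=\frac{\phi_f(1)}{\phi_f'(1)}.
\end{align*}
For the left side I will use the Rayleigh expansion for a symmetric profile with $u_p''=-2$:
\begin{align*}
\phi_s(1)/\phi_s'(1)\approx -\frac{c}{u_p'(1)}+\frac{\alpha^2}{u_p'(1)^2}\int_0^1(u_p-c)^2\,dy+\text{(log/critical-layer terms)}.
\end{align*}
This gives leading part $c/2-\alpha^2\cdot\tfrac{8}{15}\cdot\tfrac14$ plus an imaginary part of size $O(c^{?})$ coming from $u_p''\log$ at the critical layer. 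For the right side I will use $\delta\,\mathcal C(z)$, where $z=-\delta^{-1}(1-y_c)\sim \varepsilon^{-1/3}c^{3/2}$ up to phase and $\mathcal C$ is Tietjens' function. This yields a complex equation $F(c,\alpha,\nu)=0$.

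Next I rescale according to the two regimes. On the lower branch $z=O(1)$ and $c\sim\alpha^2$, which gives $\varepsilon\sim c^{9/2}/\ldots$ and hence $\nu\sim\alpha^7$. On the upper branch $z\to\infty$ and the imaginary parts balance through the asymptotics of $\mathcal C$, which gives $\nu\sim\alpha^{11}$. In each regime I will apply the implicit function theorem to the normalized $F$ in the unknown $c$, with $\alpha$ fixed and $\nu$ as parameter. This requires the refined remainder estimates from the iteration, including the bounds on their $c$- and $\nu$-derivatives, so that the leading-order map is a nondegenerate perturbation. Existence and uniqueness in $\mathbb H$ then follow from a degree or contraction argument on the leading model equation $c\approx \alpha^2/\ldots$. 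The model equation has a unique root because $\mathrm{Im}\,\mathcal C$ is monotone on the relevant range, and a sub-region patching covers $|\alpha|^{11}\ll\nu\ll|\alpha|^7$, where $c_i\sim(\varepsilon/c)^{1/2}$ gives $\nu^{1/2}|\alpha|^{-3/2}$.

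Simplicity comes from $\partial_cF\neq0$ at the root: geometric simplicity is automatic from the two-dimensional even solution space with the boundary conditions, and algebraic simplicity is equivalent to $\partial_c F\ne0$. The derivatives $\partial_\nu c=-\partial_\nu F/\partial_c F$ are then computed from the leading model. At the lower branch, $\partial_\nu c\sim c/\nu\sim\alpha^{2-7}$, with the signs read off from $\mathcal C'$ at the neutral value $z_0$. At the upper branch one gets $\alpha^{2-11+2}=\alpha^{-7}$, with the opposite sign of $c_i$-crossing. The points where $c_i=-|\alpha|^3/2$ and $c_i=-|\alpha|^5/2$ follow by continuity and monotonicity in $\nu$.

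The main obstacle is the upper branch. There the neutral balance occurs between subleading terms: the critical-layer imaginary part $\sim\alpha^2c^{?}$ cancels against the exponentially-asymptotic tail of $\mathcal C$. The remainders, and their $\nu$-derivatives, must therefore be shown to be smaller than this delicate imaginary part, not merely small relative to $c$. I would first try to extract the exact second-order terms explicitly and bound everything else by an extra factor of $\alpha^{1/2}$.
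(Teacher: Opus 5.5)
Your architecture (even reduction, wall matching $\phi_s/\phi_s'=\phi_f/\phi_f'$, a Tietjens-function analysis at $z=O(1)$ on the lower branch, a balance of imaginary parts on the upper branch, and $\partial_\nu c=-\partial_\nu F/\partial_c F$) is the paper's. However, the scaling derivation, which is the content of the theorem, is wrong as written.

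For $u_p=1-y^2$ the critical point $y_c=\sqrt{1-c_r}$ lies at distance $\approx c_r/2$ from the wall. There $|u_p'(y_c)|=2\sqrt{1-c_r}\approx 2$, not $c_r^{1/2}$. Hence $\delta\sim\varepsilon^{1/3}$ and $|z|\sim\varepsilon^{-1/3}|c|$ (the paper's $\kappa c$), not $\varepsilon^{-1/3}c^{3/2}$.
\begin{itemize}
\item With your $z$, the condition $z=O(1)$ gives $\varepsilon\sim c^{9/2}\sim\alpha^9$, i.e.\ $\nu=\alpha\varepsilon\sim\alpha^{10}$. This contradicts the $\alpha^7$ you state. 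With the correct $z$ it gives $\varepsilon\sim\alpha^6$ and $\nu\sim\alpha^7$.
\item On the upper branch you leave the Rayleigh critical-layer imaginary part as $\alpha^2c^{?}$, but its exact size and sign are needed. It is $\frac{4}{15^2}\alpha^4\arg(-\hat c)\approx-\frac{4\pi}{225}\alpha^4$, produced by the $\ln(-\hat c)$ in $\phi_{2,0}'$ at the wall through the first $\alpha^2$-correction.
\item Balancing this at $c_i=0$ against the fast-mode imaginary part, of modulus $\approx\varepsilon^{1/2}c_r^{-1/2}/\sqrt{2}$, with $c_r\approx\frac{4}{15}\alpha^2$, is what gives $\varepsilon\sim\alpha^{10}$ and $\nu\sim\alpha^{11}$.
\end{itemize}

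The genuine gap is the step you flag yourself. ``Bound everything else by an extra factor $\alpha^{1/2}$'' is a target, not an argument, and the natural estimates miss it.
\begin{itemize}
\item The neutral balance on the upper branch sits at level $\varepsilon^{2/5}$. The direct bound on the first Airy correction $-AirySolver(Diff(\phi_{1,\alpha}))$ at the wall is $\varepsilon^{1/3}\alpha^2(1+\kappa c_r)|\ln c_0|\sim\varepsilon^{2/5}|\ln\varepsilon|$, a logarithm too large.
\item For the crossing condition, the $c_r$- and $\nu$-derivatives of the remainders must be small compared with $|\partial_{c_r}\Im(\phi_f/\phi_f')|\sim\varepsilon^{1/2}c_r^{-3/2}$. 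Crude differentiation of the iteration costs $\varepsilon^{-1/3}$ rather than $c_r^{-1}\sim\varepsilon^{-1/5}$.
\end{itemize}
The paper closes this with three ingredients your plan lacks.
\begin{itemize}
\item[(i)] The regularization is the specific choice $\hat c=c+i\varepsilon^{1/2}$ in the Rayleigh solver, with the compensating term $i\varepsilon^{1/2}\Delta_\alpha\phi$ moved into $Diff$. This $c_0$ is large enough that only the first corrections need sharp treatment, and small enough that its error stays below $\varepsilon^{2/5}$. It also makes the neutral ($c_i=0$) and damped ($c_i\ge-\varepsilon^{1/2}/2$) parts of $\mathbb H$ uniformly tractable.
\item[(ii)] The nonlocal Green-function terms $a_1,a_2$ are expanded with leading parts given by the Scorer function $\mathrm{Hi}$. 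The dangerous integral of $\mathrm{Hi}''(\cdots)\,(x-\tilde y_{\hat c})^{-1}$, over the interval from the wall to its reflection through $y_c$, is deformed onto a half-circle below the real axis (the pole $\tilde y_{\hat c}$ lies above it). This turns $|\ln\varepsilon|$ into $\langle\kappa c_r\rangle^{-3}$.
\item[(iii)] Parameter derivatives of the Airy boundary terms are controlled by the cancellation inside each growing/decaying Airy pair, not by the product rule.
\end{itemize}

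Two smaller points.
\begin{itemize}
\item Your $2\times2$ Wronskian needs basis functions with $\phi'(0)=\phi'''(0)=0$ exactly. The iteration gives only $\phi'(0)=0$, with $\phi'''(0)=O(|\ln c_0|)$ for the slow mode. So the second slow mode and the exponentially growing fast mode must be used to restore the centerline conditions. This is the paper's $4\times4$ reduction, costing $O(\kappa^{-3}|\ln c_0|)$ at the wall.
\item For $|\kappa c|\lesssim M$, neither $\partial_cF\neq0$ nor your monotonicity of $\Im\mathcal C$ follows from asymptotics. The paper relies there on numerical values of the Tietjens/Hankel function and on plots of the Airy quotient $\mathrm{Ai}(z)\mathrm{Ai}(2,z)/\mathrm{Ai}(1,z)^2$.
\end{itemize}
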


\begin{theorem}\label{thm2}
    For each $\nu$ sufficiently small, there exists a unique eigenvalue curve $\left(c_r(\alpha^2), c_i(\alpha^2)\right)$ for the even modes in the T-S eigen region $\mathbb H$. More precisely, for each point on this curve, the eigenvalue $c(\alpha^2)$ of the Orr--Sommerfeld of each wavenumber $\alpha=\pm\sqrt{\alpha^2}$ is simple, and the eigenspaces corresponding to $\alpha$ and $-\alpha$ are conjugate to each other.

   There exist four positive $O(1)$ quantities 
  \begin{align*}
    \widetilde J_{\alpha,-}\left(\nu\right)<J_{\alpha,-}\left(\nu\right),\quad J_{\alpha,+}\left(\nu\right)<\widetilde J_{\alpha,+}\left(\nu\right),
  \end{align*}
  such that
  \begin{align*}
  -\frac{\nu^{\frac{3}{7}}}{2}=c_i \left( \widetilde J_{\alpha,-}\left(\nu\right)\nu^{\frac{2}{7}} \right)<c_i\left(\alpha^2\right)< 0=c_i \left(J_{\alpha,-}\left(\nu\right)\nu^{\frac{2}{7}} \right), \text{ for }\widetilde J_{\alpha,-}\left(\nu\right)\nu^{\frac{2}{7}}<\alpha^2< J_{\alpha,-}\left(\nu\right)\nu^{\frac{2}{7}},
\end{align*}
  \begin{align*}
  c_i\left(\alpha^2\right)>0, \text{ for }J_{\alpha,-}\left(\nu\right)\nu^{\frac{2}{7}}<\alpha^2< J_{\alpha,+}\left(\nu\right)\nu^{\frac{2}{11}},
\end{align*}
  \begin{align*}
  -\frac{\nu^{\frac{5}{11}}}{2}=c_i \left( \widetilde J_{\alpha,+}\left(\nu\right)\nu^{\frac{2}{11}} \right)< c_i\left(\alpha^2\right)< 0=c_i \left(J_{\alpha,+}\left(\nu\right)\nu^{\frac{2}{11}} \right), \text{ for }J_{\alpha,+}\left(\nu\right)\nu^{\frac{2}{11}}<\alpha^2<\widetilde J_{\alpha,+}\left(\nu\right)\nu^{\frac{2}{11}}.
\end{align*}
Moreover, 
\begin{align*}
  c_r(\alpha^2)\sim\alpha^2,\ c_r \left(J_{\alpha,-}\left(\nu\right)\nu^{\frac{2}{7}}\right)\sim  \nu^{\frac{2}{7}},\ c_r \left(J_{\alpha,+}\left(\nu\right)\nu^{\frac{2}{11}}\right)\sim  \nu^{\frac{2}{11}},
\end{align*}
and
\begin{align*}
  c_i(\alpha^2)\sim \nu^{\frac{1}{2}}|\alpha|^{-\frac{3}{2}}, \text{ for }\nu^{\frac{2}{7}}\ll \alpha^2\ll\nu^{\frac{2}{11}},
\end{align*}
At the lower and upper neutral points, it holds that
\begin{align*}
  \left|\pa_{\alpha^2}c_r(\alpha^2)|_{\alpha^2=J_{\alpha,-}\left(\nu\right)\nu^{\frac{2}{7}}}\right|\lesssim 1,\quad \pa_{\alpha^2}c_i(\alpha^2)|_{\alpha^2=J_{\alpha,-}\left(\nu\right)\nu^{\frac{2}{7}}}\sim1,\\
  \pa_{\alpha^2}c_r(\alpha^2)|_{\alpha^2=J_{\alpha,+}\left(\nu\right)\nu^{\frac{2}{11}}}\sim1,\quad \pa_{\alpha^2}c_i(\alpha^2)|_{\alpha^2=J_{\alpha,+}\left(\nu\right)\nu^{\frac{2}{11}}}\sim-\nu^{\frac{2}{11}}.
\end{align*}
\end{theorem}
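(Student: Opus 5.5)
Theorem \ref{thm2} is a reparametrization of the same dispersion relation that underlies Theorem \ref{thm1}, so I will reuse its analytic machinery and only redo the implicit-function step with $\nu$ fixed and $\alpha^2$ as the moving parameter. The starting point is the even-mode dispersion relation on $[0,1]$ with $\phi'(0)=\phi'''(0)=0$ and $\phi(1)=\phi'(1)=0$. Combining the slow (Rayleigh) solution from the boundary-adapted Rayleigh--Airy iteration with the fast (Airy) solution concentrated near $y=1$, the relation takes the form
\begin{align*}
  F(c,\alpha^2,\nu)=\frac{\phi_s(1)}{\phi_s'(1)}-\frac{\phi_f(1)}{\phi_f'(1)}=0 .
\end{align*}
Here $\phi_s(1)/\phi_s'(1)\approx -c/2+\alpha^2\cdot O(1)+i\,O(c^2)$, where the $O(c^2)$ part comes from the critical layer, and the Airy ratio is $\approx \delta\,\mathcal C(z)$ with $\delta=(\varepsilon/2)^{1/3}e^{i\pi/6}$ and $z\sim c\,\varepsilon^{-1/3}$. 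I take the precise expansions of both ratios and of their derivatives in $c$, $\alpha^2$, $\varepsilon$ as given from the earlier sections used for Theorem \ref{thm1}, with $\varepsilon=\nu/|\alpha|$.

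\emph{Existence and uniqueness of the curve.} Fix $\nu$ small. On $\mathbb H$, I will show that $\partial_c F$ is nonvanishing and dominated by the Rayleigh term $-1/2$, the Airy contribution being $O(\varepsilon^{1/3}\partial_z\mathcal C\cdot\varepsilon^{-1/3})$ with a controlled sign. A contraction (or Rouché) argument in $c$ then gives, for each $\alpha^2$ in the admissible range, exactly one zero $c(\alpha^2)$, depending $C^1$ on $\alpha^2$. Simplicity is argued in two steps. The geometric multiplicity is one because the even eigenspace is spanned by $\phi_s-\tfrac{\phi_s(1)}{\phi_f(1)}\phi_f$. Algebraic simplicity follows from $\partial_c F\neq0$, since a Jordan block would force $F=\partial_cF=0$ via the standard adjoint/derivative identity. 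The statement about $\pm\alpha$ is immediate from \eqref{eq-Orr-Sommerfeld-<0}: the equation depends only on $\alpha^2$, and the eigenfunctions are related by complex conjugation.

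\emph{Sign of $c_i$ and the neutral points.} Separate real and imaginary parts. To leading order, $c_r\sim\alpha^2$ from the Rayleigh part, and
\begin{align*}
  c_i\approx \mathrm{Im}\bigl(\text{critical layer}\bigr)-\mathrm{Im}\bigl(\delta\,\mathcal C(z)\bigr),
\end{align*}
which gives $c_i\sim\nu^{1/2}|\alpha|^{-3/2}$ in the intermediate range. The change of sign happens in two regimes.
\begin{itemize}
  \item Lower branch: $z=O(1)$ crosses the zero of $\mathrm{Im}\,\mathcal C$, i.e. $\alpha^2\varepsilon^{-1/3}\sim1$, which gives $\alpha^2\sim\nu^{2/7}$.
  \item Upper branch: the $\alpha^2$-correction balances the viscous term, i.e. $\alpha^4\sim(\varepsilon/\alpha^2)^{1/2}$, which gives $\alpha^2\sim\nu^{2/11}$.
\end{itemize}
Once the derivative bounds below are established, monotonicity of $c_i$ in $\alpha^2$ on each transition window gives uniqueness of $J_{\alpha,\pm}(\nu)$. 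Existence of $\widetilde J_{\alpha,\pm}(\nu)$ with $c_i=-\nu^{3/7}/2$ and $c_i=-\nu^{5/11}/2$ then follows from the intermediate value theorem together with the monotonicity.

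\emph{Transversality, the main obstacle.} I compute
\begin{align*}
  \partial_{\alpha^2}c=-\frac{\partial_{\alpha^2}F}{\partial_cF},
\end{align*}
where $\partial_{\alpha^2}F$ includes the implicit $\varepsilon$-dependence through $\partial_{\alpha^2}\varepsilon=-\varepsilon/(2\alpha^2)$. Alternatively, I can chain-rule from Theorem \ref{thm1}: along the curve $F(c,\alpha^2,\nu)=0$, the identity $\partial_{\alpha^2}c=-(\partial_\nu c)\,(\partial_{\alpha^2}\nu)$ is taken at fixed $c$-level sets. The difficulty is the upper branch. There $\partial_{\alpha^2}c_i\sim-\nu^{2/11}$ results from a cancellation, because the leading $O(1)$ imaginary contributions of $\partial_{\alpha^2}F$ cancel at neutrality. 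One therefore needs the next-order terms in the expansion of the Rayleigh ratio in $\alpha^2$ (of size $\alpha^4$) and of the Airy ratio in $\varepsilon$, with errors $o(\nu^{2/11})$. I would first try to express $\mathrm{Im}\,\partial_{\alpha^2}F$ exactly at the neutral point by substituting the neutrality relation, thereby eliminating the cancelling leading terms, and then bound the remaining correction terms using the refined parameter-derivative estimates established for \eqref{est-ci-nu-up}.
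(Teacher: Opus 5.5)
\textbf{Gap: the lower branch.} Your sketch treats the regime $\kappa c_r=O(1)$, i.e.\ $\alpha^2\sim\nu^{2/7}$, perturbatively, and there the fast-mode ratio is not a correction. You note yourself that its $c$-derivative is $O(\pa_z\mathcal C)$. In fact $\pa_cF\approx-\frac12 AR(z)$ with $AR(z)=\mathrm{Ai}(z)\mathrm{Ai}(2,z)/\mathrm{Ai}^2(1,z)$ and $z=e^{\frac{\pi}{6}i}\kappa\eta(-1)$, and $1-AR(z)$ is of order one at the lower neutral point. So ``$\pa_cF$ dominated by $-\frac12$'' is false there. The fixed-point map in $c$ has Lipschitz constant $\approx|1-AR(z)|$, which is not small. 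A Rouch\'e comparison would require counting the zeros of the model function $-\frac{c}{2\mathrm{Han}(\kappa c/2)}+\frac{2}{15}\alpha^2$, which you do not do.

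What is actually needed are non-perturbative facts about the Tietjens/Hankel function of Lemma \ref{lem-sol-curve}:
\begin{itemize}
\item[(i)] $AR(z)\neq0$ on the relevant set, which gives the nondegenerate Jacobian. Also $\Re AR(z)>0$, which gives the monotonicity of $c_r$ used for continuation and uniqueness.
\item[(ii)] $\mathrm{Han}_i$ has a real zero at $\frac{\kappa c_r}{2}\approx 2.3$ with $\mathrm{Han}_i'\neq0$ there. This is what produces the lower neutral point; your phrase ``$z$ crosses the zero of $\Im\mathcal C$'' presupposes it.
\item[(iii)] The values of $\mathrm{Han}$ and $\mathrm{Han}'$ at that point. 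Inserted into \eqref{eq-deriv-c-alpha2}, they give $|\pa_{\alpha^2}c_r|\lesssim1$ and $\pa_{\alpha^2}c_i\sim1$, with the \emph{positive} sign needed for $c_i<0$ below and $c_i>0$ above $J_{\alpha,-}(\nu)\nu^{2/7}$.
\end{itemize}
None of these follows from scaling. The paper takes them from the data of \cite{Miles1960} and from the plots of $|AR|$ and $\Re AR$. It locates the lower neutral point first and then continues the curve by the implicit function theorem, with a localization step and a contradiction argument for uniqueness. Your proposal does not address the lower-branch derivatives at all.

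\textbf{Upper branch and smaller points.} You have misdiagnosed the difficulty on the upper branch: $\pa_{\alpha^2}F$ has no $O(1)$ imaginary part to cancel. At fixed $\nu$ and $c$ one has $\Im\pa_{\alpha^2}F\approx-\frac{8\pi}{15^2}\alpha^2-\frac{c_r}{4\alpha^2}(\kappa c_r)^{-\frac32}$. The first term comes from $\frac{4}{15^2}\alpha^4\arg(-\hat c)$. The second comes from the fast-mode ratio, which is proportional to $\kappa^{-\frac32}\propto\alpha^{-\frac12}$. Both are negative and of size $\alpha^2$. Combined with $\pa_{\alpha^2}F_r\approx\frac{2}{15}$, $\pa_{c_r}F_r\approx-\frac12$ and $\pa_{c_r}F_i\approx-\frac12(\kappa c_r)^{-\frac32}$, the two products in the numerator of \eqref{eq-deriv-c-alpha2} have the same sign. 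Hence $\pa_{\alpha^2}c_i\sim-\alpha^2\sim-\nu^{\frac{2}{11}}$ follows directly, once the residual derivatives are $o(\alpha^2)$; that is what Lemmas \ref{lem-res-ray} and \ref{lem-res-airy} supply.

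Your alternative identity $\pa_{\alpha^2}c=-(\pa_\nu c)(\pa_{\alpha^2}\nu)$ is not valid, because $c(\alpha^2,\nu)$ depends on two independent variables. A level-set relation holds only for one real component along a curve such as $\{c_i=0\}$, and it would involve the unknown slope of the neutral curve.

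Finally, simplicity for the Orr--Sommerfeld operator on $[-1,1]$ also requires ruling out an odd eigenfunction with the same $c$, as in Remark \ref{rmk-odd-Wron}. Your argument covers only the even subspace.
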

In fact, the above two theorems are consequences of the following more general result.
\begin{theorem}
  For $\frac{\nu}{|\alpha|}$ sufficiently small, sufficiently small, there exists a unique eigenvalue surface $\left(c_r(\alpha^2,\nu), c_i(\alpha^2,\nu)\right)$ for the even modes in the T-S eigen region $\mathbb H$.
\end{theorem}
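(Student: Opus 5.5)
Since $u_p$ is even, I would restrict to even modes: even $\phi$ on $[0,1]$ with $\phi'(0)=\phi'''(0)=0$ and the no-slip conditions at $y=1$. Near the wall the flow is linear, $u_p\approx 2(1-y)$. The plan is to build an even fundamental system of \eqref{eq-Orr-Sommerfeld}, reduce the eigenvalue problem to a scalar dispersion relation, and solve it by an implicit function argument in the rescaled region $\mathbb H$.

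\textbf{Step 1 (slow modes).} Construct the slow solution $\phi_s$ from the Rayleigh equation $(u_p-c)\Delta_\alpha\phi-u_p''\phi=0$. With $u_p''=-2$, $c$ near $0$, and the critical layer $y_c$ near $\pm1$, the even Rayleigh solution is explicitly perturbative around $\phi_0=u_p-c$. I would expand it in $\alpha^2$ and $c$, keeping track of the logarithmic term near $y_c$. Then I would run the boundary-adapted Rayleigh--Airy iteration: alternately invert Rayleigh and Airy to correct the viscous error $i\varepsilon\Delta_\alpha^2\phi$. This gives $\phi_s$, with remainders of relative size $\varepsilon^{1/3}|\ln\varepsilon|$ or smaller, analytic in $(c,\alpha^2,\varepsilon)$.

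\textbf{Step 2 (fast mode).} Construct the fast decaying boundary-layer solution $\phi_f$ near $y=1$. Its leading part is the second primitive of the Airy function $Ai(e^{i\pi/6}\delta^{-1}(y-y_c))$ with $\delta=(\varepsilon/2)^{1/3}$, corrected by the same iteration. I would use the Airy estimates of Remark \ref{rmk:Airy-class-est}, which is where the constant $M$ enters. Because it decays exponentially, $\phi_f$ is negligible at $y=0$ up to $e^{-C/\delta}$ errors, so an even combination $\phi=\phi_s+a\phi_f$ satisfies the conditions at $0$ up to exponentially small terms.

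\textbf{Step 3 (dispersion relation).} The boundary conditions at $y=1$ reduce to
\[
D(c,\alpha^2,\varepsilon)=\frac{\phi_s(1)}{\phi_s'(1)}-\frac{\phi_f(1)}{\phi_f'(1)}=0 .
\]
The first term is $\approx -c/2+\alpha^2\cdot O(1)+c\ln c$-type corrections from the critical layer. The second is $\delta\,\mathcal C(z)$ with $z=-y_c$-rescaled variable $\sim c\,\delta^{-1}$ and $\mathcal C$ the Tietjens function. In the scaled variables $c=\varepsilon^{1/3}\tilde c$ and $\alpha^2$ ranging over $\mathbb H$, I would write $D=D_0+R$, where $D_0$ is the explicit leading relation (balancing $c$, $\alpha^2$ and $\delta\,\mathcal C$) and $R$ is small in $C^1$ with respect to $(c_r,c_i)$.

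\textbf{Step 4 (solving).} Existence and uniqueness of a root $c(\alpha^2,\nu)$ in $\mathbb H$ then follow from Rouch\'e's theorem or the analytic implicit function theorem, provided $\partial_c D_0\neq0$ and $|R|<|D_0|$ on the boundary of $\mathbb H$. This is the main obstacle. $\mathbb H$ spans several scalings, from $c\sim\varepsilon^{1/3}$ with $\alpha^2\sim\varepsilon^{1/3}$ up to $c\sim\varepsilon^{1/5}$, and the size of $\partial_c D_0$ and of the remainders degenerates differently across these scalings. I would split $\mathbb H$ into overlapping subregions, for example $z=O(1)$ versus $z\gg1$, where $\mathcal C$ has its large-$z$ asymptotics, and in each one use the matching precise expansion.

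In each subregion I would bound the parameter derivatives of the correction terms, not only the terms themselves. That is what makes the IFT apply uniformly and yields a smooth surface $(c_r,c_i)(\alpha^2,\nu)$. Simplicity should then follow from $\partial_c D\neq0$, combined with a check that the adjoint pairing is nonzero at the root.
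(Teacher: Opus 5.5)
Your architecture matches the paper's: slow mode by a Rayleigh--Airy iteration, decaying fast mode by an Airy iteration, matching of $\phi/\phi'$ at the wall, then an implicit-function step. Two ingredients the argument cannot do without are missing, though.

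\textbf{The damped and neutral part of $\mathbb H$.} The region $\mathbb H$ contains $-\tfrac12\varepsilon^{1/2}\le c_i\le 0$. This strip contains the neutral points, and it is where the surface ends: it leaves $\mathbb H$ through the face $c_i=-c_0/2$ at $\alpha^2=\widetilde{\mathcal K}_-(\varepsilon)\varepsilon^{1/3}$ and at $\alpha^2=\widetilde{\mathcal K}_+(\varepsilon)\varepsilon^{1/5}$.
\begin{itemize}
\item Your Step 1 inverts the Rayleigh operator with the true $c$. The Rayleigh solver's bounds are then of size $|\ln c_i|$ (cf.\ Lemmas \ref{lem-Raysolver0}--\ref{lem-Raysolver-alpha}).
\item At $c_i=0$ its kernel has a non-integrable $1/(x-y_c)$ singularity acting on the fed-back Airy corrections $Reg(\psi^{[j]})$, which need not vanish at $y_c$.
\item So the Rayleigh--Airy iteration as you describe it, which is a $c_i>0$ construction as in Grenier--Guo--Nguyen, does not produce $\phi_s$ uniformly through the real axis.
\end{itemize}
The missing device is the paper's regularization. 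One uses $\hat c=c+ic_0$ with $c_0=\varepsilon^{1/2}$ in $Ray_\alpha$, and moves the compensating term $ic_0(\partial_y^2-\alpha^2)\phi$ into $Diff$, where the Airy solver absorbs it. All Rayleigh bounds then carry $|\ln c_0|$ uniformly for $c_i\ge-c_0/2$. One then checks that the artificial correction $\psi^{[1]}_{1,\alpha,b}$ is only $O(c_0|\ln c_0|)$ (Lemma \ref{lem-acc-psi-1-b}), which is lower order in the dispersion relation.

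Rouch\'e on the $c$-slices of $\mathbb H$ also cannot work as stated, because $D$ vanishes on the face $c_i=-c_0/2$ exactly at the parameters where the surface exits. You need an enlarged region, and you would have to check that your $D$ is holomorphic in $c$. The paper's Langer-variable construction is not holomorphic, since $\eta$ depends on $c_r$ and $c_i$ separately. The paper accordingly works with the real implicit function theorem in $(c_r,c_i)$.

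\textbf{The lower-branch region $\kappa c_r\lesssim M$.} No asymptotic expansion is available there. One has $\partial_{c_r}D\approx-\tfrac12AR(z)$ and $\partial_{c_i}D\approx-\tfrac i2AR(z)$, where $AR(z)=\mathrm{Ai}(z)\mathrm{Ai}(2,z)/\mathrm{Ai}(1,z)^2$ and $z=e^{i\pi/6}\kappa\eta(-1)$, with $\arg z\approx-\tfrac{5\pi}{6}$.
\begin{itemize}
\item Your hypothesis $\partial_cD_0\neq0$ is therefore the nonvanishing of $AR$ for $|z|\lesssim M$ on that ray.
\item The zero count of $D_0$ that Rouch\'e requires is a global property of the Tietjens/Hankel function.
\end{itemize}
Nothing in your plan yields these facts. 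The paper takes them from explicit numerical information: plots of $|AR|$ and $\Re AR$, and Miles' values of $\mathrm{Han}$ near $\kappa c_r/2\approx2.3$. Without them, uniqueness and smoothness of the surface on the lower branch remain open.

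\textbf{Two further points need repair.}

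First, $\phi_s+a\phi_f$ does not meet the centerline conditions up to $e^{-C/\delta}$. The Airy corrections in the slow mode do not preserve $\phi'''(0)=0$; the paper only has $\Phi_{1,\alpha}'(0)=0$ and $\Phi_{1,\alpha}'''(0)=O(|\ln c_0|)$ (Lemma \ref{lem-Phi-0}). An exact even eigenfunction therefore also involves the second slow mode and the growing fast mode. These change the dispersion function by $O(\kappa^{-3}|\ln c_0|)$, and that change must be controlled together with its parameter derivatives. The paper does this through the full $4\times4$ Wronskian. Its reduction to the wall matching rests on two things:
\begin{itemize}
\item the exact zeros $\Phi_{1,\alpha}'(0)=\Phi_{3,\alpha}'(0)=0$ and $\Phi_{4,\alpha}(-1)=\Phi_{4,\alpha}'(-1)=0$, built into the boundary-adapted Green functions;
\item the size gap $\Phi_{4,\alpha}'''(0)\sim\kappa^{9/2}\gg\Phi_{4,\alpha}'(0)\sim\kappa^{3/2}$.
\end{itemize}

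Second, the remainder size you assert in Step 1 is not what the iteration gives by itself. The nonlocal term $a_1(x)(y-y_c)$ of the Airy Green function costs a factor $1+\kappa c_r$. Near $c_r\sim\varepsilon^{1/5}$ this puts the error, up to a logarithm, at the $\varepsilon^{2/5}$ scale of the imaginary balance that decides the sign of $c_i$. The paper removes this loss using the Scorer-function form of $a_{1,M}$ and a contour deformation (Lemma \ref{lem-acc-psi-1-a}).
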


\begin{remark}[Even and odd modes]\label{rmk-oddmode}
Since the plane Poiseuille profile is even, the eigenspaces decompose
into even and odd subspaces, and the eigenfunctions can be chosen  to be either even or odd. As pointed out in the classical literature
\cite{LinCC1955,Miles1960,Orszag1971}, the unstable and neutral T--S modes
occur in the even class. Therefore, in the proof of Theorems
\ref{thm1} and \ref{thm2}, as in \cite{LinCC1955}, we consider the
problem on the half-channel with boundary conditions
\begin{align}\label{bc-even}
  \phi_\alpha(-1)=0,\quad\phi'_\alpha(-1)=0,\quad\phi'_\alpha(0)=0,\quad\phi'''_\alpha(0)=0.
\end{align}
Using the techniques developed in this paper, one can also show that the odd
problem has no eigenvalues in the T--S eigen region \(\mathbb H\); see Remark \ref{rmk-odd-Wron}.
\end{remark}

\begin{remark}[Spectral conditions for Hopf bifurcation]\label{rmk-spect-assum}
\label{rmk-hopf-spectral-assumptions}
Fix \(0<\alpha^{[0]}\ll1\), and let \(\nu^{[0]}\) be one of the neutral
viscosity coefficients obtained in Theorem \ref{thm1}. Denote the corresponding
neutral eigenvalue by
\[
  c^{[0]}=c_r(\nu^{[0]})+ic_i(\nu^{[0]}) \text{ with } c_i(\nu^{[0]})=0.
\]
To apply the Hopf bifurcation theory, one needs the simplicity of the neutral eigenvalue, and the transversal crossing condition. In the \(2\pi/\alpha^{[0]}\)-periodic setting, the simplicity condition requires
that \(c^{[0]}\) be simple for the mode \(\alpha=\pm\alpha^{[0]}\), and that no higher wave number has the same neutral eigenvalue:
\[
  c^{[0]}\notin
  \sigma\!\left(\mathrm{Orr}_{k\alpha^{[0]},\nu^{[0]}}\right),
  \qquad |k|\ge2.
\]
In \cite{AH2026}, Almog--Helffer proved that, for symmetric shear flows in a channel, all possible eigenvalues of the Orr--Sommerfeld operator are stable in the regimes 
\begin{align*}
  \alpha^2\ll\varepsilon^{\frac{1}{3}} \text{ and }\alpha^2\gg\varepsilon^{\frac{1}{5}}.
\end{align*}
Therefore, for the plane Poiseuille flow, all neutral modes in the regime
considered here are captured by Theorems \ref{thm1} and \ref{thm2}. The
required simplicity condition then follows from the uniqueness statement in Theorem \ref{thm2}. Finally, the
estimates \eqref{est-ci-nu-low} and \eqref{est-ci-nu-up} imply
\[
  \partial_\nu c_i(\nu^{[0]})\neq0,
\]
which is the transversal crossing condition.
\end{remark}

By Theorems \ref{thm1}--\ref{thm2} and Remark
\ref{rmk-hopf-spectral-assumptions}, the spectral assumptions required in the Hopf bifurcation framework of Joseph--Sattinger \cite{JS1972} and Iooss
\cite{Iooss1972} are satisfied at the neutral points on the lower and upper
branches. Hence, taking \(\nu\) as the bifurcation parameter, we obtain
traveling-wave solutions bifurcating from the plane Poiseuille flow.

\begin{theorem}\label{thm-bif}
For $\alpha^{[0]}$ that sufficiently small, let $\nu^{[0]}$ be the upper or the lower neutral viscosity coefficient obtained in Theorem \ref{thm1}, and $c^{[0]}=c_r^{[0]}\sim(\alpha^{[0]})^2$ be the corresponding neutral eigenvalue. There exists $s_0>0$ sufficiently small and a branch of traveling-wave solutions $\left(\nu_s,\Phi_s(t,x,y)\right)$ of the nonlinear system \eqref{NS-pert-nonlinear} with $s\in[-s_0,s_0]$, such that 
\begin{align*}
  \nu_s=\nu^{[0]}+s^2\nu^{[2]},\\
  \Phi_s(t,x,y)= \phi_s(x-c_s t,y),\\
   \phi_s(x,y)=\phi_s(x+\frac{2\pi}{\alpha^{[0]}},y),\\
   \phi_s(x,y)=s\phi^{[1]}(x,y)+s^2 \phi^{[2]}(x,y),\\
  c_s=c_r^{[0]}+s^2c_r^{[2]}.
\end{align*}
where $\nu^{[2]}$, $c_r^{[2]}\in\mathbb R$, $\phi^{[1]}(x,y)=\Re \left(\phi_{\nu^{[0]},\alpha^{[0]}}(y)e^{i\alpha^{[0]}x}\right)$, $\phi_{\alpha^{[0]},\nu^{[0]}}(y)$ is the eigenfunction corresponding to $\alpha^{[0]}$ and $\nu^{[0]}$, and $\phi^{[2]}\in H^4\left(\mathbb T_{\frac{2\pi}{\alpha^{[0]}}}\times[-1,1]\right)$.
\end{theorem}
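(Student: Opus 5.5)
The plan is to deduce Theorem \ref{thm-bif} from the abstract Hopf bifurcation theorem of Joseph--Sattinger \cite{JS1972} and Iooss \cite{Iooss1972}, in its traveling-wave form. We first pass to the moving frame $\xi=x-ct$ and look for $2\pi/\alpha^{[0]}$-periodic steady solutions of
\[
  -c\,\pa_\xi\omega+u_p\pa_\xi \omega-u_p''\pa_\xi\phi-\nu\Delta\omega+u\cdot\nabla\omega=0,
\]
posed on $\mathbb T_{2\pi/\alpha^{[0]}}\times[-1,1]$ with no-slip conditions. We restrict to the closed subspace of functions even in $y$ (for the stream function), which the nonlinearity preserves. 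Write the equation as $\mathcal F(\phi,c,\nu)=\mathcal L_{c,\nu}\phi+\mathcal N(\phi)=0$, with $\mathcal F\colon H^4\cap\{\text{bc}\}\times\mathbb R^2\to L^2$. By Fourier decomposition in $\xi$, the operator $\mathcal L_{c,\nu}$ acts on the $k$-th mode, up to the factor $ik\alpha^{[0]}$, as the Orr--Sommerfeld operator $\mathrm{Orr}_{k\alpha^{[0]},\nu}-c$. It is Fredholm of index zero, since it is a compact perturbation of $\nu\Delta^2$.

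\textbf{Kernel and simplicity.} At $(c^{[0]},\nu^{[0]})$ the kernel of $\mathcal L$ is exactly two-dimensional (real), spanned by $\Re$ and $\Im$ of $\phi_{\alpha^{[0]},\nu^{[0]}}(y)e^{i\alpha^{[0]}\xi}$. The argument runs as follows.
\begin{itemize}
\item The $k=0$ mode has trivial kernel: it is $\nu\pa_y^4\phi=0$ with the boundary conditions.
\item For $|k|\ge2$ the Orr--Sommerfeld problem has no neutral eigenvalue equal to $c^{[0]}$. Here $\varepsilon=\nu^{[0]}/(k\alpha^{[0]})$. Either $(k\alpha^{[0]})^2$ lies outside the window $[\varepsilon^{1/3},\varepsilon^{1/5}]$ up to logarithms, where the Almog--Helffer result \cite{AH2026} gives stability, or it lies inside $\mathbb H$. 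In the latter case Theorem \ref{thm2} applies. Since $\nu^{[0]}$ is a neutral point for $\alpha^{[0]}$, uniqueness of the neutral wavenumbers $J_{\alpha,\pm}(\nu^{[0]})$ together with $c_r\sim\alpha^2$ shows that no other wavenumber has $c_i=0$ with the same $c_r$. In particular $c_r(k^2\alpha^2)\ne c_r(\alpha^2)$, by the monotonicity $\pa_{\alpha^2}c_r\sim1$ on the upper branch, and by the bounds from Theorem \ref{thm2} on the lower branch.
\item Odd modes have no eigenvalue in $\mathbb H$, by Remark \ref{rmk-oddmode}.
\item The $k=\pm1$ eigenvalue is algebraically simple by Theorem \ref{thm1}.
\end{itemize}
The modes $\pm\alpha^{[0]}$ are complex conjugate, so the real kernel is two-dimensional. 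Translation invariance in $\xi$ removes one direction, so we impose the phase condition $\langle\phi,\psi^*\rangle$ real, where $\psi^*$ is the adjoint eigenfunction.

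\textbf{Lyapunov--Schmidt reduction.} We set $\phi=s(\phi^{[1]}+w)$ with $w\perp\ker$, and solve the range equation for $w$ by the implicit function theorem. The remaining bifurcation equation is a single complex scalar equation $G(s,c,\nu)=0$, with $G(s,c,\nu)=s\,g(s,c,\nu)$. At leading order,
\[
  g(0,c,\nu)=\mu(c,\nu)\langle\phi_{\alpha^{[0]}},\psi^*\rangle,
\]
where $\mu$ is the perturbed eigenvalue of $\mathrm{Orr}_{\alpha^{[0]},\nu}-c$ near $0$. Solvability in $(c,\nu)$ via the implicit function theorem requires the real Jacobian
\[
  \frac{\pa(\Re g,\Im g)}{\pa(c,\nu)}
\]
to be invertible at $s=0$. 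We have $\pa_c\mu=-1$ and $\pa_\nu\mu=\pa_\nu c(\nu)$, so the determinant is, up to a nonzero factor, $\pa_\nu c_i(\nu^{[0]})$. This is nonzero by \eqref{est-ci-nu-low} and \eqref{est-ci-nu-up}, which is the transversality condition.

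\textbf{Expansion of the branch.} The implicit function theorem gives smooth functions $c(s)$ and $\nu(s)$ together with the solution. The symmetry $\xi\mapsto\xi+\pi/\alpha^{[0]}$ sends $s\mapsto-s$, so $c$ and $\nu$ are even in $s$. This yields the expansions $\nu_s=\nu^{[0]}+s^2\nu^{[2]}+\dots$ and $c_s=c^{[0]}_r+s^2c_r^{[2]}+\dots$. The second-order corrector $\phi^{[2]}\in H^4$ solves $\mathcal L\phi^{[2]}=-\mathcal N(\phi^{[1]})$. Its right-hand side has Fourier modes $0$ and $\pm2$ only, where $\mathcal L$ is invertible by the previous step. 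Elliptic regularity for $\nu\Delta^2$ then gives the stated $H^4$ regularity, with the higher-order terms absorbed into the remainder.

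\textbf{Main obstacle.} The delicate step is the $|k|\ge2$ nonresonance. The results above are formulated for a fixed neutral wavenumber, so excluding $c^{[0]}$ as a neutral eigenvalue at $k\alpha^{[0]}$ needs a quantitative comparison. For $k\ge2$ we would show either $c_i(k^2(\alpha^{[0]})^2)\ne0$, using the sign statements of Theorem \ref{thm2}, or that $c_r$ differs by a factor $\sim k^2$ by $c_r\sim\alpha^2$. If $k\alpha^{[0]}$ leaves $\mathbb H$, we fall back on \cite{AH2026}. The constants in $c_r\sim\alpha^2$ may be too crude to separate $c_r$ values when $k=2$; in that case we would use instead that on the lower branch $(2\alpha^{[0]})^2$ falls in the unstable band or beyond, where $c_i\neq0$.
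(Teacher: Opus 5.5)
Your overall strategy matches the paper's. It is a Lyapunov--Schmidt reduction around the real two-dimensional kernel spanned by $\Re$ and $\Im$ of $\phi_{\nu^{[0]},\alpha^{[0]}}(y)e^{i\alpha^{[0]}x}$, and the $2\times2$ bifurcation Jacobian in $(c,\nu)$ has determinant proportional to $\partial_\nu c_i(\nu^{[0]})$.

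\textbf{The gap.} Your first step is wrong as stated: the space of stream functions even in $y$ is \emph{not} invariant under the nonlinearity. If $\phi$ is even, then $\partial_y\phi\,\partial_x\omega-\partial_x\phi\,\partial_y\omega$ is odd in $y$. This is exactly the parity fact the paper uses in Lemma \ref{lem-nu1-cr1} to get $\nu^{[1]}=c_r^{[1]}=0$. Consequences:
\begin{itemize}
\item Your $\mathcal F$ does not map the restricted space into itself, so the reduction as you set it up solves the wrong equation.
\item The corrector $\phi^{[2]}$, which lives on the Fourier modes $0,\pm2$, is odd in $y$.
\item The channel reflection acts on stream functions by $\phi\mapsto-\phi(x,-y)$ and sends the T--S mode to its negative. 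The symmetry that actually fixes $\phi^{[1]}$ is the shift--reflection $\phi(x,y)\mapsto-\phi(x+\pi/\alpha^{[0]},-y)$.
\end{itemize}
The repair is to work in the full space, as the paper does. Inverting $\mathcal L_{c_r^{[0]},\nu^{[0]}}$ off the kernel then requires non-resonance for both parities at every $k\neq\pm1$, and for the odd parity at $k=\pm1$. So Remark \ref{rmk-odd-Wron} and the zero-mode computation are essential, not optional; your bullet list already contains these ingredients.

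You should also state the zero-mode boundary conditions. Full no-slip $\hat\phi(0,\pm1)=\hat\phi'(0,\pm1)=0$ fixes the flux. The paper instead imposes $\hat\phi(0,-1)=\hat\phi'(0,\pm1)=0$ and $\partial_y^2\hat\phi(0,1)=\partial_y^2\hat\phi(0,-1)$, so that no extra mean pressure gradient appears. Both choices give a trivial zero-mode kernel and solutions of \eqref{NS-pert-nonlinear}, but they are different problems.

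\textbf{Remaining differences, once the gap is fixed.} These are technical.
\begin{itemize}
\item You apply the implicit function theorem to $\mathcal L_{c,\nu}(\phi^{[1]}+w)+s\,\mathcal N(\phi^{[1]}+w)=0$. You get evenness of $c(s),\nu(s)$ from the half-period translation, which maps the solution at $s$ to the one at $-s$. This is valid and gives evenness to all orders.
\item The paper instead uses the exact ansatz $s\omega_1+s^2\omega^{[2]}$. It kills the first-order coefficients by $y$-parity against the even adjoint $\phi_1^*$, then solves for the $s$-dependent $(\nu^{[2]},c_r^{[2]},\omega^{[2]})$ by a contraction in a weighted product norm.
\item Your Fredholm-index-zero argument gives the inverse on $Ran(Q)$ in one stroke, replacing the mode-by-mode bound of Lemma \ref{lem-inver-Lcnu}. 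It needs the same spectral input.
\end{itemize}

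Your worry about $k=2$ is unnecessary. By uniqueness in Theorem \ref{thm2}, the only even eigenvalue in $\mathbb H$ at $k\alpha^{[0]}$ is $c(k^2(\alpha^{[0]})^2)$, and it is real only at the two neutral wavenumbers. So it could only match the other neutral point, whose $c_r$ is of a different order. This is the content of Remark \ref{rmk-spect-assum}, with the Almog--Helffer result covering the parameter ranges outside $\mathbb H$.
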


\begin{remark}
  In Theorem \ref{thm-bif}, if $c^{[0]}=c_r \left(\nu\right)|_{\nu=J_{\nu,-}(\alpha^{[0]})|\alpha^{[0]}|^7}$, then the bifurcation occurs on the lower branch. If $c^{[0]}=c_r \left(\nu\right)|_{\nu=J_{\nu,+}(\alpha^{[0]})|\alpha^{[0]}|^{11}}$, then the bifurcation occurs on the upper branch.
\end{remark}

\subsection{Notations}
Through this paper, we use the following notations.

We use $C$ to denote a positive constant that may be different from line to line, and use $f\lesssim g$ to denote
\begin{align*}
  f\le C g.
\end{align*}
We use $f\ll g$ to indicate $f\le \frac{1}{C}g$ for a sufficiently large constant $C$.

We use $f\sim g$ if there exists $C>0$ such that
\begin{align*}
  f\le C g, \text{ and }g\le C f.
\end{align*}
We use $f\approx g$ if 
\begin{align*}
  |f-g|\ll |f| \text{ or } |f-g|\ll |g|.
\end{align*}
For a function $f(y)$, we denote
\begin{align*}
  f'(y)=\pa_yf(y),\quad f''(y)=\pa_y^2f(y),\quad f{(k)}(y)=\pa_y^kf(y).
\end{align*}
 
\section{Main ideas and sketch of proof}
\subsection{Classical strategy for the spectral problem}
We briefly explain the main ideas and difficulties in the proof. The starting point is the classical strategy, going back to Heisenberg \cite{Heisenberg1924} and C. C. Lin \cite{LinCC1955}, for studying the Orr--Sommerfeld equation. One constructs four linearly independent
solutions $\left(\Phi_{1,\alpha},\Phi_{2,\alpha},\Phi_{3,\alpha},\Phi_{4,\alpha}\right)$ of the fourth-order Orr--Sommerfeld equation \eqref{eq-Orr-Sommerfeld} and writes a general solution as their linear combination
\begin{align*}
  \Phi(y)=B_1 \Phi_{1,\alpha}+B_2 \Phi_{2,\alpha}+B_3 \Phi_{3,\alpha}+B_4 \Phi_{4,\alpha}.
\end{align*}
For fixed parameters $(c,\alpha,\nu)$, the eigenvalue problem is reduced to whether there exists a nontrivial choice of the coefficients $B_j$ such that all boundary conditions in \eqref{bc-even} are satisfied. This gives a dispersion relation, written in the form of a Wronskian condition \eqref{eq-wron-cond}, and the zeros of this Wronskian give the Orr--Sommerfeld eigenvalues.

For small $\varepsilon$, the Orr--Sommerfeld equation
\eqref{eq-Orr-Sommerfeld} can be regarded as a perturbation of the inviscid
Rayleigh equation away from the critical layer. Here the critical layer refers to the region where $u_p(y)$ is close to the wave speed $c$, more precisely, $|u_p(y)-c|\lesssim\varepsilon^{\frac{1}{3}}$. Outside this region, the leading-order equation of \eqref{eq-Orr-Sommerfeld} is the Rayleigh equation
\begin{align}\label{eq-Ray-Heisen}
  \left(u_p-c\right)\left(\pa_y^2-\alpha^2\right)\phi_\alpha-u_p''\phi_\alpha=0.
\end{align}
Thus, two of the four solutions, $\Phi_{1,\alpha}$ and $\Phi_{2,\alpha}$, are constructed as perturbations of two linearly independent Rayleigh solutions $\phi_{1,\alpha}$ and $\phi_{2,\alpha}$ of \eqref{eq-Ray-Heisen}; these are the so-called slow modes. However, in the critical layer, where $\left|u_p(y)-c\right|\lesssim \varepsilon^{\frac{1}{3}}$, the Rayleigh equation becomes singular and the viscous term $i\varepsilon\left(\pa_y^2-\alpha^2\right)^2\phi_\alpha$ in the Orr--Sommerfeld equation cannot be neglected.

Inside the critical layer, the Orr--Sommerfeld equation is approximated by an Airy-type equation of the form
\begin{align}\label{eq-Airy-0}
  i\varepsilon\pa_y^4\phi(y)+\left(u_p-c-2i\varepsilon\alpha^2\right)\pa_y^2\phi(y)=f(y),\quad y\in[-1,0].
\end{align}
The other two independent solutions, $\Phi_{3,\alpha}$ and $\Phi_{4,\alpha}$, are therefore constructed from Airy-type solutions and are called fast modes. The mode $\Phi_{3,\alpha}$ decays exponentially from the wall to the center of the channel, namely from $y=-1$ to $y=0$, while $\Phi_{4,\alpha}$ grows exponentially in the same direction. As pointed out by Reid in the survey \cite{Reid1965}, viscous effects are expected to be small near the center of the channel, and the exponentially growing fast mode $\Phi_{4,\alpha}$ should not enter the leading-order matching. Since the boundary value of $\Phi_{3,\alpha}$ is exponentially small at $y=0$, the leading matching reduces to constructing an inviscid Rayleigh solution
\begin{align}\label{eq-Phi-Ray}
  \Phi_{Ray},\text{ with }\Phi_{Ray}'(0)=0,
\end{align}
and matching it with the decaying fast mode $\Phi_{3,\alpha}$  at the wall $y=-1$
\begin{align*}
  \left.\frac{\Phi_{Ray}'(y)}{\Phi_{Ray}(y)}\right|_{y=-1}=\left.\frac{\Phi_{3,\alpha}'(y)}{\Phi_{3,\alpha}(y)}\right|_{y=-1}.
\end{align*}
This is the mechanism underlying Lin's asymptotic derivation of the neutral
curve.

In C. C. Lin's work \cite{LinCC1955}, the boundary values of these linearly independent solutions were derived through formal asymptotic expansions. In \cite{GGN16adv}, Grenier--Guo--Nguyen introduced the Rayleigh--Airy iteration scheme and gave a rigorous construction of the four linearly independent solutions for $c_i>0$. They then proved the existence of unstable T--S modes for symmetric shear flows in a channel.

\subsection{Boundary-adapted iterations and the reduced dispersion relation}
We construct the slow modes by a Rayleigh--Airy iteration and the fast modes by two distinct Airy iterations. In both constructions, the solution operators are adapted to the relevant boundary conditions. The resulting exact zero entries in the dispersion matrix \eqref{eq-matrix-scale} play a important role in reducing the full dispersion relation to a perturbation of the matching condition between $\Phi_{1,\alpha}$ and $\Phi_{3,\alpha}$.

\subsubsection{A boundary-adapted Rayleigh--Airy iteration for the slow modes}
The slow modes $\Phi_{1,\alpha}$ and $\Phi_{2,\alpha}$ are constructed
within the framework of the Rayleigh--Airy iteration introduced in
\cite{GGN16adv}. The basic idea is to decompose the Orr--Sommerfeld
operator in two complementary ways:
\begin{align}\label{eq-OS-decomp-intro}
  Orr(\phi)=Ray_{\alpha}(\phi)+Diff(\phi)=Airy_4(\phi)+Reg(\phi),
\end{align}
where $Ray_{\alpha}$, $Diff$, $Airy_4$, and $Reg$ are defined in \eqref{eq-OS-decom-ray}-\eqref{eq-OS-decom-reg}. Starting from a homogeneous Rayleigh solution, one corrects the defect generated by $Diff$ through an inhomogeneous Airy equation. The resulting Airy correction generates a $Reg$ defect, which is then corrected through an inhomogeneous Rayleigh equation. Repeating this procedure yields a solution of the Orr--Sommerfeld equation.

A important step in the Rayleigh--Airy iteration is the construction of a suitable solution operator $AirySolver$ for $Airy_4$. This operator is constructed using the approximate Green function \eqref{eq-Green-Airy}. The Green function \eqref{eq-Green-Airy} consists of modified Airy functions, modified primitive Airy functions, and the associated nonlocal terms. In contrast to \cite{GGN16adv}, where the modified primitive Airy functions are defined from infinity, we define \eqref{eq-Airy-modi-1}--\eqref{eq-Airy-modi-2} from the boundary. This boundary-adapted construction ensures that
\begin{align}\label{eq-AirySolver-center-intro}
  \left.\pa_y AirySolver(f)\right|_{y=0}=0.
\end{align}
The Rayleigh solution operator $RaySolver_{\alpha}$ used in the iteration, as in \cite{GGN16adv}, satisfies 
\begin{align}\label{eq-RaySolver-center-intro}
  \left.\pa_y RaySolver_{\alpha}(f)\right|_{y=0}=0.
\end{align}
Consequently, every correction generated by the Rayleigh--Airy iteration has zero first derivative at the centerline.

Since the leading Rayleigh solution $\phi_{1,\alpha}$ constructed in Lemma \ref{lem-sol-Ray-hom} satisfies $\phi_{1,\alpha}'(0)=0$, this condition is preserved throughout the iteration, and hence $\Phi_{1,\alpha}'(0)=0$. Thus, $\Phi_{1,\alpha}$ is the viscous slow mode associated with the inviscid mode $\Phi_{Ray}$ used in \cite{LinCC1955,Reid1965}. For the same reason, we also have $\Phi_{2,\alpha}'(0)=\phi_{2,\alpha}'(0)=\frac{1}{1-\hat c}$.

A further difference from \cite{GGN16adv} is that our analysis includes the neutral case $c_i=0$, where the Rayleigh equation has a genuine critical-layer singularity. In \eqref{eq-OS-decom-ray}--\eqref{eq-OS-decom-diff}, we replace $c$ in the Rayleigh operator by the regularized parameter $\hat c=c+ic_0$ with $c_0>0$, and include the compensating term $ic_0(\pa_y^2-\alpha^2)\phi$ in $Diff(\phi)$. Such a regularization was previously used in Chen--Wu--Zhang \cite{CWZ2026}. 

In this paper, we choose $c_0=\varepsilon^{\frac{1}{2}}$. This choice balances two effects. On the one hand, a larger $c_0$ gives a stronger regularization of the critical-layer singularity in the Rayleigh solver, so that only a few principal correction terms in the Rayleigh--Airy iteration need to be treated sharply. On the other hand, $c_0$ should be small enough, since the regularization introduces the additional term $ic_0(\partial_y^2-\alpha^2)\phi$. The correction terms generated by this artificial term must remain lower-order contributions in the dispersion relation and be absorbed into the residual terms. The choice $c_0=\varepsilon^{\frac{1}{2}}$ satisfies both requirements: it sufficiently regularizes the Rayleigh solver while keeping the regularization error negligible in the leading-order matching.
\subsubsection{Boundary-adapted Airy iterations for the fast modes}
Unlike in \cite{GGN16adv}, where the fast modes are constructed through the Rayleigh--Airy iteration, we construct $\Phi_{3,\alpha}$ and $\Phi_{4,\alpha}$ directly by two distinct Airy iterations. Each iteration is based on a different boundary-adapted approximate Green function chosen to preserve the boundary behavior of the corresponding mode.

For $\Phi_{3,\alpha}$, we use the approximate Green function \eqref{eq-Green-Airy-}. In contrast to \eqref{eq-Green-Airy}, which is used in the construction of the slow modes, all nonlocal terms are assigned to the $x>y$ side. These two Green functions satisfy the same approximate Green-function equation \eqref{eq-Green-app}, but their different structures lead to solution operators with different boundary properties.

Starting from the unified exponentially decaying primitive Airy function $\phi_{A,1}$ in \eqref{eq-unifi-prim-Airy}, we apply the corresponding Airy iteration. The one-sided structure of \eqref{eq-Green-Airy-} preserves the exponential decay from the wall $y=-1$ toward the centerline $y=0$ throughout the iteration, see Lemma \ref{lem-AS--0}. We thereby obtain a fast mode $\Phi_{3,\alpha}$ satisfying
\begin{align}\label{eq-Phi3-center-intro}
  \Phi_{3,\alpha}'(0)=0,
  \qquad
  \left|\Phi_{3,\alpha}'''(0)\right|
  \lesssim
  e^{-\varepsilon^{-\frac13}}.
\end{align}

For $\Phi_{4,\alpha}$, we use the approximate Green function \eqref{eq-Green-Airy+}, in which the nonlocal terms are assigned to the $x<y$ side. The corresponding Airy iteration preserves the wall conditions and yields
\begin{align*}
  \Phi_{4,\alpha}(-1)=\Phi_{4,\alpha}'(-1)=0.
\end{align*}
\subsubsection{The reduced dispersion relation}
The boundary-adapted construction of the four independent solutions gives the dispersion matrix the special structure described in \eqref{eq-matrix-scale}. In particular, $\Phi_{4,\alpha}'''(0)\sim\kappa^{\frac92}$ is much larger than $\Phi_{4,\alpha}'(0)\sim\kappa^{\frac32}$, so that expansion along the fourth column reduces the leading part of the full determinant to a $3\times3$ minor. The exact identities $\Phi_{1,\alpha}'(0)=\Phi_{3,\alpha}'(0)=0$ then reduce this minor to the $2\times2$ Wronskian involving $\Phi_{1,\alpha}$ and $\Phi_{3,\alpha}$. After normalization, the remaining lower-order contributions are absorbed into a residual term. It therefore suffices to study the zero set of the reduced dispersion function $F=F_r+iF_i$, given by
\begin{align}\label{eq-F-intr}
  F(c_r,c_i,\alpha^2,\nu)=\frac{\Phi_{1,\alpha}(-1)}{\Phi_{1,\alpha}'(-1)}-  \frac{\Phi_{3,\alpha}(-1)}{\Phi_{3,\alpha}'(-1)}+Res(c_r,c_i,\alpha^2,\nu).
\end{align}
Thus, the leading dispersion relation is reduced to the wall matching condition between the first slow mode $\Phi_{1,\alpha}$ and the decaying fast mode $\Phi_{3,\alpha}$. This reduced form provides the basis for the subsequent analysis of the neutral curve and its parameter derivatives. 

\subsection{Eigenvalue curves and the transversal crossing condition}
\subsubsection{Main difficulties}
We next describe the main analytic difficulty in the study of the eigenvalue curve. For fixed $\alpha$, the eigenvalue curve $c(\nu)$ is determined by the dispersion relation
\begin{align*}
  F\left(c_r(\nu),c_i(\nu),\alpha^2,\nu\right)=0.
\end{align*}
In order to apply the Hopf bifurcation theory and prove the existence of
traveling waves, we need to verify the transversal crossing condition. This
requires derivative estimates for $c(\nu)$, such as \eqref{est-ci-nu-low} and \eqref{est-ci-nu-up}. These estimates are obtained from the implicit function theorem applied to the above dispersion relation, see \eqref{eq-dervi-c-nu}. Therefore, we need rather precise information on the Wronskian function $F\left(c_r(\nu),c_i(\nu),\alpha^2,\nu\right)$ and its parameter derivatives.

We explain the difficulty by considering the upper branch of the neutral curve. In this regime, it holds that $c_r\sim \varepsilon^{\frac{1}{5}}$. If we only keep the leading Rayleigh part $\phi_{1,\alpha}$ of $\Phi_{1,\alpha}$  and the leading Airy part $\phi_{A,1}$ of $\Phi_{3,\alpha}$, then the dispersion relation takes the form
\begin{align*}
  F(c_r,c_i,\alpha^2,\nu)=&\frac{\phi_{1,\alpha}(-1)}{\phi_{1,\alpha}'(-1)}-  \frac{\phi_{A,1}(-1)}{\phi_{A,1}'(-1)}+res\\
  =&\underbrace{\frac{-c_r-ic_i+\frac{4}{15}\alpha^2-\frac{8\pi}{15^2}\alpha^4i}{2}}_{\text{contribution from }\phi_{1,\alpha}} +\underbrace{\varepsilon^{\frac{1}{2}}c_r^{-\frac{1}{2}}e^{\frac{\pi}{4}i}}_{\text{contribution from }\phi_{A,1}}+res,
\end{align*}
where $res$ denotes the residual terms produced by the full Rayleigh--Airy
iteration.

The real part of the leading-order relation gives $c_r\sim\alpha^2$. For this purpose, it is enough to prove $|res|\ll c_r$. However, the imaginary part is much more delicate. On the neutral curve $c_i=0$, the imaginary part of the slow-mode contribution is of size $\alpha^4$,  while the imaginary part of the fast-mode contribution is of size $\varepsilon^{\frac{1}{2}}c_r^{-\frac{1}{2}}$. Balancing these two terms gives $c_r\sim \varepsilon^{\frac{1}{5}}$, which corresponds to the upper branch in Lin's classical asymptotic analysis \cite{LinCC1955}. To justify this balance rigorously, one needs the stronger estimate $|res|\ll c_r^2\sim \varepsilon^{\frac{2}{5}}$. In other words, the leading terms $\phi_{1,\alpha}$ and $\phi_{A,1}$ must indeed dominate the wall boundary values of the full modes  $\Phi_{1,\alpha}$ and $\Phi_{3,\alpha}$, and all correction terms generated by the Rayleigh--Airy iteration for the slow modes and by the Airy iteration for the fast modes must be sufficiently small at the boundary. 

The derivative estimates are even more delicate. To obtain the crossing condition, we need sharp bounds for derivatives of $F$, for instance $\partial_{c_r}F$, and we must show that these derivatives are still governed by the leading terms displayed above. For example, $\Im\pa_{c_r} \left(\frac{\phi_{A,1}(-1)}{\phi_{A,1}'(-1)}\right)\sim -\varepsilon^{\frac{1}{2}}c_r^{-\frac{3}{2}}$, the size of which is larger than $\Im\frac{\phi_{A,1}(-1)}{\phi_{A,1}'(-1)}\sim \varepsilon^{\frac{1}{2}}c_r^{-\frac{1}{2}}$ by a factor of order $c_r^{-1}\sim\varepsilon^{-\frac{1}{5}}$ on the upper branch. Since the residual $res$ consists of many terms produced by the Rayleigh--Airy iteration, a crude estimate would typically lose a much larger factor after differentiation, for instance of order $\varepsilon^{-\frac{1}{3}}$ or worse. Thus, even if $res$ itself is small enough, it does not automatically follow that its derivatives are also negligible. This is the main reason why a fine structural analysis of the residual terms is required.

\subsubsection{Refined estimates of the iteration terms}
To overcome the difficulties described above, we need a more refined analysis of the correction terms generated in the iteration. We illustrate the idea by considering the first slow mode. The first Airy correction to the Rayleigh solution is
\begin{align}\label{eq-psi1-intro}
  \psi^{[1]}_{1,\alpha}=-AirySolver \left(Diff(\phi_{1,\alpha})\right).
\end{align}
Recall that $AirySolver$ is constructed from the approximate
Green function \eqref{eq-Green-Airy}. The main difficulty in estimating
\eqref{eq-psi1-intro} comes from the nonlocal terms $a_1(x)$ and $a_2(x)$ in \eqref{eq-Green-Airy}, defined in \eqref{eq-a1} and \eqref{eq-a2}.  Unlike the local Airy terms, these nonlocal contributions do not enjoy
strong exponential decay. In \cite{GGN16adv}, it is sufficient to estimate such terms by upper bounds. In the present work, however, the derivative estimates required for the crossing condition force us to use more detailed structural properties of $a_1(x)$ and $a_2(x)$.

We first derive more precise expansions of the modified primitive Airy
functions in Lemma \ref{lem:pri-Airy-expan}. Based on these expansions, we decompose $a_1(x)$ as
\begin{align}\label{eq-a1-exp-intro}
  a_1(x)=a_{1,M}(x)+a_{1,SecM}(x)+a_{1,B}(x)+a_{1,R}(x),
\end{align}
Here the leading term is 
\begin{align*}
  a_{1,M}(x)=\frac{i\frac{e^{\frac{\pi}{2}i}}{2} \mathrm{Hi}(e^{-\frac{\pi}{2}i}\kappa \eta(x))}{\kappa\left(\eta_r'(x)\right)^{\frac{3}{2}}},
\end{align*}
where $\mathrm{Hi}$ is the Scorer function and $\eta$ is the Langer
transform. Similarly, we obtain an expansion of $a_2(x)$, whose leading term involves the second derivative of the Scorer function. These explicit leading-order formulas lead to slightly sharper
upper bounds for $a_1(x)$ and $a_2(x)$ than those used in
\cite{GGN16adv}. More importantly, they allow us to exploit the 
analytic properties of $\mathrm{Hi}$.

A typical difficult contribution in $\psi^{[1]}_{1,\alpha}$ can be reduced to an integral of the following form; see Section \ref{ssub:accurate_estimate_of_psi_1_alpha_a},
\begin{align*}
  |Int|\eqdef&\left|c_r\alpha^2\int^{2y_c+1}_{-1} \mathrm{Hi}''\left(e^{-\frac{\pi}{2}i}\varepsilon^{-\frac{1}{3}} \left(x-y_c-ic_i\right)\right)\frac{1}{x-\left(y_c+ic_i+ic_0\right)}dx\right|,
\end{align*}
where $y_c\approx-1+\frac{c_r}{2}$ denotes the location of the real critical layer such that $u_p(y_c)=c_r$. A direct estimate obtained by integrating the absolute value
of the integrand over the real interval gives only the rough bound
\begin{align*}
  |Int|\lesssim c_r\alpha^2 \left|\ln \varepsilon\right|\sim \varepsilon^{\frac{2}{5}}\left|\ln \varepsilon\right|.
\end{align*}
This is not sufficient, since the leading contribution to $F_i$ is of size $\alpha^4\sim \varepsilon^{\frac{2}{5}}$.

Instead, we use the analyticity of $\mathrm{Hi}$ and deform the contour from the real interval
\begin{align*}
  x\in [-1,2y_c+1]
\end{align*}
to the circular arc
\begin{align*}
  x=y_c+e^{i\theta}(y_c+1),\quad \theta\in [-\pi,0].
\end{align*}
The pole of the denominator is located at $x=y_c+i(c_i+c_0)$. Since $c_0+c_i>0$, this pole lies in the upper half-plane. Hence it is
outside the domain enclosed by the real interval and the lower circular arc, and the contour deformation is admissible. 

Using the asymptotic expansion of $\mathrm{Hi}''$ in the relevant sector,
see \eqref{eq-expan-Hi''}, we obtain
\begin{align*}
  |Int|=&\left|c_r\alpha^2\int^{0}_{-\pi} \frac{\mathrm{Hi}''\left(e^{-\frac{\pi}{2}i}\varepsilon^{-\frac{1}{3}} \left(e^{i\theta}(y_c+1)-ic_i\right)\right)ie^{i\theta}(y_c+1)}{ y_c+e^{i\theta}(y_c+1)  -\left(y_c+ic_i+ic_0\right)}d\theta\right|\lesssim \frac{c_r\alpha^2}{\left\langle \varepsilon^{-\frac{1}{3}} c_r \right\rangle^{3}}\ll \varepsilon^{\frac{2}{5}}.
\end{align*}
Thus this contribution is indeed an error term compared with the leading terms in the imaginary part of the dispersion relation. See Lemma \ref{lem-acc-psi-1-a} for details.

The explicit expansions of the nonlocal terms are also crucial for parameter derivatives. Away from the critical point, the same contour deformation argument gives the desired smallness for the differentiated correction terms. Near the critical point, however, this argument no longer gives the same gain. As shown in \eqref{est-acc-psi-1-a-cr-2}, the $L^\infty$-estimate of $\pa_{c_r}\psi^{[1]}_{1,\alpha}$ loses a factor of order $\varepsilon^{-\frac13}$ compared with that of $\psi^{[1]}_{1,\alpha}$. The key point is that this loss occurs only in a narrow region near $y_c$. Consequently, the boundary estimate at $y=-1$, see \eqref{est-acc-psi-1-a-cr-1}, and the $L^1$ estimate \eqref{est-acc-psi-1-a-cr-3} are strong enough to control the wall boundary value of the next Rayleigh correction $\pa_{c_r}\phi^{[1]}_{1,\alpha}$. Therefore, the differentiated correction terms remain lower order in the dispersion relation and do not affect the leading-order estimates needed for the transversal crossing condition. See Lemma \ref{lem-acc-psi-1-a-cr} for details.

Another difficulty comes from the boundary terms arising from the nonlocal part. For example, one such term is
\begin{align}\label{eq-def-a1B-intro}
  a_{1,B}(x)
  =&-\frac{\mathcal A_1(1,\kappa \eta(0))}
  {\kappa\left(\eta_r'(0)\right)^{\frac{3}{2}}}
  \mathrm{Ai}\left(e^{\frac{5\pi}{6}i}\kappa \eta(x)\right) +\frac{\mathcal A_2(1,\kappa \eta(-1))}
  {\kappa\left(\eta_r'(-1)\right)^{\frac{3}{2}}}
  \mathrm{Ai}\left(e^{\frac{\pi}{6}i}\kappa \eta(x)\right).
\end{align}
Away from the endpoints $0$ and $-1$, this boundary term has strong decay and is much smaller than the leading term $a_{1,M}(x)$. However, its parameter derivatives are more delicate. If one estimates, for instance,
\begin{align*}
  \pa_{c_r}\left(\mathcal A_2(1,\kappa \eta(-1))\mathrm{Ai}\left(e^{\frac{\pi}{6}i}\kappa \eta(x)\right)\right)
\end{align*}
by a direct product-rule bound, then
\begin{align*}
  &\left|\pa_{c_r}\left(\mathcal A_2(1,\kappa \eta(-1))\mathrm{Ai}\left(e^{\frac{\pi}{6}i}\kappa \eta(x)\right)\right)\right|\\
  \lesssim\;&
  \left|\pa_{c_r}\mathcal A_2(1,\kappa \eta(-1))\right|
  \left|\mathrm{Ai}\left(e^{\frac{\pi}{6}i}\kappa \eta(x)\right)\right|
  +\left|\mathcal A_2(1,\kappa \eta(-1))\right|
  \left|\pa_{c_r}\mathrm{Ai}\left(e^{\frac{\pi}{6}i}\kappa \eta(x)\right)\right|.
\end{align*}
This estimate is too large near $x=-1$.

The key point is that the boundary terms are not arbitrary Airy terms. They are formed by special pairs of Airy-type functions, one growing and one decaying. After taking parameter derivatives, the asymptotic expansions of these Airy functions reveal a cancellation structure within such pairs. This gives much better estimates than the direct product-rule bound. The precise estimates are given in Lemma \ref{lem-d-cr-a1B}.

This cancellation mechanism is also important for other parameter derivatives, such as $\pa_\nu$-estimates. Since the Airy-type functions used in this paper always appear in such paired growing--decaying combinations, this structure allows us to control the derivatives of the Airy terms throughout the argument.
 
\subsection{Organization of the paper}
The remainder of the paper is organized as follows. In Section 3, we study the Rayleigh problem. We construct the operator $RaySolver_{\alpha}$, build two linearly independent solutions \(\phi_{1,\alpha}\) and \(\phi_{2,\alpha}\) of the Rayleigh equation, and derive derivatives estimates for $\frac{\phi_{1,\alpha}(-1)}{\phi_{1,\alpha}'(-1)}$, which gives the Rayleigh contribution to the dispersion relation.

In Section 4, we study the Airy problem. We introduce the Langer transform, construct the approximate Green function, derive precise expansions of the associated Airy-type functions, and establish estimates for the modified and exact $AirySolver$ in suitable weighted spaces. These estimates will be used to control the viscous corrections in the Orr--Sommerfeld construction. In Section 5, we present the general strategy for constructing solutions of the Orr--Sommerfeld equation.

In Section 6, we construct the two slow modes. Starting from the Rayleigh solutions $\phi_{1,\alpha}$ and $\phi_{2,\alpha}$, we use the Rayleigh--Airy iteration to obtain Orr--Sommerfeld solutions $\Phi_{1,\alpha}$ and $\Phi_{2,\alpha}$. We then derive refined estimates for the correction terms and for the boundary expansion of $\Phi_{1,\alpha}$ at $y=-1$. This is one of the main technical parts of the paper, and it provides the slow-mode contribution to the dispersion relation.

In Section 7, we construct the two fast modes. We introduce two boundary-adapted approximate Green functions for the Airy equation and use the corresponding Airy iterations to construct $\Phi_{3,\alpha}$ and $\Phi_{4,\alpha}$. We also derive the boundary expansions of the fast modes, in particular the expansion of $\frac{\Phi_{3,\alpha}(-1)}{\Phi_{3,\alpha}'(-1)}$. 

In Section 8, we analyze the dispersion relation and prove the main spectral results. We first fix \(\varepsilon\) and locate the possible spectral points in the T--S eigen region \(\mathbb H\). We then study, respectively, the eigenvalue curve \(c(\nu)\) for fixed \(\alpha\) and the eigenvalue curve \(c(\alpha^2)\) for fixed \(\nu\). In particular, we obtain the key derivative estimates at the lower and upper branches of the neutral curve. 

Finally, in Section 9, using the spectral information obtained above, we prove the existence of traveling-wave solutions bifurcating from the plane Poiseuille flow.

\section{The Rayleigh equation}
In this section, we consider the following Rayleigh equation
\begin{align}\label{eq-Ray}
  \left(u_p-\hat c\right)\left(\pa_y^2-\alpha^2\right)\phi(y)-u_p''\phi(y)=f(y),\quad y\in[-1,0].
\end{align}
Here $\hat c=c_r+i\hat c_i$. Throughout this section, we assume that $|\hat c|\ll1$, $0<\hat c_i\lesssim c_r$, $\left|\ln \hat c_i\right|\sim \left|\ln \hat c\right|$, and $|\hat c|\ll \frac{1}{\left|\ln \hat c_i\right|}$. 

Note that we do not impose boundary conditions on \eqref{eq-Ray}.

We define the Rayleigh operator $Ray_\alpha$ by
\begin{align}\label{eq-op-Ray-alpha}
  Ray_\alpha \left(\phi\right)=\left(u_p-\hat c\right)\left(\pa_y^2-\alpha^2\right)\phi(y)-u_p''\phi(y).
\end{align}

\subsection{Homogeneous solutions for $\alpha=0$}
There are two linearly independent solutions of \eqref{eq-Ray} with $\alpha=0$ and $f=0$:
\begin{align}
  \phi_{1,0}(y)=\left(u_p-\hat c\right);\label{eq-phi10}\\
  \phi_{2,0}(y)=\left(u_p-\hat c\right)\int_0^y \frac{1}{\left(u_p-\hat c\right)^2}  dx.\label{eq-phi20}
\end{align}

A direct computation gives
\begin{align*}
  \phi_{2,0}(y)=\frac{y}{2(1-\hat c)}+\frac{u_p-\hat c}{4(1-\hat c)^{\frac{3}{2}}}\ln{\left(\frac{\sqrt{1-\hat c}+y}{\sqrt{1-\hat c}-y}\right)},
\end{align*}
\begin{align*}
  \phi_{2,0}'(y)=\frac{1+\phi_{2,0}(y)\phi_{1,0}'(y)}{\phi_{1,0}(y)}=\frac{1}{1-\hat c}-\frac{y}{2(1-\hat c)^{\frac{3}{2}}}\ln{\frac{(\sqrt{1-\hat c}+y)}{(\sqrt{1-\hat c}-y)}}.
\end{align*}
\begin{align*}
  \phi_{2,0}'''(y)=-\frac{1}{(1-\hat c)^{\frac{3}{2}}} \left( \frac{1}{(\sqrt{1-\hat c}+y)}+ \frac{1}{(\sqrt{1-\hat c}-y)}\right)-\frac{y}{2(1-\hat c)^{\frac{3}{2}}}\left(- \frac{1}{(\sqrt{1-\hat c}+y)^2}+ \frac{1}{(\sqrt{1-\hat c}-y)^2}\right).
\end{align*}
Here and below, the square root is chosen so that 
$\sqrt{1-\hat c}\to 1$ as $\hat c\to0$, and the logarithm is taken on the corresponding branch.

At the endpoints $y=-1$ and $y=0$, we have
\begin{equation}\label{eq-phi0-boundary}
  \begin{aligned}    
   \phi_{1,0}(-1)=-\hat c,\quad \phi_{1,0}'(-1)=2,\quad \phi_{1,0}(0)=1-\hat c,\quad \phi_{1,0}'(0)=0;\\ 
  \phi_{2,0}(-1)=\frac{-1}{2(1-\hat c)}+\frac{-\hat c}{4(1-\hat c)^{\frac{3}{2}}}\ln{\frac{(\sqrt{1-\hat c}-1)}{(\sqrt{1-\hat c}+1)}},\\ 
  \phi_{2,0}'(-1)=\frac{1+\phi_{2,0}(-1)\phi_{1,0}'(-1)}{\phi_{1,0}(-1)}=\frac{1}{1-\hat c}+\frac{1}{2(1-\hat c)^{\frac{3}{2}}}\ln{\frac{(\sqrt{1-\hat c}-1)}{(\sqrt{1-\hat c}+1)}},\\ 
  \phi_{2,0}(0)=0,\quad \phi_{2,0}'(0)=\frac{1}{1-\hat c}.   
  \end{aligned}
\end{equation}
\subsection{Non-homogeneous solutions for $\alpha=0$}
Using $\phi_{1,0}$ and $\phi_{2,0}$, we define the Green function $G_{R,0}(x,y)$ by
\begin{equation}
  G_{R,0}(x,y)=-\frac{1}{u_p(x)-\hat c}\left\{
    \begin{array}{ll}
      \phi_{1,0}(y)\phi_{2,0}(x),&x<y;\\ 
      \phi_{1,0}(x)\phi_{2,0}(y),&x>y,
    \end{array}
  \right.
\end{equation}
which satisfies
\begin{align*}
  Ray_0G_{R,0}(x,y)=\left(u_p(y)-\hat c\right) \pa_y^2G_{R,0}(x,y)-u_p''G_{R,0}(x,y)=\delta_x(y). 
\end{align*}
Then we define the solution operator for $Ray_0$ by
\begin{equation}\label{eq-op-solver-Ray0}
  \begin{aligned}    
  RaySolver_0(f)=&\int^0_{-1}G_{R,0}(x,y)f(x)dx \\
  =&-\phi_{1,0}(y)\int^y_{-1}\phi_{2,0}(x)\frac{f(x)}{u_p(x)-\hat c}dx-\phi_{2,0}(y)\int^0_{y}\phi_{1,0}(x)\frac{f(x)}{u_p(x)-\hat c}dx.    
  \end{aligned}
\end{equation}

We shall use the following weighted norms:
\begin{align*}
  \left\|f\right\|_{X_k}=&\sum_{j=0}^k\left\|(u_p-\hat c)^j\pa_y^j f\right\|_{L^\infty},\text{ for }k\ge1,\\
  \left\|f\right\|_{Y_k}=&\left\|f\right\|_{L^\infty}+\sum_{j=1}^k\left\|(u_p-\hat c)^{j-1}\pa_y^j f\right\|_{L^\infty},\text{ for }k\ge1.
\end{align*}
These norms are adapted to the possible degeneracy of $u_p-\hat c$ near the critical point.

\begin{lemma}\label{lem-Raysolver0}
  For $f\in X_2$, the following estimates hold:
  \begin{align*}
    \left\|RaySolver_0(f)\right\|_{Y_2}\lesssim&\left|\ln \hat c_i\right|\left\|f\right\|_{L^\infty},\\
    \left\|RaySolver_0(f)\right\|_{Y_3}\lesssim&\left|\ln \hat c_i\right| \left\|f\right\|_{L^\infty}+\left\|\left(u_p(y)-\hat c\right)\pa_yf\right\|_{L^\infty},\\
    \left\|RaySolver_0(f)\right\|_{Y_4}\lesssim&\left|\ln \hat c_i\right| \left\|f\right\|_{L^\infty}+\left\|\left(u_p(y)-\hat c\right)\pa_yf\right\|_{L^\infty}+\left\|\left(u_p(y)-\hat c\right)^2\pa_y^2f\right\|_{L^\infty},
  \end{align*}
  and
  \begin{align*}
    \pa_yRaySolver_0(f)(0)=0.
  \end{align*}
  If, in addition, $\pa_y f(0)=0$, then
  \begin{align}\label{eq-Ray-0-3-0}
    \pa_y^3RaySolver_0(f)(0)=0.
  \end{align}
\end{lemma}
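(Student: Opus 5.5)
We work directly from the explicit representation \eqref{eq-op-solver-Ray0}, writing $\phi=RaySolver_0(f)=-\phi_{1,0}(y)I_1(y)-\phi_{2,0}(y)I_2(y)$ with
\begin{align*}
I_1(y)=\int_{-1}^y\frac{\phi_{2,0}f}{u_p-\hat c}dx,\qquad I_2(y)=\int_y^0\frac{\phi_{1,0}f}{u_p-\hat c}dx=\int_y^0 f\,dx.
\end{align*}
The boundary facts come first. Since $I_1'=\phi_{2,0}f/(u_p-\hat c)$ and $I_2'=-f$, the integrand contributions cancel in the derivative because $\phi_{1,0}\phi_{2,0}-\phi_{2,0}\phi_{1,0}=0$. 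Hence $\phi'=-\phi_{1,0}'I_1-\phi_{2,0}'I_2$. At $y=0$ we have $\phi_{1,0}'(0)=0$ and $I_2(0)=0$, so $\phi'(0)=0$.

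For the third derivative, differentiate again and use the Wronskian identity $\phi_{1,0}\phi_{2,0}'-\phi_{1,0}'\phi_{2,0}=1$. This gives $\phi''=-\phi_{1,0}''I_1-\phi_{2,0}''I_2+f/(u_p-\hat c)$, which is just the equation itself. Since $\phi_{j,0}''=u_p''\phi_{j,0}/(u_p-\hat c)$, this reads $\phi''=(u_p''\phi+f)/(u_p-\hat c)$. Differentiating once more,
\begin{align*}
\phi'''=\frac{u_p''\phi'+f'}{u_p-\hat c}-\frac{u_p'(u_p''\phi+f)}{(u_p-\hat c)^2},
\end{align*}
using $u_p'''=0$. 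At $y=0$ we have $u_p'(0)=0$ and $\phi'(0)=0$, so $\phi'''(0)=f'(0)/(1-\hat c)$, which vanishes when $\partial_yf(0)=0$. This proves \eqref{eq-Ray-0-3-0}.

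For the $L^\infty$ bound, note that $\phi_{1,0}=u_p-\hat c$ is bounded and $|I_2|\le\|f\|_{L^\infty}$. From the formula for $\phi_{2,0}$, the function $\phi_{2,0}$ is bounded on $[-1,0]$, since $(u_p-\hat c)\ln(\cdot)$ is bounded near $y=-1$. Then $|I_1|\lesssim\|f\|_{L^\infty}\int_{-1}^0|u_p-\hat c|^{-1}dx\lesssim|\ln\hat c_i|\,\|f\|_{L^\infty}$, because $|u_p-\hat c|\gtrsim|x-y_c|+\hat c_i$ near the critical point $y_c$ close to $-1$.

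For $\phi'$, we use $|\phi_{2,0}'|\lesssim 1+|\ln\hat c|\lesssim|\ln\hat c_i|$, read off from the explicit formula. This gives $|\phi'|\lesssim|\ln\hat c_i|\,\|f\|_{L^\infty}$.

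For the $Y_2$ term, the identity above gives $(u_p-\hat c)\phi''=u_p''\phi+f$, so $\|(u_p-\hat c)\phi''\|_{L^\infty}\lesssim|\ln\hat c_i|\,\|f\|_{L^\infty}$.

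For $Y_3$, multiply the displayed formula for $\phi'''$ by $(u_p-\hat c)^2$:
\begin{align*}
(u_p-\hat c)^2\phi'''=(u_p-\hat c)(u_p''\phi'+f')-u_p'(u_p''\phi+f).
\end{align*}
Every term is controlled by the previous bounds together with $\|(u_p-\hat c)f'\|_{L^\infty}$.

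For $Y_4$, differentiate once more, multiply by $(u_p-\hat c)^3$, and use the bound just obtained on $(u_p-\hat c)^2\phi'''$. The only new term is $(u_p-\hat c)^2f''$.

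The main obstacle is only bookkeeping: verifying the uniform bounds on $\phi_{2,0}$ and $\phi_{2,0}'$ from the explicit logarithmic formulas near $y=-1$, and confirming that the logarithmic integral $\int|u_p-\hat c|^{-1}$ indeed yields $|\ln\hat c_i|$ under the standing assumptions $|\ln\hat c_i|\sim|\ln\hat c|$ and $\hat c_i\lesssim c_r$.
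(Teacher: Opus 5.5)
Your proposal is correct and follows essentially the same route as the paper. The explicit Green-function representation gives the $L^\infty$ and $\pa_y$ bounds, the identity $(u_p-\hat c)\phi''=u_p''\phi+f$ and its differentiated forms give the weighted higher-derivative bounds, and $\phi_{1,0}'(0)=u_p'(0)=0$ gives the centerline identities; the only cosmetic differences are that you derive the equation from the Wronskian $\phi_{1,0}\phi_{2,0}'-\phi_{1,0}'\phi_{2,0}=1$ rather than quoting it, and that $\sup|\phi_{2,0}'|$ is really of order $|\ln\hat c_i|$ (attained near $y_c$, not $1+|\ln\hat c|$), which is harmless under the standing assumption $|\ln\hat c_i|\sim|\ln\hat c|$.
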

\begin{proof}
 We first note that $|\hat c|\ll 1$ and
  \begin{align*}
    \left(u_p-\hat c\right)\ln{\left(\frac{\sqrt{1-\hat c}+y}{\sqrt{1-\hat c}-y}\right)}=\left(\sqrt{1-\hat c}+y\right)\left(\sqrt{1-\hat c}-y\right)\left(\ln{(\sqrt{1-\hat c}+y)}-\ln{(\sqrt{1-\hat c}-y)}\right).
  \end{align*}
  It follows that $\phi_{2,0}$ is uniformly bounded independently of $\hat c$.

  Recall that $y_c$ denotes the location of the real critical layer, defined by $u_p(y_c)-c_r=0$. For Poiseuille flow, we have
\begin{align}\label{eq-yc}
  y_c=-\sqrt{1-c_r}.
\end{align}
Since $|\hat c|\ll1$, we have $y_c\approx -1$ and hence $u_p'(y_c)=-2y_c\approx 2$.

  Then we have
  \begin{align*}
  \left|\phi_{1,0}(y)\int^y_{-1}\phi_{2,0}(x)\frac{f(x)}{u_p(x)-\hat c}dx\right|\lesssim \left|\int^y_{-1} \frac{1}{\left|u_p(x)-\hat c\right|}dx\right| \left\|f\right\|_{L^\infty}\lesssim \left|\ln \hat c_i\right|\left\|f\right\|_{L^\infty}.
\end{align*}
\begin{align*}
  \left|\phi_{2,0}(y)\int^0_{y}\phi_{1,0}(x)\frac{f(x)}{u_p(x)-\hat c}dx\right|=\left|\phi_{2,0}(y)\int^0_{y}f(x)dx\right|\lesssim  \left\|f\right\|_{L^\infty}.
\end{align*}
It follows that
\begin{align}\label{eq-Raysolver-0}
  \left\|RaySolver_0(f)\right\|_{L^\infty}\lesssim&\left|\ln \hat c_i\right|\left\|f\right\|_{L^\infty}.
\end{align}

From \eqref{eq-op-solver-Ray0}, we have
\begin{align*}
  \pa_y RaySolver_0(f)=-\phi_{1,0}'(y)\int^y_{-1}\phi_{2,0}(x)\frac{f(x)}{u_p(x)-\hat c}dx-\phi_{2,0}'(y)\int^0_{y}\phi_{1,0}(x)\frac{f(x)}{u_p(x)-\hat c}dx,
\end{align*}
and
\begin{align*}
  \left|\phi_{1,0}'(y)\int^y_{-1}\phi_{2,0}(x)\frac{f(x)}{u_p(x)-\hat c}dx\right|\lesssim \left|\int^y_{-1} \frac{1}{\left|u_p(x)-\hat c\right|}dx\right| \left\|f\right\|_{L^\infty}\lesssim \left\|f\right\|_{L^\infty}\left|\ln \hat c_i\right|,
\end{align*}
\begin{align*}
  \left|\phi_{2,0}'(y)\int^0_{y}\phi_{1,0}(x)\frac{f(x)}{u_p(x)-\hat c}dx\right|\lesssim \left\|\phi_{2,0}'\right\|_{L^\infty}\left\|f\right\|_{L^\infty}\lesssim \left\|f\right\|_{L^\infty}\left|\ln \hat c_i\right|.
\end{align*}
Therefore, we have
\begin{align}\label{eq-Raysolver-1}
  \left\|\pa_yRaySolver_0(f)\right\|_{L^\infty}\lesssim&\left|\ln \hat c_i\right|\left\|f\right\|_{L^\infty}.
\end{align}

Moreover, as $RaySolver_0(f)$ solves \eqref{eq-Ray} with $\alpha=0$, it holds that
\begin{align*}
  \left(u_p(y)-\hat c\right)\pa_{yy} RaySolver_0(f)=u_p''RaySolver_0(f)+f(y),
\end{align*}
and from \eqref{eq-Raysolver-0} that
\begin{align}\label{eq-Raysolver-2}
  \left\|\left(u_p(y)-\hat c\right)\pa_{yy} RaySolver_0(f)\right\|_{L^\infty}\lesssim&\left|\ln \hat c_i\right|\left\|f\right\|_{L^\infty}.
\end{align}

We next estimate the higher weighted derivatives.

Applying $\left(u_p(y)-\hat c\right)\pa_y$ to \eqref{eq-Ray} with $\alpha=0$, we have
\begin{align*}
  \left(u_p(y)-\hat c\right)u_p' \pa_y^2\phi+\left(u_p(y)-\hat c\right)^2\pa_y^3\phi-u_p''\left(u_p(y)-\hat c\right)\pa_y\phi=\left(u_p(y)-\hat c\right)\pa_yf.
\end{align*}
It follows that
\begin{align}\label{eq-wt-3-Ray}
  \left(u_p(y)-\hat c\right)^2\pa_y^3\phi=-\left(u_p(y)-\hat c\right)u_p' \pa_y^2\phi+u_p''\left(u_p(y)-\hat c\right)\pa_y\phi+\left(u_p(y)-\hat c\right)\pa_yf.
\end{align}
Combining this identity with \eqref{eq-Raysolver-1} and \eqref{eq-Raysolver-2}, we obtain
\begin{align*}
  \left\|(u_p-\hat c)^2\pa_{y}^3 RaySolver_0(f)\right\|_{L^\infty}\lesssim \left|\ln \hat c_i\right| \left\|f\right\|_{L^\infty}+\left\|\left(u_p(y)-\hat c\right)\pa_yf\right\|_{L^\infty}.
\end{align*}

Similarly, applying $\left(u_p(y)-\hat c\right) \pa_y$  to \eqref{eq-wt-3-Ray}, we have
\begin{align*}
  \left(u_p(y)-\hat c\right)^3\pa_y^4\phi=&-\left(u_p(y)-\hat c\right)(u_p')^2 \pa_y^2\phi-3\left(u_p(y)-\hat c\right)^2u_p' \pa_y^3\phi+u_p''\left(u_p(y)-\hat c\right)u_p'\pa_y\phi\\ 
  &+\left(u_p(y)-\hat c\right)u_p'\pa_yf+\left(u_p(y)-\hat c\right)^2\pa_y^2f.
\end{align*}
Therefore
\begin{align*}
  \left\|\left(u_p(y)-\hat c\right)^3\pa_y^4 RaySolver_0(f)\right\|_{L^\infty}\lesssim \left|\ln \hat c_i\right| \left\|f\right\|_{L^\infty}+\left\|\left(u_p(y)-\hat c\right)\pa_yf\right\|_{L^\infty}+\left\|\left(u_p(y)-\hat c\right)^2\pa_y^2f\right\|_{L^\infty}.
\end{align*}

From \eqref{eq-phi0-boundary}, we have
\begin{align*}
  \pa_yRaySolver_0(f)(0)=-\phi_{1,0}'(0)\int^0_{-1}\phi_{2,0}(x)\frac{f(x)}{u_p(x)-\hat c}dx=0.
\end{align*}
Moreover, differentiating the equation once more gives
\begin{align*}
  \pa_y^3RaySolver_0(f)=\frac{-u_p' \pa_y^2RaySolver_0(f)+u_p''\pa_yRaySolver_0(f)+\pa_yf}{u_p(y)-\hat c}.
\end{align*}
If $\pa_y f(0)=0$, then evaluating the above identity at $y=0$ and using
$u_p'(0)=0$ and $\pa_y RaySolver_0(f)(0)=0$, we obtain
\begin{align*}
  \pa_y^3RaySolver_0(f)(0)=0.
\end{align*}

Combining the above estimates, we complete the proof of this lemma.
\end{proof} 

\subsection{Non-homogeneous solution for $\alpha\neq0$}
We now turn to the case $\alpha\neq0$. Here we assume that $|\alpha|\ll\frac{1}{\left|\ln \hat c_i\right|}$. Without loss of generality, we assume that $\alpha>0$.
\begin{lemma}\label{lem-Raysolver-alpha}
  For $f\in X_2$, there exists a solution operator $RaySolver_\alpha$ such that $RaySolver_\alpha(f)$ solves \eqref{eq-Ray}. Moreover, the following estimates hold:
  \begin{align*}
    \left\|RaySolver_\alpha(f)\right\|_{Y_2}\lesssim&\left|\ln \hat c_i\right|\left\|f\right\|_{L^\infty},\\
    \left\|RaySolver_\alpha(f)\right\|_{Y_4}\lesssim&\left|\ln \hat c_i\right| \left\|f\right\|_{L^\infty}+\left\|\left(u_p(y)-\hat c\right)\pa_yf\right\|_{L^\infty}+\left\|\left(u_p(y)-\hat c\right)^2\pa_y^2f\right\|_{L^\infty},
  \end{align*}
  and
  \begin{align*}
    \pa_yRaySolver_\alpha(f)(0)=0.
  \end{align*}
  If, in addition, $\pa_y f(0)=0$, then
  \begin{align}\label{eq-Ray-alpha-3-0}
    \pa_y^3RaySolver_\alpha(f)(0)=0.
  \end{align}
\end{lemma}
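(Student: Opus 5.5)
The plan is to treat the $\alpha^2$ term as a perturbation of $Ray_0$ and invert by a Neumann series, in the spirit of \cite{GGN16adv}. We write
\begin{align*}
  Ray_\alpha(\phi)=Ray_0(\phi)-\alpha^2\left(u_p-\hat c\right)\phi .
\end{align*}
Accordingly we look for $\phi=RaySolver_\alpha(f)$ as the solution of the fixed-point equation
\begin{align*}
  \phi=RaySolver_0(f)+\alpha^2RaySolver_0\left((u_p-\hat c)\phi\right).
\end{align*}
Formally this is
\begin{align*}
  RaySolver_\alpha(f)=\sum_{n\ge0}\left(\alpha^2 T\right)^nRaySolver_0(f),\qquad T(g)=RaySolver_0\left((u_p-\hat c)g\right).
\end{align*}
Each term lies in the range of $RaySolver_0$, so $Ray_0\phi=f+\alpha^2(u_p-\hat c)\phi$ holds. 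Hence $\phi$ solves \eqref{eq-Ray} once the series converges in $Y_4$ together with its derivatives.

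\textbf{Convergence in $Y_2$.} By Lemma \ref{lem-Raysolver0},
\begin{align*}
  \|T g\|_{Y_2}\lesssim |\ln\hat c_i|\,\|(u_p-\hat c)g\|_{L^\infty}\lesssim |\ln\hat c_i|\,\|g\|_{L^\infty}\le|\ln\hat c_i|\,\|g\|_{Y_2}.
\end{align*}
Thus $\|\alpha^2T\|_{Y_2\to Y_2}\lesssim\alpha^2|\ln\hat c_i|\ll1$, using the standing assumption $|\alpha|\ll|\ln\hat c_i|^{-1}$. The series converges in $Y_2$ and
\begin{align*}
  \|\phi\|_{Y_2}\lesssim\|RaySolver_0(f)\|_{Y_2}\lesssim|\ln\hat c_i|\,\|f\|_{L^\infty}.
\end{align*}

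\textbf{The $Y_4$ bound.} We apply the $Y_4$ estimate of Lemma \ref{lem-Raysolver0} to the source $f+\alpha^2(u_p-\hat c)\phi$. This requires bounding the weighted derivatives of $(u_p-\hat c)\phi$:
\begin{align*}
  (u_p-\hat c)\pa_y\big((u_p-\hat c)\phi\big)&=(u_p-\hat c)u_p'\phi+(u_p-\hat c)^2\phi',\\
  (u_p-\hat c)^2\pa_y^2\big((u_p-\hat c)\phi\big)&=(u_p-\hat c)^2\big(u_p''\phi+2u_p'\phi'\big)+(u_p-\hat c)^3\phi''.
\end{align*}
All of these are controlled by $\|\phi\|_{Y_2}$ (note $\|(u_p-\hat c)\phi''\|_{L^\infty}\le\|\phi\|_{Y_2}$ and $|u_p-\hat c|\lesssim1$). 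Therefore
\begin{align*}
  \|\phi\|_{Y_4}\lesssim\|f\|_{X_2\text{-type}}+\alpha^2|\ln\hat c_i|\,\|\phi\|_{Y_2},
\end{align*}
which is the claimed $Y_4$ estimate. Alternatively, one can run the contraction directly in $Y_4$, since $\|(u_p-\hat c)g\|_{X_2}\lesssim\|g\|_{Y_2}\le\|g\|_{Y_4}$.

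\textbf{Boundary identities.} Since $\phi$ is in the range of $RaySolver_0$, Lemma \ref{lem-Raysolver0} gives $\pa_y\phi(0)=0$ directly. For the third derivative, suppose $\pa_yf(0)=0$. The source is $\tilde f=f+\alpha^2(u_p-\hat c)\phi$, and
\begin{align*}
  \pa_y\tilde f(0)=\pa_yf(0)+\alpha^2\big(u_p'(0)\phi(0)+(u_p(0)-\hat c)\phi'(0)\big)=0,
\end{align*}
because $u_p'(0)=0$ and $\phi'(0)=0$. Then \eqref{eq-Ray-0-3-0} gives $\pa_y^3\phi(0)=0$.

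\textbf{Main obstacle.} The step needing care is that the contraction closes in a space where the perturbation $(u_p-\hat c)\phi$ is admissible for Lemma \ref{lem-Raysolver0}. This works because multiplication by $u_p-\hat c$ exactly compensates the weight loss in the $Y$-norms. The only smallness used is $\alpha^2|\ln\hat c_i|\ll1$.
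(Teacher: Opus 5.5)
Your proposal is correct and follows the paper's proof: the same Neumann series $\sum_{j\ge0}S_{R,j}(f)$ with $S_{R,j}=RaySolver_0\big(\alpha^2(u_p-\hat c)S_{R,j-1}(f)\big)$, controlled by Lemma \ref{lem-Raysolver0} and the smallness $\alpha^2|\ln\hat c_i|\ll1$. The only difference is organizational and harmless. You write the limit as $RaySolver_0\big(f+\alpha^2(u_p-\hat c)\phi\big)$ and so get the $Y_4$ bound and the identities at $y=0$ in one step, whereas the paper sums term-by-term $Y_4$ bounds and argues by induction.
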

\begin{proof}
We first regard the $\alpha^2$ term as a perturbation of $Ray_0$.  From \eqref{eq-op-Ray-alpha} and \eqref{eq-op-solver-Ray0}, we have
\begin{align*}
  Ray_\alpha \left(RaySolver_0(f) \right)=f-\alpha^2\left(u_p(y)-\hat c\right)RaySolver_0(f).
\end{align*}
Define
\begin{align*}
  S_{R,0}(f)=&RaySolver_0(f),\\ 
  S_{R,j}(f)=&RaySolver_0 \left(\alpha^2\left(u_p(y)-\hat c\right)S_{R,j-1}(f)\right).
\end{align*}
Using Lemma \ref{lem-Raysolver0}, we have
\begin{align*}
    \left\|S_{R,j}(f)\right\|_{L^\infty}\lesssim \left(|\ln \hat c_i|\alpha^2\right)^j|\ln \hat c_i|\left\|f\right\|_{L^\infty},
\end{align*}
and
\begin{align*}
  \left\|\pa_yS_{R,j}(f)\right\|_{L^\infty}\lesssim \left(|\ln \hat c_i|\alpha^2\right)\left\|S_{R,j-1}(f)\right\|_{L^\infty}\lesssim \left(|\ln \hat c_i|\alpha^2\right)^j|\ln \hat c_i|\left\|f\right\|_{L^\infty}.
\end{align*}
The same argument gives 
\begin{align*}
  \left\|S_{R,j}(f)\right\|_{Y_2}\lesssim \left(|\ln \hat c_i|\alpha^2\right)^j \left|\ln \hat c_i\right| \left\|f\right\|_{L^\infty},
\end{align*}
and
\begin{align*}
  \left\|S_{R,j}(f)\right\|_{Y_4}\lesssim \left(|\ln \hat c_i|\alpha^2\right)^j \left(\left|\ln \hat c_i\right| \left\|f\right\|_{L^\infty}+\left\|\left(u_p(y)-\hat c\right)\pa_yf\right\|_{L^\infty}+\left\|\left(u_p(y)-\hat c\right)^2\pa_y^2f\right\|_{L^\infty}\right). 
\end{align*}

We also note that, for $j\ge 2$, we only need to use the $L^\infty$ norm of $f$,
\begin{align*}
  \left\|S_{R,j}(f)\right\|_{Y_4}\lesssim \left(|\ln \hat c_i|\alpha^2\right)^j \left|\ln \hat c_i\right| \left\|f\right\|_{L^\infty}. 
\end{align*}

Recall that $\alpha^2|\ln \hat c_i|\ll1$. Hence the series $\sum_{j=0}^{\infty}S_{R,j}(f)$ converges in $Y_4$. By the recursive definition, its partial sums satisfy
\[
  Ray_\alpha\sum_{j=0}^{N}S_{R,j}(f)
  =
  f-\alpha^2(u_p-\hat c)S_{R,N}(f).
\]
Letting $N\to\infty$ and setting
\[
  RaySolver_\alpha(f)=\sum_{j=0}^{\infty}S_{R,j}(f),
\]
we obtain the desired solution operator for \eqref{eq-Ray}. The identities at $y=0$ follow from \eqref{eq-Ray-0-3-0} by induction.
\end{proof}
\subsection{Homogeneous solution for $\alpha\neq0$}
We next construct two linearly independent solutions of \eqref{eq-Ray} with $f=0$. Recall that $|\alpha|\ll\frac{1}{\left|\ln \hat c_i\right|}$. The idea is to use the Rayleigh solver for $\alpha=0$ to perturb the two solutions $\phi_{1,0}$ and $\phi_{2,0}$.

\begin{lemma}\label{lem-sol-Ray-hom}
There exist two solutions
  $\phi_{1,\alpha}(y)$ and $\phi_{2,\alpha}(y)$ of \eqref{eq-Ray} with $f=0$ such that
  \begin{align*}
    \left\|\phi_{1,\alpha}-\phi_{1,0}\right\|_{Y_4}
    \lesssim&
    \alpha^2\left|\ln \hat c_i\right|,\\
    \left\|\phi_{2,\alpha}-\phi_{2,0}\right\|_{Y_4}
    \lesssim&
    \alpha^2\left|\ln \hat c_i\right|.
  \end{align*}
  Moreover,
  \begin{align*}
    \pa_y\phi_{1,\alpha}(0)=\pa_y^3\phi_{1,\alpha}(0)=0,
  \end{align*}
  and
  \begin{align*}
    \pa_y\phi_{2,\alpha}(0)
    =
    \pa_y\phi_{2,0}(0)
    =
    \frac{1}{1-\hat c},
    \qquad
    \pa_y^3\phi_{2,\alpha}(0)
    =
    -\frac{2}{(1-\hat c)^2}
    +\frac{\alpha^2}{1-\hat c}.
  \end{align*}
\end{lemma}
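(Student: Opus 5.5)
The plan is to construct $\phi_{j,\alpha}$, $j=1,2$, as perturbations of $\phi_{j,0}$ using the operator $RaySolver_\alpha$ from Lemma \ref{lem-Raysolver-alpha}. Since $Ray_\alpha(\phi_{j,0})=Ray_0(\phi_{j,0})-\alpha^2(u_p-\hat c)\phi_{j,0}=-\alpha^2(u_p-\hat c)\phi_{j,0}$, I set
\begin{align*}
  \phi_{j,\alpha}=\phi_{j,0}+RaySolver_\alpha\left(\alpha^2(u_p-\hat c)\phi_{j,0}\right),\quad j=1,2.
\end{align*}
Then $Ray_\alpha(\phi_{j,\alpha})=0$ follows directly from Lemma \ref{lem-Raysolver-alpha}.

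\textbf{Estimates.} Let $f_j=\alpha^2(u_p-\hat c)\phi_{j,0}$. Both $\phi_{1,0}$ and $\phi_{2,0}$ are uniformly bounded; for $\phi_{2,0}$ this was shown in the proof of Lemma \ref{lem-Raysolver0}. Hence $\|f_j\|_{L^\infty}\lesssim\alpha^2$. For the weighted derivative terms I will use
\begin{align*}
  (u_p-\hat c)\phi_{2,0}'=1+\phi_{2,0}u_p',
\end{align*}
which is bounded, and the equation $(u_p-\hat c)\phi_{2,0}''=u_p''\phi_{2,0}$. These give $\|(u_p-\hat c)\pa_yf_j\|_{L^\infty}+\|(u_p-\hat c)^2\pa_y^2f_j\|_{L^\infty}\lesssim\alpha^2$. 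The $Y_4$ bound in Lemma \ref{lem-Raysolver-alpha} then yields
\begin{align*}
  \|\phi_{j,\alpha}-\phi_{j,0}\|_{Y_4}\lesssim\alpha^2|\ln\hat c_i|+\alpha^2\lesssim\alpha^2|\ln\hat c_i|.
\end{align*}
The main point to check here is that the weighted derivatives of $f_2$ do not carry an extra logarithmic loss. The identity above for $(u_p-\hat c)\phi_{2,0}'$ handles this.

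\textbf{Centerline values.} From \eqref{eq-phi0-boundary}, $\phi_{1,0}'(0)=0$ and $\phi_{2,0}'(0)=\frac{1}{1-\hat c}$. Lemma \ref{lem-Raysolver-alpha} gives $\pa_yRaySolver_\alpha(f)(0)=0$, so $\pa_y\phi_{1,\alpha}(0)=0$ and $\pa_y\phi_{2,\alpha}(0)=\frac{1}{1-\hat c}$. For the third derivatives I avoid tracking the solver and use the ODE directly. Differentiating $Ray_\alpha\phi=0$ gives
\begin{align*}
  (u_p-\hat c)\phi'''+u_p'(\phi''-\alpha^2\phi)-u_p''\phi'=0.
\end{align*}
At $y=0$ we have $u_p'(0)=0$, $u_p''=-2$ and $u_p(0)-\hat c=1-\hat c$, so
\begin{align*}
  \phi'''(0)=\frac{-2\phi'(0)}{1-\hat c}.
\end{align*}
For $\phi_{1,\alpha}$ this gives $\phi_{1,\alpha}'''(0)=0$.

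\textbf{Expected obstacle.} The same relation applied to $\phi_{2,\alpha}$ gives $-\frac{2}{(1-\hat c)^2}$, while the stated formula has an extra $\frac{\alpha^2}{1-\hat c}$. I have not reconciled this with the displayed ODE, and I expect it to be the main obstacle. I would first recheck the convention of the operator, i.e.\ whether the derivative is taken of $(u_p-\hat c)\Delta_\alpha\phi$ or of a rearranged form, and then compute $\phi_{2,\alpha}'''(0)$ directly from the formula for $\phi_{2,\alpha}$ with $\phi_{2,0}'''(0)$. I would also note that $\pa_yf_j(0)=0$, since $u_p'(0)=0$ and $\phi_{j,0}'(0)$ multiplies $(u_p-\hat c)$; this gives consistency with \eqref{eq-Ray-alpha-3-0} for the correction term.
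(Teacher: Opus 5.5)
Your construction, the $Y_4$ estimates and the first-derivative identities at $y=0$ are correct. They coincide with the paper's construction: the paper sums the iterates $RaySolver_0\bigl(\alpha^2(u_p-\hat c)\phi_{j,0}^{[k-1]}\bigr)$, and that series is exactly $RaySolver_\alpha\bigl(\alpha^2(u_p-\hat c)\phi_{j,0}\bigr)$. Your use of $(u_p-\hat c)\phi_{2,0}'=1+u_p'\phi_{2,0}$ and $(u_p-\hat c)\phi_{2,0}''=u_p''\phi_{2,0}$ to control the weighted derivatives of the source is correct.

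The gap is in the third-derivative step, and the obstacle you flag comes from an algebra slip, not from a convention issue. Differentiating $(u_p-\hat c)(\phi''-\alpha^2\phi)-u_p''\phi=0$ and using $u_p'''=0$ gives
\begin{align*}
  (u_p-\hat c)\left(\phi'''-\alpha^2\phi'\right)+u_p'\left(\phi''-\alpha^2\phi\right)-u_p''\phi'=0 .
\end{align*}
You dropped the term $-\alpha^2(u_p-\hat c)\phi'$. Evaluating the correct identity at $y=0$ gives
\begin{align*}
  \phi'''(0)=\alpha^2\phi'(0)-\frac{2\phi'(0)}{1-\hat c}.
\end{align*}
This yields $\pa_y^3\phi_{1,\alpha}(0)=0$. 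With $\phi_{2,\alpha}'(0)=\frac{1}{1-\hat c}$ it yields exactly $\pa_y^3\phi_{2,\alpha}(0)=-\frac{2}{(1-\hat c)^2}+\frac{\alpha^2}{1-\hat c}$, so there is no discrepancy.

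Your side remark is also wrong. In fact $\pa_y f_2(0)=\alpha^2(1-\hat c)\phi_{2,0}'(0)=\alpha^2\neq 0$, so \eqref{eq-Ray-alpha-3-0} does not apply to the correction of $\phi_{2,0}$. Since $\pa_yRaySolver_0(f_2)(0)=0$ and $u_p'(0)=0$, the identity in the proof of Lemma \ref{lem-Raysolver0} gives $\pa_y^3RaySolver_0(f_2)(0)=\pa_yf_2(0)/(1-\hat c)=\alpha^2/(1-\hat c)$. All later iterates have sources whose derivative vanishes at $0$. This is exactly how the paper produces the extra term: it takes $\pa_y^3\phi_{2,0}(0)$ from the explicit formula and adds $\pa_y^3\phi_{2,0}^{[1]}(0)$.

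Once the missing term is restored, evaluating the differentiated equation directly at $y=0$ is cleaner than the paper's route. It gives both third-derivative identities at once, with no need to track the iterates.
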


\begin{proof}
  We define the iterates associated with $\phi_{1,0}$ by
  \begin{align*}
    \phi_{1,0}^{[0]}(y)=\phi_{1,0}(y),\qquad
    \phi_{1,0}^{[j]}(y)
    =
    RaySolver_0\left(
    \alpha^2\left(u_p(y)-\hat c\right)\phi_{1,0}^{[j-1]}
    \right),
    \qquad j\ge1.
  \end{align*}
  Similarly, we define
  \begin{align*}
    \phi_{2,0}^{[0]}(y)=\phi_{2,0}(y),\qquad
    \phi_{2,0}^{[j]}(y)
    =
    RaySolver_0\left(
    \alpha^2\left(u_p(y)-\hat c\right)\phi_{2,0}^{[j-1]}
    \right),
    \qquad j\ge1.
  \end{align*}
  We then set
  \begin{align*}
    \phi_{1,\alpha}(y)=\sum_{j=0}^{\infty}\phi_{1,0}^{[j]}(y),
    \qquad
    \phi_{2,\alpha}(y)=\sum_{j=0}^{\infty}\phi_{2,0}^{[j]}(y).
  \end{align*}
  The estimates in Lemma \ref{lem-Raysolver0}, together with the smallness of
  $\alpha^2|\ln \hat c_i|$, imply that these two series converge in $Y_4$. Moreover,
  \begin{align*}
    \left\|\sum_{j=1}^{\infty}\phi_{1,0}^{[j]}\right\|_{Y_4}
    \lesssim
    \alpha^2|\ln \hat c_i|,
    \qquad
    \left\|\sum_{j=1}^{\infty}\phi_{2,0}^{[j]}\right\|_{Y_4}
    \lesssim
    \alpha^2|\ln \hat c_i|.
  \end{align*}
  This gives the stated bounds for
  $\phi_{1,\alpha}-\phi_{1,0}$ and $\phi_{2,\alpha}-\phi_{2,0}$.

  The same telescoping argument as in Lemma \ref{lem-Raysolver-alpha} shows that
  both $\phi_{1,\alpha}$ and $\phi_{2,\alpha}$ solve \eqref{eq-Ray} with $f=0$.
  They are linearly independent for $\alpha$ sufficiently small, since they are small
  perturbations of the linearly independent pair $\phi_{1,0}$ and $\phi_{2,0}$.

  We next prove the identities at $y=0$. For the first mode, $\phi_{1,0}'(0)=0$ and  $\phi_{1,0}'''(0)=0$, then for the same reason as \eqref{eq-Ray-alpha-3-0}, we have
    \begin{align*}
    \pa_y\phi_{1,\alpha}(0)=\pa_y^3\phi_{1,\alpha}(0)=0.
  \end{align*}
  Similarly, 
  \begin{align*}
    \pa_y\phi_{2,\alpha}(0)
    =
    \pa_y\phi_{2,0}(0)
    =
    \frac{1}{1-\hat c},
  \end{align*}
  and
  \begin{align*}
    \pa_y^3\phi_{2,\alpha}(0)
    =\pa_y^3\phi_{2,0}(0)+\pa_y^3\phi_{2,0}^{[1]}(0)=
    -\frac{2}{(1-\hat c)^2}
    +\frac{\alpha^2}{1-\hat c}.
  \end{align*}
 
  This completes the proof.
\end{proof}
\begin{lemma} \label{lem-Ray-endpoint-expansion}
  For  the functions $\phi_{1,\alpha}(y)$ and $\phi_{2,\alpha}(y)$  constructed in Lemma \ref{lem-sol-Ray-hom}, it holds that
  \begin{align*}
    \left|\phi_{1,\alpha}'(-1)-\phi_{1,0}'(-1)\right|
    &\lesssim \alpha^2\left|\ln \hat c\right|,\\
    \left|\pa_{\hat c}\phi_{1,\alpha}(-1)-\pa_{\hat c}\phi_{1,0}(-1)\right|
    &\lesssim \alpha^2\left|\ln \hat c\right|,\\
    \left|\pa_{\hat c}\phi_{1,\alpha}'(-1)-\pa_{\hat c}\phi_{1,0}'(-1)\right|
    &\lesssim \frac{\alpha^2}{\left|\hat c\right|}.
  \end{align*}
  Moreover,
  \begin{align*}
    \Re \left(\frac{\phi_{1,\alpha}(-1)}{\phi_{1,\alpha}'(-1)}\right)
    =
    &-\frac{c_r}{2}
    +\frac{2}{15}\alpha^2
    -\frac{1}{3}c_r\alpha^2
    +\frac{4}{15^2}\alpha^4\ln \left|\frac{\hat c}{4}\right|\\
    &+O\left(\alpha^4+\alpha^4 \hat c\left|\ln \hat c\right|
    +\alpha^2 \hat c^2\left|\ln \hat c\right|\right),
  \end{align*}
  and
  \begin{align*}
    \Im \left(\frac{\phi_{1,\alpha}(-1)}{\phi_{1,\alpha}'(-1)}\right)
    =
    -\frac{\hat c_i}{2}
    -\frac{1}{3}\hat c_i\alpha^2
    +\frac{4}{15^2}\alpha^4\arg(-\hat c)
    +O \left(\alpha^4 \hat c\left|\ln \hat c\right|+\alpha^2 \hat c^2\left|\ln \hat c\right|+\alpha^6\left|\ln \hat c\right|\right).
  \end{align*}
  Consequently,
  \begin{align*}
    \pa_{c_r}\Re \left(\frac{\phi_{1,\alpha}(-1)}{\phi_{1,\alpha}'(-1)}\right)
    =
    \pa_{\hat c_i}\Im \left(\frac{\phi_{1,\alpha}(-1)}{\phi_{1,\alpha}'(-1)}\right)
    =
    -\frac{1}{2}
    -\frac{1}{3}\alpha^2
    +\frac{4}{15^2}\alpha^4
    \frac{c_r^{-1}}{1+\left(\frac{\hat c_i}{c_r}\right)^2}
    +O \left(\alpha^4\left|\ln \hat c\right|
    +\alpha^2 \hat c\left|\ln \hat c\right|\right),
  \end{align*}
  and
  \begin{align*}
    \pa_{\hat c_i}\Re \left(\frac{\phi_{1,\alpha}(-1)}{\phi_{1,\alpha}'(-1)}\right)
    =
    -\pa_{c_r}\Im \left(\frac{\phi_{1,\alpha}(-1)}{\phi_{1,\alpha}'(-1)}\right)
    =
    \frac{4}{15^2}\alpha^4
    \frac{\hat c_i c_r^{-2}}{1+\left(\frac{\hat c_i}{c_r}\right)^2}
    +O \left(\alpha^4\left|\ln \hat c\right|
    +\alpha^2 \hat c\left|\ln\hat  c\right|\right).
  \end{align*}
\end{lemma}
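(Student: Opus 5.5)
We expand $\phi_{1,\alpha}=\phi_{1,0}+\phi_{1,0}^{[1]}+\sum_{j\ge2}\phi_{1,0}^{[j]}$ and compute the first correction explicitly at $y=-1$. By \eqref{eq-op-solver-Ray0}, the integral over $[-1,y]$ vanishes at $y=-1$. Using $\phi_{1,0}=u_p-\hat c$, we get
\begin{align*}
  \phi_{1,0}^{[1]}(-1)&=-\alpha^2\phi_{2,0}(-1)\int_{-1}^0 (u_p-\hat c)^2\,dx,\\
  \pa_y\phi_{1,0}^{[1]}(-1)&=-\alpha^2\phi_{2,0}'(-1)\int_{-1}^0 (u_p-\hat c)^2\,dx .
\end{align*}
Here $\int_{-1}^0(u_p-\hat c)^2dx=\tfrac{8}{15}-\tfrac43\hat c+\hat c^2$, and $\phi_{2,0}(-1)$, $\phi_{2,0}'(-1)$ are given explicitly in \eqref{eq-phi0-boundary}. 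In particular,
\[
  \phi_{2,0}'(-1)=1+\tfrac12\ln\tfrac{\hat c}{4}+O(\hat c|\ln\hat c|),
\]
since $\sqrt{1-\hat c}-1\approx-\hat c/2$; this is the source of both the logarithm and the argument $\arg(-\hat c)$. Also $\phi_{2,0}(-1)=-\tfrac12+O(\hat c|\ln \hat c|)$. The remaining terms, $j\ge2$, carry an extra factor $\alpha^2$. The key point is to bound their boundary values by $\alpha^4|\ln\hat c|$ rather than by the $Y_4$ norm times a constant, using again that at $y=-1$ only the second integral survives. That integral is $\int_{-1}^0\phi_{1,0}\,f/(u_p-\hat c)\,dx=\int_{-1}^0 f\,dx$ with $f=\alpha^2(u_p-\hat c)\phi^{[j-1]}$, and it is bounded by $\alpha^2\|\phi^{[j-1]}\|_{L^\infty}$. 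This gives the first estimate, $|\phi_{1,\alpha}'(-1)-\phi_{1,0}'(-1)|\lesssim \alpha^2|\ln\hat c|$.

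For the ratio we write $\phi_{1,\alpha}(-1)=-\hat c+\tfrac{4}{15}\alpha^2+A$ and $\phi_{1,\alpha}'(-1)=2-\tfrac{8}{15}\alpha^2\phi_{2,0}'(-1)+B$. We then expand
\[
  \frac{-\hat c+\tfrac{4}{15}\alpha^2+\cdots}{2\bigl(1-\tfrac{4}{15}\alpha^2\phi_{2,0}'(-1)+\cdots\bigr)}
\]
to second order in $\alpha^2$. The cross term $\tfrac{4}{15}\alpha^2\cdot\tfrac4{15}\alpha^2\cdot\tfrac12\ln(\hat c/4)\cdot\tfrac12$ produces $\tfrac{4}{15^2}\alpha^4\ln(\hat c/4)$, up to the sign convention for $\ln(-\hat c)$. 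Its real and imaginary parts give $\ln|\hat c/4|$ and $\arg(-\hat c)$; the two are consistent once the branch in \eqref{eq-phi0-boundary} is tracked, which requires care. The $-\tfrac13 c\alpha^2$ terms come from $-\tfrac43\hat c$ in the integral combined with $\phi_{2,0}(-1)\approx-\tfrac12$ and with the $-\hat c\cdot\tfrac4{15}\alpha^2\cdot\tfrac12\cdot$(constant part) cross term. All other terms are placed in the stated $O(\cdot)$. The imaginary part has a sharper error, with no bare $\alpha^4$ term. This is because $\Im$ of the $O(\alpha^4)$ pieces, namely the real constants and the $j=2$ term evaluated at $\hat c=0$, is real to leading order. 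We would check that the $j=2$ contribution is real up to $O(\alpha^4\hat c|\ln\hat c|+\alpha^6|\ln\hat c|)$ by noting that at $\hat c\to0$ the integrands are real apart from the logarithm, which enters only at higher order in $\alpha^2$.

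For the derivative statements we use analyticity. Every object is holomorphic in $\hat c$ in the region $\hat c_i>0$: $RaySolver_0$ has a holomorphic kernel away from the critical layer, and the series converges uniformly. The Cauchy–Riemann equations then give the identities $\pa_{c_r}\Re=\pa_{\hat c_i}\Im$ and $\pa_{\hat c_i}\Re=-\pa_{c_r}\Im$. The explicit formulas follow by differentiating the leading term $\tfrac{4}{15^2}\alpha^4\ln(-\hat c)$, since $\pa\arg(-\hat c)$ produces the factors $c_r^{-1}/(1+(\hat c_i/c_r)^2)$ and $\hat c_i c_r^{-2}/(1+\cdots)$. We obtain the derivatives of the error terms by Cauchy estimates on a disc of radius $\sim|\hat c|$ around $\hat c$. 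This is legitimate because $\hat c_i\lesssim c_r$ keeps us in a sector, where $|\ln\hat c_i|\sim|\ln \hat c|$. Each error is thereby divided by $|\hat c|$, for example $\alpha^4\hat c|\ln\hat c|/|\hat c|=\alpha^4|\ln\hat c|$. The bounds on $\pa_{\hat c}\phi_{1,\alpha}(-1)$ and $\pa_{\hat c}\phi_{1,\alpha}'(-1)$ are obtained the same way. In the first of these, the explicit $\alpha^2$ term differentiates to $O(\alpha^2|\ln\hat c|)$, coming from $\pa_{\hat c}\phi_{2,0}(-1)$. In the second, $\pa_{\hat c}\phi_{2,0}'(-1)\sim \hat c^{-1}$ gives $\alpha^2/|\hat c|$.

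The main obstacle is the Cauchy-estimate step. The disc of radius $\sim|\hat c|$ must remain in the admissible region $\hat c_i>0$ with $\hat c_i\gtrsim$ a fixed fraction of $|\hat c|$. When $\hat c_i\ll c_r$ this fails, and the disc must shrink to radius $\hat c_i$, which loses too much. We would therefore instead differentiate the explicit integral formulas directly. The $\hat c$-dependence of $\phi_{2,0}(-1)$ and $\phi_{2,0}'(-1)$ is explicit. For the higher iterates, we would integrate by parts in $\int\phi_{2,0}f/(u_p-\hat c)$, using $\pa_{\hat c}(u_p-\hat c)^{-1}=-\pa_x(u_p-\hat c)^{-1}/u_p'$. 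This moves the derivative onto smooth factors and yields only $|\hat c|^{-1}$-type or logarithmic losses.
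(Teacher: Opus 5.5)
Your overall route is the paper's. You use that only the $\int_y^0$ part of $RaySolver_0$ survives at $y=-1$, compute the first iterate explicitly, and bound the higher iterates through the same surviving integral. You then expand the quotient, use Cauchy--Riemann for the derivative identities, and get the $\partial_{\hat c}$ bounds by differentiating the iteration formulas directly. The paper packages this last step as the inductive representation $\phi_{1,0}^{[j]}=\alpha^{2j}P_j+\alpha^{2j}Q_j(\sqrt{1-\hat c}+y)\ln(\sqrt{1-\hat c}+y)+\dots$. Two steps of your sketch, however, do not work as written.

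\emph{The $\alpha^2\hat c$ bookkeeping.} Replacing $\phi_{2,0}(-1)$ by $-\frac12+O(\hat c|\ln\hat c|)$ discards contributions of size $\alpha^2|\hat c|$ and $\alpha^2|\hat c||\ln\hat c|$ to the quotient. Neither fits the stated error terms. For $c_r\sim\alpha^2$ the second is of the same order as the term $\frac{4}{15^2}\alpha^4\ln|\hat c/4|$ you are extracting.

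Your attribution of the coefficient is also off. The two sources you list give $-\frac13-\frac{2}{15}$, and the compensating $+\frac{2}{15}$ is exactly the $-\frac{\hat c}{2}$ correction in $\phi_{2,0}(-1)$ that you threw away.

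The clean way to see the cancellations is the Wronskian $\phi_{1,0}\phi_{2,0}'-\phi_{1,0}'\phi_{2,0}=1$. At $y=-1$ it gives $\phi_{2,0}(-1)=-\frac12-\frac{\hat c}{2}\phi_{2,0}'(-1)$. Every iterate satisfies $\phi_{1,0}^{[j]}(-1)=-\phi_{2,0}(-1)K_j$ and $\phi_{1,0}^{[j]\prime}(-1)=-\phi_{2,0}'(-1)K_j$, with the same number $K_j=\alpha^2\int_{-1}^0(u_p-\hat c)\phi_{1,0}^{[j-1]}\,dx$. Hence one gets the exact identity
\[
\frac{\phi_{1,\alpha}(-1)}{\phi_{1,\alpha}'(-1)}=-\frac{\hat c}{2}+\frac{K}{4-2\phi_{2,0}'(-1)K},\qquad K=\alpha^2\int_{-1}^0(u_p-\hat c)\phi_{1,\alpha}\,dx .
\]
From this identity:
\begin{itemize}
\item all $\alpha^2\hat c\ln\hat c$ terms cancel;
\item $-\frac13\hat c\alpha^2$ comes solely from $\frac14\cdot(-\frac43\alpha^2\hat c)$ in $K/4$;
\item $\frac{4}{15^2}\alpha^4\ln\frac{-\hat c}{4}$ comes from $\frac18\phi_{2,0}'(-1)K^2$, where the logarithm is $\ln\frac{-\hat c}{4}$ on the branch with $\arg(-\hat c)\in(-\pi,0)$.
\end{itemize}
The paper reaches the same result by carrying the full expansion of $\phi_{2,0}(-1)$ in \eqref{eq-phi101-at-minus-one}.

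\emph{The derivative step.} Cauchy estimates fail for a more basic reason than the disc leaving $\{\hat c_i>0\}$. The real part carries a bare $O(\alpha^4)$ error. A Cauchy estimate on a disc of radius $\sim|\hat c|$ turns it into $O(\alpha^4/|\hat c|)$, the same size as the term $\frac{4}{15^2}\alpha^4c_r^{-1}\bigl(1+(\hat c_i/c_r)^2\bigr)^{-1}$ you want to isolate. So direct differentiation is necessary, not a fallback.

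For that differentiation, the integration by parts against $\partial_{\hat c}(u_p-\hat c)^{-1}$ is beside the point, and $1/u_p'$ blows up at $x=0$ anyway. Because the source is $\alpha^2(u_p-\hat c)\phi_{1,0}^{[j-1]}$, the factor $(u_p-\hat c)^{-1}$ in $RaySolver_0$ cancels identically. The only $\hat c$-singular object in the iterates is $\phi_{2,0}$ itself. Its $\hat c$-derivative has an integrable logarithmic singularity at $x=-\sqrt{1-\hat c}$, and $\partial_{\hat c}\phi_{2,0}'(-1)=\frac{1}{2\hat c}+O(|\ln\hat c|)$. This produces the bounds $\alpha^2|\ln\hat c|$ and $\alpha^2/|\hat c|$. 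Inserted into the identity above, it also produces the $\frac{4}{15^2}\alpha^4/\hat c$ term whose real and imaginary parts give the stated derivative formulas.
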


\begin{proof}
  We use the iterative representation introduced in the proof of
  Lemma \ref{lem-sol-Ray-hom}. We first estimate the imaginary part of
  $\phi_{2,0}$. Recall that
  \begin{align*}
    \phi_{2,0}(y)
    =
    \frac{y}{2(1-\hat c)}
    +
    \frac{\left(u_p-\hat c\right)}{4(1-\hat c)^{\frac{3}{2}}}
    \ln{\frac{u_p-\hat c}{(\sqrt{1-\hat c}-y)^2}}.
  \end{align*}
  Here
  \begin{align*}
    \ln{\frac{u_p-\hat c}{(\sqrt{1-\hat c}-y)^2}}
    =
    \ln{\left|\frac{u_p-\hat c}{(\sqrt{1-\hat c}-y)^2}\right|}
    +
    i\arg \left(\frac{u_p-\hat c}{(\sqrt{1-\hat c}-y)^2}\right).
  \end{align*}
  Recall that $\hat c_i\lesssim c_r$ and $y_c+1\sim c_r$. For $y\ge y_c+c_r$, we have
  \begin{align*}
    \left|\arg \left(\frac{u_p-\hat c}{(\sqrt{1-\hat c}-y)^2}\right)\right|
    \lesssim
    \arctan \left(\frac{\hat c_i}{u_p-c_r}\right),
  \end{align*}
  and therefore
  \begin{align*}
    \left|\Im \left(\phi_{2,0}(y)\right)\right|
    \lesssim
    \hat c_i\left|\ln c_r\right|.
  \end{align*}
  On the other hand, for $y< y_c+c_r$, we have
  $\left|u_p-\hat c\right|\lesssim c_r$, and hence
  \begin{align*}
    \left|\Im \left(\phi_{2,0}(y)\right)\right|
    \lesssim
    c_r+\hat c_i\left|\ln c_r\right|.
  \end{align*}
  Thus, for all $y\in[-1,0]$,
  \begin{align}\label{eq-est-phi20im-alpha}
    \left|\Im \left(\phi_{2,0}(y)\right)\right|
    \lesssim
    c_r+\hat c_i\left|\ln c_r\right|.
  \end{align}

  We next compute the first correction to $\phi_{1,0}$. By the definition, we have
\begin{align}\label{eq-phi-1-0-1}
  \phi_{1,0}^{[1]}(y)=-\alpha^2\phi_{1,0}(y)\int^y_{-1}\phi_{2,0}(x) \left(u_p(x)-\hat c\right)dx-\alpha^2\phi_{2,0}(y)\int^0_{y}\phi_{1,0}(x) \left(u_p(x)-\hat c\right)dx.
\end{align}
  Combining this formula with \eqref{eq-est-phi20im-alpha}, we obtain
  \begin{align}\label{eq-est-phi101RI}
    \begin{aligned}
      \left|\Re \left(\phi_{1,0}^{[1]}(y)\right)\right|
      &\lesssim \alpha^2,\qquad \left|\Im \left(\phi_{1,0}^{[1]}(y)\right)\right|
      &\lesssim
      \alpha^2c_r+\alpha^2\hat c_i\left|\ln c_r\right|.
    \end{aligned}
  \end{align}

  On the boundary $y=-1$, it holds that
  \begin{align*}
    \phi_{2,0}(-1)
    =
    -\frac{1}{2(1-\hat c)}
    -\frac{\hat c}{4(1-\hat c)^{\frac{3}{2}}}
    \ln{\frac{-\hat c}{(\sqrt{1-\hat c}+1)^2}}.
  \end{align*}
  Since $|\hat c|\ll1$, we have
\begin{align}\label{eq-ln-expan}
  \ln{\frac{-\hat c}{(\sqrt{1-\hat c}+1)^2}}=\ln {\left|\frac{\hat c}{4}\right|}+i \arg(-\hat c)+O(\hat c).
\end{align}
  Consequently,
  \begin{align*}
    \Re \left(\phi_{2,0}(-1)\right)
    =
    -\frac{1}{2}
    -\frac{c_r}{2}
    -\frac{c_r}{4}\ln \left|\frac{\hat c}{4}\right|
    +\frac{\hat c_i}{4}\arg(-\hat c)
    +O(\hat c^2|\ln\hat c|),
  \end{align*}
  and
  \begin{align*}
    \Im \left(\phi_{2,0}(-1)\right)
    =
    -\frac{\hat c_i}{2}
    -\frac{\hat c_i}{4}\ln \left|\frac{\hat c}{4}\right|
    -\frac{c_r}{4}\arg(-\hat c)
    +O(\hat c^2|\ln\hat  c|).
  \end{align*}
  Since
  \begin{align*}
    \int_{-1}^{0}\left(u_p-\hat c\right)^2\,dx
    =
    \frac{8}{15}-\frac{4}{3}\hat c+\hat c^2,
  \end{align*}
  we obtain
  \begin{align}\label{eq-phi101-at-minus-one}
    \phi_{1,0}^{[1]}(-1)
    =
    \alpha^2
    \bigg[
    &\frac{4}{15}
    -\frac{2}{5}c_r
    +\frac{2c_r}{15}\ln \left|\frac{\hat c}{4}\right|
    -\frac{2\hat c_i}{15}\arg(-\hat c) \nonumber\\
    &\quad
    +i\left(
    \frac{2c_r}{15}\arg(-\hat c)
    -\frac{2}{5}\hat c_i
    +\frac{2\hat c_i}{15}\ln \left|\frac{\hat c}{4}\right|
    \right)
    \bigg]
    +O(\alpha^2\hat c^2|\ln \hat c|).
  \end{align}

  We also need the corresponding expansion of the derivative. Recall that
  \begin{align*}
    \phi_{2,0}'(-1)
    =
    \frac{1}{1-\hat c}
    +
    \frac{1}{2(1-\hat c)^{\frac{3}{2}}}
    \ln{\frac{-\hat c}{(\sqrt{1-\hat c}+1)^2}}.
  \end{align*}
  By \eqref{eq-ln-expan},
  \begin{align*}
    \Re \left(\phi_{2,0}'(-1)\right)
    =
    1+\frac{1}{2}\ln \left|\frac{\hat c}{4}\right|
    +O(\hat c|\ln \hat c|),
  \end{align*}
  and
  \begin{align*}
    \Im \left(\phi_{2,0}'(-1)\right)
    =
    \frac{1}{2}\arg(-\hat c)
    +O(\hat c|\ln \hat c|).
  \end{align*}
  From
  \begin{align*}
    \phi_{1,0}^{[1]\prime}(-1)
    =
    -\alpha^2\phi_{2,0}'(-1)
    \int_{-1}^{0}\left(u_p(x)-\hat c\right)^2\,dx,
  \end{align*}
  we get
  \begin{align}\label{eq-phi101prime-at-minus-one}
    \Re\left(\phi_{1,0}^{[1]\prime}(-1)\right)
    =
    -\alpha^2\frac{8}{15}
    -\alpha^2\frac{4}{15}\ln \left|\frac{\hat c}{4}\right|
    +O(\alpha^2\hat c|\ln \hat c|),
  \end{align}
  and
  \begin{align}\label{eq-phi101prime-at-minus-one-im}
    \Im\left(\phi_{1,0}^{[1]\prime}(-1)\right)
    =
    -\alpha^2\frac{4}{15}\arg(-\hat c)
    +O(\alpha^2\hat c|\ln \hat c|).
  \end{align}

  Combining \eqref{eq-phi101-at-minus-one},
  \eqref{eq-phi101prime-at-minus-one}, and
  \eqref{eq-phi101prime-at-minus-one-im}, we get
\begin{align*}
  &\frac{\phi_{1,0}(-1)+\phi_{1,0}^{[1]}(-1)}{\phi_{1,0}'(-1)+\phi_{1,0}^{[1]\prime}(-1)}=\frac{-\hat c+\phi_{1,0}^{[1]}(-1)}{2+\phi_{1,0}^{[1]\prime}(-1)}=\frac{1}{2} \left( -\hat c+\phi_{1,0}^{[1]}(-1) \right) \left(1-\frac{1}{2}\phi_{1,0}^{[1]\prime}(-1)+O(\alpha^4\left|\ln \hat c\right| ) \right)\\
  =&\frac{-\hat c}{2}+\frac{2}{15}\alpha^2-\frac{1}{3}c_r\alpha^2+\frac{8}{15^2}\alpha^4+\frac{4}{15^2}\alpha^4\ln {\left|\frac{\hat c}{4}\right|}+i \left(-\frac{1}{3}\hat c_i\alpha^2+\frac{4}{15^2}\alpha^4\arg(-\hat c)\right)\\
  &+O \left(\alpha^4 \hat c\left|\ln \hat c\right|+\alpha^2 \hat c^2\left|\ln \hat c\right|+\alpha^6\left|\ln \hat c\right|\right).
\end{align*}

From the definition, we have
\begin{align*}
  \phi_{1,0}^{[2]}(y)=-\alpha^2\phi_{1,0}(y)\int^y_{-1}\phi_{2,0}(x)\phi_{1,0}^{[1]}(x)dx-\alpha^2\phi_{2,0}(y)\int^0_{y}\phi_{1,0}(x)\phi_{1,0}^{[1]}(x)dx,\\
  \phi_{1,0}^{[2]\prime}(y)=-\alpha^2\phi_{1,0}'(y)\int^y_{-1}\phi_{2,0}(x)\phi_{1,0}^{[1]}(x)dx-\alpha^2\phi_{2,0}'(y)\int^0_{y}\phi_{1,0}(x)\phi_{1,0}^{[1]}(x)dx.
\end{align*}
Similar to \eqref{eq-est-phi101RI}, one can check that
\begin{equation}\label{eq-est-phi102RI}
  \begin{aligned}    
  \left|\Re \left(\phi_{1,0}^{[2]}(y)\right)\right|\lesssim \alpha^4,\quad
  \left|\Im \left(\phi_{1,0}^{[2]}(y)\right)\right|\lesssim \alpha^4c_r+\alpha^4\hat c_i\left|\ln c_r\right|,\\
  \left|\Re \left(\phi_{1,0}^{[2]\prime}(y)\right)\right|\lesssim \alpha^4 \left(1+\left|\ln \left(\sqrt{1-\hat c}+y\right)\right|\right),\quad
  \left|\Im \left(\phi_{1,0}^{[2]\prime}(y)\right)\right|\lesssim \alpha^4 \left(1+\left|\ln \left(\sqrt{1-\hat c}+y\right)\right|\right).   
  \end{aligned}
\end{equation}
Moreover,
\begin{equation}\label{eq-est-phi10jRI}
  \begin{aligned}    
  \left|Re \left(\phi_{1,0}^{[j]}(y)\right)\right|\lesssim \alpha^{2j},\quad
  \left|Im \left(\phi_{1,0}^{[j]}(y)\right)\right|\lesssim \alpha^{2j}c_r+\alpha^{2j}c_i\left|\ln c_r\right|,\\
  \left|Re \left(\phi_{1,0}^{[j]\prime}(y)\right)\right|\lesssim \alpha^{2j}\left(1+\left|\ln \left(\sqrt{1-\hat c}+y\right)\right|\right),\quad
  \left|Im \left(\phi_{1,0}^{[j]\prime}(y)\right)\right|\lesssim \alpha^{2j}\left(1+\left|\ln \left(\sqrt{1-\hat c}+y\right)\right|\right).\\
  \end{aligned}
\end{equation}

  These bounds show that the higher-order corrections contribute only to the stated
  remainders. Therefore,
  \begin{align*}
    \Re \left(\frac{\phi_{1,\alpha}(-1)}{\phi_{1,\alpha}'(-1)}\right)
    =
    &-\frac{c_r}{2}
    +\frac{2}{15}\alpha^2
    -\frac{1}{3}c_r\alpha^2
    +\frac{4}{15^2}\alpha^4\ln \left|\frac{\hat c}{4}\right|\\
    &+O\left(\alpha^4+\alpha^4\hat c|\ln \hat c|^2
    +\alpha^2\hat c^2|\ln \hat c|\right),
  \end{align*}
  and
  \begin{align*}
   \Im \left(\frac{\phi_{1,\alpha}(-1)}{\phi_{1,\alpha}'(-1)}\right)=\frac{-\hat c_i}{2}-\frac{1}{3}\hat c_i\alpha^2+\frac{4}{15^2}\alpha^4\arg(-\hat c)+O \left(\alpha^4 \hat c\left|\ln \hat c\right|+\alpha^2 \hat c^2\left|\ln \hat c\right|+\alpha^6\left|\ln \hat c\right|\right).
  \end{align*}

  Finally, we justify the estimates involving $\pa_{\hat c}$. From the recursive construction,
  $\phi_{1,\alpha}(y)$ is analytic in $\hat c$. More precisely, by induction one may write
  \begin{align*}
    \phi_{1,0}^{[j]}(y)
    =
    \alpha^{2j}P_j(y,\hat c)
    +
    \alpha^{2j}Q_j(y,\hat c)
    \left(\sqrt{1-\hat c}+y\right)
    \ln \left(\sqrt{1-\hat c}+y\right)+err,
  \end{align*}
  where $P_j(y,\hat c)$ and $Q_j(y,\hat c)$ are smooth in $y$ and analytic in $\hat c$, and
  $\left|P_j(y,\hat c)\right|$, $\left|Q_j(y,\hat c)\right|$,
  $\left|\pa_{\hat c}P_j(y,\hat c)\right|$, and $\left|\pa_{\hat c}Q_j(y,\hat c)\right|$ are uniformly bounded.
  Here $err$ denotes a finite
sum of smoother logarithmic terms, each of which contains at least two powers of
$\sqrt{1-\hat c}+y$, namely terms of the form
\[
  \alpha^{2j}
  \left(\sqrt{1-\hat c}+y\right)^m
  \left(\ln\left(\sqrt{1-\hat c}+y\right)\right)^k,
  \qquad m\ge2,\quad k\ge0,
\]
multiplied by coefficients that are smooth in $y$, analytic in $\hat c$, and
uniformly bounded together with their $\hat c$-derivatives. These terms are
smoother at the logarithmic singularity $y=-\sqrt{1-\hat c}$ and are therefore
absorbed in the estimates below.
  In particular,
  \begin{align*}
    \left|\phi_{1,0}^{[j]}(y)\right|
    &\lesssim \alpha^{2j},\\
    \left|\phi_{1,0}^{[j]\prime}(y)\right|
    &\lesssim
    \alpha^{2j}
    \left(1+\left|\ln \left(\sqrt{1-\hat c}+y\right)\right|\right),\\
    \left|\pa_{\hat c}\phi_{1,0}^{[j]}(y)\right|
    &\lesssim
    \alpha^{2j}
    \left(1+\left|\ln \left(\sqrt{1-\hat c}+y\right)\right|\right),\\
    \left|\pa_{\hat c}\phi_{1,0}^{[j]\prime}(y)\right|
    &\lesssim
    \alpha^{2j}
    \frac{1}{\left|\sqrt{1-\hat c}+y\right|}.
  \end{align*}
  Evaluating these bounds at $y=-1$ gives
  \begin{align*}
    \left|\phi_{1,\alpha}'(-1)-\phi_{1,0}'(-1)\right|
    &\lesssim \alpha^2\left|\ln \hat c\right|,\\
    \left|\pa_{\hat c}\phi_{1,\alpha}(-1)-\pa_{\hat c}\phi_{1,0}(-1)\right|
    &\lesssim \alpha^2\left|\ln\hat  c\right|,\\
    \left|\pa_{\hat c}\phi_{1,\alpha}'(-1)-\pa_{\hat c}\phi_{1,0}'(-1)\right|
    &\lesssim \frac{\alpha^2}{|\hat c|}.
  \end{align*}

  Since the quotient
  \[
    \frac{\phi_{1,\alpha}(-1)}{\phi_{1,\alpha}'(-1)}
  \]
  is analytic in $\hat c$, the derivative identities follow from the above expansion and the
  Cauchy--Riemann relations. This gives
  \begin{align*}
    \pa_{c_r}\Re \left(\frac{\phi_{1,\alpha}(-1)}{\phi_{1,\alpha}'(-1)}\right)
    =
    \pa_{\hat c_i}\Im \left(\frac{\phi_{1,\alpha}(-1)}{\phi_{1,\alpha}'(-1)}\right)
    =
    -\frac{1}{2}
    -\frac{1}{3}\alpha^2
    +\frac{4}{15^2}\alpha^4
    \frac{c_r^{-1}}{1+\left(\frac{\hat c_i}{c_r}\right)^2}
    +O\left(\alpha^4|\ln \hat c|
    +\alpha^2\hat c|\ln \hat c|\right),
  \end{align*}
  and
  \begin{align*}
    \pa_{\hat c_i}\Re \left(\frac{\phi_{1,\alpha}(-1)}{\phi_{1,\alpha}'(-1)}\right)
    =
    -\pa_{c_r}\Im \left(\frac{\phi_{1,\alpha}(-1)}{\phi_{1,\alpha}'(-1)}\right)
    =
    \frac{4}{15^2}\alpha^4
    \frac{\hat c_i c_r^{-2}}{1+\left(\frac{\hat c_i}{c_r}\right)^2}
    +O\left(\alpha^4|\ln \hat c|
    +\alpha^2\hat c|\ln \hat c|\right).
  \end{align*}
  The proof is complete.
\end{proof}
\begin{remark}\label{rmk-Ray-endpoint-expansion}
  In the subregion $|\hat c_i|\ll |c_r|$, we have
  \begin{align*}
    \arg(-\hat c)
    =
    -\pi+\frac{\hat c_i}{c_r}+O\left(\left|\frac{\hat c_i}{c_r}\right|^2\right).
  \end{align*}
  Hence \eqref{eq-phi101-at-minus-one} reduces to
  \begin{align*}
    \phi_{1,0}^{[1]}(-1)
    =
    \alpha^2
    \bigg[
    &\frac{4}{15}
    -\frac{2}{5}c_r
    +\frac{2c_r}{15}\ln \left|\frac{\hat c}{4}\right|\\
    &\quad
    +i\left(
    -\frac{2\pi c_r}{15}
    -\frac{4}{15}\hat c_i
    +\frac{2\hat c_i}{15}\ln \left|\frac{\hat c}{4}\right|
    \right)
    \bigg]
    +O\left(\alpha^2\hat c^2|\ln \hat c|+\alpha^2 \frac{\hat c_i^2}{c_r}  \right),
  \end{align*}
  and
  \begin{align*}
    \phi_{1,0}^{[1]\prime}(-1)
    =
    -\alpha^2
    \left(
    \frac{8}{15}
    +\frac{4}{15}\ln \left|\frac{\hat c}{4}\right|
    -i\frac{4\pi}{15}
    \right)
    +O\left(\alpha^2\hat c|\ln \hat c|
    +\frac{\alpha^2\hat c_i}{c_r}\right).
  \end{align*}
  Consequently,
  \begin{align*}
    \Re \left(\frac{\phi_{1,\alpha}(-1)}{\phi_{1,\alpha}'(-1)}\right)
    =
    &-\frac{c_r}{2}
    +\frac{2}{15}\alpha^2
    -\frac{1}{3}c_r\alpha^2
    +\frac{4}{15^2}\alpha^4\ln \left|\frac{\hat c}{4}\right|\\
    &+O\left(\alpha^4+\alpha^4\hat c|\ln \hat c|
    +\alpha^2\hat c^2|\ln \hat c|
    +\frac{\alpha^4\hat c_i}{c_r}\right),
  \end{align*}
  and
  \begin{align*}
    \Im \left(\frac{\phi_{1,\alpha}(-1)}{\phi_{1,\alpha}'(-1)}\right)
    =
    -\frac{\hat c_i}{2}
    -\frac{1}{3}\hat c_i\alpha^2
    -\frac{4}{15^2}\pi\alpha^4
    +O\left(\alpha^4\hat c|\ln \hat c|
    +\alpha^2\hat c^2|\ln \hat c|
    +\frac{\alpha^4\hat c_i}{c_r}\right).
  \end{align*}
\end{remark}
 
\section{The Airy equation}
In this section, we study the Airy equation
\begin{align}\label{eq-Airy}
  i\varepsilon\pa_y^4\phi(y)+\left(u_p-c-2i\varepsilon\alpha^2\right)\pa_y^2\phi(y)=f(y),\quad y\in[-1,0].
\end{align}

Throughout the rest of this paper, we denote by $Airy_4$ the fourth-order operator
\begin{align}\label{op-airy-4}
  Airy_4=i\varepsilon\pa_y^4+\left(u_p-c-2i\varepsilon\alpha^2\right)\pa_y^2.
\end{align}

From here, we assume that $\varepsilon$ is sufficiently small and $c\in \mathbb H$, where $\mathbb H$ is the T--S eigen region defined in \eqref{eq-region-H}.
\subsection{Langer transform}
To study this problem, we follow the idea of introducing the Langer transform. 

We define
\begin{align}\label{eq-Langertran}
  \eta(y)=\eta_r(y)+i\eta_i,
\end{align}
where
\begin{align}\label{eq-Langertran-i}
  \eta_i=-\frac{c_i}{u_p'(y_c)},
\end{align}  
and
\begin{equation}\label{eq-Langertran-r}
  \eta_r(y)=\left\{
    \begin{array}{ll}
      \left(\frac{3}{2}\int^y_{y_c}\left(\frac{u_p(x)-c_r}{u_p'(y_c)}\right)^{\frac{1}{2}}dx\right)^{\frac{2}{3}},&y\ge y_c,\\ 
      -\left(\frac{3}{2}\int^{y_c}_y\left(\frac{c_r-u_p(x)}{u_p'(y_c)}\right)^{\frac{1}{2}}dx\right)^{\frac{2}{3}},&y\le y_c.
    \end{array}
  \right.
\end{equation}
Here $y_c=-\sqrt{1-c_r}$ is given in \eqref{eq-yc}.

Recall that the standard Airy function $\mathrm{Ai}(y)$ solves
\begin{align*}
  \pa_y^2\mathrm{Ai}(y)-y\mathrm{Ai}(y)=0.
\end{align*}
The motivation for introducing the Langer transform is that, $\eta_r$ satisfies
\begin{align}\label{eq-Langer-tran-moti}
  \left(\eta_r'(y)\right)^2\eta_r(y)=\frac{u_p-c_r}{u_p'(y_c)}.
\end{align}
Consequently, $\mathrm{Ai} \left(\eta_r(y)\right)$ is an approximate solution of the following generalized Airy equation,
\begin{align*}
  \pa_y^2\mathrm{g}(y)-\frac{u_p-c_r}{u_p'(y_c)}\mathrm{g}(y)=0.
\end{align*}

The Langer transform \eqref{eq-Langertran} has the following properties.
\begin{lemma}\label{lem-Langer}
   For $\eta$ defined in \eqref{eq-Langertran}, it holds that $\eta(y)$ is smooth in $y$, and $\pa_{c_r}\eta(y)$ is smooth in $y$, and
   \begin{align}\label{eq-Langer-prop1}
     \eta_r(y)=\left((y-y_c)^{\frac{3}{2}}+O(\left|y-y_c\right|^{\frac{5}{2}})\right)^{\frac{2}{3}}=(y-y_c)+O(\left|y-y_c\right|^2),
   \end{align}
   \begin{align}\label{eq-Langer-prop2}
    \eta_r(y_c)=0,\quad \eta_r'(y_c)=1,\quad \eta(-1)= -\frac{c}{2}+O(c^2).
  \end{align}  
     \begin{align}\label{eq-Langer-prop3}
     \eta_r'(y)>0,\quad \eta_r'(y)=O(1)\text{ for }y\in[-1,0],
  \end{align}   
    \begin{align}\label{eq-Langer-prop4}
     \pa_{c_r}\eta_r(y)=-\frac{1}{u_p'(y_c)}+O(\left|y-y_c\right|),
   \end{align}
   and
    \begin{align}\label{eq-Langer-prop5}
     \left(\pa_{c_r}\eta_r\right)(y_c)=-\frac{1}{u_p'(y_c)},\quad \left(\pa_{c_r}\eta_r'\right)(y_c)=\frac{2}{5 \left(u_p'(y_c)\right)^2}.
   \end{align}
 \end{lemma}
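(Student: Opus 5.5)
The plan is to exploit the explicit structure of $u_p=1-y^2$: factor the quantity under the square root in \eqref{eq-Langertran-r} so that $\eta_r$ becomes $(y-y_c)$ times a smooth positive function. Every listed property then follows from Taylor expansion of that factor in $s=y-y_c$ and in the parameter $y_c$.

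\textbf{Step 1 (factorization).} Since $c_r=u_p(y_c)=1-y_c^2$ and $u_p'(y_c)=-2y_c$, we have
\begin{align*}
\frac{u_p(x)-c_r}{u_p'(y_c)}=\frac{(x-y_c)(-x-y_c)}{-2y_c}=(x-y_c)\,h(x,y_c),\qquad h(x,y_c)=1+\frac{x-y_c}{2y_c}.
\end{align*}
Because $y_c\approx-1$, $h$ is smooth in $(x,y_c)$ and bounded below by a positive constant on $[-1,0]$ (e.g.\ $h(0,y_c)=\tfrac12$). Substituting $x=y_c+t(y-y_c)$ treats both cases $y\gtrless y_c$ of \eqref{eq-Langertran-r} uniformly; the signs match because $(x-y_c)$ and $(c_r-u_p)$ change sign together. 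This gives
\begin{align*}
\eta_r(y)=(y-y_c)\,K(y,y_c),\qquad K(y,y_c)=\Big(\tfrac32\int_0^1 t^{\frac12}\,h\big(y_c+t(y-y_c),y_c\big)^{\frac12}\,dt\Big)^{\frac23}.
\end{align*}
Here $K$ is smooth in both variables and positive, and $K(y_c,y_c)=1$ because $\tfrac32\int_0^1t^{1/2}dt=1$. Since $c_r\mapsto y_c=-\sqrt{1-c_r}$ is smooth with $dy_c/dc_r=\frac{1}{2\sqrt{1-c_r}}=\frac{1}{u_p'(y_c)}$, smoothness of $\eta$ and of $\pa_{c_r}\eta$ in $y$ follows. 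Expanding $h^{1/2}=1+\frac{t s}{4y_c}+O(s^2)$ gives
\begin{align*}
K=1+\frac{s}{10y_c}+O(s^2),
\end{align*}
which yields \eqref{eq-Langer-prop1} and $\eta_r(y_c)=0$, $\eta_r'(y_c)=1$.

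\textbf{Step 2 (monotonicity and endpoint value).} The identity \eqref{eq-Langer-tran-moti}, $(\eta_r')^2\eta_r=(y-y_c)h$, follows by differentiating $\tfrac23\eta_r^{3/2}$ on each side of $y_c$. Away from $y_c$ it shows $\eta_r'\neq0$, and near $y_c$ we have $\eta_r'\approx1$. By continuity, $\eta_r'>0$ on $[-1,0]$, and boundedness of $\eta_r'$ is clear from smoothness of $K$. At $y=-1$, $-1-y_c=\sqrt{1-c_r}-1=-\frac{c_r}{2}+O(c_r^2)$, so $\eta_r(-1)=-\frac{c_r}{2}+O(c_r^2)$. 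Also $\eta_i=-\frac{c_i}{2\sqrt{1-c_r}}=-\frac{c_i}{2}+O(c_ic_r)$. Together these give $\eta(-1)=-\frac c2+O(c^2)$.

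\textbf{Step 3 (parameter derivatives).} Write $\pa_{c_r}=\frac{1}{u_p'(y_c)}\frac{d}{dy_c}$ with $y$ fixed. Then
\begin{align*}
\frac{d\eta_r}{dy_c}=-K+(y-y_c)\,\frac{dK}{dy_c},
\end{align*}
which gives \eqref{eq-Langer-prop4} and the first identity in \eqref{eq-Langer-prop5}. For the second identity, use $\eta_r'=1+\frac{2(y-y_c)}{10y_c}+O((y-y_c)^2)$, where the $O$-term has a coefficient smooth in $y_c$. Differentiating in $y_c$ at fixed $y$ and then setting $y=y_c$ gives $-\frac{1}{5y_c}$. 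Multiplying by $\frac{1}{u_p'(y_c)}=-\frac{1}{2y_c}$ yields $\frac{1}{10y_c^2}=\frac{2}{5(u_p'(y_c))^2}$.

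The only delicate point is the bookkeeping in this last step. One must differentiate in $y_c$ with $y$ held fixed, so that the $O(s^2)$ remainder, whose derivative is still $O(s)$, vanishes at $s=0$. One must also compute the first-order Taylor coefficient of $K$ correctly through the $\tfrac23$ power.
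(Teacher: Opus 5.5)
Your proof is correct, and it takes a genuinely different route from the paper.

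\textbf{The paper's route.} The paper works directly with the two-sided definition \eqref{eq-Langertran-r}. It expands the square root near $y_c$ on each side and asserts smoothness of $\eta_r$ because $u_p$ is smooth. It obtains $\pa_{c_r}\eta_r$ by differentiating under the integral, which produces a formula involving $\int_{y_c}^{y}\bigl(\frac{u_p-c_r}{u_p'(y_c)}\bigr)^{-1/2}dx\,/\,\eta_r^{1/2}$ whose limit at $y_c$ must be taken. The second identity in \eqref{eq-Langer-prop5} comes from differentiating $\eta_r'(y_c)\equiv1$ along $c_r$: $(\pa_{c_r}\eta_r')(y_c)=-\eta_r''(y_c)\frac{dy_c}{dc_r}$, with $\eta_r''(y_c)=\frac15\frac{u_p''(y_c)}{u_p'(y_c)}$ read off from $(\eta_r')^2\eta_r=\frac{u_p-c_r}{u_p'(y_c)}$.

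\textbf{What your route buys.} Your factorization plus the rescaling $x=y_c+t(y-y_c)$ gives the single formula $\eta_r=(y-y_c)K(y,y_c)$ with $K$ jointly smooth and positive. This actually proves smoothness of $\eta$ and $\pa_{c_r}\eta$ across the turning point, which the paper only asserts. It also replaces the paper's $0/0$-type limits by plain Taylor expansion.

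\textbf{What it costs.} For \eqref{eq-Langer-prop5} you must push the first Taylor coefficient of $K$ through the $\frac23$ power and track the $y_c$-dependence of the remainder. The paper's chain-rule trick needs only $\eta_r''(y_c)$. Nothing is lost in generality, since $u_p(x)-c_r=(x-y_c)\int_0^1u_p'(y_c+\tau(x-y_c))\,d\tau$ gives the same structure whenever $u_p'\neq0$ on the relevant interval.

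\textbf{Two bonuses.} Your Step 2 gives strict positivity $\eta_r'>0$, whereas the paper's monotonicity remark only yields $\eta_r'\ge0$. Combining \eqref{eq-Langer-tran-moti} with your factorization also gives $(\eta_r')^2=h(y,y_c)/K(y,y_c)$. This provides uniform two-sided bounds on $\eta_r'$, of the kind the paper later uses in the form $\eta_r'\ge C^{-1}$.
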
 

\begin{proof}
For $y\ge y_c$, we have
\begin{align*}
  \left(\frac{u_p-c_r}{u_p'(y_c)}\right)^{\frac{1}{2}}=(y-y_c)^{\frac{1}{2}}+O(\left|y-y_c\right|^{\frac{3}{2}}),
\end{align*}
it follows that
\begin{align*}
  \frac{3}{2}\int^y_{y_c}\left(\frac{u_p(x)-c_r}{u_p'(y_c)}\right)^{\frac{1}{2}}dx=(y-y_c)^{\frac{3}{2}}+O(\left|y-y_c\right|^{\frac{5}{2}}).
\end{align*}
Therefore, by the definition of $\eta_r$ we have
\begin{align*}
  \eta_r(y)=\left((y-y_c)^{\frac{3}{2}}+O(\left|y-y_c\right|^{\frac{5}{2}})\right)^{\frac{2}{3}}=(y-y_c)+O(\left|y-y_c\right|^2).
\end{align*}
For $y\le y_c$ we have the same result. Actually as $u_p$ is smooth, $\eta_r$ is smooth and has an arbitrary-order Taylor expansion.

Moreover, one can easily check that
\begin{align}\label{eq-Langer-tran-deri}
  \left(\eta_r'(y)\right)^2\eta_r(y)=\frac{u_p-c_r}{u_p'(y_c)},\quad \eta_r''(y)=\frac{\frac{\eta_r(y)}{\eta_r'(y)}u_p'-\left(u_p-c_r\right)}{2\eta_r^2(y)u_p'(y_c)},
\end{align}
and
\begin{align*}
  \eta_r'(y_c)=1,\quad \eta_r''(y_c)=\frac{1}{5} \frac{u_p''(y_c)}{u_p'(y_c)}.
\end{align*}

From the definition of $\eta_r(y)$, we can see that $\eta_r(y)$ is monotonically increasing with respect to $y$, therefore, $\eta_r'(y)\ge0$. Then by \eqref{eq-Langer-tran-deri}, we have $\left|\eta_r'(y)\right|=O(1)$, which gives \eqref{eq-Langer-prop3}.

Next, we study the $c_r$-derivative of $\eta_r$. 

For $y\ge y_c$, we have
\begin{align*}
  \pa_{c_r}\eta_r(y)=&\pa_{c_r}\left(\frac{3}{2}\int^y_{y_c}\left(\frac{u_p(x)-c_r}{u_p'(y_c)}\right)^{\frac{1}{2}}dx\right)^{\frac{2}{3}}\\
  =&\left(\frac{3}{2}\int^y_{y_c}\left(\frac{u_p(x)-c_r}{u_p'(y_c)}\right)^{\frac{1}{2}}dx\right)^{-\frac{1}{3}}\int^y_{y_c}\pa_{c_r}\left(\frac{u_p(x)-c_r}{u_p'(y_c)}\right)^{\frac{1}{2}}dx.
\end{align*}
Recall that
\begin{align*}
  \frac{u_p(x)-c_r}{u_p'(y_c)}=\frac{1-x^2-c_r}{2\sqrt{1-c_r}},
\end{align*}
we have
\begin{align*}
  \pa_{c_r}\left(\frac{u_p(x)-c_r}{u_p'(y_c)}\right)^{\frac{1}{2}}=&\frac{\left(\frac{u_p(x)-c_r}{u_p'(y_c)}\right)^{\frac{1}{2}}}{4(1-c_r)}-\frac{\left(\frac{u_p(x)-c_r}{u_p'(y_c)}\right)^{-\frac{1}{2}}}{4\sqrt{1-c_r}},
\end{align*}
and then
\begin{align*}
  \pa_{c_r}\eta_r(y)=&\frac{\frac{\frac{2}{3}\eta_r^{\frac{3}{2}}(y)}{4(1-c_r)}-\frac{\int^y_{y_c}\left(\frac{u_p(x)-c_r}{u_p'(y_c)}\right)^{-\frac{1}{2}}dx}{4\sqrt{1-c_r}}}{\eta_r^{\frac{1}{2}}(y)}.
\end{align*}
It holds that
\begin{align*}
  \left(\frac{u_p(x)-c_r}{u_p'(y_c)}\right)^{-\frac{1}{2}}=\left(x-y_c\right)^{-\frac{1}{2}}+O(\left|x-y_c\right|^{\frac{1}{2}}),
\end{align*}
then we have
\begin{align*}
  \pa_{c_r}\eta_r(y_c)=-\frac{1}{u_p'(y_c)}.
\end{align*}

For $y\le y_c$, by using the same technical, we have
\begin{align*}
  \pa_{c_r}\eta_r(y)=&\frac{\frac{\frac{2}{3}\eta_r(y)\left(-\eta_r(y)\right)^{\frac{1}{2}}}{4(1-c_r)}-\frac{\int^{y_c}_y\left(\frac{c_r-u_p(x)}{u_p'(y_c)}\right)^{-\frac{1}{2}}dx}{4\sqrt{1-c_r}}}{\left(-\eta_r(y)\right)^{\frac{1}{2}}}.
\end{align*}

Therefore, we have
\begin{align*}
  \pa_{c_r}\eta_r(y)=-\frac{1}{u_p'(y_c)}+O(\left|y-y_c\right|),
\end{align*}
and
\begin{align*}
  \eta_r(-1)= -\frac{c_r}{2}+O(c_r^2).
\end{align*}

As $u$ and $\eta_r(y)$ are smooth, $\pa_{c_r}\eta_r(y)$ is also smooth.

Recall that $\eta_r'(y_c)=1$. We have 
\begin{align*}
  \left(\pa_{c_r}\eta_r'\right)(y_c)=\pa_{c_r}\left(\eta_r'(y_c)\right)-\eta_r''(y_c)\pa_{c_r}y_c=\frac{2}{5 \left(u_p'(y_c)\right)^2}=\frac{1}{10(1-c_r)}.
\end{align*}
By same technical, we can also get the value of $\pa_{c_r}\eta_r^{(n)} (y_c)$.

Then we complete the proof of this lemma.
\end{proof}

\subsection{Modified Airy function and approximate Green function}
We first introduce two modified Airy functions which are approximate solutions of
\begin{align*}
  i\varepsilon\pa_y^2A(y)+\left(u_p-c-2i\varepsilon\alpha^2\right)A(y)=0,\quad y\in[-1,0].
\end{align*}

We define
\begin{align}\label{eq-kappa}
  \kappa=\left(\frac{\varepsilon}{u_p'(y_c)}\right)^{-\frac{1}{3}},
\end{align}
and
\begin{align}\label{eq-modi-Airy}
   A_1(y)=\mathrm{Ai}(e^{\frac{\pi}{6}i}\kappa \eta(y)),\quad A_2(y)=\mathrm{Ai}(e^{\frac{5\pi}{6}i}\kappa \eta(y)).
\end{align}

Using \eqref{eq-Langer-tran-moti}, a direct computation gives, for $j=1,2$,
\begin{equation}\label{eq-airy2}
  \begin{aligned}    
  &i\varepsilon\pa_y^2 A_j(y)+\left(u_p(y)-c-2i\varepsilon\alpha^2\right) A_j(y)\\ 
  =&i\varepsilon\frac{\eta_r''(y)}{\eta_r'(y)}\pa_y A_j(y)+i \left(\left(\eta_r'(y)\right)^2-1\right)c_i A_j(y)-2i\varepsilon\alpha^2 A_j(y).    
  \end{aligned}
\end{equation}

From the properties of $\mathrm{Ai}(y)$, we can see that $A_1(y)$ and $A_2(y)$ are linearly independent solutions of \eqref{eq-airy2}, and their Wronskian is given by
\begin{align}\label{eq-Wron}
   A_1(y)\pa_yA_2(y)-\pa_yA_1(y)A_2(y)=\frac{1}{2\pi}\kappa \eta_r'(y).
\end{align}

Next, we construct an approximate Green function for the fourth-order Airy operator $Airy_4$.

Inspired by \cite{GGN16adv}, we define the following modified primitive Airy functions
\begin{align}
    A_1(1,y)=\int^y_0 \frac{1}{\left(\eta_r'(z)\right)^{\frac{1}{2}}}  A_1(z)dz,\quad  A_1(2,y)=\int^y_0  A_1(1,z)dz \label{eq-Airy-modi-1}\\ 
   A_2(1,y)=\int^y_{-1} \frac{1}{\left(\eta_r'(z)\right)^{\frac{1}{2}}} A_2(z)dz,\quad A_2(2,y)=\int^y_{-1} A_2(1,z)dz.\label{eq-Airy-modi-2}
\end{align}
and the following approximate Green function
\begin{equation}\label{eq-Green-Airy}
   G_A(x,y)=i \frac{2\pi}{\left(\eta_r'(x)\right)^{\frac{1}{2}}\kappa\varepsilon}\left\{
    \begin{array}{ll}
       A_1(2,y) A_2(x)+a_2(x)+a_1(x)(x-y_c),&x<y;\\ 
       A_1(x) A_2(2,y)+a_1(x)(y-y_c),&x>y.
    \end{array}
  \right.
\end{equation}
where
\begin{align}
  a_1(x)= A_1(1,x) A_2(x)- A_1(x) A_2(1,x),\label{eq-a1}\\ 
  a_2(x)= A_1(x) A_2(2,x)- A_1(2,x) A_2(x).\label{eq-a2}
\end{align}
With the above choice of $a_1(x)$ and $a_2(x)$, the functions $G_A(x,y)$, $\pa_yG_A(x,y)$, and $\pa_y^2G_A(x,y)$ are continuous across $y=x$.

A direct computation shows that
\begin{equation}\label{eq-Green-app}
  \begin{aligned}    
    &i\varepsilon\pa_y^4 G_A(x,y)+\left(u_p(y)-c-2i\varepsilon\alpha^2\right)\pa_y^2 G_A(x,y)\\ 
  =&\delta_x(y)+ i\varepsilon \left(\pa_y^2\left(\eta_r'(y)\right)^{-\frac{1}{2}}\right)\left(\eta_r'(y)\right)^{\frac{1}{2}}\pa_y^2 G_A(x,y)+i \left(\left(\eta_r'(y)\right)^2-1\right)c_i\pa_y^2 G_A(x,y)-2i\varepsilon\alpha^2\pa_y^2 G_A(x,y),
  \end{aligned}
\end{equation} 
where $\delta_x(y)$ is the Dirac delta distribution.

\begin{remark}
  The definition of the modified primitive Airy functions used here is slightly different from that in \cite{GGN16adv}. Here we have
\begin{align*}
  A_1(1,0)=A_1(2,0)=A_2(1,-1)=A_2(2,-1)=0,
\end{align*}
and then the Green function has some useful boundary properties. For example, we have
\begin{align*}
  \pa_yG_A(x,0)=0.
\end{align*}
\end{remark}

\subsection{Properties of the modified Airy functions}
We first recall some classical properties of the Airy functions.  

Let
\begin{align*}
  \mathrm{Ai}(0,z)=\mathrm{Ai}(z),\\
  \mathrm{Ai}(k,z)=\pa_z^{|k|} \mathrm{Ai}(z),\text{ for }k\in\mathbb Z, k\le-1,\\
  \mathrm{Ai}(k,z)=\int^z_{+\infty}\mathrm{Ai}(k-1,x)dx,\text{ for }k\in\mathbb Z, k\ge1,
\end{align*}
which are all analytic functions. The first primitive of Airy has the following properties, see \cite{Olver2010},
\begin{align*}
  \int^z_{+\infty}\mathrm{Ai}(x)dx=-\pi \left(\mathrm{Ai}(z)\mathrm{Gi}'(z)-\mathrm{Ai}'(z)\mathrm{Gi}(z)\right),\\
  \int^z_{+\infty}\mathrm{Ai}(x)dx=\int^{-\infty}_{+\infty}\mathrm{Ai}(x)dx+\int^z_{-\infty}\mathrm{Ai}(x)dx=-1+\pi \left(\mathrm{Ai}(z)\mathrm{Hi}'(z)-\mathrm{Ai}'(z)\mathrm{Hi}(z)\right),
\end{align*}
where $\mathrm{Gi}$ and $\mathrm{Hi}$ are Scorer functions. For given $\delta>0$ and big enough $M$, it holds that
\begin{align*}
  \mathrm{Gi}(z)=\frac{1}{\pi z}\sum^\infty_{j=0}\frac{\left(3j\right)!}{j!\left(3z^3\right)^j},\ \mathrm{Gi}'(z)=-\frac{1}{\pi z^2}\sum^\infty_{j=0}\frac{\left(3j+1\right)!}{j!\left(3z^3\right)^j},\text{ for }|z|\ge M,\ |\arg z|\le \frac{1}{3}\pi-\delta,\\
  \mathrm{Hi}(z)=-\frac{1}{\pi z}\sum^\infty_{j=0}\frac{\left(3j\right)!}{j!\left(3z^3\right)^j},\ \mathrm{Hi}'(z)=\frac{1}{\pi z^2}\sum^\infty_{j=0}\frac{\left(3j+1\right)!}{j!\left(3z^3\right)^j},\text{ for }|z|\ge M,\ |\arg -z|\le \frac{2}{3}\pi-\delta.
\end{align*}
Therefore, in the corresponding sectors, the representations in terms of
\(\mathrm{Ai}\), \(\mathrm{Gi}\), and \(\mathrm{Hi}\) yield the asymptotic
expansion of the first primitive \(\mathrm{Ai}(1,z)\). By induction, it is easy to check that 
\begin{align}\label{eq-Airy-k}
  \mathrm{Ai}(k,z)=\frac{z}{k-1}\mathrm{Ai}(k-1,z)-\frac{1}{k-1}\mathrm{Ai}(k-3,z), \text{ for }k\ge2.
\end{align}
Then we can get the asymptotic expansion for all $\mathrm{Ai}(k,z)$.

Combine the above analysis and the classical results from \cite{VS2010}, \cite{Olver2010}, and \cite{GGN16adv}, we have the following properties.
\begin{lemma}\label{lem:Airy-class-est} Let 
\begin{align*}
  S_z\eqdef\left\{z\Big||z|\ge M, |\arg z|\le \pi-\delta,\left|\arg (z)-\frac{1}{3}\pi\right|\ge \delta,\left|\arg (z)+\frac{1}{3}\pi\right|\ge \delta \right\}.
\end{align*}
For every small $\delta>0$, there exists a sufficiently large $M=M(\delta)$ such that, for all $z\in S_z$,
  \begin{align*}
  \mathrm{Ai}(k,z)=\frac{\left(-1\right)^{|k|}}{2\sqrt{\pi}}z^{-\frac{1}{4}-\frac{k}{2}}e^{-\frac{2}{3}z^{\frac{3}{2}}}\left(1+O(z^{-\frac{3}{2}})\right),\text{ for }k\in\mathbb Z,
\end{align*}
especially,
\begin{align*}
  \mathrm{Ai}'(z)=-\frac{z^{\frac{1}{4}}e^{-\frac{2}{3}z^{\frac{3}{2}}}}{2\sqrt\pi }\left(1+\frac{3\cdot 7z^{-\frac{3}{2}}}{144 }-\frac{5\cdot 7\cdot 9\cdot 13z^{-3}}{2\cdot144^2 }+O(|z|^{-\frac{9}{2}})\right)
\end{align*}
\begin{align*}
  \mathrm{Ai}(z)=\frac{z^{-\frac{1}{4}}e^{-\frac{2}{3}z^{\frac{3}{2}}}}{2\sqrt\pi}\left(1-\frac{3\cdot 5 z^{-\frac{3}{2}}}{144}+\frac{5\cdot 7\cdot 9\cdot 11z^{-3}}{2\cdot144^2}+O(|z|^{-\frac{9}{2}})\right)
\end{align*}
\begin{align*}
  \mathrm{Ai}(1,z)=-\frac{z^{-\frac{3}{4}}e^{-\frac{2}{3}z^{\frac{3}{2}}}}{2\sqrt\pi }\left(1+\left(\frac{3\cdot 7}{144}-1\right) z^{-\frac{3}{2}}+\left(\frac{3\cdot 5}{144}-\frac{5\cdot 7\cdot 9\cdot 13}{2\cdot144^2}+2\right)z^{-3}+O(|z|^{-\frac{9}{2}})\right),
\end{align*}
\begin{align*}
  \mathrm{Ai}(2,z)=\frac{z^{-\frac{5}{4}}e^{-\frac{2}{3}z^{\frac{3}{2}}}}{2\sqrt\pi}\left(1-\left(\frac{3\cdot 5}{144}+2\right)z^{-\frac{3}{2}}+O(|z|^{-3})\right).
\end{align*}
Furthermore, there hold asymptotic bounds:
\begin{align*}
  \left|\mathrm{Ai}(k,e^{\frac{\pi}{6}i}y)\right|\lesssim \left\langle y \right\rangle^{-\frac{k}{2}-\frac{1}{4}}e^{-\sqrt{2|y|}y/3},\text{ for }k\in\mathbb Z,y\in\mathbb R,\\
  \left|\mathrm{Ai}(k,e^{\frac{5\pi}{6}i}y)\right|\lesssim \left\langle y \right\rangle^{-\frac{k}{2}-\frac{1}{4}}e^{\sqrt{2|y|}y/3},\text{ for }k\in\mathbb Z,y\in\mathbb R.
\end{align*}
\end{lemma}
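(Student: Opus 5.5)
The plan is to derive everything from the classical representations of $\mathrm{Ai}$, $\mathrm{Ai}'$ and of the first primitive in terms of the Scorer functions. From these we obtain the general-$k$ statements by the recurrence \eqref{eq-Airy-k}, and the two real-line bounds by rotating into the sector $S_z$.

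\textbf{Step 1: $\mathrm{Ai}$ and $\mathrm{Ai}'$.} For $k=0$ and $k=-1$ we quote the standard asymptotic series (see \cite{Olver2010})
\[
  \mathrm{Ai}(z)\sim\frac{z^{-1/4}e^{-\zeta}}{2\sqrt\pi}\sum_j(-1)^j\frac{u_j}{\zeta^j},\qquad
  \mathrm{Ai}'(z)\sim-\frac{z^{1/4}e^{-\zeta}}{2\sqrt\pi}\sum_j(-1)^j\frac{v_j}{\zeta^j},\qquad
  \zeta=\tfrac23 z^{3/2}.
\]
These are valid uniformly in $|\arg z|\le\pi-\delta$. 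Writing out $u_1=\tfrac{5}{72}$, $u_2=\tfrac{385}{10368}$, $v_1=-\tfrac{7}{72}$, $v_2=-\tfrac{455}{10368}$ and converting $\zeta^{-j}$ into powers of $z^{-3/2}$ gives exactly the displayed coefficients $\tfrac{3\cdot5}{144}$, $\tfrac{5\cdot7\cdot9\cdot11}{2\cdot144^2}$, and so on. This is a routine check. Derivatives $k\le-2$ follow by differentiating the series, which is legitimate in closed subsectors for analytic asymptotic expansions, or more simply by using $\mathrm{Ai}''=z\mathrm{Ai}$.

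\textbf{Step 2: the primitives $k\ge1$ in $S_z$.} This is where the excluded rays $\arg z=\pm\pi/3$ matter, and I expect it to be the main obstacle. We use the two Scorer representations:
\[
  \mathrm{Ai}(1,z)=-\pi(\mathrm{Ai}\,\mathrm{Gi}'-\mathrm{Ai}'\,\mathrm{Gi}),
\]
valid for $|\arg z|\le\pi/3-\delta$, and
\[
  \mathrm{Ai}(1,z)=-1+\pi(\mathrm{Ai}\,\mathrm{Hi}'-\mathrm{Ai}'\,\mathrm{Hi}),
\]
valid for $|\arg(-z)|\le\tfrac23\pi-\delta$, that is $\pi/3+\delta\le|\arg z|\le\pi-\delta$.

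In the first sector we insert the power series of $\mathrm{Gi}$ and $\mathrm{Gi}'$ and the series of Step 1, and collect terms. The leading contributions $\mathrm{Ai}\cdot z^{-2}$ and $\mathrm{Ai}'\cdot z^{-1}$ combine into $-\tfrac{1}{2\sqrt\pi}z^{-3/4}e^{-\zeta}(1+\dots)$, with the stated correction coefficients.

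In the second sector we do the same with $\mathrm{Hi}$. There one must check that the constant $-1$ is exactly cancelled, or dominated. The point is that $\mathrm{Hi}$ carries the growing exponential in that sector, and the combination $\mathrm{Ai}\,\mathrm{Hi}'-\mathrm{Ai}'\mathrm{Hi}$ reproduces $1$ plus the same $e^{-\zeta}$ expansion. Equivalently, $e^{-\zeta}$ is exponentially large there, so $-1$ is absorbed into the $O(z^{-3/2})$ error. The analytic continuation argument (uniqueness of asymptotic expansions across overlapping sectors) guarantees that the two formulas agree. The form is $(-1)^{|k|}=1$ for $k=1$ up to the overall sign convention, and that convention must be tracked carefully.

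\textbf{Step 3: induction for $k\ge2$.} Next we apply \eqref{eq-Airy-k}, $\mathrm{Ai}(k,z)=\frac{z}{k-1}\mathrm{Ai}(k-1,z)-\frac{1}{k-1}\mathrm{Ai}(k-3,z)$. The leading terms $z\cdot z^{-1/4-(k-1)/2}$ cancel between the two pieces only at leading order in the appropriate way. Rather than expanding blindly, we check the leading term by differentiating the ansatz $z^{-1/4-k/2}e^{-\zeta}(1+O(z^{-3/2}))$ and matching it with $\mathrm{Ai}(k-1)$. Since $\partial_z$ of the ansatz has leading factor $-z^{1/2}$, consistency forces the sign $(-1)^{k}$ and the exponent $-\tfrac14-\tfrac k2$. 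For $\mathrm{Ai}(2,z)$ we compute the first correction explicitly, $-(\tfrac{3\cdot5}{144}+2)$, from the $k=1$ and $k=-1$ series.

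\textbf{Step 4: bounds on the rays $e^{i\pi/6}\mathbb R$ and $e^{5i\pi/6}\mathbb R$.} For $z=e^{i\pi/6}y$ with $y>0$, we have $\arg z=\pi/6$, inside $S_z$, and $\Re\,\tfrac23z^{3/2}=\tfrac23 y^{3/2}\cos(\pi/4)=\tfrac{\sqrt2}{3}y^{3/2}$. This gives the bound $e^{-\sqrt{2|y|}y/3}$. For $y<0$, $z=e^{-5i\pi/6}|y|$ has $|\arg z|=5\pi/6\le\pi-\delta$, and $\Re\,\zeta=\tfrac23|y|^{3/2}\cos(5\pi/4)<0$, giving growth $e^{\sqrt2|y|^{3/2}/3}$, which matches $e^{-\sqrt{2|y|}y/3}$ with $y<0$. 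The ray $e^{5i\pi/6}\mathbb R$ is the mirror image. The case $|z|\le M$ is handled by continuity and compactness, giving the $\langle y\rangle$ factor. Polynomial prefactors follow from Steps 1–3.
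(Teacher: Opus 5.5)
Your route is the paper's: Scorer representations of $\mathrm{Ai}(1,z)$ in the two sectors, derivatives for $k\le -1$, the recurrence \eqref{eq-Airy-k} for $k\ge 2$, then restriction to the two rays. Steps 1, 2 and 4 are essentially sound. The gap is in Step 3.

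\emph{Why Step 3 fails as written.} $z\,\mathrm{Ai}(k-1,z)$ and $\mathrm{Ai}(k-3,z)$ have the same leading term: the same power $z^{-1/4-(k-3)/2}$ and the same sign $(-1)^{k-1}=(-1)^{k-3}$. So each application of the recurrence costs one order of relative accuracy. An induction hypothesis of the form $c_j z^{a_j}e^{-\zeta}(1+O(z^{-3/2}))$ gives only $\mathrm{Ai}(k,z)=O(|z|^{-1/4-k/2}|e^{-\zeta}|)$, with no leading coefficient. With the displayed expansions you can reach $\mathrm{Ai}(2,z)$ with relative error $O(z^{-3})$ and $\mathrm{Ai}(3,z)$ with relative error $O(z^{-3/2})$. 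But $\mathrm{Ai}(4,z)$ already needs $\mathrm{Ai}(1,z)$ and $\mathrm{Ai}'(z)$ with relative error $O(z^{-6})$, beyond what is displayed. Your ``differentiate the ansatz'' check does not fill this. It shows what the leading term must be \emph{if} $\mathrm{Ai}(k,z)$ has an expansion of that shape, not that it has one. The paper's ``by induction, it is easy to check'' is silent on the same point.

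\emph{How to close it.} There are two options.
\begin{enumerate}
\item Run the induction on full asymptotic series in powers of $z^{-3/2}$. These exist for $\mathrm{Ai},\mathrm{Ai}',\mathrm{Gi},\mathrm{Hi}$, hence for $\mathrm{Ai}(1,z)$, and the recurrence is an exact identity. Uniqueness of asymptotic expansions then lets you read off the leading coefficient by your consistency argument.
\item Drop the recurrence and integrate, using $\partial_t\bigl(t^{a-1/2}e^{-\zeta(t)}\bigr)=-t^{a}e^{-\zeta(t)}\bigl(1+O(t^{-3/2})\bigr)$. For $|\arg z|\le\pi/3-\delta$, write $\mathrm{Ai}(k,z)=-\int_z^{\infty}\mathrm{Ai}(k-1,t)\,dt$ along the ray $\arg t=\arg z$, where $|e^{-\zeta}|$ decays. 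For $\pi/3+\delta\le|\arg z|\le\pi-\delta$, write $\mathrm{Ai}(k,z)=\mathrm{Ai}(k,z_0)+\int_{z_0}^{z}\mathrm{Ai}(k-1,t)\,dt$ with $z_0=Me^{i\arg z}$. The $O(1)$ contributions from $z_0$ are exponentially small relative to $|e^{-\zeta(z)}|$.
\end{enumerate}
The second option propagates the weak form $1+O(z^{-3/2})$ itself. It gives the sign flip and the drop of the exponent by $\tfrac12$ directly.

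\emph{Minor slips in Step 2.}
\begin{itemize}
\item $(-1)^{|k|}=-1$ for $k=1$, not $1$. This matches the minus sign you actually derived.
\item In $\pi/3+\delta\le|\arg z|\le\pi-\delta$ it is not $\mathrm{Hi}$ that carries the large factor; $\mathrm{Hi}$ is algebraic there. The large factor $e^{-\zeta}$ comes from $\mathrm{Ai}$ and $\mathrm{Ai}'$. So your second explanation, that the constant $-1$ is exponentially small relative to the main term, is the correct one.
\item No continuation argument is needed, and the two sectors do not overlap anyway. The two representations of $\mathrm{Ai}(1,z)$ are exact global identities, equivalent via $\mathrm{Gi}+\mathrm{Hi}=\mathrm{Bi}$ and $\mathrm{Ai}\,\mathrm{Bi}'-\mathrm{Ai}'\,\mathrm{Bi}=1/\pi$.
\end{itemize}
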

\begin{remark}\label{rmk:Airy-class-est}
  Throughout this paper, there exists a uniform $\delta_*>0$ such that, in every application of Lemma \ref{lem:Airy-class-est},  $|\arg z|\le \pi-\delta_*,\left|\arg (z)-\frac{1}{3}\pi\right|\ge \delta_*,\left|\arg (z)+\frac{1}{3}\pi\right|\ge \delta_*$. Indeed, we are mainly concerned with the cases where $\arg z$ is close to $\frac{\pi}{6}$ or $-\frac{5\pi}{6}$. Consequently, the constant $M$ in Lemma \ref{lem:Airy-class-est} can be chosen uniformly for all applications in this paper.
\end{remark}

The estimates in Lemma \ref{lem:Airy-class-est} show that $\mathrm{Ai}(e^{\frac{\pi}{6}i}y)$ decays exponentially as $y\to+\infty$, whereas $\mathrm{Ai}(e^{\frac{5\pi}{6}i}y)$ decays exponentially as $y\to-\infty$. Then we introduce the following primitive functions of $\mathrm{Ai}(e^{\frac{\pi}{6}i}y)$ and $\mathrm{Ai}(e^{\frac{5\pi}{6}i}y)$:
\begin{align}
  \mathcal A_1(1,y)=\int^y_{+\infty}\mathrm{Ai}(e^{\frac{\pi}{6}i}x)dx, \qquad \mathcal A_1(k,y)=\int^y_{+\infty}\mathcal A_1(k-1,x)dx,\label{eq-Airy-prim-1}\\
  \mathcal A_2(1,y)=\int^y_{-\infty}\mathrm{Ai}(e^{\frac{5\pi}{6}i}x)dx, \qquad \mathcal A_2(k,y)=\int^y_{-\infty}\mathcal A_2(k-1,x)dx.\label{eq-Airy-prim-2}
\end{align}
A direct calculation shows that
\begin{align}
  \mathcal A_1(k,y)=e^{-\frac{k\pi}{6}i}\mathrm{Ai}(k,e^{\frac{\pi}{6}i}y),\label{eq-Airy-prim-1-prop}\\
  \mathcal A_2(k,y)=e^{-\frac{5k\pi}{6}i}\mathrm{Ai}(k,e^{\frac{5\pi}{6}i}y).\label{eq-Airy-prim-2-prop}
\end{align}

Next, we derive explicit expressions for $A_1(1,y)$, $A_1(2,y)$, $A_2(1,y)$, and $A_2(2,y)$. Since these functions are defined by integral representations, integration by parts allows us to extract their leading-order contributions, which are expressed in terms of Airy functions composed with the Langer transformation. Similar techniques have been used in \cite{MWWZ2024} and \cite{BG23-LWR}.

\begin{lemma}\label{lem:pri-Airy-expan}
  It holds that
  \begin{align*}
  A_1(1,y)=&\frac{\mathcal A_1(1,\kappa \eta(y))}{\kappa\left(\eta_r'(y)\right)^{\frac{3}{2}}}-\frac{\mathcal A_1(1,\kappa \eta(0))}{\kappa\left(\eta_r'(0)\right)^{\frac{3}{2}}}+\frac{3}{2}\frac{\mathcal A_1(2,\kappa \eta(y))\eta_r''(y)}{\kappa^2\left(\eta_r'(y)\right)^{\frac{7}{2}}}-\frac{3}{2}\frac{\mathcal A_1(2,\kappa \eta(0))\eta_r''(0)}{\kappa^2\left(\eta_r'(0)\right)^{\frac{7}{2}}}\\
  &+\int^y_0\frac{3}{2}\frac{\mathcal A_1(2,\kappa \eta(z))\left( \frac{7}{2}\left(\eta_r''(z)\right)^2- \eta_r'(z)\eta_r'''(z)\right)}{\kappa^2\left(\eta_r'(z)\right)^{\frac{9}{2}}}dz.
\end{align*}
\begin{align*}
  A_1(2,y)
  =&\frac{\mathcal A_1(2,\kappa \eta(y)) }{\kappa^2\left(\eta_r'(y)\right)^{\frac{5}{2}}}+4\frac{\mathcal A_1(3,\kappa \eta(y))\eta_r''(y)}{\kappa^3\left(\eta_r'(y)\right)^{\frac{9}{2}}}-\frac{\mathcal A_1(2,\kappa \eta(0)) }{\kappa^2\left(\eta_r'(0)\right)^{\frac{5}{2}}}-4\frac{\mathcal A_1(3,\kappa \eta(0))\eta_r''(0)}{\kappa^3\left(\eta_r'(0)\right)^{\frac{9}{2}}}\\
  &+\int^y_04\frac{\mathcal A_1(3,\kappa \eta(z))\left(\frac{9}{2}\left(\eta_r''(z)\right)^2-\eta_r'(z)\eta_r'''(z)\right)  }{\kappa^3\left(\eta_r'(z)\right)^{\frac{11}{2}}}dz\\
  &-y \left(\frac{\mathcal A_1(1,\kappa \eta(0))}{\kappa\left(\eta_r'(0)\right)^{\frac{3}{2}}}+\frac{3}{2}\frac{\mathcal A_1(2,\kappa \eta(0))\eta_r''(0)}{\kappa^2\left(\eta_r'(0)\right)^{\frac{7}{2}}}\right)\\
  &+\int^y_0 \int^z_0\frac{3}{2}\frac{\mathcal A_1(2,\kappa \eta(x))\left( \frac{7}{2}\left(\eta_r''(x)\right)^2- \eta_r'(x)\eta_r'''(x)\right)}{\kappa^2\left(\eta_r'(x)\right)^{\frac{9}{2}}}dx dz
\end{align*}
\begin{align*}
  A_2(1,y)=&\frac{\mathcal A_2(1,\kappa \eta(y))}{\kappa\left(\eta_r'(y)\right)^{\frac{3}{2}}}-\frac{\mathcal A_2(1,\kappa \eta(-1))}{\kappa\left(\eta_r'(-1)\right)^{\frac{3}{2}}}+\frac{3}{2}\frac{\mathcal A_2(2,\kappa \eta(y))\eta_r''(y)}{\kappa^2\left(\eta_r'(y)\right)^{\frac{7}{2}}}-\frac{3}{2}\frac{\mathcal A_2(2,\kappa \eta(-1))\eta_r''(-1)}{\kappa^2\left(\eta_r'(-1)\right)^{\frac{7}{2}}}\\
  &+\int^y_{-1}\frac{3}{2}\frac{\mathcal A_2(2,\kappa \eta(z))\left( \frac{7}{2}\left(\eta_r''(z)\right)^2- \eta_r'(z)\eta_r'''(z)\right)}{\kappa^2\left(\eta_r'(z)\right)^{\frac{9}{2}}}dz.
\end{align*}
\begin{align*}
  A_2(2,y)=&\frac{\mathcal A_2(2,\kappa \eta(y)) }{\kappa^2\left(\eta_r'(y)\right)^{\frac{5}{2}}}+4\frac{\mathcal A_2(3,\kappa \eta(y))\eta_r''(y)}{\kappa^3\left(\eta_r'(y)\right)^{\frac{9}{2}}} -\frac{\mathcal A_2(2,\kappa \eta(-1)) }{\kappa^2\left(\eta_r'(-1)\right)^{\frac{5}{2}}}-4\frac{\mathcal A_2(3,\kappa \eta(-1))\eta_r''(-1)}{\kappa^3\left(\eta_r'(-1)\right)^{\frac{9}{2}}}\\
  &+\int^y_{-1}4\frac{\mathcal A_2(3,\kappa \eta(z))\left(\frac{9}{2}\left(\eta_r''(z)\right)^2-\eta_r'(z)\eta_r'''(z)\right)  }{\kappa^3\left(\eta_r'(z)\right)^{\frac{11}{2}}}dz \\
  &-(y+1) \left(  \frac{\mathcal A_2(1,\kappa \eta(-1))}{\kappa\left(\eta_r'(-1)\right)^{\frac{3}{2}}}+\frac{3}{2}\frac{\mathcal A_2(2,\kappa \eta(-1))\eta_r''(-1)}{\kappa^2\left(\eta_r'(-1)\right)^{\frac{7}{2}}}\right)\\
  &+\int^y_{-1} \int^z_{-1}\frac{3}{2}\frac{\mathcal A_2(2,\kappa \eta(x))\left( \frac{7}{2}\left(\eta_r''(x)\right)^2- \eta_r'(x)\eta_r'''(x)\right)}{\kappa^2\left(\eta_r'(x)\right)^{\frac{9}{2}}}dx dz.
\end{align*}
\end{lemma}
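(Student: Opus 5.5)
The plan is repeated integration by parts, using the chain rule through the Langer variable. The key identity is that, since $\eta=\eta_r+i\eta_i$ with $\eta_i$ independent of $y$, we have $\pa_z\left[\mathcal A_j(k,\kappa\eta(z))\right]=\kappa\eta_r'(z)\,\mathcal A_j(k-1,\kappa\eta(z))$. This holds by \eqref{eq-Airy-prim-1}--\eqref{eq-Airy-prim-2}, with $\mathcal A_j(0,\cdot)$ read as $\mathrm{Ai}(e^{\frac{\pi}{6}i}\cdot)$, respectively $\mathrm{Ai}(e^{\frac{5\pi}{6}i}\cdot)$. Hence $A_1(z)=\mathrm{Ai}(e^{\frac{\pi}{6}i}\kappa\eta(z))=\frac{1}{\kappa\eta_r'(z)}\pa_z\mathcal A_1(1,\kappa\eta(z))$, and the integrand of $A_1(1,y)$ in \eqref{eq-Airy-modi-1} becomes
\begin{align*}
\frac{1}{\kappa\left(\eta_r'(z)\right)^{\frac32}}\pa_z\mathcal A_1(1,\kappa\eta(z)).
\end{align*}
Smoothness and positivity of $\eta_r'$ on $[-1,0]$ (Lemma \ref{lem-Langer}) justify every step.

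\textbf{Step 1: $A_1(1,y)$.} Integrate by parts once on $[0,y]$. This gives the boundary terms $\frac{\mathcal A_1(1,\kappa\eta(y))}{\kappa(\eta_r')^{3/2}(y)}-\frac{\mathcal A_1(1,\kappa\eta(0))}{\kappa(\eta_r')^{3/2}(0)}$ plus the integral
\begin{align*}
\frac32\int_0^y\frac{\eta_r''}{\kappa(\eta_r')^{5/2}}\mathcal A_1(1,\kappa\eta)\,dz .
\end{align*}
Write $\mathcal A_1(1,\kappa\eta)=\frac{1}{\kappa\eta_r'}\pa_z\mathcal A_1(2,\kappa\eta)$ and integrate by parts again. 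The boundary terms are $\frac32\frac{\mathcal A_1(2,\kappa\eta)\eta_r''}{\kappa^2(\eta_r')^{7/2}}$ evaluated between $0$ and $y$. The remaining integrand is $-\frac32\mathcal A_1(2,\kappa\eta)\,\pa_z\big(\eta_r''(\eta_r')^{-7/2}\big)/\kappa^2$, and
\begin{align*}
\pa_z\left(\eta_r''(\eta_r')^{-\frac72}\right)=\left(\eta_r'''\eta_r'-\tfrac72(\eta_r'')^2\right)(\eta_r')^{-\frac92},
\end{align*}
which is exactly the stated remainder.

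\textbf{Step 2: $A_1(2,y)=\int_0^y A_1(1,z)\,dz$.} Integrate the Step 1 formula term by term.
\begin{itemize}
\item The constant boundary terms at $0$ integrate to $-y(\cdots)$.
\item The double-integral term comes directly from the last term of Step 1.
\item For $\frac{\mathcal A_1(1,\kappa\eta)}{\kappa(\eta_r')^{3/2}}=\frac{\pa_z\mathcal A_1(2,\kappa\eta)}{\kappa^2(\eta_r')^{5/2}}$, integrate by parts. This produces $\frac{\mathcal A_1(2,\kappa\eta)}{\kappa^2(\eta_r')^{5/2}}\big|_0^y$ plus $\frac52\int\frac{\eta_r''\mathcal A_1(2,\kappa\eta)}{\kappa^2(\eta_r')^{7/2}}$.
\item Adding the $\frac32$-term from Step 1, the coefficient of $\int\frac{\eta_r''\mathcal A_1(2,\kappa\eta)}{\kappa^2(\eta_r')^{7/2}}$ becomes $\frac52+\frac32=4$.
\item One more integration by parts, with $\mathcal A_1(2,\kappa\eta)=\frac{\pa_z\mathcal A_1(3,\kappa\eta)}{\kappa\eta_r'}$, gives $4\frac{\mathcal A_1(3,\kappa\eta)\eta_r''}{\kappa^3(\eta_r')^{9/2}}\big|_0^y$. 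Its remainder uses
\begin{align*}
\pa_z\left(\eta_r''(\eta_r')^{-\frac92}\right)=\left(\eta_r'''\eta_r'-\tfrac92(\eta_r'')^2\right)(\eta_r')^{-\frac{11}{2}},
\end{align*}
which yields the stated $\mathcal A_1(3,\cdot)$ integral.
\end{itemize}

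\textbf{Step 3: $A_2(1,y)$ and $A_2(2,y)$.} These follow identically, with base point $-1$ instead of $0$ (definitions \eqref{eq-Airy-modi-2}) and $\mathcal A_2$ from \eqref{eq-Airy-prim-2}. The same derivative identity holds for $\mathcal A_2$.

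The only point needing care, rather than a genuine obstacle, is bookkeeping. Specifically, one must confirm that $\mathcal A_j(k,\cdot)$ extends analytically to complex arguments so the chain rule applies along $\kappa\eta(z)$. This follows from \eqref{eq-Airy-prim-1-prop}--\eqref{eq-Airy-prim-2-prop}, which express $\mathcal A_j(k,\cdot)$ through the entire functions $\mathrm{Ai}(k,\cdot)$. Beyond that, one only tracks the coefficients $\frac32$, $\frac52+\frac32=4$, $\frac72$ and $\frac92$.
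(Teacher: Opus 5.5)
Your proposal is correct and follows the same route as the paper: two integrations by parts for $A_1(1,y)$ via $\partial_z\mathcal A_1(k,\kappa\eta(z))=\kappa\eta_r'(z)\,\mathcal A_1(k-1,\kappa\eta(z))$, then term-by-term integration for $A_1(2,y)$ with the combined coefficient $\frac{5}{2}+\frac{3}{2}=4$ and one further integration by parts, and the $A_2$ formulas obtained identically from the base point $-1$. You also justify the chain rule along the complex path $\kappa\eta(z)$ through the entire extensions \eqref{eq-Airy-prim-1-prop}--\eqref{eq-Airy-prim-2-prop}, a point the paper uses only implicitly.
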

\begin{proof}
It follows from the definition of $A_1(1,y)$ and $\mathcal A_1(1,y)$ that
\begin{align*}
  A_1(1,y)=&\int^y_0 \frac{1}{\left(\eta_r'(z)\right)^{\frac{1}{2}}}\mathrm{Ai}(e^{\frac{\pi}{6}i}\kappa \eta(z))dz\\
  =&\int^y_0 \pa_z \left(\frac{\mathcal A_1(1,\kappa \eta(z))}{\kappa\left(\eta_r'(z)\right)^{\frac{3}{2}}}\right)+\frac{3}{2}\frac{\mathcal A_1(1,\kappa \eta(z))\eta_r''(z)}{\kappa\left(\eta_r'(z)\right)^{\frac{5}{2}}}dz\\
  =&\frac{\mathcal A_1(1,\kappa \eta(y))}{\kappa\left(\eta_r'(y)\right)^{\frac{3}{2}}}-\frac{\mathcal A_1(1,\kappa \eta(0))}{\kappa\left(\eta_r'(0)\right)^{\frac{3}{2}}}+\int^y_0\frac{3}{2}\frac{\mathcal A_1(1,\kappa \eta(z))\eta_r''(z)}{\kappa\left(\eta_r'(z)\right)^{\frac{5}{2}}}dz.
\end{align*}
By using integration by parts again, we have
\begin{align*}
  A_1(1,y)=&\frac{\mathcal A_1(1,\kappa \eta(y))}{\kappa\left(\eta_r'(y)\right)^{\frac{3}{2}}}-\frac{\mathcal A_1(1,\kappa \eta(0))}{\kappa\left(\eta_r'(0)\right)^{\frac{3}{2}}}+\frac{3}{2}\frac{\mathcal A_1(2,\kappa \eta(y))\eta_r''(y)}{\kappa^2\left(\eta_r'(y)\right)^{\frac{7}{2}}}-\frac{3}{2}\frac{\mathcal A_1(2,\kappa \eta(0))\eta_r''(0)}{\kappa^2\left(\eta_r'(0)\right)^{\frac{7}{2}}}\\
  &+\int^y_0\frac{3}{2}\frac{\mathcal A_1(2,\kappa \eta(z))\left( \frac{7}{2}\left(\eta_r''(z)\right)^2- \eta_r'(z)\eta_r'''(z)\right)}{\kappa^2\left(\eta_r'(z)\right)^{\frac{9}{2}}}dz.
\end{align*}

Based on the same idea, we have
\begin{align*}
  &\int^y_0 \frac{\mathcal A_1(1,\kappa \eta(z))}{\kappa\left(\eta_r'(z)\right)^{\frac{3}{2}}}+\frac{3}{2}\frac{\mathcal A_1(2,\kappa \eta(z))\eta_r''(z)}{\kappa^2\left(\eta_r'(z)\right)^{\frac{7}{2}}}dz\\
  =&\frac{\mathcal A_1(2,\kappa \eta(y)) }{\kappa^2\left(\eta_r'(y)\right)^{\frac{5}{2}}}- \frac{\mathcal A_1(2,\kappa \eta(0)) }{\kappa^2\left(\eta_r'(0)\right)^{\frac{5}{2}}}\\
  &+\frac{5}{2}\frac{\mathcal A_1(3,\kappa \eta(y))\eta_r''(y)}{\kappa^3\left(\eta_r'(y)\right)^{\frac{9}{2}}}- \frac{5}{2}\frac{\mathcal A_1(3,\kappa \eta(0))\eta_r''(0)}{\kappa^3\left(\eta_r'(0)\right)^{\frac{9}{2}}}+\int^y_0\frac{5}{2}\frac{\mathcal A_1(3,\kappa \eta(z))\left(\frac{9}{2}\left(\eta_r''(z)\right)^2-\eta_r'(z)\eta_r'''(z)\right)  }{\kappa^3\left(\eta_r'(z)\right)^{\frac{11}{2}}}dz\\
  &+\frac{3}{2}\frac{\mathcal A_1(3,\kappa \eta(y))\eta_r''(y)}{\kappa^3\left(\eta_r'(y)\right)^{\frac{9}{2}}}- \frac{3}{2}\frac{\mathcal A_1(3,\kappa \eta(0))\eta_r''(0)}{\kappa^3\left(\eta_r'(0)\right)^{\frac{9}{2}}}+\int^y_0\frac{3}{2}\frac{\mathcal A_1(3,\kappa \eta(z))\left(\frac{9}{2}\left(\eta_r''(z)\right)^2-\eta_r'(z)\eta_r'''(z)\right)  }{\kappa^3\left(\eta_r'(z)\right)^{\frac{11}{2}}}dz.
\end{align*}
It follows that
\begin{align*}
  &A_1(2,y)=\int^y_0 A_1(1,z)dz\\
  =&\frac{\mathcal A_1(2,\kappa \eta(y)) }{\kappa^2\left(\eta_r'(y)\right)^{\frac{5}{2}}}+4\frac{\mathcal A_1(3,\kappa \eta(y))\eta_r''(y)}{\kappa^3\left(\eta_r'(y)\right)^{\frac{9}{2}}}-\frac{\mathcal A_1(2,\kappa \eta(0)) }{\kappa^2\left(\eta_r'(0)\right)^{\frac{5}{2}}}-4\frac{\mathcal A_1(3,\kappa \eta(0))\eta_r''(0)}{\kappa^3\left(\eta_r'(0)\right)^{\frac{9}{2}}}\\
  &+\int^y_04\frac{\mathcal A_1(3,\kappa \eta(z))\left(\frac{9}{2}\left(\eta_r''(z)\right)^2-\eta_r'(z)\eta_r'''(z)\right)  }{\kappa^3\left(\eta_r'(z)\right)^{\frac{11}{2}}}dz-y \left(  \frac{\mathcal A_1(1,\kappa \eta(0))}{ \kappa\left(\eta_r'(0)\right)^{\frac{3}{2}}}+\frac{3}{2}\frac{\mathcal A_1(2,\kappa \eta(0))\eta_r''(0)}{\kappa^2\left(\eta_r'(0)\right)^{\frac{7}{2}}}\right)\\
  &+\int^y_0 \int^z_0\frac{3}{2}\frac{\mathcal A_1(2,\kappa \eta(x))\left( \frac{7}{2}\left(\eta_r''(x)\right)^2- \eta_r'(x)\eta_r'''(x)\right)}{\kappa^2\left(\eta_r'(x)\right)^{\frac{9}{2}}}dx dz.
\end{align*} 

The expansions of $A_2(1,y)$ and $A_2(2,y)$ can be derived in a similar manner, and we omit the details.
\end{proof}
 
We have the following properties.
\begin{lemma}\label{lem-Airy-Airy-est}
For any $-1<y\leq x<0$, we have that
\begin{align}\label{est-Airy-Airy-est}
\left|e^{-\f23(e^{\f{\pi i}{6}}\kappa\eta(x))^\f32} e^{\f23(e^{\f{\pi i}{6}}\kappa\eta(y))^\f32}\right|\lesssim e^{-\frac{1}{C}|\kappa\eta_r(x)-\kappa\eta_r(y)|(|\kappa\eta(x)|^\f12+|\kappa\eta(y)|^\f12)}.
\end{align}

\end{lemma}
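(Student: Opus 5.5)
The argument reduces the estimate to a lower bound for the real part of the difference of the phase functions. Set $w(t)=e^{\frac{\pi i}{6}}\kappa\eta(t)$ and $\Psi(t)=\frac23 w(t)^{\frac32}$, with the principal branch. Since $\eta=\eta_r+i\eta_i$ with $\eta_i$ independent of $t$, we can write $w(t)=e^{\frac{\pi i}{6}}(s+i\kappa\eta_i)$ with $s=\kappa\eta_r(t)$. The left-hand side of \eqref{est-Airy-Airy-est} equals $\exp\big(-\Re(\Psi(x)-\Psi(y))\big)$. It therefore suffices to show
\begin{align*}
\Re\big(\Psi(x)-\Psi(y)\big)\ge \frac1C\,|\kappa\eta_r(x)-\kappa\eta_r(y)|\,\big(|\kappa\eta(x)|^{\frac12}+|\kappa\eta(y)|^{\frac12}\big)-C_0,
\end{align*}
where the constant $C_0$ is absorbed into $\lesssim$.

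By the fundamental theorem of calculus in the variable $s$,
\begin{align*}
\Re(\Psi(x)-\Psi(y))=\int_{\kappa\eta_r(y)}^{\kappa\eta_r(x)}\Re\Big(e^{\frac{\pi i}{6}}\big(e^{\frac{\pi i}{6}}(s+i\kappa\eta_i)\big)^{\frac12}\Big)\,ds .
\end{align*}
Here $\eta_r$ is monotone increasing by \eqref{eq-Langer-prop3}, so $x\ge y$ gives $\kappa\eta_r(x)\ge\kappa\eta_r(y)$.

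The key pointwise claim is that for $\zeta=s+i\kappa\eta_i$ we have
\begin{align*}
\Re\big(e^{\frac{\pi i}{6}}(e^{\frac{\pi i}{6}}\zeta)^{\frac12}\big)\ge \frac1C|\zeta|^{\frac12}.
\end{align*}
To check this, note that $\arg\zeta\in(-\pi,\pi)$ and compute $\arg\big(e^{\frac{\pi i}{6}}(e^{\frac{\pi i}{6}}\zeta)^{\frac12}\big)=\frac{\pi}{4}+\frac12\arg\zeta$ when $\arg\zeta>-\frac{7\pi}{6}$, which always holds. We need this angle to stay in a compact subset of $(-\frac\pi2,\frac\pi2)$, i.e.\ $\arg\zeta\in(-\frac{3\pi}{2}+\delta,\frac{\pi}{2}-\delta)$.

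The restriction on $\eta_i$ comes from $\mathbb H$. There $c_i\ge-\frac{\varepsilon^{1/2}}{2}$, so $\kappa\eta_i=-\kappa c_i/u_p'(y_c)\lesssim\varepsilon^{\frac12-\frac13}=\varepsilon^{\frac16}$. Hence $\kappa\eta_i$ is either $\le$ a small positive number or negative, and its positive part is tiny. When $\kappa\eta_i\le 0$, $\zeta$ lies in the closed lower half plane and $\arg\zeta\in[-\pi,0]$, so the angle lies in $[-\frac\pi4,\frac\pi4]$ and the claim holds. When $0<\kappa\eta_i\ll1$, the bound holds outside the region $|\zeta|\lesssim1$ (where the angle deviates only slightly). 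On $|\zeta|\lesssim1$ the integrand is bounded, so that region contributes at most an $O(1)$ error, which is the constant $C_0$ above.

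Integrating the pointwise claim gives $\int_{a}^{b}|s+i\kappa\eta_i|^{\frac12}ds$ with $a=\kappa\eta_r(y)$ and $b=\kappa\eta_r(x)$. The remaining step is the elementary inequality
\begin{align*}
\int_a^b|s+i\kappa\eta_i|^{\frac12}ds\gtrsim (b-a)\big(|a+i\kappa\eta_i|^{\frac12}+|b+i\kappa\eta_i|^{\frac12}\big).
\end{align*}
Since $|s+i\kappa\eta_i|^{\frac12}\ge \max(|s|,|\kappa\eta_i|)^{\frac12}$, this follows from the fact that, on any interval, at least half its length lies where $|s|\ge\frac12\max(|a|,|b|)$ (the quasi-convexity of $|s|^{\frac12}$). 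Choose $C$ large.

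The main obstacle is the uniform angle control in the regime where $c_i<0$ and $\zeta$ is close to the origin. I would handle it as above, either by absorbing the bounded region into the implicit constant, or by directly using $|\kappa\eta_i|\ll1$ there.
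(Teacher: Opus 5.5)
Your argument is correct up to two small slips (below), but it is organized differently from the paper's.

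\textbf{Comparison.} The paper compares the two endpoint values $\cos\bigl(\arg w^{3/2}\bigr)|w|^{3/2}$ directly. It proceeds in three steps:
\begin{itemize}
\item It disposes of the case $\min(|\eta_r(x)|,|\eta_r(y)|)\lesssim|\eta_i|$ using $|\kappa\eta_i|\lesssim M$.
\item It then assumes $|\eta_i|\le\delta_0\min(|\eta_r(x)|,|\eta_r(y)|)$, so both points stay in fixed sectors.
\item It splits into three cases by the signs of $\eta_r(x)$ and $\eta_r(y)$. In the subcase $\eta_i<0$, $\eta_r\le 0$ it must also control how much the angle varies between the endpoints (the term $I_2$).
\end{itemize}
You instead differentiate in $s=\kappa\eta_r$. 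Everything then reduces to the single sectorial bound $\Re\bigl(e^{\frac{\pi i}{6}}w^{1/2}\bigr)\ge\frac1C|w|^{1/2}$ and the elementary inequality $\int_a^b|s+i\kappa\eta_i|^{1/2}ds\gtrsim(b-a)\bigl(|a+i\kappa\eta_i|^{1/2}+|b+i\kappa\eta_i|^{1/2}\bigr)$.

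This buys you several things. It removes the sign case split, the $\delta_0$-reduction and the angle-variation term. It handles $\eta_i<0$ (i.e.\ $c_i>0$) of arbitrary size at once. It uses only the lower bound $c_i\ge-\frac12\varepsilon^{1/2}$ from $\mathbb H$, rather than $|\kappa\eta_i|\lesssim M$.

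What the endpoint approach buys in return is that it never follows $w$ along a path. For $\kappa\eta_i>0$, your path $s\mapsto e^{\frac{\pi i}{6}}(s+i\kappa\eta_i)$ crosses the principal cut of $w^{3/2}$ at $s=-\sqrt3\,\kappa\eta_i$.

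\textbf{First slip: the branch.} With principal branches, $\arg\bigl(e^{\frac{\pi i}{6}}(e^{\frac{\pi i}{6}}\zeta)^{1/2}\bigr)=\frac\pi4+\frac12\arg\zeta$ requires $\arg\zeta\le\frac{5\pi}{6}$, not just $\arg\zeta>-\frac{7\pi}{6}$. For $\kappa\eta_i>0$ and $s\le-1$ your formula would give an angle near $\frac{3\pi}{4}$, i.e.\ a negative real part.
\begin{itemize}
\item The correct value for $\arg\zeta\in(\frac{5\pi}{6},\pi]$ is $\frac\pi4+\frac12(\arg\zeta-2\pi)\in(-\frac\pi3,-\frac\pi4]$, so your lower bound does hold there.
\item The fundamental theorem of calculus for $\Re\Psi$ survives the crossing, because $\Re w^{3/2}$ is continuous across the cut (both one-sided values vanish).
\item The set where the integrand fails to be $\gtrsim|\zeta|^{1/2}$ is $\{-\sqrt3\kappa\eta_i\le s\le\kappa\eta_i\tan\delta\}\subset\{|\zeta|\le 2\kappa\eta_i\}$. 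Your absorption into $C_0$ therefore covers it, with error only $O((\kappa\eta_i)^{3/2})$.
\end{itemize}

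\textbf{Second slip: the fraction.} ``At least half its length'' should read ``at least a third''; for example $[a,b]=[-\frac m2,m]$ gives exactly one third. This only changes constants.
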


\begin{proof}
Note that 
\begin{align*}
\left|e^{-\f23(e^{\f{\pi i}{6}}\kappa\eta(x))^\f32} e^{\f23(e^{\f{\pi i}{6}}\kappa\eta(y))^\f32}\right|=e^{-\f23 \Re \left((e^{\f{\pi i}{6}}\kappa\eta(x))^\f32-e^{\f{\pi i}{6}}\kappa\eta(y))^\f32)\right)}.
\end{align*}
Recall that $|\eta_i|\le 2M \frac{1}{\kappa}$. If $\min\{|\eta_r(x)|, |\eta_r(y)|\}\lesssim |\eta_i|$, then $\kappa\min\{|\eta_r(x)|, |\eta_r(y)|\}\lesssim 1$, and
\begin{align*}
  \left|e^{-\f23(e^{\f{\pi i}{6}}\kappa\eta(x))^\f32} e^{\f23(e^{\f{\pi i}{6}}\kappa\eta(y))^\f32}\right|\lesssim e^{-\frac{1}{C} \left( \kappa\max\{|\eta_r(x)|, |\eta_r(y)|\} \right)^{\frac{3}{2}}},
\end{align*}
then \eqref{est-Airy-Airy-est} is naturally true.  Therefore, in the rest of the proof, we assume that
\begin{align}\label{assume: (eta_i, eta_r)}
\f{|\eta_i|}{\min\{|\eta_r(x)|, |\eta_r(y)|\}}\leq \delta_0,
\end{align}
where $0<\delta_0<\frac{1}{4}$ is a small constant to be determined later.

As a result, the goal of this lemma is to prove that under \eqref{assume: (eta_i, eta_r)}, for $-1<y\leq x<0$
\begin{align*}
\Re (e^{\f{\pi i}{6}}\kappa\eta(x))^\f32-e^{\f{\pi i}{6}}\kappa\eta(y))^\f32)\gtrsim|\kappa\eta_r(x)-\kappa\eta_r(y)|(|\kappa\eta(x)|^\f12+|\kappa\eta(y)|^\f12).
\end{align*}
 Due to $\eta_r'\geq C^{-1}>0$, we know $\eta_r(x)$ is a monotonic increasing function. Thus we have $\eta_r(y)\leq \eta_r(x)$ for $y\leq x$. We divide the proof of this lemma into 3 cases.
 
{\bf Case 1: $\eta_r(y)\leq \eta_r(x)\leq 0$.} In this case, we have $|\eta_r(y)|\geq |\eta_r(x)|$ and $|\eta(y)|\geq |\eta(x)|$. We define angles
\begin{align*}
\theta(x)=\arctan(\f{\eta_i}{|\eta_r(x)|}),\quad \theta(y)=\arctan(\f{\eta_i}{|\eta_r(y)|}).
\end{align*}
For $\eta_i\geq 0$, we have $\theta(x)\geq \theta (y)\geq0$ and for $\eta_i< 0$, we have $\theta(x)\leq  \theta (y)< 0$. A direct calculation gives that
\begin{align*}
\arg((e^{\f{\pi i}{6}}\kappa\eta(x))^\f32)=\f{3}{4}\pi -\f32 \theta(x),\quad \arg((e^{\f{\pi i}{6}}\kappa\eta(y))^\f32)=\f{3}{4}\pi -\f32 \theta(y).
\end{align*}
 Then by \eqref{assume: (eta_i, eta_r)}, for $\eta_i\geq 0$, we get
\begin{align*}
 \cos\f{3\pi}{4}\le\cos(\arg((e^{\f{\pi i}{6}}\kappa\eta(y))^\f32))\leq \cos(\arg((e^{\f{\pi i}{6}}\kappa\eta(x))^\f32))< \cos\f{5\pi}{8}<0,
\end{align*}
and for $\eta_i< 0$, we get
\begin{align*}
\cos\f{7\pi}{8}\le\cos(\arg((e^{\f{\pi i}{6}}\kappa\eta(x))^\f32))\leq \cos(\arg((e^{\f{\pi i}{6}}\kappa\eta(y))^\f32))\leq \cos\f{3\pi}{4}<0.
\end{align*}

$\bullet$ For $\eta_i\geq 0$, we have
\begin{align*}
\Re (e^{\f{\pi i}{6}}\kappa\eta(x))^\f32-e^{\f{\pi i}{6}}\kappa\eta(y))^\f32)=&\cos(\arg((e^{\f{\pi i}{6}}\kappa\eta(x))^\f32))|\kappa\eta(x)|^\f32-\cos(\arg((e^{\f{\pi i}{6}}\kappa\eta(y))^\f32))|\kappa\eta(y)|^\f32\\
\geq &-\cos(\arg((e^{\f{\pi i}{6}}\kappa\eta(y))^\f32))(|\kappa\eta(y)|^\f32-|\kappa\eta(x)|^\f32)\\
\gtrsim &|\kappa\eta_r(x)-\kappa\eta_r(y)|(|\kappa\eta(x)|^\f12+|\kappa\eta(y)|^\f12).
\end{align*}

$\bullet$ For $\eta_i< 0$, we have
\begin{align*}
\Re (e^{\f{\pi i}{6}}\kappa\eta(x))^\f32-e^{\f{\pi i}{6}}\kappa\eta(y))^\f32)=&\cos(\arg((e^{\f{\pi i}{6}}\kappa\eta(x))^\f32))|\kappa\eta(x)|^\f32-\cos(\arg((e^{\f{\pi i}{6}}\kappa\eta(y))^\f32))|\kappa\eta(y)|^\f32\\
=&-\cos(\arg((e^{\f{\pi i}{6}}\kappa\eta(y))^\f32))(|\kappa\eta(y)|^\f32-|\kappa\eta(x)|^\f32)\\
&+(\cos(\arg((e^{\f{\pi i}{6}}\kappa\eta(x))^\f32))-\cos(\arg((e^{\f{\pi i}{6}}\kappa\eta(y))^\f32)))|\kappa\eta(x)|^\f32\\
=&I_1+I_2.
\end{align*}
For $I_1$, we use the argument above to have
\begin{align*}
I_1\geq -\cos\f{3\pi}{4}(|\kappa\eta(y)|^\f32-|\kappa\eta(x)|^\f32)\geq &C_*|\kappa\eta_r(x)-\kappa\eta_r(y)|(|\kappa\eta(x)|^\f12+|\kappa\eta(y)|^\f12),
\end{align*}
with some $C_*>0$.

For $I_2$, we first consider the case $|\kappa \eta(y)|\geq (4C_*^{-1}+1)^\f 23|\kappa\eta(x)|$, and we get
\begin{align*}
|I_2|\leq 2|\kappa\eta(x)|^\f32\leq \f12C_*(|\kappa\eta(y)|^\f32-|\kappa\eta(x)|^\f32).
\end{align*}
For $|\kappa \eta(x)|\leq |\kappa \eta(y)|\leq (4C_*^{-1}+1)^\f 23|\kappa\eta(x)|$, we use  \eqref{assume: (eta_i, eta_r)} to have
\begin{align*}
&|\cos(\arg((e^{\f{\pi i}{6}}\kappa\eta(x))^\f32))-\cos(\arg((e^{\f{\pi i}{6}}\kappa\eta(y))^\f32))|\\
&\le \frac{3}{2}|\theta(x)-\theta(y)|\lesssim|\f{\eta_i}{|\eta_r(x)|}-\f{\eta_i}{|\eta_r(y)|}|\lesssim \f{|\eta_i||\eta_r(x)-\eta_r(y)|}{|\eta_r(x)||\eta_r(y)|},
\end{align*}
which implies
\begin{align*}
|I_2|\lesssim \f{\kappa^\f32|\eta_i||\eta_r(x)-\eta_r(y)|}{|\eta_r(y)|^\f12}=\f{\kappa^\f32|\eta_i||\eta_r(x)-\eta_r(y)|}{|\eta_r(y)|}|\eta_r(y)|^\f12\lesssim \d_0|\kappa\eta_r(x)-\kappa\eta_r(y)||\kappa\eta(y)|^\f12.
\end{align*}
Thus for $|\kappa \eta(x)|\leq |\kappa \eta(y)|$, we get
\begin{align*}
|I_2|\leq \f12C_*(|\kappa\eta(y)|^\f32-|\kappa\eta(x)|^\f32)+\d_0|\kappa\eta_r(x)-\kappa\eta_r(y)||\kappa\eta(y)|^\f12.
\end{align*}
Combining with the estimate $I_1$ and taking $\d_0$ small enough, we obtain for $|\kappa \eta(x)|\leq |\kappa \eta(y)|$ that
\begin{align*}
\Re (e^{\f{\pi i}{6}}\kappa\eta(x))^\f32-e^{\f{\pi i}{6}}\kappa\eta(y))^\f32)\geq \f14 C_*|\kappa\eta_r(x)-\kappa\eta_r(y)|(|\kappa\eta(x)|^\f12+|\kappa\eta(y)|^\f12).
\end{align*}

{\bf Case 2: $\eta_r(y)\leq 0\leq \eta_r(x)$.} For this case, we have
\begin{align*}
\arg((e^{\f{\pi i}{6}}\kappa\eta(y))^\f32))=\f34\pi-\f 32 \theta(y),\quad \arg((e^{\f{\pi i}{6}}\kappa\eta(x))^\f32))=\f14\pi+\f 32 \theta(x),
\end{align*}
which deduces
\begin{align*}
\cos(\arg((e^{\f{\pi i}{6}}\kappa\eta(y))^\f32)))<\cos\f{5\pi}{8}<0,\quad \cos(\arg((e^{\f{\pi i}{6}}\kappa\eta(x))^\f32)))>\cos\f{3\pi}{8}>0.
\end{align*}
Then we have
\begin{align*}
\Re (e^{\f{\pi i}{6}}\kappa\eta(x))^\f32-e^{\f{\pi i}{6}}\kappa\eta(y))^\f32)=&\cos(\arg((e^{\f{\pi i}{6}}\kappa\eta(x))^\f32))|\kappa\eta(x)|^\f32-\cos(\arg((e^{\f{\pi i}{6}}\kappa\eta(y))^\f32))|\kappa\eta(y)|^\f32\\
\gtrsim& |\kappa\eta_r(x)-\kappa\eta_r(y)|(|\kappa\eta(x)|^\f12+|\kappa\eta(y)|^\f12).
\end{align*}

{\bf Case 3: $0\leq \eta_r(y)\leq \eta_r(x)$.} In this case, we have $|\eta_r(y)|\leq |\eta_r(x)|$ and $|\eta(y)|\leq |\eta(x)|$. Angles $\theta(x)$ and $\theta(y)$ are defined in Case 1. For $\eta_i\geq 0$, we have $\theta(y)\geq \theta (x)\geq0$ and for $\eta_i< 0$, we have $\theta(y)\leq  \theta (x)< 0$. A direct calculation gives that
\begin{align*}
\arg((e^{\f{\pi i}{6}}\kappa\eta(x))^\f32)=\f{1}{4}\pi +\f32 \theta(x),\quad \arg((e^{\f{\pi i}{6}}\kappa\eta(y))^\f32)=\f{1}{4}\pi +\f32 \theta(y).
\end{align*}
According to \eqref{assume: (eta_i, eta_r)} and $|\eta_r(y)|\leq |\eta_r(x)|$, we have $\f{|\eta_i|}{|\eta_r(x)|}\leq \d_0$.
Then for $\eta_i\geq 0$, we get
\begin{align*}
 \cos\f{\pi}{4}\ge\cos(\arg((e^{\f{\pi i}{6}}\kappa\eta(x))^\f32))\geq \cos(\arg((e^{\f{\pi i}{6}}\kappa\eta(y))^\f32))>\cos\f{3\pi}{8}>0,
\end{align*}
and for $\eta_i< 0$, we get
\begin{align*}
\cos\f{\pi}{8}\ge\cos(\arg((e^{\f{\pi i}{6}}\kappa\eta(y))^\f32))\geq \cos(\arg((e^{\f{\pi i}{6}}\kappa\eta(x))^\f32))> \cos\f{\pi}{4}>0.
\end{align*}

Then by using the same technique to Case 1, we obtain for $|\kappa \eta(y)|\leq |\kappa \eta(x)$ that
\begin{align*}
\Re (e^{\f{\pi i}{6}}\kappa\eta(x))^\f32-e^{\f{\pi i}{6}}\kappa\eta(y))^\f32)\geq \f14 C_*|\kappa\eta_r(x)-\kappa\eta_r(y)|(|\kappa\eta(x)|^\f12+|\kappa\eta(y)|^\f12).
\end{align*}

This finishes the proof of this lemma.
\end{proof}

From the definition of $\mathcal A_1(k,y)$ and $\mathcal A_2(k,y)$, and the expansion of Airy function in Lemma \ref{lem:Airy-class-est}, we can see that for $|\kappa\eta(y)|\ge M$, it holds that
\begin{align}
  \mathcal A_1(k,\kappa\eta(y))\sim \left\langle \kappa\eta(y) \right\rangle^{-\frac{1}{4}-\frac{1}{2}k}e^{-\frac{2}{3}\left(e^{\frac{\pi}{6}i}\kappa\eta(y)\right)^{\frac{3}{2}}}\left(1+O(\left\langle \kappa\eta(y) \right\rangle^{-\frac{3}{2}})\right),\label{eq-exp-MA1k}\\
  \mathcal A_2(k,\kappa\eta(y))\sim \left\langle \kappa\eta(y) \right\rangle^{-\frac{1}{4}-\frac{1}{2}k}e^{-\frac{2}{3}\left(e^{\frac{5\pi}{6}i}\kappa\eta(y)\right)^{\frac{3}{2}}}\left(1+O(\left\langle \kappa\eta(y) \right\rangle^{-\frac{3}{2}})\right).\label{eq-exp-MA2k}
\end{align}
Using these expansions, we derive the following estimates for the integration terms and the boundary terms in the in Lemma \ref{lem:pri-Airy-expan}.
\begin{lemma}\label{lem-Airy-int-est}
  It holds that
  \begin{align*}
      \int^0_y \left|\mathcal A_1(2,\kappa \eta(z))\right|dz  \lesssim \kappa^{-1}\left\langle \kappa\eta(y) \right\rangle^{-\frac{7}{4}} \left|e^{-\frac{2}{3}\left(e^{\frac{\pi}{6}i}\kappa\eta(y)\right)^{\frac{3}{2}}}\right|,\\
      \int^0_y \int^0_z \left|\mathcal A_1(2,\kappa \eta(x))\right| dx dz \lesssim \kappa^{-2}\left\langle \kappa\eta(y) \right\rangle^{-\frac{9}{4}} \left|e^{-\frac{2}{3}\left(e^{\frac{\pi}{6}i}\kappa\eta(y)\right)^{\frac{3}{2}}}\right|,\\
     \int^y_{-1} \left|\mathcal A_2(2,\kappa \eta(z))\right|dz \lesssim \kappa^{-1}\left\langle \kappa\eta(y) \right\rangle^{-\frac{7}{4}} \left|e^{-\frac{2}{3}\left(e^{\frac{5\pi}{6}i}\kappa\eta(y)\right)^{\frac{3}{2}}}\right|,\\
     \int^y_{-1} \int^z_{-1} \left|\mathcal A_2(2,\kappa \eta(x))\right| dx dz \lesssim \kappa^{-2}\left\langle \kappa\eta(y) \right\rangle^{-\frac{9}{4}} \left|e^{-\frac{2}{3}\left(e^{\frac{5\pi}{6}i}\kappa\eta(y)\right)^{\frac{3}{2}}}\right|,
  \end{align*}
  and
  \begin{align*}
    \left|\mathcal A_1(2,\kappa \eta(0))\right|+\left|\kappa y \mathcal A_1(1,\kappa \eta(0))\right|\lesssim \left\langle \kappa\eta(y) \right\rangle^{-\frac{5}{4}}\left|e^{-\frac{2}{3}\left(e^{\frac{\pi}{6}i}\kappa\eta(y)\right)^{\frac{3}{2}}}\right|,\\
    \left|\mathcal A_2(2,\kappa \eta(-1))\right|+\left|\kappa (y+1) \mathcal A_2(1,\kappa \eta(-1))\right|\lesssim \left\langle \kappa\eta(y) \right\rangle^{-\frac{5}{4}}\left|e^{-\frac{2}{3}\left(e^{\frac{5\pi}{6}i}\kappa\eta(y)\right)^{\frac{3}{2}}}\right|.
  \end{align*}
\end{lemma}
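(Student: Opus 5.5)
The plan is to reduce every integral to the pointwise bounds \eqref{eq-exp-MA1k}--\eqref{eq-exp-MA2k} and then compare exponentials at two different points using Lemma \ref{lem-Airy-Airy-est}. The whole argument rests on that monotonicity comparison. I carry out the $\mathcal A_1$ estimates in detail; the $\mathcal A_2$ ones follow by the reflection $e^{\frac{\pi}{6}i}\to e^{\frac{5\pi}{6}i}$ together with reversing the orientation of integration, namely from $-1$ instead of toward $0$.

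\textbf{Single integral of $\mathcal A_1(2,\cdot)$.} For $y\le z\le 0$ I combine \eqref{eq-exp-MA1k} with \eqref{est-Airy-Airy-est}, applied with $x=z$ and the lower point $y$. On the region $|\kappa\eta|\lesssim M$ I use the bounded-argument version from Lemma \ref{lem:Airy-class-est}. This gives
\begin{align*}
\left|\mathcal A_1(2,\kappa\eta(z))\right|\lesssim \langle\kappa\eta(z)\rangle^{-\frac54}\left|e^{-\frac23(e^{\frac{\pi}{6}i}\kappa\eta(y))^{\frac32}}\right| e^{-\frac1C\kappa|\eta_r(z)-\eta_r(y)|\left(|\kappa\eta(z)|^{\frac12}+|\kappa\eta(y)|^{\frac12}\right)}.
\end{align*}
Next I change variables to $s=\kappa(\eta_r(z)-\eta_r(y))$, using $\eta_r'\sim1$ from \eqref{eq-Langer-prop3}; this costs one factor $\kappa^{-1}$. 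I then split into two cases.
\begin{itemize}
\item If $\langle\kappa\eta(y)\rangle\lesssim 1$, the integral in $s$ is bounded outright.
\item If $\langle\kappa\eta(y)\rangle\gg1$, the exponential confines the integrand to $s\lesssim \langle\kappa\eta(y)\rangle^{-1/2}$. On that range $\langle\kappa\eta(z)\rangle\sim\langle\kappa\eta(y)\rangle$, since $s\ll|\kappa\eta(y)|$. This yields the gain $\langle\kappa\eta(y)\rangle^{-5/4-1/2}=\langle\kappa\eta(y)\rangle^{-7/4}$.
\end{itemize}
In the second case, one subtlety is that $\eta_r(y)<0$ while $\eta_r(z)$ crosses zero. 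There the decay weight $|\kappa\eta(z)|^{1/2}+|\kappa\eta(y)|^{1/2}\ge|\kappa\eta(y)|^{1/2}$ still controls the integral, so the bound survives.

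\textbf{Double integral.} I apply the single-integral bound to the inner integral, which gives $\kappa^{-1}\langle\kappa\eta(z)\rangle^{-7/4}|e^{\cdots(z)}|$, and repeat the same argument once more. This produces $\kappa^{-2}\langle\kappa\eta(y)\rangle^{-9/4}$.

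\textbf{Boundary terms.} The point is that $0$ is the decaying end for $\mathcal A_1$. By \eqref{eq-exp-MA1k} and \eqref{est-Airy-Airy-est} with $x=0$,
\begin{align*}
|\mathcal A_1(2,\kappa\eta(0))|\lesssim \langle\kappa\eta(0)\rangle^{-\frac54}|e^{\cdots(y)}|\,e^{-\frac1C\kappa|\eta_r(0)-\eta_r(y)|\langle\kappa\eta(0)\rangle^{1/2}}.
\end{align*}
Here $\langle\kappa\eta(0)\rangle\sim\kappa\gtrsim\langle\kappa\eta(y)\rangle$, because $\eta_r(0)\sim1$ and $|\eta(y)|\lesssim1$. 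So the prefactor $\langle\kappa\eta(0)\rangle^{-5/4}$ is already at most $\langle\kappa\eta(y)\rangle^{-5/4}$.

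For the term $\kappa|y|\,|\mathcal A_1(1,\kappa\eta(0))|\lesssim \kappa|y|\kappa^{-3/4}|e^{\cdots(0)}|$, I write $|y|\lesssim|\eta_r(0)-\eta_r(y)|$. The leftover power of $\kappa$ is then absorbed by the extra factor $e^{-\frac1C\kappa^{3/2}|\eta_r(0)-\eta_r(y)|}$, via $t e^{-\kappa^{3/2}t/C}\lesssim\kappa^{-3/2}$.

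The $\mathcal A_2$ versions use the lower endpoint $-1$ in the same way. Here I need the mirror of Lemma \ref{lem-Airy-Airy-est} for $e^{\frac{5\pi}{6}i}$ and points $x\le y$. I would obtain it by the identical three-case argument; since $\eta(-1)\approx -c/2$ need not be large, I would also check the case $\kappa\eta(-1)=O(1)$, where the claim is trivial.

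\textbf{Main obstacle.} I expect the main difficulty to be making the single-integral gain uniform near the turning point and across the sign change of $\eta_r$. The same difficulty appears in the $\mathcal A_2$ analogue of \eqref{est-Airy-Airy-est} when $c_i<0$, since the sectors then approach the anti-Stokes lines. To handle this I would lean on Remark \ref{rmk:Airy-class-est}, which keeps the arguments uniformly away from $\pm\frac{\pi}{3}$.
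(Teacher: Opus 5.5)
Your proposal is correct and follows essentially the paper's route: the pointwise Airy bound combined with the exponential comparison of Lemma~\ref{lem-Airy-Airy-est}, and a near/far splitting in $\kappa(\eta_r(z)-\eta_r(y))$; your rescaled variable $s$ and the localization $s\lesssim\langle\kappa\eta(y)\rangle^{-1/2}$ play the role of the paper's split at $|\kappa\eta_r(z)-\kappa\eta_r(y)|\le 1$, and as in the paper you iterate the single-integral bound for the double integral and use the same endpoint comparison for the boundary terms. One small caution for the $\mathcal A_2$ boundary terms: the prefactor comparison $\langle\kappa\eta(0)\rangle\gtrsim\langle\kappa\eta(y)\rangle$ that you used at the endpoint $0$ has no analogue at $-1$, since $\eta(-1)\approx -c/2$ is small. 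There the mismatch between $\langle\kappa\eta(-1)\rangle^{-5/4}$ and $\langle\kappa\eta(y)\rangle^{-5/4}$ must instead be absorbed by the factor $e^{-\frac1C|\kappa\eta_r(y)-\kappa\eta_r(-1)|(|\kappa\eta(y)|^{1/2}+|\kappa\eta(-1)|^{1/2})}$ from the mirrored comparison lemma. This works, because $\langle\kappa\eta(y)\rangle\gg\langle\kappa\eta(-1)\rangle$ forces $\kappa(\eta_r(y)-\eta_r(-1))\gtrsim|\kappa\eta(y)|$.
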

\begin{proof}
Since
\begin{align*}
|\mathcal A_1(2, \kappa\eta(z))|=|\mathrm{Ai}(2, e^{\frac{\pi i}{6}}\kappa\eta(z))|\lesssim \langle \kappa\eta(z) \rangle^{-\frac54} \left|e^{-\frac23(e^{\frac{\pi i}{6}}\kappa\eta(z))^\frac32}\right|,
\end{align*}
then by Lemma \ref{lem-Airy-Airy-est}, we have
\begin{align*}
\int_0^y \left|\mathcal A_1(2, \kappa\eta(z)) e^{\frac23(e^{\frac{\pi i}{6}}\kappa\eta(y))^\frac32}\right| dz\lesssim& \int_y^0\langle \kappa\eta(z) \rangle^{-\frac54} \left|e^{-\frac23(e^{\frac{\pi i}{6}}\kappa\eta(z))^\frac32} e^{\frac23(e^{\frac{\pi i}{6}}\kappa\eta(y))^\frac32}\right|dz\\
\lesssim&\int_y^0\langle \kappa\eta(z) \rangle^{-\frac54} e^{-C^{-1}|\kappa\eta_r(z)-\kappa\eta_r(y)|(|\kappa\eta(z)|^\frac12+|\kappa\eta(y)|^\frac12)}dz.
\end{align*}

If $|\kappa\eta_r(z)-\kappa\eta_r(y)|\leq 1$ and $|\kappa\eta(y)|\ge 1$, we have $\langle \kappa\eta(z) \rangle \sim\langle \kappa\eta(y) \rangle $ and
\begin{align*}
\int_y^0 \chi_{|\kappa\eta_r(z)-\kappa\eta_r(y)|\leq 1}\langle \kappa\eta(z) \rangle^{-\frac54} e^{-C^{-1}|\kappa\eta_r(z)-\kappa\eta_r(y)|(|\kappa\eta(z)|^\frac12+|\kappa\eta(y)|^\frac12)}dz\lesssim \kappa^{-1}\langle \kappa\eta(y) \rangle^{-\frac74}.
\end{align*}
If $|\kappa\eta_r(z)-\kappa\eta_r(y)|\leq 1$ and $|\kappa\eta(y)|<1$, we have $\langle \kappa\eta(y) \rangle\sim 1$, and
\begin{align*}
&\int_y^0 \chi_{|\kappa\eta_r(z)-\kappa\eta_r(y)|\leq 1}\langle \kappa\eta(z) \rangle^{-\frac54} e^{-C^{-1}|\kappa\eta_r(z)-\kappa\eta_r(y)|(|\kappa\eta(z)|^\frac12+|\kappa\eta(y)|^\frac12)}dz\\
\lesssim&\int_{z\in \left\{z| |\kappa\eta_r(z)-\kappa\eta_r(y)|\leq 1\right\}} dz \lesssim \kappa^{-1}\langle \kappa\eta(y) \rangle^{-\frac74}.
\end{align*}
For the rest part, $|\kappa\eta_r(z)-\kappa\eta_r(y)|>1$, we have
\begin{align*}
&\int_y^0\chi_{|\kappa\eta_r(z)-\kappa\eta_r(y)|> 1}\langle \kappa\eta(z) \rangle^{-\frac54} e^{-C^{-1}|\kappa\eta_r(z)-\kappa\eta_r(y)|(|\kappa\eta(z)|^\frac12+|\kappa\eta(y)|^\frac12)}dz\\
\lesssim& e^{-C^{-1} |\kappa\eta(y)|^\frac12}\int_y^0\langle \kappa\eta(z) \rangle^{-\frac54}dz\lesssim\kappa^{-1}e^{-C^{-1}|\kappa\eta(y)|^\frac12}\lesssim \kappa^{-1}\langle \kappa\eta(y) \rangle^{-\frac74}.
\end{align*}

For the second-order integral term, we write
\begin{align*}
  &\int^y_0 \int^z_0 \left|\mathcal A_1(2, \kappa\eta(x)) e^{\frac23(e^{\frac{\pi i}{6}}\kappa\eta(y))^\frac32}\right| dx dz\\
  =&\int^y_0 \left|e^{\frac23(e^{\frac{\pi i}{6}}\kappa\eta(y))^\frac32}e^{-\frac23(e^{\frac{\pi i}{6}}\kappa\eta(z))^\frac32}\right| \int^z_0  \left|\mathcal A_1(2, \kappa\eta(x)) e^{\frac23(e^{\frac{\pi i}{6}}\kappa\eta(z))^\frac32}\right| dx dz,
\end{align*}
Then by using the same technical, we can get the desired estimate. 

Next, we turn to the boundary terms. It is clear that
\begin{align*}
  \left|\kappa y \mathcal A_1(1,\kappa \eta(0))e^{\frac23(e^{\frac{\pi i}{6}}\kappa\eta(y))^\frac32}\right|\lesssim \kappa y\langle \kappa\eta(0) \rangle^{-\frac34} e^{-C^{-1}|\kappa\eta_r(0)-\kappa\eta_r(y)|(|\kappa\eta(0)|^\frac12+|\kappa\eta(y)|^\frac12)}.
\end{align*}
If $|\kappa\eta_r(0)-\kappa\eta_r(y)|\leq 1$, then $|y|\lesssim \kappa^{-1}$, and $\langle \kappa\eta(0) \rangle \sim\langle \kappa\eta(y) \rangle$. Thus for both cases $|\kappa\eta(y)|<1$ and $|\kappa\eta(y)|\ge1$, we have
\begin{align*}
  \kappa y\langle \kappa\eta(0) \rangle^{-\frac34}e^{-C^{-1}|\kappa y||\kappa\eta(0)|^\frac12}\lesssim\langle \kappa\eta(0) \rangle^{-\frac54} \lesssim\langle \kappa\eta(y) \rangle^{-\frac54}.
\end{align*}
If $|\kappa\eta_r(0)-\kappa\eta_r(y)|> 1$, then
\begin{align*}
  \left|\kappa y \mathcal A_1(1,\kappa \eta(0))e^{\frac23(e^{\frac{\pi i}{6}}\kappa\eta(y))^\frac32}\right|\lesssim \kappa |y|e^{-C^{-1}(|\kappa\eta(0)|^\frac12+|\kappa\eta(y)|^\frac12)}\lesssim \langle \kappa\eta(y) \rangle^{-\frac54}.
\end{align*}
The estimate for $\mathcal A_1(2,\kappa \eta(0))$ is easier.

The estimates for $\mathcal A_2$ related terms are the same.
\end{proof}

\subsection{Estimates for the modified Green function}
In this subsection, we derive estimates for the modified Green function $G_A$ defined in \eqref{eq-Green-Airy}. 

The Green function consists of two types of contributions. The first type consists of localized terms, such as $A_1(2,y)A_2(x)$ and $A_1(x)A_2(2,y)$, which enjoy exponential off-diagonal decay when $|x-y|$ is large. The second type consists of non-localized correction terms involving $a_1(x)$ and $a_2(x)$, which in general have only polynomial decay. Therefore, in order to obtain sharp estimates, we need to expand these non-localized terms more carefully.

Recall that
\begin{align*}
  a_1(x)= A_1(1,x) A_2(x)- A_1(x) A_2(1,x),\\ 
  a_2(x)= A_1(x) A_2(2,x)- A_1(2,x) A_2(x).
\end{align*}

By Lemma \ref{lem:pri-Airy-expan}, we can write 
\begin{align}\label{eq-a1-exp}
  a_1(x)=a_{1,M}(x)+a_{1,SecM}(x)+a_{1,B}(x)+a_{1,R}(x),
\end{align}
where
\begin{align}\label{eq-def-a1M}
  a_{1,M}(x)=\frac{\mathcal A_1(1,\kappa \eta(x))}{\kappa\left(\eta_r'(x)\right)^{\frac{3}{2}}}\mathrm{Ai}(e^{\frac{5\pi}{6}i}\kappa \eta(x))-\frac{\mathcal A_2(1,\kappa \eta(x))}{\kappa\left(\eta_r'(x)\right)^{\frac{3}{2}}}\mathrm{Ai}(e^{\frac{\pi}{6}i}\kappa \eta(x)),
\end{align}
\begin{align}\label{eq-def-a1SM}
  a_{1,SecM}(x)=\frac{3}{2}\frac{\eta_r''(x)}{\kappa^2\left(\eta_r'(x)\right)^{\frac{7}{2}}}\left(\mathcal A_1(2,\kappa \eta(x))\mathrm{Ai}(e^{\frac{5\pi}{6}i}\kappa \eta(x))-\mathcal A_2(2,\kappa \eta(x))\mathrm{Ai}(e^{\frac{\pi}{6}i}\kappa \eta(x))\right),
\end{align}
\begin{align}\label{eq-def-a1B}
  a_{1,B}(x)=-\frac{\mathcal A_1(1,\kappa \eta(0))}{\kappa\left(\eta_r'(0)\right)^{\frac{3}{2}}}\mathrm{Ai}(e^{\frac{5\pi}{6}i}\kappa \eta(x))+\frac{\mathcal A_2(1,\kappa \eta(-1))}{\kappa\left(\eta_r'(-1)\right)^{\frac{3}{2}}}\mathrm{Ai}(e^{\frac{\pi}{6}i}\kappa \eta(x)),
\end{align}
and
\begin{align}\label{eq-def-a1R}
  a_{1,R}(x)=a_1(x)-a_{1,M}(x)-a_{1,SecM}(x)-a_{1,B}(x).
\end{align}
Here $a_{1,M}$ is the leading main part, while $a_{1,SecM}$ is the secondary main part. These two terms will be used through their precise leading-order structures. The term $a_{1,B}$ contains the leading boundary contributions. Although it is not necessarily small in size, it enjoys exponential decay away from the corresponding boundary and is therefore harmless in the estimates below. All the remaining terms, including the secondary boundary contributions and the integral remainders in Lemma \ref{lem:pri-Airy-expan}, are collected in $a_{1,R}$.

Similarly, we write
\begin{align}\label{eq-a2-exp}
  a_2(x)=a_{2,M}(x)+a_{2,B}(x)+a_{2,R}(x),
\end{align}
where
\begin{align}\label{eq-def-a2M}
  a_{2,M}(x)=\mathrm{Ai}(e^{\frac{\pi}{6}i}\kappa \eta(x))\frac{\mathcal A_2(2,\kappa \eta(x))}{\kappa^2\left(\eta_r'(x)\right)^{\frac{5}{2}}} -\mathrm{Ai}(e^{\frac{5\pi}{6}i}\kappa \eta(x))\frac{\mathcal A_1(2,\kappa \eta(x))}{\kappa^2\left(\eta_r'(x)\right)^{\frac{5}{2}}},
\end{align}
\begin{equation}\label{eq-def-a2B}
  \begin{aligned}    
    a_{2,B}(x)=&-\mathrm{Ai}(e^{\frac{\pi}{6}i}\kappa \eta(x))\left( \frac{\mathcal A_2(2,\kappa \eta(-1))}{\kappa^2\left(\eta_r'(-1)\right)^{\frac{5}{2}}}+(x+1) \left(  \frac{\mathcal A_2(1,\kappa \eta(-1))}{\kappa\left(\eta_r'(-1)\right)^{\frac{3}{2}}}\right)\right)\\
 &+\mathrm{Ai}(e^{\frac{5\pi}{6}i}\kappa \eta(x)) \left(\frac{\mathcal A_1(2,\kappa \eta(0))}{\kappa^2\left(\eta_r'(0)\right)^{\frac{5}{2}}}+x \left(\frac{\mathcal A_1(1,\kappa \eta(0))}{\kappa\left(\eta_r'(0)\right)^{\frac{3}{2}}}\right)\right),    
  \end{aligned}
\end{equation}
and
\begin{align}\label{eq-def-a2R}
  a_{2,R}(x)=a_2(x)-a_{2,M}(x)-a_{2,B}(x).
\end{align}

The leading terms $a_{1,M}$, $a_{1,SecM}$, and $a_{2,M}$ are combinations of two rotated Airy functions and their primitives. These combinations can be reduced to the Scorer function $\mathrm{Hi}$ by the standard connection formulas for Airy and Scorer functions. The calculation of $a_{1,M}$ is closely related to the Airy-primitives identity used in Appendix A of \cite{CWZ2026}.
\begin{lemma}\label{lem-a1-a2-exp}
  It holds that
  \begin{align}\label{eq-exp-a1M}
    a_{1,M}(x)=-\frac{1}{2}\frac{\mathrm{Hi}(e^{-\frac{\pi}{2}i}\kappa \eta(x))}{\kappa\left(\eta_r'(x)\right)^{\frac{3}{2}}},
  \end{align}
  \begin{align}\label{eq-exp-a1SM}
    a_{1,SecM}(x)=-\frac{3i}{4}\frac{\eta_r''(x)}{\kappa^2\left(\eta_r'(x)\right)^{\frac{7}{2}}}\mathrm{Hi}''(e^{-\frac{\pi}{2}i}\kappa \eta(x)),
  \end{align}  
  and
  \begin{align}\label{eq-exp-a2M}
    a_{2,M}(x)=\frac{i}{2}\frac{1}{\kappa^2\left(\eta_r'(x)\right)^{\frac{5}{2}}}\mathrm{Hi}''(e^{-\frac{\pi}{2}i}\kappa \eta(x)).
  \end{align}
\end{lemma}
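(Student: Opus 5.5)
The plan is to reduce all three identities to one identity between functions of the single complex variable $z=\kappa\eta(x)$. Since the prefactors $\kappa^{-1}(\eta_r')^{-3/2}$, $\kappa^{-2}(\eta_r')^{-7/2}\eta_r''$ and $\kappa^{-2}(\eta_r')^{-5/2}$ appear identically on both sides, it suffices to prove the following.
\begin{align*}
  &f(z)\eqdef\mathcal A_1(1,z)\,g_2(z)-\mathcal A_2(1,z)\,g_1(z)=-\tfrac12\,\mathrm{Hi}(e^{-\frac{\pi}{2}i}z),\\
  &\mathcal A_1(2,z)\,g_2(z)-\mathcal A_2(2,z)\,g_1(z)=-\tfrac{i}{2}\,\mathrm{Hi}''(e^{-\frac{\pi}{2}i}z),
\end{align*}
Here $g_1(z)=\mathrm{Ai}(e^{\frac{\pi}{6}i}z)$ and $g_2(z)=\mathrm{Ai}(e^{\frac{5\pi}{6}i}z)$. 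The third identity \eqref{eq-exp-a2M} is just minus the second one, and \eqref{eq-exp-a1SM} follows from the second one after multiplying by $\tfrac32$. All these expressions are entire in $z$. Hence I would prove them for real $z$ and extend to complex $z=\kappa\eta(x)$ by analytic continuation.

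\emph{Step 1: an ODE for $f$.} A direct check gives $g_j''=iz\,g_j$ for $j=1,2$, because $(e^{\frac{\pi}{6}i})^3=(e^{\frac{5\pi}{6}i})^3=i$. By \eqref{eq-Airy-prim-1}--\eqref{eq-Airy-prim-2} we have $\pa_z\mathcal A_j(1,z)=g_j$. Then
\[
  f'=\mathcal A_1(1,\cdot)g_2'-\mathcal A_2(1,\cdot)g_1',\qquad f''=(g_1g_2'-g_1'g_2)+iz f .
\]
The Wronskian $g_1g_2'-g_1'g_2$ is constant, and by \eqref{eq-Wron} (with $\kappa\eta_r'$ removed) it equals $\frac{1}{2\pi}$. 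On the other side, set $h(z)=\mathrm{Hi}(e^{-\frac{\pi}{2}i}z)$. Scorer's equation $\mathrm{Hi}''(t)=t\,\mathrm{Hi}(t)+\frac1\pi$ gives $h''-izh=-\frac1\pi$. So $f$ and $-\frac12 h$ solve the same inhomogeneous equation $w''-izw=\frac{1}{2\pi}$. Their difference therefore has the form $c_1g_1+c_2g_2$.

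\emph{Step 2: killing the homogeneous part by asymptotics on $\mathbb R$.} This is the step needing the most care. By Lemma \ref{lem:Airy-class-est}, $g_1$ grows exponentially as $z\to-\infty$ and $g_2$ grows exponentially as $z\to+\infty$. Thus a combination $c_1g_1+c_2g_2$ bounded on $\mathbb R$ must vanish. It remains to show that $f$ and $h$ are bounded on $\mathbb R$.
\begin{itemize}
\item For $h$: the argument $t=-iz$ satisfies $|\arg(-t)|=\frac\pi2<\frac{2\pi}{3}$. The stated expansion then gives $h=O(|z|^{-1})$ for large $|z|$.
\item For $f$ as $z\to+\infty$: the expansion \eqref{eq-exp-MA1k} makes $\mathcal A_1(1,z)g_2(z)$ algebraic, since the exponentials cancel. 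Also $\mathcal A_2(1,z)$ stays bounded, converging to $\int_{\mathbb R}g_2$, while $g_1$ decays.
\item For $f$ as $z\to-\infty$: symmetrically, $\mathcal A_2(1,z)g_1(z)$ is algebraic and $\mathcal A_1(1,z)g_2(z)$ is bounded times decaying.
\end{itemize}
The only delicate point is confirming that the exponents really cancel, i.e.\ that $(e^{\frac{\pi}{6}i}z)^{3/2}+(e^{\frac{5\pi}{6}i}z)^{3/2}$ has zero real part on the relevant half-lines with the principal branches. I would verify this directly from the arguments, $\frac{\pi}{4}$ versus $\frac{5\pi}{4}$ for $z>0$. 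This yields \eqref{eq-exp-a1M}.

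\emph{Step 3: the second primitives.} From \eqref{eq-Airy-k} with $k=2$ we get $\mathrm{Ai}(2,w)=w\,\mathrm{Ai}(1,w)-\mathrm{Ai}'(w)$. Combining this with \eqref{eq-Airy-prim-1-prop}--\eqref{eq-Airy-prim-2-prop} gives
\[
  \mathcal A_1(2,z)=z\mathcal A_1(1,z)-i\,g_1'(z)\cdot(-1),\qquad \mathcal A_2(2,z)=z\mathcal A_2(1,z)-i\,g_2'(z)\cdot(-1),
\]
where the phases $e^{-\frac{\pi}{3}i}e^{-\frac{\pi}{6}i}$ and $e^{-\frac{5\pi}{3}i}e^{-\frac{5\pi}{6}i}$ both equal $-i$. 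More precisely, $\mathcal A_j(2,z)=z\mathcal A_j(1,z)+i g_j'(z)$. Hence
\[
  \mathcal A_1(2,z)g_2-\mathcal A_2(2,z)g_1=z f+i(g_1'g_2-g_2'g_1)=zf-\frac{i}{2\pi}=-\frac z2 h-\frac{i}{2\pi}.
\]
Finally, Scorer's equation at $t=-iz$ gives $-\frac i2\mathrm{Hi}''(t)=-\frac i2\bigl(-iz\,\mathrm{Hi}(t)+\frac1\pi\bigr)=-\frac z2h-\frac{i}{2\pi}$. This proves the second identity, and with it \eqref{eq-exp-a1SM} and \eqref{eq-exp-a2M}.
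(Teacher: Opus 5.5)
Your strategy works, but the boundedness argument for $f$ in Step~2 contains a false claim. As $z\to+\infty$, $\mathcal A_2(1,z)=\int_{-\infty}^z g_2$ does not converge to $\int_{\mathbb R}g_2$. As you note at the start of Step~2, $g_2$ grows exponentially there. Indeed, by \eqref{eq-Airy-prim-2-prop} and Lemma~\ref{lem:Airy-class-est} at argument $\frac{5\pi}{6}$, $|\mathcal A_2(1,z)|$ grows like $z^{-3/4}e^{\frac{\sqrt2}{3}z^{3/2}}$. Symmetrically, $\mathcal A_1(1,z)=\int_{+\infty}^z g_1$ grows exponentially as $z\to-\infty$, so $\mathcal A_1(1,z)g_2(z)$ is not ``bounded times decaying'' there.

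The conclusion still holds, and the repair is exactly the check you flagged. For every real $z\neq0$ the principal branches satisfy $(e^{\frac{\pi}{6}i}z)^{3/2}=-(e^{\frac{5\pi}{6}i}z)^{3/2}$. The arguments are $\frac{\pi}{4},\frac{5\pi}{4}$ for $z>0$ and $-\frac{5\pi}{4},-\frac{\pi}{4}$ for $z<0$. So at \emph{each} end, \emph{both} products $\mathcal A_1(1,z)g_2(z)$ and $\mathcal A_2(1,z)g_1(z)$ are (growing)$\times$(decaying) with exactly cancelling exponentials, hence $O(|z|^{-1})$. As a consistency check, for $z\to+\infty$ the leading terms of $\mathcal A_1(1,z)g_2(z)$ and $-\mathcal A_2(1,z)g_1(z)$ are each $\frac{i}{4\pi z}$. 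They sum to $\frac{i}{2\pi z}$, which is the leading term of $-\frac12\mathrm{Hi}(-iz)$. With this correction, Steps~1--3 give a complete proof.

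Your route differs from the paper's for \eqref{eq-exp-a1M}. The paper argues by explicit manipulation. It rotates the integration variable and inserts $\mathrm{Ai}(e^{\frac{2\pi}{3}i}x)=\frac{e^{\frac{\pi}{3}i}}{2}(\mathrm{Ai}(x)-i\mathrm{Bi}(x))$. It then uses $\int_{\mathbb R}\mathrm{Ai}=1$, $\int_{-\infty}^0\mathrm{Bi}=0$, the integral representation of $\mathrm{Gi}$, and the connection formula $\mathrm{Gi}(y)=e^{\frac{\pi}{3}i}\mathrm{Hi}(e^{-\frac{2\pi}{3}i}y)-i\mathrm{Ai}(y)$. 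For \eqref{eq-exp-a1SM} and \eqref{eq-exp-a2M}, the paper does exactly your Step~3: the recurrence for $\mathrm{Ai}(2,\cdot)$, the constant Wronskian, and Scorer's equation.

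Your ODE-uniqueness argument replaces the chain of connection formulas by Scorer's equation plus the sectorial asymptotics already recorded in Lemma~\ref{lem:Airy-class-est}. It also avoids justifying the rotated-contour changes of variables. Its cost is the growth/decay bookkeeping on both half-lines, which is where the slip occurred.
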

\begin{proof}
We first focus on the main part. It follows from the definition \eqref{eq-Airy-prim-1} and \eqref{eq-def-a1M} that
\begin{align*}
  a_{1,M}(x)=&\frac{\mathcal A_1(1,\kappa \eta(x))}{\kappa\left(\eta_r'(x)\right)^{\frac{3}{2}}}\mathrm{Ai}(e^{\frac{5\pi}{6}i}\kappa \eta(x))-\frac{\mathcal A_2(1,\kappa \eta(x))}{\kappa\left(\eta_r'(x)\right)^{\frac{3}{2}}}\mathrm{Ai}(e^{\frac{\pi}{6}i}\kappa \eta(x))\\
  =&\frac{\int^{\kappa \eta(x)}_{+\infty}\mathrm{Ai}(e^{\frac{\pi}{6}i}z)dz\mathrm{Ai}(e^{\frac{5\pi}{6}i}\kappa \eta(x))-\int^{\kappa \eta(x)}_{-\infty}\mathrm{Ai}(e^{\frac{5\pi}{6}i}z)dz\mathrm{Ai}(e^{\frac{\pi}{6}i}\kappa \eta(x))}{\kappa\left(\eta_r'(x)\right)^{\frac{3}{2}}}\\
  =&\frac{e^{-\frac{\pi}{6}i}\int^{e^{\frac{\pi}{6}i}\kappa \eta(x)}_{+\infty}\mathrm{Ai}(z)dz\mathrm{Ai}(e^{\frac{5\pi}{6}i}\kappa \eta(x))-e^{-\frac{\pi}{6}i}\int^{e^{\frac{\pi}{6}i}\kappa \eta(x)}_{-\infty}\mathrm{Ai}(e^{\frac{2\pi}{3}i}z)dz\mathrm{Ai}(e^{\frac{\pi}{6}i}\kappa \eta(x))}{\kappa\left(\eta_r'(x)\right)^{\frac{3}{2}}},
\end{align*}
by using the fact that $\mathrm{Ai}(e^{\frac{2\pi}{3}i}x)=\frac{e^{\frac{\pi}{3}i}}{2}\left(\mathrm{Ai}(x)-i\mathrm{Bi}(x)\right)$, we have
\begin{align*} 
  a_{1,M}(x)=&\frac{-\frac{e^{\frac{\pi}{6}i}}{2}\int^{+\infty}_{-\infty}\mathrm{Ai}(z)dz\mathrm{Ai}(e^{\frac{\pi}{6}i}\kappa \eta(x))}{\kappa\left(\eta_r'(x)\right)^{\frac{3}{2}}}\\ 
  &+\frac{-\frac{ie^{\frac{\pi}{6}i}}{2}\int^{e^{\frac{\pi}{6}i}\kappa \eta(x)}_{+\infty}\mathrm{Ai}(z)dz\mathrm{Bi}(e^{\frac{\pi}{6}i}\kappa \eta(x))+\frac{ie^{\frac{\pi}{6}i}}{2}\int^{e^{\frac{\pi}{6}i}\kappa \eta(x)}_{-\infty}\mathrm{Bi}(z)dz\mathrm{Ai}(e^{\frac{\pi}{6}i}\kappa \eta(x))}{\kappa\left(\eta_r'(x)\right)^{\frac{3}{2}}}.
\end{align*}
Recall that $\int^{0}_{-\infty}\mathrm{Bi}(x)dx=0$, $\int^{+\infty}_{-\infty}\mathrm{Ai}(x)dx=1$ and (see \cite{Olver2010})
\begin{align*}
  \mathrm{Gi}(y)=\mathrm{Bi}(y)\int^{+\infty}_{y}\mathrm{Ai}(x)dx+\mathrm{Ai}(y)\int^{y}_{0}\mathrm{Bi}(x)dx,\quad \mathrm{Gi}(y)=e^{\frac{\pi}{3}i}\mathrm{Hi}\left(e^{-\frac{2\pi}{3}i}y\right)-i\mathrm{Ai}(y).
\end{align*}
We have
\begin{align*}
  a_{1,M}(x)=&\frac{-\frac{e^{\frac{\pi}{6}i}}{2} \mathrm{Ai}(e^{\frac{\pi}{6}i}\kappa \eta(x))+\frac{ie^{\frac{\pi}{6}i}}{2}\mathrm{Gi}(e^{\frac{\pi}{6}i}\kappa \eta(x))}{\kappa\left(\eta_r'(x)\right)^{\frac{3}{2}}}=\frac{i\frac{e^{\frac{\pi}{2}i}}{2} \mathrm{Hi}(e^{-\frac{\pi}{2}i}\kappa \eta(x))}{\kappa\left(\eta_r'(x)\right)^{\frac{3}{2}}}.
\end{align*} 

Next, we study the secondary main part $a_{1,SecM}(x)$. Recall that
\begin{align*}
  a_{1,SecM}(x)=&\frac{3}{2}\frac{\eta_r''(x)}{\kappa^2\left(\eta_r'(x)\right)^{\frac{7}{2}}}\left(\mathcal A_1(2,\kappa \eta(x))\mathrm{Ai}(e^{\frac{5\pi}{6}i}\kappa \eta(x))-\mathcal A_2(2,\kappa \eta(x))\mathrm{Ai}(e^{\frac{\pi}{6}i}\kappa \eta(x))\right).
\end{align*}

It follows from \eqref{eq-Airy-k} and \eqref{eq-Airy-prim-2} that
\begin{align*}
  \mathcal A_1(2,y)=y\mathcal A_1(1,y)-e^{-\frac{\pi}{3}i}\mathrm{Ai}'(e^{\frac{\pi}{6}i}y),\\
  \mathcal A_2(2,y)=y\mathcal A_2(1,y)-e^{\frac{\pi}{3}i}\mathrm{Ai}'(e^{\frac{5\pi}{6}i}y),
\end{align*}
therefore, we have
\begin{align*}
  &\mathcal A_1(2,\kappa \eta(x))\mathrm{Ai}(e^{\frac{5\pi}{6}i}\kappa \eta(x))-\mathcal A_2(2,\kappa \eta(x))\mathrm{Ai}(e^{\frac{\pi}{6}i}\kappa \eta(x))\\
  =&\kappa \eta(x) \left(\mathcal A_1(1,\kappa \eta(x))\mathrm{Ai}(e^{\frac{5\pi}{6}i}\kappa \eta(x))-\mathcal A_2(1,\kappa \eta(x))\mathrm{Ai}(e^{\frac{\pi}{6}i}\kappa \eta(x))\right)\\
  &-e^{-\frac{\pi}{3}i}\mathrm{Ai}'(e^{\frac{\pi}{6}i}\kappa \eta(x))\mathrm{Ai}(e^{\frac{5\pi}{6}i}\kappa \eta(x))+e^{\frac{\pi}{3}i}\mathrm{Ai}'(e^{\frac{5\pi}{6}i}\kappa \eta(x))\mathrm{Ai}(e^{\frac{\pi}{6}i}\kappa \eta(x)).
\end{align*}
A direct calculation shows that
\begin{align*}
  &-e^{-\frac{\pi}{3}i}\mathrm{Ai}'(e^{\frac{\pi}{6}i}y)\mathrm{Ai}(e^{\frac{5\pi}{6}i}y)+e^{\frac{\pi}{3}i}\mathrm{Ai}'(e^{\frac{5\pi}{6}i}y)\mathrm{Ai}(e^{\frac{\pi}{6}i}y)\\
  =&e^{-\frac{\pi}{3}i} \left( \mathrm{Ai}(e^{\frac{\pi}{6}i}y)e^{\frac{2\pi}{3}i} \mathrm{Ai}'(e^{\frac{5\pi}{6}i}y) -\mathrm{Ai}'(e^{\frac{\pi}{6}i}y)\mathrm{Ai}(e^{\frac{5\pi}{6}i}y)\right)=e^{-\frac{\pi}{3}i}\frac{e^{-\frac{\pi}{6}i}}{2\pi}=\frac{e^{-\frac{\pi}{2}i}}{2\pi}.
\end{align*}
By the same technique used in deducing \eqref{eq-exp-a1M}, we have
\begin{align*}
  &\kappa \eta(x) \left(\mathcal A_1(1,\kappa \eta(x))\mathrm{Ai}(e^{\frac{5\pi}{6}i}\kappa \eta(x))-\mathcal A_2(1,\kappa \eta(x))\mathrm{Ai}(e^{\frac{\pi}{6}i}\kappa \eta(x))\right)=i\frac{e^{\frac{\pi}{2}i}}{2}\kappa \eta(x) \mathrm{Hi}(e^{-\frac{\pi}{2}i}\kappa \eta(x)).
\end{align*}
Recall that $\mathrm{Hi}$ satisfies
\begin{align*}
  \mathrm{Hi}''(z)-z\mathrm{Hi}(z)=\frac{1}{\pi}.
\end{align*}
We have
\begin{align*}
  a_{1,SecM}(x)=-i\frac{3}{4}\frac{\eta_r''(x)}{\kappa^2\left(\eta_r'(x)\right)^{\frac{7}{2}}}\mathrm{Hi}''(e^{-\frac{\pi}{2}i}\kappa \eta(x)).
\end{align*}

For the same reason, we get \eqref{eq-exp-a2M}.
\end{proof}
\begin{remark}\label{rmk-Hi''}
  Here we remark that, by the expansion of $\mathrm{Hi}(z)$, we can deduce the expansion of $\mathrm{Hi}''(z)$. For given $\delta>0$ and big enough $M$, it holds that
\begin{align}\label{eq-expan-Hi''}
  \mathrm{Hi}''(z)=-\frac{1}{\pi}\sum^\infty_{j=1}\frac{\left(3j\right)!}{j!\left(3z^3\right)^j},\text{ for }|z|\ge M,\ |\arg -z|\le \frac{2}{3}\pi-\delta.
\end{align}
By induction, we can derive the expansion of $\mathrm{Hi}^{(k)}(z)$ for $\forall k\in\mathbb Z_+$.
\end{remark}

Combining the above results, we get the following estimate for $a_1(x)$ and $a_2(x)$.
\begin{lemma}\label{lem-est-a1-a2}
  For $k=0,1,2$, where $a^{(k)}=\pa_x^k a$, it holds that
\begin{align*}
  \left|a_{1,M}^{(k)}(x)\right|\lesssim& \kappa^{-1+k}\left\langle \kappa \eta(x) \right\rangle^{-1-k},\\
  \left|a_{1,SecM}^{(k)}(x)\right|\lesssim& \kappa^{-2+k}\left\langle \kappa \eta(x) \right\rangle^{-3-k},\\
  \left|a_{1,B}^{(k)}(x)\right|\lesssim & \frac{e^{-\frac{1}{C}|\kappa\eta_r(x)-\kappa\eta_r(0)|(|\kappa\eta(x)|^\frac12+|\kappa\eta(0)|^\frac12)}}{\kappa^{1-k}\left\langle \kappa \eta(0) \right\rangle^{\frac{3}{4}}\left\langle \kappa \eta(x) \right\rangle^{\frac{1}{4}-\frac{1}{2}k}}+\frac{e^{-\frac{1}{C}|\kappa\eta_r(x)-\kappa\eta_r(-1)|(|\kappa\eta(x)|^\frac12+|\kappa\eta(-1)|^\frac12)}}{\kappa^{1-k}\left\langle \kappa \eta(-1) \right\rangle^{\frac{3}{4}}\left\langle \kappa \eta(x) \right\rangle^{\frac{1}{4}-\frac{1}{2}k}},\\
  \left|a_{1,R}^{(k)}(x)\right|\lesssim & \kappa^{-3+k}\left\langle \kappa \eta(x) \right\rangle^{-2+\frac{1}{2}k}\\
  &+\frac{e^{-\frac{1}{C}|\kappa\eta_r(x)-\kappa\eta_r(0)|(|\kappa\eta(x)|^\frac12+|\kappa\eta(0)|^\frac12)}}{\kappa^{2-k}\left\langle \kappa \eta(0) \right\rangle^{\frac{5}{4}}\left\langle \kappa \eta(x) \right\rangle^{\frac{1}{4}-\frac{1}{2}k}}+\frac{e^{-\frac{1}{C}|\kappa\eta_r(x)-\kappa\eta_r(-1)|(|\kappa\eta(x)|^\frac12+|\kappa\eta(-1)|^\frac12)}}{\kappa^{2-k}\left\langle \kappa \eta(-1) \right\rangle^{\frac{5}{4}}\left\langle \kappa \eta(x) \right\rangle^{\frac{1}{4}-\frac{1}{2}k}},\\
  \left|a_{2,M}^{(k)}(x)\right|\lesssim& \kappa^{-2+k}\left\langle \kappa \eta(x) \right\rangle^{-3-k},\\
  \left|a_{2,B}^{(k)}(x)\right|\lesssim & \frac{e^{-\frac{1}{C}|\kappa\eta_r(x)-\kappa\eta_r(0)|(|\kappa\eta(x)|^\frac12+|\kappa\eta(0)|^\frac12)}}{\kappa^{2-k}\left\langle \kappa \eta(0) \right\rangle^{\frac{5}{4}}\left\langle \kappa \eta(x) \right\rangle^{\frac{1}{4}-\frac{1}{2}k}}+\frac{e^{-\frac{1}{C}|\kappa\eta_r(x)-\kappa\eta_r(-1)|(|\kappa\eta(x)|^\frac12+|\kappa\eta(-1)|^\frac12)}}{\kappa^{2-k}\left\langle \kappa \eta(-1) \right\rangle^{\frac{5}{4}}\left\langle \kappa \eta(x) \right\rangle^{\frac{1}{4}-\frac{1}{2}k}},\\
  \left|a_{2,R}^{(k)}(x)\right|\lesssim & \kappa^{-3+k}\left\langle \kappa \eta(x) \right\rangle^{-2+\frac{1}{2}k}\\
  &+\frac{e^{-\frac{1}{C}|\kappa\eta_r(x)-\kappa\eta_r(0)|(|\kappa\eta(x)|^\frac12+|\kappa\eta(0)|^\frac12)}}{\kappa^{3-k}\left\langle \kappa \eta(0) \right\rangle^{\frac{7}{4}}\left\langle \kappa \eta(x) \right\rangle^{\frac{1}{4}-\frac{1}{2}k}}+\frac{e^{-\frac{1}{C}|\kappa\eta_r(x)-\kappa\eta_r(-1)|(|\kappa\eta(x)|^\frac12+|\kappa\eta(-1)|^\frac12)}}{\kappa^{3-k}\left\langle \kappa \eta(-1) \right\rangle^{\frac{7}{4}}\left\langle \kappa \eta(x) \right\rangle^{\frac{1}{4}-\frac{1}{2}k}}.
  \end{align*}
\end{lemma}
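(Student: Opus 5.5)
The estimates follow term by term from the decomposition \eqref{eq-a1-exp}--\eqref{eq-a2-exp}, the closed forms in Lemma \ref{lem-a1-a2-exp}, and the expansions of the modified primitives in Lemma \ref{lem:pri-Airy-expan}. Each $x$-derivative will either fall on the $\eta_r'$ prefactors, costing $O(1)$, or on a function of $\kappa\eta(x)$, producing a factor $\kappa\eta_r'(x)$ by the chain rule. Since $\eta_r'\sim1$ by \eqref{eq-Langer-prop3}, each derivative therefore costs at most $\kappa$ times the derivative of the profile.

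For the main parts I would use \eqref{eq-exp-a1M}, \eqref{eq-exp-a1SM} and \eqref{eq-exp-a2M}. Here $\mathrm{Hi}$ and its derivatives are entire, so they are bounded on compact sets. The argument of $e^{-\frac{\pi}{2}i}\kappa\eta(x)$ lies near $\pm\frac{\pi}{2}$, because $\kappa|\eta_i|\lesssim M$ while $\kappa\eta_r$ is real. Hence $|\arg(-z)|\le\frac{\pi}{2}+\delta<\frac{2}{3}\pi-\delta$, and the Scorer asymptotics apply for $|z|\ge M$. Differentiating the expansion termwise gives
\[
|\mathrm{Hi}^{(k)}(z)|\lesssim\langle z\rangle^{-1-k},\qquad |\mathrm{Hi}''^{(k)}(z)|\lesssim\langle z\rangle^{-4-k}.
\]
The second bound uses \eqref{eq-expan-Hi''}, whose sum starts at $j=1$, and is stronger than needed. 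Together with the chain rule these give the bounds for $a_{1,M}^{(k)}$, $a_{1,SecM}^{(k)}$ and $a_{2,M}^{(k)}$.

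For the boundary parts \eqref{eq-def-a1B} and \eqref{eq-def-a2B}, each term is a constant, such as $\mathcal A_2(1,\kappa\eta(-1))$, times $\mathrm{Ai}(e^{\frac{\pi}{6}i}\kappa\eta(x))$ or its analogue. I would bound the constants by \eqref{eq-exp-MA1k}--\eqref{eq-exp-MA2k} and the Airy function by Lemma \ref{lem:Airy-class-est}, so that $\partial_x^k$ produces $\kappa^k\langle\kappa\eta\rangle^{k/2}$. The product of the two exponentials is then controlled by Lemma \ref{lem-Airy-Airy-est}. For the pair $\mathcal A_2(\cdot,\kappa\eta(-1))\,\mathrm{Ai}(e^{\frac{\pi}{6}i}\kappa\eta(x))$ this is exactly the stated decay from $-1$. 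For the pair $\mathcal A_1(\cdot,\kappa\eta(0))\,\mathrm{Ai}(e^{\frac{5\pi}{6}i}\kappa\eta(x))$ I would use the analogous statement with the roles reversed.

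In $a_{2,B}$ there are additional factors $(x+1)$ and $x$. These are absorbed as in the last part of Lemma \ref{lem-Airy-int-est}, using $\kappa|x+1|e^{-\frac1C\kappa|x+1|\cdots}\lesssim\langle\cdot\rangle^{-1/2}$. This converts the $\kappa^{-1}\langle\kappa\eta(-1)\rangle^{-3/4}$ coefficient into $\kappa^{-2}\langle\cdot\rangle^{-5/4}$.

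The remainders $a_{1,R}$ and $a_{2,R}$ collect three kinds of terms, and I expect them to be the main bookkeeping obstacle. The first kind is the $\kappa^{-3}$-order local terms, such as $\mathcal A_1(3,\cdot)\eta_r''/\kappa^3$ times $\mathrm{Ai}$, together with the cross terms between the $A_1$ and $A_2$ expansions. These are products of a growing and a decaying Airy-type factor evaluated at the same point, so the exponentials cancel exactly. The polynomial weights then give $\kappa^{-3}\langle\kappa\eta\rangle^{-2}$, and derivatives cost $\kappa\langle\kappa\eta\rangle^{1/2}$ each. The second kind is the integral remainders, which are controlled by Lemma \ref{lem-Airy-int-est} multiplied by the partner $\mathrm{Ai}$; the partner's exponential cancels the one in that lemma's bound. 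The third kind is the secondary boundary terms of order $\kappa^{-2}$, which are treated like $a_{1,B}$.

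The delicate point is checking that every piece lies in one of these classes with the right powers. In particular, derivatives of the integral terms return the integrand, which is local. I would therefore organize the proof by writing $a_j^{(k)}$ via the Leibniz rule, using $\partial_xA_1(1,x)=(\eta_r')^{-1/2}A_1(x)$, and track the powers case by case for $k\le2$.
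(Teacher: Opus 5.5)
Your proposal is correct and follows the same route as the paper, whose proof is only a one-line appeal to Lemma \ref{lem-a1-a2-exp}, Lemma \ref{lem-Airy-Airy-est} and Lemma \ref{lem-Airy-int-est}; you fill in the omitted bookkeeping, namely the Scorer asymptotics for the main parts, the paired growing/decaying Airy factors for the boundary and secondary-boundary parts, and the integral remainders coming from Lemma \ref{lem:pri-Airy-expan}. One correction: by \eqref{eq-expan-Hi''} the leading term of $\mathrm{Hi}''(z)$ is $-\frac{2}{\pi z^{3}}$, so the correct bound is $|\mathrm{Hi}^{(2+k)}(z)|\lesssim\langle z\rangle^{-3-k}$, not $\langle z\rangle^{-4-k}$; this is exactly what the $a_{1,SecM}$ and $a_{2,M}$ estimates require (not more), so your conclusion still holds (also, there are no ``cross terms'' to track, since in $a_1$ and $a_2$ only the primitive factor is expanded while the partner $A_1(x)$ or $A_2(x)$ is exact).
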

\begin{proof}
The results follow directly from Lemma \ref{lem-a1-a2-exp}, Lemma \ref{lem-Airy-Airy-est}, and Lemma \ref{lem-Airy-int-est}.  
\end{proof}
 
In the subsequent analysis, we will need to take the derivative of $a_{1}$ with respect to $c_r$. Among these terms, only the derivative of $a_{1,B}$ requires a careful and detailed examination. 
\begin{lemma}\label{lem-d-cr-a1B}
  It holds that
  \begin{equation}\label{est-d-cr-a1B}
    \left|\pa_{c_r}a_{1,B}^{(k)}(x)\right|\lesssim\left\{
      \begin{array}{ll}
        \frac{\kappa^{k}}{\left\langle \kappa c_r\right\rangle^{2-\frac{k}{2}}}\left|\ln c\right|,&\text{ for }-1\le x\le -1+\frac{C \left|\ln c\right|}{\kappa^{\frac{3}{2}}c_r^{\frac{1}{2}}},\\
        \varepsilon^2,&\text{ for }-1+\frac{C \left|\ln c\right|}{\kappa^{\frac{3}{2}}c_r^{\frac{1}{2}}}< x< -\frac{C \left|\ln c\right|}{\kappa^{\frac{3}{2}}},\\
        \kappa^{-2+\frac{3k}{2}}\left|\ln c\right|,&\text{ for }-\frac{C \left|\ln c\right|}{\kappa^{\frac{3}{2}}}\le x\le 0,
      \end{array}
    \right.\text{ for }k=0,1,2.
  \end{equation}
  where $C$ is a constant that sufficiently big.
\end{lemma}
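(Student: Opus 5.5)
The plan is to exploit the fact that each of the two summands in \eqref{eq-def-a1B} is a product of a \emph{decaying} primitive Airy function evaluated at an endpoint and a \emph{growing} Airy function evaluated at $x$, so that their exponential phases nearly cancel. Write
\[
a_{1,B}=-T_0+T_{-1},\qquad T_0=\frac{\mathcal A_1(1,\kappa\eta(0))\,\mathrm{Ai}(e^{\frac{5\pi}{6}i}\kappa\eta(x))}{\kappa(\eta_r'(0))^{\frac32}},\qquad T_{-1}=\frac{\mathcal A_2(1,\kappa\eta(-1))\,\mathrm{Ai}(e^{\frac{\pi}{6}i}\kappa\eta(x))}{\kappa(\eta_r'(-1))^{\frac32}} .
\]
By \eqref{eq-Airy-prim-1-prop}, \eqref{eq-Airy-prim-2-prop} and Lemma \ref{lem:Airy-class-est}, in the large-argument regime each factor equals an explicit power prefactor times $\exp(\mp\frac23(\cdot)^{3/2})$ times $(1+O(\langle\cdot\rangle^{-3/2}))$. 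Since $(e^{\frac{5\pi}{6}i}z)^{\frac32}=-(e^{\frac{\pi}{6}i}z)^{\frac32}$ on the relevant branch, the product takes the form
\[
T_{-1}=P(x,c_r)\,\exp\!\Big(\tfrac23\big[(e^{\frac{\pi}{6}i}\kappa\eta(-1))^{\frac32}-(e^{\frac{\pi}{6}i}\kappa\eta(x))^{\frac32}\big]\Big)\big(1+R\big),
\]
with $|P|\lesssim\kappa^{-1}\langle\kappa\eta(-1)\rangle^{-\frac34}\langle\kappa\eta(x)\rangle^{-\frac14}$, and similarly for $T_0$ with the endpoint $-1$ replaced by $0$. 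When the arguments are $O(1)$ (i.e.\ $\kappa c_r\lesssim1$, or $x$ within $\kappa^{-1}$ of $y_c$), the functions are smooth and bounded, and $\partial_{c_r}$ acts through $\kappa\,\partial_{c_r}\eta$ with $\partial_{c_r}\eta=O(1)$ by \eqref{eq-Langer-prop4}. This gives the trivial bound $\lesssim1$, which is consistent with $\langle\kappa c_r\rangle^{-2}$ in that range.

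In the large-argument regime, I will apply $\partial_{c_r}$ factor by factor. For the prefactor, the derivative of $\langle\kappa\eta(\cdot)\rangle^{-\frac34}$, of $\eta_r'(-1)$, and of the $c_r$-dependence of $\kappa$ through $u_p'(y_c)$ each cost at most a factor $c_r^{-1}$. (Note $\kappa\eta(-1)\approx-\kappa c/2$ by \eqref{eq-Langer-prop2}.) Near $x=-1$ this gives $\kappa^{-1}c_r^{-1}\langle\kappa c_r\rangle^{-1}\lesssim\langle\kappa c_r\rangle^{-2}$. The phase requires the key cancellation. Its derivative is
\[
\kappa^{\frac32}\big(\eta(-1)^{\frac12}\partial_{c_r}\eta(-1)-\eta(x)^{\frac12}\partial_{c_r}\eta(x)\big),
\]
up to the same $c_r^{-1}$-type terms from $\partial_{c_r}\kappa$. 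Using \eqref{eq-Langer-prop4}, i.e.\ $\partial_{c_r}\eta=-1/u_p'(y_c)+O(|y-y_c|)$, together with $|\eta(-1)^{1/2}-\eta(x)^{1/2}|\lesssim|x+1|\,c_r^{-1/2}$, this is $\lesssim\kappa^{\frac32}c_r^{-\frac12}|x+1|+\kappa^{\frac32}c_r^{\frac32}$. Comparing with the bound $\kappa^{\frac32}c_r^{\frac12}$ that a naive product rule would give, this gains exactly the factor $|x+1|/c_r$ that is needed. In the first region, $|x+1|\le C|\ln c|\kappa^{-3/2}c_r^{-1/2}$, so the phase derivative is $\lesssim|\ln c|/c_r$, and multiplying by $|P|$ yields $\langle\kappa c_r\rangle^{-2}|\ln c|$. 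For $T_0$ in this region, as in the other regions, I will use that its phase difference has real part $\gtrsim$ the quantity in Lemma \ref{lem-Airy-Airy-est}, so it is exponentially small.

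In the middle region, Lemma \ref{lem-Airy-Airy-est} bounds both exponentials by $\exp(-\frac1C\kappa|\eta_r(x)-\eta_r(\mathrm{endpoint})|\,|\kappa\eta|^{1/2})$. With $|x+1|\ge C|\ln c|\kappa^{-3/2}c_r^{-1/2}$ and $|\kappa\eta(-1)|^{1/2}\sim\kappa^{1/2}c_r^{1/2}$, the exponent is at least $C'|\ln c|$, where $C'$ grows with $C$; the same holds on the side of $0$. Since the derivative of the phase is at most polynomial in $\kappa$, choosing $C$ large makes the product $\lesssim c^{N}\lesssim\varepsilon^2$ ($c\gtrsim\varepsilon^{1/3}$ in $\mathbb H$). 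In the region near $0$, $\kappa\eta(0)\sim\kappa$, the prefactor of $T_0$ is $\kappa^{-1}\kappa^{-3/4}\kappa^{-1/4}=\kappa^{-2}$, and the same cancellation gives a phase derivative $\lesssim\kappa^{3/2}|x|\lesssim|\ln c|$. This gives $\kappa^{-2}|\ln c|$.

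The $x$-derivatives $k=1,2$ will be handled by differentiating the representation above. Each $\partial_x$ of $\mathrm{Ai}(e^{\cdot}\kappa\eta(x))$ produces a factor $\kappa\eta_r'\langle\kappa\eta(x)\rangle^{1/2}$, which is $\sim\kappa\langle\kappa c_r\rangle^{1/2}$ near $-1$ and $\sim\kappa^{3/2}$ near $0$; this produces the $\kappa^k\langle\kappa c_r\rangle^{k/2}$ and $\kappa^{3k/2}$ factors. Differentiating $\eta_r'$ is harmless, and $\partial_x\partial_{c_r}$ commute. The main obstacle is making the phase cancellation rigorous: one must track the branch of $(\cdot)^{3/2}$ when $\eta_i\ne0$ and control the $O(\langle\cdot\rangle^{-3/2})$ correction $R$ and its $c_r$-derivative. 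I would handle this by writing $\mathrm{Ai}(k,z)=z^{-\frac14-\frac k2}e^{-\frac23z^{3/2}}(1+R_k(z))$ with $R_k$ analytic and $|R_k'(z)|\lesssim|z|^{-5/2}$, so that $\partial_{c_r}R$ only costs $\kappa|z|^{-5/2}\lesssim c_r^{-1}$.
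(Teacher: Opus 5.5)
Your proposal is correct and follows essentially the same route as the paper: the paper also differentiates each growing--decaying pair in $a_{1,B}$. It uses the Airy asymptotics to rewrite the $c_r$-derivative as (difference of the endpoint and $x$ phase derivatives) times the pair, plus $O(\langle\kappa\eta\rangle^{-3/2})$-relative remainders. It then exploits the resulting cancellation for $|x+1|\le C|\ln c|\kappa^{-3/2}c_r^{-1/2}$ (resp.\ $|x|\le C|\ln c|\kappa^{-3/2}$), and uses the exponential factor elsewhere.

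One small point: bound $\partial_{c_r}\eta(x)-\partial_{c_r}\eta(-1)$ by $|x+1|$, and $\partial_{c_r}\eta(x)-\partial_{c_r}\eta(0)$ by $|x|$, using the smoothness of $\partial_{c_r}\eta$ in $y$ from Lemma \ref{lem-Langer}, as the paper does. Do not use the $O(c_r)$ bound from \eqref{eq-Langer-prop4}: it leaves an extra $\kappa^{3/2}c_r^{3/2}$ that is $\lesssim c_r^{-1}$ only because $c_r\lesssim\varepsilon^{1/5}$ in $\mathbb H$, and near $0$ it would give $\kappa^{3/2}$ instead of your claimed $\kappa^{3/2}|x|$.
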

\begin{proof}
Recall \eqref{eq-def-a1B} the definition of $a_{1,B}$. To study $\pa_{c_r}a_{1,B}$, it suffices to study $\pa_{c_r} \left(\mathcal A_2(1,\kappa \eta(-1))\mathrm{Ai}(e^{\frac{\pi}{6}i}\kappa \eta(x))  \right)$ and $\pa_{c_r} \left(\mathcal A_1(1,\kappa \eta(0))\mathrm{Ai}(e^{\frac{5\pi}{6}i}\kappa \eta(x))  \right)$.

From the \eqref{eq-Airy-prim-2-prop} and the expansion of Airy functions, we have
\begin{align*}
  &\pa_{c_r} \left(\mathcal A_2(1,\kappa \eta(-1))\mathrm{Ai}(e^{\frac{\pi}{6}i}\kappa \eta(x))\right)\\
  =& \left(\pa_{c_r} \left(\kappa \eta(-1)\right)\right)\mathrm{Ai}(e^{\frac{5\pi}{6}i}\kappa \eta(-1))\mathrm{Ai}(e^{\frac{\pi}{6}i}\kappa \eta(x))+\left(\pa_{c_r} \left(\kappa \eta(x)\right)\right)e^{-\frac{2\pi}{3}i}\mathrm{Ai}(1,e^{\frac{5\pi}{6}i}\kappa \eta(-1))\mathrm{Ai}'(e^{\frac{\pi}{6}i}\kappa \eta(x))\\
  =&\left(\left(e^{\frac{\pi}{6}i}\kappa \eta(-1)\right)^{\frac{1}{2}}\pa_{c_r} \left(e^{\frac{\pi}{6}i}\kappa \eta(-1)\right)-\left(e^{\frac{\pi}{6}i}\kappa \eta(x)\right)^{\frac{1}{2}}\pa_{c_r} \left(e^{\frac{\pi}{6}i}\kappa \eta(x)\right)\right)\mathcal A_2(1,\kappa \eta(-1))\mathrm{Ai}(e^{\frac{\pi}{6}i}\kappa \eta(x))\\
  &+\left(e^{\frac{\pi}{6}i}\kappa \eta(-1)\right)^{\frac{1}{2}}\pa_{c_r} \left(e^{\frac{\pi}{6}i}\kappa \eta(-1)\right)\mathcal A_2(1,\kappa \eta(-1))\mathrm{Ai}(e^{\frac{\pi}{6}i}\kappa \eta(x))\cdot O \left(\left\langle \kappa\eta(-1) \right\rangle^{-\frac{3}{2}}\right) \\
  &+\left(e^{\frac{\pi}{6}i}\kappa \eta(x)\right)^{\frac{1}{2}}\pa_{c_r} \left(e^{\frac{\pi}{6}i}\kappa \eta(x)\right)\mathcal A_2(1,\kappa \eta(-1))\mathrm{Ai}(e^{\frac{\pi}{6}i}\kappa \eta(x))\cdot O \left(\left\langle \kappa\eta(x) \right\rangle^{-\frac{3}{2}}\right).
\end{align*}

It holds that
\begin{align*}
  &\left(\left(e^{\frac{\pi}{6}i}\kappa \eta(x)\right)^{\frac{1}{2}}\pa_{c_r} \left(e^{\frac{\pi}{6}i}\kappa \eta(x)\right)-\left(e^{\frac{\pi}{6}i}\kappa \eta(-1)\right)^{\frac{1}{2}}\pa_{c_r} \left(e^{\frac{\pi}{6}i}\kappa \eta(-1)\right)\right)\\
  =& \kappa^{\frac{1}{2}}\pa_{c_r}\kappa \left(\left(e^{\frac{\pi}{6}i} \eta(x)\right)^{\frac{3}{2}}-\left(e^{\frac{\pi}{6}i} \eta(-1)\right)^{\frac{3}{2}}   \right) + \kappa^{\frac{3}{2}}e^{\frac{\pi}{6}i} \left(\left(e^{\frac{\pi}{6}i} \eta(x)\right)^{\frac{1}{2}}\pa_{c_r}\eta(x)-\left(e^{\frac{\pi}{6}i} \eta(-1)\right)^{\frac{1}{2}} \pa_{c_r}\eta(-1)  \right).
\end{align*}

Remark that $|\pa_{c_r}\kappa|\sim |\kappa|$, $\eta_r$ and  $\pa_{c_r}\eta(x)$ are smooth. If $|x+1|\le \frac{C \left|\ln c\right|}{\kappa^{\frac{3}{2}}c_r^{\frac{1}{2}}}$, with $C$ sufficiently large such that $\left|e^{-\frac{2}{3}\kappa^{\frac{3}{2}} \left(\left(e^{\frac{\pi}{6}i} \eta(-1+ \frac{C \left|\ln c\right|}{\kappa^{\frac{3}{2}}c_r^{\frac{1}{2}}})\right)^{\frac{3}{2}}-\left(e^{\frac{\pi}{6}i} \eta(-1)\right)^{\frac{3}{2}}   \right)}\right|\lesssim \varepsilon^3$,  we have $\left|\eta(x)\right|\sim c_r$, $\left|\eta(x)-\eta(-1)\right|\lesssim \frac{C \left|\ln c\right|}{\kappa^{\frac{3}{2}}c_r^{\frac{1}{2}}}$. It follows that
\begin{align}\label{est-lem-cr-a1}
  \left|\kappa^{\frac{1}{2}}\pa_{c_r}\kappa \left(\left(e^{\frac{\pi}{6}i} \eta(x)\right)^{\frac{3}{2}}-\left(e^{\frac{\pi}{6}i} \eta(-1)\right)^{\frac{3}{2}}   \right)\right|\lesssim \left|\ln c\right|^{\frac{3}{2}},
\end{align}
and
\begin{align*}
  \left|\kappa^{\frac{3}{2}}\frac{3}{2}e^{\frac{\pi}{6}i} \left(\left(e^{\frac{\pi}{6}i} \eta(x)\right)^{\frac{1}{2}}\pa_{c_r}\eta(x)-\left(e^{\frac{\pi}{6}i} \eta(-1)\right)^{\frac{1}{2}} \pa_{c_r}\eta(-1)  \right)\right|\lesssim \frac{1}{c_r}\left|\ln c\right|.
\end{align*}

If $|x+1|\ge \frac{C \left|\ln c\right|}{\kappa^{\frac{3}{2}}c_r^{\frac{1}{2}}}$, the exponential factor provides sufficient smallness. 

The remainder can be estimated in the same way.

Therefore, we have 
\begin{align*}
  \left|\pa_{c_r} \left(\frac{\mathcal A_2(1,\kappa \eta(-1))}{\kappa\left(\eta_r'(-1)\right)^{\frac{3}{2}}}\mathrm{Ai}(e^{\frac{\pi}{6}i}\kappa \eta(x))\right)\right|\lesssim  \frac{1}{\left\langle \kappa c_r\right\rangle^2} \left|\ln c\right|.
\end{align*}

By using the same technique, we also have
\begin{align*}
  \left|\pa_{c_r} \left(\frac{\mathcal A_1(1,\kappa \eta(0))}{\kappa\left(\eta_r'(0)\right)^{\frac{3}{2}}}\mathrm{Ai}(e^{\frac{5\pi}{6}i}\kappa \eta(x))\right)\right|\lesssim \frac{1}{\kappa^2}\left|\ln c\right|.
\end{align*}

Combining the above estimates, we get \eqref{est-d-cr-a1B} for $\pa_{c_r}a_{1,B}(x)$. The estimates for $\pa_{c_r}a_{1,B}'(x)$ and $\pa_{c_r}a_{1,B}''(x)$ can be derived by  using the same method. 

\end{proof}
\subsection{Estimates for the modified AirySolver}\label{sec-modified-airysolver}
Using the approximate Green function \eqref{eq-Green-Airy}, we define the modified Airy-solver by
\begin{align}\label{eq-modified-Airysolver}
  AirySolver_m(f)(y)=\int^0_{-1}G_A(x,y)f(x)dx.
\end{align}
By the definition of the modified primitive Airy functions, we can see that
\begin{align}\label{eq-boundary-modi-Airy-Solver}
  \pa_yAirySolver_m(f)(0)=0.
\end{align}

Let $\phi(y)=AirySolver_m(f)(y)$. By the definition of $G_A(x,y)$, we can see that $\phi(y)$ solves the following modified Airy equation
\begin{equation}\label{eq-phiapp4}
  \begin{aligned}    
  &i\varepsilon\pa_y^4\phi(y)+\left(u_p(y)-c-2i\varepsilon\alpha^2\right)\pa_y^2\phi(y)\\ 
  =&f(y)+ i\varepsilon \left(\pa_y^2\left(\eta_r'(y)\right)^{-\frac{1}{2}}\right)\left(\eta_r'(y)\right)^{\frac{1}{2}}\pa_y^2\phi(y)+i \left(\left(\eta_r'(y)\right)^2-1\right)c_i\pa_y^2\phi(y)-2i\varepsilon\alpha^2\pa_y^2\phi(y).    
  \end{aligned}
\end{equation}

As in \cite{GGN16adv}, we decompose the Green function into a localized part $G_{A,M}(x,y)$and a non-localized part $E(x,y)$,
\begin{align*}
  G_A(x,y)=G_{A,M}(x,y)+E(x,y),
\end{align*}
where
\begin{align*}
  G_{A,M}(x,y)=i \frac{2\pi}{\left(\eta_r'(x)\right)^{\frac{1}{2}}\kappa\varepsilon}\left\{
    \begin{array}{ll}
      A_1(2,y) A_2(x) ,&x<y;\\ 
       A_1(x) A_2(2,y) ,&x>y.
    \end{array}
  \right.
\end{align*}
\begin{align*}
  E(x,y)=i \frac{2\pi}{\left(\eta_r'(x)\right)^{\frac{1}{2}}\kappa\varepsilon}\left\{
    \begin{array}{ll}
       a_2(x)+a_1(x)(x-y_c),&x<y;\\ 
       a_1(x)(y-y_c),&x>y.
    \end{array}
  \right.
\end{align*}

Next we give some basic estimates for $AirySolver_m(f)(y)$. To state the estimates, we introduce the modified wave speed $\hat c=\hat c_r+i\hat c_i$, with $\hat c_r=c_r$, $\hat c_i=c_i+c_0$, and $c_0=\varepsilon^\frac{1}{2}$. Since $c\in\mathbb H$, we have $\hat c_i\ge \frac{1}{2}c_0$.

 The first three estimates \eqref{eq-est-AS-0}-\eqref{eq-est-AS-2} will be widely used in constructing the solution of \eqref{eq-Airy}, and the estimate \eqref{eq-est-AS-4} just shows the solution is well defined in suitable regularity space.
 
\begin{lemma}\label{lem-est-AS}
  For all $y\in[-1,0]$, the modified Airy solver satisfies
\begin{align}\label{eq-est-AS-0}
  \left|AirySolver_m(f)(y)\right| \lesssim \left(1+ \kappa c_r\right)|\ln c_0|\left\|\left(u_p-\hat c\right)f\right\|_{L^\infty}.
\end{align}
\begin{align}\label{eq-est-AS-1}
  \left|\left(u_p-\hat c\right)\pa_yAirySolver_m(f)(y)\right| \lesssim \left(1+ \kappa c_r\right)|\ln c_0|\left\|\left(u_p-\hat c\right)f\right\|_{L^\infty}.
\end{align}
\begin{align}\label{eq-est-AS-2}
  \left|\left(u_p-\hat c\right)^2\pa_y^2AirySolver_m(f)(y)\right| \lesssim |\ln c_0|\left\|\left(u_p-\hat c\right)f\right\|_{L^\infty}.
\end{align}
\begin{align}\label{eq-est-AS-3}
   \left|\left(u_p-\hat c\right)^2\pa_y^3AirySolver_m(f)(y)\right| \lesssim  \varepsilon^{-\frac{1}{2}}|\ln c_0|\left\|\left(u_p-\hat c\right)f\right\|_{L^\infty}.
\end{align}
\begin{align}\label{eq-est-AS-4}
   \left|\left(u_p-\hat c\right)\pa_y^4AirySolver_m(f)(y)\right| \lesssim  \varepsilon^{-1}|\ln c_0|\left\|\left(u_p-\hat c\right)f\right\|_{L^\infty}.
\end{align}
\end{lemma}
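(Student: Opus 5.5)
We split the Green function as $G_A=G_{A,M}+E$ and estimate $\int_{-1}^0 G_{A,M}(x,y)f(x)\,dx$ and $\int_{-1}^0 E(x,y)f(x)\,dx$ separately. In both integrals we write $f=(u_p-\hat c)^{-1}\cdot(u_p-\hat c)f$ and pull out $\|(u_p-\hat c)f\|_{L^\infty}$. The whole argument then reduces to bounding the kernels against the weight $|u_p(x)-\hat c|^{-1}$. Near the critical layer this weight is of size $\langle \kappa\eta(x)\rangle^{-1}\kappa$ when $|x-y_c|\lesssim\kappa^{-1}$. Globally it behaves like $|x-y_c-i\hat c_i/u_p'(y_c)|^{-1}$, whose integral over $[-1,0]$ is $\lesssim |\ln \hat c_i|\lesssim|\ln c_0|$.

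\textbf{Localized part.} The prefactor in $G_{A,M}$ is $\frac{2\pi}{\kappa\varepsilon}\sim\kappa^{2}$. By Lemma \ref{lem:pri-Airy-expan} and Lemma \ref{lem-Airy-int-est}, $A_1(2,y)$ is bounded by $\kappa^{-2}\langle\kappa\eta(y)\rangle^{-5/4}|e^{-\frac23(e^{\pi i/6}\kappa\eta(y))^{3/2}}|$, and similarly for $A_2(2,y)$. Lemma \ref{lem:Airy-class-est} bounds $A_1(x)$ and $A_2(x)$ with the opposite exponential factor. Lemma \ref{lem-Airy-Airy-est} then gives
\[
|G_{A,M}(x,y)|\lesssim \langle\kappa\eta(x)\rangle^{-1/4}\langle\kappa\eta(y)\rangle^{-5/4}e^{-\frac1C|\kappa\eta_r(x)-\kappa\eta_r(y)|(|\kappa\eta(x)|^{1/2}+|\kappa\eta(y)|^{1/2})}.
\]
We integrate this against $|u_p(x)-\hat c|^{-1}$ and split into $|\kappa\eta_r(x)-\kappa\eta_r(y)|\le1$ and its complement, as in the proof of Lemma \ref{lem-Airy-int-est}. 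This yields a bound $\lesssim |\ln c_0|$ for \eqref{eq-est-AS-0}, and even better ones away from $y_c$. Each $y$-derivative of $A_1(2,y)$ or $A_2(2,y)$ costs a factor $\kappa\langle\kappa\eta(y)\rangle^{1/2}$. This is compensated by the weight $(u_p(y)-\hat c)^j$, since $|u_p-\hat c|\gtrsim \kappa^{-1}\langle\kappa\eta\rangle$, provided $j\ge k$ up to order two. That handles \eqref{eq-est-AS-1} and \eqref{eq-est-AS-2} for this part. For \eqref{eq-est-AS-3} and \eqref{eq-est-AS-4} the weights fall short by $\langle\kappa\eta\rangle^{1/2}$ per derivative. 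Since $\kappa\eta$ ranges up to $\kappa\sim\varepsilon^{-1/3}$ and $c_0=\varepsilon^{1/2}$, we take the crude losses $\varepsilon^{-1/2}$ and $\varepsilon^{-1}$. For the fourth derivative we can instead use the equation \eqref{eq-phiapp4} to express $\partial_y^4\phi$ through $f$ and $\partial_y^2\phi$, which is cleaner.

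\textbf{Nonlocal part.} For $x>y$ the kernel is $\kappa^{2}a_1(x)(y-y_c)$, and for $x<y$ it is $\kappa^2(a_2(x)+a_1(x)(x-y_c))$. We use Lemma \ref{lem-est-a1-a2}. The main term satisfies $|a_{1,M}|\lesssim\kappa^{-1}\langle\kappa\eta(x)\rangle^{-1}$, so
\[
\int_{-1}^0\kappa^2|a_{1,M}(x)|\,|y-y_c|\,|u_p(x)-\hat c|^{-1}\,dx\lesssim \kappa|y-y_c|\int\frac{dx}{\langle\kappa\eta(x)\rangle|x-y_c-i\hat c_i/2|}\lesssim \kappa|y-y_c|\,|\ln c_0|.
\]
Because $|y-y_c|\le |y_c+1|+|y+1|$, this is where the factor $(1+\kappa c_r)$ appears. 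More precisely, we get $\kappa|y-y_c|$, and with the weight $\langle\kappa\eta\rangle^{-1}$ the $x$-integral has effective length $\lesssim\kappa^{-1}|\ln c_0|$ near $y_c$ plus a tail. Doing the bookkeeping carefully should convert this into $(1+\kappa c_r)|\ln c_0|$.

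The term $a_1(x)(x-y_c)$ with $x<y$ is better, since $(x-y_c)/(u_p(x)-\hat c)$ is bounded. The remaining pieces $a_{1,SecM}$, $a_{2,M}$ and $a_{j,R}$ carry extra powers of $\kappa^{-1}$ and are harmless. The boundary pieces $a_{j,B}$ are exponentially localized near $x=0$ or $x=-1$, where $|u_p-\hat c|\gtrsim c_r$ or $\gtrsim 1$, and they contribute at most $\lesssim\langle\kappa c_r\rangle^{-3/4}\kappa c_r\lesssim 1+\kappa c_r$.

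For the derivative estimates, $\partial_y$ of the $E$-part only falls on the linear factor $(y-y_c)$ or on the endpoint jump, so \eqref{eq-est-AS-1} follows in the same way. The second derivative of $E$ vanishes away from $x=y$; it is produced only by the continuity structure, and is therefore controlled by the local part.

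\textbf{Main obstacle.} The hardest step is the sharp $x$-integration near the critical layer in \eqref{eq-est-AS-0}. There one must show that $\int\langle\kappa\eta(x)\rangle^{-1}|u_p-\hat c|^{-1}dx$ loses only $|\ln c_0|$ and not a power of $\kappa$. The approach is to compare $|u_p(x)-\hat c|$ with $\kappa^{-1}|\kappa\eta(x)-i\kappa\hat c_i/u_p'(y_c)|$ and integrate in the variable $\kappa\eta_r$, using $\hat c_i\ge c_0/2$ to cut off the logarithm.
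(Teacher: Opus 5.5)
Your overall architecture matches the paper's: you split $G_A=G_{A,M}+E$, pull out $\|(u_p-\hat c)f\|_{L^\infty}$, integrate $|u_p-\hat c|^{-1}$ to get $|\ln c_0|$, and use \eqref{eq-phiapp4} for the fourth derivative. However, the step that produces the factor $(1+\kappa c_r)$ is never carried out, and the justification you give for it does not work.

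You integrate $\kappa^2|a_1(x)|\,|y-y_c|\,|u_p(x)-\hat c|^{-1}$ over all of $[-1,0]$ and obtain $\kappa|y-y_c|\,|\ln c_0|$. The splitting $|y-y_c|\le|y_c+1|+|y+1|$ does not help, because $|y+1|$ is of order one; for $y$ near $0$ your bound is $\kappa|\ln c_0|$. Nor is the $x$-integral $\int\langle\kappa\eta(x)\rangle^{-1}|x-y_c-i\hat c_i/2|^{-1}dx$ of size $\kappa^{-1}|\ln c_0|$. It is at least of order one, and of order $|\ln c_0|$ when $c_i=0$. So no further bookkeeping recovers the claim.

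The missing idea is that this nonlocal term is supported only on $x>y$.
\begin{itemize}
\item If $y\ge y_c$, every $x\in[y,0]$ satisfies $|y-y_c|\le|x-y_c|\lesssim\kappa^{-1}\langle\kappa\eta(x)\rangle$. With $|a_1(x)|\lesssim\kappa^{-1}\langle\kappa\eta(x)\rangle^{-1}$ (Lemma \ref{lem-est-a1-a2}), the kernel is $O(1)$ and contributes $\lesssim|\ln c_0|$.
\item If $y<y_c$, then $|y-y_c|\le y_c+1\lesssim c_r$, so the kernel is $\lesssim\kappa c_r$.
\end{itemize}
This is exactly where $\kappa c_r$ comes from. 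The same restriction is needed for \eqref{eq-est-AS-1}, where the $E$-contribution is $(u_p(y)-\hat c)\int_y^0\kappa^2a_1(x)f(x)\,dx$; your ``follows in the same way'' inherits the gap.

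A second error affects the local part. You claim $|u_p-\hat c|\gtrsim\kappa^{-1}\langle\kappa\eta\rangle$, equivalently $|u_p-\hat c|^{-1}\sim\kappa\langle\kappa\eta\rangle^{-1}$ in the critical layer. This fails whenever $\hat c_i\ll\kappa^{-1}$, for example on the neutral curve, where $\hat c_i=c_0=\varepsilon^{1/2}$ and $|u_p(y_c)-\hat c|^{-1}=\varepsilon^{-1/2}\gg\kappa$. So the weights $(u_p(y)-\hat c)^j$ do not compensate the derivative losses pointwise. What works is the paper's case split:
\begin{itemize}
\item For $x$ near $y$ and outside the critical layer, use $|u_p(x)-\hat c|\sim|u_p(y)-\hat c|\sim\kappa^{-1}\langle\kappa\eta(y)\rangle$ together with the Airy localization length $(\kappa\langle\kappa\eta(y)\rangle^{1/2})^{-1}$.
\item For $|y-y_c|\lesssim\kappa^{-1}$, use only the upper bound $|u_p(y)-\hat c|\lesssim\kappa^{-1}$.
\item Off the diagonal, let the exponential decay absorb powers of $\langle\kappa\eta\rangle$.
\item Whenever $x$ lies in the critical layer, keep $|u_p(x)-\hat c|^{-1}$ and integrate it to $|\ln c_0|$.
\end{itemize}
With these two corrections the rest of your sketch goes through, including the crude $\varepsilon^{-1/2}$ loss for \eqref{eq-est-AS-3} and deriving \eqref{eq-est-AS-4} from the equation. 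The logarithmic integral you single out as the main obstacle is in fact elementary; the real content is the one-sidedness of the nonlocal term and this case split.
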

\begin{proof}
  Recall that $\frac{1}{\varepsilon}\sim \kappa^3$. From Lemma \ref{lem:pri-Airy-expan}, Lemma \ref{lem-Airy-Airy-est}, and Lemma \ref{lem-Airy-int-est}, we can see that
  \begin{align*}
    \left|G_{A,M}(x,y)\right|\lesssim  \frac{1}{\left\langle \kappa\eta(x) \right\rangle^{\frac{1}{4}}\left\langle \kappa\eta(y) \right\rangle^{\frac{5}{4}}}e^{-\frac{1}{C}|\kappa\eta_r(x)-\kappa\eta_r(y)|(|\kappa\eta_r(x)|^\f12+|\kappa\eta_r(y)|^\f12)}.
  \end{align*}
It follows that 
\begin{align*}
  \left|\int^0_{-1}G_{A,M}(x,y)f(x)dx\right|\lesssim  \int^0_{-1} \frac{1}{\left|u_p-\hat c\right|}dx\left\|\left(u_p-\hat c\right)f\right\|_{L^\infty}\lesssim |\ln c_0|\left\|\left(u_p-\hat c\right)f\right\|_{L^\infty}.
\end{align*}

For the non-local term, it follows from Lemma \ref{lem-est-a1-a2} that
\begin{align*}
  \frac{1}{\kappa\varepsilon}a_1(x)(x-y_c)\sim \frac{\kappa(x-y_c)}{\left\langle \kappa\eta(x) \right\rangle}\sim 1,\quad \frac{1}{\kappa\varepsilon}a_1(x)(y-y_c)\sim \frac{\kappa(y-y_c)}{\left\langle \kappa\eta(x) \right\rangle}\sim \frac{\kappa\eta_r(y)}{\left\langle \kappa\eta(x) \right\rangle},
\end{align*}
and
\begin{align*}
  \left|\frac{1}{\kappa\varepsilon}a_2(x)\right|\lesssim \left\langle \kappa\eta(x) \right\rangle^{-\frac{3}{2}}.
\end{align*}

For $y<x$, worst term is $\frac{1}{\kappa\varepsilon}a_1(x)(y-y_c) \sim \frac{\kappa\eta(y)}{\left\langle \kappa\eta(x) \right\rangle}$. If $y<y_c$ and $x$ close to $y_c$, this term could be big. Recall that $\eta_r(-1)=-\frac{c_r}{2}+O(c^2)$, we have
\begin{align*}
  \left|\frac{1}{\kappa\varepsilon}a_1(x)(y-y_c)\right|\lesssim\left|\frac{\kappa\eta(y)}{\left\langle \kappa\eta(x) \right\rangle}\right|\lesssim \kappa c_r.
\end{align*}

Then
\begin{align*}
  \left|\int^0_{-1}E(x,y)f(x)dx\right|\lesssim  \kappa c_r\int^0_{-1} \frac{1}{\left|u_p-\hat c\right|}dx\left\|\left(u_p-\hat c\right)f\right\|_{L^\infty}\lesssim \kappa c_r |\ln c_0|\left\|\left(u_p-\hat c\right)f\right\|_{L^\infty}.
\end{align*}
This gives \eqref{eq-est-AS-0}.
 
Next, we turn to $\left(u_p-\hat c\right)\pa_yAirySolver_m(f)(y)$.  Recall that
\begin{align*}
  \pa_yAirySolver_m(f)(y)=&i\frac{2\pi}{\kappa\varepsilon}\int^y_{-1} A_1(1,y) A_2(x) \frac{f(x)}{\left(\eta_r'(x)\right)^{\frac{1}{2}}}dx\\ 
  &+i\frac{2\pi}{\kappa\varepsilon}\int^0_{y}\left( A_1(x) A_2(1,y)+a_1(x)\right)\frac{f(x)}{\left(\eta_r'(x)\right)^{\frac{1}{2}}}dx.
\end{align*}

For the $E(x,y)$ part, there is only one term remain. Since
\begin{align*}
  \left|\left(u_p(y)-\hat c\right)a_1(x)\right|\sim \left|a_1(x)(y-y_c+ic_0)\right|,
\end{align*}
the estimate of this part is the same to \eqref{eq-est-AS-0}. 

For the $G_{A,M}(x,y)$ part, by using Lemma \ref{lem:pri-Airy-expan}-\ref{lem-Airy-int-est}, we have
\begin{align*}
  &\left|\frac{2\pi}{\kappa\varepsilon}\int^y_{-1} A_1(1,y) A_2(x) \frac{f(x)}{\left(\eta_r'(x)\right)^{\frac{1}{2}}}dx+\frac{2\pi}{\kappa\varepsilon}\int^0_{y} A_1(x) A_2(1,y)\frac{f(x)}{\left(\eta_r'(x)\right)^{\frac{1}{2}}}dx\right|\\
\lesssim&\int^0_{-1}\left|\frac{\kappa\left(u_p(y)-\hat c\right)}{\left\langle \kappa\eta(y) \right\rangle^{\frac{3}{4}}} \frac{1}{\left\langle \kappa\eta(x) \right\rangle^{\frac{1}{4}}} e^{-\frac{1}{C}\left|\kappa\eta_r(x)-\kappa\eta_r(y)  \right|\left( |\kappa\eta(y)|^{\frac{1}{2}}+|\kappa\eta(x)|^{\frac{1}{2}} \right) }\frac{1}{\left(u_p(x)-\hat c\right)}\frac{\left(u_p(x)-\hat c\right)f(x)}{\left(\eta_r'(x)\right)^{\frac{1}{2}}}\right|dx.
\end{align*}

We denote
\begin{align*}
  I_{GM,1}(x,y) = \frac{\kappa\left(u_p(y)-\hat c\right)}{\left\langle \kappa\eta(y) \right\rangle^{\frac{3}{4}}} \frac{1}{\left\langle \kappa\eta(x) \right\rangle^{\frac{1}{4}}} e^{-\frac{1}{C}\left|\kappa\eta_r(x)-\kappa\eta_r(y)  \right|\left( |\kappa\eta(y)|^{\frac{1}{2}}+|\kappa\eta(x)|^{\frac{1}{2}} \right) }\frac{1}{\left(u_p(x)-\hat c\right)}.
\end{align*}
Therefore, to get \eqref{eq-est-AS-1}, we only need to estimate
\begin{align*}
  \left|\int^0_{-1}I_{GM,1}(x,y)dx\right|.
\end{align*}
We divided the estimate to different cases.

Case 1. $|y-y_c|\ge \frac{1}{2}$.

Case 1.1. $|x-y|\le \frac{1}{4}$. For this case, it holds that 
\begin{align*}
  \left|\eta(x)\right|\sim\left|\eta(y)\right|\sim|x-y_c|\sim|y-y_c|\sim \left|u_p(x)-\hat c\right|\sim \left|u_p(y)-\hat c\right|\sim 1.
\end{align*}
 It follows that $\left|\kappa\eta_r(x)-\kappa\eta_r(y)  \right|\left( |\kappa\eta(y)|^{\frac{1}{2}}+|\kappa\eta(x)|^{\frac{1}{2}} \right)\sim\kappa^{\frac{3}{2}} \left|x-y\right|$. Therefore, for this case, we have
\begin{align*}
  &\left|\frac{\kappa\left(u_p(y)-\hat c\right)}{\left\langle \kappa\eta(y) \right\rangle^{\frac{3}{4}}} \frac{1}{\left\langle \kappa\eta(x) \right\rangle^{\frac{1}{4}}} e^{-\frac{1}{C}\left|\kappa\eta_r(x)-\kappa\eta_r(y)  \right|\left( |\kappa\eta(y)|^{\frac{1}{2}}+|\kappa\eta(x)|^{\frac{1}{2}} \right) }\frac{1}{\left(u_p(x)-\hat c\right)}\right|\lesssim e^{-\frac{1}{C} \kappa^{\frac{3}{2}} \left|x-y\right|}.
\end{align*}
After integration, we deduce
\begin{align*}
  \left|\int^0_{-1}I_{GM,1}(x,y)\chi_{\text{Case 1.1}}dx\right|\lesssim \kappa^{-\frac{3}{2}}.
\end{align*}

Case 1.2. $|x-y|\ge \frac{1}{4}$. For this case, $\left|\kappa\eta_r(x)-\kappa\eta_r(y)  \right|\left( |\kappa\eta(y)|^{\frac{1}{2}}+|\kappa\eta(x)|^{\frac{1}{2}} \right)\sim\kappa^{\frac{3}{2}}$, and
\begin{align*}
  e^{-\frac{1}{C}\left|\kappa\eta_r(x)-\kappa\eta_r(y)  \right|\left( |\kappa\eta(y)|^{\frac{1}{2}}+|\kappa\eta(x)|^{\frac{1}{2}} \right) }\sim e^{-\frac{1}{C}\kappa^{\frac{3}{2}}},
\end{align*}
which is extremely small.
 
Case 2. $\frac{2}{\kappa}\le|y-y_c|\le \frac{1}{2}$. 

Case 2.1. $\left|x-y_c\right|\ge \frac{1}{\kappa}$ and $\left|x-y\right|\le \frac{1}{\kappa}$. For this case, we have $\left|u_p(x)-\hat c\right|\sim \left|u_p(y)-\hat c\right|$ and $\left|\kappa\eta_r(x)-\kappa\eta_r(y)  \right|\left( |\kappa\eta(y)|^{\frac{1}{2}}+|\kappa\eta(x)|^{\frac{1}{2}} \right)\gtrsim\kappa\left|x-y \right|$. It follows that
\begin{align*}
  \left|\int^0_{-1}I_{GM,1}(x,y)\chi_{\text{Case 2.1}}dx\right|\lesssim \left|\int^0_{-1}\kappa e^{-\frac{1}{C}\kappa\left|x-y\right|}dx\right|\lesssim1.
\end{align*}

Case 2.2. $\left|x-y_c\right|\ge \frac{1}{\kappa}$ and $\left|x-y\right|>\frac{1}{\kappa}$. For this case, $|\kappa\eta_r(x)|\gtrsim 1$, $|\kappa\eta_r(y)|\gtrsim 1$, and the distance of $x$ and $y$ has lower bound, which gives $\left|\kappa\eta_r(x)-\kappa\eta_r(y)\right|\gtrsim \kappa |x-y|+1$. Then we have
\begin{align*}
  e^{-\frac{1}{C}\left|\kappa\eta_r(x)-\kappa\eta_r(y)  \right|\left( |\kappa\eta(y)|^{\frac{1}{2}}+|\kappa\eta(x)|^{\frac{1}{2}} \right) }\lesssim e^{- \frac{1}{C}\left( |\kappa\eta(y)|^{\frac{1}{2}}+|\kappa\eta(x)|^{\frac{1}{2}} \right) }e^{-\frac{1}{C}\kappa |x-y|}.
\end{align*}
The first exponential decay term can absorb all potential polynomial growth. Therefore, in this case, it holds that
\begin{align*}
  \left|I_{GM,1}(x,y)\right|=&\left|\frac{\kappa\left(u_p(y)-\hat c\right)}{\left\langle \kappa\eta(y) \right\rangle^{\frac{3}{4}}} \frac{1}{\left\langle \kappa\eta(x) \right\rangle^{\frac{1}{4}}} e^{-\frac{1}{C}\left|\kappa\eta_r(x)-\kappa\eta_r(y)  \right|\left( |\kappa\eta(y)|^{\frac{1}{2}}+|\kappa\eta(x)|^{\frac{1}{2}} \right) }\frac{1}{\left(u_p(x)-\hat c\right)}\right|\\ 
  \lesssim&\left|\left\langle \kappa\eta(y) \right\rangle^{\frac{1}{4}} e^{-\frac{1}{C}\left|\kappa\eta_r(x)-\kappa\eta_r(y)  \right|\left( |\kappa\eta(y)|^{\frac{1}{2}}+|\kappa\eta(x)|^{\frac{1}{2}} \right) }\frac{\kappa}{\kappa\left(u_p(x)-\hat c\right)}\right|\\
  \lesssim& \kappa e^{-\frac{1}{C}\kappa\left|x-y\right|}.
\end{align*}
Then the integration estimate is the same to Case 2.1.

Case 2.3. $\left|x-y_c\right|\le \frac{1}{\kappa}$. For this case, it holds that $\left|x-y\right|\ge \frac{1}{\kappa}$ and $\left\langle \kappa\eta(x) \right\rangle\sim 1$. Similar to the above case, we also have $\left|\kappa\eta_r(x)-\kappa\eta_r(y)\right|\gtrsim \kappa |x-y|+1$. It follows that
\begin{align*}
  \left|I_{GM,1}(x,y)\right|\lesssim  e^{-\frac{1}{C}\kappa\left|x-y \right| }\frac{1}{\left|u_p(x)-\hat c\right|}\lesssim \frac{1}{\left|u_p(x)-\hat c\right|}\lesssim \frac{1}{|x-y_c+ic_0|}.
\end{align*}
Then we deduce that
\begin{align*}
  \left|\int^0_{-1}I_{GM,1}(x,y)\chi_{\text{Case 2.3}}dx\right|\lesssim \left|\int^0_{-1}\frac{1}{|x-y_c+ic_0|}dx\right|\lesssim|\ln c_0|.
\end{align*}

Case 3. $|y-y_c|\le \frac{2}{\kappa}$.

Case 3.1. $\left|x-y_c\right|\ge \frac{4}{\kappa}$. For this case, it holds that  $\left|x-y\right|\ge \frac{2}{\kappa}$, $\left\langle \kappa\eta(y) \right\rangle\sim 1$, and $|\kappa\eta(x)|\gtrsim1$. Therefore, similar to Case 2.2, we have
\begin{align*}
  \left|\int^0_{-1}I_{GM,1}(x,y)\chi_{\text{Case 3.1}}dx\right|\lesssim \left|\int^0_{-1}\kappa e^{-\frac{1}{C}\kappa\left|x-y\right|}dx\right|\lesssim1.
\end{align*}

Case 3.2. $\left|x-y_c\right|\le \frac{4}{\kappa}$. In this case, we have $\left\langle \kappa\eta(y) \right\rangle\sim\left\langle \kappa\eta(x) \right\rangle\sim 1$, and
\begin{align*}
  \left|\int^0_{-1}I_{GM,1}(x,y)\chi_{\text{Case 3.2}}dx\right|\lesssim \left|\int^0_{-1}\frac{1}{\left(u_p(x)-\hat c\right)}dx\right|\lesssim|\ln c_0|.
\end{align*}

Combining the above estimates, we have
\begin{align*}
  \left|\left(u_p-\hat c\right)\pa_yAirySolver_m(f)(y)\right| \lesssim \left(1+ \kappa c_r\right)|\ln c_0|\left\|\left(u_p-\hat c\right)f\right\|_{L^\infty}.
\end{align*}

Next, we turn to $\left(u_p-\hat c\right)^2\pa_y^2AirySolver_m(f)(y)$. Recall that
\begin{align*}
  \pa_y^2AirySolver_m(f)(y)=&i\frac{2\pi}{\kappa\varepsilon}\int^y_{-1} \frac{A_1(y) A_2(x) f(x)}{\left(\eta_r'(y)\right)^{\frac{1}{2}}\left(\eta_r'(x)\right)^{\frac{1}{2}}}dx +i\frac{2\pi}{\kappa\varepsilon}\int^0_{y} \frac{A_1(x) A_2(y)f(x)}{\left(\eta_r'(y)\right)^{\frac{1}{2}}\left(\eta_r'(x)\right)^{\frac{1}{2}}}dx.
\end{align*}

Only $G_{A,M}(x,y)$ part remain. Similar to \eqref{eq-est-AS-1}, we have
\begin{align*}
  &\left|\left(u_p-\hat c\right)^2\pa_y^2AirySolver_m(f)(y)\right|\\ 
  \lesssim&\int^y_{-1}\left|\frac{\left|\kappa\left(u_p(y)-\hat c\right)\right|^2}{\left\langle \kappa\eta(y) \right\rangle^{\frac{1}{4}}} \frac{1}{\left\langle \kappa\eta(x) \right\rangle^{\frac{1}{4}}} e^{-\frac{1}{C}\left|\kappa\eta_r(x)-\kappa\eta_r(y)  \right|\left( |\kappa\eta(y)|^{\frac{1}{2}}+|\kappa\eta(x)|^{\frac{1}{2}} \right) }\frac{1}{\left(u_p(x)-\hat c\right)}\frac{\left(u_p(x)-\hat c\right)f(x)}{\left(\eta_r'(y)\right)^{\frac{1}{2}}\left(\eta_r'(x)\right)^{\frac{1}{2}}}\right|dx.
\end{align*}
We denote
\begin{align*}
  I_{GM,2}(x,y)=\frac{\left|\kappa\left(u_p(y)-\hat c\right)\right|^2}{\left\langle \kappa\eta(y) \right\rangle^{\frac{1}{4}}} \frac{1}{\left\langle \kappa\eta(x) \right\rangle^{\frac{1}{4}}} e^{-\frac{1}{C}\left|\kappa\eta_r(x)-\kappa\eta_r(y)  \right|\left( |\kappa\eta(y)|^{\frac{1}{2}}+|\kappa\eta(x)|^{\frac{1}{2}} \right) }\frac{1}{\left(u_p(x)-\hat c\right)}.
\end{align*}

Similar to \eqref{eq-est-AS-1}, we give the estimate of $\left|\int^0_{-1}I_{GM,2}(x,y)dx\right|$ in different cases. 
 
Case 1. $|y-y_c|\ge \frac{1}{2}$.

Case 1.1. $|x-y|\le \frac{1}{4}$. For this case, it holds that 
\begin{align*}
  \left|\int^0_{-1}I_{GM,2}(x,y)\chi_{\text{Case 1.1}}dx\right|\lesssim \int^0_{-1}\kappa^{\frac{3}{2}}e^{-\kappa^{\frac{3}{2}} \left|x-y\right|}dx\lesssim1.
\end{align*}

Case 1.2. $|x-y|\ge \frac{1}{4}$. Same to the one of \eqref{eq-est-AS-1}.

Case 2. $\frac{2}{\kappa}\le|y-y_c|\le \frac{1}{2}$. 

Case 2.1. $\left|x-y_c\right|\ge \frac{1}{\kappa}$ and $\left|x-y\right|\le \frac{1}{\kappa}$. Similar to the one of \eqref{eq-est-AS-1}, we have
\begin{align*}
  \left|\int^0_{-1}I_{GM,2}(x,y)\chi_{\text{Case 2.1}}dx\right|\lesssim \left|\int^0_{-1}\kappa |\kappa\eta(y)|^{\frac{1}{2}}e^{-\kappa\left|x-y \right||\kappa\eta(y)|^{\frac{1}{2}}}dx\right|\lesssim1.
\end{align*}

Case 2.2. $\left|x-y_c\right|\ge \frac{1}{\kappa}$ and $\left|x-y\right|>\frac{1}{\kappa}$. Similar to the one of \eqref{eq-est-AS-1}, we have
\begin{align*}
  \left|I_{GM,2}(x,y)\right|\lesssim \frac{\left|\kappa\left(u_p(y)-\hat c\right)\right|^2}{\left\langle \kappa\eta(y) \right\rangle^{\frac{1}{4}}} \frac{1}{\left\langle \kappa\eta(x) \right\rangle^{\frac{1}{4}}} e^{- \left( |\kappa\eta(y)|^{\frac{1}{2}}+|\kappa\eta(x)|^{\frac{1}{2}} \right) }e^{-\frac{1}{C}\kappa |x-y|}\frac{\kappa}{\kappa\left(u_p(x)-\hat c\right)}\lesssim \kappa e^{-\frac{1}{C}\kappa\left|x-y\right|}.
\end{align*}
The integration estimate follows directly. 

Case 2.3. $\left|x-y_c\right|\le \frac{1}{\kappa}$. Same to the one of \eqref{eq-est-AS-1}.

Case 3. $|y-y_c|\le \frac{2}{\kappa}$. Same to the one of \eqref{eq-est-AS-1}.

Combining the above estimates, we get \eqref{eq-est-AS-2}.

Next, we turn to $\left(u_p-\hat c\right)^2\pa_y^3AirySolver_m(f)(y)$. Recall that
\begin{align*}
   \pa_y^3AirySolver_m(f)(y)=&i\frac{2\pi}{\kappa\varepsilon}\int^y_{-1} \frac{A_1'(y) A_2(x) f(x)}{\left(\eta_r'(y)\right)^{\frac{1}{2}}\left(\eta_r'(x)\right)^{\frac{1}{2}}}dx +i\frac{2\pi}{\kappa\varepsilon}\int^0_{y} \frac{A_1(x) A_2'(y)f(x)}{\left(\eta_r'(y)\right)^{\frac{1}{2}}\left(\eta_r'(x)\right)^{\frac{1}{2}}}dx+\text{err}.
\end{align*}
Here the err terms consist of the terms when the $y$-derivative applied on $\frac{1}{\left(\eta_r'(y)\right)^{\frac{1}{2}}}$. And for these therms, the estimates is the same to \eqref{eq-est-AS-2}. So we only focus on the main part. Similar to \eqref{eq-est-AS-1}, we have
\begin{align*}
  &\left|\left(u_p-\hat c\right)^2\pa_y^3AirySolver_m(f)(y)\right|\\ 
  \lesssim&\int^y_{-1} \left|\kappa \frac{\left\langle \kappa\eta(y) \right\rangle^{\frac{9}{4}}}{\left\langle \kappa\eta(x) \right\rangle^{\frac{1}{4}}} e^{-\frac{1}{C}\left|\kappa\eta_r(x)-\kappa\eta_r(y)  \right|\left( |\kappa\eta(y)|^{\frac{1}{2}}+|\kappa\eta(x)|^{\frac{1}{2}} \right) }\frac{1}{\left(u_p(x)-\hat c\right)}\frac{\left(u_p(x)-\hat c\right)f(x)}{\left(\eta_r'(y)\right)^{\frac{1}{2}}\left(\eta_r'(x)\right)^{\frac{1}{2}}}\right|dx.
\end{align*}
We denote
\begin{align*}
  I_{GM,3}(x,y) = \kappa\frac{\left\langle \kappa\eta(y) \right\rangle^{\frac{9}{4}}}{\left\langle \kappa\eta(x) \right\rangle^{\frac{1}{4}}} e^{-\frac{1}{C}\left|\kappa\eta_r(x)-\kappa\eta_r(y)  \right|\left( |\kappa\eta(y)|^{\frac{1}{2}}+|\kappa\eta(x)|^{\frac{1}{2}} \right) }\frac{1}{\left(u_p(x)-\hat c\right)}.
\end{align*}
Similar to \eqref{eq-est-AS-1}, we give the estimate of $\left|\int^0_{-1}I_{GM,3}(x,y)dx\right|$ in different cases. 

Case 1. $|y-y_c|\ge \frac{1}{2}$.

Case 1.1. $|x-y|\le \frac{1}{4}$. For this case, it holds that 
\begin{align*}
  \left|\int^0_{-1}I_{GM,3}(x,y)\chi_{\text{Case 1.1}}dx\right|\lesssim \int^0_{-1}\kappa^{3}e^{-\frac{1}{C}\kappa^{\frac{3}{2}} \left|x-y\right|}dx\lesssim \varepsilon^{-\frac{1}{2}}.
\end{align*}

Case 1.2. $|x-y|\ge \frac{1}{4}$. Same to the one of \eqref{eq-est-AS-1}.

Case 2. $\frac{2}{\kappa}\le|y-y_c|\le \frac{1}{2}$. 

Case 2.1. $\left|x-y_c\right|\ge \frac{1}{\kappa}$ and $\left|x-y\right|\le \frac{1}{\kappa}$. Similar to the one of \eqref{eq-est-AS-1}, we have
\begin{align*}
  \left|\int^0_{-1}I_{GM,3}(x,y)\chi_{\text{Case 2.1}}dx\right|\lesssim \left|\int^0_{-1}\kappa^2 |\kappa\eta(y)|e^{-\frac{1}{C}\kappa\left|x-y \right||\kappa\eta(y)|^{\frac{1}{2}}}dx\right|\lesssim\varepsilon^{-\frac{1}{2}}.
\end{align*}

Case 2.2. $\left|x-y_c\right|\ge \frac{1}{\kappa}$ and $\left|x-y\right|>\frac{1}{\kappa}$. Similar to the one of \eqref{eq-est-AS-1}, and for this case, we have
\begin{align*}
  \left|\int^0_{-1}I_{GM,3}(x,y)\chi_{\text{Case 2.1}}dx\right|\lesssim \left|\int^0_{-1}\kappa^2 e^{-\frac{1}{C}\kappa\left|x-y \right| }dx\right|\lesssim \kappa.
\end{align*}

Case 2.3. $\left|x-y_c\right|\le \frac{1}{\kappa}$. Similar to the one of \eqref{eq-est-AS-1},
\begin{align*}
  \left|\int^0_{-1}I_{GM,3}(x,y)\chi_{\text{Case 2.3}}dx\right|\lesssim \left|\int^0_{-1}\frac{\kappa}{|x-y_c+ic_0|}dx\right|\lesssim \kappa|\ln c_0|.
\end{align*}

Case 3. $|y-y_c|\le \frac{2}{\kappa}$. 

Case 3.1. $\left|x-y_c\right|\ge \frac{4}{\kappa}$. Similar to the one of \eqref{eq-est-AS-1},
\begin{align*}
  \left|\int^0_{-1}I_{GM,3}(x,y)\chi_{\text{Case 3.1}}dx\right|\lesssim \left|\int^0_{-1}\kappa^2 e^{-\frac{1}{C}\kappa\left|x-y\right|}dx\right|\lesssim\kappa.
\end{align*}

Case 3.2. $\left|x-y_c\right|\le \frac{4}{\kappa}$. In this case, we have $\left\langle \kappa\eta(y) \right\rangle\sim\left\langle \kappa\eta(x) \right\rangle\sim 1$, and
\begin{align*}
  \left|\int^0_{-1}I_{GM,3}(x,y)\chi_{\text{Case 3.2}}dx\right|\lesssim \left|\int^0_{-1}\frac{\kappa}{\left(u_p(x)-\hat c\right)}dx\right|\lesssim\kappa|\ln c_0|.
\end{align*}

Last, we give the estimate to $\left(u_p-\hat c\right)\pa_y^4AirySolver_m(f)(y)$.

Recall that $AirySolver_m(f)$ satisfies \eqref{eq-phiapp4}. It is clear that 
\begin{align*}
  &\varepsilon \left|\left(u_p-\hat c\right)\pa_y^4AirySolver_m(f)(y)\right| \\ 
  \lesssim&  \left\|\left(u_p-\hat c\right)^2\pa_y^2AirySolver_m(f)(y)\right\|_{L^\infty}+  \left\|\left(u_p-\hat c\right)f\right\|_{L^\infty}+\varepsilon\left\|\left(u_p-\hat c\right)\pa_y^2AirySolver_m(f)(y)\right\|_{L^\infty}\\ 
  & +\hat c_i \left\|\left(u_p-\hat c\right)^2\pa_y^2AirySolver_m(f)(y)\right\|_{L^\infty}+\varepsilon\alpha^2\left\|\left(u_p-\hat c\right)\pa_y^2AirySolver_m(f)(y)\right\|_{L^\infty}.
\end{align*}
Remark that $\hat c_i\gtrsim \varepsilon^2$. By \eqref{eq-est-AS-2}, we deduce \eqref{eq-est-AS-4}.

This complete the proof of this lemma.

\end{proof}
In Lemma \ref{lem-est-AS}, the source term is allowed to have a critical-layer singularity of order $\left(u_p-\hat c\right)^{-1}$, and the estimates are stated in terms of the weighted norm $\left\|\left(u_p-\hat c\right)f\right\|_{L^\infty}$. If the source term is regular, then one obtains the following improved estimates.
\begin{lemma}\label{lem-est-AS-ns}
  For all $y\in[-1,0]$, it holds that
\begin{align}\label{eq-est-AS-0-ns}
  \left|AirySolver_m(f)(y)\right| \lesssim |\ln c_0| \left\| f\right\|_{L^\infty}.
\end{align}
\begin{align}\label{eq-est-AS-1-ns}
  \left|\left(u_p-\hat c\right)\pa_yAirySolver_m(f)(y)\right| \lesssim |\ln c_0| \left\| f\right\|_{L^\infty}.
\end{align} 
\end{lemma}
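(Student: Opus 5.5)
We follow the proof of Lemma \ref{lem-est-AS} and split $G_A=G_{A,M}+E$. The point is that for a regular source the critical-layer weight $(u_p(x)-\hat c)^{-1}$ never appears in the $x$-integrand. The logarithm $|\ln c_0|$ then comes only from the growth of the weight $u_p(y)-\hat c$ on the $y$-side (for \eqref{eq-est-AS-1-ns}), or from integrating the nonlocal terms. In particular, the factor $(1+\kappa c_r)$ should disappear.

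\textbf{The localized part.} Using Lemmas \ref{lem:pri-Airy-expan}--\ref{lem-Airy-int-est} we have
\[
|G_{A,M}(x,y)|\lesssim \langle\kappa\eta(x)\rangle^{-\frac14}\langle\kappa\eta(y)\rangle^{-\frac54}e^{-\frac1C|\kappa\eta_r(x)-\kappa\eta_r(y)|(|\kappa\eta(x)|^{\frac12}+|\kappa\eta(y)|^{\frac12})}.
\]
By the change of variables $s=\kappa\eta_r(x)$, the $x$-integral is bounded by $\kappa^{-1}$ times an integral of the exponential kernel, and this is $O(1)$. So the localized part contributes $\lesssim\|f\|_{L^\infty}$ to $|AirySolver_m(f)|$.

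For \eqref{eq-est-AS-1-ns} the kernel of $\partial_y$ is bounded by $\kappa\langle\kappa\eta(y)\rangle^{-\frac34}\langle\kappa\eta(x)\rangle^{-\frac14}e^{-\cdots}$. After multiplying by $|u_p(y)-\hat c|\lesssim|y-y_c|+\hat c_i$, we rerun the case analysis of Lemma \ref{lem-est-AS} (Cases 1--3) with $f$ in place of $(u_p-\hat c)f$. In each case the $x$-integral produces a factor $\kappa^{-1}\langle\kappa\eta(y)\rangle^{-\frac12}$ or an exponentially small factor. Hence the contribution is bounded by
\[
|y-y_c|\,\langle\kappa\eta(y)\rangle^{-\frac54}+\hat c_i\lesssim \kappa^{-1}\langle\kappa\eta(y)\rangle^{-\frac14},
\]
which is $O(1)$.

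\textbf{The nonlocal part.} This is the delicate step. For $x<y$ we use $|\tfrac{1}{\kappa\varepsilon}a_2(x)|\lesssim\langle\kappa\eta(x)\rangle^{-\frac32}$, which is integrable in $x$ with integral $\lesssim\kappa^{-1}$. We also use
\[
\Big|\tfrac{1}{\kappa\varepsilon}a_1(x)(x-y_c)\Big|\lesssim \kappa^{2}\varepsilon\,\frac{\kappa|x-y_c|}{\langle\kappa\eta(x)\rangle}\lesssim1,
\]
from Lemma \ref{lem-est-a1-a2} (the bound $|a_1|\lesssim\kappa^{-1}\langle\kappa\eta\rangle^{-1}$ plus remainders). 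Its $x$-integral is therefore $O(1)\cdot\|f\|_{L^\infty}$. For $x>y$ the term $\tfrac{1}{\kappa\varepsilon}a_1(x)(y-y_c)$ is bounded by $|\kappa(y-y_c)|\langle\kappa\eta(x)\rangle^{-1}$. Integrating in $x\in[y,0]$ gives
\[
|\kappa(y-y_c)|\,\kappa^{-1}\ln\langle\kappa\rangle\lesssim|y-y_c|\,|\ln\varepsilon|.
\]
This is $\lesssim|\ln c_0|$, since $c_0=\varepsilon^{\frac12}$ gives $|\ln\varepsilon|\sim|\ln c_0|$. This replaces the crude bound $\kappa c_r$ used before. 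We check the remainders $a_{1,B},a_{1,R},a_{2,B},a_{2,R}$ by the same integration against Lemma \ref{lem-est-a1-a2}. For instance, $a_{1,B}$ is exponentially localized near $x=0$ or $x=-1$ on a scale $\kappa^{-1}$, which compensates the prefactor.

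For \eqref{eq-est-AS-1-ns} only the term $\tfrac{1}{\kappa\varepsilon}\int_y^0 a_1(x)f(x)\,dx$ survives after differentiation. It is bounded by
\[
\kappa\int_y^0\langle\kappa\eta(x)\rangle^{-1}dx\lesssim|\ln\varepsilon|,
\]
so multiplying by $|u_p(y)-\hat c|\lesssim1$ gives $|\ln c_0|\,\|f\|_{L^\infty}$.

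\textbf{Main obstacle.} The step I expect to be hardest is making sure that no hidden $\kappa c_r$ factor survives from the $a_1(x)(y-y_c)$ term when $y<y_c$ and $x$ is near $y_c$. The resolution is that, without the $(u_p-\hat c)^{-1}$ singularity in $f$, integrating $\langle\kappa\eta(x)\rangle^{-1}$ over $x$ gains a factor $\kappa^{-1}|\ln\varepsilon|$ that exactly offsets $\kappa|y-y_c|$.
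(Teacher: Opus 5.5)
Your proposal is correct and follows the paper's own argument: the paper also reruns the proof of Lemma \ref{lem-est-AS}, notes that without the $(u_p(x)-\hat c)^{-1}$ weight the only term that produced the extra $\kappa c_r$ is $\frac{1}{\kappa\varepsilon}a_1(x)(y-y_c)$, bounds it by $\int_y^0 \kappa|y-y_c|\langle\kappa\eta(x)\rangle^{-1}dx\lesssim|\ln c_0|$, and treats \eqref{eq-est-AS-1-ns} the same way. One slip: in your bound for $\frac{1}{\kappa\varepsilon}a_1(x)(x-y_c)$ the prefactor should be $(\kappa^3\varepsilon)^{-1}\sim 1$, not $\kappa^2\varepsilon$, but your conclusion $\lesssim 1$ still holds.
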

\begin{proof}
  From the proof of \eqref{eq-est-AS-0}, we can see that the only troublesome term is $\frac{1}{\kappa\varepsilon}a_1(x)(y-y_c)$. By using Lemma \ref{lem-est-a1-a2}, we have
  \begin{align*}
    \frac{1}{\kappa\varepsilon}\int^0_{y}\left| a_1(x)(y-y_c) f(x)\right| dx   \lesssim \int^0_{y}\left| \frac{\kappa(y-y_c)}{\left\langle \kappa\eta(x) \right\rangle} \right| dx \left\| f\right\|_{L^\infty} \lesssim |\ln c_0|\left\| f\right\|_{L^\infty}.
  \end{align*}
  Then we get \eqref{eq-est-AS-0-ns}. For the same reason, we also have \eqref{eq-est-AS-1-ns}.
\end{proof}

Next, we consider source terms of derivative type. More precisely, we estimate the modified Airy solver applied to $f''$:
\begin{align*}
  AirySolver_m(f'')(y)=\int^0_{-1} G_A(x,y)f''(x)dx.
\end{align*}

For later use, we introduce the following weighted norm:
\begin{align*}
  \mathcal W_0(y)=\min \left\{\varepsilon^{\frac{1}{3}}\left|u_p-\hat c\right|^3,\varepsilon \left|u_p-\hat c\right|\right\},
\end{align*}
and
\begin{align*}
  \left\|g\right\|_{\mathcal X}=\left\|\mathcal W_0(y)\pa_y^4g\right\|_{L^\infty}+\varepsilon^{\frac{1}{2}}\left\|\left(u_p-\hat c\right)^2\pa_y^3g\right\|_{L^\infty}+\left\|\left(u_p-\hat c\right)^2\pa_y^2g\right\|_{L^\infty}+\left\|\left(u_p-\hat c\right)\pa_yg\right\|_{L^\infty}+\left\|g\right\|_{L^\infty}.
\end{align*}
We have the following estimates.
\begin{lemma}\label{lem-est-ASS}
  For all $y\in[-1,0]$, the modified Airy solver satisfies
\begin{align}\label{eq-est-ASS-0}
  \left|AirySolver_m(f'')(y)\right| \lesssim \varepsilon^{-\frac{2}{3}}\left(1+ \kappa c_r\right)|\ln c_0| \left(\left\|\left(u_p-\hat c\right)f\right\|_{L^\infty}+\left\|\left(u_p-\hat c\right)^2f'\right\|_{L^\infty}\right),
\end{align}
\begin{align}\label{eq-est-ASS-1}
  \left|\left(u_p-\hat c\right)\pa_yAirySolver_m(f'')(y)\right| \lesssim \varepsilon^{-\frac{2}{3}}\left(1+ \kappa c_r\right)|\ln c_0|\left(\left\|\left(u_p-\hat c\right)f\right\|_{L^\infty}+\left\|\left(u_p-\hat c\right)^2f'\right\|_{L^\infty}\right),
\end{align}
\begin{align}\label{eq-est-ASS-2}
  \left|\left(u_p-\hat c\right)^2\pa_y^2AirySolver_m(f'')(y)\right| \lesssim \varepsilon^{-\frac{2}{3}}|\ln c_0|\left(\left\|\left(u_p-\hat c\right)f\right\|_{L^\infty}+\left\|\left(u_p-\hat c\right)^2f'\right\|_{L^\infty}\right),
\end{align}
\begin{align}\label{eq-est-ASS-3}
   \left|\left(u_p-\hat c\right)^2\pa_y^3AirySolver_m(f'')(y)\right| \lesssim  \varepsilon^{-1}|\ln c_0|\left(\left\|\left(u_p-\hat c\right)f\right\|_{L^\infty}+\left\|\left(u_p-\hat c\right)^2f'\right\|_{L^\infty}\right).
\end{align}
We also have
\begin{equation}\label{eq-est-ASS-4}
  \begin{aligned}    
       &\left|\mathcal W_0(y)\pa_y^4AirySolver_m(f'')(y)\right| \\
   \lesssim&  \varepsilon^{-\frac{2}{3}}|\ln c_0|\left(\left\|\left(u_p-\hat c\right)f\right\|_{L^\infty}+\left\|\left(u_p-\hat c\right)^2f'\right\|_{L^\infty}+\left\|\left(u_p-\hat c\right)^3f''\right\|_{L^\infty}\right).
  \end{aligned}
\end{equation}
Consequently,
 \begin{equation}\label{eq-est-ASS-5}
  \begin{aligned}    
      &\left\|AirySolver_m(f'')(y)\right\|_{\mathcal X}\\
      \lesssim& \varepsilon^{-\frac{2}{3}}\left(1+ \kappa c_r\right)|\ln c_0| \left(\left\|\left(u_p-\hat c\right)f\right\|_{L^\infty}+\left\|\left(u_p-\hat c\right)^2f'\right\|_{L^\infty}+\left\|\left(u_p-\hat c\right)^3f''\right\|_{L^\infty}\right).
  \end{aligned}
\end{equation}
\end{lemma}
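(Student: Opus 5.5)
The approach is to integrate by parts twice in $x$, so that the two derivatives on $f$ move onto the Green function. Then $AirySolver_m(f'')(y)=\int_{-1}^0\pa_x^2G_A(x,y)f(x)\,dx$ plus boundary terms at $x=-1,0$ and the jump contributions at $x=y$. The Green function $G_A$ and its first two $y$-derivatives are continuous across $y=x$. Its $x$-derivatives generally jump at $x=y$, however, so I would compute the jumps of $G_A$ and $\pa_xG_A$ at $x=y$ explicitly from \eqref{eq-Green-Airy}. These are expressible through $A_1(y)A_2(2,y)-A_1(2,y)A_2(y)+a_2(y)$ and similar combinations, and by \eqref{eq-a2} several of them vanish identically. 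Rather than fully integrating by parts, I would integrate by parts only once: $\int G_Af''=[G_Af']-\int\pa_xG_Af'$. Then $f'$ is estimated with the weight $(u_p-\hat c)^2$, which matches the right-hand sides of \eqref{eq-est-ASS-0}--\eqref{eq-est-ASS-3}. The remaining single derivative $\pa_xG_A$ costs a factor $\kappa\langle\kappa\eta(x)\rangle^{1/2}$ on the localized part, by Lemma \ref{lem-Airy-int-est} and the Airy asymptotics in Lemma \ref{lem:Airy-class-est}. On the nonlocal part $E$, it costs $\kappa^{k}\langle\kappa\eta\rangle^{-k}$-type factors, by Lemma \ref{lem-est-a1-a2}.

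The key steps, in order, are the following. First, bound $|\pa_xG_{A,M}(x,y)|$ by $\kappa\,\langle\kappa\eta(x)\rangle^{1/4}\langle\kappa\eta(y)\rangle^{-5/4}$ times the off-diagonal exponential of Lemma \ref{lem-Airy-Airy-est}. Bound $|\pa_xE|$ by $\kappa^{3}\varepsilon\cdot(|a_1'|+|a_2'|+|a_1|)$-type terms, using Lemma \ref{lem-est-a1-a2} with $k=1$. Second, handle the boundary terms $G_A(x,y)f'(x)|_{x=-1}^{x=0}$: at $x=0$ and $x=-1$, the weight $|u_p-\hat c|$ is $\sim 1$ and $\sim c_r$ respectively. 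Third, run the same case decomposition as in the proof of Lemma \ref{lem-est-AS} (Cases 1--3 according to $|y-y_c|$ and $|x-y_c|$). In each case, integrate $|\pa_xG_A|\,|u_p-\hat c|^{-2}$ against $\|(u_p-\hat c)^2f'\|_{L^\infty}$. The dangerous region is $|x-y_c|\lesssim\kappa^{-1}$. There the factor $|u_p-\hat c|^{-2}$ is not integrable uniformly; it gives $\int|x-y_c+ic_0|^{-2}\,dx\sim c_0^{-1}$, which is too large.

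This is the main obstacle, and I would resolve it by not integrating by parts inside the critical layer. I would split $f=\chi f+(1-\chi)f$, with $\chi$ a cutoff on $|x-y_c|\le 2\kappa^{-1}$. On the inner piece, integrate by parts twice: $\int G_A(\chi f)''=\int\pa_x^2G_A\,\chi f$ plus jump terms. Here $|\pa_x^2G_{A,M}|\lesssim\kappa^2$ and $|u_p-\hat c|^{-1}$ is integrated over a layer of width $\kappa^{-1}$. Together with $\varepsilon^{-1/3}\sim\kappa$, this produces $\kappa^2|\ln c_0|\sim\varepsilon^{-2/3}|\ln c_0|$ against $\|(u_p-\hat c)f\|_{L^\infty}$. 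On the outer piece, integrate by parts once as above. There $|u_p-\hat c|\gtrsim\kappa^{-1}$, so $|u_p-\hat c|^{-2}\lesssim\kappa|u_p-\hat c|^{-1}$, and combined with the factor $\kappa$ from $\pa_xG_A$ this again yields $\kappa^2|\ln c_0|$. The cutoff derivatives $\chi',\chi''$ are of size $\kappa,\kappa^2$ and are supported where $|u_p-\hat c|\sim\kappa^{-1}$, so they are controlled by the same bound. The factor $(1+\kappa c_r)$ in \eqref{eq-est-ASS-0}--\eqref{eq-est-ASS-1} comes, exactly as in Lemma \ref{lem-est-AS}, from the term $a_1(x)(y-y_c)$ with $y<y_c$. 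For the $y$-derivative bounds \eqref{eq-est-ASS-1}--\eqref{eq-est-ASS-3}, I would repeat this with $\pa_y^kG_A$, $k=1,2,3$, reusing the weights and case analyses of \eqref{eq-est-AS-1}--\eqref{eq-est-AS-3}. The $\varepsilon^{-1/2}$ loss in \eqref{eq-est-AS-3} combines with $\kappa$ only to give $\varepsilon^{-1}$ in the worst case, which is what \eqref{eq-est-ASS-3} allows.

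Finally, \eqref{eq-est-ASS-4} follows from the equation \eqref{eq-phiapp4} with source $f''$. Solving for $i\varepsilon\pa_y^4\phi$ gives $\varepsilon|\pa_y^4\phi|\lesssim|u_p-\hat c||\pa_y^2\phi|+|f''|+\ldots$. Multiplying by $\mathcal W_0\le\varepsilon|u_p-\hat c|$ controls the $f''$ term by $\|(u_p-\hat c)^3f''\|_{L^\infty}$ divided by $|u_p-\hat c|^2$; in that term one uses instead $\mathcal W_0\le\varepsilon^{1/3}|u_p-\hat c|^3$, which gives the factor $\varepsilon^{-2/3}$. The $\pa_y^2\phi$ terms are controlled by \eqref{eq-est-ASS-2}, choosing in each region whichever of the two weights in $\mathcal W_0$ is smaller. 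Summing \eqref{eq-est-ASS-0}--\eqref{eq-est-ASS-4} with the $\varepsilon^{1/2}$ weight on the third-derivative term gives \eqref{eq-est-ASS-5}.
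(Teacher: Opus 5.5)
Your proposal is correct and is essentially the paper's proof. You move the two derivatives off $f''$ onto $G_A$, integrate by parts only once away from the critical layer $|x-y_c|\lesssim\kappa^{-1}$ (where $|u_p-\hat c|^{-2}$ is not integrable) and twice inside it, and rerun the case analysis of Lemma~\ref{lem-est-AS}, with the $(1+\kappa c_r)$ loss coming from $a_1(x)(y-y_c)$ for $y<y_c$. You then read off \eqref{eq-est-ASS-4} from \eqref{eq-phiapp4}, using $\mathcal W_0\le\varepsilon|u_p-\hat c|$ on the $\pa_y^2$ terms and $\mathcal W_0\le\varepsilon^{1/3}|u_p-\hat c|^3$ on $f''$. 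One small slip: the prefactor of $E$ is $(\kappa\varepsilon)^{-1}\sim\kappa^2$, not $\kappa^3\varepsilon$.

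The only real difference is that the paper uses your inner/outer split only for \eqref{eq-est-ASS-2}--\eqref{eq-est-ASS-3}, and does it with a sharp cut at $y_c\pm4/\kappa$, which produces boundary terms there instead of $\chi'$-terms. The choice of $a_1,a_2$ makes $G_A$, $\pa_xG_A$, $\pa_x^2G_A$, $\pa_yG_A$ and $\pa_x\pa_yG_A$ continuous across $x=y$, so there are no jump terms at all for \eqref{eq-est-ASS-0}--\eqref{eq-est-ASS-1}. The paper therefore simply integrates by parts twice over all of $[-1,0]$ there. By contrast, $\pa_x\pa_y^2G_A$ and $\pa_y^3G_A$ do jump at $x=y$ and generate the $\varepsilon^{-1}\delta_x(y)$ contributions, which is why the split is needed for the second- and third-derivative bounds.
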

\begin{proof}
  From the definition of the approximate Green function \eqref{eq-Green-Airy}, one can see that $\pa_xG_A(x,y)$ and $\pa_x^2G_A(x,y)$ are continuous. Then by integration by parts, we have
\begin{align*}
  AirySolver_m(f'')(y)=&\int^0_{-1}\pa_x^2G_A(x,y)f(x)dx-\pa_xG_A(0,y)f(0)+\pa_xG_A(-1,y)f(-1)\\
  &+ G_A(0,y)f'(0)- G_A(-1,y)f'(-1).
\end{align*}

First consider the boundary terms:

\textbf{Term $G_A(0,y)f'(0)$.} Remark that $y_c\sim -1$, we have $\eta(0)\sim\left|u_p(0)-\hat c\right|\sim 1$. Then by Lemma \ref{lem:pri-Airy-expan}-\ref{lem-est-a1-a2}, we have
\begin{align*}
  \left|G_A(0,y)\right|\lesssim  \kappa^2 \left|A_1(0)A_2(2,y)\right|+\kappa^2 \left|a_1(0)(y-y_c)\right|\lesssim \kappa^{-\frac{1}{4}}\left\langle \kappa\eta(y) \right\rangle^{-\frac{5}{4}}+|y-y_c|.
\end{align*}
It follows that
\begin{align*}
  \left|G_A(0,y)f'(0)\right|\lesssim \left\|\left(u_p-\hat c\right)^2f'\right\|_{L^\infty}.
\end{align*}

\textbf{Term $G_A(-1,y)f'(-1)$.} Remak that $\eta(-1)\sim\left|u_p(-1)-\hat c\right|\sim \left|1+y_c\right|\sim c_r$.  Similar to the previous case, we have
\begin{align*}
  \left|G_A(-1,y)\right|\lesssim& \kappa^2\left(\left|A_2(-1)A_1(2,y)\right|+\left|a_2(-1)\right|+\left|a_1(-1)(-1-y_c)\right|\right)\\
  \lesssim&\left\langle \kappa c_r \right\rangle^{-\frac{1}{4}}\left\langle \kappa\eta(y) \right\rangle^{-\frac{5}{4}}+\left\langle \kappa c_r \right\rangle^{-3}+\frac{\kappa \left|1+y_c\right|}{\left\langle \kappa c_r \right\rangle}\lesssim 1.
\end{align*}
Therefore, we deduce that
\begin{align*}
  \left|G_A(-1,y)f'(-1)\right|\lesssim \frac{1}{c_r^2} \left|u_p(-1)-\hat c\right|^2 \left|f'(-1)\right|\lesssim \frac{1}{c_r^2} \left\|\left(u_p-\hat c\right)^2f'\right\|_{L^\infty}.
\end{align*}

\textbf{Term $\pa_xG_A(0,y)f(0)$.} We have  
\begin{align*}
  \left|\pa_xG_A(0,y)\right|\lesssim& \kappa^2 \left|A_1'(0) A_2(2,y)\right|+\kappa^2\left|a_1'(0)(y-y_c)\right|\\
  \lesssim& \kappa^{1+\frac{1}{4}} \left\langle \kappa\eta(y) \right\rangle^{-\frac{5}{4}} +\frac{\kappa^2 \left|y-y_c\right|}{\left\langle \kappa \eta(0) \right\rangle^\frac{1}{2}}\lesssim \kappa^{\frac{3}{2}}.
\end{align*}
Therefore, we deduce that
\begin{align*}
  \left|\pa_xG_A(0,y)f(0)\right|\lesssim \kappa^{\frac{3}{2}} \left|(u_p(0)-\hat c)f(0)\right|\lesssim \kappa^{\frac{3}{2}}\left\|\left(u_p-\hat c\right)f\right\|_{L^\infty}.
\end{align*}

\textbf{Term $\pa_xG_A(-1,y)f(-1)$.}  We have
\begin{align*}
  \left|\pa_xG_A(-1,y)\right|\lesssim& \kappa^2\left(\left|A_2'(-1) A_1(2,y)\right|+\left|a_2'(-1)\right|+\left|a_1'(-1)(-1-y_c)\right|\right)\\
  \lesssim& \kappa \left\langle \kappa \eta(-1) \right\rangle^{\frac{1}{4}}\left\langle \kappa\eta(y) \right\rangle^{-\frac{5}{4}}+\kappa\left\langle \kappa \eta(-1) \right\rangle^{-1}+\frac{\kappa^2 \left|1+y_c\right|}{\left\langle \kappa \eta(-1) \right\rangle^\frac{1}{2}}\lesssim \kappa \left\langle \kappa c_r \right\rangle^{\frac{1}{2}}.
\end{align*}
Therefore, we deduce that
\begin{align*}
  \left|\pa_xG_A(-1,y)f(-1)\right|\lesssim \kappa \left\langle \kappa c_r \right\rangle^{\frac{1}{2}}\frac{1}{c_r} \left|u_p(-1)-\hat c\right| \left|f(-1)\right|\lesssim \frac{\kappa^2 }{\left\langle \kappa c_r \right\rangle^{\frac{1}{2}}}\left\|\left(u_p-\hat c\right)f\right\|_{L^\infty}.
\end{align*}

For the integration part, we have
\begin{align*}
  \int^0_{-1}\pa_x^2G_A(x,y)f(x)dx=&i\frac{2\pi}{\kappa\varepsilon}\int^y_{-1} \left(A_1(2,y) A_2''(x)+a_2''(x)+a_1''(x)(x-y_c)+2a_1'(x)\right) \frac{f(x)}{\left(\eta_r'(x)\right)^{\frac{1}{2}}}dx\\ 
  &+i\frac{2\pi}{\kappa\varepsilon}\int^0_{y}\left( A_1''(x) A_2(2,y)+a_1''(x)(y-y_c)\right)\frac{f(x)}{\left(\eta_r'(x)\right)^{\frac{1}{2}}}dx+\text{err}.
\end{align*}
Here the error terms consist of the terms when the $x$-derivative applied on $\frac{1}{\left(\eta_r'(x)\right)^{\frac{1}{2}}}$. And for these therms, the estimates is easier. 

We first focus on the $E(x,y)$ related terms. From Lemma \ref{lem-est-a1-a2}, we have
\begin{align*}
  &\frac{1}{\kappa\varepsilon} \int^y_{-1}\left(\left|a_2''(x)\right|+\left|a_1'(x)\right|\right) \left|f(x)\right|dx \\
  \lesssim&\frac{1}{\kappa\varepsilon} \int^y_{-1}\frac{1}{\left|u_p(x)-\hat c\right|}dx \left\|\left(u_p-\hat c\right)f\right\|_{L^\infty}\lesssim \varepsilon^{-\frac{2}{3}}|\ln c_0|\left\|\left(u_p-\hat c\right)f\right\|_{L^\infty}.
\end{align*}
\begin{align*}
  &\frac{1}{\kappa\varepsilon} \int^y_{-1}\left|a_1''(x)\left(x-y_c\right)f(x)\right|dx\\
  \lesssim&\frac{1}{\varepsilon} \int^y_{-1} \left(\left\langle \kappa\eta(x) \right\rangle^{-3}+e^{-\frac{1}{C}\kappa |x|}+e^{-\frac{1}{C}\kappa |x+1|}\right) \left|\frac{\left(x-y_c\right)}{\left(u_p(x)-\hat c\right)}\left(u_p(x)-\hat c\right)f(x)\right|dx
  \lesssim \varepsilon^{-\frac{2}{3}} \left\|\left(u_p-\hat c\right)f\right\|_{L^\infty}.
\end{align*}
For the term related to $a_1''(x)\left(y-y_c\right)$. If $y\ge y_c$, then we have $\left|\frac{\left(y-y_c\right)}{\left(u_p(x)-\hat c\right)}\right|\lesssim 1$, and the estimate is the same to the $a_1''(x)\left(x-y_c\right)$ case. If $y< y_c$, then we have $\left|\frac{\left(y-y_c\right)}{\left(u_p(x)-\hat c\right)}\right|\lesssim \left|\frac{c_r}{\left(u_p(x)-\hat c\right)}\right|$, and
\begin{align*}
  &\frac{1}{\kappa\varepsilon}\int^0_{y}\left|a_1''(x)\left(y-y_c\right)f(x)\right|dx\\
  \lesssim&\frac{1}{\varepsilon} \int^y_{-1}   \left|\frac{c_r}{\left(u_p(x)-\hat c\right)}\right|dx\left\|\left(u_p-\hat c\right)f\right\|_{L^\infty}  \lesssim \varepsilon^{-1}c_r |\ln c_0|\left\|\left(u_p-\hat c\right)f\right\|_{L^\infty}.
\end{align*}

Next, we turn to the $G_{A,M}(x,y)$ part. Similar to Lemma \ref{lem-est-AS}, we have
\begin{align*}
  &\left|\frac{2\pi}{\kappa\varepsilon}\int^y_{-1} A_1(2,y) A_2''(x) \frac{f(x)}{\left(\eta_r'(x)\right)^{\frac{1}{2}}}dx+\frac{2\pi}{\kappa\varepsilon}\int^0_{y} A_1''(x) A_2(2,y)\frac{f(x)}{\left(\eta_r'(x)\right)^{\frac{1}{2}}}dx\right|\\
\lesssim&\int^0_{-1}\left|\frac{1}{\varepsilon \kappa}\frac{\left\langle \kappa\eta(x) \right\rangle^{\frac{3}{4}}}{\left\langle \kappa\eta(y) \right\rangle^{\frac{5}{4}}}e^{-\frac{1}{C}\left|\kappa\eta_r(x)-\kappa\eta_r(y)  \right|\left( |\kappa\eta(y)|^{\frac{1}{2}}+|\kappa\eta(x)|^{\frac{1}{2}} \right) }\frac{1}{\left(u_p(x)-\hat c\right)}\frac{\left(u_p(x)-\hat c\right)f(x)}{\left(\eta_r'(x)\right)^{\frac{1}{2}}}\right|dx.
\end{align*}
Similar to Lemma \ref{lem-est-AS}, we denote
\begin{align*}
  I_{GMS,0}(x,y) = \frac{1}{\varepsilon \kappa}\frac{\left\langle \kappa\eta(x) \right\rangle^{\frac{3}{4}}}{\left\langle \kappa\eta(y) \right\rangle^{\frac{5}{4}}}e^{-\frac{1}{C}\left|\kappa\eta_r(x)-\kappa\eta_r(y)  \right|\left( |\kappa\eta(y)|^{\frac{1}{2}}+|\kappa\eta(x)|^{\frac{1}{2}} \right) }\frac{1}{\left(u_p(x)-\hat c\right)},
\end{align*}
and divided the estimate to different cases.

Case 1. $|y-y_c|\ge \frac{1}{2}$.

Case 1.1. $|x-y|\le \frac{1}{4}$. Similar to the estimate of \eqref{eq-est-AS-1}, we have
\begin{align*}
  &\left|\int^0_{-1}I_{GMS,0}(x,y)\chi_{\text{Case 1.1}}dx\right|\lesssim \int^0_{-1}\kappa^{\frac{3}{2}}e^{-\frac{1}{C} \kappa^{\frac{3}{2}} \left|x-y\right|}dx\lesssim 1.
\end{align*}

Case 1.2. $|x-y|\ge \frac{1}{4}$. The same to \eqref{eq-est-AS-1}.
 
Case 2. $\frac{2}{\kappa}\le|y-y_c|\le \frac{1}{2}$. 

Case 2.1. $\left|x-y_c\right|\ge \frac{1}{\kappa}$ and $\left|x-y\right|\le \frac{1}{\kappa}$. Similar to the estimate of \eqref{eq-est-AS-1}, we have
\begin{align*}
  &\left|\int^0_{-1}I_{GMS,0}(x,y)\chi_{\text{Case 2.1}}dx\right|\lesssim \int^0_{-1}\kappa^{3}e^{-\frac{1}{C} \kappa\left|x-y\right|}dx\lesssim \varepsilon^{-\frac{2}{3}}.
\end{align*}

Case 2.2. $\left|x-y_c\right|\ge \frac{1}{\kappa}$ and $\left|x-y\right|>\frac{1}{\kappa}$. For this case, similar to \eqref{eq-est-AS-1}, it holds that
\begin{align*}
  \left|I_{GMS,0}(x,y)\right| \lesssim& \varepsilon^{-1} e^{-\frac{1}{C}\kappa\left|x-y\right|}.
\end{align*}
Then the integration estimate is the same to Case 2.1.

Case 2.3. $\left|x-y_c\right|\le \frac{1}{\kappa}$. For this case, similar to \eqref{eq-est-AS-1}, it holds that 
\begin{align*}
  \left|I_{GMS,0}(x,y)\right|\lesssim \frac{1}{\varepsilon \kappa} e^{-\frac{1}{C}\kappa\left|x-y \right| }\frac{1}{\left|u_p(x)-\hat c\right|}.
\end{align*}
Then we deduce that
\begin{align*}
  \left|\int^0_{-1}I_{GMS,0}(x,y)\chi_{\text{Case 2.3}}dx\right| \lesssim \varepsilon^{-\frac{2}{3}}|\ln c_0|.
\end{align*}

Case 3. $|y-y_c|\le \frac{2}{\kappa}$.

Case 3.1. $\left|x-y_c\right|\ge \frac{4}{\kappa}$. For this case, similar to \eqref{eq-est-AS-1}, it holds that
\begin{align*}
  \left|\int^0_{-1}I_{GMS,0}(x,y)\chi_{\text{Case 3.1}}dx\right|\lesssim \left|\int^0_{-1}\frac{1}{\varepsilon }\frac{1}{\left\langle \kappa\eta(x) \right\rangle^{\frac{1}{4}}}  e^{-\frac{1}{C}\kappa\left|x-y\right|}dx\right|\lesssim \varepsilon^{-\frac{2}{3}}.
\end{align*}

Case 3.2. $\left|x-y_c\right|\le \frac{4}{\kappa}$. In this case, we have $\left\langle \kappa\eta(y) \right\rangle\sim\left\langle \kappa\eta(x) \right\rangle\sim 1$, and
\begin{align*}
  \left|\int^0_{-1}I_{GMS,0}(x,y)\chi_{\text{Case 3.2}}dx\right|\lesssim \varepsilon^{-\frac{2}{3}}|\ln c_0|.
\end{align*}

Combining the above estimates, we get \eqref{eq-est-ASS-0}. 

Next, we turn to $\left(u_p-\hat c\right)\pa_yAirySolver_m(f'')(y)$. 
\begin{align*}
  \pa_yAirySolver_m(f'')(y)=&\int^0_{-1}\pa_x^2\pa_yG_A(x,y)f(x)dx-\pa_x\pa_y G_A(0,y)f(0)+\pa_x\pa_y G_A(-1,y)f(-1)\\ 
  &+\pa_y G_A(0,y)f'(0)-\pa_y G_A(-1,y)f'(-1).
\end{align*}

First consider the boundary terms. Similar to \eqref{eq-est-ASS-0}, it holds that:

\textbf{Term $\pa_y G_A(0,y)f'(0)$.} We have
\begin{align*}
  \left|\pa_yG_A(0,y)\right|\lesssim  \kappa^2 \left|A_1(0)A_2(1,y)\right|+\kappa^2 \left|a_1(0)\right|\lesssim \kappa^{\frac{3}{4}}\left\langle \kappa\eta(y) \right\rangle^{-\frac{3}{4}}+1.
\end{align*}
It follows that
\begin{align*}
  \left|\left(u_p(y)-\hat c\right)\pa_y G_A(0,y)f'(0)\right|\lesssim \left\|\left(u_p-\hat c\right)^2f'\right\|_{L^\infty}.
\end{align*}

\textbf{Term $\pa_y G_A(-1,y)f'(-1)$.}  We have
\begin{align*}
  &\left|\left(u_p(y)-\hat c\right)\pa_y G_A(-1,y)\right|\lesssim \kappa^2 \left|\left(u_p(y)-\hat c\right)A_2(-1)A_1(1,y)\right| \\
  \lesssim& \left\langle \kappa c_r \right\rangle^{-\frac{1}{4}}\left\langle \kappa\eta(y) \right\rangle^{\frac{1}{4}}e^{-\frac{1}{C}\left|\kappa\eta_r(-1)-\kappa\eta_r(y)  \right|\left( |\kappa\eta(y)|^{\frac{1}{2}}+|\kappa\eta(-1)|^{\frac{1}{2}} \right) }.
\end{align*}
If $|y+1|\le \frac{1}{\kappa}$, then $\left\langle \kappa c_r \right\rangle^{-\frac{1}{4}}\left\langle \kappa\eta(y) \right\rangle^{\frac{1}{4}}\lesssim 1$. If $|y+1|\ge \frac{1}{\kappa}$, then we can use the exponential decay to absorb $\left\langle \kappa\eta(y) \right\rangle^{\frac{1}{4}}$. Therefore, we deduce that
\begin{align*}
  \left|\left(u_p(y)-\hat c\right)\pa_y G_A(-1,y)f'(-1)\right|\lesssim \frac{1}{ c_r^2} \left|u_p(-1)-\hat c\right|^2 \left|f'(-1)\right|\lesssim \frac{1}{c_r^{2}} \left\|\left(u_p-\hat c\right)^2f'\right\|_{L^\infty}.
\end{align*}

\textbf{Term $\pa_x\pa_y G_A(0,y)f(0)$.} We have 
\begin{align*}
  \left|\left(u_p(y)-\hat c\right)\pa_x\pa_yG_A(0,y)\right|\lesssim& \kappa^2 \left|\left(u_p(y)-\hat c\right)A_1'(0) A_2(1,y)\right|+\kappa^2\left|\left(u_p(y)-\hat c\right)a_1'(0)\right|\\
  \lesssim& \kappa   \left\langle \kappa \eta(0) \right\rangle^{\frac{1}{4}}\left\langle \kappa\eta(y) \right\rangle^{\frac{1}{4}} +\frac{\kappa^2 \left|y-y_c\right|}{\left\langle \kappa \eta(0) \right\rangle^\frac{1}{2}}\lesssim \kappa^{\frac{3}{2}}.
\end{align*}
Therefore, we deduce that
\begin{align*}
  \left|\left(u_p(y)-\hat c\right)\pa_x\pa_yG_A(0,y)f(0)\right|\lesssim  \kappa^{\frac{3}{2}}\left\|\left(u_p-\hat c\right)f\right\|_{L^\infty}.
\end{align*}

\textbf{Term $\pa_x\pa_y G_A(-1,y)f(-1)$.}  We have
\begin{align*}
  &\left|\left(u_p(y)-\hat c\right)\pa_x\pa_yG_A(-1,y)\right|\lesssim \kappa^2 \left|\left(u_p(y)-\hat c\right)A_2'(-1) A_1(1,y)\right| \\
  \lesssim& \kappa \left\langle \kappa \eta(-1) \right\rangle^{\frac{1}{4}}\left\langle \kappa\eta(y) \right\rangle^{\frac{1}{4}}e^{-\frac{1}{C}\left|\kappa\eta_r(-1)-\kappa\eta_r(y)  \right|\left( |\kappa\eta(y)|^{\frac{1}{2}}+|\kappa\eta(-1)|^{\frac{1}{2}} \right) } \lesssim \kappa\left\langle \kappa c_r \right\rangle^{\frac{1}{2}}.
\end{align*}
Therefore, we deduce that
\begin{align*}
  \left|\left(u_p(y)-\hat c\right)\pa_x\pa_yG_A(-1,y)f(-1)\right|\lesssim \frac{\kappa \left\langle \kappa c_r \right\rangle^{\frac{1}{2}}}{c_r} \left|u_p(-1)-\hat c\right| \left|f(-1)\right|\lesssim \frac{\kappa^2 }{\left\langle \kappa c_r \right\rangle^{\frac{1}{2}}}\left\|\left(u_p-\hat c\right)f\right\|_{L^\infty}.
\end{align*}

For the integration part, we have
\begin{align*}
  \int^0_{-1}\pa_x^2\pa_yG_A(x,y)f(x)dx=&i\frac{2\pi}{\kappa\varepsilon}\int^y_{-1} A_1(1,y) A_2''(x)  \frac{f(x)}{\left(\eta_r'(x)\right)^{\frac{1}{2}}}dx\\ 
  &+i\frac{2\pi}{\kappa\varepsilon}\int^0_{y}\left( A_1''(x) A_2(1,y)+a_1''(x) \right)\frac{f(x)}{\left(\eta_r'(x)\right)^{\frac{1}{2}}}dx+\text{err}.
\end{align*}
Here the err terms consist of the terms when the $x$-derivative applied on $\frac{1}{\left(\eta_r'(x)\right)^{\frac{1}{2}}}$. And for these therms, the estimates is easier. 

For the $E(x,y)$ related term, only $a_1''(x)$ rest. For the same estimate as the boundary term estimates for \eqref{eq-est-ASS-0}, we have 
\begin{align*}
  &\frac{1}{\kappa\varepsilon}\int^0_{y}\left|\left(u_p(y)-\hat c\right)a_1''(x) f(x)\right|dx  \lesssim \varepsilon^{-1}c_r |\ln c_0|\left\|\left(u_p-\hat c\right)f\right\|_{L^\infty}.
\end{align*}

Next, we turn to the $G_{A,M}(x,y)$ part. Similar to Lemma \ref{lem-est-AS}, we have
\begin{align*}
  &\left|\left(u_p(y)-\hat c\right)\right|\left|\frac{2\pi}{\kappa\varepsilon}\int^y_{-1} A_1(1,y) A_2''(x) \frac{f(x)}{\left(\eta_r'(x)\right)^{\frac{1}{2}}}dx+\frac{2\pi}{\kappa\varepsilon}\int^0_{y} A_1''(x) A_2(1,y)\frac{f(x)}{\left(\eta_r'(x)\right)^{\frac{1}{2}}}dx\right|\\
\lesssim&\int^0_{-1}\left|\frac{1}{\varepsilon \kappa} \left\langle \kappa\eta(y) \right\rangle^{\frac{1}{4}}\left\langle \kappa\eta(x) \right\rangle^{\frac{3}{4}} e^{-\frac{1}{C}\left|\kappa\eta_r(x)-\kappa\eta_r(y)  \right|\left( |\kappa\eta(y)|^{\frac{1}{2}}+|\kappa\eta(x)|^{\frac{1}{2}} \right) }\frac{1}{\left(u_p(x)-\hat c\right)}\frac{\left(u_p(x)-\hat c\right)f(x)}{\left(\eta_r'(x)\right)^{\frac{1}{2}}}\right|dx.
\end{align*}
To get \eqref{eq-est-ASS-1}, we denote
\begin{align*}
  I_{GMS,1}(x,y) = \frac{1}{\varepsilon \kappa}\left\langle \kappa\eta(y) \right\rangle^{\frac{1}{4}}\left\langle \kappa\eta(x) \right\rangle^{\frac{3}{4}}e^{-\frac{1}{C}\left|\kappa\eta_r(x)-\kappa\eta_r(y)  \right|\left( |\kappa\eta(y)|^{\frac{1}{2}}+|\kappa\eta(x)|^{\frac{1}{2}} \right) }\frac{1}{\left(u_p(x)-\hat c\right)},
\end{align*}
and divided the estimate to different cases. Here the estimates are very similar to \eqref{eq-est-ASS-0}, we omit the details.

Next, we turn to $\left(u_p-\hat c\right)^2\pa_y^2AirySolver_m(f'')(y)$. It holds that
\begin{align*}
  \pa_y^2AirySolver_m(f'')(y)=&-\int^0_{-1}\pa_x\pa_y^2G_A(x,y)f'(x)dx+\pa_y^2 G_A(0,y)f'(0)-\pa_y^2 G_A(-1,y)f'(-1)\\ 
  =&-\int_{[-1,0]\setminus [y_c-\frac{4}{\kappa},y_c+\frac{4}{\kappa}]}\pa_x\pa_y^2G_A(x,y)f'(x)dx+\int_{[y_c-\frac{4}{\kappa},y_c+\frac{4}{\kappa}]}\pa_x^2\pa_y^2G_A(x,y)f(x)dx\\
  &-\pa_x\pa_y^2G_A(y_c+\frac{4}{\kappa},y)f(y_c+\frac{4}{\kappa})+\pa_x\pa_y^2G_A(y_c-\frac{4}{\kappa},y)f(y_c-\frac{4}{\kappa})\\
  &+\pa_y^2 G_A(0,y)f'(0)-\pa_y^2 G_A(-1,y)f'(-1).
\end{align*}

We have
\begin{equation*}
  \pa_y^2G_A(x,y)=i \frac{2\pi}{\left(\eta_r'(y)\right)^{\frac{1}{2}}\left(\eta_r'(x)\right)^{\frac{1}{2}}\kappa\varepsilon}\left\{
    \begin{array}{ll}
       A_1(y)  A_2(x),&x<y;\\ 
       A_1(x) A_2(y),&x>y,
    \end{array}
  \right.
\end{equation*}
and
\begin{equation*}
  \pa_x\pa_y^2G_A(x,y)=i \frac{2\pi}{\left(\eta_r'(y)\right)^{\frac{1}{2}}\left(\eta_r'(x)\right)^{\frac{1}{2}}\kappa\varepsilon}\left\{
    \begin{array}{ll}
       A_1(y)  A_2'(x),&x<y;\\ 
       A_1'(x) A_2(y),&x>y,
    \end{array}
  \right.+\text{err}.
\end{equation*}
Here we remark that $\pa_x\pa_y^2G_A(x,y)$ is not continuous, and
\begin{equation}\label{eq-dx2dy2-green-Airy}
  \pa_x^2\pa_y^2G_A(x,y)=-i\frac{\delta_x(y)}{ \varepsilon}+i \frac{2\pi}{\left(\eta_r'(y)\right)^{\frac{1}{2}}\left(\eta_r'(x)\right)^{\frac{1}{2}}\kappa\varepsilon}\left\{
    \begin{array}{ll}
       A_1(y)  A_2''(x),&x<y;\\ 
       A_1''(x) A_2(y),&x>y,
    \end{array}
  \right.+\text{err}.
\end{equation}
Here the error terms consist of the terms when the $x$-derivative applied on $\frac{1}{\left(\eta_r'(x)\right)^{\frac{1}{2}}}$. And for these therms, the estimates is easier. 

First consider the boundary terms:

\textbf{Term $\pa_x\pa_y^2G_A(y_c+\frac{4}{\kappa},y)f(y_c+\frac{4}{\kappa})$.} We have
\begin{align*}
  &\left|\left(u_p(y)-\hat c\right)^2\pa_x\pa_y^2G_A(y_c+\frac{4}{\kappa},y)\right|\lesssim\kappa \left\langle \kappa\eta(y) \right\rangle^{\frac{7}{4}}  e^{-\frac{1}{C}\kappa\left|y_c+\frac{4}{\kappa}-y \right|\left( |\kappa\eta(y)|^{\frac{1}{2}}+1 \right) }.
\end{align*}
If $|y-y_c|\le \frac{8}{\kappa}$, then we have $\left\langle \kappa\eta(y) \right\rangle\sim 1$. If $|y-y_c|> \frac{8}{\kappa}$, then the exponential decay $e^{-\frac{1}{C}\kappa\left|y_c+\frac{4}{\kappa}-y \right|\left( |\kappa\eta(y)|^{\frac{1}{2}}+1 \right) }$ will absorb the polynomial growth. It follows that
\begin{align*}
  &\left|\left(u_p(y)-\hat c\right)^2\pa_x\pa_y^2G_A(y_c+\frac{4}{\kappa},y)f(y_c+\frac{4}{\kappa})\right|\lesssim \kappa^2 \left\|\left(u_p-\hat c\right)f\right\|_{L^\infty}.
\end{align*}

\textbf{Term $\pa_x\pa_y^2G_A(y_c-\frac{4}{\kappa},y)f(y_c-\frac{4}{\kappa})$.} The estimates it the same to $\pa_x\pa_y^2G_A(y_c+\frac{4}{\kappa},y)f(y_c+\frac{4}{\kappa})$.
 
\textbf{Term $\pa_y^2G_A(0,y)f'(0)$.} We have
\begin{align*}
  &\left|\left(u_p(y)-\hat c\right)^2 \pa_y^2G_A(0,y)\right|\lesssim\kappa^{-\frac{1}{4}} \left\langle \kappa\eta(y) \right\rangle^{\frac{7}{4}}  e^{-\frac{1}{C}\left|\kappa\eta_r(0)-\kappa\eta_r(y)  \right|\left( |\kappa\eta(y)|^{\frac{1}{2}}+|\kappa\eta(0)|^{\frac{1}{2}} \right) }.
\end{align*}
Therefore, we deduce that
\begin{align*}
  \left|\left(u_p(y)-\hat c\right)^2 \pa_y^2G_A(0,y)f'(0)\right|\lesssim \kappa^{\frac{3}{2}} \left\|\left(u_p-\hat c\right)^2f'\right\|_{L^\infty}.
\end{align*}

\textbf{Term $\pa_y^2G_A(-1,y)f'(-1)$.}  We have
\begin{align*}
  &\left|\left(u_p(y)-\hat c\right)^2 \pa_y^2G_A(-1,y)\right|\lesssim\left\langle \kappa c_r \right\rangle^{-\frac{1}{4}} \left\langle \kappa\eta(y) \right\rangle^{\frac{7}{4}}  e^{-\frac{1}{C}\left|\kappa\eta_r(-1)-\kappa\eta_r(y)  \right|\left( |\kappa\eta(y)|^{\frac{1}{2}}+|\kappa\eta(-1)|^{\frac{1}{2}} \right) }.
\end{align*}
Therefore, we deduce that
\begin{align*}
  \left|\left(u_p(y)-\hat c\right)^2 \pa_y^2G_A(-1,y)f'(-1)\right|\lesssim  \left\langle \kappa c_r \right\rangle^{\frac{3}{2}}\frac{1}{c_r^2} \left|u_p(-1)-\hat c\right|^2 \left|f'(-1)\right|\lesssim \frac{\kappa^2 }{\left\langle \kappa c_r \right\rangle^{\frac{1}{2}}}\left\|\left(u_p-\hat c\right)^2f'\right\|_{L^\infty}.
\end{align*}

Then we study $\left(u_p(y)-\hat c\right)^2\int_{[-1,0]\setminus [y_c-\frac{4}{\kappa},y_c+\frac{4}{\kappa}]}\pa_x\pa_y^2G_A(x,y)f'(x)dx$. It is to estimate 
\begin{align*}
  \int_{[-1,0]\setminus [y_c-\frac{4}{\kappa},y_c+\frac{4}{\kappa}]}\bigg|\kappa \left\langle \kappa\eta(y) \right\rangle^{\frac{7}{4}}\left\langle \kappa\eta(x) \right\rangle^{\frac{1}{4}}  &e^{-\frac{1}{C}\left|\kappa\eta_r(x)-\kappa\eta_r(y)  \right|\left( |\kappa\eta(y)|^{\frac{1}{2}}+|\kappa\eta(x)|^{\frac{1}{2}} \right) }\\
  &\qquad \qquad\cdot\frac{1}{\left(u_p(x)-\hat c\right)^2}\frac{\left(u_p(x)-\hat c\right)^2f'(x)}{\eta_r'(x)}\bigg|dx.
\end{align*}
We denote
\begin{align*}
  I_{GMS,2,1}(x,y) = \kappa \left\langle \kappa\eta(y) \right\rangle^{\frac{7}{4}}\left\langle \kappa\eta(x) \right\rangle^{\frac{1}{4}}  &e^{-\frac{1}{C}\left|\kappa\eta_r(x)-\kappa\eta_r(y)  \right|\left( |\kappa\eta(y)|^{\frac{1}{2}}+|\kappa\eta(x)|^{\frac{1}{2}} \right) }\frac{1}{\left(u_p(x)-\hat c\right)^2},
\end{align*}
and divided the estimate to different cases. Note that there is no need to establish estimate for Case 2.3 $\left(\frac{2}{\kappa}\le|y-y_c|\le \frac{1}{2}\text{ and }\left|x-y_c\right|\le \frac{1}{\kappa}\right)$ and Case 3.2 $\left(|y-y_c|\le \frac{2}{\kappa}\text{ and }\left|x-y_c\right|\le \frac{4}{\kappa}\right)$, and all other estimates are similar to the one of \eqref{eq-est-ASS-0}. We omit the details.

Next, we study $\left(u_p(y)-\hat c\right)^2\int_{[y_c-\frac{4}{\kappa},y_c+\frac{4}{\kappa}]}\pa_x^2\pa_y^2G_A(x,y)f(x)dx$. From \eqref{eq-dx2dy2-green-Airy}, we have
\begin{align*}
  &\left|\left(u_p(y)-\hat c\right)^2\int_{[y_c-\frac{4}{\kappa},y_c+\frac{4}{\kappa}]}\pa_x^2\pa_y^2G_A(x,y)f(x)dx\right|\\
  \lesssim& \left|\frac{1}{\varepsilon} \left(u_p(y)-\hat c\right)^2f(y)\chi_{y\in [y_c-\frac{4}{\kappa},y_c+\frac{4}{\kappa}]}\right| + \int_{[y_c-\frac{4}{\kappa},y_c+\frac{4}{\kappa}]}\left|I_{GMS,2,2}(x,y)\right| dx  \left\|\left(u_p-\hat c\right)f\right\|_{L^\infty},
\end{align*}
where
\begin{align*}
  I_{GMS,2,2}(x,y) = \kappa^2 \left\langle \kappa\eta(y) \right\rangle^{\frac{7}{4}}\left\langle \kappa\eta(x) \right\rangle^{\frac{3}{4}}  &e^{-\frac{1}{C}\left|\kappa\eta_r(x)-\kappa\eta_r(y)  \right|\left( |\kappa\eta(y)|^{\frac{1}{2}}+|\kappa\eta(x)|^{\frac{1}{2}} \right) }\frac{1}{\left(u_p(x)-\hat c\right)}.
\end{align*}

It is clear that
\begin{align*}
  \left|\frac{1}{\varepsilon} \left(u_p(y)-\hat c\right)^2f(y)\chi_{y\in [y_c-\frac{4}{\kappa},y_c+\frac{4}{\kappa}]}\right|\lesssim \kappa^2\left\|\left(u_p-\hat c\right)f\right\|_{L^\infty}.
\end{align*}

For the term related to $I_{GMS,2,2}(x,y)$. If $\left|y-y_c\right|\le \frac{6}{\kappa}$, then $\left\langle \kappa\eta(y) \right\rangle\sim\left\langle \kappa\eta(x) \right\rangle\sim1$; if $\left|y-y_c\right|> \frac{6}{\kappa}$, then the exponential decay will absorb $\left\langle \kappa\eta(y) \right\rangle^{\frac{7}{4}}$. Then we have
\begin{align*}
  \int_{[y_c-\frac{4}{\kappa},y_c+\frac{4}{\kappa}]}\left|I_{GMS,2,2}(x,y)\right| dx\lesssim\int_{[y_c-\frac{4}{\kappa},y_c+\frac{4}{\kappa}]}\frac{\kappa^2}{\left|u_p(x)-\hat c\right|} dx \lesssim \kappa^2|\ln c_0|.
\end{align*}
 
Combining the above estimates, we derive \eqref{eq-est-ASS-2}.

Next, we study $\left(u_p-\hat c\right)^2\pa_y^3AirySolver_m(f'')(y)$.
\begin{align*}
  \pa_y^3AirySolver_m(f'')(y)=&-\int^0_{-1}\pa_x\pa_y^3G_A(x,y)f'(x)dx+\pa_y^3 G_A(0,y)f'(0)-\pa_y^3 G_A(-1,y)f'(-1).
\end{align*}
Here we need to remark that
\begin{align*}
  &\pa_y^3G_A(x,y)\\
  =&   i \frac{2\pi}{\left(\eta_r'(y)\right)^{\frac{1}{2}}\left(\eta_r'(x)\right)^{\frac{1}{2}}\kappa\varepsilon}\left\{
    \begin{array}{ll}
       A_1'(y)  A_2(x),&x<y;\\ 
       A_1(x) A_2'(y),&x>y,
    \end{array}
  \right. +i \frac{2\pi \pa_y \left(\left(\eta_r'(y)\right)^{-\frac{1}{2}}\right)}{\left(\eta_r'(x)\right)^{\frac{1}{2}}\kappa\varepsilon}\left\{
    \begin{array}{ll}
       A_1(y)  A_2(x),&x<y;\\ 
       A_1(x) A_2(y),&x>y,
    \end{array}
  \right.
\end{align*}
which is not a continuous function. Therefore, 
\begin{align*}
  &\pa_x\pa_y^3G_A(x,y)\\
  =&i\frac{\delta_x(y)}{ \varepsilon}+ i \frac{2\pi}{\left(\eta_r'(y)\right)^{\frac{1}{2}}\left(\eta_r'(x)\right)^{\frac{1}{2}}\kappa\varepsilon}\left\{
    \begin{array}{ll}
       A_1'(y) A_2'(x),&x<y;\\ 
       A_1'(x) A_2'(y),&x>y,
    \end{array}
  \right.\\
  &+i \frac{2\pi \pa_x \left(\left(\eta_r'(x)\right)^{-\frac{1}{2}}\right)}{ \left(\eta_r'(x)\right)^{\frac{1}{2}}\kappa\varepsilon}\left\{
    \begin{array}{ll}
       A_1'(y)  A_2(x),&x<y;\\ 
       A_1(x) A_2'(y),&x>y,
    \end{array}
  \right. +i \frac{2\pi \pa_y \left(\left(\eta_r'(y)\right)^{-\frac{1}{2}}\right)}{\left(\eta_r'(x)\right)^{\frac{1}{2}}\kappa\varepsilon}\left\{
    \begin{array}{ll}
       A_1(y)  A_2'(x),&x<y;\\ 
       A_1'(x) A_2(y),&x>y,
    \end{array}
  \right.\\
  &+i \frac{2\pi \pa_y \left(\left(\eta_r'(y)\right)^{-\frac{1}{2}}\right) \pa_x \left(\left(\eta_r'(x)\right)^{-\frac{1}{2}}\right) }{ \kappa\varepsilon}\left\{
    \begin{array}{ll}
       A_1(y)  A_2(x),&x<y;\\ 
       A_1(x) A_2(y),&x>y.
    \end{array}
  \right.
\end{align*}
We note that, $\partial_x\partial_y^3 G_A(x,y)$ can be decomposed into the Dirac measure term $i\,\frac{\delta_x(y)}{\varepsilon}$ and a remainder which is a continuous function. This means that we could do extra integration by parts in $x$. We have
\begin{align*}
  &\pa_y^3AirySolver_m(f'')(y)\\
  =&-\int_{[-1,0]\setminus [y_c-\frac{4}{\kappa},y_c+\frac{4}{\kappa}]}\pa_x\pa_y^3G_A(x,y)f'(x)dx+\int_{[y_c-\frac{4}{\kappa},y_c+\frac{4}{\kappa}]}\pa_x^2\pa_y^3G_A(x,y)f(x)dx\\
  &-\pa_x\pa_y^3G_A(y_c+\frac{4}{\kappa},y)f(y_c+\frac{4}{\kappa})+\pa_x\pa_y^3G_A(y_c-\frac{4}{\kappa},y)f(y_c-\frac{4}{\kappa})\\
  &+\pa_y^3 G_A(0,y)f'(0)-\pa_y^3 G_A(-1,y)f'(-1)+i\frac{f'(y)}{ \varepsilon} .
\end{align*}
Here, by abuse of notation, $\pa_x\pa_y^3G_A(x,y)$ and $\pa_x^2\pa_y^3G_A(x,y)$ denote only the regular (function) part, excluding the measure-valued contribution.

For the Dirac measure part, it is clear that
\begin{align*}
  \left|\left(u_p(y)-\hat c\right)^2 \frac{f'(y) }{ \varepsilon} \right|\lesssim \varepsilon^{-1} \left\|\left(u_p-\hat c\right)^2f'\right\|_{L^\infty}.
\end{align*}

For the boundary terms, by using the same technique to \eqref{eq-est-ASS-2}, we can show that all the terms have upper bound
\begin{align*}
&\left|\left(u_p(y)-\hat c\right)^2\right| \Bigg(\left|\pa_x\pa_y^3G_A(y_c+\frac{4}{\kappa},y)f(y_c+\frac{4}{\kappa})\right|+\left|\pa_x\pa_y^3G_A(y_c-\frac{4}{\kappa},y)f(y_c-\frac{4}{\kappa})\right| \\
&\qquad \qquad\qquad\qquad\qquad\qquad\qquad\qquad+\left|\pa_y^3 G_A(0,y)f'(0)\right|+\left|\pa_y^3 G_A(-1,y)f'(-1)\right|\Bigg)\\
  \lesssim&\varepsilon^{-1} \left(\left\|\left(u_p-\hat c\right)f\right\|_{L^\infty}+\left\|\left(u_p-\hat c\right)^2f'\right\|_{L^\infty}\right).
\end{align*}

For the integration terms, by using the same technique to \eqref{eq-est-ASS-2}, we have
\begin{align*}
  \left|\left(u_p(y)-\hat c\right)^2\int_{[-1,0]\setminus [y_c-\frac{4}{\kappa},y_c+\frac{4}{\kappa}]}\pa_x\pa_y^3G_A(x,y)f'(x)dx\right|\lesssim \varepsilon^{-1} \left\|\left(u_p-\hat c\right)^2f'\right\|_{L^\infty},\\
  \left|\left(u_p(y)-\hat c\right)^2\int_{[y_c-\frac{4}{\kappa},y_c+\frac{4}{\kappa}]}\pa_x^2\pa_y^3G_A(x,y)f(x)dx\right|\lesssim \varepsilon^{-1} |\ln c_0|\left\|\left(u_p-\hat c\right)f\right\|_{L^\infty}.
\end{align*}

Combining the above estimates, we derive \eqref{eq-est-ASS-3}.

Let $\omega_{app}=\pa_y^2AirySolver_m(f'')(y)$, we have
\begin{align*}
  &i\varepsilon\pa_y^2\omega_{app}(y)+\left(u_p(y)-c-2i\varepsilon\alpha^2\right)\omega_{app}(y)\\ 
  =&\pa_y^2f(y)+ i\varepsilon \left(\pa_y^2\left(\eta_r'(y)\right)^{-\frac{1}{2}}\right)\left(\eta_r'(y)\right)^{\frac{1}{2}}\omega_{app}(y) +i \left(\left(\eta_r'(y)\right)^2-1\right) c_i \omega_{app}(y)-2i\varepsilon\alpha^2\omega_{app}(y).
\end{align*}
Then, recalling the definition of $\mathcal W_0(y)$, we deduce that
\begin{align*}
  \left\|\mathcal W_0(y)\pa_y^2\omega_{app}\right\|_{L^\infty}\lesssim&\left\|\left(u_p-\hat c\right)^2\omega_{app}\right\|_{L^\infty}+ \varepsilon\left\|\left(u_p-\hat c\right)\omega_{app}\right\|_{L^\infty}+\varepsilon^{-\frac{2}{3}} \left\|\left(u_p-\hat c\right)^3f''\right\|_{L^\infty}. 
\end{align*} 
Recall that $c_0\sim \varepsilon^{\frac{1}{2}}$. Then \eqref{eq-est-ASS-4} follows from \eqref{eq-est-ASS-2}, and \eqref{eq-est-ASS-5} follows from \eqref{eq-est-ASS-0}-\eqref{eq-est-ASS-4}.

\end{proof}

For regular differentiated source terms, we also have the following improved estimates.
\begin{lemma}\label{lem-est-ASS-ns}
  For all $y\in[-1,0]$, it holds that
\begin{align}\label{eq-est-ASS-0-ns}
  \left|AirySolver_m(f'')(y)\right| \lesssim \varepsilon^{-\frac{2}{3}}  \left(\left\| f\right\|_{L^\infty}+\left\|\left(u_p-\hat c\right)f'\right\|_{L^\infty}\right),
\end{align}
\begin{align}\label{eq-est-ASS-1-ns}
  \left|\left(u_p-\hat c\right)\pa_yAirySolver_m(f'')(y)\right| \lesssim \varepsilon^{-\frac{2}{3}}  \left(\left\| f\right\|_{L^\infty}+\left\|\left(u_p-\hat c\right)f'\right\|_{L^\infty}\right).
\end{align}
\end{lemma}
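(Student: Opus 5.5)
The plan is to follow the proof of Lemma \ref{lem-est-ASS} line by line, replacing the weighted norms $\|(u_p-\hat c)f\|_{L^\infty}$ and $\|(u_p-\hat c)^2f'\|_{L^\infty}$ by $\|f\|_{L^\infty}$ and $\|(u_p-\hat c)f'\|_{L^\infty}$. This mirrors how Lemma \ref{lem-est-AS-ns} improves Lemma \ref{lem-est-AS}. The gain in each term comes from one of two sources.
\begin{itemize}
\item The critical-layer factor $\frac{1}{u_p(x)-\hat c}$, which produced $|\ln c_0|$, is no longer present.
\item In the boundary terms, one fewer power of $u_p-\hat c$ has to be recovered.
\end{itemize}
As before, I integrate by parts twice:
\begin{align*}
  AirySolver_m(f'')(y)=&\int^0_{-1}\pa_x^2G_A(x,y)f(x)dx-\pa_xG_A(0,y)f(0)+\pa_xG_A(-1,y)f(-1)\\
  &+G_A(0,y)f'(0)-G_A(-1,y)f'(-1).
\end{align*}
I then estimate the four boundary terms and the integral separately.

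\textbf{Boundary terms.} At $x=0$ we have $|u_p(0)-\hat c|\sim1$, so the bounds $|G_A(0,y)|\lesssim1$ and $|\pa_xG_A(0,y)|\lesssim\kappa^{3/2}$ already proved give contributions $\lesssim \varepsilon^{-1/2}(\|f\|_{L^\infty}+\|(u_p-\hat c)f'\|_{L^\infty})$. At $x=-1$ the earlier bounds are $|G_A(-1,y)|\lesssim1$ and $|\pa_xG_A(-1,y)|\lesssim\kappa\langle\kappa c_r\rangle^{1/2}$. These give
\begin{align*}
|G_A(-1,y)f'(-1)|\lesssim c_r^{-1}\|(u_p-\hat c)f'\|_{L^\infty},\qquad |\pa_xG_A(-1,y)f(-1)|\lesssim \kappa\langle\kappa c_r\rangle^{1/2}\|f\|_{L^\infty}.
\end{align*}
Both are $\lesssim\kappa^{2}\sim\varepsilon^{-2/3}$ because $c_r\gtrsim\varepsilon^{1/3}$ and $c_r\lesssim\varepsilon^{1/5}$ in $\mathbb H$. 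This point needs a little care: one should check $\kappa^{3/2}c_r^{1/2}\lesssim\kappa^2$, which holds since $c_r\ll1$.

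\textbf{Interior integral.} For the nonlocal part, Lemma \ref{lem-est-a1-a2} gives $|a_2''|+|a_1'|\lesssim\kappa\langle\kappa\eta\rangle^{-2}$ up to exponentially localized boundary pieces. Hence
\[
\frac{1}{\kappa\varepsilon}\int|a_2''|+|a_1'|\,dx\lesssim \varepsilon^{-1}\int\langle\kappa\eta(x)\rangle^{-2}dx\lesssim\varepsilon^{-2/3},
\]
with no logarithm. For $a_1''(x)(x-y_c)$ and $a_1''(x)(y-y_c)$, I use $|a_1''|\lesssim\kappa\langle\kappa\eta\rangle^{-3}$ plus boundary-localized pieces, together with $|y-y_c|\lesssim1$. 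This gives
\[
\varepsilon^{-1}\kappa^{-1}\cdot\kappa\int\langle\kappa\eta\rangle^{-3}dx\lesssim\varepsilon^{-2/3}.
\]
The key improvement over Lemma \ref{lem-est-ASS} is that we no longer divide by $u_p-\hat c$, so the $\varepsilon^{-1}c_r|\ln c_0|$ loss disappears. The boundary-localized pieces $a_{1,B}$ and $a_{1,R}$ carry exponential factors, and integrating them costs at most $\kappa^{-1}$. For the localized part $G_{A,M}$, the kernel bound $\frac{1}{\varepsilon\kappa}\langle\kappa\eta(x)\rangle^{3/4}\langle\kappa\eta(y)\rangle^{-5/4}e^{-\cdots}$ integrates, by the same case split as before, to $\lesssim\varepsilon^{-2/3}$ without the $\frac{1}{u_p-\hat c}$ factor. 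This yields \eqref{eq-est-ASS-0-ns}.

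\textbf{The derivative estimate.} For \eqref{eq-est-ASS-1-ns} I repeat the argument with $\pa_y$ applied. The weight $(u_p(y)-\hat c)$ absorbs the growth of $A_1(1,y)$, $A_2(1,y)$ exactly as in \eqref{eq-est-ASS-1}. The only nonlocal term is $a_1''(x)$, and it is handled as above with $|u_p(y)-\hat c|\lesssim1$. The main obstacle I expect is the near-wall regime $x$ close to $-1$. There the boundary pieces $a_{1,B}$ and $a_{2,B}$ and the bound on $\pa_x G_A(-1,y)$ involve $\langle\kappa c_r\rangle$, so one must verify that they integrate to $O(\kappa^2)$. I would handle this using the exponential localization scale $\kappa^{-1}\langle\kappa c_r\rangle^{-1/2}$ from Lemma \ref{lem-Airy-Airy-est}, which compensates the extra $\langle\kappa c_r\rangle^{1/2}$ growth.
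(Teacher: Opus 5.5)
Your proposal is correct and is essentially the paper's argument. The paper also reruns the integration-by-parts proof of Lemma \ref{lem-est-ASS} without the critical-layer weight, identifies $\frac{1}{\kappa\varepsilon}a_1''(x)(y-y_c)$ as the only term that previously lost, and handles its boundary piece $a_{1,B}''$ by exponential localization near $x=-1$ (Lemma \ref{lem-Airy-Airy-est}), splitting at $x=y_c-\frac{y_c+1}{2}$ so that $\langle\kappa\eta(x)\rangle^{3/4}\lesssim\langle\kappa\eta(-1)\rangle^{3/4}$ on the left and the exponential is negligible on the right. A minor advantage of your uniform treatment is that it covers $c_r\sim\varepsilon^{1/3}$ with no $|\ln c_0|$ loss, whereas the paper appeals to \eqref{eq-est-ASS-0}--\eqref{eq-est-ASS-1} in that regime, and those estimates literally carry such a factor.
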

\begin{proof}
  Here, we only consider the case $c_r\gg \varepsilon^{\frac{1}{3}}$, as for the case $c_r\sim \varepsilon^{\frac{1}{3}}$ the estimate \eqref{eq-est-ASS-0} and \eqref{eq-est-ASS-1} are good enough. From the proof of \eqref{eq-est-ASS-0}, we can see that the only troublesome term is $\frac{1}{\kappa\varepsilon}a_1''(x)(y-y_c)$. Here the term with $a_{1,M}''$, $a_{1,SecM}''$, and $a_{1,B}''$ can be treated in the same way as Lemma \ref{lem-est-AS-ns}. For $a_{1,B}''$, by using Lemma \ref{lem-est-a1-a2}, we have
  \begin{align*}
    &\frac{1}{\kappa\varepsilon}\int^0_{y}\left| a_{1,B}''(x)(y-y_c) \right| dx \\
      \lesssim& \frac{|y-y_c|}{ \varepsilon} \int^0_{y}\frac{e^{-\frac{1}{C}|\kappa\eta_r(x)-\kappa\eta_r(-1)|(|\kappa\eta(x)|^\frac12+|\kappa\eta(-1)|^\frac12)}}{ \left\langle \kappa \eta(-1) \right\rangle^{\frac{3}{4}}\left\langle \kappa \eta(x) \right\rangle^{-\frac{3}{4}}}+\frac{e^{-\frac{1}{C}|\kappa\eta_r(x)-\kappa\eta_r(0)|(|\kappa\eta(x)|^\frac12+|\kappa\eta(0)|^\frac12)}}{ \left\langle \kappa \eta(0) \right\rangle^{\frac{3}{4}}\left\langle \kappa \eta(x) \right\rangle^{-\frac{3}{4}}}dx.
  \end{align*}
  For the first term, we split the estimate to two region. For $-1\le x\le y_c-\frac{y_c+1}{2}$, we have
  \begin{align*}
    \frac{\left\langle \kappa \eta(x) \right\rangle^{\frac{3}{4}}}{\left\langle \kappa \eta(-1) \right\rangle^{\frac{3}{4}}}\lesssim 1,
  \end{align*}
  and for $y_c-\frac{y_c+1}{2}\le x\le 0$, $e^{-\frac{1}{C}|\kappa\eta_r(x)-\kappa\eta_r(-1)|(|\kappa\eta(x)|^\frac12+|\kappa\eta(-1)|^\frac12)}$ is extremely small. For the second term, similar properties hold for $x$ close and away from $0$. 

  The estimates \eqref{eq-est-ASS-0-ns} and \eqref{eq-est-ASS-1-ns} follow directly.
\end{proof}

\subsection{Construction of the exact AirySolver}
In this subsection, we use an iteration argument to construct a solution operator for the non-homogeneous Airy equation \eqref{eq-Airy}.
 
\begin{lemma}\label{lem-real-Airy-Solver}
  For $f(y)$ such that $\left\|\left(u_p-\hat c\right)f\right\|_{L^\infty}< +\infty$, there exists a solution operator $AirySolver$ such that $\phi=AirySolver(f)$ solves \eqref{eq-Airy}. Moreover, $AirySolver(f)(y)$ satisfies the same estimates in Lemma \ref{lem-est-AS}.

  Moreover, it holds that
  \begin{align}\label{eq-boundary-Airy-Solver}
    \pa_yAirySolver(f)(0)=0.
  \end{align}
\end{lemma}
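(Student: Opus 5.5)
The plan is to build the exact solver from $AirySolver_m$ by a Neumann-series iteration that removes the defect in \eqref{eq-phiapp4}. Write
\[
  \mathcal R(\phi)= i\varepsilon \left(\pa_y^2\left(\eta_r'\right)^{-\frac{1}{2}}\right)\left(\eta_r'\right)^{\frac{1}{2}}\pa_y^2\phi+i \left(\left(\eta_r'\right)^2-1\right)c_i\pa_y^2\phi-2i\varepsilon\alpha^2\pa_y^2\phi .
\]
By \eqref{eq-phiapp4}, $Airy_4\left(AirySolver_m(f)\right)=f+\mathcal R\left(AirySolver_m(f)\right)$. Set $\phi_0=AirySolver_m(f)$, $f_0=f$, $f_{j}=-\mathcal R(\phi_{j-1})$ and $\phi_j=AirySolver_m(f_j)$. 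Define $AirySolver(f)=\sum_{j\ge0}\phi_j$. Then the partial sums satisfy
\[
  Airy_4\Big(\sum_{j=0}^N\phi_j\Big)=f+\mathcal R(\phi_N),
\]
so it suffices to show that the weighted norm $\left\|(u_p-\hat c)f_j\right\|_{L^\infty}$ contracts geometrically.

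The key step is the contraction estimate
\[
  \left\|(u_p-\hat c)\mathcal R(\phi)\right\|_{L^\infty}\ll\frac{1}{|\ln c_0|}\left\|(u_p-\hat c)f\right\|_{L^\infty}
\]
for $\phi=AirySolver_m(f)$. I would estimate the three pieces of $\mathcal R$ separately, always using \eqref{eq-est-AS-2}, which controls $(u_p-\hat c)^2\pa_y^2\phi$ by $|\ln c_0|\,\|(u_p-\hat c)f\|_{L^\infty}$ without the bad factor $1+\kappa c_r$.

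\textbf{Third piece.} Since $(u_p-\hat c)\varepsilon\alpha^2\pa_y^2\phi=\varepsilon\alpha^2 (u_p-\hat c)^{-1}\cdot(u_p-\hat c)^2\pa_y^2\phi$ and $|u_p-\hat c|\ge \hat c_i\gtrsim\varepsilon^{\frac12}$, this piece is bounded by $\varepsilon^{\frac12}\alpha^2|\ln c_0|$, which is tiny.

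\textbf{First piece.} The same argument gives the bound $\varepsilon^{\frac12}|\ln c_0|$, because $\pa_y^2(\eta_r')^{-\frac12}$ is bounded by Lemma \ref{lem-Langer}.

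\textbf{Second piece.} Here the factor $c_i\left((\eta_r')^2-1\right)$ must be compared with $u_p-\hat c$. By \eqref{eq-Langer-prop1}–\eqref{eq-Langer-prop2}, $(\eta_r')^2-1=O(|y-y_c|)$, and $|y-y_c|\lesssim |u_p-\hat c|$. Hence
\[
  \left|(u_p-\hat c)\,c_i\left((\eta_r')^2-1\right)\pa_y^2\phi\right|\lesssim |c_i|\,\left|(u_p-\hat c)^2\pa_y^2\phi\right| ,
\]
and in $\mathbb H$ we have $|c_i|\lesssim\varepsilon^{\frac13}$, so this piece is bounded by $\varepsilon^{\frac13}|\ln c_0|$.

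Combining the three pieces,
\[
  \|(u_p-\hat c)f_{j}\|_{L^\infty}\lesssim(\varepsilon^{\frac13}|\ln\varepsilon|)^j\|(u_p-\hat c)f\|_{L^\infty}.
\]
The series therefore converges in the norms appearing in Lemma \ref{lem-est-AS}, the remainder $\mathcal R(\phi_N)\to0$, and the estimates \eqref{eq-est-AS-0}–\eqref{eq-est-AS-4} transfer to $AirySolver(f)$ with the same constants up to a factor $1+O(\varepsilon^{\frac13}|\ln\varepsilon|)$.

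The boundary identity \eqref{eq-boundary-Airy-Solver} follows termwise from \eqref{eq-boundary-modi-Airy-Solver}, since each $\phi_j$ lies in the range of $AirySolver_m$. Uniform convergence of $\pa_y\phi_j$ (from \eqref{eq-est-AS-1}, noting that $u_p-\hat c\sim1$ near $y=0$) then allows evaluation at $y=0$.

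The main obstacle is the $c_i$-term when $|y-y_c|$ is not small, for instance near $y=0$. There I rely only on $|(\eta_r')^2-1|\lesssim1\lesssim|u_p-\hat c|$, which the argument above already covers.
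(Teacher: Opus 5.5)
Your proposal is correct and follows essentially the same route as the paper. The paper also iterates $S_{A,j}(f)=-AirySolver_m(\cdot)$ on the defect in \eqref{eq-phiapp4}, bounds the $\varepsilon$-terms by $\varepsilon^{\frac12}|\ln c_0|$ using \eqref{eq-est-AS-2} and $\hat c_i\gtrsim\varepsilon^{\frac12}$, bounds the $c_i$-term by $|c_i|\,\|(u_p-\hat c)^2\pa_y^2\phi\|_{L^\infty}$, and gets \eqref{eq-boundary-Airy-Solver} termwise from \eqref{eq-boundary-modi-Airy-Solver}. Your write-up is somewhat more explicit than the paper's on two points: why $(\eta_r')^2-1=O(|y-y_c|)$ makes the $c_i$-term admissible, and how one passes to the limit in the equation and at $y=0$.
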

\begin{proof}
  In section \ref{sec-modified-airysolver}, we define a modified Airy-solver which satisfies \eqref{eq-phiapp4}. Recalling \eqref{op-airy-4}, we have
\begin{align*}
  Airy_4 \left(AirySolver_m(f)\right)=&f(y)+i\varepsilon \left(\pa_y^2\left(\eta_r'(y)\right)^{-\frac{1}{2}}\right)\left(\eta_r'(y)\right)^{\frac{1}{2}}\pa_y^2AirySolver_m(f)(y)\\
  &+i \left(\left(\eta_r'(y)\right)^2-1\right)c_i\pa_y^2AirySolver_m(f)(y)-2i\varepsilon\alpha^2\pa_y^2AirySolver_m(f)(y).
\end{align*}
We define
\begin{align*}
  S_{A,0}(f)=&AirySolver_m(f),\\
  S_{A,j}(f)=&-AirySolver_m\left(i\varepsilon \left(\pa_y^2\left(\eta_r'(y)\right)^{-\frac{1}{2}}\right)\left(\eta_r'(y)\right)^{\frac{1}{2}}\pa_y^2S_{A,j-1}(f)(y)\right.\\
  &\qquad\qquad\qquad\left.+i \left(\left(\eta_r'(y)\right)^2-1\right)c_i\pa_y^2S_{A,j-1}(f)(y)-2i\varepsilon\alpha^2\pa_y^2S_{A,j-1}(f)(y)\right).
\end{align*}

Recalling that $c_0=\varepsilon^{\frac{1}{2}}$, by using Lemma \ref{lem-est-AS}, we have
\begin{align*}
  &\left\| \left(u_p-\hat c\right)\varepsilon\left(\pa_y^2\left(\eta_r'(y)\right)^{-\frac{1}{2}}\right)\left(\eta_r'(y)\right)^{\frac{1}{2}}\pa_y^2S_{A,0}(f)(y)\right\|_{L^\infty}+\left\|\left(u_p-\hat c\right)\varepsilon\alpha^2\pa_y^2S_{A,0}(f)(y)\right\|_{L^\infty}\\
  \lesssim& \varepsilon^{\frac{1}{2}}\left\|\left(u_p-\hat c\right)^2 \pa_y^2S_{A,0}(f)(y)\right\|_{L^\infty}\lesssim \varepsilon^{\frac{1}{2}}|\ln c_0|\left\|\left(u_p-\hat c\right)f\right\|_{L^\infty},
\end{align*}
and
\begin{align*}
  &\left\| \left(u_p-\hat c\right)\left(\left(\eta_r'(y)\right)^2-1\right)c_i\pa_y^2S_{A,0}(f)(y)\right\|_{L^\infty} \lesssim \left|c_i\right|\left\|\left(u_p-\hat c\right)^2 \pa_y^2S_{A,0}(f)(y)\right\|_{L^\infty} .
\end{align*}
Then by using  Lemma \ref{lem-est-AS} again, we deduce that
\begin{align*}
  \left\|S_{A,1}(f)(y)\right\|_{X_1}\lesssim \kappa c_r(\varepsilon^{\frac{1}{2}}+|c_i|)|\ln c_0|^2\left\|\left(u_p-\hat c\right)f\right\|_{L^\infty},
\end{align*}
and
\begin{align*}
  \left\|\left(u_p-\hat c\right)^2 \pa_y^2S_{A,1}(f)(y)\right\|_{L^\infty}\lesssim (\varepsilon^{\frac{1}{2}}+|c_i|)|\ln c_0|^2\left\|\left(u_p-\hat c\right)f\right\|_{L^\infty}.
\end{align*}
Therefore, by iteration, we have
\begin{align}\label{eq-est-SA1}
  \left\|\left(u_p-\hat c\right)^2 \pa_y^2S_{A,j}(f)(y)\right\|_{L^\infty}\lesssim (\varepsilon^{\frac{1}{2}}+|c_i|)^j|\ln c_0|^{j+1}\left\|\left(u_p-\hat c\right)f\right\|_{L^\infty}.
\end{align}

We define
\begin{align*}
  \phi_{err}(y)=\sum^{\infty}_{j=1} S_{A,j}(f)(y),\quad  AirySolver(f)(y)= S_{A,0}(f)(y)+\phi_{err}(y).
\end{align*}
It follows from \eqref{eq-est-SA1} that
\begin{align}\label{eq-real-Airy-Solver-err}
  \left\|\phi_{err}(y)\right\|_{X_1}\lesssim \kappa c_r(\varepsilon^{\frac{1}{2}}+|c_i|)|\ln c_0|^2\left\|\left(u_p-\hat c\right)f\right\|_{L^\infty},
\end{align}
and then $AirySolver(f)(y)$ satisfies the same estimates to $AirySolver_m(f)(y)$ in the sense of \eqref{eq-est-AS-0} and \eqref{eq-est-AS-1}. By using the same techniques, one can easily check that $AirySolver(f)(y)$ satisfies all the same estimates as those in Lemma \ref{lem-est-AS}.

The boundary value \eqref{eq-boundary-Airy-Solver} follows from the definition of $AirySolver(f)$ and \eqref{eq-boundary-modi-Airy-Solver}.
\end{proof}

\begin{lemma}\label{lem-real-Airy-Solver-S}
  For $f(y)$ such that $\left\|\left(u_p-\hat c\right)f\right\|_{L^\infty} +\left\|\left(u_p-\hat c\right)^2f'\right\|_{L^\infty}+\left\|\left(u_p-\hat c\right)^3f''\right\|_{L^\infty}< +\infty$, there exists a solution operator $AirySolver$ such that $\phi=AirySolver(f'')$ solves
\begin{align*}
  i\varepsilon\pa_y^4\phi(y)+\left(u_p-c-2i\varepsilon\alpha^2\right)\pa_y^2\phi(y)=f''(y),\quad y\in[-1,0].
\end{align*}
and satisfies the same estimates as those in Lemma \ref{lem-est-ASS}.

  Moreover, it holds that
  \begin{align}\label{eq-boundary-Airy-Solver-S}
    \pa_yAirySolver(f'')(0)=0.
  \end{align}
\end{lemma}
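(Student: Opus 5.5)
The plan is to repeat the iteration used in the proof of Lemma \ref{lem-real-Airy-Solver}, now starting from the differentiated-source estimates of Lemma \ref{lem-est-ASS} rather than those of Lemma \ref{lem-est-AS}. The first step is to set $S_{A,0}=AirySolver_m(f'')$. By \eqref{eq-phiapp4}, this function solves $Airy_4(S_{A,0})=f''+R(S_{A,0})$, where
\begin{align*}
R(\phi)=i\varepsilon \left(\pa_y^2\left(\eta_r'\right)^{-\frac{1}{2}}\right)\left(\eta_r'\right)^{\frac{1}{2}}\pa_y^2\phi+i\left(\left(\eta_r'\right)^2-1\right)c_i\pa_y^2\phi-2i\varepsilon\alpha^2\pa_y^2\phi .
\end{align*}
The remainder $R(S_{A,0})$ is not of derivative type. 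The second step is therefore to correct it with the non-differentiated solver: $S_{A,1}=-AirySolver(R(S_{A,0}))$, where $AirySolver$ is the exact operator already built in Lemma \ref{lem-real-Airy-Solver}. With $AirySolver(f'')=S_{A,0}+S_{A,1}$, we get $Airy_4(AirySolver(f''))=f''$ exactly, and no further iteration is needed.

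The remaining work is to show that $S_{A,1}$ obeys the bounds of Lemma \ref{lem-est-ASS}. By \eqref{eq-est-ASS-2}, with $c_0=\varepsilon^{\frac12}$ and $|c_i|\lesssim\varepsilon^{\frac13}$ in $\mathbb H$,
\begin{align*}
\left\|\left(u_p-\hat c\right)R(S_{A,0})\right\|_{L^\infty}\lesssim \left(\varepsilon+|c_i|\right)\left\|\left(u_p-\hat c\right)^2\pa_y^2S_{A,0}\right\|_{L^\infty}\lesssim \left(\varepsilon+|c_i|\right)\varepsilon^{-\frac23}|\ln c_0|\,N(f),
\end{align*}
where $N(f)$ denotes the bracketed norm of $f$ in \eqref{eq-est-ASS-0}. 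Here I use $|u_p-\hat c|\lesssim1$ to drop one weight from the $\varepsilon$-terms. Lemma \ref{lem-est-AS}, which also holds for the exact solver, then controls $S_{A,1}$ in the weighted norms \eqref{eq-est-AS-0}--\eqref{eq-est-AS-4}. This gives the same powers $\varepsilon^{-\frac23}$, $(1+\kappa c_r)$ as in \eqref{eq-est-ASS-0}--\eqref{eq-est-ASS-2}, multiplied by the small factor $(\varepsilon^{\frac12}+|c_i|)|\ln c_0|\ll1$. For the third derivative, \eqref{eq-est-AS-3} costs $\varepsilon^{-\frac12}$. Since $\varepsilon^{-\frac12}\cdot\varepsilon^{-\frac23}(\varepsilon^{\frac12}+|c_i|)\lesssim\varepsilon^{-1}$, this matches \eqref{eq-est-ASS-3}.

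The main obstacle is the fourth-derivative weight $\mathcal W_0$ in \eqref{eq-est-ASS-4}. The inherited bound \eqref{eq-est-AS-4} controls $(u_p-\hat c)\pa_y^4 S_{A,1}$ with loss $\varepsilon^{-1}$, and $\mathcal W_0\le\varepsilon|u_p-\hat c|$ absorbs exactly this loss. So the bound follows after multiplying by the small prefactor, but the argument must use the minimum in $\mathcal W_0$ and not the branch $\varepsilon^{\frac13}|u_p-\hat c|^3$. If the crude route fails for the $c_i$ term, I would instead bound $\pa_y^4$ directly from the equation for $\omega=\pa_y^2S_{A,1}$, as at the end of Lemma \ref{lem-est-ASS}.

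The boundary identity \eqref{eq-boundary-Airy-Solver-S} follows from \eqref{eq-boundary-modi-Airy-Solver} applied to $S_{A,0}$ and from \eqref{eq-boundary-Airy-Solver} applied to $S_{A,1}$.
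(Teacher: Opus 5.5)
Your proposal is correct and is essentially the paper's proof: write $AirySolver(f'')=AirySolver_m(f'')+\phi_{err}$, where $\phi_{err}$ is obtained by applying the exact solver of Lemma \ref{lem-real-Airy-Solver} to the non-derivative remainder, then bound $\phi_{err}$ using Lemma \ref{lem-est-ASS} together with the Lemma \ref{lem-est-AS} estimates. The boundary identity then follows from \eqref{eq-boundary-modi-Airy-Solver} and \eqref{eq-boundary-Airy-Solver}.

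One justification is backwards, though the conclusion survives. For the $\varepsilon$-terms of $R$, you must pass from $(u_p-\hat c)\pa_y^2S_{A,0}$ to $(u_p-\hat c)^2\pa_y^2S_{A,0}$, i.e.\ gain a weight rather than drop one. This requires the lower bound $|u_p-\hat c|\ge \hat c_i\gtrsim c_0=\varepsilon^{\frac12}$, not the upper bound $|u_p-\hat c|\lesssim 1$. It produces the factor $\varepsilon^{\frac12}$ rather than $\varepsilon$, which is exactly the paper's $\varepsilon^{\frac12-\frac23}$. Since you in fact use $(\varepsilon^{\frac12}+|c_i|)$ in every subsequent bound, your argument is otherwise unaffected.
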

\begin{proof}
  It holds that
  \begin{align*}
  &Airy_4 \left(AirySolver_m(f'')\right)\\
  =&f''(y)+i\varepsilon \left(\pa_y^2\left(\eta_r'(y)\right)^{-\frac{1}{2}}\right)\left(\eta_r'(y)\right)^{\frac{1}{2}}\pa_y^2AirySolver_m(f'')(y)\\
  &+i \left(\left(\eta_r'(y)\right)^2-1\right)c_i\pa_y^2AirySolver_m(f'')(y)-2i\varepsilon\alpha^2\pa_y^2AirySolver_m(f'')(y).
\end{align*}
Let 
\begin{align*}
  AirySolver(f'')(y)=AirySolver_m(f'')(y)+\phi_{err}(y),
\end{align*}
where $\phi_{err}(y)$ is constructed through $AirySolver$ that given in Lemma \ref{lem-real-Airy-Solver}, which satisfies
\begin{align*}
  Airy_4 \left(\phi_{err}\right)=&-i\varepsilon \left(\pa_y^2\left(\eta_r'(y)\right)^{-\frac{1}{2}}\right)\left(\eta_r'(y)\right)^{\frac{1}{2}}\pa_y^2AirySolver_m(f'')(y)\\
  &-i \left(\left(\eta_r'(y)\right)^2-1\right)c_i\pa_y^2AirySolver_m(f'')(y)+2i\varepsilon\alpha^2\pa_y^2AirySolver_m(f'')(y).
\end{align*}
From Lemma \ref{lem-est-ASS}, we have
\begin{align*}
  &\left\| \left(u_p-\hat c\right)\varepsilon\left(\pa_y^2\left(\eta_r'(y)\right)^{-\frac{1}{2}}\right)\left(\eta_r'(y)\right)^{\frac{1}{2}}\pa_y^2AirySolver_m(f'')(y)\right\|_{L^\infty}\\
  &+\left\|\left(u_p-\hat c\right)\varepsilon\alpha^2\pa_y^2AirySolver_m(f'')(y)\right\|_{L^\infty}\\
  \lesssim& \varepsilon^{\frac{1}{2}-\frac{2}{3}}|\ln c_0|\left(\left\|\left(u_p-\hat c\right)f\right\|_{L^\infty}+\left\|\left(u_p-\hat c\right)^2f'\right\|_{L^\infty}\right),
\end{align*}
and
\begin{align*}
  &\left\| \left(u_p-\hat c\right)\left(\left(\eta_r'(y)\right)^2-1\right)c_i\pa_y^2AirySolver_m(f'')(y)\right\|_{L^\infty}\\
  \lesssim& |c_i| \varepsilon^{-\frac{2}{3}}|\ln c_0|\left(\left\|\left(u_p-\hat c\right)f\right\|_{L^\infty}+\left\|\left(u_p-\hat c\right)^2f'\right\|_{L^\infty}\right).
\end{align*}

Therefore, by Lemma \ref{lem-real-Airy-Solver},  $\phi_{err}(y)$ has better estimates than $AirySolver_m(f'')$. This complete the proof of this lemma.

\end{proof}

\section{The Rayleigh--Airy iteration}
In this section, we describe the Rayleigh--Airy iteration scheme for constructing solutions to the Orr--Sommerfeld equation \eqref{eq-Orr-Sommerfeld}. Using the estimates developed in the previous sections, we first construct a solution operator for the inhomogeneous Orr--Sommerfeld equation. In the next section, we apply the same strategy to construct two slow modes.
\subsection{Iteration strategy}
Recall that $\hat c=c+ic_0$, where $c=c_r+ic_i$. Here $c_0=\varepsilon^{\frac{1}{2}}$ and $c_i\ge-\frac{1}{2}c_0$.

Recall the Orr-Sommerfeld operator
\begin{align*}
  Orr(\phi)=i\varepsilon\left(\pa_y^2-\alpha^2\right)^2\phi+\left(u_p-c\right)\left(\pa_y^2-\alpha^2\right)\phi-u_p''\phi.
\end{align*}
We use two decompositions of the Orr--Sommerfeld operator:
\begin{align}\label{eq-OS-decom}
  Orr(\phi)=Ray_{\alpha}(\phi)+Diff(\phi)=Airy_4(\phi)+Reg(\phi),
\end{align}
where
\begin{align}\label{eq-OS-decom-ray}
  Ray_{\alpha}(\phi)=\left(u_p-\hat c\right)\left(\pa_y^2-\alpha^2\right)\phi-u_p''\phi.
\end{align}
\begin{align}\label{eq-OS-decom-diff}
  Diff(\phi)=i\varepsilon\left(\pa_y^2-\alpha^2\right)^2\phi+ic_0\left(\pa_y^2-\alpha^2\right)\phi.
\end{align}
\begin{align}\label{eq-OS-decom-airy}
  Airy_4(\phi)=i\varepsilon\pa_y^4\phi+\left(u_p-c-2i\varepsilon\alpha^2\right)\pa_y^2\phi,
\end{align}
\begin{align}\label{eq-OS-decom-reg}
  Reg(\phi)= \left(-u_p''-\alpha^2\left(u_p-c \right)+i\varepsilon\alpha^4\right)\phi.
\end{align}
Here $Ray_\alpha$ contains the Rayleigh part with the regularized wave speed $\hat c$, while $Diff$ contains the viscous correction. The second decomposition separates the fourth-order Airy part from the regular lower-order terms.

To construct a solution to
\begin{align}\label{eq-inhome-OS}
  Orr(\phi)=f,
\end{align}
we first solve the Rayleigh equation and set
\begin{align*}
  \mathring \phi^{[0]}=RaySolver_{\alpha}(f).
\end{align*}
Substituting $\mathring \phi^{[0]}$ into the Orr--Sommerfeld operator gives
\begin{align*}
  Orr(\mathring \phi^{[0]})=f+Diff(\mathring \phi^{[0]}),
\end{align*}
Thus the Rayleigh approximation produces a residual term which is small but has low regularity. Then we solve the Airy equation and let
\begin{align*}
  \mathring \psi^{[1]}=AirySolver \left(-Diff(\mathring \phi^{[0]})\right).
\end{align*}
Then we have
\begin{align*}
  Orr(\mathring \phi^{[0]}+\mathring \psi^{[1]})=f+Reg(\mathring \psi^{[1]}).
\end{align*}
The AirySolver provides the required regularity gain. Therefore, $Reg(\mathring \psi^{[1]})$ has sufficient regularity and is smaller than the original source term. 

We denote
\begin{align}\label{op-Iter}
  Iter(f)=-Reg \left( AirySolver \left(Diff \left( RaySolver_{\alpha}(f) \right)\right) \right).
\end{align}
We will show that $Iter(f)$ is smaller than $f$ in a suitable norm. Hence the solution to \eqref{eq-inhome-OS} can be constructed by summing the Rayleigh--Airy iteration. This type of iteration was first introduced in \cite{GGN16duke,GGN16adv} and has proved useful in the study of problems related to T--S waves.

\subsection{Non-homogeneous solution}
In this subsection, we show the properties of the iteration operator $Iter$ given in \eqref{op-Iter}, and give the existence of solutions of \eqref{eq-inhome-OS}.
\begin{lemma}\label{lem-iter}
  For $f\in X_{2}$, it holds that 
  \begin{align*}
    \left\|Iter(f)\right\|_{\mathcal X}\lesssim \varepsilon^{\frac{1}{3}}\left(1+ \kappa c_r\right)|\ln c_0|^2\left\|f\right\|_{X_{2}}.
  \end{align*}
\end{lemma}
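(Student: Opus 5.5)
We follow the three stages in the definition \eqref{op-Iter}: $\phi=RaySolver_\alpha(f)$, then $Diff(\phi)$, then $\psi=AirySolver(Diff(\phi))$ (up to sign), and finally $Reg(\psi)$. The norms must be chained carefully, because $Diff(\phi)$ is not controlled in the weighted $L^\infty$ norm required by Lemma \ref{lem-real-Airy-Solver}. We will therefore write it in derivative form and use Lemma \ref{lem-real-Airy-Solver-S}.

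\textbf{Step 1 (Rayleigh stage).} By Lemma \ref{lem-Raysolver-alpha}, since $f\in X_2$,
\begin{align*}
  \|\phi\|_{Y_4}\lesssim |\ln \hat c_i|\,\|f\|_{X_2}\lesssim|\ln c_0|\,\|f\|_{X_2},
\end{align*}
where we used $\hat c_i\ge c_0/2$. Hence $|\pa_y^j\phi|\lesssim |u_p-\hat c|^{1-j}|\ln c_0|\|f\|_{X_2}$ for $1\le j\le4$, and $\phi$ itself is bounded.

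\textbf{Step 2 (the $Diff$ term).} Expanding,
\begin{align*}
  Diff(\phi)=\pa_y^2\bigl(i\varepsilon \pa_y^2\phi+ic_0\phi-2i\varepsilon\alpha^2\phi\bigr)+\bigl(i\varepsilon\alpha^4-ic_0\alpha^2\bigr)\phi .
\end{align*}
Set $F=i\varepsilon\pa_y^2\phi+ic_0\phi-2i\varepsilon\alpha^2\phi$. Step 1 gives
\begin{align*}
  \|(u_p-\hat c)F\|_{L^\infty}+\|(u_p-\hat c)^2F'\|_{L^\infty}+\|(u_p-\hat c)^3F''\|_{L^\infty}\lesssim(\varepsilon+c_0)|\ln c_0|\,\|f\|_{X_2}.
\end{align*}
The $\varepsilon$-terms are harmless. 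The term $c_0\phi$ contributes $c_0\|\phi\|_{L^\infty}$, because the weights $(u_p-\hat c)^j$ are bounded. Lemma \ref{lem-real-Airy-Solver-S} (via \eqref{eq-est-ASS-5}) then gives
\begin{align*}
  \|AirySolver(F'')\|_{\mathcal X}\lesssim \varepsilon^{-\frac23}(1+\kappa c_r)(\varepsilon+c_0)|\ln c_0|^2\|f\|_{X_2}\sim \varepsilon^{-\frac16}(1+\kappa c_r)|\ln c_0|^2\|f\|_{X_2}.
\end{align*}
The regular remainder $(i\varepsilon\alpha^4-ic_0\alpha^2)\phi$ is treated with Lemma \ref{lem-real-Airy-Solver} together with the regular-source bounds of Lemma \ref{lem-est-AS-ns}, and it is much smaller.

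\textbf{Step 3 (the $Reg$ stage) — main obstacle.} The bound from Step 2 alone is too weak: multiplying by $\|Reg\|\sim 1$ loses $\varepsilon^{-1/6}$, whereas the target gain is $\varepsilon^{1/3}$. The real gain must come from using the $\varepsilon$-regular pieces more sharply, so we split $F$ accordingly.
\begin{itemize}
\item For the $i\varepsilon\pa_y^2\phi$ piece, the prefactor $\varepsilon$ against the loss $\varepsilon^{-2/3}$ already gives $\varepsilon^{1/3}$.
\item For the $ic_0(\pa_y^2-\alpha^2)\phi$ piece, we do \emph{not} integrate by parts. Instead we apply Lemma \ref{lem-est-AS} directly to the source $c_0(\pa_y^2-\alpha^2)\phi$, which satisfies $\|(u_p-\hat c)c_0\pa_y^2\phi\|_{L^\infty}\lesssim c_0|\ln c_0|\|f\|_{X_2}$. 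This yields $c_0=\varepsilon^{1/2}\le\varepsilon^{1/3}$ times $(1+\kappa c_r)|\ln c_0|^2$ in the lower-order parts of the $\mathcal X$ norm.
\end{itemize}
The higher components $\varepsilon^{1/2}(u_p-\hat c)^2\pa_y^3$ and $\mathcal W_0\pa_y^4$ are checked with \eqref{eq-est-AS-3} and \eqref{eq-est-AS-4}, together with the equation \eqref{eq-Airy} to trade $\pa_y^4$ for lower derivatives. We expect this trade, and the matching of each weight in $\mathcal W_0$ against $\varepsilon^{1/3}$, to be the delicate bookkeeping.

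\textbf{Step 4 (conclusion).} $Reg$ multiplies $\psi$ by the smooth bounded coefficient $-u_p''-\alpha^2(u_p-c)+i\varepsilon\alpha^4$. By the product rule, $\|Reg(\psi)\|_{\mathcal X}\lesssim\|\psi\|_{\mathcal X}$, since lower derivatives of $\psi$ carry weaker weights in each component of the norm. Combining Steps 2 and 3 gives
\begin{align*}
  \|Iter(f)\|_{\mathcal X}\lesssim \varepsilon^{\frac13}(1+\kappa c_r)|\ln c_0|^2\|f\|_{X_2},
\end{align*}
which is the claim.
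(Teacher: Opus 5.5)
Your final argument (after the Step 3 correction) is the paper's proof. The paper also splits $Diff(\phi)$ into an $\varepsilon$-part in second-derivative form, bounded by Lemma \ref{lem-real-Airy-Solver-S} with gain $\varepsilon\cdot\varepsilon^{-2/3}$, and a $c_0$-part used as an undifferentiated source in Lemma \ref{lem-real-Airy-Solver} with $c_0=\varepsilon^{1/2}\le\varepsilon^{1/3}$; this is exactly its decomposition $\mathring\psi^{[1]}=\mathring\psi^{[1]}_a+\mathring\psi^{[1]}_b$. The only cosmetic differences are that the paper rewrites $(\pa_y^2-\alpha^2)\phi=(f+u_p''\phi)/(u_p-\hat c)$ via the Rayleigh equation, whereas you read the weighted bounds directly off $\|\phi\|_{Y_4}$, and that the higher $\mathcal X$-components and the $Reg$ step are less delicate than you fear, since $\mathcal W_0\le\varepsilon|u_p-\hat c|\lesssim\varepsilon^{1/2}|u_p-\hat c|^2$ follows from $|u_p-\hat c|\ge\hat c_i\ge c_0/2$.
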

\begin{proof}
Let 
\begin{align*}
  \mathring \phi^{[0]}=RaySolver_{\alpha}(f).
\end{align*}
By Lemma \ref{lem-Raysolver-alpha}, we have
\begin{align*}
  \left\|\mathring \phi^{[0]}\right\|_{Y_{4}}\lesssim |\ln c_0|\left\|f\right\|_{X_{2}}.
\end{align*}

Since $\mathring \phi^{[0]}$ solves the non-homogeneous Rayleigh equation, we have
\begin{align*}
  \left(\pa_y^2-\alpha^2\right)\mathring \phi^{[0]}=\frac{f+u_p''\mathring \phi^{[0]}}{u_p-\hat c}.
\end{align*}
It follows that
\begin{align*}
  Orr(\mathring \phi^{[0]})=&f+Diff(\mathring \phi^{[0]})=f+i\varepsilon\left(\pa_y^2-\alpha^2\right)^2\mathring \phi^{[0]}+ic_0\left(\pa_y^2-\alpha^2\right)\mathring \phi^{[0]}\\ 
  =&f+i\varepsilon \pa_y^2 \left(\frac{f+u_p''\mathring \phi^{[0]}}{u_p-\hat c}\right)+\left(-i\varepsilon\alpha^2+ic_0\right)\left(\frac{f+u_p''\mathring \phi^{[0]}}{u_p-\hat c}\right),
\end{align*}
The term $Diff(\mathring\phi^{[0]})$ consists of a differentiated singular source and a lower-order singular source. We treat these two parts separately by using Lemma \ref{lem-real-Airy-Solver-S} and Lemma \ref{lem-real-Airy-Solver}, respectively. We then decompose the first Airy correction as
\begin{align*}
  \mathring \psi^{[1]}=\mathring \psi_{a}^{[1]}+\mathring \psi_{b}^{[1]},
\end{align*}
where
\begin{align*}
  \mathring \psi_{a}^{[1]}=-AirySolver \left(i\varepsilon \pa_y^2 \left(\frac{f+u_p''\mathring \phi^{[0]}}{u_p-\hat c}\right)\right),
\end{align*}
and
\begin{align*}
  \mathring \psi_{b}^{[1]}=-AirySolver \left(\left(-i\varepsilon\alpha^2+ic_0\right)\left(\frac{f+u_p''\mathring \phi^{[0]}}{u_p-\hat c}\right)\right).
\end{align*}
By Lemma \ref{lem-real-Airy-Solver} and Lemma \ref{lem-real-Airy-Solver-S}, we obtain
\begin{align*}
  \left\|\mathring \psi_{a}^{[1]}\right\|_{\mathcal X}\lesssim& \varepsilon^{\frac{1}{3}}\left(1+ \kappa c_r\right)|\ln c_0| \left(\left\|f+u_p''\mathring \phi^{[0]}\right\|_{L^\infty}+\left\|\left(u_p-\hat c\right)^2\pa_y \left(\frac{f+u_p''\mathring \phi^{[0]}}{u_p-\hat c}\right)\right\|_{L^\infty}\right) \\ 
  \lesssim& \varepsilon^{\frac{1}{3}}\left(1+ \kappa c_r\right)|\ln c_0| \left(\left\|f\right\|_{L^\infty}+\left\|\left(u_p-\hat c\right)f'\right\|_{L^\infty}+\left\|\mathring \phi^{[0]}\right\|_{L^\infty}+\left\|\left(u_p-\hat c\right)\mathring \phi^{[0]\prime}\right\|_{L^\infty}\right) \\ 
  \lesssim& \varepsilon^{\frac{1}{3}}\left(1+ \kappa c_r\right)|\ln c_0|^2\left\|f\right\|_{X_{2}}.
\end{align*}
Similarly,
\begin{align*}
  \left\|\mathring \psi_{b}^{[1]}\right\|_{\mathcal X}\lesssim& \left(\varepsilon\alpha^2+c_0\right)\left(1+ \kappa c_r\right)|\ln c_0| \left\|f+u_p''\mathring \phi^{[0]}\right\|_{L^\infty}\lesssim \left(\varepsilon\alpha^2+c_0\right)\left(1+ \kappa c_r\right)|\ln c_0|^2\left\|f\right\|_{X_{2}}.
\end{align*}

Recall that
\begin{align*}
  Iter(f)=&-Reg \left( AirySolver \left(Diff \left( RaySolver_{\alpha}(f) \right)\right) \right)\\
  =&Reg \left(\mathring \psi^{[1]}\right)=\left(-u_p''-\alpha^2\left(u-c \right)+i\varepsilon\alpha^4\right)\mathring \psi^{[1]}.
\end{align*}

Thus we have
\begin{align*}
  \left\|Iter(f)\right\|_{\mathcal X}\lesssim&  \varepsilon^{\frac{1}{3}}\left(1+ \kappa c_r\right)|\ln c_0|^2\left\|f\right\|_{X_{2}}.
\end{align*}
 
\end{proof}
 
\section{Slow modes}
In this section, we construct two solutions $\Phi_{1,\alpha}$ and  $\Phi_{2,\alpha}$ of the homogeneous Orr--Sommerfeld equation \eqref{eq-Orr-Sommerfeld}. They are obtained by applying the Rayleigh--Airy iteration scheme of the previous section, starting respectively from the Rayleigh solutions $\phi_{1,\alpha}$ and $\phi_{2,\alpha}$ given in Lemma \ref{lem-sol-Ray-hom}. Following \cite{GGN16adv}, we refer to these two solutions as the slow modes.

To analyze the relation between $c$ and $\nu$, or equivalently the zeros of the Wronskian in the dispersion relation studied in the next section, we need precise information on the boundary values of $\Phi_{1,\alpha}$ and $\Phi_{2,\alpha}$ at $y=-1$ and $y=0$. In particular, the leading contribution comes from the boundary value of $\Phi_{1,\alpha}$ at $y=-1$, together with its parameter derivatives with respect to $c_r$, $c_i$, and $\nu$.

We will show that the boundary behavior of $\Phi_{1,\alpha}$ is mainly determined by its Rayleigh component $\phi_{1,\alpha}$, while the viscous correction is of lower order. This agrees with the classical physical picture of T--S waves that the slow modes are perturbations of the Rayleigh modes.

\subsection{The slow modes  $\Phi_{1,\alpha}$ and  $\Phi_{2,\alpha}$ }
In this subsection, we give the construction of $\Phi_{1,\alpha}$ and  $\Phi_{2,\alpha}$, and provide a rough estimate for the above solutions.
\begin{lemma}\label{lem-rough-est-phi-psi}
  There exist $\Phi_{1,\alpha}$ and  $\Phi_{2,\alpha}$ which solve the homogeneous Orr--Sommerfeld equation \eqref{eq-Orr-Sommerfeld} such that
  \begin{align*}
    \left\|\Phi_{1,\alpha}-\phi_{1,\alpha}\right\|_{X_{2}}\lesssim \varepsilon^{\frac{1}{3}}\left(1+ \kappa c_r\right)|\ln c_0|^3,\quad \left\|\Phi_{2,\alpha}-\phi_{2,\alpha}\right\|_{X_{2}}\lesssim \varepsilon^{\frac{1}{3}}\left(1+ \kappa c_r\right)|\ln c_0|^3.
  \end{align*}
\end{lemma}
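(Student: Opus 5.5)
We build $\Phi_{1,\alpha}$ by the Rayleigh--Airy iteration of Section 5, starting from the homogeneous Rayleigh solution $\phi_{1,\alpha}$ of Lemma \ref{lem-sol-Ray-hom}; $\Phi_{2,\alpha}$ is obtained in the same way from $\phi_{2,\alpha}$. Since $Ray_\alpha(\phi_{1,\alpha})=0$, we have $Orr(\phi_{1,\alpha})=Diff(\phi_{1,\alpha})$. The first Airy correction is
\[
\psi^{[1]}_{1,\alpha}=-AirySolver\left(Diff(\phi_{1,\alpha})\right),
\]
which gives $Orr(\phi_{1,\alpha}+\psi^{[1]}_{1,\alpha})=Reg(\psi^{[1]}_{1,\alpha})$. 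We then set $f_1=Reg(\psi^{[1]}_{1,\alpha})$. For $j\ge1$ we define $\phi^{[j]}=-RaySolver_\alpha(f_j)$ and $\psi^{[j+1]}=-AirySolver(Diff(\phi^{[j]}))$, so that $f_{j+1}=Reg(\psi^{[j+1]})=Iter(f_j)$. Each stage cancels the previous error, and we take
\[
\Phi_{1,\alpha}=\phi_{1,\alpha}+\sum_{j\ge1}\psi^{[j]}+\sum_{j\ge1}\phi^{[j]}.
\]
The telescoping identity shows $Orr(\Phi_{1,\alpha})=0$ once the series converges, because the residual $f_{N+1}\to0$.

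The first step is to estimate the initial correction. We use the Rayleigh equation exactly as in the proof of Lemma \ref{lem-iter}: $(\pa_y^2-\alpha^2)\phi_{1,\alpha}=u_p''\phi_{1,\alpha}/(u_p-\hat c)$. This splits $Diff(\phi_{1,\alpha})$ into $i\varepsilon\pa_y^2\big(u_p''\phi_{1,\alpha}/(u_p-\hat c)\big)$ and $(ic_0-i\varepsilon\alpha^2)u_p''\phi_{1,\alpha}/(u_p-\hat c)$.
\begin{itemize}
\item The first piece is treated with Lemma \ref{lem-real-Airy-Solver-S}. The required norms $\|(u_p-\hat c)f\|_{L^\infty}$, $\|(u_p-\hat c)^2f'\|_{L^\infty}$ and $\|(u_p-\hat c)^3f''\|_{L^\infty}$ with $f=u_p''\phi_{1,\alpha}/(u_p-\hat c)$ are all bounded by $\|\phi_{1,\alpha}\|_{Y_4}\lesssim1$. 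This gives an $\mathcal X$-bound of order $\varepsilon^{1/3}(1+\kappa c_r)|\ln c_0|$.
\item The second piece is treated with Lemma \ref{lem-real-Airy-Solver}. It gives the bound $c_0(1+\kappa c_r)|\ln c_0|$, and $c_0=\varepsilon^{1/2}\le\varepsilon^{1/3}$.
\end{itemize}
Hence $\|\psi^{[1]}_{1,\alpha}\|_{\mathcal X}\lesssim\varepsilon^{1/3}(1+\kappa c_r)|\ln c_0|$. The $\mathcal X$-norm controls the $X_2$-norm, since it contains $\|g\|_{L^\infty}$, $\|(u_p-\hat c)g'\|_{L^\infty}$ and $\|(u_p-\hat c)^2g''\|_{L^\infty}$. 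Because $Reg$ is multiplication by a smooth bounded function, $f_1$ obeys the same bound in $X_2$. By Lemma \ref{lem-Raysolver-alpha}, $\|\phi^{[1]}\|_{Y_4}\lesssim|\ln c_0|\|f_1\|_{X_2}$, which is of order $\varepsilon^{1/3}(1+\kappa c_r)|\ln c_0|^2$. $Y_4$ controls $X_2$ up to bounded factors, since $(u_p-\hat c)^2\pa_y^2g=(u_p-\hat c)\cdot(u_p-\hat c)\pa_y^2 g$.

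For the higher terms we apply Lemma \ref{lem-iter} repeatedly, which gives $\|f_{j+1}\|_{X_2}\lesssim\|f_{j+1}\|_{\mathcal X}\le\theta\|f_j\|_{X_2}$ with $\theta=C\varepsilon^{1/3}(1+\kappa c_r)|\ln c_0|^2$. The contraction step is the main obstacle: it requires $\theta\ll1$. In $\mathbb H$ we have $\kappa\sim\varepsilon^{-1/3}$ and $c_r\le10^3\varepsilon^{1/5}$, so $\varepsilon^{1/3}\kappa c_r\lesssim\varepsilon^{1/5}$, and therefore $\theta\lesssim\varepsilon^{1/5}|\ln\varepsilon|^2\ll1$. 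The geometric series converges. Summing the Rayleigh pieces, each bounded by $|\ln c_0|\theta^{j-1}\|f_1\|_{X_2}$, and the Airy pieces, bounded in $\mathcal X\subset X_2$, gives
\[
\|\Phi_{1,\alpha}-\phi_{1,\alpha}\|_{X_2}\lesssim\varepsilon^{1/3}(1+\kappa c_r)|\ln c_0|^3,
\]
where the extra logarithm is allowed for bookkeeping. The same argument applies verbatim to $\phi_{2,\alpha}$, since Lemma \ref{lem-sol-Ray-hom} also gives $\|\phi_{2,\alpha}\|_{Y_4}\lesssim1$. Finally, the $\mathcal X$-bounds include fourth-derivative control with the weight $\mathcal W_0$, which ensures that the $\psi$-series converges in a space where $Orr$ makes sense. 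Convergence there justifies passing to the limit in the telescoping identity.
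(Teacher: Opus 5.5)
Your proposal is correct and follows essentially the paper's argument: you run the Rayleigh--Airy iteration from $\phi_{1,\alpha}$ (resp.\ $\phi_{2,\alpha}$), split $Diff(\phi_{1,\alpha})$ into the differentiated piece handled by Lemma~\ref{lem-real-Airy-Solver-S} and the $(ic_0-i\varepsilon\alpha^2)$ piece handled by Lemma~\ref{lem-real-Airy-Solver}, and sum the series using Lemma~\ref{lem-iter}; your explicit check that $\varepsilon^{\frac13}\kappa c_r\lesssim\varepsilon^{\frac15}$ in $\mathbb H$ supplies the contraction that the paper uses only implicitly. One small slip is that $\|\phi_{2,\alpha}\|_{Y_4}$ is of size $|\ln c_0|$ rather than $O(1)$, because $\phi_{2,0}'$ has a logarithmic singularity (for instance $\Re\phi_{2,0}'(-1)=1+\frac12\ln|\hat c/4|+O(\hat c|\ln\hat c|)$); this is harmless, since the quantities you actually need, $\|\phi\|_{L^\infty}+\|(u_p-\hat c)\phi'\|_{L^\infty}+\|(u_p-\hat c)^2\phi''\|_{L^\infty}$, are $O(1)$ for $\phi_{2,0}$ via $(u_p-\hat c)\phi_{2,0}'=1+u_p'\phi_{2,0}$, and in any case one extra logarithm is absorbed by the $|\ln c_0|^3$ in the statement.
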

\begin{proof}

We first construct $\Phi_{1,\alpha}$. We remark that in the iteration strategy, the Rayleigh operator $Ray_{\alpha}$ is defined on $\hat c=c+ic_0$, therefore $\phi_{1,0}^{[0]}=u_p-\hat c$, and $\phi_{1,\alpha}=\sum^{\infty}_0 \phi_{1,0}^{[j]}(y)$ that constructed in Lemma \ref{lem-sol-Ray-hom} satisfies $Ray_{\alpha} \left(\phi_{1,\alpha}\right)=0$. 

It holds that
\begin{align*}
  Orr\left(\phi_{1,\alpha}\right)=&i\varepsilon\left(\pa_y^2-\alpha^2\right)^2\phi_{1,\alpha}+ic_0\left(\pa_y^2-\alpha^2\right)\phi_{1,\alpha}\\ 
  =&i\varepsilon \pa_y^2 \left(\frac{u_p''\phi_{1,\alpha}}{u_p-\hat c}\right)+\left(-i\varepsilon\alpha^2+ic_0\right)\left(\frac{u_p''\phi_{1,\alpha}}{u_p-\hat c}\right)
\end{align*}
Let
\begin{align*}
  \psi^{[1]}_{1,\alpha}=-AirySolver \left(Diff(\phi_{1,\alpha})\right)=\psi_{1,\alpha,a}^{[1]}+\psi_{1,\alpha,b}^{[1]},
\end{align*}
where
\begin{align*}
  \psi_{1,\alpha,a}^{[1]}=-AirySolver \left(i\varepsilon \pa_y^2 \left(\frac{u_p''\phi_{1,\alpha}}{u_p-\hat c}\right)\right),
\end{align*}
\begin{align*}
  \psi_{1,\alpha,b}^{[1]}=-AirySolver \left(\left(-i\varepsilon\alpha^2+ic_0\right)\left(\frac{u_p''\phi_{1,\alpha}}{u_p-\hat c}\right)\right),
\end{align*}
by Lemma \ref{lem-real-Airy-Solver} and Lemma \ref{lem-real-Airy-Solver-S} we have
\begin{equation}\label{est-psi1a-rough}
  \begin{aligned}    
     \left\|\psi_{1,\alpha,a}^{[1]}\right\|_{\mathcal X}\lesssim& \varepsilon^{\frac{1}{3}}\left(1+ \kappa c_r\right)|\ln c_0| \left(\left\|u_p''\phi_{1,\alpha}\right\|_{L^\infty}+\left\|\left(u_p-\hat c\right)^2\pa_y \left(\frac{u_p''\phi_{1,\alpha}}{u_p-\hat c}\right)\right\|_{L^\infty}\right) \\  
  \lesssim& \varepsilon^{\frac{1}{3}}\left(1+ \kappa c_r\right)|\ln c_0|^2, 
  \end{aligned}
\end{equation} 
and
\begin{align}\label{est-psi1b-rough}
  \left\|\psi_{1,\alpha,b}^{[1]}\right\|_{\mathcal X}\lesssim& \left(\varepsilon\alpha^2+c_0\right)\left(1+ \kappa c_r\right)|\ln c_0| \left\|u_p''\phi_{1,\alpha}\right\|_{L^\infty}\lesssim \left(\varepsilon\alpha^2+c_0\right)\left(1+ \kappa c_r\right)|\ln c_0|^2.
\end{align}

Then we have
\begin{align*}
  Orr\left(\phi_{1,\alpha}+\psi^{[1]}_{1,\alpha}\right)=Reg(\psi^{[1]}_{1,\alpha})=\left(-u_p''-\alpha^2\left(u_p-c \right)+i\varepsilon\alpha^4\right)\psi^{[1]}_{1,\alpha}.
\end{align*} 
Now we let $\phi_{1,\alpha}^{[0]}=\phi_{1,\alpha}$, and 
\begin{align*}
  \psi_{1,\alpha}^{[j]}=-AirySolver \left(Diff(\phi_{1,\alpha}^{[j-1]})\right),\quad \phi_{1,\alpha}^{[j]}=-RaySolver_{\alpha} \left(Reg(\psi^{[j]}_{1,\alpha})\right) \text{ for }j\ge 1.
\end{align*}
Then by Lemma \ref{lem-iter}, we have
\begin{align*}
  \left\|\phi_{1,\alpha}^{[j]}\right\|_{X_{2}}+\left\|\psi_{1,\alpha}^{[j]}\right\|_{X_{2}}\lesssim \varepsilon^{\frac{1}{3}j}\left(1+ \kappa c_r\right)^j|\ln c_0|^{j+2}.
\end{align*}

Then we define
\begin{align*}
  \Phi_{1,\alpha}=\phi_{1,\alpha} +\sum_{j=1}^{+\infty}\phi_{1,\alpha}^{[j]}+\psi_{1,\alpha}^{[j]}.
\end{align*}
It is clear that $\Phi_{1,\alpha}$ is well defined and $Orr \left(\Phi_{1,\alpha}\right)=0$.

By using the same technique, we can also construct $\Phi_{2,\alpha}$.
\end{proof}

\subsection{Expansion of $\Phi_{1,\alpha}$ at $y=-1$}
In this subsection, we derive refined estimates for $\Phi_{1,\alpha}$ and its parameter derivatives at the boundary $y=-1$. These boundary expansions will be used in Section \ref{sec-dispersion_relation} to derive the dispersion relation and to verify the transversal crossing condition along the neutral curve.

We show that the boundary value $\Phi_{1,\alpha}(-1)$ is mainly determined by the Rayleigh component $\phi_{1,\alpha}(-1)$. More precisely, we will prove estimates of the form
\begin{align*}
  \Phi_{1,\alpha}(-1)\approx\phi_{1,\alpha}(-1),\quad \pa_{c_r}\Phi_{1,\alpha}(-1)\approx\pa_{c_r}\phi_{1,\alpha}(-1).
\end{align*}
Near the neutral curve, one has $c_r\sim \alpha^2$ (see the analysis in Section \ref{sec-dispersion_relation}). In this regime,
\begin{align*}
  \left\|\phi_{1,\alpha}\right\|_{L^\infty}=O(1),\quad  \phi_{1,\alpha}(-1)=O(c_r),\qquad \Im \phi_{1,\alpha}(-1)=-c_i+O(c_r\alpha^2).
\end{align*}
Therefore, the correction terms $\phi_{1,\alpha}^{[j]}(-1)$ and $\psi_{1,\alpha}^{[j]}(-1)$, $j\ge1$, must be shown to be negligible compared with the leading contributions in the dispersion relation, for instance smaller than $\bigl|\Im \phi_{1,\alpha}(-1)\bigr|$. 

In addition, after taking derivatives with respect to $c_r$, $c_i$, and $\nu$, the corresponding correction terms must remain sufficiently small. These derivative estimates are needed to verify the transversal crossing condition on the neutral curve. 

The rough estimates, such as \eqref{est-psi1a-rough}, are not sufficient for this purpose. We therefore derive sharper estimates for $\psi_{1,\alpha}^{[1]}$ and $\phi_{1,\alpha}^{[1]}$, together with their parameter derivatives.
\subsubsection{Refined estimate of $\psi_{1,\alpha,a}^{[1]}$}\label{ssub:accurate_estimate_of_psi_1_alpha_a}
In this part, we derive estimates for $\psi_{1,\alpha,a}^{[1]}$, which is the singular component of $\psi_{1,\alpha}^{[1]}$.   

Recall Lemma \ref{lem-sol-Ray-hom} that $\phi_{1,\alpha}=\sum^{\infty}_{j=0} \phi_{1,0}^{[j]}(y)$ with $\phi_{1,0}^{[0]}=u_p(y)-\hat c$ and $u_p''=-2$. It holds that
\begin{align*}
  \pa_y^2 \left(\frac{u_p''\phi_{1,\alpha}}{u_p(y)-\hat c}\right)=-2\pa_y^2\left(1+\frac{\sum^{\infty}_{j=1} \phi_{1,0}^{[j]}(y)}{u_p(y)-\hat c}\right)=-2\pa_y^2\left(\frac{\sum^{\infty}_{j=1} \phi_{1,0}^{[j]}(y)}{u_p(y)-\hat c}\right).
\end{align*}
Therefore,
\begin{align*}
  \psi_{1,\alpha,a}^{[1]}=-AirySolver \left(i\varepsilon \pa_y^2 \left(\frac{u_p''\phi_{1,\alpha}}{u_p-\hat c}\right)\right)=2AirySolver \left(i\varepsilon \pa_y^2 \left(\frac{\sum^{\infty}_{j=1} \phi_{1,0}^{[j]}(y)}{u_p-\hat c}\right)\right).
\end{align*}

By Lemma \ref{lem-sol-Ray-hom}, we have
\begin{align*}
  \left\|\phi_{1,0}^{[1]}\right\|_{Y_4}\lesssim&\alpha^2\left|\ln c_0\right|.
\end{align*}
Hence one expects the estimate for $\psi_{1,\alpha,a}^{[1]}$ to improve upon \eqref{est-psi1a-rough} by an additional factor of $\alpha^2$. This improvement, however, is still not sufficient for the upper branch analysis. By using the precise expression of the non-local term $a_1$, we have the following estimate.
\begin{lemma}\label{lem-acc-psi-1-a}
  It holds that
  \begin{align}\label{est-acc-psi-1-a}
    \left\|\psi_{1,\alpha,a}^{[1]}\right\|_{L^\infty}\lesssim \varepsilon^{\frac{1}{3}}\alpha^2|\ln c_0|,
  \end{align}
  \begin{align}\label{est-acc-psi-1-a-dx}
    \left|\psi_{1,\alpha,a}^{[1]\prime}(-1)\right|\lesssim \varepsilon^{\frac{1}{3}} \frac{1}{c_r}\alpha^2|\ln c_0|.
  \end{align}
\end{lemma}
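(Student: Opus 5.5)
We first reduce to the modified solver: writing $AirySolver=AirySolver_m+(\text{iteration error})$ as in Lemma \ref{lem-real-Airy-Solver-S}, the error carries an extra factor $(\varepsilon^{1/2}+|c_i|)|\ln c_0|$ relative to the main part and is harmless. Set
\begin{align*}
  g=\frac{\sum_{j\ge1}\phi_{1,0}^{[j]}}{u_p-\hat c}.
\end{align*}
By Lemma \ref{lem-sol-Ray-hom} and the bounds \eqref{eq-est-phi10jRI}, we have $|\phi_{1,0}^{[j]}|\lesssim\alpha^{2j}$ and $|(u_p-\hat c)\pa_y\phi_{1,0}^{[j]}|\lesssim\alpha^{2j}|\ln c_0|$. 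Hence
\begin{align*}
  \|(u_p-\hat c)g\|_{L^\infty}+\|(u_p-\hat c)^2g'\|_{L^\infty}\lesssim\alpha^2|\ln c_0|.
\end{align*}
Applying Lemma \ref{lem-est-ASS} directly gives $|\psi^{[1]}_{1,\alpha,a}|\lesssim \varepsilon^{1/3}(1+\kappa c_r)\alpha^2|\ln c_0|^2$. The task is therefore to remove the factor $(1+\kappa c_r)$ and one logarithm.

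We integrate by parts twice as in the proof of \eqref{eq-est-ASS-0}. This writes $\varepsilon\,AirySolver_m(g'')(y)$ as $\varepsilon\int\pa_x^2G_A(x,y)g(x)\,dx$ plus four boundary terms at $x=0,-1$. The boundary terms at $x=0$ are $O(\varepsilon\kappa^{3/2}\alpha^2)$. At $x=-1$, since $\phi_{1,0}^{[j]}(-1)=O(\alpha^{2j})$ and $|u_p(-1)-\hat c|\sim c_r$, the bounds on $G_A(-1,y)$ and $\pa_xG_A(-1,y)$ from that proof give $O(\varepsilon\kappa^2\alpha^2 c_r^{-1}\langle\kappa c_r\rangle^{-1/2})\lesssim\varepsilon^{1/3}\alpha^2$. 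For the localized part $G_{A,M}$, the case analysis of Lemma \ref{lem-est-ASS} already yields $\varepsilon^{1/3}|\ln c_0|\,\|(u_p-\hat c)g\|_{L^\infty}$. The loss of $\kappa c_r$ comes only from the non-local term $\frac{1}{\kappa}\int_y^0 a_1''(x)(y-y_c)g(x)\,dx$ for $y<y_c$, together with the terms $a_2''$ and $a_1''(x-y_c)$.

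For this non-local term we use the decomposition \eqref{eq-a1-exp}. The parts $a_{1,B}$ and $a_{1,R}$ are handled as in Lemma \ref{lem-est-ASS-ns}, using exponential localization near the endpoints and the extra power of $\kappa^{-1}$. The main part is $a_{1,M}''$ (and similarly $a_{1,SecM}''$, $a_{2,M}''$), which by Lemma \ref{lem-a1-a2-exp} equals an explicit multiple of $\kappa\,\mathrm{Hi}''(e^{-\frac{\pi}{2}i}\kappa\eta(x))$. Its integral against $g$ leads to integrals of the form $Int$ from the introduction, with singular factor
\begin{align*}
  \frac{1}{u_p-\hat c}\approx\frac{-1}{u_p'(y_c)\,\bigl(x-(y_c+ic_i+ic_0)\bigr)}.
\end{align*}
The pole lies in the upper half plane. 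On $[-1,2y_c+1]$ we deform the contour to the lower semicircle $x=y_c+e^{i\theta}(y_c+1)$, $\theta\in[-\pi,0]$, as described in the introduction. There $e^{-\frac{\pi}{2}i}\kappa\eta$ stays in the sector where \eqref{eq-expan-Hi''} holds. Analyticity of $\eta$ near $y_c$ and of $\phi_{1,0}^{[j]}$ away from the logarithmic branch point $-\sqrt{1-\hat c}$, which is also in the upper half plane, is needed for this. The expansion gives $|\mathrm{Hi}''|\lesssim\langle\kappa c_r\rangle^{-3}$ on the arc, which cancels the weight $\kappa|y-y_c|\lesssim\kappa c_r$. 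On $[2y_c+1,0]$ we have $|u_p-\hat c|\gtrsim c_r$, and the decay $\langle\kappa\eta\rangle^{-3}$ makes the integral $O(1)$. This yields \eqref{est-acc-psi-1-a}. When $\kappa c_r\lesssim1$ there is nothing to gain, and the rough bound already suffices.

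For \eqref{est-acc-psi-1-a-dx} we repeat the argument with $\pa_yG_A$. At $y=-1$ we have $\pa_yG_A(x,-1)=\frac{2\pi i}{(\eta_r')^{1/2}\kappa\varepsilon}\bigl(A_1(x)A_2(1,-1)+a_1(x)\bigr)$, and $A_2(1,-1)=0$. So only $\frac{1}{\kappa\varepsilon}\int a_1(x)\,\varepsilon g''$ remains, together with boundary terms; the cross term $A_1(x)A_2(1,-1)$ vanishes. After integrating by parts, the endpoint contributions at $x=-1$ carry $|g(-1)|\sim\alpha^2/c_r$ and produce the factor $c_r^{-1}$. The integral term is treated by the same contour deformation. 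The main obstacle is justifying this deformation rigorously: we must check analyticity of $\eta$ and of the Rayleigh corrections in a complex neighbourhood of radius about $c_r$ around $y_c$, and that the arc avoids the Stokes lines of $\mathrm{Hi}''$.
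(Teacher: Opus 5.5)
Your proposal follows the paper's proof in its main steps:
\begin{itemize}
\item reduction to $AirySolver_m$ and integration by parts in $x$;
\item the observation that the only $\kappa c_r$ loss comes from the non-local term $\frac{1}{\kappa\varepsilon}\int_y^0 a_1''(x)(y-y_c)(\cdot)\,dx$ with $y<y_c$;
\item deformation of $[-1,2y_c+1]$ into the lower semicircle, using the Scorer-function form of $a_{1,M}$;
\item at $y=-1$, the identity $A_2(1,-1)=0$.
\end{itemize}

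The difference is at the step you flag as the main obstacle. The paper removes that obstacle instead of verifying it, by never deforming the contour for the full source $g$. It does three things.
\begin{itemize}
\item It splits $\phi_{1,0}^{[1]}/(u_p-\hat c)=\Upsilon_M+\Upsilon_R$. Here $\Upsilon_M$ is an explicit constant times $\alpha^2(y-\tilde y_{\hat c})^{-1}$ with $\tilde y_{\hat c}=-\sqrt{1-\hat c}$, and $\|\Upsilon_R\|_{X_2}\lesssim\alpha^2|\ln c_0|$. So $\Upsilon_R$ is a bounded source, and Lemma \ref{lem-est-ASS-ns} gives $\varepsilon^{1/3}\alpha^2|\ln c_0|$ with no $\kappa c_r$ loss.
\item It leaves the terms $j\ge2$ to the rough bound of Lemma \ref{lem-est-ASS}, since $(1+\kappa c_r)\alpha^2|\ln c_0|\ll1$ in $\mathbb H$.
\item In the remaining integral it replaces $(\eta_r')^{1/2}\mathrm{Hi}''(e^{-\frac{\pi}{2}i}\kappa\eta(x))$ by $\mathrm{Hi}''\bigl(e^{-\frac{\pi}{2}i}\kappa(x-y_c-\frac{ic_i}{u_p'(y_c)})\bigr)$. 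The difference is bounded on the real axis by $|x-y_c|\langle\kappa\eta\rangle^{-3}+\kappa|x-y_c|^2\langle\kappa\eta\rangle^{-4}$.
\end{itemize}
The contour is then moved only for an entire function times $(x-\tilde y_{\hat c})^{-1}$, where admissibility is immediate.

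Your version can also be closed, but the continuation has to be checked. $\eta_r$ extends holomorphically to a disk of radius $O(1)$ about $y_c$, since the other zero of $u_p-c_r$ is at $-y_c$. Each $\phi_{1,0}^{[j]}$ is an explicit iterated integral whose only nearby singularity is the branch point at $\tilde y_{\hat c}$, which lies in the upper half-plane. Induction then gives $|\phi_{1,0}^{[j]}|\lesssim C^j\alpha^{2j}$ on the arc, and there $|u_p-\hat c|\gtrsim c_r$.

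Some bookkeeping also needs fixing.
\begin{itemize}
\item The integral runs over $[y,0]$, not $[-1,0]$. For $-1<y<y_c$ you also need the piece $-\int_{-1}^{y}$; it is harmless because $|y-y_c|\le|x-y_c|$ there.
\item At $x=-1$ the correct product is $\varepsilon\kappa\langle\kappa c_r\rangle^{1/2}\cdot\alpha^2c_r^{-1}\lesssim\varepsilon\kappa^2\langle\kappa c_r\rangle^{-1/2}\alpha^2$. The expression you wrote carries an extra $c_r^{-1}$ and is not $\lesssim\varepsilon^{1/3}\alpha^2$.
\item To end with a single logarithm, use $\|(u_p-\hat c)g\|_{L^\infty}\lesssim\alpha^2$ in the $G_{A,M}$ estimate; this follows from your own bound $|\phi_{1,0}^{[j]}|\lesssim\alpha^{2j}$. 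Your combined bound with $|\ln c_0|$ gives $|\ln c_0|^2$.
\item By the proof of \eqref{eq-est-ASS-0}, the $a_2''$ and $a_1''(x-y_c)$ terms cause no $\kappa c_r$ loss and need no special treatment.
\end{itemize}
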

\begin{remark}
  If one directly applies \eqref{eq-est-ASS-0} in Lemma \ref{lem-est-ASS} in the regime $c_r\gg\varepsilon^{\frac{1}{3}}$, one could only obtain
  \begin{align*}
    \left\|\psi_{1,\alpha,a}^{[1]}\right\|_{L^\infty}\lesssim \varepsilon^{\frac{1}{3}}\alpha^2\left(1+ \kappa c_r\right)|\ln c_0|.
  \end{align*}
  This is weaker than \eqref{est-acc-psi-1-a}. On the upper branch, $c_r\sim \alpha^2\sim \varepsilon^{\frac{1}{5}}$, and the leading imaginary contribution to the Wronskian has size $\varepsilon^{\frac{2}{5}}$. Therefore, the rough estimate above gives an error of size
  \begin{align*}
    \varepsilon^{\frac{1}{3}}\alpha^2\left(1+ \kappa c_r\right)|\ln c_0|\sim \varepsilon^{\frac{2}{5}}|\ln c_0|,
  \end{align*}
  which is of the same order as the leading term up to a logarithmic factor and hence is not sufficient. This is why the sharper estimate \eqref{est-acc-psi-1-a} is needed.
\end{remark} 
\begin{proof}[Proof of Lemma \ref{lem-acc-psi-1-a}]
  Recalling the definition of expression of $\phi_{1,0}^{[1]}$ in \eqref{eq-phi-1-0-1}, we have
  \begin{align*}
  \frac{\phi_{1,0}^{[1]}(y)}{u_p(y)-\hat c}=&-\alpha^2\frac{\phi_{1,0} (y)}{u_p(y)-\hat c}\int^y_{-1}\phi_{2,0}(x) \left(u_p(x)-\hat c\right)dx-\alpha^2\frac{\phi_{2,0} (y)}{u_p(y)-\hat c}\int^0_{y}\phi_{1,0}(x) \left(u_p(x)-\hat c\right)dx\\
  =&-\alpha^2\int^y_{-1}\phi_{2,0}(x) \left(u_p(x)-\hat c\right)dx-\alpha^2\frac{\frac{y}{2(1-\hat c)}+\frac{u_p-\hat c}{4(1-\hat c)^{\frac{3}{2}}}\ln{\left(\frac{\sqrt{1-\hat c}+y}{\sqrt{1-\hat c}-y}\right)}}{\left(\sqrt{1-\hat c}+y\right)\left(\sqrt{1-\hat c}-y\right)}\int^0_{y}\left(u_p(x)-\hat c\right)^2dx.
\end{align*}
This term has one order singularity around $y_c$.

Let us define
\begin{align}\label{eq-yhc}
  \tilde y_{\hat c}=-\sqrt{1-\hat c}.
\end{align}
It is clear that $\left|\tilde y_{\hat c}+1\right|=O(|\hat c|)$ and $\Im(\tilde y_{\hat c})=\frac{1}{2}\hat c_i\left(1+O(|\hat c|)\right)>0$. Recalling the definition of $y_c$, we have
\begin{align*} 
  y_c-\tilde y_{\hat c}=\sqrt{1-\hat c}-\sqrt{1-c_r}=\frac{-i\hat c_i}{\sqrt{1-\hat c}+\sqrt{1-c_r}},
\end{align*}
therefore $\left|y_c-\tilde y_{\hat c}\right|\le |\hat c_i|$, and $\left|Re \left(y_c-\tilde y_{\hat c}\right)\right|\lesssim |\hat c_i|^2$.

The singularity of $\frac{\phi_{1,0}^{[1]}(y)}{u_p(y)-\hat c}$ can be represent by the following main part 
\begin{align*}
 \Upsilon_M=&\alpha^2\frac{y_c}{2(1-\hat c)\left(\sqrt{1-\hat c}+y\right)\left(\sqrt{1-\hat c}-\tilde y_{\hat c}\right)} \int^0_{y_c}\left(u_p(x)-\hat c\right)^2dx\\
 =&\alpha^2\frac{y_c}{4(1-\hat c)\tilde y_{\hat c} } \int^0_{y_c}\left(u_p(x)-\hat c\right)^2dx \frac{1}{y-\tilde y_{\hat c}}.
\end{align*}
We define
\begin{align*}
  \Upsilon_R=\frac{\phi_{1,0}^{[1]}(y)}{u_p(y)-\hat c}-\Upsilon_M.
\end{align*}
One can easily check that
\begin{align*}
  \left\|\Upsilon_R\right\|_{X_2}\lesssim \alpha^2|\ln c_0|.
\end{align*}
Then by Lemma \ref{lem-est-ASS-ns}, we have
\begin{align*}
   \left\|AirySolver_m(\varepsilon\Upsilon_R'')(y)\right\|_{L^\infty} \lesssim \varepsilon^{\frac{1}{3}}\alpha^2|\ln c_0|.
\end{align*}

Therefore, to prove \eqref{est-acc-psi-1-a}, we only need to estimate for $\varepsilon \alpha^2 AirySolver_m\left(\pa_y^2\frac{1}{ y-\tilde y_{\hat c}}\right)(y)$.

From the proof of \eqref{eq-est-ASS-0}, to estimate $AirySolver_m\left(\pa_y^2\frac{1}{ y-\tilde y_{\hat c}}\right)(y)$, the only troublesome term that generates extra $\kappa c_r$ is
\begin{align*}
  \frac{1}{\kappa\varepsilon}\int^0_{y}a_1''(x)\left(y-y_c\right)\frac{1}{x-\tilde y_{\hat c}}dx.
\end{align*}
Now we will use the precise information of the non-local term $a_1$ to improve the estimate \eqref{eq-est-ASS-0}.

Recall that we assume $c_r\gg \varepsilon^{\frac{1}{3}}$.

If $y\ge y_c$, then $\left|\frac{y-y_c}{x-\tilde y_{\hat c}}\right|\lesssim 1$, thus by \eqref{eq-yhc} and Lemma \ref{lem-est-a1-a2} we have
\begin{align*}
  &\frac{1}{\kappa\varepsilon}\left|\int^0_{y}a_1''(x)\left(y-y_c\right)\frac{1}{x-\tilde y_{\hat c}}dx\right|
  \lesssim\frac{1}{\kappa\varepsilon}\int^0_{y}\left|a_1''(x)\right| dx\lesssim  \varepsilon^{-\frac{2}{3}}.
\end{align*}
If $y_c-\frac{1}{\kappa}\le y\le y_c$, then $\left|y-y_c\right|\le \frac{1}{\kappa}$, and
\begin{align*}
  &\frac{1}{\kappa\varepsilon}\left|\int^0_{y} a_1''(x)\left(y-y_c\right)\frac{1}{x-\tilde y_{\hat c}}dx\right| \lesssim\varepsilon^{-\frac{2}{3}}|\ln c_0|.
\end{align*}

If $y\le y_c-\frac{1}{\kappa}$, then we write
\begin{align*}
  &\frac{1}{\kappa\varepsilon}\int^0_{y}a_1''(x)\frac{y-y_c}{x-\tilde y_{\hat c}}dx=\frac{1}{\kappa\varepsilon}\int^{2y_c+1}_{-1}a_1''(x)\frac{y-y_c}{x-\tilde y_{\hat c}}dx+\frac{1}{\kappa\varepsilon}\int^{0}_{2y_c+1}a_1''(x)\frac{y-y_c}{x-\tilde y_{\hat c}}dx-\frac{1}{\kappa\varepsilon}\int^y_{-1}a_1''(x)\frac{y-y_c}{x-\tilde y_{\hat c}}dx.
\end{align*}
In this case, it is clear that $\left|\frac{y-y_c}{x-\tilde y_{\hat c}}\right|\lesssim 1$ for $x\in [-1,y]\cup[2y_c+1,0]$, and
\begin{align*}
   \frac{1}{\kappa\varepsilon}\left|\int^{0}_{2y_c+1}a_1''(x)\left(y-y_c\right)\frac{1}{x-\tilde y_{\hat c}}dx\right|+\frac{1}{\kappa\varepsilon}\left|\int^y_{-1}a_1''(x)\left(y-y_c\right)\frac{1}{x-\tilde y_{\hat c}}dx\right|\lesssim\frac{1}{\kappa\varepsilon}\int^0_{y}\left|a_1''(x)\right| dx\lesssim  \varepsilon^{-\frac{2}{3}}.
\end{align*}
For the rest term, the integration on $[-1,2y_c+1]$, we will use precise expression of $a_1''$. Here it is easy to check that for $a_{1,SecM}$ and $a_{1,R}$ the estimates are good. 

Next, we give the estimate for the boundary term $a_{1,B}$. Recall that $y_c+1\sim c_r$ and $c_r\gg\varepsilon^{\frac{1}{3}}$. By Lemma \ref{lem-est-a1-a2}, for $y_c-\frac{y_c+1}{2}\le x\le 2y_c+1$, $\left|a_{1,B}\right|$ has strong exponential decay, and
\begin{align*}
  \left|a_{1,B}''(x)\frac{y-y_c}{\kappa\varepsilon}\frac{1}{x-\tilde y_{\hat c}}\right|\lesssim 1.
\end{align*}
For $-1\le x\le y_c-\frac{y_c+1}{2}$, we have $\left|\frac{y-y_c}{x-\tilde y_{\hat c}}\right|\lesssim 1$, $\frac{\left\langle \kappa \eta(x) \right\rangle^{\frac{3}{4}}}{\left\langle \kappa \eta(-1) \right\rangle^{\frac{3}{4}}}\lesssim 1$, and
\begin{align*}
  \left|a_{1,B}''(x)\frac{y-y_c}{\kappa\varepsilon}\frac{1}{x-\tilde y_{\hat c}}\right|\lesssim \frac{e^{-\frac{1}{C}|\kappa\eta_r(x)-\kappa\eta_r(0)|(|\kappa\eta(x)|^\frac12+|\kappa\eta(0)|^\frac12)}}{\varepsilon}.
\end{align*}
Therefore, it holds that
\begin{align*}
  \left|\frac{y-y_c}{\kappa\varepsilon}\int^{2y_c+1}_{-1}a_{1,B}''(x)\frac{1}{x-\tilde y_{\hat c}}dx\right|\lesssim \varepsilon^{-\frac{2}{3}} \left\langle \kappa c_r \right\rangle^{-\frac{1}{2}}.
\end{align*}

Next, we address the most subtle part: the contribution of the main term of the non-local term $a_{1}$ in the dangerous region, which reveals the special structure of the slow-mode solution. Recall that the main term of $a_1''(x)$ is $a_{1,M}''(x)$, and
\begin{align*}
  a_{1,M}''(x)=\frac{1}{2}\kappa\left(\eta_r'(x)\right)^{\frac{1}{2}} \mathrm{Hi}''(e^{-\frac{\pi}{2}i}\kappa \eta(x))+err,
\end{align*}
where $err=O(1)$ consists of the terms where derivative applied on $\eta_r'(x)$.

We write
\begin{align*}
  &\frac{y-y_c}{\kappa\varepsilon}\int^{2y_c+1}_{-1}\frac{1}{2}\kappa\left(\eta_r'(x)\right)^{\frac{1}{2}} \mathrm{Hi}''(e^{-\frac{\pi}{2}i}\kappa \eta(x))\frac{1}{x-\tilde y_{\hat c}}dx\\
  =&\frac{y-y_c}{2 \varepsilon}\int^{2y_c+1}_{-1} \left( \left(\eta_r'(x)\right)^{\frac{1}{2}} \mathrm{Hi}''(e^{-\frac{\pi}{2}i}\kappa \eta(x))-\mathrm{Hi}''\left(e^{-\frac{\pi}{2}i}\kappa (x-y_c-\frac{ic_i}{u_p'(y_c)})\right)\right)\frac{1}{x-\tilde y_{\hat c}}dx\\
  &+\frac{y-y_c}{2 \varepsilon}\int^{2y_c+1}_{-1} \mathrm{Hi}''\left(e^{-\frac{\pi}{2}i}\kappa (x-y_c-\frac{ic_i}{u_p'(y_c)})\right)\frac{1}{x-\tilde y_{\hat c}}dx\\
  =&I+II.
\end{align*}
For $I$, recalling \eqref{eq-Langertran}, Lemma \ref{lem-Langer}, and Remark \ref{rmk-Hi''}, we have $\eta=y-y_c+O(|y-y_c|^2)-\frac{ic_i}{u_p'(y_c)}$, $\eta_r'=1+O(|y-y_c|)$, and 
\begin{align*}
  &\left|\left(\eta_r'(x)\right)^{\frac{1}{2}} \mathrm{Hi}''(e^{-\frac{\pi}{2}i}\kappa \eta(x))-\mathrm{Hi}''\left(e^{-\frac{\pi}{2}i}\kappa (x-y_c-\frac{ic_i}{u_p'(y_c)})\right)\right|\\
  =&\left|\left(\left(\eta_r'(x)\right)^{\frac{1}{2}}-1\right) \mathrm{Hi}''(e^{-\frac{\pi}{2}i}\kappa \eta(x))+ \left(\mathrm{Hi}''(e^{-\frac{\pi}{2}i}\kappa \eta(x))-\mathrm{Hi}''\left(e^{-\frac{\pi}{2}i}\kappa (x-y_c-\frac{ic_i}{u_p'(y_c)})\right)\right)\right|\\
  =&\left|\left(\left(\eta_r'(x)\right)^{\frac{1}{2}}-1\right) \mathrm{Hi}''(e^{-\frac{\pi}{2}i}\kappa \eta(x))+ \int^{e^{-\frac{\pi}{2}i}\kappa \eta(x)}_{e^{-\frac{\pi}{2}i}\kappa (x-y_c-\frac{ic_i}{u_p'(y_c)})} \mathrm{Hi}'''(s)ds\right|\\
  \lesssim&|x-y_c|\left\langle \kappa\eta(x) \right\rangle^{-3}+\kappa|x-y_c|^2\left\langle \kappa\eta(x) \right\rangle^{-4}.
\end{align*}
It follows that
\begin{align*}
  \left|I\right|\lesssim& \frac{\left|y-y_c\right|}{\kappa\varepsilon}\int^{2y_c+1}_{-1} \left| \frac{x-y_c}{x-\tilde y_{\hat c}}\left\langle \kappa\eta(x) \right\rangle^{-3} \right|+\left| \frac{\kappa|x-y_c|^2}{x-\tilde y_{\hat c}}\left\langle \kappa\eta(x) \right\rangle^{-4} \right| dx\\
   \lesssim&\frac{\left|y-y_c\right|}{\kappa\varepsilon}\int^{2y_c+1}_{-1} \left\langle \kappa\eta(x) \right\rangle^{-3} dx\lesssim  \varepsilon^{-\frac{2}{3}}|y-y_c|.
\end{align*}
Next, for $II$, we use the fact that the integrand is analytic in $x$. Hence, we may deform the contour of integration from the real interval
\begin{align*}
  x\in [-1,2y_c+1]
\end{align*}
to the circular arc
\begin{align*}
  x=y_c+e^{i\theta}(y_c+1),\quad \theta\in [-\pi,0].
\end{align*}
Recall from \eqref{eq-yhc} that $\Im(\tilde y_{\hat c})>0$. It follows that the region enclosed by these two curves does not contain the singularity of $\frac{1}{x-\tilde y_{\hat c}}$.   Moreover, along the new contour, $\mathrm{Hi}''(z)$ admits the polynomial decay expansion described in \eqref{eq-expan-Hi''}, which will be crucial for the subsequent estimates.

By using the above properties, we have
\begin{align*}
  II=&\frac{y-y_c}{2 \varepsilon}\int^{2y_c+1}_{-1} \mathrm{Hi}''\left(e^{-\frac{\pi}{2}i}\kappa \left(x-y_c-\frac{ic_i}{u_p'(y_c)}\right)\right)\frac{1}{x-\tilde y_{\hat c}}dx\\
  =&\frac{y-y_c}{2 \varepsilon}\int^{0}_{-\pi} \mathrm{Hi}''\left(e^{-\frac{\pi}{2}i}\kappa \left(e^{i\theta}(y_c+1)-\frac{ic_i}{u_p'(y_c)}\right)\right)\frac{ie^{i\theta}(y_c+1)}{ y_c+e^{i\theta}(y_c+1)  -\tilde y_{\hat c}}d\theta.
\end{align*}
and
\begin{align*}
   \left|\mathrm{Hi}''\left(e^{-\frac{\pi}{2}i}\kappa \left(e^{i\theta}(y_c+1)-\frac{ic_i}{u_p'(y_c)}\right)\right)\frac{(y_c+1)}{ y_c+e^{i\theta}(y_c+1)  -\tilde y_{\hat c}}\right|\sim \left\langle \kappa c_r \right\rangle^{-3}\frac{y_c+1}{c_r}\sim\left\langle \kappa c_r \right\rangle^{-3}.
\end{align*}
It follows that
\begin{align*}
  \left|II\right|\lesssim \frac{\left|y-y_c\right|}{\varepsilon}\left\langle \kappa c_r \right\rangle^{-3}\lesssim \frac{c_r}{\varepsilon}\left\langle \kappa c_r \right\rangle^{-3}\lesssim \frac{1}{c_r^2}.
\end{align*}

Combining the above estimates, we have
\begin{align*}
   \left\|AirySolver_m(\varepsilon\Upsilon_M'')(y)\right\|_{L^\infty} \lesssim \varepsilon^{\frac{1}{3}}\alpha^2|\ln c_0|.
\end{align*}

Next, we study the derivative estimate for $y=-1$. For the same reason, here we only give the estimate for
\begin{align*}
  \varepsilon \alpha^2 \pa_yAirySolver_m\left(\pa_y^2\frac{1}{ y-\tilde y_{\hat c}}\right)(y)\Big|_{y=-1}.
\end{align*}

Recalling \eqref{eq-Airy-modi-2} the definition of $A_2(1,y)$, we have $A_2(1,-1)=0$. Therefore
\begin{align*}
  &\pa_yAirySolver_m\left(\pa_y^2\frac{1}{ y-\tilde y_{\hat c}}\right)(-1)= i \frac{2\pi}{\kappa\varepsilon} \int^0_{-1}a_1(x)\frac{1}{\left(\eta_r'(x)\right)^{\frac{1}{2}}}\pa_x^2 \frac{1}{x-\tilde y_{\hat c}}dx.
\end{align*}
For the main part, it holds that
\begin{align*}
   &\frac{1}{\kappa\varepsilon} \int^0_{-1}a_1(x)\pa_x^2 \frac{1}{x-\tilde y_{\hat c}}dx\\
   =&\frac{1}{\kappa\varepsilon} \int^0_{-1}a_1''(x) \frac{1}{x-\tilde y_{\hat c}}dx+\frac{-a_1(0) \frac{1}{(0-\tilde y_{\hat c})^2}+a_1(-1) \frac{1}{(-1-\tilde y_{\hat c})^2}-a_1'(0) \frac{1}{0-\tilde y_{\hat c}}+a_1'(-1) \frac{1}{-1-\tilde y_{\hat c}}}{\kappa\varepsilon}.
\end{align*}

For the boundary term, by using Lemma \ref{lem-est-a1-a2}, we have
\begin{align*}
  \left|\frac{-a_1(0) \frac{1}{(0-\tilde y_{\hat c})^2}+a_1(-1) \frac{1}{(-1-\tilde y_{\hat c})^2}-a_1'(0) \frac{1}{0-\tilde y_{\hat c}}+a_1'(-1) \frac{1}{-1-\tilde y_{\hat c}}}{\kappa\varepsilon}\right|\lesssim \frac{1}{\varepsilon}\frac{1}{ \left\langle \kappa c_r \right\rangle^{\frac{3}{2}}}.
\end{align*} 
 
For the integration term, we write
\begin{align*}
  \frac{1}{\kappa\varepsilon} \int^0_{-1}a_1''(x) \frac{1}{x-\tilde y_{\hat c}}dx=\frac{1}{\kappa\varepsilon} \int^{2y_c+1}_{-1}a_1''(x) \frac{1}{x-\tilde y_{\hat c}}dx+\frac{1}{\kappa\varepsilon} \int^0_{2y_c+1}a_1''(x) \frac{1}{x-\tilde y_{\hat c}}dx.
\end{align*}
By using Lemma \ref{lem-est-a1-a2}, one can easily check that
\begin{align*}
  \left|\frac{1}{\kappa\varepsilon} \int^0_{2y_c+1}a_1''(x) \frac{1}{x-\tilde y_{\hat c}}dx\right|\lesssim \frac{1}{\varepsilon}\frac{1}{ \left\langle \kappa c_r \right\rangle^{3}}.
\end{align*}
For $\frac{1}{\kappa\varepsilon} \int^{2y_c+1}_{-1}a_1''(x) \frac{1}{x-\tilde y_{\hat c}}dx$, similar to \eqref{est-acc-psi-1-a}, by using analyticity of $\mathrm{Hi}''$, we have
\begin{align*}
  \left|\frac{1}{\kappa\varepsilon} \int^{2y_c+1}_{-1}a_1''(x) \frac{1}{x-\tilde y_{\hat c}}dx\right|\lesssim \frac{1}{\varepsilon}\frac{1}{ \left\langle \kappa c_r \right\rangle^{2}}.
\end{align*}
Then by using the same technique to \eqref{est-acc-psi-1-a}, we have
\begin{align*}
   \left|\varepsilon \alpha^2 \pa_yAirySolver_m\left(\pa_y^2\frac{1}{ y-\tilde y_{\hat c}}\right)(-1)\right|\lesssim \varepsilon^{\frac{1}{3}} \frac{1}{c_r}\alpha^2|\ln c_0|,
\end{align*}
and then
\begin{align*}
  \left|  \pa_yAirySolver_m\left(i\varepsilon \pa_y^2 \left(\frac{\phi_{1,0}^{[1]}(y)}{u_p-\hat c}\right)\right)(-1)\right| \lesssim \varepsilon^{\frac{1}{3}} \frac{1}{c_r}\alpha^2|\ln c_0|.
\end{align*}

Following the proof of Lemma \ref{lem-real-Airy-Solver-S}, for the $\phi_{err}$ associated with $f''=i\varepsilon \pa_y^2 \left(\frac{\phi_{1,0}^{[1]}(y)}{u_p-\hat c}\right)$, we have
\begin{align}\label{est-acc-psi-1-a-err}
  \left\|\phi_{err}\right\|_{L^\infty}+\left\|\left(u_p-\hat c\right)\phi_{err}'\right\|_{L^\infty}\lesssim \varepsilon^{\frac{1}{3}}c_r\alpha^2|\ln c_0|.
\end{align}

Recall from \eqref{eq-est-phi10jRI} that $\phi_{1,0}^{[j]}$ shares the same structure as $\phi_{1,0}^{[1]}$, while enjoying better smallness, namely $\phi_{1,0}^{[j]} = O(\alpha^{2j})$. Even using the rough estimates in Lemma \ref{lem-real-Airy-Solver-S}, one can get 
\begin{equation}\label{est-acc-psi-1-a-j}
  \begin{aligned}    
    \left\|AirySolver \left(i\varepsilon \pa_y^2 \left(\frac{\sum^{\infty}_{j=2} \phi_{1,0}^{[j]}(y)}{u_p-\hat c}\right)\right)\right\|_{L^\infty}  \lesssim \left(1+ \kappa c_r\right)\varepsilon^{\frac{1}{3}}\alpha^4|\ln c_0|^2,\\
    \left|\pa_yAirySolver \left(i\varepsilon \pa_y^2 \left(\frac{\sum^{\infty}_{j=2} \phi_{1,0}^{[j]}(y)}{u_p-\hat c}\right)\right)(-1)\right| \lesssim \left(1+ \kappa c_r\right)\varepsilon^{\frac{1}{3}}\frac{1}{c_r}\alpha^4|\ln c_0|^2.
  \end{aligned}
\end{equation} 

The estimate \eqref{est-acc-psi-1-a} and \eqref{est-acc-psi-1-a-dx} follow directly. 

\end{proof}

\subsubsection{Refined estimates of $\pa_{c_r}\psi_{1,\alpha,a}^{[1]}$, $\pa_{c_i}\psi_{1,\alpha,a}^{[1]}$, and $\pa_{\nu}\psi_{1,\alpha,a}^{[1]}$} 

In this subsubsection, we study the parameter derivatives of $\psi_{1,\alpha,a}^{[1]}$. This is the most delicate part of the analysis. We derive estimates that are valid throughout the T--S eigen region $\mathbb H$. However, the level of precision obtained here is only needed near the upper branch neutral curve, namely $c_r\sim \varepsilon^{\frac{1}{5}}$. Away from this regime, weaker estimates would already be sufficient for our purposes.

\begin{lemma}\label{lem-acc-psi-1-a-cr}
  It holds that
  \begin{align}\label{est-acc-psi-1-a-cr-1}
    \left|\pa_{c_r}\psi_{1,\alpha,a}^{[1]}(-1)\right|\lesssim \frac{\alpha^2}{ \left\langle \kappa c_r \right\rangle^{\frac{3}{2}}}|\ln c_0|^2+\varepsilon^{-\frac{1}{6}}(c_r+\alpha^2)\alpha^2|\ln c_0|^2,
  \end{align}
    \begin{align}\label{est-acc-psi-1-a-cr-2}
    \left\|\pa_{c_r}\psi_{1,\alpha,a}^{[1]}\right\|_{L^\infty}\lesssim \alpha^2|\ln c_0|^2, 
  \end{align}
    \begin{align}\label{est-acc-psi-1-a-cr-3}
    \left\|\pa_{c_r}\psi_{1,\alpha,a}^{[1]}\right\|_{L^1}\lesssim  \frac{\alpha^2}{ \left\langle \kappa c_r \right\rangle^{\frac{3}{2}}}|\ln c_0|^2+\varepsilon^{-\frac{1}{6}}(c_r+\alpha^2)\alpha^2|\ln c_0|^2,
  \end{align}
    \begin{align}\label{est-acc-psi-1-a-cr-1-dx}
    \left|\pa_{c_r}\psi_{1,\alpha,a}^{[1]\prime}(-1)\right|\lesssim  \frac{\alpha^2}{c_r \left\langle \kappa c_r \right\rangle^{\frac{3}{2}}}|\ln c_0|^2+\varepsilon^{-\frac{1}{6}}\frac{c_r+\alpha^2}{c_r}\alpha^2|\ln c_0|^2.
  \end{align}
\end{lemma}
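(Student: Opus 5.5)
Differentiating the representation from the proof of Lemma \ref{lem-acc-psi-1-a} in $c_r$ is the natural route. Recall $\psi_{1,\alpha,a}^{[1]}=2AirySolver\bigl(i\varepsilon\pa_y^2(\sum_{j\ge1}\phi_{1,0}^{[j]}/(u_p-\hat c))\bigr)$, with the $j=1$ singular source split as $\Upsilon_M+\Upsilon_R$, where $\Upsilon_M$ is an explicit multiple of $\alpha^2(y-\tilde y_{\hat c})^{-1}$ and $\|\Upsilon_R\|_{X_2}\lesssim\alpha^2|\ln c_0|$. I would first reduce to the modified solver $AirySolver_m$, controlling the $\phi_{err}$ part as in Lemma \ref{lem-real-Airy-Solver-S} after differentiation. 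This costs at most a factor $\varepsilon^{-1/3}$ relative to \eqref{est-acc-psi-1-a-err}, so it is absorbed into the $\varepsilon^{-1/6}(c_r+\alpha^2)\alpha^2$ term. The $j\ge2$ tail is handled by \eqref{est-acc-psi-1-a-j} with the same loss, and the extra $\alpha^2$ makes it harmless. Then $\pa_{c_r}$ falls on three objects: the source coefficients ($\pa_{c_r}\Upsilon_R$, $\pa_{c_r}\tilde y_{\hat c}$ and the prefactor of $\Upsilon_M$), the Green function through $\kappa$, $\eta$ and $y_c$ (using Lemma \ref{lem-Langer}, in particular \eqref{eq-Langer-prop4}–\eqref{eq-Langer-prop5}), and the nonlocal terms $a_1,a_2$.

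For the source-derivative terms, differentiating $(y-\tilde y_{\hat c})^{-1}$ produces $(y-\tilde y_{\hat c})^{-2}$, one order more singular. I would integrate by parts once in $x$, writing $(x-\tilde y_{\hat c})^{-2}=-\pa_x(x-\tilde y_{\hat c})^{-1}$, to move the derivative onto $G_A$, and then rerun the case analysis of Lemmas \ref{lem-est-AS} and \ref{lem-est-ASS}. Each $\pa_x$ on the localized kernel costs $\kappa\langle\kappa\eta\rangle^{1/2}$, but only in the window $|x-y_c|\lesssim\kappa^{-1}$. This produces the $L^\infty$ bound \eqref{est-acc-psi-1-a-cr-2}, whose $\varepsilon^{-1/3}$ loss relative to \eqref{est-acc-psi-1-a} is concentrated near $y_c$. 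Integrating in $y$ over a window of width $\kappa^{-1}$ recovers the missing factor and gives the $L^1$ bound \eqref{est-acc-psi-1-a-cr-3}. For the terms where $\pa_{c_r}$ hits $\kappa\eta$ inside the Airy functions, I would use the chain rule $\pa_{c_r}\mathrm{Ai}(e^{i\theta}\kappa\eta)=e^{i\theta}\pa_{c_r}(\kappa\eta)\mathrm{Ai}'$ with $|\pa_{c_r}(\kappa\eta)|\lesssim\kappa$. Each such term is one $x$-derivative weight worse and is treated the same way.

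The main obstacle is the boundary value at $y=-1$, i.e.\ \eqref{est-acc-psi-1-a-cr-1} and \eqref{est-acc-psi-1-a-cr-1-dx}. There the dangerous contribution is $\frac{y-y_c}{\kappa\varepsilon}\int_{-1}^{2y_c+1}a_1''(x)(x-\tilde y_{\hat c})^{-1}dx$ together with its $c_r$-derivative. For the $a_{1,M}$ part, written through $\mathrm{Hi}''(e^{-\frac{\pi}{2}i}\kappa\eta)$, I would differentiate the contour-deformed representation on the arc $x=y_c+e^{i\theta}(y_c+1)$. After the substitution the integrand depends smoothly on $c_r$, and the expansion \eqref{eq-expan-Hi''} (differentiated once, giving $\mathrm{Hi}'''\sim z^{-4}$) yields the gain $\langle\kappa c_r\rangle^{-3}$ times a factor $c_r^{-1}$ from differentiating the arc radius and endpoints. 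The $a_{1,B}$ part uses Lemma \ref{lem-d-cr-a1B} in the three regions listed there. Only the region $|x+1|\lesssim|\ln c|\kappa^{-3/2}c_r^{-1/2}$ contributes, and the region bound $\langle\kappa c_r\rangle^{-2}|\ln c|$ times the short length $\kappa^{-3/2}c_r^{-1/2}$ gives the $\alpha^2\langle\kappa c_r\rangle^{-3/2}|\ln c_0|^2$ term. Since $A_2(1,-1)=0$, the derivative at $-1$ only sees $a_1$, and the same argument applied after the integration by parts used for \eqref{est-acc-psi-1-a-dx} gives \eqref{est-acc-psi-1-a-cr-1-dx}, with the extra $c_r^{-1}$ coming from the boundary factors $(-1-\tilde y_{\hat c})^{-1}$.
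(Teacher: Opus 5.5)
Your overall plan follows the paper's. You reduce to $AirySolver_m$, absorb $\phi_{err}$ and the $j\ge2$ tail by a crude derivative loss, control $a_{1,B}$ with Lemma \ref{lem-d-cr-a1B}, and get the $\langle\kappa c_r\rangle^{-3}$ gain from the analyticity of $\mathrm{Hi}$ on the lower half-circle.

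At $y=-1$ you do one thing differently: you deform the contour first and differentiate afterwards. Then $\pa_{c_r}(x-\tilde y_{\hat c})^{-1}$ is only evaluated on the arc, where $|x-\tilde y_{\hat c}|\gtrsim c_r$. The paper instead differentiates on the real line and then has to handle $(x-\tilde y_{\hat c})^{-2}$ separately. Your version is legitimate and a bit cleaner; note also that at $y=-1$ the localized kernel vanishes identically, since $A_2(2,-1)=0$, so only $a_1$ enters. It does require that the remaining pieces be differentiated directly, with no further integration by parts near $x=-1$. These pieces are the linearization error (the term $I$ in the proof of Lemma \ref{lem-acc-psi-1-a}) and $a_{1,SecM}$, $a_{1,R}$, $a_{1,B}$.

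The gap is in the step you use for the general-$y$ bounds. There you absorb the extra power via $(x-\tilde y_{\hat c})^{-2}=-\pa_x(x-\tilde y_{\hat c})^{-1}$ and a third integration by parts onto $G_A$. You only discuss its effect on the localized kernel, but the trouble is in the nonlocal kernel $E$.
\begin{itemize}
\item Over all of $[-1,0]$ the step produces the endpoint term $\pa_x^2G_A(-1,y)\,g(-1)$, with $g=\frac{1}{2\sqrt{1-\hat c}}(x-\tilde y_{\hat c})^{-1}$. This contains $\frac{-1-y_c}{\kappa\varepsilon}a_1''(-1)g(-1)$.
\item For $\kappa c_r\gg1$ one has $|a_1''(-1)|\sim\kappa$. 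It is dominated by $a_{1,B}''(-1)$, whose growing and decaying exponentials cancel at $x=-1$.
\item So this term has size $\varepsilon^{-1}$ for every $y\in(-1,0]$, which is $\alpha^2$ after the prefactor $\varepsilon\alpha^2$. The endpoint $x=0$ similarly gives $|y-y_c|\varepsilon^{-1}$.
\item On the upper branch the right-hand side of \eqref{est-acc-psi-1-a-cr-3} is $o(\alpha^2)$, so that bound fails. So would \eqref{est-acc-psi-1-a-cr-1}, if you used the same step there. The $L^\infty$ bound survives.
\end{itemize}
The true contribution is small only because these endpoint terms cancel against the $a_{1,B}'''$ part of $\int\pa_x^3G_A\,g\,dx$. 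Estimating the pieces separately cannot see this cancellation.

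If you meant to integrate by parts only on $|x-y_c|\lesssim\kappa^{-1}$, that window is too narrow. There $|a_1''|\sim\kappa$ and $|g|\sim\kappa$, so the endpoint terms at $y=-1$ are of order $\kappa c_r\varepsilon^{-1}$.

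What works, and what the paper does on the upper branch, is to integrate by parts only on $|x-y_c|\le\frac{y_c+1}{2}$.
\begin{itemize}
\item At the window's endpoints $a_{1,M}''\sim\kappa\langle\kappa c_r\rangle^{-3}$, $a_{1,B}$ is exponentially small, and $|g|\sim c_r^{-1}$.
\item Inside the window, $\pa_x^3G_A$ is treated like $\pa_{c_r}\pa_x^2G_A$, using the contour argument for $\mathrm{Hi}'''$ on the nonlocal part.
\item Outside the window, you use $|x-\tilde y_{\hat c}|^{-2}\lesssim c_r^{-1}|x-\tilde y_{\hat c}|^{-1}$ and rerun Lemma \ref{lem-acc-psi-1-a}.
\end{itemize}

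Two smaller points. First, the crude $c_r$-derivative loss for $\phi_{err}$ and the tail is $\kappa\langle\kappa\eta\rangle^{1/2}\lesssim\varepsilon^{-1/2}$, not $\varepsilon^{-1/3}$. This is harmless, since $\varepsilon^{-1/2}\cdot\varepsilon^{1/3}c_r\alpha^2$ is precisely the $\varepsilon^{-1/6}c_r\alpha^2$ term in the statement. Second, you should also list the endpoint terms $\pa_{c_r}\bigl(a_1^{(k)}(-1)\bigr)$, $k=0,1$, produced by the two global integrations by parts behind $\int a_1''(x)(x-\tilde y_{\hat c})^{-1}dx$. By Lemma \ref{lem-d-cr-a1B} they are a second source of the $\alpha^2\langle\kappa c_r\rangle^{-3/2}|\ln c_0|$ term.
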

\begin{proof}
  Similar to Lemma \ref{lem-acc-psi-1-a}, the most troublesome part is 
  \begin{align*}
    \varepsilon \alpha^2\pa_{c_r} \left(AirySolver_m\left(\pa_y^2\frac{1}{ y-\tilde y_{\hat c}}\right)(y)\right).
  \end{align*}
Following the proof of \eqref{eq-est-ASS-0}, we check the $c_r$-derivative on each terms of $AirySolver_m\left(\pa_y^2\frac{1}{ y-\tilde y_{\hat c}}\right)(y)$.

First, we claim that 
\begin{align}\label{eq-d-cr-A11}
  \left|\pa_{c_r} A_1(1,y)\right|\lesssim \left\langle \kappa\eta(y) \right\rangle^{-\frac{1}{4}}e^{-\frac{2}{3}\frac{1}{\sqrt2}|\kappa\eta(y)|^{\frac{1}{2}}\kappa\eta_r(y)}.
\end{align} 
Recall \eqref{eq-kappa} the definition of $\kappa$, we have $\left|\pa_{c_r}\kappa\right|\lesssim \kappa$. From the definition of $\eta$ and Lemma \ref{lem-Langer}, we can see that $\left|\pa_{c_r}\eta_r\right|\lesssim 1$ and $\left|\pa_{c_r}\eta_i\right|\lesssim \left|c_i\right|$. Therefore, it holds that
\begin{align*}
  &\pa_{c_r} A_1(1,y)=\frac{\pa_{c_r}\left(\mathcal A_1(1,\kappa \eta(y))\right)}{\kappa\left(\eta_r'(y)\right)^{\frac{3}{2}}}+err\\
  =&\frac{\kappa \pa_{c_r}\left(\eta(y)\right) \mathrm{Ai}(e^{\frac{\pi}{6}i}\kappa \eta(y))}{\kappa\left(\eta_r'(y)\right)^{\frac{3}{2}}}+err=\frac{ \pa_{c_r}\left(\eta_r(y)\right) \mathrm{Ai}(e^{\frac{\pi}{6}i}\kappa \eta(y))}{ \left(\eta_r'(y)\right)^{\frac{3}{2}}}+err,
\end{align*} 
then \eqref{eq-d-cr-A11} follows directly. For the same reason, we have
\begin{align}\label{eq-d-cr-A1k}
  \left|\pa_{c_r} A_1(k+1,y)\right|\lesssim  \kappa^{-k}\left\langle \kappa\eta(y) \right\rangle^{-\frac{1}{4}-\frac{1}{2}k}e^{-\frac{2}{3}\frac{1}{\sqrt2}|\kappa\eta(y)|^{\frac{1}{2}}\kappa\eta_r(y)},
\end{align}
\begin{align}\label{eq-d-cr-A2k}
  \left|\pa_{c_r} A_2(k+1,y)\right|\lesssim  \kappa^{-k}\left\langle \kappa\eta(y) \right\rangle^{-\frac{1}{4}-\frac{1}{2}k}e^{\frac{2}{3}\frac{1}{\sqrt2}|\kappa\eta(y)|^{\frac{1}{2}}\kappa\eta_r(y)}.
\end{align}

With the above information, we check the boundary terms of $AirySolver_m\left(\pa_y^2\frac{1}{ y-\tilde y_{\hat c}}\right)(y)$.  Taking $f(x)=\frac{1}{x-\tilde y_{\hat c}}$, by \eqref{eq-yhc} the definition of $\tilde y_{\hat c}$, one can easily check that
\begin{align*}
  f(-1)\sim \frac{1}{c_r},\quad f'(-1)\sim \pa_{c_r}f(-1)\sim \frac{1}{c_r^2},\quad  \pa_{c_r}f'(-1)\sim \frac{1}{c_r^3},\quad  f(0)\sim f'(0)\sim \pa_{c_r}f(0)\sim \pa_{c_r}f'(0)\sim 1.
\end{align*}

Here we only give the estimate for the most delicate term $\pa_{c_r} \left(\pa_xG_A(-1,y)f(-1)\right)$, and the other terms can be treated in similar way.

By \eqref{eq-d-cr-A1k} and \eqref{eq-d-cr-A2k}, we have
\begin{align*}
  &\kappa^2\left|\pa_{c_r}\left(A_2'(-1)A_1(2,y)\right)\right| \\
  \lesssim&\kappa^2\left(\left\langle \kappa \eta(-1) \right\rangle^{\frac{3}{4}}\left\langle \kappa\eta(y) \right\rangle^{-\frac{5}{4}}+\left\langle \kappa \eta(-1) \right\rangle^{\frac{1}{4}}\left\langle \kappa\eta(y) \right\rangle^{-\frac{3}{4}}\right)e^{-\frac{1}{C}\left|\kappa\eta_r(-1)-\kappa\eta_r(y)  \right|\left( |\kappa\eta(y)|^{\frac{1}{2}}+|\kappa\eta(-1)|^{\frac{1}{2}} \right) }\\
   \lesssim&\kappa^2\left\langle \kappa c_r \right\rangle^{-\frac{1}{2}}.
\end{align*}

Next, we study the non-local term. It holds that
\begin{align*}
  \left|\pa_{c_r} \left(a_1'(-1) \left(-1-y_c\right)\right)\right|\lesssim \left|\pa_{c_r} \left(a_1'(-1)\right) \left(-1-y_c\right)\right|+\left|\left(a_1'(-1)\right)\right|.
\end{align*}
From the expansion of $a_1(x)$ and Lemma \ref{lem-est-a1-a2}, it is easy to see that
\begin{align*}
  \kappa^2\left(\left|\pa_{c_r} \left(a_{1,M}'(-1)\right)\right|+\left|\pa_{c_r} \left(a_{1,SecM}'(-1)\right)\right|+\left|\pa_{c_r} \left(a_{1,R}'(-1)\right)\right|\right)\lesssim \kappa^3\left\langle \kappa c_r \right\rangle^{-3}+\kappa^2\left\langle \kappa c_r \right\rangle^{-\frac{1}{2}}.
\end{align*}
For $a_{1,B}$, by using Lemma \ref{lem-d-cr-a1B}, we have
\begin{align*}
  \kappa^2 \left|\pa_{c_r} \left(a_{1,B}'(-1)\right)\right| \lesssim \kappa^3\left\langle \kappa c_r \right\rangle^{-\frac{3}{2}}|\ln c_0|.
\end{align*}
Therefore, it holds that
\begin{align*}
  \kappa^2\left|\pa_{c_r} \left(a_1'(-1) \left(-1-y_c\right)\right)\right|\lesssim \left(\kappa^3\left\langle \kappa c_r \right\rangle^{-3}+ \kappa^3\left\langle \kappa c_r \right\rangle^{-\frac{3}{2}}|\ln c_0|\right)c_r\lesssim \kappa^2\left\langle \kappa c_r \right\rangle^{-2}+ \kappa^2\left\langle \kappa c_r \right\rangle^{-\frac{1}{2}}|\ln c_0|.
\end{align*}

Recall the expansion of $a_2(x)$. By Lemma \ref{lem-a1-a2-exp} and Lemma \ref{lem-est-a1-a2}, we have
\begin{align*}
  \left|\kappa^2\pa_{c_r}a_2'(-1)\right|\lesssim \kappa^2\left\langle \kappa c_r \right\rangle^{-\frac{1}{2}}.
\end{align*}

Therefore, recalling $f(-1)\sim \frac{1}{c_r}$, we have
\begin{align*}
  \left|\pa_{c_r} \left(\pa_xG_A(-1,y)\right)f(-1)\right|\lesssim \left(\kappa^2\left\langle \kappa c_r \right\rangle^{-2}+ \kappa^2\left\langle \kappa c_r \right\rangle^{-\frac{1}{2}}|\ln c_0|\right)\frac{1}{c_r}\lesssim \frac{\kappa^3|\ln c_0|}{\left\langle \kappa c_r \right\rangle^{\frac{3}{2}}}.
\end{align*}

From the proof of \eqref{eq-est-ASS-0} and the fact that $\pa_{c_r}f(-1)\sim \frac{1}{c_r^2}$, we have
\begin{align*}
  \left| \pa_xG_A(-1,y)\pa_{c_r}f(-1)\right|\lesssim\frac{\kappa^3}{\left\langle \kappa c_r \right\rangle^{\frac{3}{2}}}.
\end{align*}
It follows that
\begin{align*}
  \left|\pa_{c_r} \left(\pa_xG_A(-1,y)f(-1)\right)\right|\lesssim \frac{\kappa^3|\ln c_0|}{\left\langle \kappa c_r \right\rangle^{\frac{3}{2}}}.
\end{align*}

Next, we study the integration part. We first study $\int^0_{-1}\left(\pa_{c_r}\pa_x^2G_A(x,y)\right)\frac{1}{x-\tilde y_{\hat c}}dx$.

For the non-local term part, we first study $\int^0_{-1}\left(\pa_{c_r}\pa_x^2E(x,y)\right)\frac{1}{x-\tilde y_{\hat c}}dx$. Here we only give the proof for the terms related to $\pa_{c_r}a_2''$ and $\pa_{c_r}\left(a_1''(x)\left(y-y_c\right)\right)$, the rest terms can be treated in the same way.

\paragraph{1) term $\frac{1}{\kappa\varepsilon} \int^y_{-1} \frac{\pa_{c_r}a_2''(x)}{x-\tilde y_{\hat c}} dx$.} 

Recalling the expansion of $a_2(x)$, using Lemma \ref{lem-est-a1-a2} and techniques in Lemma \ref{lem-d-cr-a1B}, one can easily check that
\begin{align*}
  \frac{1}{\kappa\varepsilon}\left|\int^y_{-1} \frac{\pa_{c_r}a_{2,B}''(x)+\pa_{c_r}a_{2,R}''(x)}{x-\tilde y_{\hat c}}dx\right| \lesssim\frac{1}{\varepsilon \left\langle \kappa c_r \right\rangle}|\ln c_0|.
\end{align*}

Recall \eqref{eq-exp-a2M} the expression of $a_2(x)$, we have
\begin{align*}
  \pa_{c_r}a_{2,M}''(x)=\frac{-1}{2\left(\eta_r'(x)\right)^{\frac{1}{2}}} \left(\eta(x)\pa_{c_r}\kappa +\kappa \pa_{c_r}\eta(x)\right)\mathrm{Hi}^{(5)}(e^{-\frac{\pi}{2}i}\kappa \eta(x))+err,
\end{align*}
where the term $err$ collects all contributions in which the $c_r$- or $x$-derivatives fall on $\eta_r'(x)$. One can easily check that
\begin{align*}
  \left|err\right|+ \left|\eta(x)\pa_{c_r}\kappa\mathrm{Hi}^{(5)}(e^{-\frac{\pi}{2}i}\kappa \eta(x)) \right|\lesssim 1.
\end{align*}
Therefore, for the same reason as Lemma \ref{lem-acc-psi-1-a}, to estimate $\frac{1}{\kappa\varepsilon} \int^y_{-1} \frac{\pa_{c_r}a_{2,M}''(x)}{x-\tilde y_{\hat c}} dx$ it suffice to estimate
\begin{align*}
  \frac{1}{\kappa\varepsilon} \int^y_{-1}\kappa \mathrm{Hi}^{(5)}(e^{-\frac{\pi}{2}i}\kappa \eta(x))\frac{1}{x-\tilde y_{\hat c}}dx.
\end{align*}

\paragraph{1.1) Case $-1< y\le 2y_c+1$.} When $y$ is close to $y_c$, it is not convenient to exploit the analyticity of $\mathrm{Hi}^{(5)}$. We therefore get the following rough estimate:
\begin{align*}
  &\frac{1}{\kappa\varepsilon}\left|\int^y_{-1}\kappa \mathrm{Hi}^{(5)}(e^{-\frac{\pi}{2}i}\kappa \eta(x))\frac{1}{x-\tilde y_{\hat c}}dx\right| \lesssim\frac{1}{\varepsilon}|\ln c_0|.
\end{align*}
This pointwise estimate is not optimal. In fact, it can be improved; for instance, as $y\to -1$, the integral above tends to $0$. However, for the purposes of the present proof, the bound stated above is already sufficient. Moreover, since in this case the interval $[-1,2y_c+1]$ is sufficiently short, 
we have the following $L^1$ estimate:
\begin{align*}
  \int_{-1}^0 \chi_{[-1,2y_c+1]}(y) \frac{1}{\kappa\varepsilon}\left|\int^y_{-1}\kappa \mathrm{Hi}^{(5)}(e^{-\frac{\pi}{2}i}\kappa \eta(x))\frac{1}{x-\tilde y_{\hat c}}dx\right| dy\lesssim  \frac{c_r}{\varepsilon}|\ln c_0|.
\end{align*}

\paragraph{1.2) Case $2y_c+1< y\le 0$.} For this case, we have
\begin{align*}
  &\frac{1}{\kappa\varepsilon} \int^y_{-1}\kappa \mathrm{Hi}^{(5)}(e^{-\frac{\pi}{2}i}\kappa \eta(x))\frac{1}{x-\tilde y_{\hat c}}dx\\
  =&\frac{1}{\kappa\varepsilon} \int^{2y_c+1}_{-1}\kappa \mathrm{Hi}^{(5)}(e^{-\frac{\pi}{2}i}\kappa \eta(x))\frac{1}{x-\tilde y_{\hat c}}dx+\frac{1}{\kappa\varepsilon} \int^{y}_{2y_c+1}\kappa \mathrm{Hi}^{(5)}(e^{-\frac{\pi}{2}i}\kappa \eta(x))\frac{1}{x-\tilde y_{\hat c}}dx.
\end{align*}
The second term
\begin{align*}
  \left|\frac{1}{\kappa\varepsilon} \int^{y}_{2y_c+1}\kappa \mathrm{Hi}^{(5)}(e^{-\frac{\pi}{2}i}\kappa \eta(x))\frac{1}{x-\tilde y_{\hat c}}dx\right|\lesssim \left|\frac{1}{\kappa\varepsilon} \int^{y}_{2y_c+1}\frac{\kappa^2}{\left\langle \kappa \eta(x) \right\rangle^6} dx\right|\lesssim \frac{1}{ \varepsilon\left\langle \kappa c_r \right\rangle^5}.
\end{align*}
For the integration on $[-1,2y_c+1]$, we write
\begin{align*}
   &\frac{1}{\kappa\varepsilon} \int^{2y_c+1}_{-1}\kappa \mathrm{Hi}^{(5)}(e^{-\frac{\pi}{2}i}\kappa \eta(x))\frac{1}{x-\tilde y_{\hat c}}dx\\
   =&\frac{1}{\varepsilon}\int^{2y_c+1}_{-1} \left(\mathrm{Hi}^{(5)}(e^{-\frac{\pi}{2}i}\kappa \eta(x))-\mathrm{Hi}^{(5)}\left(e^{-\frac{\pi}{2}i}\kappa (x-y_c-\frac{ic_i}{u_p'(y_c)})\right)\right)\frac{1}{x-\tilde y_{\hat c}}dx\\
  &+\frac{1}{\varepsilon}\int^{2y_c+1}_{-1} \mathrm{Hi}^{(5)}\left(e^{-\frac{\pi}{2}i}\kappa (x-y_c-\frac{ic_i}{u_p'(y_c)})\right)\frac{1}{x-\tilde y_{\hat c}}dx\\
  =&I+II.
\end{align*}
By using the same technique as Lemma \ref{lem-acc-psi-1-a}, we have
\begin{align*}
  |I|\lesssim \left|\frac{1}{\varepsilon}\int^{2y_c+1}_{-1}\frac{\kappa (x-y_c)}{\left\langle \kappa\eta(x) \right\rangle^6} dx\right|\lesssim \varepsilon^{-\frac{2}{3}},
\end{align*}
and
\begin{align*}
  |II|\lesssim \frac{1}{\varepsilon}\left|\int^{0}_{-\pi} \mathrm{Hi}^{(5)}\left(e^{-\frac{\pi}{2}i}\kappa (e^{i\theta}(y_c+1)-\frac{ic_i}{u_p'(y_c)})\right)\frac{ie^{i\theta}(y_c+1)}{ y_c+e^{i\theta}(y_c+1)  -\tilde y_{\hat c}}d\theta\right|\lesssim \frac{1}{ \varepsilon\left\langle \kappa c_r \right\rangle^6}.
\end{align*}

\paragraph{2) term $\frac{1}{\kappa\varepsilon} \int^0_{y} \frac{\pa_{c_r} \left(a_1''(x)\left(y-y_c\right)\right)}{x-\tilde y_{\hat c}} dx$.} 

It holds that
\begin{align*}
  \pa_{c_r}\left(a_1''(x)\left(y-y_c\right)\right)= \left(\pa_{c_r}a_1''(x)\right)\left(y-y_c\right)- a_1''\left(x\right)\frac{1}{2\sqrt{1-c_r}},
\end{align*}
here the second term can be treated in the same way to Lemma \ref{lem-acc-psi-1-a}, we only focus on the first term. 

\paragraph{2.1) Case $2y_c+1\le y\le 0$.} For this case, we have $\left|\frac{y-y_c}{x-\tilde y_{\hat c}}\right|\lesssim 1$, and
\begin{align*}
  \left|\frac{1}{\kappa\varepsilon}\int^0_{y} \pa_{c_r}a_{1,M}''(x) \frac{y-y_c}{x-\tilde y_{\hat c}}dx\right|\lesssim \left|\frac{1}{\kappa\varepsilon}\int^0_{2y_c+1}\kappa^2\left\langle \kappa\eta(x) \right\rangle^{-4} dx\right|\lesssim \frac{1}{\varepsilon}\frac{1}{ \left\langle \kappa c_r \right\rangle^{2}}.
\end{align*}
By using Lemma \ref{lem-d-cr-a1B}, we have
\begin{align*}
  \left|\frac{1}{\kappa\varepsilon}\int^0_{y} \pa_{c_r}a_{1,B}''(x) \frac{y-y_c}{x-\tilde y_{\hat c}}dx\right|\lesssim \frac{1}{\kappa\varepsilon} \left( \varepsilon^2+ \kappa\left|\ln c_0\right| \frac{C \left|\ln c_0\right|}{\kappa^{\frac{3}{2}}}\right)\lesssim \kappa^{\frac{3}{2}}\left|\ln c_0\right|^2.
\end{align*}
Then by the expansion of $a_1(x)$, one can easily check that
\begin{align*}
  \left|\frac{1}{\kappa\varepsilon}\int^0_{y} \left(\pa_{c_r}a_{1,SecM}''(x)+\pa_{c_r}a_{1,R}''(x)\right)\frac{y-y_c}{x-\tilde y_{\hat c}}dx\right|\lesssim   \kappa^2.
\end{align*}

\paragraph{2.2) Case $y_c-\frac{1}{\kappa}\le y\le 2y_c+1$.} For $y_c\le y\le 2y_c+1$, we have $\left|\frac{y-y_c}{x-\tilde y_{\hat c}}\right|\lesssim 1$, and for $y_c-\frac{1}{\kappa}\le y\le y_c$, we have $|y-y_c|\le\frac{1}{\kappa}$. Therefore
\begin{align*}
  \left|\frac{1}{\kappa\varepsilon}\int^0_{y} \pa_{c_r}a_{1,M}''(x) \frac{y-y_c}{x-\tilde y_{\hat c}}dx\right|\lesssim  \frac{1}{\varepsilon}|\ln c_0|.
\end{align*}
This estimate is not good, however, as the interval $[y_c-\frac{1}{\kappa},2y_c+1]$ is short, we have
\begin{align*}
  \int_{-1}^0 \chi_{[y_c-\frac{1}{\kappa},2y_c+1]}(y) \frac{1}{\kappa\varepsilon}\left|\int^0_{y}\left(\pa_{c_r}a_1''(x)\right)\frac{y-y_c}{x-\tilde y_{\hat c}}dx\right| dy\lesssim  \frac{c_r}{\varepsilon}|\ln c_0|.
\end{align*}

\paragraph{2.3) Case $-1\le y\le y_c-\frac{1}{\kappa}$.} Similar to Lemma  \ref{lem-acc-psi-1-a}, we write
\begin{align*}
  &\frac{1}{\kappa\varepsilon}\int^0_{y}\left(\pa_{c_r}a_1''(x)\right)\frac{y-y_c}{x-\tilde y_{\hat c}}dx\\
  =&\frac{1}{\kappa\varepsilon}\int^{2y_c+1}_{-1}\left(\pa_{c_r}a_1''(x)\right)\frac{y-y_c}{x-\tilde y_{\hat c}}dx +\frac{1}{\kappa\varepsilon}\int^{0}_{2y_c+1}\left(\pa_{c_r}a_1''(x)\right)\frac{y-y_c}{x-\tilde y_{\hat c}}dx-\frac{1}{\kappa\varepsilon}\int^y_{-1}\left(\pa_{c_r}a_1''(x)\right)\frac{y-y_c}{x-\tilde y_{\hat c}}dx.
\end{align*}
The second term can be treated as Case 2.1), and the last term can be treated as Case 2.2).

For the main term, the integration on $[-1,2y_c+1]$, similar to Lemma \ref{lem-acc-psi-1-a}, we use the analyticity of $\mathrm{Hi}^{(3)}$ to get
\begin{align*}
  \left|\frac{1}{\kappa\varepsilon}\int^{2y_c+1}_{-1}\left(\pa_{c_r}a_1''(x)\right)\frac{y-y_c}{x-\tilde y_{\hat c}}dx\right|\lesssim \frac{c_r}{\varepsilon} |\ln c_0|+\frac{1}{\varepsilon}\frac{1}{ \left\langle \kappa c_r \right\rangle^{3}}+\frac{1}{\varepsilon}\frac{1}{ \left\langle \kappa c_r \right\rangle^{\frac{3}{2}}}|\ln c_0|^2.
\end{align*}

For $y=-1$, $\int^0_{-1}\left(\pa_{c_r}\pa_x^2E(x,y)\right)\frac{1}{x-\tilde y_{\hat c}}dx$ only contains the $a_1''(x)\left(y-y_c\right)$ related term. From the above analysis, we can see that
\begin{align*}
  \left|\int^0_{-1}\left(\pa_{c_r}\pa_x^2E(x,-1)\right)\frac{1}{x-\tilde y_{\hat c}}dx\right|\lesssim \frac{c_r}{\varepsilon} |\ln c_0|+\frac{1}{\varepsilon}\frac{1}{ \left\langle \kappa c_r \right\rangle^{\frac{3}{2}}}|\ln c_0|+\frac{1}{\varepsilon}\frac{1}{ \left\langle \kappa c_r \right\rangle^{\frac{3}{2}}}|\ln c_0|^2.
\end{align*}

Next, we study $\int^0_{-1}\left(\pa_{c_r}\pa_x^2G_{A,M}(x,y)\right)\frac{1}{x-\tilde y_{\hat c}}dx$.

From \eqref{eq-d-cr-A1k} and \eqref{eq-d-cr-A2k}, we have
\begin{align*}
  &\left|\frac{2\pi}{\kappa\varepsilon}\int^y_{-1} \frac{\pa_{c_r}\left(A_1(2,y) A_2''(x)\right)}{\left(\eta_r'(x)\right)^{\frac{1}{2}}}\frac{1}{x-\tilde y_{\hat c}}dx+\frac{2\pi}{\kappa\varepsilon}\int^0_{y} \frac{\pa_{c_r}\left(A_1''(x) A_2(2,y)\right)}{\left(\eta_r'(x)\right)^{\frac{1}{2}}}\frac{1}{x-\tilde y_{\hat c}}  dx\right|\\
\lesssim&\frac{1}{\varepsilon}\left|\int^0_{-1}\left(\frac{\left\langle \kappa\eta(x) \right\rangle^{\frac{3}{4}}}{\left\langle \kappa\eta(y) \right\rangle^{\frac{3}{4}}} +\frac{\left\langle \kappa\eta(x) \right\rangle^{\frac{5}{4}}}{\left\langle \kappa\eta(y) \right\rangle^{\frac{5}{4}}} \right)  e^{-\frac{1}{C}\left|\kappa\eta_r(x)-\kappa\eta_r(y)  \right|\left( |\kappa\eta(y)|^{\frac{1}{2}}+|\kappa\eta(x)|^{\frac{1}{2}} \right) }\frac{1}{x-\tilde y_{\hat c}}dx\right|.
\end{align*}
Similar to Lemma \ref{lem-est-AS}, we write
\begin{align}\label{eq-I-GM,0,c_r}
  I_{GMS,0,c_r}(x,y) = \frac{1}{\varepsilon }\left(\frac{\left\langle \kappa\eta(x) \right\rangle^{\frac{3}{4}}}{\left\langle \kappa\eta(y) \right\rangle^{\frac{3}{4}}} +\frac{\left\langle \kappa\eta(x) \right\rangle^{\frac{5}{4}}}{\left\langle \kappa\eta(y) \right\rangle^{\frac{5}{4}}} \right)  e^{-\frac{1}{C}\left|\kappa\eta_r(x)-\kappa\eta_r(y)  \right|\left( |\kappa\eta(y)|^{\frac{1}{2}}+|\kappa\eta(x)|^{\frac{1}{2}} \right) }\frac{1}{x-\tilde y_{\hat c}},
\end{align}
and divided the estimate to different cases.

Case 1. $|y-y_c|\ge \frac{1}{2}$.

Case 1.1. $|x-y|\le \frac{1}{4}$. Similar to the estimate of \eqref{eq-est-AS-1}, we have
\begin{align*}
  &\left|\int^0_{-1}I_{GMS,0,c_r}(x,y)\chi_{\text{Case 1.1}}dx\right|\lesssim \int^0_{-1}\varepsilon^{-1}e^{-\frac{1}{C} \kappa^{\frac{3}{2}} \left|x-y\right|}dx\lesssim \varepsilon^{-\frac{1}{2}}.
\end{align*}

Case 1.2. $|x-y|\ge \frac{1}{4}$. The same to \eqref{eq-est-AS-1}.
 
Case 2. $\frac{2}{\kappa}\le|y-y_c|\le \frac{1}{2}$. 

Case 2.1. $\left|x-y_c\right|\ge \frac{1}{\kappa}$ and $\left|x-y\right|\le \frac{1}{\kappa}$. For this case, it holds that $\left|x-\tilde y_{\hat c}\right|\sim \left|y-y_{c}\right|\sim \left|\eta(y)\right|$. Similar to the estimate of \eqref{eq-est-AS-1}, we have
\begin{align*}
  &\left|\int^0_{-1}I_{GMS,0,c_r}(x,y)\chi_{\text{Case 2.1}}dx\right|\lesssim \int^0_{-1}\frac{1}{\varepsilon}\frac{1}{\left|\eta(y)\right|} e^{-\frac{1}{C}\kappa\left|x-y \right||\kappa\eta(y)|^{\frac{1}{2}}}dx\lesssim \varepsilon^{-\frac{1}{2}}|y-y_c|^{-\frac{3}{2}}.
\end{align*}
For $|y-y_c|\sim \frac{1}{\kappa}$, we have $\varepsilon^{-\frac{1}{2}}|y-y_c|^{-\frac{3}{2}}\sim \varepsilon^{-1}$. However, for the boundary $y=-1$, we have
\begin{align*}
  \varepsilon^{-\frac{1}{2}}|-1-y_c|^{-\frac{3}{2}}\sim \frac{1}{\varepsilon}\frac{1}{ \left\langle \kappa c_r \right\rangle^{\frac{3}{2}}}.
\end{align*}
And for the $L^1$ norm in $y$, we have
\begin{align*}
  \int_{-1}^0 \chi_{\frac{2}{\kappa}\le|y-y_c|\le \frac{1}{2}}(y) \varepsilon^{-\frac{1}{2}}|y-y_c|^{-\frac{3}{2}} dy\lesssim  \varepsilon^{-\frac{1}{2}} \kappa^{\frac{1}{2}}\lesssim \varepsilon^{-\frac{2}{3}}.
\end{align*}

Case 2.2. $\left|x-y_c\right|\ge \frac{1}{\kappa}$ and $\left|x-y\right|>\frac{1}{\kappa}$. For this case, similar to \eqref{eq-est-AS-1}, it holds that
\begin{align*}
  &\left|\int^0_{-1}I_{GMS,0,c_r}(x,y)\chi_{\text{Case 2.2}}dx\right|\lesssim \int^0_{-1}\frac{\kappa}{\varepsilon} e^{-\frac{1}{C}\kappa\left|x-y \right||\kappa\eta(y)|^{\frac{1}{2}}}e^{-\frac{1}{C}|\kappa\eta(y)|^{\frac{1}{2}}}dx\lesssim \frac{1}{\varepsilon|\kappa\eta(y)|^{\frac{1}{2}}} e^{-\frac{1}{C}|\kappa\eta(y)|^{\frac{1}{2}}}.
\end{align*}
For $y=-1$, the above value is extremely small. We also have
\begin{align*}
  \int_{-1}^0 \chi_{\frac{2}{\kappa}\le|y-y_c|\le \frac{1}{2}}(y) \frac{1}{\varepsilon|\kappa\eta(y)|^{\frac{1}{2}}} e^{-\frac{1}{C}|\kappa\eta(y)|^{\frac{1}{2}}} dy\lesssim  \varepsilon^{-1} \kappa^{-1}\lesssim \varepsilon^{-\frac{2}{3}}.
\end{align*}

 Case 2.3. $\left|x-y_c\right|\le \frac{1}{\kappa}$. For this case, similar to \eqref{eq-est-AS-1}, it holds that 
\begin{align*}
  &\left|\int^0_{-1}I_{GMS,0,c_r}(x,y)\chi_{\text{Case 2.3}}dx\right|\lesssim \int^0_{-1}\frac{1}{\varepsilon}\frac{1}{\left|x-\tilde y_{\hat c}\right|}  e^{- |\kappa\eta(y)|^{\frac{1}{2}}}dx\lesssim \frac{1}{\varepsilon} |\ln c_0|e^{-|\kappa\eta(y)|^{\frac{1}{2}}}.
\end{align*}
For $y=-1$, the above value is extremely small. We also have
\begin{align*}
  \int_{-1}^0 \chi_{\frac{2}{\kappa}\le|y-y_c|\le \frac{1}{2}}(y) \frac{1}{\varepsilon} |\ln c_0|e^{-|\kappa\eta(y)|^{\frac{1}{2}}} dy\lesssim \varepsilon^{-\frac{2}{3}}|\ln c_0|.
\end{align*}
 
 Case 3. $|y-y_c|\le \frac{2}{\kappa}$.

Case 3.1. $\left|x-y_c\right|\ge \frac{4}{\kappa}$. For this case, similar to \eqref{eq-est-AS-1}, it holds that
\begin{align*}
  &\left|\int^0_{-1}I_{GMS,0,c_r}(x,y)\chi_{\text{Case 3.1}}dx\right|\lesssim \int^0_{-1}\frac{\kappa}{\varepsilon} e^{-\kappa\left|x-y \right|}dx\lesssim \frac{1}{\varepsilon},
\end{align*}
and
\begin{align*}
  \int_{-1}^0 \chi_{|y-y_c|\le \frac{2}{\kappa}}(y) \frac{1}{\varepsilon} dy\lesssim \varepsilon^{-\frac{2}{3}}.
\end{align*}

Case 3.2. $\left|x-y_c\right|\le \frac{4}{\kappa}$. In this case, we have  
\begin{align*}
  &\left|\int^0_{-1}I_{GMS,0,c_r}(x,y)\chi_{\text{Case 3.2}}dx\right|\lesssim \int^0_{-1}\frac{1}{\varepsilon} \frac{1}{x-\tilde y_{\hat c}}dx\lesssim \frac{|\ln c_0|}{\varepsilon},
\end{align*}
and
\begin{align*}
  \int_{-1}^0 \chi_{|y-y_c|\le \frac{2}{\kappa}}(y) \frac{|\ln c_0|}{\varepsilon} dy\lesssim \varepsilon^{-\frac{2}{3}}|\ln c_0|.
\end{align*}

Next, we study $\int^0_{-1}\pa_x^2G_A(x,y)\pa_{c_r}\frac{1}{x-\tilde y_{\hat c}}dx$. 

By the definition of $\tilde y_{\hat c}$ in \eqref{eq-yhc}, we have
\begin{align}\label{eq-com-x-cr}
  \pa_{c_r}\frac{1}{x-\tilde y_{\hat c}}=\frac{\frac{1}{2\sqrt{1-\hat c}}}{\left(x-\tilde y_{\hat c}\right)^2}=-\pa_x\frac{\frac{1}{2\sqrt{1-\hat c}}}{x-\tilde y_{\hat c}}.
\end{align}

Then we have
\begin{align*}
  &\int^0_{-1}\pa_x^2G_A(x,y)\pa_{c_r}\frac{1}{x-\tilde y_{\hat c}}dx=-\int^0_{-1}\pa_x^2G_A(x,y)\pa_{x}\frac{\frac{1}{2\sqrt{1-\hat c}}}{x-\tilde y_{\hat c}}dx\\
  =&\int_{[-1,0]\setminus [y_c-\frac{y_c+1}{2},y_c+\frac{y_c+1}{2}]}\pa_x^2G_A(x,y)\frac{\frac{1}{2\sqrt{1-\hat c}}}{\left(x-\tilde y_{\hat c}\right)^2}dx+\int^{y_c+\frac{y_c+1}{2}}_{y_c-\frac{y_c+1}{2}}\pa_x^3G_A(x,y)\frac{\frac{1}{2\sqrt{1-\hat c}}}{x-\tilde y_{\hat c}}dx\\
  &-\left.\pa_x^2G_A(x,y) \frac{\frac{1}{2\sqrt{1-\hat c}}}{x-\tilde y_{\hat c}}\right|^{y_c+\frac{y_c+1}{2}}_{x=y_c-\frac{y_c+1}{2}}.
\end{align*}
Note that we avoid performing integration by parts over the entire interval $[-1,0]$, since the estimates for $a_{1,B}'''(x)$ at the endpoints $x=-1$ and $x=0$ are not good enough. However, on the upper branch, as the points $y_c+\frac{y_c+1}{2}$ and $y_c-\frac{y_c+1}{2}$ are separated from $-1$ and $0$, 
the term $a_{1,B}''$ exhibits exponential decay and is thus negligible. A straightforward verification yields the estimate
\begin{align}\label{eq-psi-1-a-cr-bd}
  \left|\left.\pa_x^2G_A(x,y) \frac{\frac{1}{2\sqrt{1-\hat c}}}{x-\tilde y_{\hat c}}\right|^{y_c+\frac{y_c+1}{2}}_{x=y_c-\frac{y_c+1}{2}}\right|\lesssim \varepsilon^{-\frac{2}{3}} \frac{1}{c_r}.
\end{align}
On the lower branch, one can directly do the intrgration by parts over $[-1,0]$, and we have the same estimate for the boundary terms as \eqref{eq-psi-1-a-cr-bd}.

We further remark that for $x\in [-1,0]\setminus \left[y_c-\frac{y_c+1}{2},y_c+\frac{y_c+1}{2}\right]$, it holds that
\begin{align*}
  \left|\frac{\frac{1}{2\sqrt{1-\hat c}}}{\left(x-\tilde y_{\hat c}\right)^2}\right|\lesssim \frac{1}{c_r}\frac{1}{x-\tilde y_{\hat c}},
\end{align*}
therefore by using the same technique of Lemma \ref{lem-acc-psi-1-a}, one can show that
\begin{align*}
  \left|\int_{[-1,0]\setminus [y_c-\frac{y_c+1}{2},y_c+\frac{y_c+1}{2}]}\pa_x^2G_A(x,y)\frac{\frac{1}{2\sqrt{1-\hat c}}}{\left(x-\tilde y_{\hat c}\right)^2}dx\right|\lesssim \varepsilon^{-\frac{2}{3}} \frac{1}{c_r}|\ln c_0|.
\end{align*}

The properties of $\partial_x^3 G_A(x,y)$ are very close to those of $\partial_{c_r}\partial_x^2 G_{A,M}(x,y)$.  Consequently, the term $\int^{y_c+\frac{y_c+1}{2}}_{y_c-\frac{y_c+1}{2}}\pa_x^3G_A(x,y)\frac{\frac{1}{2\sqrt{1-\hat c}}}{x-\tilde y_{\hat c}}dx$ admits the same estimate as  $\int^0_{-1}\left(\pa_{c_r}\pa_x^2G_{A,M}(x,y)\right)\frac{1}{x-\tilde y_{\hat c}}dx$.

Combining the above estimates, we have
\begin{align*}
  \varepsilon \alpha^2\pa_{c_r} \left(AirySolver_m\left(\pa_y^2\frac{1}{ y-\tilde y_{\hat c}}\right)(-1)\right)\lesssim c_r \alpha^2|\ln c_0|+ \frac{\alpha^2}{ \left\langle \kappa c_r \right\rangle^{\frac{3}{2}}}|\ln c_0|,
\end{align*}
\begin{align*}
  \left\|\varepsilon \alpha^2\pa_{c_r} \left(AirySolver_m\left(\pa_y^2\frac{1}{ y-\tilde y_{\hat c}}\right)(y)\right)\right\|_{L^\infty}\lesssim  \alpha^2|\ln c_0|,
\end{align*}
\begin{align*}
  \left\|\varepsilon \alpha^2\pa_{c_r} \left(AirySolver_m\left(\pa_y^2\frac{1}{ y-\tilde y_{\hat c}}\right)(y)\right)\right\|_{L^1}\lesssim c_r \alpha^2|\ln c_0|+ \frac{\alpha^2}{ \left\langle \kappa c_r \right\rangle^{\frac{3}{2}}}|\ln c_0|.
\end{align*}

Next, we study $\pa_{c_r}\left(\pa_yAirySolver_m\left(\pa_y^2\frac{1}{ y-\tilde y_{\hat c}}\right)(-1)\right)$. Similar to \eqref{est-acc-psi-1-a-dx}, there is only $a_1$ related terms. We have
\begin{align*}
  &\frac{1}{\kappa\varepsilon} \pa_{c_r}\int^0_{-1}a_1(x)\pa_x^2 \frac{1}{x-\tilde y_{\hat c}}dx\\
  =&\frac{1}{\kappa\varepsilon} \int^0_{-1}\pa_{c_r}a_1''(x) \frac{1}{x-\tilde y_{\hat c}}dx +\frac{1}{\kappa\varepsilon}\int_{[-1,0]\setminus [y_c-\frac{y_c+1}{2},y_c+\frac{y_c+1}{2}]}a_1''(x)\frac{\frac{1}{2\sqrt{1-\hat c}}}{\left(x-\tilde y_{\hat c}\right)^2}dx\\
  &+\frac{1}{\kappa\varepsilon}\int^{y_c+\frac{y_c+1}{2}}_{y_c-\frac{y_c+1}{2}}a_1'''(x)\frac{\frac{1}{2\sqrt{1-\hat c}}}{x-\tilde y_{\hat c}}dx -\frac{1}{\kappa\varepsilon}\left.a_1''(x) \frac{\frac{1}{2\sqrt{1-\hat c}}}{x-\tilde y_{\hat c}}\right|^{y_c+\frac{y_c+1}{2}}_{x=y_c-\frac{y_c+1}{2}}\\
  &+\pa_{c_r}\frac{-a_1(0) \frac{1}{(0-\tilde y_{\hat c})^2}+a_1(-1) \frac{1}{(-1-\tilde y_{\hat c})^2}-a_1'(0) \frac{1}{0-\tilde y_{\hat c}}+a_1'(-1) \frac{1}{-1-\tilde y_{\hat c}}}{\kappa\varepsilon}.
\end{align*} 

For the boundary term, the worst terms is $\pa_{c_r}\frac{a_1'(-1) \frac{1}{-1-\tilde y_{\hat c}}}{\kappa\varepsilon}$. By using Lemma \ref{lem-est-a1-a2} and Lemma \ref{lem-d-cr-a1B}, we have
\begin{align*}
  \left|\pa_{c_r}\frac{a_1'(-1) \frac{1}{-1-\tilde y_{\hat c}}}{\kappa\varepsilon}\right|\lesssim \frac{1}{\varepsilon} \frac{1}{\left\langle \kappa c_r \right\rangle^{\frac{3}{2}}} \frac{1}{c_r}\left|\ln c_0\right|+\frac{1}{\varepsilon} \frac{1}{\left\langle \kappa c_r \right\rangle^{\frac{3}{2}}}.
\end{align*}

For the intgration terms, the main difficulty comes from the following integral
\begin{align*}
  \frac{\kappa}{\varepsilon}\int^{y_c+\frac{y_c+1}{2}}_{y_c-\frac{y_c+1}{2}}\mathrm{Hi}'''\left(e^{-\frac{\pi}{2}i}\kappa \left(x-y_c-\frac{ic_i}{u_p'(y_c)}\right)\right) \frac{1}{x-\tilde y_{\hat c}}dx.
\end{align*}
As in \eqref{est-acc-psi-1-a-cr-1}, by using the analyticity of $\mathrm{Hi}'''$, we have
\begin{align*}
  \frac{\kappa}{\varepsilon}\int^{y_c+\frac{y_c+1}{2}}_{y_c-\frac{y_c+1}{2}}\mathrm{Hi}'''\left(e^{-\frac{\pi}{2}i}\kappa \left(x-y_c-\frac{ic_i}{u_p'(y_c)}\right)\right) \frac{1}{x-\tilde y_{\hat c}}dx\lesssim \frac{1}{c_r^4}.
\end{align*}
Therefore, one can easily check that all the integratrion terms have the following upper bound
\begin{align*}
 \frac{1}{\varepsilon} |\ln c_0|+\frac{1}{\varepsilon} \frac{1}{c_r}\frac{1}{ \left\langle \kappa c_r \right\rangle^{3}}.
\end{align*}

Combining the above estimates, we have
\begin{align*}
  \left|\varepsilon \alpha^2\pa_{c_r}\left(\pa_yAirySolver_m\left(\pa_y^2\frac{1}{ y-\tilde y_{\hat c}}\right)(-1)\right)\right|\lesssim \alpha^2|\ln c_0|+ \frac{\alpha^2}{c_r}\frac{1}{ \left\langle \kappa c_r \right\rangle^{\frac{3}{2}}}|\ln c_0|.
\end{align*}

Compared with Lemma \ref{lem-acc-psi-1-a}, we also need to take care that the $c_r$-derivative may act on $\phi_{err}$, where $\phi_{err}$ is introduced in Lemma \ref{lem-real-Airy-Solver-S} associated with $f''=i\varepsilon \pa_y^2 \left(\frac{\phi_{1,0}^{[1]}(y)}{u_p-\hat c}\right)$. From the previous analysis, one can see that, without any further refined analysis, taking a $c_r$-derivative leads to at most a loss of order $\varepsilon^{-\frac12}$ in the corresponding bounds. Therefore, from \eqref{est-acc-psi-1-a-err}, we have
\begin{align}\label{est-acc-psi-1-a-err-cr}
  \left\|\pa_{c_r}\phi_{err}\right\|_{L^\infty}+\left\|\left(u_p-\hat c\right)\pa_{c_r}\phi_{err}'\right\|_{L^\infty}\lesssim \varepsilon^{-\frac{1}{6}}c_r\alpha^2|\ln c_0|.
\end{align}

For the same reason, we deduce from \eqref{est-acc-psi-1-a-j} that
\begin{equation}\label{est-acc-psi-1-a-j-cr}
  \begin{aligned}    
    \left\|\pa_{c_r}AirySolver \left(i\varepsilon \pa_y^2 \left(\frac{\sum^{\infty}_{j=2} \phi_{1,0}^{[j]}(y)}{u_p-\hat c}\right)\right)\right\|_{L^\infty}  \lesssim \varepsilon^{-\frac{1}{6}}\alpha^4|\ln c_0|^2,\\
    \left|\pa_{c_r}\pa_yAirySolver \left(i\varepsilon \pa_y^2 \left(\frac{\sum^{\infty}_{j=2} \phi_{1,0}^{[j]}(y)}{u_p-\hat c}\right)\right)(-1)\right| \lesssim \varepsilon^{-\frac{1}{6}}\frac{1}{c_r}\alpha^4|\ln c_0|^2.
  \end{aligned}
\end{equation}  
This complete the estimate of this lemma.
\end{proof}

By using the similar techniques, we can also derive the estimates for $\pa_{c_i}\psi_{1,\alpha,a}^{[1]}$, $\pa_{\nu}\psi_{1,\alpha,a}^{[1]}$, and $\pa_{\alpha^2}\psi_{1,\alpha,a}^{[1]}$. 
\begin{lemma}\label{lem-acc-psi-1-a-ci}
  It holds that
  \begin{align}
    \left|\pa_{c_i}\psi_{1,\alpha,a}^{[1]}(-1)\right|\lesssim   \frac{\alpha^2}{ \left\langle \kappa c_r \right\rangle^{\frac{3}{2}}}|\ln c_0|^2+\varepsilon^{-\frac{1}{6}}(c_r+\alpha^2)\alpha^2|\ln c_0|^2,\label{est-acc-psi-1-a-ci-1}\\
    \left\|\pa_{c_i}\psi_{1,\alpha,a}^{[1]}\right\|_{L^\infty}\lesssim \alpha^2|\ln c_0|^2, \label{est-acc-psi-1-a-ci-2}\\ 
    \left\|\pa_{c_i}\psi_{1,\alpha,a}^{[1]}\right\|_{L^1}\lesssim  \frac{\alpha^2}{ \left\langle \kappa c_r \right\rangle^{\frac{3}{2}}}|\ln c_0|^2+\varepsilon^{-\frac{1}{6}}(c_r+\alpha^2)\alpha^2|\ln c_0|^2,\label{est-acc-psi-1-a-ci-3} \\
    \left|\pa_{c_i}\psi_{1,\alpha,a}^{[1]\prime}(-1)\right|\lesssim  \frac{\alpha^2}{c_r \left\langle \kappa c_r \right\rangle^{\frac{3}{2}}}|\ln c_0|^2+\varepsilon^{-\frac{1}{6}}\frac{c_r+\alpha^2}{c_r}\alpha^2|\ln c_0|^2.\label{est-acc-psi-1-a-ci-1-dx}
  \end{align}
  For fixed $\alpha$, we have 
    \begin{align}
    \left\|\pa_{\nu}\psi_{1,\alpha,a}^{[1]}\right\|_{L^\infty}\lesssim \frac{\varepsilon^{\frac{1}{3}}}{\nu} \alpha^2|\ln c_0|^2, \label{est-acc-psi-1-a-nu-2}\\ 
    \left|\pa_{\nu}\psi_{1,\alpha,a}^{[1]\prime}(-1)\right|\lesssim \frac{\varepsilon^{\frac{1}{3}}}{\nu} \frac{\alpha^2}{c_r}|\ln c_0|^2.\label{est-acc-psi-1-a-nu-1-dx}
  \end{align}
  For fixed $\nu$ or $\varepsilon$, we have
      \begin{align}
    \left\|\pa_{\alpha^2}\psi_{1,\alpha,a}^{[1]}\right\|_{L^\infty}\lesssim \varepsilon^{\frac{1}{3}}|\ln c_0|^2, \label{est-acc-psi-1-a-alpha-2}\\ 
    \left|\pa_{\alpha^2}\psi_{1,\alpha,a}^{[1]\prime}(-1)\right|\lesssim \varepsilon^{\frac{1}{3}}\frac{1}{c_r}|\ln c_0|^2.\label{est-acc-psi-1-a-alpha-1-dx}
  \end{align}
\end{lemma}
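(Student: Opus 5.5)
The plan is to mirror the proof of Lemma \ref{lem-acc-psi-1-a-cr} term by term, exploiting the fact that every parameter enters $\psi_{1,\alpha,a}^{[1]}$ only through a few quantities. These are $\hat c$ (through $\tilde y_{\hat c}$ and the Rayleigh iterates $\phi_{1,0}^{[j]}$), the Langer variable $\eta$ (through $\eta_i=-c_i/u_p'(y_c)$ and $\eta_r$), and $\kappa$ (through $\varepsilon$ and $u_p'(y_c)$). As before, we reduce to the principal piece
\[
\varepsilon\alpha^2 AirySolver_m\left(\pa_y^2\frac{1}{y-\tilde y_{\hat c}}\right),
\]
with the remainder $\Upsilon_R$, the error $\phi_{err}$ of Lemma \ref{lem-real-Airy-Solver-S}, and the higher iterates $j\ge2$ handled by the crude loss bounds used for \eqref{est-acc-psi-1-a-err-cr} and \eqref{est-acc-psi-1-a-j-cr}.

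\textbf{The $c_i$-derivative.} Note that $\pa_{c_i}\tilde y_{\hat c}=i\,\pa_{c_r}\tilde y_{\hat c}$. Hence the commutation identity \eqref{eq-com-x-cr} holds verbatim with an extra factor $i$, and the source term is treated exactly as before: integrate by parts away from $[y_c-\frac{y_c+1}{2},y_c+\frac{y_c+1}{2}]$ and use the analyticity and contour deformation of $\mathrm{Hi}''$ and $\mathrm{Hi}'''$ near $y_c$. On the Green-function side, $\pa_{c_i}$ falls only on $\eta_i$, and $\pa_{c_i}(\kappa\eta)=-i\kappa/u_p'(y_c)$, which has the same size as $\pa_{c_r}(\kappa\eta)$. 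Therefore \eqref{eq-d-cr-A1k}, \eqref{eq-d-cr-A2k} and the analogue of Lemma \ref{lem-d-cr-a1B} hold with $c_i$ in place of $c_r$. The paired growing--decaying cancellation in $a_{1,B}$ is preserved because it depends only on the difference $(e^{\frac{\pi}{6}i}\kappa\eta(x))^{\frac32}-(e^{\frac{\pi}{6}i}\kappa\eta(-1))^{\frac32}$. The case analysis (Cases 1--3 and 1.1)--2.3)) then repeats unchanged and yields \eqref{est-acc-psi-1-a-ci-1}--\eqref{est-acc-psi-1-a-ci-1-dx}.

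\textbf{The $\nu$-derivative at fixed $\alpha$.} Here $\varepsilon=\nu/|\alpha|$, so $\pa_\nu=\frac{\varepsilon}{\nu}\pa_\varepsilon$, and $\hat c=c+i\varepsilon^{1/2}$ depends on $\varepsilon$ through $c_0$. We write
\[
\psi_{1,\alpha,a}^{[1]}=\varepsilon\,\alpha^2 AirySolver_m(\cdots)+\dots
\]
and distribute the derivative. The explicit prefactor $\varepsilon$ gives $\frac{1}{\nu}\psi_{1,\alpha,a}^{[1]}$, which by \eqref{est-acc-psi-1-a} is bounded by $\frac{\varepsilon^{1/3}}{\nu}\alpha^2|\ln c_0|$. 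Since $\pa_\varepsilon\kappa=-\frac{\kappa}{3\varepsilon}$, a derivative on $\kappa$ acts like $\frac{1}{3\varepsilon}\,\kappa\eta\,\pa_{\kappa\eta}$. This is the scaling vector field, and it costs no power of $\varepsilon$: on the Airy and Scorer expansions, $z\,\mathrm{Hi}^{(k+1)}(z)$ obeys the same decay bounds as $\mathrm{Hi}^{(k)}(z)$ up to constants (by Remark \ref{rmk-Hi''}), and $z\mathrm{Ai}'(z)\sim z^{3/2}\mathrm{Ai}(z)$ is absorbed by the exponential factors as in Lemma \ref{lem-Airy-Airy-est}. A derivative on $c_0$ gives $\pa_\varepsilon c_0=\frac{1}{2}\varepsilon^{-1/2}$, which combines with the $c_i$-type derivative bound above. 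That bound loses at most $\varepsilon^{-1/2}$ relative to \eqref{est-acc-psi-1-a}, so after the factor $\frac{\varepsilon}{\nu}$ the loss is absorbed, giving \eqref{est-acc-psi-1-a-nu-2} and \eqref{est-acc-psi-1-a-nu-1-dx}.

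\textbf{The $\alpha^2$-derivative.} This is easiest. The principal source $\Upsilon_M$ is linear in $\alpha^2$, and $\pa_{\alpha^2}$ of the higher iterates $\phi_{1,0}^{[j]}$ costs a factor $\alpha^{-2}$. The Airy operator depends on $\alpha^2$ only through the harmless term $-2i\varepsilon\alpha^2$, and, if $\varepsilon$ is fixed, $\kappa$ and $\eta$ do not depend on $\alpha$. So \eqref{est-acc-psi-1-a} and \eqref{est-acc-psi-1-a-dx} with $\alpha^2$ removed give \eqref{est-acc-psi-1-a-alpha-2} and \eqref{est-acc-psi-1-a-alpha-1-dx}. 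For fixed $\nu$, one adds the $\pa_\varepsilon$ contribution as in the previous step.

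\textbf{Main obstacle.} The hardest part is the $\nu$-derivative of the boundary pair $a_{1,B}$ and of the endpoint terms $\pa_xG_A(-1,y)f(-1)$. Here the scaling derivative produces factors $\kappa\eta(-1)\sim\kappa c_r$ that are not small. I would first try to show that the $\kappa$-scaling derivative of $\mathcal A_2(1,\kappa\eta(-1))\mathrm{Ai}(e^{\frac{\pi}{6}i}\kappa\eta(x))$ exhibits the same growing/decaying phase cancellation as in Lemma \ref{lem-d-cr-a1B}, now with $\pa_{c_r}$ replaced by $\kappa\pa_\kappa$. This would give a loss of at most $|\ln c_0|$ in the region $|x+1|\lesssim\frac{|\ln c|}{\kappa^{3/2}c_r^{1/2}}$, and exponential smallness elsewhere.
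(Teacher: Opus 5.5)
Your proposal is correct and follows essentially the paper's route. The paper also treats $\pa_{c_i}$ exactly like $\pa_{c_r}$. For $\pa_\nu$ at fixed $\alpha$, it likewise reduces to a factor $\nu^{-1}$ from the prefactors plus the scaling derivative $-\frac{1}{3\nu}\kappa\pa_\kappa$ on the Airy and Scorer factors, controlled by the growing--decaying pairing of Lemma \ref{lem-d-cr-a1B}. The differences are minor: the paper places the main $\nu$-difficulty in the kernel $G_{A,M}$ (e.g.\ $\pa_\nu\bigl(A_1(2,y)A_2''(x)\bigr)$) rather than in $a_{1,B}$, and it treats $c_0$ as $\nu$-independent (made explicit in the proof of Theorem \ref{thm1}); your extra term from $\pa_\nu c_0=\frac{\varepsilon^{1/2}}{2\nu}$ is harmless.
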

\begin{proof}
  The estimates for $\pa_{c_i}\psi_{1,\alpha,a}^{[1]}$ are obtained in the same
way as those for $\pa_{c_r}\psi_{1,\alpha,a}^{[1]}$.

We next study $\pa_{\nu}\psi_{1,\alpha,a}^{[1]}$ with $\alpha$ fixed. As in
Lemma \ref{lem-acc-psi-1-a-cr}, the main term to be considered is
\begin{align}\label{eq-acc-psi-1-a-nu-m}
  \pa_{\nu}
  AirySolver_m\left(
    \varepsilon\alpha^2\pa_y^2\frac{1}{y-\tilde y_{\hat c}}
  \right)(y).
\end{align}

  Recall that $\varepsilon=\frac{\nu}{\alpha}$ and $\kappa=\left(\frac{\varepsilon}{u_p'(y_c)}\right)^{-\frac{1}{3}}$. For fixed $\alpha$, we have
\begin{align*}
  \pa_\nu\varepsilon=\frac{1}{\nu}\varepsilon, \quad \pa_\nu\kappa=-\frac{1}{3}\frac{1}{\nu}\kappa.
\end{align*}
Therefore, when the $\nu$-derivative falls on the coefficient $i \frac{2\pi}{\left(\eta_r'(x)\right)^{\frac{1}{2}}\kappa\varepsilon}$ in \eqref{eq-Green-Airy}, the resulting estimate differs from that in
Lemma \ref{lem-acc-psi-1-a} only by an additional factor $\nu^{-1}$.

It remains to consider the case where the $\nu$-derivative falls on the Airy
factors in \eqref{eq-Green-Airy}. Since $\eta$ is independent of $\nu$, we have
\begin{align*}
  \pa_{\nu}\mathrm{Ai}(k,e^{\frac{\pi}{6}i}\kappa\eta(y))=-\frac{e^{\frac{\pi}{6}i}}{3\nu} \kappa\eta(y)\mathrm{Ai}(k-1,e^{\frac{\pi}{6}i}\kappa\eta(y))\sim \frac{\left\langle \kappa\eta(y) \right\rangle^{\frac{3}{2}}}{\nu}\left\langle \kappa\eta(y) \right\rangle^{-\frac{1}{4}-\frac{1}{2}k}e^{-\frac{2}{3}\frac{1}{\sqrt2}|\kappa\eta(y)|^{\frac{1}{2}}\kappa\eta_r(y)},\\
    \pa_{\nu}\mathrm{Hi}^{(j)}(e^{-\frac{\pi}{2}i}\kappa \eta(y))=-\frac{e^{-\frac{\pi}{2}i}}{3\nu} \kappa\eta(y)\mathrm{Hi}^{(j+1)}(e^{-\frac{\pi}{2}i}\kappa \eta(y))\sim \frac{1}{\nu}\frac{1}{\left\langle \kappa\eta(y) \right\rangle^{j+1}}.
\end{align*}
The discussion above shows that the main difficulty in estimating
$\pa_{c_r}\psi_{1,\alpha,a}^{[1]}$ comes from the nonlocal terms in $E(x,y)$,
for example from $\pa_{c_r}a_1''(x)$. In contrast, for
$\pa_{\nu}\psi_{1,\alpha,a}^{[1]}$, the main difficulty comes from the
$\nu$-dependence of the kernel $G_{A,M}$. A typical term is
\begin{align*}
  \partial_{\nu}\big(A_1(2,y) A_2''(x)\big)  = \frac{e^{-\frac{\pi}{3}i}\left(\eta_r'(x)\right)^2}{\left(\eta_r'(y)\right)^{\frac{5}{2}}}\partial_{\nu} \left(\mathcal A_1(2,\kappa \eta(y))\mathrm{Ai}''\left(e^{\frac{5\pi}{6}i}\kappa \eta(x)\right)\right)  + err.
\end{align*}
Here $err$ denotes terms that are easier to estimate. Since $\eta_r$ is
independent of $\nu$, applying the same argument as in
Lemma \ref{lem-d-cr-a1B} gives
\begin{align*}
  \left|\pa_{\nu}\left(A_1(2,y) A_2''(x)\right)\right|\lesssim& \left(\frac{\left\langle \kappa\eta(x) \right\rangle^{\frac{3}{4}}}{\left\langle \kappa\eta(y) \right\rangle^{\frac{3}{4}}} +\frac{\left\langle \kappa\eta(x) \right\rangle^{\frac{5}{4}}}{\left\langle \kappa\eta(y) \right\rangle^{\frac{5}{4}}} \right)\frac{\kappa \left| \eta_r(x)-\eta_r(y)\right| }{\nu}  e^{-\frac{1}{C}\left|\kappa\eta_r(x)-\kappa\eta_r(y)  \right|\left( |\kappa\eta(y)|^{\frac{1}{2}}+|\kappa\eta(x)|^{\frac{1}{2}} \right)}\\
  &+\frac{\left\langle \kappa\eta(x) \right\rangle^{\frac{3}{4}}}{\left\langle \kappa\eta(y) \right\rangle^{\frac{5}{4}}}\frac{1}{\nu}  e^{-\frac{1}{C}\left|\kappa\eta_r(x)-\kappa\eta_r(y)  \right|\left( |\kappa\eta(y)|^{\frac{1}{2}}+|\kappa\eta(x)|^{\frac{1}{2}} \right)}.
\end{align*}
Compared with \eqref{eq-I-GM,0,c_r}, this estimate contains an additional
factor $\kappa \left| \eta_r(x)-\eta_r(y)\right|$ which may appear problematic at first sight. However, this factor is harmless.
Indeed, if $|x-y|\le \kappa^{-1}$, then $\kappa \left| \eta_r(x)-\eta_r(y)\right| \lesssim 1$. On the other hand, if $|x-y|\ge \kappa^{-1}$, the exponential decay \newline $e^{-\frac{1}{C}\left|\kappa\eta_r(x)-\kappa\eta_r(y)  \right|\left( |\kappa\eta(y)|^{\frac{1}{2}}+|\kappa\eta(x)|^{\frac{1}{2}} \right) }$ dominates the polynomial growth coming from this extra factor.

Consequently, the main term in \eqref{eq-acc-psi-1-a-nu-m} satisfies estimates of the form \eqref{est-acc-psi-1-a-nu-2} and \eqref{est-acc-psi-1-a-nu-1-dx}.

More generally, the above analysis
shows that after taking one $\nu$-derivative, even under the rough estimates of the type in Lemma \ref{lem-est-ASS}, we lose at most a factor
$\frac{\kappa^{1/2}}{\nu}$. Therefore, by applying the same argument as in
Lemma \ref{lem-acc-psi-1-a-cr}, we obtain the desired estimate for
$\pa_{\nu}\psi_{1,\alpha,a}^{[1]}$.

Finally, the estimate for $\pa_{\alpha^2}\psi_{1,\alpha,a}^{[1]}$ with $\nu$
fixed can be derived in exactly the same way. If instead $\varepsilon$ is fixed,
the proof becomes simpler, and in fact one obtains better estimates.
\end{proof}

\subsubsection{Refined estimates of $\psi_{1,\alpha,b}^{[1]}$}
In this subsubsection, we derive estimates for $\psi_{1,\alpha,b}^{[1]}$. This term is less singular and hence less delicate than $\psi_{1,\alpha,a}^{[1]}$.
\begin{lemma}\label{lem-acc-psi-1-b}
   For all $y\in[-1,0]$, it holds that
  \begin{align}\label{est-acc-psi-1-b}
    \left|\psi_{1,\alpha,b}^{[1]}(y)\right|\lesssim c_0|\ln c_0| ,\quad  \left|\psi_{1,\alpha,b}^{[1]\prime}(-1)\right|\lesssim \frac{c_0}{c_r}|\ln c_0|.
  \end{align}
  Moreover,  the following refined pointwise estimate holds: 
  \begin{align}\label{est-acc-psi-1-b-2}
    \left|\psi_{1,\alpha,b}^{[1]}(y)\right|\lesssim c_0|\ln c_0||y-y_c|+\left(c_r+\alpha^2+\kappa c_r |c_i|\right)c_0|\ln c_0|^2+c_rc_0.
  \end{align}
\end{lemma}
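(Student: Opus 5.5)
We work with the explicit source. Since $u_p''=-2$, we have
\[
\psi_{1,\alpha,b}^{[1]}=-AirySolver\big(g\big),\qquad g=\left(-i\varepsilon\alpha^2+ic_0\right)\frac{-2\phi_{1,\alpha}}{u_p-\hat c}=2i\left(\varepsilon\alpha^2-c_0\right)\left(1+\frac{\sum_{j\ge1}\phi_{1,0}^{[j]}}{u_p-\hat c}\right).
\]
The constant part $2i(\varepsilon\alpha^2-c_0)$ is a regular, bounded source of size $c_0$. The remainder carries an extra factor $\alpha^2$ and has a simple pole $\Upsilon_M\propto \frac{1}{y-\tilde y_{\hat c}}$, exactly as in the proof of Lemma \ref{lem-acc-psi-1-a}.

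\textbf{Step 1: the crude bound \eqref{est-acc-psi-1-b}.}
For the regular part we apply Lemma \ref{lem-est-AS-ns} together with the iteration in Lemma \ref{lem-real-Airy-Solver}. This gives an $L^\infty$ bound $\lesssim c_0|\ln c_0|$ without the factor $(1+\kappa c_r)$.

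For the singular part $\alpha^2 c_0\,\Upsilon$ we use \eqref{eq-est-AS-0}. This gives $c_0\alpha^2(1+\kappa c_r)|\ln c_0|^2$. The factor $\kappa c_r\alpha^2|\ln c_0|$ is not small a priori, so this estimate is too weak. We therefore repeat the trick of Lemma \ref{lem-acc-psi-1-a}: the only bad term is $\frac{1}{\kappa\varepsilon}\int a_1(x)(y-y_c)\frac{dx}{x-\tilde y_{\hat c}}$ with $y<y_c$. Here $a_1$ is undifferentiated, so $a_{1,M}\sim\kappa^{-1}\mathrm{Hi}$. Deforming $[-1,2y_c+1]$ to the lower half-circle, as before, makes this term $O(1)$.

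For the derivative at $y=-1$ we use $A_2(1,-1)=0$, so that
\[
\psi'(-1)=\frac{i2\pi}{\kappa\varepsilon}\int_{-1}^0 a_1(x)(\eta_r')^{-1/2} g\,dx .
\]
The regular part of this integral is bounded by $\frac{1}{\kappa\varepsilon}\int\kappa^{-1}\langle\kappa\eta\rangle^{-1}\,dx\, c_0\lesssim c_0|\ln c_0|$. The pole part, with $|x-\tilde y_{\hat c}|\gtrsim c_r$ near $x=-1$ and the contour argument otherwise, gives $\frac{c_0}{c_r}|\ln c_0|$.

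\textbf{Step 2: the refined bound \eqref{est-acc-psi-1-b-2}.}
Here we use the structure of the constant source. Without the lower-order corrections, the function $\Psi:=AirySolver_m(1)$ would solve $i\varepsilon\Psi''''+(u_p-c)\Psi''=1$. An explicit approximate solution is the double primitive of $1/(u_p-c)$, which is regularized across the critical layer by the Airy kernel.

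We write $G_A$ explicitly and split it into the localized part $G_{A,M}$ and the nonlocal part $E$.

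\begin{itemize}
\item \emph{Localized part.} Integrating $G_{A,M}$ against a constant yields, after integration by parts using Lemma \ref{lem:pri-Airy-expan}, terms of size $\kappa^{-1}\langle\kappa\eta\rangle^{-1}$. These contribute $O(c_r c_0)$, since $\langle \kappa\eta\rangle$ is at most comparable to $\kappa c_r$ only on a region of length $c_r$.
\item \emph{Nonlocal part.} In $E$ the $y$-dependence is exactly linear, through $a_1(x)(y-y_c)$. This term is bounded by $|y-y_c|\cdot\frac{1}{\kappa\varepsilon}\int|a_1|\,dx\lesssim |y-y_c|\,|\ln c_0|$, which after multiplying by $c_0$ gives the first term of \eqref{est-acc-psi-1-b-2}. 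The $x<y$ piece $a_2+a_1(x-y_c)$, integrated over $[-1,y]$, contributes $O(c_r)$ times $c_0$.
\end{itemize}

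The remaining pieces are controlled as follows:
\begin{itemize}
\item The $\alpha^2$-singular source contributes $\alpha^2 c_0|\ln c_0|^2$, by Step 1.
\item The iteration error $\phi_{err}$ from Lemma \ref{lem-real-Airy-Solver}, controlled via \eqref{eq-real-Airy-Solver-err}, contributes $\kappa c_r(\varepsilon^{1/2}+|c_i|)c_0|\ln c_0|^2$. This accounts for the term $\kappa c_r|c_i|c_0|\ln c_0|^2$.
\item The $(c_r+\alpha^2)c_0|\ln c_0|^2$ term collects the $a_{1,B},a_{1,R},a_2$ boundary contributions.
\end{itemize}

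\textbf{Main obstacle.}
The hard step is the refined pointwise bound. One has to show that the linear-in-$(y-y_c)$ nonlocal term carries no extra $\kappa c_r$ factor in the region $y<y_c$. This requires the Scorer-function form of $a_{1,M}$ (Lemma \ref{lem-a1-a2-exp}) and the contour deformation, rather than the absolute-value bounds of Lemma \ref{lem-est-a1-a2}. I would handle it first, mirroring the treatment of the term $II$ in Lemma \ref{lem-acc-psi-1-a}.
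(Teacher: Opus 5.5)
This is essentially the paper's argument. You use the same split of the source into the constant part and the $\alpha^2$-weighted pole part, and the crude bounds via Lemmas~\ref{lem-est-AS}--\ref{lem-est-AS-ns} (with $A_2(1,-1)=0$ for the derivative at $y=-1$). For the refined bound you, like the paper, treat the $G_{A,M}$, $a_2$, $a_1(x)(y-y_c)$ and $a_1(x)(x-y_c)$ contributions of the constant source separately, then add the contour argument of Lemma~\ref{lem-acc-psi-1-a} for the pole part and \eqref{eq-real-Airy-Solver-err} for the iteration error.

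There are two slips, both harmless. First, the $x<y$ term $\frac{1}{\kappa\varepsilon}\int_{-1}^{y}a_1(x)(x-y_c)\,dx$ has an $O(1)$ integrand, so it is $O(c_r+|y-y_c|)$, not $O(c_r)$; the paper subtracts the constant $C^*=\int_{-1}^{y_c}(\cdots)=O(c_r)$, and the excess is absorbed by $c_0|\ln c_0||y-y_c|$. Second, your ``main obstacle'' is misplaced. For the constant source, the $(y-y_c)a_1$ term needs only the absolute bounds of Lemma~\ref{lem-est-a1-a2}, exactly as in Lemma~\ref{lem-est-AS-ns}. The contour deformation is needed only for the pole part in the refined bound, where the crude estimate would leave a term $\alpha^2\kappa c_r c_0$ not bounded by $c_r c_0$ once $\alpha^2\gg\varepsilon^{1/3}$. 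For \eqref{est-acc-psi-1-b} itself, even the crude $(1+\kappa c_r)$ bound suffices, since $\kappa c_r\alpha^2\lesssim\varepsilon^{1/15}|\ln\varepsilon|$ in $\mathbb H$.
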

\begin{proof}
    From the analysis in Section \ref{ssub:accurate_estimate_of_psi_1_alpha_a}, one can see that 
  \begin{align*}
    \frac{u_p''\phi_{1,\alpha}}{u_p-\hat c}=-2-2\frac{\phi_{1,0}^{[1]}(y)}{u_p(y)-\hat c}+err,\quad \text{ where }\left\|\phi_{1,0}^{[1]}(y)\right\|_{X_1}\lesssim \alpha^2|\ln c_0|.
  \end{align*} 
Therefore, by using Lemma \ref{lem-est-AS} and Lemma \ref{lem-est-AS-ns}, it is clear that
\begin{align*}
  \left\|AirySolver_m(c_0\frac{u_p''\phi_{1,\alpha}}{u_p-\hat c})(y)\right\|_{L^\infty}\lesssim c_0|\ln c_0|,\ \left\|\pa_yAirySolver_m(c_0\frac{u_p''\phi_{1,\alpha}}{u_p-\hat c})(-1)\right\|_{L^\infty}\lesssim \frac{c_0}{c_r}|\ln c_0|.
\end{align*}
Then by using the same technique in Lemma \ref{lem-acc-psi-1-a} we get \eqref{est-acc-psi-1-b}.

Next, we need more precise estimate, and refine the estimate of Lemma \ref{lem-est-AS-ns}.

The main term of $\frac{u_p''\phi_{1,\alpha}}{u_p-\hat c}$ is $-2$, then following the estimate of \eqref{eq-est-AS-0}, one can easily check that
\begin{align*}
  \left|\int^0_{-1}G_{A,M}(x,y)\cdot2dx\right|\lesssim  \frac{1}{\kappa}.
\end{align*}
Next, we turn to the non-local term $E(x,y)$. It follows from Lemma \ref{lem-est-a1-a2} that
\begin{align*}
  \left|\int^y_{-1}\frac{1}{\kappa\varepsilon}a_2(x)\cdot 2dx\right|\lesssim  \frac{1}{\kappa}.
\end{align*}
For the same reason to Lemma \ref{lem-est-AS-ns}, we have
\begin{align*}
  \left|\int^0_{y}\frac{1}{\kappa\varepsilon}a_1(x)(y-y_c)\cdot 2dx\right|\lesssim  |\ln c_0||y-y_c|.
\end{align*}
For the rest term $\int^y_{-1}\frac{1}{\left(\eta_r'(x)\right)^{\frac{1}{2}}\kappa\varepsilon}a_1(x)(x-y_c)\cdot 2dx$, by taking
\begin{align*}
  C^*=\int^{y_c}_{-1}i \frac{2\pi}{\left(\eta_r'(x)\right)^{\frac{1}{2}}\kappa\varepsilon}a_1(x)(x-y_c)\cdot 2dx.
\end{align*}
It is clear that $\left|C^*\right|\lesssim c_r$. Then we have
\begin{align*}
  \left| \int^y_{-1}i \frac{2\pi}{\left(\eta_r'(x)\right)^{\frac{1}{2}}\kappa\varepsilon}a_1(x)(x-y_c)\cdot 2dx-C^* \right|\lesssim |y-y_c|.
\end{align*}
Thus by following the technical of Lemma \ref{lem-acc-psi-1-a}, we have
\begin{align*}
  \left|AirySolver_m(c_0\frac{u_p''\phi_{1,\alpha}}{u_p-\hat c})(y)-C^*c_0\right|\lesssim c_0|\ln c_0||y-y_c|+c_0\alpha^2|\ln c_0|^2.
\end{align*}
Then, the estimate \eqref{est-acc-psi-1-b-2} follows from \eqref{eq-real-Airy-Solver-err}.

\end{proof}
\begin{lemma}\label{lem-acc-psi-1-b-cr}
   The following parameter-derivative estimates hold:
    \begin{align}\label{est-acc-psi-1-b-cr}
    \left\|\pa_{c_r}\psi_{1,\alpha,b}^{[1]}\right\|_{L^\infty}\lesssim \kappa c_r(\varepsilon^{\frac{1}{2}}+|c_i|)|\ln c_0|^2, \quad \left|\pa_{c_r}\psi_{1,\alpha,b}^{[1]\prime}(-1)\right|\lesssim \kappa (\varepsilon^{\frac{1}{2}}+|c_i|)|\ln c_0|^2,
  \end{align}
  and
      \begin{align}\label{est-acc-psi-1-b-ci}
    \left\|\pa_{c_i}\psi_{1,\alpha,b}^{[1]}\right\|_{L^\infty}\lesssim \kappa c_r(\varepsilon^{\frac{1}{2}}+|c_i|)|\ln c_0|^2, \quad \left|\pa_{c_i}\psi_{1,\alpha,b}^{[1]\prime}(-1)\right|\lesssim \kappa (\varepsilon^{\frac{1}{2}}+|c_i|)|\ln c_0|^2.
  \end{align}
  For fixed $\alpha$, it holds that
  \begin{align}\label{est-acc-psi-1-b-nu}
    \left\|\pa_{\nu}\psi_{1,\alpha,b}^{[1]}\right\|_{L^\infty}\lesssim \frac{c_0|\ln c_0|^2}{\nu}  , \quad \left|\pa_{\nu}\psi_{1,\alpha,b}^{[1]\prime}(-1)\right|\lesssim \frac{c_0|\ln c_0|^2}{\nu c_r}.
  \end{align}
  For fixed $\nu$ or $\varepsilon$, it holds that
    \begin{align}\label{est-acc-psi-1-b-al}
    \left\|\pa_{\alpha^2}\psi_{1,\alpha,b}^{[1]}\right\|_{L^\infty}\lesssim \frac{c_0|\ln c_0|^2}{\alpha^2} , \quad \left|\pa_{\alpha^2}\psi_{1,\alpha,b}^{[1]\prime}(-1)\right|\lesssim\frac{c_0|\ln c_0|^2}{\alpha^2 c_r} .
  \end{align}
\end{lemma}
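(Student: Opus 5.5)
We would start from the representation
\[
\psi_{1,\alpha,b}^{[1]}=-AirySolver\Big(\big(-i\varepsilon\alpha^2+ic_0\big)\frac{u_p''\phi_{1,\alpha}}{u_p-\hat c}\Big),
\]
and split it, as in the proof of Lemma \ref{lem-real-Airy-Solver}, into the modified part $AirySolver_m(\cdot)$ and the correction $\phi_{err}=\sum_{j\ge1}S_{A,j}$. The source is $g=(-i\varepsilon\alpha^2+ic_0)\big(-2-2\phi_{1,0}^{[1]}/(u_p-\hat c)+err\big)$, so $\|(u_p-\hat c)g\|_{L^\infty}\lesssim c_0$ and, for the principal constant $-2$, even $\|g\|_{L^\infty}\lesssim c_0$. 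Each parameter derivative then falls either on the prefactor $(-i\varepsilon\alpha^2+ic_0)$, on the source through $\hat c$ (via $\phi_{1,\alpha}$ and $1/(u_p-\hat c)$), or on the kernel $G_A$ (through $\kappa$ and $\eta$). We would treat these three contributions separately for each of $\pa_{c_r},\pa_{c_i},\pa_\nu,\pa_{\alpha^2}$.

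For $\pa_{c_r}$ and $\pa_{c_i}$ we would not try to be sharp, since the claimed bound $\kappa c_r(\varepsilon^{1/2}+|c_i|)|\ln c_0|^2$ is exactly the size of \eqref{eq-real-Airy-Solver-err}, with $c_0=\varepsilon^{1/2}$. Derivatives landing on the source are handled with \eqref{eq-com-x-cr}. There $\pa_{c_r}(u_p-\hat c)^{-1}$ is rewritten as an $x$-derivative, and then one integration by parts in $x$ moves it onto $G_A$, restricted to $[y_c-\frac{y_c+1}{2},y_c+\frac{y_c+1}{2}]$ as in Lemma \ref{lem-acc-psi-1-a-cr}. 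Derivatives landing on the kernel are handled with \eqref{eq-d-cr-A1k}--\eqref{eq-d-cr-A2k} and Lemma \ref{lem-d-cr-a1B}. The worst term is again $\frac{1}{\kappa\varepsilon}a_1(x)(y-y_c)$, now differentiated. Since the source here is only $O(c_0)$ with a simple pole, and not the second derivative of a singular function, the loss from $\pa_{c_r}$ should be at most one factor $\kappa$, relative to the $L^\infty$ bound $c_0|\ln c_0|$ on the main part that comes from the proof of \eqref{est-acc-psi-1-b-2}. Combined with the factor $c_r$ supplied by $|y-y_c|\lesssim c_r$ in the nonlocal term, or by $C^*\lesssim c_r$, this gives $\kappa c_r c_0|\ln c_0|^2$. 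The $\phi_{err}$ part would be handled by differentiating the Neumann series termwise. Each factor $(\varepsilon^{1/2}+|c_i|)$ survives, and differentiating the coefficient $c_i((\eta_r')^2-1)$ produces the $|c_i|$ contribution. At $y=-1$ we use $A_2(1,-1)=0$, so that $\pa_y AirySolver_m(\cdot)(-1)$ reduces to $\frac{1}{\kappa\varepsilon}\int a_1(x)g(x)\,dx$. Differentiating this, the factor $c_r$ from $|y-y_c|$ is absent, which yields $\kappa(\varepsilon^{1/2}+|c_i|)|\ln c_0|^2$. The $\pa_{c_i}$ estimates are identical, since $\hat c$ depends holomorphically on $c$ and $\pa_{c_i}\eta_i=-1/u_p'(y_c)$.

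For $\pa_\nu$ at fixed $\alpha$ we use $\pa_\nu\varepsilon=\varepsilon/\nu$, $\pa_\nu\kappa=-\kappa/(3\nu)$ and $\pa_\nu c_0=c_0/(2\nu)$, while $\eta$ is independent of $\nu$. When the derivative hits the prefactor or the normalization $1/(\kappa\varepsilon)$, we simply multiply \eqref{est-acc-psi-1-b} by $\nu^{-1}$. When it hits an Airy factor, we use $\pa_\nu \mathrm{Ai}(k,e^{\frac{\pi}{6}i}\kappa\eta)=-\frac{e^{\frac{\pi}{6}i}}{3\nu}\kappa\eta\,\mathrm{Ai}(k-1,\cdot)$ and the analogous identity for $\mathrm{Hi}^{(j)}$. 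As shown in the proof of Lemma \ref{lem-acc-psi-1-a-ci}, the extra factor $\kappa|\eta_r(x)-\eta_r(y)|$ is absorbed by the exponential off-diagonal decay, and the $\mathrm{Hi}$ terms keep their decay $\langle\kappa\eta\rangle^{-j}$. This yields $c_0|\ln c_0|^2/\nu$, and $c_0|\ln c_0|^2/(\nu c_r)$ at $y=-1$, where the factor $1/c_r$ is inherited from \eqref{est-acc-psi-1-b}. For $\pa_{\alpha^2}$ we argue the same way: $\phi_{1,\alpha}$ depends on $\alpha^2$ analytically, with $\pa_{\alpha^2}\phi_{1,0}^{[j]}=O(\alpha^{2j-2})$, and $\varepsilon$, $\kappa$, $c_0$ have logarithmic derivatives in $\alpha^2$ of size $\alpha^{-2}$ when $\nu$ is fixed. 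This gives the factor $\alpha^{-2}$. When $\varepsilon$ is fixed, only the source and the $\varepsilon\alpha^2$ terms move, and the result is better.

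The main obstacle is the $\pa_{c_r}$ derivative of the nonlocal contribution at $y\le y_c$ near the wall, where $\pa_{c_r}a_{1,B}$ is large (Lemma \ref{lem-d-cr-a1B}), together with the boundary value $\pa_{c_r}\psi'(-1)$. We would control these by using the wall-localized bound of Lemma \ref{lem-d-cr-a1B} on the short interval $[-1,-1+C|\ln c|\kappa^{-3/2}c_r^{-1/2}]$, and by checking that $1/(u_p-\hat c)$ evaluated there is $\lesssim 1/c_r$. If that product turns out to be too large, the fallback is the crude bound $\kappa$ times the estimate of Lemma \ref{lem-est-AS}, which still fits within the stated right-hand side.
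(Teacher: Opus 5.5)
Your analysis of the main part follows the paper. The dominant term is the differentiated nonlocal piece $\frac{1}{\kappa\varepsilon}\int_y^0\pa_{c_r}a_{1,M}(x)(y-y_c)\,c_0\,dx$, which gives $c_0\kappa c_r$. At $y=-1$ the identity $A_2(1,-1)=0$ reduces everything to $\frac{1}{\kappa\varepsilon}\int a_1 g$, which gives $c_0\kappa$. Your $\pa_\nu$ and $\pa_{\alpha^2}$ bounds by scaling are what the paper means by ``the same way''.

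The genuine difference is how you treat $\phi_{err}$. The paper does not differentiate the Neumann series term by term. It takes the size of $\phi_{err}$ for a source of size $c_0$, namely $\kappa c_r(\varepsilon^{1/2}+|c_i|)|\ln c_0|^2c_0$ by \eqref{eq-real-Airy-Solver-err}. It then accepts a crude loss of $\varepsilon^{-1/2}=c_0^{-1}$ under $\pa_{c_r}$. That is exactly where the stated right-hand side comes from, including the term $\kappa c_r|c_i||\ln c_0|^2$, which carries no factor $c_0$. So the target is not ``the size of \eqref{eq-real-Airy-Solver-err}'' but $c_0^{-1}$ times it, and this slack is spent only on $\phi_{err}$.

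Your termwise route would give more. However, it needs $\pa_{c_r}S_{A,j}$ controlled in the weighted norm $\|(u_p-\hat c)^2\pa_y^2\,\cdot\,\|_{L^\infty}$ that drives the iteration, and you never quantify that loss; the paper's shortcut avoids this. Also, the source $c_i\,\pa_{c_r}\bigl((\eta_r')^2-1\bigr)\pa_y^2S_{A,0}$ is not what produces the target's $|c_i|$ term. It yields roughly $\kappa c_r|c_i|c_0|\ln c_0|$, a factor $c_0$ smaller.

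One claim in your last paragraph is false: the fallback ``$\kappa$ times the estimate of Lemma~\ref{lem-est-AS}'' does not fit the stated bound. For a source of size $c_0$ it gives $\kappa(1+\kappa c_r)c_0|\ln c_0|$. When $c_i=0$, which is the neutral case where these bounds are used, this exceeds $\kappa c_r(\varepsilon^{1/2}+|c_i|)|\ln c_0|^2$ by a factor of order $\kappa/|\ln c_0|$. The same excess appears at $y=-1$, where the fallback gives $\kappa^2c_0|\ln c_0|$ against the target $\kappa c_0|\ln c_0|^2$. A crude loss is affordable only on $\phi_{err}$, which carries the extra factor $\varepsilon^{1/2}+|c_i|$; the main part must be done sharply.

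Fortunately your primary route works, so the fallback is never needed. On the wall interval, Lemma~\ref{lem-d-cr-a1B} gives
\[
\frac{|y-y_c|}{\kappa\varepsilon}\int|\pa_{c_r}a_{1,B}|\,c_0\,dx\lesssim\kappa^2c_r\,\frac{|\ln c_0|}{\langle\kappa c_r\rangle^{2}}\,\frac{|\ln c_0|}{\kappa^{3/2}c_r^{1/2}}\,c_0\lesssim c_0|\ln c_0|^2,
\]
using $\kappa c_r\gtrsim1$ in $\mathbb H$. The same computation without the factor $|y-y_c|$ gives $\lesssim\kappa c_0|\ln c_0|^2$ for the value at $y=-1$.
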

\begin{proof}
We first study $\pa_{c_r}\psi_{1,\alpha,b}^{[1]}$. We again follow the argument used in Lemma \ref{lem-acc-psi-1-a-cr}. The worst contribution arises from the term $\partial_{c_r}\mathrm{AirySolver}_m(c_0)$, in particular, for $-1\le y\le 2y_c+1$,
\begin{align*}
  \left|  \frac{1}{\kappa\varepsilon}\int^0_{y}\left(\pa_{c_r}a_{1,M}(x)\right)\left(y-y_c\right)c_0 dx\right|\lesssim  \left|  \frac{\left|y-y_c\right|}{\kappa\varepsilon}\int^0_{y}\frac{c_0}{\left\langle \kappa\eta(x) \right\rangle^2}dx\right|\lesssim c_0\varepsilon^{-\frac{1}{3}}\left|y-y_c\right|\lesssim c_0\kappa c_r.
\end{align*}

For $\pa_{c_r}\psi_{1,\alpha,b}^{[1]\prime}(-1)$, similarly, the worst contribution is 
\begin{align*}
  \left|  \frac{1}{\kappa\varepsilon}\int^0_{-1}\left(\pa_{c_r}a_{1,M}(x)\right) c_0 dx\right|\lesssim  \left|  \frac{1}{\kappa\varepsilon}\int^0_{-1}\frac{c_0}{\left\langle \kappa\eta(x) \right\rangle^2}dx\right|\lesssim  c_0\kappa.
\end{align*}

Following the proof of Lemma \ref{lem-real-Airy-Solver}, for the $\phi_{err}$ associated with $f=c_0$, we have
\begin{align}\label{est-acc-psi-1-b-err}
  \left\|\phi_{err}\right\|_{L^\infty}+\left\|\left(u_p-\hat c\right)\phi_{err}'\right\|_{L^\infty}\lesssim \kappa c_r(\varepsilon^{\frac{1}{2}}+|c_i|)|\ln c_0|^2 c_0 .
\end{align}

Similar to Lemma \ref{lem-acc-psi-1-a-cr}, without any further refined analysis, taking a $c_r$-derivative leads to at most a loss of order $\varepsilon^{-\frac12}$ in the corresponding bounds. Then we have
\begin{align}\label{est-acc-psi-1-b-err-cr}
  \left\|\pa_{c_r}\phi_{err}\right\|_{L^\infty}+\left\|\left(u_p-\hat c\right)\pa_{c_r}\phi_{err}'\right\|_{L^\infty}\lesssim \kappa c_r(\varepsilon^{\frac{1}{2}}+|c_i|)|\ln c_0|^2.
\end{align}
For $|c_i|\sim \varepsilon^{\frac{1}{3}}$, we can see the above estimate is in the scale of $c_r|\ln c_0|^2$, which is still much smaller than $1$. However, we only concern the derivative estimate on the upper branch, and for that case, $c_i=0$, and the upper bound is in the scale of  $\varepsilon^{\frac{1}{6}} c_r|\ln c_0|^2$.

The other estimates can be proved in the same way.
\end{proof}

\subsubsection{Refined estimates of $\phi_{1,\alpha}^{[1]}$}
Using the estimates for $\psi^{[1]}_{1,\alpha}$ obtained in the previous two subsections, we now estimate the next Rayleigh correction $\phi_{1,\alpha}^{[1]}=-RaySolver_{\alpha} \left(Reg(\psi^{[1]}_{1,\alpha})\right)$. 
\begin{lemma}\label{lem-acc-phi-1}
  It holds that
  \begin{align}
    \left\|\phi_{1,\alpha}^{[1]}\right\|_{L^\infty}+\left\|\phi_{1,\alpha}^{[1]\prime}\right\|_{L^\infty} \lesssim(\varepsilon^{\frac{1}{3}}\alpha^2+c_0)|\ln c_0|^2, \label{est-acc-phi-1}\\
    \left|\pa_{c_r}\phi_{1,\alpha}^{[1]}(-1)\right|+\left|\pa_{c_r}\phi_{1,\alpha}^{[1]\prime}(-1)\right|  \lesssim \left( \frac{\alpha^2}{ \left\langle \kappa c_r \right\rangle^{\frac{3}{2}}} +\varepsilon^{-\frac{1}{6}}(c_r+\alpha^2)\alpha^2 +\kappa c_r(\varepsilon^{\frac{1}{2}}+|c_i|)\right)|\ln c_0|^3,\label{est-acc-phi-1-cr}\\
    \left|\pa_{c_i}\phi_{1,\alpha}^{[1]}(-1)\right|+\left|\pa_{c_i}\phi_{1,\alpha}^{[1]\prime}(-1)\right|  \lesssim \left( \frac{\alpha^2}{ \left\langle \kappa c_r \right\rangle^{\frac{3}{2}}} +\varepsilon^{-\frac{1}{6}}(c_r+\alpha^2)\alpha^2 +\kappa c_r(\varepsilon^{\frac{1}{2}}+|c_i|)\right)|\ln c_0|^3.\label{est-acc-phi-1-ci}
  \end{align}
  For fixed $\alpha$, it holds that
      \begin{align}
    \left|\pa_{\nu}\phi_{1,\alpha}^{[1]}(-1)\right|+\left|\pa_{\nu}\phi_{1,\alpha}^{[1]\prime}(-1)\right|\lesssim \frac{\varepsilon^{\frac{1}{3}}\alpha^2+c_0}{\nu} |\ln c_0|^2.\label{est-acc-phi-1-nu}
  \end{align}
  For fixed $\nu$ or $\varepsilon$, it holds that
      \begin{align}
    \left|\pa_{\alpha^2}\phi_{1,\alpha}^{[1]}(-1)\right|+\left|\pa_{\alpha^2}\phi_{1,\alpha}^{[1]\prime}(-1)\right|\lesssim\frac{\varepsilon^{\frac{1}{3}}\alpha^2+c_0}{\alpha^2} |\ln c_0|^2.\label{est-acc-phi-1-alpha}
  \end{align}
\end{lemma}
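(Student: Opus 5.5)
The approach is to work directly with the explicit representation of $RaySolver_\alpha$ as a Neumann series $\sum_j S_{R,j}$ built on $RaySolver_0$ from \eqref{eq-op-solver-Ray0}. The source is
\[
g:=Reg(\psi^{[1]}_{1,\alpha})=\left(2-\alpha^2(u_p-c)+i\varepsilon\alpha^4\right)\psi^{[1]}_{1,\alpha},\qquad \phi^{[1]}_{1,\alpha}=-RaySolver_\alpha(g).
\]
Since $\alpha^2|\ln c_0|\ll1$, the terms $S_{R,j}$ with $j\ge1$ only add factors $\alpha^2|\ln c_0|$. It therefore suffices to estimate $RaySolver_0(g)$ and its parameter derivatives at $y=-1$, together with its $L^\infty$ and $W^{1,\infty}$ norms.

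\textbf{Step 1: the bound \eqref{est-acc-phi-1}.} I will write $\psi^{[1]}_{1,\alpha}=\psi^{[1]}_{1,\alpha,a}+\psi^{[1]}_{1,\alpha,b}$. By Lemma \ref{lem-acc-psi-1-a} and Lemma \ref{lem-acc-psi-1-b},
\[
\|g\|_{L^\infty}\lesssim(\varepsilon^{1/3}\alpha^2+c_0)|\ln c_0|.
\]
Lemma \ref{lem-Raysolver-alpha} then bounds $\|\phi^{[1]}_{1,\alpha}\|_{L^\infty}$ and $\|\partial_y\phi^{[1]}_{1,\alpha}\|_{L^\infty}$ by one further factor $|\ln c_0|$ (cf. \eqref{eq-Raysolver-1}). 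This gives \eqref{est-acc-phi-1}.

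\textbf{Step 2: boundary values and their derivatives.} At $y=-1$ the first integral in \eqref{eq-op-solver-Ray0} vanishes. Hence
\[
RaySolver_0(g)(-1)=-\phi_{2,0}(-1)\int_{-1}^0 g\,dx,\qquad \partial_yRaySolver_0(g)(-1)=-\phi_{2,0}'(-1)\int_{-1}^0 g\,dx,
\]
using $\phi_{1,0}/(u_p-\hat c)=1$. The boundary data therefore depend on $g$ only through $\int_{-1}^0 g$; the prefactors are $O(1)$ and $O(|\ln c_0|)$ respectively. Differentiating in a parameter $p\in\{c_r,c_i,\nu,\alpha^2\}$ gives two kinds of terms. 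If $\partial_p$ hits $\phi_{2,0}(-1)$ or $\phi_{2,0}'(-1)$, the loss is at most $1/c_r$ (from the explicit formulas \eqref{eq-phi0-boundary}). This term carries $\int g=O((\varepsilon^{1/3}\alpha^2+c_0)|\ln c_0|)$, and is acceptable for $c_r,c_i$ since $\varepsilon^{1/3}\alpha^2/c_r\lesssim \alpha^2\langle\kappa c_r\rangle^{-1}$; for $\nu,\alpha^2$ there is no such loss. The other term is $\int_{-1}^0\partial_p g$. This is exactly where the $L^1$ estimates \eqref{est-acc-psi-1-a-cr-3} and \eqref{est-acc-psi-1-a-ci-3} enter: they avoid the $L^\infty$ loss near $y_c$. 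Combined with \eqref{est-acc-psi-1-b-cr} and \eqref{est-acc-psi-1-b-ci} for the $b$-part, and with the bound $\partial_p$ of the coefficient $2-\alpha^2(u_p-c)+\dots$ being $O(\alpha^2)$, this yields \eqref{est-acc-phi-1-cr} and \eqref{est-acc-phi-1-ci}. For $\nu$ and $\alpha^2$ I use the $L^\infty$ bounds in Lemma \ref{lem-acc-psi-1-a-ci} and Lemma \ref{lem-acc-psi-1-b-cr}. These lose exactly $1/\nu$, respectively $1/\alpha^2$, which gives \eqref{est-acc-phi-1-nu} and \eqref{est-acc-phi-1-alpha}.

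\textbf{Step 3: higher terms $S_{R,j}$, $j\ge1$.} I will iterate the same boundary-evaluation identity. Each $S_{R,j}(g)$ at $-1$ again involves only $\int_{-1}^0(u_p-\hat c)S_{R,j-1}\,dx$ times $\phi_{2,0}(-1)$ or $\phi_{2,0}'(-1)$. The $p$-derivatives of the interior $S_{R,j-1}$ must be controlled in $L^1$. Here $\partial_p$ of the kernel $\phi_{2,0}/(u_p-\hat c)$ produces a singularity $(x-\tilde y_{\hat c})^{-2}$.

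\textbf{Main obstacle.} I expect the main difficulty to be Step 3 for $c_r$ and $c_i$. To handle the $(x-\tilde y_{\hat c})^{-2}$ singularity, I would first use the commutation identity \eqref{eq-com-x-cr} to convert the parameter derivative into an $x$-derivative, and then integrate by parts away from $y_c$. This trades the singularity for a $\log$, giving the extra $|\ln c_0|$ that appears in the $|\ln c_0|^3$ of \eqref{est-acc-phi-1-cr}. The factor $\alpha^2|\ln c_0|$ then sums the series.
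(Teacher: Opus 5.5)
Your Steps 1 and 2 agree with the paper. The bound \eqref{est-acc-phi-1} comes from Lemma~\ref{lem-Raysolver-alpha} together with the $L^\infty$ bounds on $\psi^{[1]}_{1,\alpha}$. The boundary data of $RaySolver_0(g)$ reduce to $\phi_{2,0}(-1)\int_{-1}^0 g$ and $\phi_{2,0}'(-1)\int_{-1}^0 g$, with $\int\pa_{c_r}g$ controlled through the $L^1$ bound \eqref{est-acc-psi-1-a-cr-3}.

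The gap is in Step~3, which is where the real difficulty lies. The dangerous part of $\pa_{c_r}S_{R,1}(g)(-1)$ is
\[
\phi_{2,0}(-1)\,\alpha^2\int_{-1}^0\left(u_p(x)-\hat c\right)^2\int_{-1}^x g(z)\,\pa_{c_r}\left(\frac{\phi_{2,0}(z)}{u_p(z)-\hat c}\right)dz\,dx ,
\]
whose kernel behaves like $|z-\tilde y_{\hat c}|^{-2}$. Rewriting $\pa_{c_r}$ as $\pa_z$ via \eqref{eq-com-x-cr} and integrating by parts does not remove this singularity. It only moves the derivative onto $g=Reg(\psi^{[1]}_{1,\alpha})$.
\begin{itemize}
\item The only interior derivative control you have on $\psi^{[1]}_{1,\alpha}$ is weighted by $u_p-\hat c$ (cf.\ \eqref{eq-est-AS-1}, \eqref{eq-est-ASS-1}); Lemmas~\ref{lem-acc-psi-1-a} and \ref{lem-acc-psi-1-b} give $\pa_y\psi$ only at $y=-1$. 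So $\int|\pa_z g|\,|z-\tilde y_{\hat c}|^{-1}dz$ again costs $\int|z-\tilde y_{\hat c}|^{-2}dz\sim c_0^{-1}$.
\item The promised ``trade for a log'' would need $\pa_y\psi^{[1]}_{1,\alpha}$ bounded, up to logarithms, across the critical layer. No available lemma provides this.
\item Since the singularity sits at $y_c$, integrating by parts ``away from $y_c$'' does not touch it.
\item For $\psi^{[1]}_{1,\alpha,b}$, of size $c_0|\ln c_0|$, a $c_0^{-1}$ loss yields a term of order $\alpha^2|\ln c_0|$. This is not covered by \eqref{est-acc-phi-1-cr} when $\kappa c_r\gg1$. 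Near the upper neutral point ($c_r\sim\alpha^2\sim\varepsilon^{1/5}$, $|c_i|\lesssim\varepsilon^{2/5}$) the right-hand side is only $O(\varepsilon^{7/30}|\ln c_0|^3)$.
\item For $\psi^{[1]}_{1,\alpha,a}$ the integration by parts is counterproductive. The weighted derivative bound carries the factor $1+\kappa c_r$, which Lemma~\ref{lem-acc-psi-1-a} removes only for $\|\psi^{[1]}_{1,\alpha,a}\|_{L^\infty}$.
\end{itemize}

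The paper does not integrate by parts here. It splits $\psi^{[1]}_{1,\alpha}=\psi^{[1]}_{1,\alpha,a}+\psi^{[1]}_{1,\alpha,b}$ and treats the two parts differently.
\begin{itemize}
\item For the $a$-part it accepts the direct $c_0^{-1}=\varepsilon^{-1/2}$ loss. This is harmless because $\|\psi^{[1]}_{1,\alpha,a}\|_{L^\infty}\lesssim\varepsilon^{1/3}\alpha^2|\ln c_0|$, so with the factor $\alpha^2$ from $S_{R,1}$ one gets $\varepsilon^{-1/6}\alpha^4|\ln c_0|$.
\item For the $b$-part it uses the refined pointwise bound \eqref{est-acc-psi-1-b-2}, proved in Lemma~\ref{lem-acc-psi-1-b} for exactly this purpose by subtracting the constant $C^*c_0$:
\[
|\psi^{[1]}_{1,\alpha,b}(z)|\lesssim c_0|\ln c_0|\,|z-y_c|+K,\qquad K=(c_r+\alpha^2+\kappa c_r|c_i|)c_0|\ln c_0|^2+c_rc_0 .
\]
Hence $|\psi^{[1]}_{1,\alpha,b}(z)|\,|u_p(z)-\hat c|^{-2}\lesssim c_0|\ln c_0|\,|z-\tilde y_{\hat c}|^{-1}+K|z-\tilde y_{\hat c}|^{-2}$. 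The first part costs only a logarithm, and the $c_0^{-1}$ loss falls only on $K$. The result is $\alpha^2c_0|\ln c_0|^2+(c_r+\alpha^2+\kappa c_r|c_i|)\alpha^2|\ln c_0|^2$, which does fit \eqref{est-acc-phi-1-cr}.
\end{itemize}
Your proposal never uses \eqref{est-acc-psi-1-b-2} or any substitute for it, and without it Step~3 does not close.

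A smaller point on Step~2: when $\pa_{c_r}$ falls on $\phi_{2,0}'(-1)$ you get $(\varepsilon^{1/3}\alpha^2+c_0)|\ln c_0|/c_r$. Your comparison with $\alpha^2\langle\kappa c_r\rangle^{-1}$ does not place this under the $\langle\kappa c_r\rangle^{-3/2}$ of \eqref{est-acc-phi-1-cr}, and the $c_0/c_r$ part is not addressed. The paper keeps exactly this summand in its intermediate bound, so it is not the main issue, but you should not declare it absorbed.
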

\begin{proof}
The estimate \eqref{est-acc-phi-1} follows directly from Lemma \ref{lem-Raysolver-alpha}, Lemma \ref{lem-acc-psi-1-a}, and Lemma \ref{lem-acc-psi-1-b}. 

 Next, we focus on the $c_r$-derivative. Recall the \eqref{eq-op-solver-Ray0} the definition of $RaySolver_0$, we have
  \begin{align*}
    RaySolver_0 \left(Reg(\psi^{[1]}_{1,\alpha})\right)(-1)=-\phi_{2,0}(-1)\int^0_{-1}Reg(\psi^{[1]}_{1,\alpha})(x)dx,\\
    \pa_y RaySolver_0 \left(Reg(\psi^{[1]}_{1,\alpha})\right)(-1)=-\phi_{2,0}'(-1)\int^0_{-1}Reg(\psi^{[1]}_{1,\alpha})(x)dx.
  \end{align*}
  By the definition of $\phi_{2,0}$, we have
\begin{align*}
  \left|\pa_{c_r}\phi_{2,0}(-1)\right|\lesssim |\ln c_0|,\quad \left|\pa_{c_r}\phi_{2,0}'(-1)\right|\lesssim \frac{1}{c_r}.
\end{align*}
It follows from  Lemma \ref{lem-acc-psi-1-a-cr} and Lemma \ref{lem-acc-psi-1-b-cr} that 
\begin{align*}
  &\left|\pa_{c_r}RaySolver_0 \left(Reg(\psi^{[1]}_{1,\alpha})\right)(-1)\right|+\left|\pa_{c_r}\pa_yRaySolver_0 \left(Reg(\psi^{[1]}_{1,\alpha})\right)(-1)\right|\\
  \lesssim&\left(\frac{\varepsilon^{\frac{1}{3}}\alpha^2+c_0}{c_r} +\frac{\alpha^2}{ \left\langle \kappa c_r \right\rangle^{\frac{3}{2}}}+ \varepsilon^{-\frac{1}{6}}(c_r+\alpha^2)\alpha^2+ \kappa c_r(\varepsilon^{\frac{1}{2}}+|c_i|)\right)|\ln c_0|^3.
\end{align*}

Recall Lemma \ref{lem-Raysolver-alpha} that $RaySolver_\alpha=\sum^{\infty}_{j=0} S_{R,j}$, and 
\begin{align*}
  S_{R,1}(Reg(\psi^{[1]}_{1,\alpha}))=&RaySolver_0 \left(\alpha^2\left(u_p(y)-\hat c\right)RaySolver_0 \left(Reg(\psi^{[1]}_{1,\alpha})\right)\right).
\end{align*}
The higher-order terms enjoy additional smallness. Moreover, after taking one $c_r$-derivative, we lose at most a factor of $\varepsilon^{-1/2}$. Hence it suffices to treat carefully the contribution coming from
$\psi^{[1]}_{1,\alpha,b}$. The potentially singular contribution is
\begin{align*}
  &\pa_{c_r}\left(\phi_{2,0}(-1)\int^0_{-1}\alpha^2\left(u_p(x)-\hat c\right)\phi_{1,0}(x)\int^x_{-1}\phi_{2,0}(z)\frac{Reg(\psi^{[1]}_{1,\alpha,b})(z)}{\left(u_p(z)-\hat c\right)}dz dx\right)\\
  =&\phi_{2,0}(-1)\int^0_{-1}\alpha^2\left(u_p(x)-\hat c\right)^2\int^x_{-1}Reg(\psi^{[1]}_{1,\alpha,b})(z)\pa_{c_r}\frac{\phi_{2,0}(z)}{\left(u_p(z)-\hat c\right)}dz dx+err\\
  =&\phi_{2,0}(-1)\int^0_{-1}\alpha^2\left(u_p(x)-\hat c\right)^2\int^x_{-1} \frac{\frac{-z}{1-\hat c}\psi^{[1]}_{1,\alpha,b}(z)}{\left(u_p(z)-\hat c\right)^2}dz dx+err,
\end{align*}
where $err$ denotes terms that can be estimated directly and are therefore
harmless. The main term above shows that when the $c_r$-derivative falls on
$\left(u_p(z)-\hat c\right)^{-1}$, it may produce an amplification at the
scale $1/c_0$. This is precisely the difficulty that requires the improved
estimate of the form \eqref{est-acc-psi-1-b-2}.

From \eqref{est-acc-psi-1-b-2} we can see that
\begin{align*}
  \left|\phi_{2,0}(-1)\int^0_{-1}\alpha^2\left(u_p(x)-\hat c\right)^2\int^x_{-1} \frac{\frac{-z}{1-\hat c}\psi^{[1]}_{1,\alpha,b}(z)}{\left(u_p(z)-\hat c\right)^2}dz dx\right|\lesssim \alpha^2 c_0 |\ln c_0|^2+\left(c_r+\alpha^2+\kappa c_r |c_i|\right)\alpha^2   |\ln c_0|^2.
\end{align*}
 
Combining the above estimates, we have
\begin{align*}
  \left|\pa_{c_r}S_{R,1}(Reg(\psi^{[1]}_{1,\alpha}))(-1)\right|\lesssim \left( \varepsilon^{-\frac{1}{6}}\alpha^2|\ln c_0|+ \frac{c_0}{c_r}+  c_0 |\ln c_0|^2+\left(c_r+\alpha^2+\kappa c_r |c_i|\right)  |\ln c_0|^2 \right)\alpha^2.
\end{align*}
Similarly, we also have
\begin{align*}
  \left|\pa_{c_r}S_{R,1}'(Reg(\psi^{[1]}_{1,\alpha}))(-1)\right|\lesssim \left( \varepsilon^{-\frac{1}{6}}\alpha^2|\ln c_0|+ \frac{c_0}{c_r}+  c_0 |\ln c_0|^2+\left(c_r+\alpha^2+\kappa c_r |c_i|\right)  |\ln c_0|^2 \right)\alpha^2 |\ln c_0|.
\end{align*}
For $S_{R,j}$ with $j>1$, we have better estimates. Then we deduce \eqref{est-acc-phi-1} and \eqref{est-acc-phi-1-cr}. The rest estimates can be deduced in similar way.

\end{proof}

\subsubsection{Boundary expansion of $\Phi_{1,\alpha}$}
Recall that $\Phi_{1,\alpha}
=\phi_{1,\alpha}
+\sum_{j=1}^{+\infty}\left(\phi_{1,\alpha}^{[j]}+\psi_{1,\alpha}^{[j]}\right)$. In the previous subsections, we obtained sharp estimates for the first correction terms $\phi_{1,\alpha}^{[1]}$ and $\psi_{1,\alpha}^{[1]}$. The same arguments give stronger bounds for $\phi_{1,\alpha}^{[j]}$ and $\psi_{1,\alpha}^{[j]}$ when $j>1$. Combining these bounds with Lemma \ref{lem-sol-Ray-hom}, we conclude that the leading boundary behavior of $\Phi_{1,\alpha}$ is determined by the Rayleigh component $\phi_{1,\alpha}$, up to the remainders estimated above.

Moreover, the corresponding remainder terms, together with their parameter derivatives, satisfy sufficiently strong bounds. These estimates will be used in the analysis of the neutral curve.

\subsection{Expansion of  $\frac{\Phi_{1,\alpha}(-1)}{\Phi_{1,\alpha}'(-1)}$}
In this subsection, we derive an expansion for $\frac{\Phi_{1,\alpha}(-1)}{\Phi_{1,\alpha}'(-1)}$. This expansion is one of the key steps in the analysis of the neutral curve.
\begin{lemma}\label{lem-res-ray}
  It holds that 
  \begin{align*}
    \frac{\Phi_{1,\alpha}(-1)}{\Phi_{1,\alpha}'(-1)}=\frac{\phi_{1,\alpha}(-1)}{\phi_{1,\alpha}'(-1)}+Res_{ray},
  \end{align*} 
  where the remainder $Res_{ray}$ satisfies
  \begin{align}\label{est-res-ray}
    \left|Res_{ray}\right|\lesssim \left(1+\frac{\alpha^2}{c_r}\right)\left(\varepsilon^{\frac{1}{3}}\alpha^2+c_0 \right)|\ln c_0|^2,
  \end{align}
  \begin{align}\label{est-res-ray-cr}
    \left|\pa_{c_r}Res_{ray}\right|\lesssim \left(1+\frac{\alpha^2}{c_r}\right)\left( \frac{\alpha^2}{ \left\langle \kappa c_r \right\rangle^{\frac{3}{2}}} +\varepsilon^{-\frac{1}{6}}(c_r+\alpha^2)\alpha^2 +\kappa c_r(\varepsilon^{\frac{1}{2}}+|c_i|)\right)|\ln c_0|^3,
  \end{align}
    \begin{align}\label{est-res-ray-ci}
    \left|\pa_{c_i}Res_{ray}\right|\lesssim \left(1+\frac{\alpha^2}{c_r}\right)\left( \frac{\alpha^2}{ \left\langle \kappa c_r \right\rangle^{\frac{3}{2}}} +\varepsilon^{-\frac{1}{6}}(c_r+\alpha^2)\alpha^2 +\kappa c_r(\varepsilon^{\frac{1}{2}}+|c_i|)\right)|\ln c_0|^3.
  \end{align}
  For fixed $\alpha$, it holds that
     \begin{align}\label{est-res-ray-nu}
    \left|\pa_{\nu}Res_{ray}\right|\lesssim \frac{\varepsilon^{\frac{1}{3}}\alpha^2+c_0}{\nu} |\ln c_0|^2\left(1+\frac{\alpha^2}{c_r}\right) .
  \end{align} 
    For fixed $\nu$ or $\varepsilon$, it holds that
     \begin{align}\label{est-res-ray-alpha}
    \left|\pa_{\alpha^2}Res_{ray}\right|\lesssim \frac{\varepsilon^{\frac{1}{3}}\alpha^2+c_0}{\alpha^2} |\ln c_0|^2\left(1+\frac{\alpha^2}{c_r}\right) .
  \end{align} 
\end{lemma}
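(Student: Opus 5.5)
The plan is to write $\Phi_{1,\alpha}=\phi_{1,\alpha}+R$ with $R=\sum_{j\ge1}\bigl(\phi_{1,\alpha}^{[j]}+\psi_{1,\alpha}^{[j]}\bigr)$, and to expand the quotient algebraically:
\begin{align*}
  \frac{\Phi_{1,\alpha}(-1)}{\Phi_{1,\alpha}'(-1)}-\frac{\phi_{1,\alpha}(-1)}{\phi_{1,\alpha}'(-1)}
  =\frac{R(-1)}{\Phi_{1,\alpha}'(-1)}-\frac{\phi_{1,\alpha}(-1)\,R'(-1)}{\phi_{1,\alpha}'(-1)\Phi_{1,\alpha}'(-1)}.
\end{align*}
By Lemma \ref{lem-Ray-endpoint-expansion}, $\phi_{1,\alpha}'(-1)=2+O(\alpha^2|\ln\hat c|)$. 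The bounds on $R'(-1)$ below are small, so $\Phi_{1,\alpha}'(-1)\sim1$. Also $|\phi_{1,\alpha}(-1)|\lesssim c_r+\alpha^2$, since $\phi_{1,0}(-1)=-\hat c$ and $\phi_{1,0}^{[1]}(-1)=O(\alpha^2)$. Hence
\begin{align*}
  |Res_{ray}|\lesssim |R(-1)|+(c_r+\alpha^2)|R'(-1)|.
\end{align*}
This explains the factor $1+\alpha^2/c_r$, because every bound on $R'(-1)$ available so far carries an extra $1/c_r$ relative to $R(-1)$.

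For \eqref{est-res-ray}, the contributions are as follows. For the $j=1$ terms, Lemma \ref{lem-acc-psi-1-a} gives $|\psi^{[1]}_{1,\alpha,a}(-1)|\lesssim\varepsilon^{1/3}\alpha^2|\ln c_0|$ and $|\psi^{[1]\prime}_{1,\alpha,a}(-1)|\lesssim\varepsilon^{1/3}\alpha^2|\ln c_0|/c_r$. Lemma \ref{lem-acc-psi-1-b} gives $|\psi^{[1]}_{1,\alpha,b}(-1)|\lesssim c_0|\ln c_0|$ with derivative bound $c_0|\ln c_0|/c_r$. Lemma \ref{lem-acc-phi-1} gives $\phi^{[1]}_{1,\alpha}$ and its derivative $\lesssim(\varepsilon^{1/3}\alpha^2+c_0)|\ln c_0|^2$. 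For $j\ge2$, I would iterate the same argument. Each further Rayleigh--Airy step, via Lemma \ref{lem-iter} and the structure used in Lemma \ref{lem-acc-psi-1-a}, produces an extra factor of at least $\varepsilon^{1/3}(1+\kappa c_r)|\ln c_0|^2\ll1$ in the relevant norms. The geometric tail is therefore dominated by the $j=1$ terms. Combining these gives \eqref{est-res-ray}.

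For the derivative bounds, I would differentiate the identity above in $c_r$, $c_i$, $\nu$ or $\alpha^2$. The terms where the derivative hits $\phi_{1,\alpha}(-1)$ or $\phi_{1,\alpha}'(-1)$ are controlled by Lemma \ref{lem-Ray-endpoint-expansion}: $\partial\phi_{1,\alpha}(-1)=O(1)$ and $\partial\phi_{1,\alpha}'(-1)=O(1/c_r)$. Multiplied by $R(-1)$ or $(c_r+\alpha^2)R'(-1)$, these contribute at most $(1+\alpha^2/c_r)$ times the undifferentiated residual bound divided by $c_r$. One checks that this is dominated by the right sides of \eqref{est-res-ray-cr}--\eqref{est-res-ray-alpha}. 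The terms where the derivative hits $R(-1)$ and $R'(-1)$ are read off from Lemmas \ref{lem-acc-psi-1-a-cr}, \ref{lem-acc-psi-1-a-ci}, \ref{lem-acc-psi-1-b-cr} and \ref{lem-acc-phi-1}. Each of these already has the displayed form, with the derivative at the wall having an extra $1/c_r$.

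The main obstacle is the tail $j\ge2$ under parameter differentiation. The crude rule ``one derivative costs at most $\varepsilon^{-1/2}$'' must still leave a contribution below the $j=1$ bounds. I would verify this using the extra smallness $\varepsilon^{1/3}(1+\kappa c_r)|\ln c_0|^2\lesssim\varepsilon^{2/15}|\ln c_0|^2$ gained per step. That is not enough by itself against $\varepsilon^{-1/2}$. I would therefore repeat the localization argument of Lemma \ref{lem-acc-psi-1-a-cr}: the large derivative loss occurs only near $y_c$, and the wall value and $L^1$ norm feed into $RaySolver_\alpha$ through $\int Reg(\cdot)$. This limits the loss to $\varepsilon^{-1/6}$ per step, which the gain then absorbs.
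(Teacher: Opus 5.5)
Your reduction is the paper's own, and \eqref{est-res-ray} follows from it: the exact identity for the difference of quotients, $|Res_{ray}|\lesssim|R(-1)|+(c_r+\alpha^2)|R'(-1)|$, and the same input lemmas for the $j=1$ corrections.

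The gap is the sentence ``one checks that this is dominated by the right sides'', for the terms where $\partial_{c_r}$ or $\partial_{c_i}$ falls on $\phi_{1,\alpha}$. It is false for \eqref{est-res-ray-cr}--\eqref{est-res-ray-ci}. (For \eqref{est-res-ray-nu}--\eqref{est-res-ray-alpha} it is fine, since those right sides carry $1/\nu$ or $1/\alpha^2$.) Test it at the lower branch $c_r\sim\alpha^2\sim\varepsilon^{\frac13}$, $\kappa c_r\sim 1$. There the right side of \eqref{est-res-ray-cr} is $O(\varepsilon^{\frac13}|\ln c_0|^3)$.

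\begin{itemize}
\item Your bound $\partial_{c_r}\phi_{1,\alpha}'(-1)=O(1/c_r)$, multiplied by $\psi^{[1]}_{1,\alpha,b}(-1)=O(c_0|\ln c_0|)$ inside $R(-1)$, gives $c_0|\ln c_0|/c_r\sim\varepsilon^{\frac16}|\ln c_0|$. This term is repaired by the bound the paper actually uses. Since $\phi_{1,0}'(-1)=u_p'(-1)=2$ does not depend on $\hat c$, Lemma \ref{lem-Ray-endpoint-expansion} gives $|\partial_{\hat c}\phi_{1,\alpha}'(-1)|\lesssim\alpha^2/|\hat c|$.
\item The cross term $\partial_{c_r}\phi_{1,\alpha}(-1)\,R'(-1)$, with $\partial_{c_r}\phi_{1,\alpha}(-1)=-1+O(\alpha^2|\ln\hat c|)$, is not repaired by this. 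Feeding in $|\psi^{[1]\prime}_{1,\alpha,b}(-1)|\lesssim c_0|\ln c_0|/c_r$ from Lemma \ref{lem-acc-psi-1-b} again produces $\varepsilon^{\frac16}|\ln c_0|$. The paper's one-line proof passes over this term as well.
\end{itemize}

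To close the cross term you need a sharper wall bound. The $1/c_r$ in Lemma \ref{lem-acc-psi-1-b} comes from dividing a $(u_p-\hat c)$-weighted estimate by $|u_p(-1)-\hat c|\sim c_r$. Because $A_2(1,-1)=0$, only the nonlocal kernel survives at the wall:
\[
\partial_yAirySolver_m(f)(-1)=\frac{2\pi i}{\kappa\varepsilon}\int_{-1}^0\frac{a_1(x)f(x)}{(\eta_r'(x))^{\frac12}}\,dx .
\]
Estimate this with Lemma \ref{lem-est-a1-a2} for $f\approx-2ic_0\bigl(1+\phi^{[1]}_{1,0}/(u_p-\hat c)\bigr)$, and handle $\phi_{err}$ with \eqref{est-acc-psi-1-b-err}. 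This gives $|\psi^{[1]\prime}_{1,\alpha,b}(-1)|\lesssim c_0\kappa(c_r+\alpha^2)|\ln c_0|$. That is exactly the term $(1+\alpha^2/c_r)\kappa c_r\varepsilon^{\frac12}$ on the right of \eqref{est-res-ray-cr}. The $\psi^{[1]\prime}_{1,\alpha,a}(-1)$ contribution to the same cross term is harmless at both branches.

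Your treatment of the $j\ge2$ tail is more explicit than the paper's, which only asserts it, and the absorption you claim does hold. The per-step gain is $\varepsilon^{\frac13}(1+\kappa c_r)\sim c_r\lesssim\varepsilon^{\frac15}$ up to logarithms, not $\varepsilon^{\frac{2}{15}}$. With your stated $\varepsilon^{\frac{2}{15}}$ it would not beat an $\varepsilon^{-\frac16}$ loss.
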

\begin{proof}
  Recall that $\phi_{1,\alpha}(-1)=O(c_r+\alpha^2)$ and $\phi_{1,\alpha}'(-1)=O(1)$. We write
  \begin{align*}
    \frac{\Phi_{1,\alpha}(-1)}{\Phi_{1,\alpha}'(-1)}=&\frac{\phi_{1,\alpha}(-1)+\sum_{j=1}^{+\infty}\left(\phi_{1,\alpha}^{[j]}+\psi_{1,\alpha}^{[j]}\right)(-1)}{\phi_{1,\alpha}'(-1)+\sum_{j=1}^{+\infty}\left(\phi_{1,\alpha}^{[j]\prime}+\psi_{1,\alpha}^{[j]\prime}\right)(-1)}=\frac{\phi_{1,\alpha}(-1)}{\phi_{1,\alpha}'(-1)}+Res_{ray},
  \end{align*}
where
\begin{align*}
  Res_{ray}=O\left( \sum_{j=1}^{+\infty}\left(\phi_{1,\alpha}^{[j]}+\psi_{1,\alpha}^{[j]}\right)(-1) \right)  +O\left((c_r+\alpha^2)\sum_{j=1}^{+\infty}\left(\phi_{1,\alpha}^{[j]\prime}+\psi_{1,\alpha}^{[j]\prime}\right)(-1) \right).
\end{align*}
Therefore, \eqref{est-res-ray} follows from Lemma \ref{lem-acc-psi-1-a}, Lemma \ref{lem-acc-psi-1-b}, and Lemma \ref{lem-acc-phi-1}.

From Lemma \ref{lem-sol-Ray-hom} and Lemma \ref{lem-Ray-endpoint-expansion}, we have $\left|\pa_{c_r}\phi_{1,\alpha}(-1)\right|\lesssim 1$ and $\left|\pa_{c_r}\phi_{1,\alpha}'(-1)\right|\lesssim \frac{\alpha^2}{|\hat c|}$. By Lemma \ref{lem-acc-psi-1-a-cr}, Lemma \ref{lem-acc-psi-1-b-cr}, and Lemma \ref{lem-acc-phi-1}, we have deduce \eqref{est-res-ray-cr}.

The rest estimates can be get in the same way.
\end{proof}

\subsection{Value of $\Phi_{1,\alpha}$ at $y=0$}
In this subsection, we record the boundary estimates for $\Phi_{1,\alpha}$ at $y=0$.
\begin{lemma}\label{lem-Phi-0}
  It holds that
  \begin{align*}
    \Phi_{1,\alpha}'(0)=0,\quad \left|\Phi_{1,\alpha}'''(0)\right|\lesssim|\ln c_0|    .
  \end{align*}
\end{lemma}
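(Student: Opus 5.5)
The plan uses the series representation from Lemma \ref{lem-rough-est-phi-psi},
\begin{align*}
  \Phi_{1,\alpha}=\phi_{1,\alpha}+\sum_{j\ge1}\left(\phi_{1,\alpha}^{[j]}+\psi_{1,\alpha}^{[j]}\right),
\end{align*}
and checks the two boundary quantities term by term at $y=0$. For the first identity, Lemma \ref{lem-sol-Ray-hom} gives $\phi_{1,\alpha}'(0)=0$. Each Airy correction $\psi_{1,\alpha}^{[j]}=-AirySolver(Diff(\phi_{1,\alpha}^{[j-1]}))$ has zero derivative at the centerline by \eqref{eq-boundary-Airy-Solver} and \eqref{eq-boundary-Airy-Solver-S}. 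To see this for the full solver, note that $AirySolver=AirySolver_m+\phi_{err}$, where $\phi_{err}$ is again a sum of $AirySolver_m$ applications, and $\pa_yG_A(x,0)=0$ because $A_1(1,0)=0$ and, for $x<0$, the $x<y$ branch has $y$-derivative $A_1(1,0)A_2(x)=0$. Each Rayleigh correction $\phi_{1,\alpha}^{[j]}=-RaySolver_\alpha(Reg(\psi_{1,\alpha}^{[j]}))$ has zero derivative at $0$ by Lemma \ref{lem-Raysolver-alpha}. The series converges in $X_2$, and the convergence must also hold in $C^1$ near $y=0$ so that the derivative passes through the sum. This holds because $u_p-\hat c\sim1$ there, so the $X_2$ weights are harmless. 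Hence $\Phi_{1,\alpha}'(0)=0$.

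For the bound on $\Phi_{1,\alpha}'''(0)$, the Rayleigh part $\phi_{1,\alpha}$ contributes nothing, since $\pa_y^3\phi_{1,\alpha}(0)=0$. For the Rayleigh corrections, I would avoid relying on \eqref{eq-Ray-alpha-3-0}, which would require $\pa_y Reg(\psi^{[j]})(0)=0$ and is not obviously true. Instead, the weighted $Y_4$ estimate of Lemma \ref{lem-Raysolver-alpha} gives $|\pa_y^3\phi_{1,\alpha}^{[j]}(0)|\lesssim\|\phi_{1,\alpha}^{[j]}\|_{Y_4}$, because $|u_p(0)-\hat c|\sim1$. The right-hand side is bounded by $|\ln c_0|\,\|Reg(\psi^{[j]})\|_{X_2}$ and is therefore small. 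For the Airy corrections I would use the $\mathcal X$-norm bounds from Lemma \ref{lem-real-Airy-Solver-S} and the proof of Lemma \ref{lem-iter}. At $y=0$ the weight $u_p-\hat c$ is of order one, so $\varepsilon^{1/2}|\pa_y^3\psi^{[1]}(0)|\lesssim\|\psi^{[1]}\|_{\mathcal X}\lesssim\varepsilon^{1/3}(1+\kappa c_r)|\ln c_0|^2$. This only gives $|\pa_y^3\psi^{[1]}(0)|\lesssim\varepsilon^{-1/6}(1+\kappa c_r)|\ln c_0|^2$, which is too weak.

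\textbf{Main obstacle.} The crude third-derivative bound on $\psi^{[1]}$ must be improved at $y=0$, and this is the step I expect to be hardest. I would redo the computation of $\pa_y^3 AirySolver_m(f'')(0)$ directly, exploiting that $y=0$ is at distance $\sim1$ from the critical layer. After two integrations by parts, the local kernel $A_1'(0)A_2(x)$ (for $x<0$) decays like $e^{-c\kappa^{3/2}|x|}$ with prefactor of order $\kappa^{O(1)}$. Absorbing $1/(\kappa\varepsilon)\sim\kappa^2$, the integral over $x$ near $0$ gives a bounded quantity times $\|f\|_{C^2}$ near $0$, where $f=i\varepsilon\pa_y^2(u_p''\phi_{1,\alpha}/(u_p-\hat c))$ has size $O(\varepsilon)$; in total this is $O(\varepsilon\kappa^{O(1)})$. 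The delta term $f'(0)/\varepsilon$ is $O(1)$. The nonlocal $E$-part has no $y$-dependence beyond linear order, so its third derivative vanishes. The boundary terms at $x=0$, which involve $A_1(0)$ and $A_1'(0)$, are controlled by the Airy asymptotics of Lemma \ref{lem:Airy-class-est}, giving $O(|\ln c_0|)$ after using $f(0)=O(\varepsilon)$. Higher iterates gain powers of $\varepsilon^{1/3}(1+\kappa c_r)|\ln c_0|$, and summing all contributions gives $|\Phi_{1,\alpha}'''(0)|\lesssim|\ln c_0|$.
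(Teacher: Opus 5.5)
The identity $\Phi_{1,\alpha}'(0)=0$ is handled correctly and matches the paper's argument; you also fill in the $\phi_{err}$ and convergence details. Your $Y_4$ route for the Rayleigh corrections is valid but unnecessary. $Reg$ is multiplication by $-u_p''-\alpha^2(u_p-c)+i\varepsilon\alpha^4$, whose $y$-derivative $-\alpha^2u_p'$ vanishes at $y=0$. Hence $\pa_y Reg(\psi_{1,\alpha}^{[j]})(0)=0$ follows from $\psi_{1,\alpha}^{[j]\prime}(0)=0$, and \eqref{eq-Ray-alpha-3-0} gives $\pa_y^3\phi_{1,\alpha}^{[j]}(0)=0$ exactly.

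\textbf{Where the gap is.} The problem lies in the Airy corrections. Your $\varepsilon^{-1/6}(1+\kappa c_r)$ loss is an artifact of reading $\pa_y^3\psi_{1,\alpha}^{[1]}(0)$ off the aggregated bound \eqref{eq-est-ASS-5}. That bound is dominated by the $\varepsilon^{-2/3}(1+\kappa c_r)$ of the low-order components of the $\mathcal X$-norm. The paper instead uses the separate weighted third-derivative estimates, which pass to the exact solver by Lemmas \ref{lem-real-Airy-Solver} and \ref{lem-real-Airy-Solver-S}.
\begin{itemize}
\item For $\psi_{1,\alpha,a}^{[1]}$ it uses \eqref{eq-est-ASS-3}: the factor $\varepsilon$ in $f=i\varepsilon u_p''\phi_{1,\alpha}/(u_p-\hat c)$ cancels the $\varepsilon^{-1}$.
\item For $\psi_{1,\alpha,b}^{[1]}$ it uses \eqref{eq-est-AS-3}: here $c_0\varepsilon^{-1/2}=1$.
\end{itemize}
Since $|u_p(0)-\hat c|\sim1$, these give $|\psi_{1,\alpha,a}^{[1]\prime\prime\prime}(0)|\lesssim\alpha^2|\ln c_0|$ and $|\psi_{1,\alpha,b}^{[1]\prime\prime\prime}(0)|\lesssim|\ln c_0|$ at once. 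The $|\ln c_0|$ in the statement comes from $\psi_{1,\alpha,b}^{[1]}$, not from the derivative-type piece. Your plan never improves $\psi_{1,\alpha,b}^{[1]}$ beyond the $\mathcal X$ bound, which leaves it at $(1+\kappa c_r)|\ln c_0|^2$.

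\textbf{Why the substitute computation fails.} Integrating by parts twice in $AirySolver_m(f'')$ creates the boundary term $\pa_x\pa_y^3G_A(0,0)f(0)$. Here $|\pa_x\pa_y^3G_A(0,0)|\sim(\kappa\varepsilon)^{-1}|A_1'(0)A_2'(0)|\sim\kappa^{9/2}$: the exponentials cancel and the algebraic factors give $\kappa^{5/2}$. With $f(0)\approx-2i\varepsilon$, or $O(\varepsilon\alpha^2)$ after removing the constant, this term has size $\kappa^{3/2}$ or $\kappa^{3/2}\alpha^2$, not $O(|\ln c_0|)$. It can only be absorbed by cancellation against equally large contributions from the remaining terms.

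A direct argument does work if you do not integrate by parts. At $y=0$ one has $\pa_y^3E(x,0)=0$. Also $\pa_y^3G_{A,M}(x,0)$ is a multiple of $(\kappa\varepsilon)^{-1}\pa_y\bigl(A_1(\eta_r')^{-1/2}\bigr)(0)A_2(x)$, which the Airy asymptotics and Lemma \ref{lem-Airy-Airy-est} bound by $\kappa^3e^{-\kappa^{3/2}|x|/C}$. Hence $|\pa_y^3AirySolver_m(g)(0)|\lesssim\kappa^{3/2}\sup_{|x|\le1/2}|g(x)|$, up to exponentially small contributions from the critical layer. This gives $O(\kappa^{3/2}\varepsilon)$ for $\psi_{1,\alpha,a}^{[1]}$ and $O(1)$ for $\psi_{1,\alpha,b}^{[1]}$.

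\textbf{Higher iterates.} In either route, each $\psi_{1,\alpha}^{[j]}$ must be treated with the same sharp pointwise estimate applied to its source $Diff(\phi_{1,\alpha}^{[j-1]})$. Going through the $\mathcal X$-norm, $j=2$ already gives $\varepsilon^{-1/2}(c_r+\varepsilon^{1/3})^2$ up to powers of $|\ln c_0|$, which is unbounded when $c_r\sim\varepsilon^{1/5}$.
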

\begin{proof}
  As $\Phi_{1,\alpha}$ is constructed from Rayleigh-Airy iteration, by the properties of $RaySolver_\alpha$ and $AirySolver$, it is clear that
\begin{align}\label{eq-est-phi-d-y0}
  \phi_{1,\alpha}^{[j]\prime}(0)=\psi_{1,\alpha}^{[j]\prime}(0)=0, \text{ for }j\in \mathbb Z_+.
\end{align}
Therefore,
\begin{align*}
  \Phi_{1,\alpha}'(0)=\phi_{1,\alpha}'(0)=0.
\end{align*}
Using Lemma \ref{lem-Raysolver0}, we deduce from \eqref{eq-est-phi-d-y0} that
\begin{align*}
  \phi_{1,\alpha}'''(0)=\phi_{1,\alpha}^{[j]\prime\prime\prime}(0)=0.
\end{align*}
It follows that
\begin{align*}
  \Phi_{1,\alpha}'''(0)=\sum_{j=1}^{+\infty} \psi_{1,\alpha}^{[j]\prime\prime\prime}(0) .
\end{align*}
By using Lemma \ref{lem-real-Airy-Solver}, Lemma \ref{lem-real-Airy-Solver-S}, one can easily check that
\begin{align*}
  \left|\psi_{1,\alpha,a}^{[1]\prime\prime\prime}(0)\right|\lesssim \alpha^2|\ln c_0|,\quad \left|\psi_{1,\alpha,b}^{[1]\prime\prime\prime}(0)\right|\lesssim |\ln c_0|.
\end{align*}
This finishes the proof of this lemma.
\end{proof}
We also have the following rough derivative estimates.
\begin{lemma}\label{lem-Phi1-0-cr}
   It holds that
  \begin{align*}
    \left|\pa_{c_r}\Phi_{1,\alpha}'''(0)\right|\lesssim \kappa^{\frac{3}{2}}|\ln c_0|,
  \end{align*}
    \begin{align*}
   \left|\pa_{c_i}\Phi_{1,\alpha}'''(0)\right|\lesssim \kappa^{\frac{3}{2}}|\ln c_0|.
  \end{align*}
    For fixed $\alpha$, it holds that
     \begin{align*}
   \left|\pa_{\nu}\Phi_{1,\alpha}'''(0)\right|\lesssim \frac{1}{\nu}\kappa^{\frac{1}{2}} |\ln c_0|.
  \end{align*} 
    For fixed $\nu$ or $\varepsilon$, it holds that
     \begin{align*}
    \left|\pa_{\alpha^2}\Phi_{1,\alpha}'''(0)\right|\lesssim \frac{1}{\alpha^2}\kappa^{\frac{1}{2}} |\ln c_0|.
  \end{align*} 
\end{lemma}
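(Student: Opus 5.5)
From the proof of Lemma \ref{lem-Phi-0}, the Rayleigh parts contribute nothing at the centreline. Indeed $\phi_{1,\alpha}'''(0)=0$ and $\phi_{1,\alpha}^{[j]\prime\prime\prime}(0)=0$ for all values of the parameters. Hence
\begin{align*}
  \pa_{\star}\Phi_{1,\alpha}'''(0)=\sum_{j\ge1}\pa_\star\psi_{1,\alpha}^{[j]\prime\prime\prime}(0),\qquad \star\in\{c_r,c_i,\nu,\alpha^2\},
\end{align*}
and it suffices to bound the parameter derivatives of third derivatives of Airy corrections at $y=0$. The $j\ge2$ terms carry an extra factor $\varepsilon^{\frac13}(1+\kappa c_r)|\ln c_0|$ by Lemma \ref{lem-iter}, so the main work is on $\psi^{[1]}_{1,\alpha}=\psi^{[1]}_{1,\alpha,a}+\psi^{[1]}_{1,\alpha,b}$.

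\textbf{Step 1: the $\psi^{[1]}_{1,\alpha,b}$ piece.} I would write $\pa_y^3AirySolver_m(g)(0)$ through the kernel $\pa_y^3G_A(x,0)$. Since $y=0$ lies at distance $\sim1$ from $y_c$, we have $\langle\kappa\eta(0)\rangle\sim\kappa$. On the side $x>0$ nothing remains, and the nonlocal part $a_1(x)(y-y_c)$ is linear in $y$, so it is killed by $\pa_y^3$. The kernel therefore reduces to the localized term
\begin{align*}
\frac{2\pi i}{\kappa\varepsilon}\,(\eta_r')^{-\frac12}\,A_1'(0)A_2(x)+\text{err},
\end{align*}
plus derivatives falling on $(\eta_r')^{-1/2}$. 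By Lemma \ref{lem:Airy-class-est} and Lemma \ref{lem-Airy-Airy-est}, $|A_1'(0)A_2(x)|\lesssim \kappa^{1/2}e^{-\frac1C\kappa^{3/2}|x|}$, which localizes $x$ to a layer of width $\kappa^{-3/2}$ near $0$. This reproduces the $|\ln c_0|$ bound of Lemma \ref{lem-Phi-0}.

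Differentiating in $c_r$ or $c_i$ hits the Airy factors through $\pa_{c_r}(\kappa\eta)$. The analogues of \eqref{eq-d-cr-A1k}--\eqref{eq-d-cr-A2k} for $A_1'$ and $A_2$ then produce a factor $\kappa|\eta(0)|^{1/2}\sim\kappa^{3/2}$. I would group the pair as in Lemma \ref{lem-d-cr-a1B}, using the phase difference $(\kappa\eta(0))^{3/2}-(\kappa\eta(x))^{3/2}$ and the localization $|x|\lesssim\kappa^{-3/2}|\ln c_0|$. This keeps the loss at $\kappa^{3/2}$ times $|\ln c_0|$ rather than anything worse, and it is uniform in $x$. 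Derivatives falling on the source $c_0u_p''\phi_{1,\alpha}/(u_p-\hat c)$ are harmless near $y=0$, since $u_p-\hat c\sim1$ there.

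For $\pa_\nu$ at fixed $\alpha$, $\eta$ does not depend on $\nu$ and $\pa_\nu\kappa=-\kappa/(3\nu)$. The argument-derivative computed in the proof of Lemma \ref{lem-acc-psi-1-a-ci} gives a factor $\nu^{-1}\kappa|\eta|\cdot|\kappa\eta|^{1/2}/\kappa$. After the pair cancellation (the factor $\kappa|\eta_r(x)-\eta_r(0)|$ is $O(\kappa^{-1/2})$ on the layer) this leaves $\nu^{-1}\kappa^{1/2}$. The $\alpha^2$-derivative is handled identically, with $\alpha^{-2}$ in place of $\nu^{-1}$.

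\textbf{Step 2: the $\psi^{[1]}_{1,\alpha,a}$ piece.} This term has source $\varepsilon f''$ with $f=\phi^{[1]}_{1,0}/(u_p-\hat c)$. I would integrate by parts twice as in Lemma \ref{lem-est-ASS}, keeping the Dirac contribution $if'(0)/\varepsilon$ multiplied by $\varepsilon$. Near $y=0$, $f$ and its parameter derivatives are $O(\alpha^2)$ and smooth. The same kernel analysis as in Step 1 then gives bounds of the form $\alpha^2\kappa^{3/2}|\ln c_0|$, and the corresponding smaller bounds for $\pa_\nu$ and $\pa_{\alpha^2}$. The critical-layer region contributes only through $A_2(x)$ evaluated near $y_c$, which is exponentially small relative to the layer at $y=0$.

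\textbf{Step 3: the error and higher terms.} The error $\phi_{err}$ from Lemma \ref{lem-real-Airy-Solver} and the terms $j\ge2$ are treated by the same rough loss argument, which costs at most $\varepsilon^{-1/2}\sim\kappa^{3/2}$ per derivative. Their extra smallness absorbs this.

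The main obstacle is Step 1: showing that the $c_r$-derivative of the product $A_1'(0)A_2(x)$ loses only $\kappa^{3/2}$ uniformly near $x=0$. Applying the product rule directly to the growing factor and the decaying factor separately would lose more, so the pairing cancellation of Lemma \ref{lem-d-cr-a1B} is what I would try first.
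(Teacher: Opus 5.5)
Your reduction to $\pa_\star\psi^{[j]\prime\prime\prime}_{1,\alpha}(0)$, followed by an analysis of the surviving $y=0$ kernel $A_1'(0)A_2(x)$ and rough losses for $\phi_{err}$ and the $j\ge2$ terms, is essentially the route the paper indicates; the paper only refers to the arguments of Lemmas \ref{lem-est-AS}, \ref{lem-est-ASS}, \ref{lem-acc-psi-1-a-cr} and \ref{lem-acc-psi-1-a-ci}. Your bookkeeping misplaces the difficulty, however. On the layer one has $|A_1'(0)A_2(x)|\lesssim\kappa\,e^{-\kappa^{3/2}|x|/C}$, not $\kappa^{1/2}$. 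For $\pa_{c_r},\pa_{c_i}$ the plain product rule already loses only $\kappa\langle\kappa\eta\rangle^{1/2}\lesssim\kappa^{3/2}$, which is exactly what the stated rough bound allows. The growing/decaying pairing is genuinely needed only for $\pa_\nu$ (and $\pa_{\alpha^2}$ at fixed $\nu$), where the unpaired loss $\kappa^{3/2}/\nu$ would exceed the target $\kappa^{1/2}/\nu$. For the same reason, the Step 3 count of ``$\varepsilon^{-1/2}$ per derivative'' is wrong for $\pa_\nu$ and should be replaced by the paired $\kappa^{1/2}/\nu$ loss of Lemma \ref{lem-acc-psi-1-a-ci}.
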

The proof follows from the same arguments as those used in Lemma \ref{lem-est-AS}, Lemma \ref{lem-est-ASS}, Lemma \ref{lem-acc-psi-1-a-cr}, and Lemma \ref{lem-acc-psi-1-a-ci}. In fact, sharper estimates can be obtained, but the rough bounds stated above are sufficient for the proof of the main theorem.

\subsection{Boundary values of $\Phi_{2,\alpha}$ at $y=-1$ and $y=0$}
In this subsection, we analyze the boundary values of $\Phi_{2,\alpha}$. 
Since $\Phi_{2,\alpha}$ plays a secondary role in the matching of the dispersion relation, the following rough bounds are sufficient for our purposes.

\begin{lemma}
  It holds that
  \begin{align}\label{est-Phi2--1}
    \left|\Phi_{2,\alpha}(-1)\right|\lesssim 1,\quad \left|\Phi_{2,\alpha}'(-1)\right|\lesssim |\ln c_0|,
  \end{align}
    \begin{align}\label{est-Phi2-0-ddd}
    \left|\Phi_{2,\alpha}'''(0)\right|\lesssim |\ln c_0|.
  \end{align}
    \begin{align}\label{est-Phi2-0-d}
     \Phi_{2,\alpha}'(0)=\frac{1}{1-\hat c}.
  \end{align}
\end{lemma}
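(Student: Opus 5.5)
Since $\Phi_{2,\alpha}=\phi_{2,\alpha}+\sum_{j\ge1}(\phi_{2,\alpha}^{[j]}+\psi_{2,\alpha}^{[j]})$ is built by the same Rayleigh--Airy iteration as $\Phi_{1,\alpha}$ in Lemma \ref{lem-rough-est-phi-psi}, I will prove the three statements by treating the leading Rayleigh part $\phi_{2,\alpha}$ exactly and bounding all corrections crudely.

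\textbf{Step 1: the exact identity \eqref{est-Phi2-0-d}.} Every correction $\phi_{2,\alpha}^{[j]}=-RaySolver_\alpha(\cdot)$ has zero derivative at $y=0$ by Lemma \ref{lem-Raysolver-alpha}. Every $\psi_{2,\alpha}^{[j]}=-AirySolver(\cdot)$ also has zero derivative at $y=0$, by \eqref{eq-boundary-Airy-Solver} and \eqref{eq-boundary-Airy-Solver-S}. Hence $\Phi_{2,\alpha}'(0)=\phi_{2,\alpha}'(0)=\frac{1}{1-\hat c}$ by Lemma \ref{lem-sol-Ray-hom}. This is the same argument as in Lemma \ref{lem-Phi-0}.

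\textbf{Step 2: the wall values \eqref{est-Phi2--1}.} From \eqref{eq-phi0-boundary}, $|\phi_{2,0}(-1)|\lesssim 1$ and $|\phi_{2,0}'(-1)|\lesssim|\ln \hat c|\lesssim|\ln c_0|$. Lemma \ref{lem-sol-Ray-hom} gives $\|\phi_{2,\alpha}-\phi_{2,0}\|_{Y_4}\lesssim\alpha^2|\ln c_0|$, which is small in $L^\infty$ and for the first derivative, so the leading part obeys the bounds.

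For the corrections I use Lemma \ref{lem-rough-est-phi-psi}, which gives $\|\Phi_{2,\alpha}-\phi_{2,\alpha}\|_{X_2}\lesssim\varepsilon^{1/3}(1+\kappa c_r)|\ln c_0|^3$. In $\mathbb H$ we have $\kappa c_r\lesssim\varepsilon^{-1/3}\varepsilon^{1/5}$, so this quantity is $\ll1$. That controls $|\Phi_{2,\alpha}(-1)-\phi_{2,\alpha}(-1)|$.

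The $X_2$ norm only gives $(u_p-\hat c)\partial_y$ control, and at $y=-1$ the weight $|u_p(-1)-\hat c|\sim c_r$ is small. Dividing by $c_r$ loses a factor $c_r^{-1}$. To avoid this, I will instead use the first-correction wall-derivative bounds:
\begin{itemize}
\item \eqref{est-acc-psi-1-a-dx} and \eqref{est-acc-psi-1-b} for $\psi_{2,\alpha}^{[1]}$ (whose source has the same structure as for the first mode, with $\phi_{1,\alpha}$ replaced by $\phi_{2,\alpha}$);
\item the representation $\partial_y RaySolver_0(g)(-1)=-\phi_{2,0}'(-1)\int_{-1}^0 \phi_{1,0}g/(u_p-\hat c)\,dx$ for the Rayleigh corrections.
\end{itemize}
These give wall derivatives of size at most $(\varepsilon^{1/3}+c_0)c_r^{-1}|\ln c_0|^2\ll|\ln c_0|$ once $c_r\gtrsim\varepsilon^{1/3}/10^3$. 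The higher iterates $j\ge2$ gain further powers of $\varepsilon^{1/3}(1+\kappa c_r)$.

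\textbf{Step 3: the third derivative \eqref{est-Phi2-0-ddd}.} Lemma \ref{lem-sol-Ray-hom} gives $\phi_{2,\alpha}'''(0)=-\frac{2}{(1-\hat c)^2}+\frac{\alpha^2}{1-\hat c}=O(1)$. For the Rayleigh corrections I apply \eqref{eq-Ray-alpha-3-0}: at $y=0$ one has $u_p-\hat c\approx1$, so $\phi_{2,\alpha}^{[j]\prime\prime\prime}(0)$ is bounded by the $Y_4$ norm, which is small. For the Airy corrections, near $y=0$ the weights $(u_p-\hat c)^2\sim1$, so \eqref{eq-est-AS-3} and \eqref{eq-est-ASS-3} would lose $\varepsilon^{-1/2}$ or $\varepsilon^{-1}$.

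This loss is the main obstacle. I would handle it as in the proof of Lemma \ref{lem-Phi-0}, using that $y=0$ lies far from the critical layer:
\begin{itemize}
\item The localized kernel $G_{A,M}$ contributes only on $|x|\lesssim\kappa^{-3/2}$, because $\kappa\eta(0)\sim\kappa$ gives exponential decay.
\item The nonlocal part $a_1(x)(y-y_c)$ is affine in $y$, so its third $y$-derivative vanishes.
\item Taking three $y$-derivatives of $A_1(x)A_2(2,y)$ at $y=0$ produces a factor $\kappa^{3/2}$, which is compensated by the regular source $Diff(\phi_{2,\alpha})=i\varepsilon\partial_y^2(u_p''\phi_{2,\alpha}/(u_p-\hat c))+\dots$ being smooth near $0$ and carrying the prefactor $\varepsilon$.
\end{itemize}
Together these should give $|\psi_{2,\alpha}^{[1]\prime\prime\prime}(0)|\lesssim|\ln c_0|$. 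The term from $c_0(u_p''\phi_{2,\alpha})/(u_p-\hat c)$ is bounded pointwise by $\|f\|_{L^\infty}$ near $0$, with the $|\ln c_0|$ coming from $\int|u_p-\hat c|^{-1}$. The higher iterates then follow by summation.
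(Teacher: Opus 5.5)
Your Steps 1 and 2 follow the paper's proof. The identity $\Phi_{2,\alpha}'(0)=\frac{1}{1-\hat c}$ comes from both solvers having vanishing $\partial_y$ at $y=0$. The wall values come from Lemma \ref{lem-sol-Ray-hom} together with the first-correction technique of Lemma \ref{lem-acc-psi-1-a}. Your bound $|\phi_{2,0}'(-1)|\lesssim|\ln\hat c|$ is in fact more accurate than the paper's ``$\sim1$''.

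There is one bookkeeping slip in Step 2. The merged bound $(\varepsilon^{1/3}+c_0)c_r^{-1}|\ln c_0|^2$ is \emph{not} $\ll|\ln c_0|$ at the lower edge $c_r\sim\varepsilon^{1/3}$ of $\mathbb H$. There it is of size $|\ln c_0|^2$, which would not give the stated estimate. You should keep the two contributions separate:
\begin{itemize}
\item The Airy wall derivatives carry only one logarithm, $\varepsilon^{1/3}c_r^{-1}|\ln c_0|+c_0c_r^{-1}|\ln c_0|\lesssim|\ln c_0|$. For the second mode there is no $\alpha^2$ gain: $u_p''\phi_{2,0}/(u_p-\hat c)$ has an $O(1)$ simple pole at $\tilde y_{\hat c}$ plus a logarithm, not the constant $-2$. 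So ``the same structure as for the first mode'' should be qualified.
\item The Rayleigh term $|\phi_{2,0}'(-1)|\,|\int_{-1}^0 Reg(\psi^{[1]}_{2,\alpha})\,dx|\lesssim(\varepsilon^{1/3}+c_0)|\ln c_0|^2$ has no factor $c_r^{-1}$ and is $\ll|\ln c_0|$.
\end{itemize}
Together these give $|\Phi_{2,\alpha}'(-1)|\lesssim|\ln c_0|$, which is all that is claimed.

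In Step 3 you take a heavier route than the paper, and the obstacle you identify is not really there. The weighted bounds \eqref{eq-est-AS-3} and \eqref{eq-est-ASS-3} hold for the exact $AirySolver$ by Lemmas \ref{lem-real-Airy-Solver} and \ref{lem-real-Airy-Solver-S}. They lose $\varepsilon^{-1/2}$ and $\varepsilon^{-1}$ relative to $\|(u_p-\hat c)f\|_{L^\infty}$ and $\|(u_p-\hat c)^2f'\|_{L^\infty}$. However, the two pieces of $Diff(\phi_{2,\alpha})$ carry exactly compensating prefactors:
\begin{itemize}
\item For the $c_0$ piece, $\|(u_p-\hat c)(-i\varepsilon\alpha^2+ic_0)\frac{u_p''\phi_{2,\alpha}}{u_p-\hat c}\|_{L^\infty}\lesssim c_0=\varepsilon^{1/2}$.
\item For $f=i\varepsilon\frac{u_p''\phi_{2,\alpha}}{u_p-\hat c}$, one has $\|(u_p-\hat c)f\|_{L^\infty}+\|(u_p-\hat c)^2f'\|_{L^\infty}\lesssim\varepsilon$, using $(u_p-\hat c)\phi_{2,0}'=1+\phi_{2,0}\phi_{1,0}'$.
\end{itemize}
Since $(u_p(0)-\hat c)^2\sim1$, this gives $|\psi^{[1]\prime\prime\prime}_{2,\alpha}(0)|\lesssim|\ln c_0|$ at once. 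That is the paper's argument, and it is also exactly what the proof of Lemma \ref{lem-Phi-0} does, so that proof is not a kernel analysis at $y=0$.

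Your kernel computation is nonetheless sound in substance. $\partial_y^3$ annihilates the nonlocal part $E$. At $y=0$ the localized kernel concentrates on $|x|\lesssim\kappa^{-3/2}$ with mass $\sim\kappa^{3/2}$, and $\kappa^{3/2}\varepsilon,\ \kappa^{3/2}c_0\lesssim1$. This would even give an $O(1)$ bound. But it has to be redone for the correction $\phi_{err}$ distinguishing $AirySolver$ from $AirySolver_m$, and for the iterates $j\ge2$, which you only assert; the lemma route gives all of this for free. Also, at $y=0$ every $x\in[-1,0]$ lies on the branch $x<y$, so the relevant kernel is $\partial_y^3A_1(2,y)\,A_2(x)$, not $A_1(x)\,\partial_y^3A_2(2,y)$.

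For the Rayleigh corrections, \eqref{eq-Ray-alpha-3-0} gives exact vanishing, since $\partial_y Reg(\psi^{[j]}_{2,\alpha})(0)=0$ (because $u_p'(0)=\psi^{[j]\prime}_{2,\alpha}(0)=0$). No $Y_4$ bound is needed there.
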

\begin{proof}
Similar to $\Phi_{1,\alpha}$, we have the expansion for $\Phi_{2,\alpha}$ that
\begin{align*}
  \Phi_{2,\alpha}=\phi_{2,\alpha}+\sum_{j=1}^{+\infty}\left(\phi_{2,\alpha}^{[j]}+\psi_{2,\alpha}^{[j]}\right)
\end{align*}
From  Lemma \ref{lem-sol-Ray-hom}, it is clear that $\phi_{2,\alpha}(-1)\sim-1$ and $\phi_{2,\alpha}'(-1)\sim1$.  By using the same techniques to Lemma \ref{lem-acc-psi-1-a}, we can show that the boundary value of the rest terms in $\Phi_{2,\alpha}$ are sufficient small, then we get \eqref{est-Phi2--1}. 

By using Lemma \ref{lem-real-Airy-Solver} and Lemma \ref{lem-real-Airy-Solver-S}, one can easily check that 
\begin{align*}
  \left|(u_p-\hat c)^2\pa_y^3AirySolver \left(i\varepsilon \pa_y^2 \left(\frac{u_p''\phi_{2,\alpha}}{u_p-\hat c}\right)\right)\right|\lesssim |\ln c_0|, 
\end{align*}
\begin{align*}
  \left|(u_p-\hat c)^2\pa_y^3AirySolver \left(\left(-i\varepsilon\alpha^2+ic_0\right)\left(\frac{u_p''\phi_{2,\alpha}}{u_p-\hat c}\right)\right)\right|\lesssim |\ln c_0|.
\end{align*}
Recall Lemma \ref{lem-sol-Ray-hom} that $\left|\phi_{2,\alpha}'''(0)\right|\lesssim 1$. Then following the proof of Lemma \ref{lem-rough-est-phi-psi}, one can deduce $\eqref{est-Phi2-0-ddd}$.

For the same reason to Lemma \ref{lem-Phi-0}, we have
\begin{align*}
  \Phi_{2,\alpha}'(0)=\phi_{2,\alpha}'(0)=\phi_{2,0}'(0)=\frac{1}{1-\hat c}.
\end{align*}
\end{proof}
As in Lemma \ref{lem-Phi1-0-cr}, one obtains the following rough parameter-derivative estimates for $\Phi_{2,\alpha}$.
\begin{lemma}\label{lem-Phi2-cr}
  It holds that
  \begin{align*}
    \left|\pa_{c_r}\Phi_{2,\alpha}(-1)\right|+\left|\pa_{c_r}\Phi_{2,\alpha}'(-1)\right|+\left|\pa_{c_r}\Phi_{2,\alpha}'''(0)\right|\lesssim \kappa^{\frac{3}{2}}|\ln c_0|,
  \end{align*}
    \begin{align*}
    \left|\pa_{c_i}\Phi_{2,\alpha}(-1)\right|+\left|\pa_{c_i}\Phi_{2,\alpha}'(-1)\right|+\left|\pa_{c_i}\Phi_{2,\alpha}'''(0)\right|\lesssim \kappa^{\frac{3}{2}}|\ln c_0|.
  \end{align*}
    For fixed $\alpha$, it holds that
     \begin{align*}
    \left|\pa_{\nu}\Phi_{2,\alpha}(-1)\right|+\left|\pa_{\nu}\Phi_{2,\alpha}'(-1)\right|+\left|\pa_{\nu}\Phi_{2,\alpha}'''(0)\right|\lesssim \frac{1}{\nu}\kappa^{\frac{1}{2}} |\ln c_0|.
  \end{align*} 
    For fixed $\nu$ or $\varepsilon$, it holds that
     \begin{align*}
    \left|\pa_{\alpha^2}\Phi_{2,\alpha}(-1)\right|+\left|\pa_{\alpha^2}\Phi_{2,\alpha}'(-1)\right|+\left|\pa_{\alpha^2}\Phi_{2,\alpha}'''(0)\right|\lesssim \frac{1}{\alpha^2}\kappa^{\frac{1}{2}} |\ln c_0|.
  \end{align*} 
\end{lemma}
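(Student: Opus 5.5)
We will prove the bounds by differentiating the Rayleigh--Airy series for $\Phi_{2,\alpha}$ term by term, as in Lemma \ref{lem-Phi1-0-cr}. Write
\begin{align*}
  \Phi_{2,\alpha}=\phi_{2,\alpha}+\sum_{j\ge1}\left(\phi_{2,\alpha}^{[j]}+\psi_{2,\alpha}^{[j]}\right),
\end{align*}
with $\psi_{2,\alpha}^{[j]}=-AirySolver(Diff(\phi_{2,\alpha}^{[j-1]}))$ and $\phi_{2,\alpha}^{[j]}=-RaySolver_\alpha(Reg(\psi_{2,\alpha}^{[j]}))$.

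\textbf{Rayleigh part.} By Lemma \ref{lem-sol-Ray-hom}, $\phi_{2,\alpha}$ is analytic in $\hat c$. Its explicit leading term $\phi_{2,0}$ from \eqref{eq-phi0-boundary} gives $|\pa_{c_r}\phi_{2,0}(-1)|\lesssim|\ln c_0|$, $|\pa_{c_r}\phi_{2,0}'(-1)|\lesssim c_r^{-1}$, and $\pa_{c_r}\phi_{2,\alpha}'''(0)=O(1)$. The corrections $\phi_{2,0}^{[j]}$ obey the analogue of \eqref{eq-est-phi10jRI}. All of these are at most $c_r^{-1}\lesssim\varepsilon^{-1/3}\lesssim\kappa^{3/2}$ on $\mathbb H$. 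The same holds for $\pa_{c_i}$. For $\pa_\nu$ at fixed $\alpha$ the Rayleigh part does not depend on $\nu$, since $c_0=\varepsilon^{1/2}$ enters only through $\hat c_i$ and $\pa_\nu c_0=c_0/(2\nu)$. This costs at most $\nu^{-1}|\ln c_0|$. For $\pa_{\alpha^2}$, differentiating the series in $\alpha^2$ gives $O(|\ln c_0|)$.

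\textbf{Airy corrections.} For $\psi_{2,\alpha}^{[1]}$ we reuse the rough bounds of Lemmas \ref{lem-real-Airy-Solver}, \ref{lem-real-Airy-Solver-S} and \ref{lem-est-ASS}, applied to the source $Diff(\phi_{2,\alpha})$. We split this source as in Lemma \ref{lem-rough-est-phi-psi}. We then differentiate the Green-function representation \eqref{eq-Green-Airy} in the parameter. Estimates \eqref{eq-d-cr-A1k}--\eqref{eq-d-cr-A2k} and Lemma \ref{lem-d-cr-a1B} show that one $c_r$- or $c_i$-derivative falling on an Airy factor costs at most $\kappa\langle\kappa\eta\rangle^{1/2}$. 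Combined with the undifferentiated bounds, which give $O(|\ln c_0|)$ at $y=-1$ and for $\pa_y^3$ at $y=0$, this yields a loss of at most $\kappa^{3/2}$. For $\pa_\nu$ we use $\pa_\nu\kappa=-\kappa/(3\nu)$, $\pa_\nu\varepsilon=\varepsilon/\nu$, and the fact that $\eta$ is independent of $\nu$. These give $\pa_\nu\mathrm{Ai}(k,e^{i\pi/6}\kappa\eta)=-\tfrac{e^{i\pi/6}}{3\nu}\kappa\eta\,\mathrm{Ai}(k-1,\cdot)$, so the loss is $\nu^{-1}\kappa^{1/2}$, exactly as in the proof of Lemma \ref{lem-acc-psi-1-a-ci}. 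The $\alpha^2$-derivative is handled in the same way.

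Higher iterates $j\ge2$ carry the extra smallness factor $\varepsilon^{1/3}(1+\kappa c_r)|\ln c_0|$ from Lemma \ref{lem-iter}. A parameter derivative loses at most $\kappa^{3/2}$ per step. Hence the differentiated series converges and is dominated by its first terms.

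\textbf{Main obstacle.} The hardest term is $\pa_{c_r}\Phi_{2,\alpha}'''(0)$, and likewise $\pa_{c_r}\Phi_{2,\alpha}'(-1)$. Here the third $y$-derivative together with the parameter derivative acts on a kernel evaluated near the wall or near the centerline. The undifferentiated bound in \eqref{eq-est-AS-3} already carries an $\varepsilon^{-1/2}$ loss inside the bulk. The point to check is that at $y=0$ one has $|\kappa\eta(0)|\sim\kappa$, so $\pa_y^3G_A(x,0)$ is exponentially localized to $x$ within $\kappa^{-3/2}$ of $0$. The parameter derivative then costs only $\kappa^{3/2}$ in total, with no further loss. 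The same localization argument applies at $y=-1$, using $\langle\kappa c_r\rangle$ in place of $\kappa$.
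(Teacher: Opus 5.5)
Your route is the paper's. The paper gives no separate proof of this lemma: it refers, via Lemma \ref{lem-Phi1-0-cr}, to the arguments of Lemmas \ref{lem-est-AS}, \ref{lem-est-ASS}, \ref{lem-acc-psi-1-a-cr} and \ref{lem-acc-psi-1-a-ci}. That means differentiating the Rayleigh--Airy series term by term, losing at most $\varepsilon^{-1/2}\sim\kappa^{3/2}$ per $c_r$- or $c_i$-derivative and $\kappa^{1/2}/\nu$ per $\nu$-derivative. However, three of your justifications are wrong as written. Each can be repaired within the stated bounds.

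\emph{Localization at $y=-1$.} Your localization argument does not carry over to $y=-1$. Since $A_2(1,-1)=A_2(2,-1)=0$, one has $G_A(x,-1)\propto a_1(x)(-1-y_c)$ and $\pa_yG_A(x,-1)\propto a_1(x)$ for all $x\in(-1,0)$. So the Airy contributions to $\Phi_{2,\alpha}(-1)$ and $\Phi_{2,\alpha}'(-1)$ come entirely from the non-local term, which decays only like $\langle\kappa\eta(x)\rangle^{-1}$. The derivative must be estimated on $a_1$ itself. One way is the Scorer form of Lemma \ref{lem-a1-a2-exp} together with Lemma \ref{lem-d-cr-a1B}. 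A cruder way uses \eqref{eq-d-cr-A1k}--\eqref{eq-d-cr-A2k}, giving $|\pa_{c_r}a_1''|\lesssim\kappa^2\langle\kappa\eta\rangle^{1/2}$. After integration against $|u_p-\hat c|^{-1}$ this still yields $\kappa^{3/2}|\ln c_0|$.

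\emph{Derivative of the source.} You only differentiate the kernel. The derivative also falls on $(u_p-\hat c)^{-1}$ in the source $\varepsilon\pa_y^2\bigl(u_p''\phi_{2,\alpha}/(u_p-\hat c)\bigr)$ and in the $RaySolver_0$ kernel, producing a $(u_p-\hat c)^{-2}$ singularity. Under the rough bounds this costs $1/c_0$. That fits the budget only because $c_0=\varepsilon^{1/2}\sim\kappa^{-3/2}$. Alternatively, trade it for an $x$-derivative via \eqref{eq-com-x-cr}, as the paper does.

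\emph{The $\nu$-derivative.} Your displayed formula for $\pa_\nu\mathrm{Ai}(k,\cdot)$ only gives a pointwise loss $\langle\kappa\eta\rangle^{3/2}/\nu$, up to $\kappa^{3/2}/\nu$, which exceeds the claimed bound. The missing step: in each growing--decaying pair the combined exponent $E$ satisfies $\pa_\nu E=-E/(2\nu)$. This holds because $(\kappa\eta)^{3/2}$ is homogeneous in $\kappa$ and $\eta$ does not depend on $\nu$. Hence $|E|e^{-\Re E}$ is absorbed by the exponential decay; this is the paper's ``harmless'' factor $\kappa|\eta_r(x)-\eta_r(y)|$.

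\emph{Higher iterates.} Count the loss once per term, not once per iteration step. By the Leibniz rule the $j$-th term's derivative is bounded by $j\,\kappa^{3/2}\delta^j$ with $\delta=\varepsilon^{1/3}(1+\kappa c_r)|\ln c_0|^2$. A loss of $\kappa^{3/2}$ compounded at every step would diverge, since $\kappa^{3/2}\varepsilon^{1/3}\gg1$.
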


\section{Fast modes}
In this section, we construct two solutions $\Phi_{3,\alpha}$ and $\Phi_{4,\alpha}$ of the homogeneous Orr--Sommerfeld equation \eqref{eq-Orr-Sommerfeld}. They are obtained by perturbing the Airy profiles $\phi_{A,1}$ and $\phi_{A,2}$, where
\begin{align}\label{eq-unifi-prim-Airy}
\phi_{A,1}=\frac{ A_1(2,y)}{A_1(2,-1)},\qquad\phi_{A,2}=\frac{A_2(2,y)}{A_2(2,0)}.
\end{align}
Following \cite{GGN16adv}, we refer to these two solutions as the fast modes.

In contrast to the slow modes, the fast modes can be constructed using only the Airy iteration. We first construct the left fast mode $\Phi_{3,\alpha}$ starting from $\phi_{A,1}$. The construction of the right fast mode $\Phi_{4,\alpha}$ is analogous. We will show that the boundary behavior of $\Phi_{3,\alpha}$ is mainly determined by $\phi_{A,1}$, in agreement with the classical physical picture of fast modes in the T--S instability analysis.
\subsection{Construction of $\Phi_{3,\alpha}$}
In this subsection, we construct $\Phi_{3,\alpha}$ based on $\phi_{A,1}$.

By the normalization of $\phi_{A,1}$, we have
\begin{align*}
  \phi_{A,1}(-1)=1,\quad \phi_{A,1}(0)=0,\quad \phi_{A,1}'(0)=0.
\end{align*}
Formally, the Airy asymptotic expansion give
\begin{align*}
  \phi_{A,1}(y)\sim&  \left\langle \kappa c_r \right\rangle^{\frac{5}{4}}\left\langle \kappa\eta(y) \right\rangle^{-\frac{5}{4}}e^{-\frac{2}{3}\frac{1}{\sqrt2}|\kappa\eta_r(y)|^{\frac{1}{2}}\kappa\eta_r(y)+\frac{2}{3}\frac{1}{\sqrt2}|\kappa \eta_r(-1)|^{\frac{1}{2}}\kappa \eta_r(-1)},\\
  \pa_y\phi_{A,1}(y)\sim&  \kappa\left\langle \kappa c_r \right\rangle^{\frac{5}{4}}\left\langle \kappa\eta(y) \right\rangle^{-\frac{3}{4}}e^{-\frac{2}{3}\frac{1}{\sqrt2}|\kappa\eta_r(y)|^{\frac{1}{2}}\kappa\eta_r(y)+\frac{2}{3}\frac{1}{\sqrt2}|\kappa \eta_r(-1)|^{\frac{1}{2}}\kappa \eta_r(-1)},\\
  \pa_y^2\phi_{A,1}(y)\sim&  \kappa^2\left\langle \kappa c_r \right\rangle^{\frac{5}{4}}\left\langle \kappa\eta(y) \right\rangle^{-\frac{1}{4}}e^{-\frac{2}{3}\frac{1}{\sqrt2}|\kappa\eta_r(y)|^{\frac{1}{2}}\kappa\eta_r(y)+\frac{2}{3}\frac{1}{\sqrt2}|\kappa \eta_r(-1)|^{\frac{1}{2}}\kappa \eta_r(-1)}.
\end{align*}

We introduce a modified approximate Green function adapted to the left fast mode:
\begin{equation}\label{eq-Green-Airy-}
   G_{A-}(x,y)=i \frac{2\pi}{\left(\eta_r'(x)\right)^{\frac{1}{2}}\kappa\varepsilon}\left\{
    \begin{array}{ll}
       A_1(2,y) A_2(x),&x<y;\\ 
       A_1(x) A_2(2,y)-a_2(x)+a_1(x)(y-y_c)-a_1(x)(x-y_c),&x>y.
    \end{array}
  \right.
\end{equation}
We define the corresponding modified Airy solver by
\begin{align}\label{eq-modified-Airysolver-}
  AirySolver_{m-}(f)(y)=\int^0_{-1}G_{A-}(x,y)f(x)dx.
\end{align}
Compared to \eqref{eq-Green-Airy}, all nonlocal correction terms are placed on the side $x>y$.

From the definition of $A_1(2,y)$, we can easily check that
\begin{align*}
  AirySolver_{m-}(f)(0)=0,\quad \pa_yAirySolver_{m-}(f)(0)=0.
\end{align*}
We will use $AirySolver_{m-}$ to carry out the Airy iteration starting from $\phi_{A,1}$.

  We first introduce a weight function
\begin{align}\label{eq-weight-MW}
  \mathcal W(y)=\left|e^{\frac{1}{3}\left( \left(e^{\frac{\pi i}{6}}\kappa\eta(y)\right)^{\frac{3}{2}}-\left(e^{\frac{\pi i}{6}}\kappa\eta(-1)\right)^{\frac{3}{2}}\right)}\right|,
\end{align}
and the weighted norm
\begin{align*}
  \left\|f\right\|_{L^\infty_{\mathcal W}}=\left\|f\mathcal W\right\|_{L^\infty}.
\end{align*}
We note that the exponential decay of $\frac{1}{\mathcal W}$ is weaker than that of the Airy profile. The choice of $\mathcal W$ ensures that $\left\|\phi_{A,1}(y)\right\|_{L^\infty_{\mathcal W}}\lesssim 1$. 

We have the following estimate for $AirySolver_{m-}$.
\begin{lemma}\label{lem-AS--0}
  For any $s\ge0$, it holds that
  \begin{align}
     \left\|\left\langle \kappa\eta \right\rangle^{\frac{1}{2}-s}AirySolver_{m-}(f)\right\|_{L^\infty_{\mathcal W}}\lesssim \varepsilon^{\frac{1}{3}}\left\|\left\langle \kappa\eta \right\rangle^{-s}f\right\|_{L^\infty_{\mathcal W}},\label{est-AS--0}\\
     \left\|\left\langle \kappa\eta \right\rangle^{\frac{3}{2}-s}\pa_yAirySolver_{m-}(f)\right\|_{L^\infty_{\mathcal W}}\lesssim \left\|\left\langle \kappa\eta \right\rangle^{-s}f\right\|_{L^\infty_{\mathcal W}},\label{est-AS--1}\\
     \left\|\left\langle \kappa\eta \right\rangle^{1-s}\pa_y^2AirySolver_{m-}(f)\right\|_{L^\infty_{\mathcal W}}\lesssim \varepsilon^{-\frac{1}{3}} \left\|\left\langle \kappa\eta \right\rangle^{-s}f\right\|_{L^\infty_{\mathcal W}},\label{est-AS--2}\\
     \left\|\left\langle \kappa\eta \right\rangle^{\frac{1}{2}-s}\pa_y^3AirySolver_{m-}(f)\right\|_{L^\infty_{\mathcal W}}\lesssim \varepsilon^{-\frac{2}{3}} \left\|\left\langle \kappa\eta \right\rangle^{-s}f\right\|_{L^\infty_{\mathcal W}}\label{est-AS--3}.
  \end{align} 
\end{lemma}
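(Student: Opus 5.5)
The plan is to reduce every estimate in Lemma \ref{lem-AS--0} to a pointwise kernel bound and then integrate. We split $G_{A-}$ into the localized part, given by $A_1(2,y)A_2(x)$ for $x<y$ and $A_1(x)A_2(2,y)$ for $x>y$, and the nonlocal part $-a_2(x)+a_1(x)(y-x)$, which is supported on $x>y$. Write $Z(y)=\tfrac23\Re\bigl(e^{\frac{\pi i}{6}}\kappa\eta(y)\bigr)^{3/2}$. Then the weight is $\mathcal W(y)=e^{\frac12(Z(y)-Z(-1))}$. By Lemma \ref{lem:Airy-class-est}, $|A_1(k,y)|\sim\kappa^{-k}\langle\kappa\eta(y)\rangle^{-\frac14-\frac k2}e^{-Z(y)}$ and $|A_2(x)|\sim\langle\kappa\eta(x)\rangle^{-\frac14}e^{Z(x)}$. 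For $y\le x$, Lemma \ref{lem-Airy-Airy-est} gives
\begin{align*}
e^{-(Z(x)-Z(y))}\lesssim e^{-\frac1C|\kappa\eta_r(x)-\kappa\eta_r(y)|(|\kappa\eta(x)|^{\frac12}+|\kappa\eta(y)|^{\frac12})}.
\end{align*}
Since $Z$ is increasing in $y$ up to the bounded-imaginary-part correction, the ratio $\mathcal W(y)/\mathcal W(x)$ equals $e^{\frac12(Z(y)-Z(x))}$. For the localized part we therefore have to bound
\begin{align*}
\frac{1}{\kappa\varepsilon}\int \frac{\langle\kappa\eta(x)\rangle^{s}}{\langle\kappa\eta(x)\rangle^{\frac14}\langle\kappa\eta(y)\rangle^{\frac54}\kappa^{2}}\,e^{\mp(Z(x)-Z(y))}\,e^{\frac12(Z(y)-Z(x))}\,dx
\end{align*}
by $\kappa^{-1}\langle\kappa\eta(y)\rangle^{s-\frac12}$, using $\frac1{\kappa\varepsilon}\sim\kappa^2$.

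For $x<y$ the exponent is $e^{-\frac12(Z(y)-Z(x))}$, and for $x>y$ it is $e^{-\frac12(Z(x)-Z(y))}$. In both cases the half-strength exponential decay survives, which is exactly why $\mathcal W$ uses the factor $\frac13$ rather than $\frac23$. The integration then follows the case analysis in the proof of Lemma \ref{lem-Airy-int-est}. In the region $|\kappa\eta_r(x)-\kappa\eta_r(y)|\le1$ we have $\langle\kappa\eta(x)\rangle\sim\langle\kappa\eta(y)\rangle$ and a range of $x$-length $\kappa^{-1}\langle\kappa\eta(y)\rangle^{-\frac12}$. Outside it the exponential absorbs every polynomial factor, in particular the growth of $\langle\kappa\eta(x)\rangle^{s}$. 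This yields \eqref{est-AS--0} for the localized part. For the $y$-derivatives we replace $A_1(2,y)$ and $A_2(2,y)$ by $A_1(2-k,y)$ and $A_2(2-k,y)$. Each derivative costs $\kappa\langle\kappa\eta(y)\rangle^{\frac12}$, which produces the successive gains $\varepsilon^{-\frac13}\langle\kappa\eta\rangle^{\frac12}$ recorded in \eqref{est-AS--1}--\eqref{est-AS--3}. The derivatives falling on $(\eta_r')^{-1/2}$ are harmless.

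The main obstacle is the nonlocal part on $x>y$. The functions $a_1,a_2$ only decay polynomially, like $\langle\kappa\eta\rangle^{-1}$ (Lemma \ref{lem-est-a1-a2}), whereas the target weight demands exponential decay $\mathcal W(y)^{-1}$ at $y$. Since $\|f\|_{L^\infty_{\mathcal W}}$ forces $|f(x)|\lesssim\mathcal W(x)^{-1}\langle\kappa\eta(x)\rangle^{s}$ and $x>y$ gives $\mathcal W(x)^{-1}\le\mathcal W(y)^{-1}$, we obtain
\begin{align*}
\mathcal W(y)\left|\int_y^0 \kappa^2\bigl(a_2(x)+a_1(x)(x-y)\bigr)f\,dx\right|\lesssim \kappa^2\int_y^0 e^{-\frac12(Z(x)-Z(y))}\bigl(|a_2|+|x-y||a_1|\bigr)\langle\kappa\eta(x)\rangle^{s}\,dx.
\end{align*}
The exponential factor now localizes $x$ to within $\kappa^{-1}\langle\kappa\eta(y)\rangle^{-1/2}$ of $y$. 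On that range $|x-y||a_1|\lesssim\kappa^{-2}\langle\kappa\eta\rangle^{-\frac32}$ and $|a_2|\lesssim\kappa^{-2}\langle\kappa\eta\rangle^{-\frac32}$, so this contribution is even better than the localized one. This step must be checked carefully near $y_c$ and near $x=0$. In particular, the boundary pieces $a_{1,B}$ and $a_{2,B}$ near $x=0$ need the separate exponential factors from Lemma \ref{lem-est-a1-a2}.

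The $y$-derivatives of the nonlocal term are simple. One derivative gives $\int_y^0 a_1f$ plus the boundary term $[-a_2(x)]_{x=y}$, since $a_1(x)(x-y)$ vanishes at $x=y$. Two or more derivatives only produce local evaluations at $x=y$, such as $a_1(y)f(y)$, which are bounded by Lemma \ref{lem-est-a1-a2}. These merge with the localized-part bounds, giving \eqref{est-AS--1}--\eqref{est-AS--3}.
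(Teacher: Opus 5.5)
Your estimate of the solver itself follows the paper's argument. The localized Airy kernel is controlled through the weight ratio $\mathcal W(y)/\mathcal W(x)=e^{\frac12(Z(y)-Z(x))}$, which keeps half of the decay from Lemma \ref{lem-Airy-Airy-est}. The nonlocal terms of $G_{A-}$ sit only on $x>y$, where the decay of $f\in L^\infty_{\mathcal W}$ supplies the exponential localization that $a_1,a_2$ lack. Grouping the nonlocal part as $-a_2(x)+a_1(x)(y-x)$ is a small improvement. The paper bounds $a_1(x)(y-y_c)$ on its own and gets only $\kappa^{-1}\langle\kappa\eta(y)\rangle^{s-\frac12}$, which is where the exponent $\frac12-s$ in \eqref{est-AS--0} comes from. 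Your grouping gives $\kappa^{-1}\langle\kappa\eta(y)\rangle^{s-2}$.

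\textbf{The $y$-derivative step fails as written.} You differentiate the localized part under the integral sign, replacing $A_1(2,y)$, $A_2(2,y)$ by $A_1(2-k,y)$, $A_2(2-k,y)$. This discards its diagonal jump terms. For the nonlocal part, however, you keep the diagonal terms from the lower limit of $\int_y^0$.
\begin{itemize}
\item One derivative of the nonlocal part therefore produces $a_2(y)f(y)$.
\item The second derivative then contains $\partial_y\bigl(a_2(y)f(y)\bigr)$.
\item The third contains $\partial_y^2(a_2f)$ and $\partial_y(a_1f)$.
\end{itemize}
These involve $f'$ and $f''$, which $\|\langle\kappa\eta\rangle^{-s}f\|_{L^\infty_{\mathcal W}}$ does not control. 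So \eqref{est-AS--2} and \eqref{est-AS--3} do not follow from your bookkeeping.

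The missing observation is that $a_1,a_2$ are chosen exactly so that $G_{A-}$, $\partial_yG_{A-}$ and $\partial_y^2G_{A-}$ are continuous across $x=y$. At $y=x$ both branches equal $A_1(2,x)A_2(x)$, then $A_1(1,x)A_2(x)$, then $(\eta_r'(x))^{-1/2}A_1(x)A_2(x)$. Consequently:
\begin{itemize}
\item All diagonal terms cancel between the two parts, so $\partial_y^kAirySolver_{m-}(f)=\int_{-1}^0\partial_y^kG_{A-}(x,y)f(x)\,dx$ for $k\le3$.
\item The nonlocal part of $\partial_yG_{A-}$ is just the prefactor times $a_1(x)$ on $x>y$. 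Your argument bounds this by exactly $\langle\kappa\eta(y)\rangle^{s-\frac32}$.
\item $\partial_y^2G_{A-}$ and $\partial_y^3G_{A-}$ have no nonlocal part at all, so \eqref{est-AS--2}--\eqref{est-AS--3} follow from the localized bound alone.
\end{itemize}
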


\begin{proof}
  We first give the estimate for the $G_{A,M}(x,y)$ part. Recalling \eqref{eq-Airy-modi-1}, \eqref{eq-Airy-modi-2}, and Lemma \ref{lem-Airy-Airy-est}, we have
  \begin{align*}
    &\left|e^{\frac{1}{3}\left( \left(e^{\frac{\pi i}{6}}\kappa\eta(y)\right)^{\frac{3}{2}}-\left(e^{\frac{\pi i}{6}}\kappa\eta(-1)\right)^{\frac{3}{2}}\right)}\int_{-1}^0G_{A,M}(x,y)f(x)dx\right|\\
    \lesssim&\int_{-1}^0\left|\left\langle \kappa\eta(x) \right\rangle^{s}G_{A,M}(x,y)e^{\frac{1}{3}\left( \left(e^{\frac{\pi i}{6}}\kappa\eta(y)\right)^{\frac{3}{2}}-\left(e^{\frac{\pi i}{6}}\kappa\eta(x)\right)^{\frac{3}{2}}\right)}\right|dx\left\|\left\langle \kappa\eta \right\rangle^{-s}f\right\|_{L^\infty_{\mathcal W}}\\
    \lesssim&\int_{-1}^0 \frac{\left\langle \kappa\eta(x) \right\rangle^{s-\frac{1}{4}}}{\left\langle \kappa\eta(y) \right\rangle^{\frac{5}{4}}} e^{-\frac{1}{C}\left|\kappa\eta_r(x)-\kappa\eta_r(y)  \right|\left( |\kappa\eta(y)|^{\frac{1}{2}}+|\kappa\eta(x)|^{\frac{1}{2}} \right) }dx\left\|\left\langle \kappa\eta \right\rangle^{-s}f\right\|_{L^\infty_{\mathcal W}}.
  \end{align*} 
 We denote
\begin{align*}
  I_{\mathcal W,1}(x,y) =\frac{\left\langle \kappa\eta(x) \right\rangle^{s-\frac{1}{4}}}{\left\langle \kappa\eta(y) \right\rangle^{\frac{5}{4}}}e^{-\frac{1}{C}\left|\kappa\eta_r(x)-\kappa\eta_r(y)  \right|\left( |\kappa\eta(y)|^{\frac{1}{2}}+|\kappa\eta(x)|^{\frac{1}{2}} \right) }.
\end{align*}

\paragraph{1.1) Case $\left|y-y_c\right|\le \frac{1}{\kappa}$, $\left|x-y\right|\le \frac{2}{\kappa}$.} For this case $I_{\mathcal W,1}(x,y)\lesssim 1$, and 
\begin{align*}
  \int_{-1}^0I_{\mathcal W,1}(x,y)\chi_{\left|x-y\right|\le \frac{2}{\kappa}}(x)dx\lesssim \frac{1}{\kappa}.
\end{align*}

\paragraph{1.2) Case $\left|y-y_c\right|\le \frac{1}{\kappa}$, $\left|x-y\right|\ge \frac{2}{\kappa}$.} For this case, $|\kappa\eta(x)|\ge1$, then we can use the same technique as the Case 2.2 in the proof of \eqref{eq-est-AS-1} to get
\begin{align*}
  I_{\mathcal W,1}(x,y)\lesssim e^{-\frac{1}{C}\left|\kappa\eta_r(x)-\kappa\eta_r(y)  \right| },
\end{align*}
and 
\begin{align*}
  \int_{-1}^0I_{\mathcal W,1}(x,y)\chi_{\left|x-y\right|\ge \frac{2}{\kappa}}(x)dx\lesssim \frac{1}{\kappa}.
\end{align*}

\paragraph{2) Case $\left|y-y_c\right|\ge \frac{1}{\kappa}$.} For this case, we have $|\kappa\eta(y)|\ge1$, 
\begin{align*}
  I_{\mathcal W,1}(x,y)\lesssim \left\langle \kappa\eta(y) \right\rangle^{s-\frac{3}{2}}e^{-\frac{1}{C}\left|\kappa\eta_r(x)-\kappa\eta_r(y)  \right|\left( |\kappa\eta(y)|^{\frac{1}{2}}+|\kappa\eta(x)|^{\frac{1}{2}} \right) },
\end{align*}
 and 
\begin{align*}
  \int_{-1}^0I_{\mathcal W,1}(x,y) dx\lesssim \frac{1}{\kappa}\left\langle \kappa\eta(y) \right\rangle^{s-2}.
\end{align*}
 
Next, we turn to the non-local terms.  Similar to Lemma \ref{lem-est-AS}, here the only troublesome term is from $a_1(x)(y-y_c)$. Noting that the non-local terms only appear for $x>y$, we have
\begin{align*}
  &\left|e^{\frac{1}{3}\left( \left(e^{\frac{\pi i}{6}}\kappa\eta(y)\right)^{\frac{3}{2}}-\left(e^{\frac{\pi i}{6}}\kappa\eta(-1)\right)^{\frac{3}{2}}\right)}\int_{y}^0 \left| \frac{a_1(x)\left(y-y_c\right)}{\kappa\varepsilon}f(x)\right|dx\right|\\
  \lesssim &\int_{y}^0 \frac{\left\langle \kappa\eta(y) \right\rangle}{\left\langle \kappa\eta(x) \right\rangle^{1-s}}e^{-\frac{1}{C}\left|\kappa\eta_r(x)-\kappa\eta_r(y)  \right|\left( |\kappa\eta(y)|^{\frac{1}{2}}+|\kappa\eta(x)|^{\frac{1}{2}} \right) }dx\left\|\left\langle \kappa\eta \right\rangle^{-s}f\right\|_{L^\infty_{\mathcal W}}.
\end{align*}
 We denote
\begin{align*}
  I_{\mathcal W,2}(x,y) =\frac{\left\langle \kappa\eta(y) \right\rangle}{\left\langle \kappa\eta(x) \right\rangle^{1-s}}e^{-\frac{1}{C}\left|\kappa\eta_r(x)-\kappa\eta_r(y)  \right|\left( |\kappa\eta(y)|^{\frac{1}{2}}+|\kappa\eta(x)|^{\frac{1}{2}} \right) }.
\end{align*}
Then, by using the same technique to the $G_{A,M}$ part, we deduce that
\begin{align*}
  \int_{y}^0I_{\mathcal W,2}(x,y) dx\lesssim \frac{1}{\kappa}\left\langle \kappa\eta(y) \right\rangle^{s-\frac{1}{2}}.
\end{align*}
Then \eqref{est-AS--0} follows directly. 

For $\pa_y AirySolver_{m-}(f)$, the most troublesome term is also from the nonlocal term $a_1(x)$. We have
\begin{align*}
  &\left|e^{\frac{1}{3}\left( \left(e^{\frac{\pi i}{6}}\kappa\eta(y)\right)^{\frac{3}{2}}-\left(e^{\frac{\pi i}{6}}\kappa\eta(-1)\right)^{\frac{3}{2}}\right)}\int_{y}^0 \left| \frac{a_1(x)}{\kappa\varepsilon}f(x)\right|dx\right|\\
  \lesssim &\int_{y}^0 \frac{\kappa}{\left\langle \kappa\eta(x) \right\rangle^{1-s}}e^{-\frac{1}{C}\left|\kappa\eta_r(x)-\kappa\eta_r(y)  \right|\left( |\kappa\eta(y)|^{\frac{1}{2}}+|\kappa\eta(x)|^{\frac{1}{2}} \right) }dx\left\|\left\langle \kappa\eta \right\rangle^{-s}f\right\|_{L^\infty_{\mathcal W}}.
\end{align*}
This gives \eqref{est-AS--1}.

The estimate \eqref{est-AS--2} and \eqref{est-AS--3} can be deduced in the same way. 

\end{proof}
Next, we give the construction of the solution of the non-homogeneous Orr-Sommerfeld equation.

\begin{lemma}\label{lem-sol-nh-OS}
  For any $f\in L^\infty_{\mathcal W}$, there exists a solution $\Phi_f$ to \eqref{eq-inhome-OS} satisfying
  \begin{align}\label{est-sol-nh-OS}
    \left\|\Phi_{f}\right\|_{L^\infty_{\mathcal W}}\lesssim \varepsilon^{\frac{1}{3}}\left\|f\right\|_{L^\infty_{\mathcal W}},
  \end{align}
  and
  \begin{align}\label{est-sol-nh-b0}
    \pa_y\Phi_{f}(0)=0,\qquad \left|\pa_y^3\Phi_{f}(0)\right|\lesssim e^{-\kappa}.
  \end{align}
\end{lemma}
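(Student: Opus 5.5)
We construct $\Phi_f$ by a pure Airy iteration, with $AirySolver_{m-}$ as the basic solver and the weighted space $L^\infty_{\mathcal W}$ as the working norm. The basic identity is the analogue of \eqref{eq-Green-app} for $G_{A-}$: since $G_{A-}$ and $G_A$ satisfy the same approximate Green-function equation, we have
\begin{align*}
  Orr\left(AirySolver_{m-}(f)\right)=f+\mathcal R(f),
\end{align*}
where
\begin{align*}
  \mathcal R(f)=\Big(i\varepsilon \big(\pa_y^2(\eta_r')^{-\frac12}\big)(\eta_r')^{\frac12}+i\big((\eta_r')^2-1\big)c_i-2i\varepsilon\alpha^2\Big)\pa_y^2AirySolver_{m-}(f)+Reg\left(AirySolver_{m-}(f)\right).
\end{align*}
We then set $\Phi_f=\sum_{j\ge0}\phi_j$ with $\phi_0=AirySolver_{m-}(f)$ and $\phi_j=-AirySolver_{m-}(\mathcal R(\phi_{j-1}\text{-source}))$. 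Concretely, we iterate the map $T(g)=-\mathcal R(g)$ on sources, so that $\Phi_f=AirySolver_{m-}\big(\sum_j T^j f\big)$.

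The key step is to show that $T$ is a contraction on $L^\infty_{\mathcal W}$ (possibly with the polynomial weight $\langle\kappa\eta\rangle^{-s}$). We use Lemma \ref{lem-AS--0} with $s=0$.

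For the $Reg$ part, the coefficient $-u_p''-\alpha^2(u_p-c)+i\varepsilon\alpha^4$ is $O(1)$, and \eqref{est-AS--0} gives a gain $\varepsilon^{1/3}$.

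For $\varepsilon\,\pa_y^2AirySolver_{m-}$, \eqref{est-AS--2} gives $\varepsilon\cdot\varepsilon^{-1/3}=\varepsilon^{2/3}$.

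The dangerous term is $c_i\big((\eta_r')^2-1\big)\pa_y^2AirySolver_{m-}(f)$. Here \eqref{est-AS--2} alone gives $|c_i|\varepsilon^{-1/3}$, which in $\mathbb H$ may be $O(1)$ (recall $|c_i|\lesssim M\varepsilon^{1/3}$). This is the step we expect to be the main obstacle. We would exploit $(\eta_r')^2-1=O(|y-y_c|)=O(\varepsilon^{1/3}|\kappa\eta|)$ together with the weight $\langle\kappa\eta\rangle^{1-s}$ in \eqref{est-AS--2}. Taking $s=1$ gives
\begin{align*}
  |y-y_c|\,|\pa_y^2 AirySolver_{m-}(f)|\lesssim \varepsilon^{1/3}\langle\kappa\eta\rangle\varepsilon^{-1/3}\|\langle\kappa\eta\rangle^{-1}\ldots\|,
\end{align*}
so the product is bounded by $|c_i|\,\|\langle\kappa\eta\rangle^{-s}f\|$, and $|c_i|\lesssim\varepsilon^{1/3}M$ is small. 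Thus we would run the contraction in the norm $\|\langle\kappa\eta\rangle^{-s}\cdot\|_{L^\infty_{\mathcal W}}$ with a suitable $s$ chosen to balance the weights between the two lemmas. With this choice $\|T g\|\lesssim \varepsilon^{1/3}\|g\|$ (up to the $c_i$ factor), the Neumann series converges, and \eqref{est-AS--0} then yields \eqref{est-sol-nh-OS}. If the weight bookkeeping fails, the fallback is to absorb the $c_i$ term by redefining the Langer phase, which is not available here, so we commit to the weighted argument.

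For the boundary conditions, every $AirySolver_{m-}$ output vanishes together with its first derivative at $y=0$, which gives $\pa_y\Phi_f(0)=0$. For $\pa_y^3$, at $y=0$ we have $x>y$ only for $x=0$, so the third derivative is a boundary evaluation of $\pa_y^3G_{A-}(x,0)$, i.e. an integral of $A_1'''$-type terms at $y=0$ against the source. Because the source is weighted by $\mathcal W^{-1}$, which is exponentially small at $y=0$ relative to $y=-1$ (with $\kappa\eta(0)\sim\kappa$), \eqref{est-AS--3} combined with the factor $\mathcal W(0)^{-1}\sim e^{-c\kappa^{3/2}}$ gives $|\pa_y^3\Phi_f(0)|\lesssim\varepsilon^{-2/3}e^{-c\kappa^{3/2}}\lesssim e^{-\kappa}$.
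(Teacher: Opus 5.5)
Your proposal is correct and follows essentially the same route as the paper. Both arguments run an Airy iteration built on $AirySolver_{m-}$ in $L^\infty_{\mathcal W}$. The $Reg$ and $\varepsilon\pa_y^2$ residuals gain $\varepsilon^{1/3}$ and $\varepsilon^{2/3}$ from Lemma \ref{lem-AS--0}. The $c_i\big((\eta_r')^2-1\big)\pa_y^2$ residual is reduced to a factor $|c_i|\lesssim\varepsilon^{1/3}$ by combining $|(\eta_r')^2-1|\lesssim\kappa^{-1}\langle\kappa\eta\rangle$ with the weight in \eqref{est-AS--2}. The paper measures that term with an extra $\langle\kappa\eta\rangle^{-1/2}$ weight, but, as you observe, the weight exponent is preserved, so $s=0$ already closes the contraction and your hedging is unnecessary. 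Your source-side formulation $\Phi_f=AirySolver_{m-}\big(\sum_j T^jf\big)$ is just a cleaner write-up of what the paper does. So is your derivation of the bound on $\pa_y^3\Phi_f(0)$ from \eqref{est-AS--3} together with the exponential size of $\mathcal W(0)$, which the paper says follows ``directly from the definition of $AirySolver_{m-}$''.
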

\begin{proof}
  By the definition of $AirySolver_{m-}$, we have
  \begin{align*}
  &Orr\left(AirySolver_{m-}(f)\right)\\
  =&f+\left(-u_p''-\alpha^2\left(u-c \right)+i\varepsilon\alpha^4\right)AirySolver_{m-}(f)\\ 
  &+i\varepsilon  \left(\pa_y^2\left(\eta_r'(y)\right)^{-\frac{1}{2}}\right)\left(\eta_r'(y)\right)^{\frac{1}{2}} \pa_y^2AirySolver_{m-}(f)-i2\varepsilon\alpha^2\pa_y^2AirySolver_{m-}(f)\\
  &+i c_i\left(\left(\eta_r'(y)\right)^2-1\right)  \pa_y^2AirySolver_{m-}(f).
\end{align*}
From Lemma \ref{lem-AS--0}, we have
\begin{align*}
  \left\|\left(-u_p''-\alpha^2\left(u-c \right)+i\varepsilon\alpha^4\right)AirySolver_{m-}(f)\right\|_{L^\infty_{\mathcal W}}\lesssim \varepsilon^{\frac{1}{3}}\left\|f\right\|_{L^\infty_{\mathcal W}},
\end{align*}
\begin{align*}
  \left\|\varepsilon  \left(\pa_y^2\left(\eta_r'(y)\right)^{-\frac{1}{2}}\right)\left(\eta_r'(y)\right)^{\frac{1}{2}} \pa_y^2AirySolver_{m-}(f)\right\|_{L^\infty_{\mathcal W}}\lesssim \varepsilon^{\frac{2}{3}}\left\|f\right\|_{L^\infty_{\mathcal W}},
\end{align*}
and
\begin{align*}
  &\left\|\left\langle \kappa\eta(y) \right\rangle^{-\frac{1}{2}}c_i\left(\left(\eta_r'(y)\right)^2-1\right)  \pa_y^2AirySolver_{m-}(f)\right\|_{L^\infty_{\mathcal W}}\\
  \lesssim& \left\| c_i \frac{\eta(y)}{\left\langle \kappa\eta(y) \right\rangle}\right\|_{L^\infty} \left\|\left\langle \kappa\eta(y) \right\rangle^{\frac{1}{2}}\pa_y^2AirySolver_{m-}(f)\right\|_{L^\infty_{\mathcal W}}\lesssim |c_i|  \left\|f\right\|_{L^\infty_{\mathcal W}}.
\end{align*}

Let
\begin{align*}
\Phi_{f}^{[0]}(y)=AirySolver_{m-}(f),\quad \Phi_{f}^{[1]}(y)=-AirySolver_{m-}\left(Orr\left(AirySolver_{m-}(f)\right)-f\right),\\
 \Phi_{f}^{[j]}(y)=-AirySolver_{m-}\left(Orr\left(\Phi_{f}^{[j-1]}\right)-Orr\left(\Phi_{f}^{[j-2]}\right)\right),\text{ for }j\ge2,
\end{align*}
then by Lemma \ref{lem-AS--0}, we have
\begin{align*}
  \left\|\Phi_{f}^{[j]}\right\|_{L^\infty_{\mathcal W}}\lesssim \left(|c_i|+ \varepsilon^{\frac{1}{3}}\right)^j\varepsilon^{\frac{1}{3}}.
\end{align*}

As we always assume that $|c_i|\lesssim \varepsilon^{\frac{1}{3}}$, therefore, let
\begin{align*}
  \Phi_{f}(y)=\sum_{j=0}^{+\infty}\Phi_{f}^{[j]}(y),
\end{align*}
we can see that $\Phi_{f}(y)$ is well defined and solves \eqref{eq-inhome-OS}, and satisfies \eqref{est-sol-nh-OS}.

The estimate \eqref{est-sol-nh-b0} follows directly from the definition of $AirySolver_{m-}$.
\end{proof}

We are now ready to construct $\Phi_{3,\alpha}$. We define
\begin{align*}
  \Phi_{3,\alpha}(y)=\sum_{j=0}^{+\infty}\Phi_{3,\alpha}^{[j]}(y),
\end{align*}
where
\begin{align*}
  \Phi_{3,\alpha}^{[0]}=\phi_{A,1}(y),\quad \Phi_{3,\alpha}^{[1]}(y)=-AirySolver_{m-}\left(Orr\left(\phi_{A,1}(y)\right)\right),\\
  \Phi_{3,\alpha}^{[j]}(y)=-AirySolver_{m-}\left(Orr\left(\Phi_{3,\alpha}^{[j-1]}\right)-Orr\left(\Phi_{3,\alpha}^{[j-2]}\right)\right),\text{ for }j\ge2.
\end{align*}
The estimates in Lemma \ref{lem-sol-nh-OS} imply that the above series converges in $L^\infty_{\mathcal W}$, and hence $\Phi_{3,\alpha}$ is well defined.

\subsection{Boundary values of $\Phi_{3,\alpha}$ at $y=-1$ and $y=0$}
In this subsection, we derive boundary estimates for $\Phi_{3,\alpha}$.
\begin{lemma}\label{lem-Phi3--1}
  It holds that
  \begin{align*}
    \left|\Phi_{3,\alpha}(-1)-\phi_{A,1}(-1)\right|\lesssim  \varepsilon^{\frac{1}{3}}|\kappa c_r|^{-\frac{1}{2}}+|c_i||\kappa c_r|^{\frac{3}{2}} ,\\
      \left|\pa_y\Phi_{3,\alpha}(-1)-\pa_y\phi_{A,1}(-1)\right|\lesssim |\kappa c_r|^{-\frac{3}{2}}+|c_i|\kappa|\kappa c_r|^{\frac{1}{2}},\\
    \pa_y\Phi_{3,\alpha}(0)=0,\quad \left|\pa_y^3\Phi_{3,\alpha}(0)\right|\lesssim e^{-\kappa}.
  \end{align*}
\end{lemma}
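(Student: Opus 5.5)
The plan is to write $\Phi_{3,\alpha}-\phi_{A,1}=\sum_{j\ge1}\Phi_{3,\alpha}^{[j]}$ and estimate these corrections at $y=-1$. We use the explicit kernel $G_{A-}$ for $\Phi_{3,\alpha}^{[1]}$, and the geometric convergence from Lemma \ref{lem-sol-nh-OS} for $j\ge2$.

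The centerline identities come first and are immediate. Each correction has the form $AirySolver_{m-}(g)$. By the definition of $G_{A-}$ this satisfies $AirySolver_{m-}(g)(0)=\pa_yAirySolver_{m-}(g)(0)=0$, because $A_1(1,0)=A_1(2,0)=0$ and the nonlocal terms vanish at $y=0$ when $x>y=0$. Also $\phi_{A,1}'(0)=0$ by normalization, so $\Phi_{3,\alpha}'(0)=0$. For the third derivative at $0$, each term of $\pa_y^3G_{A-}(x,0)$ carries $A_1'(0)$ or a nonlocal $a_1,a_2$ factor evaluated against the source. The source $g$ inherits the weight $\mathcal W^{-1}$, which at $y\approx0$ gives the factor $e^{-\frac{1}{C}\kappa^{3/2}}\lesssim e^{-\kappa}$ relative to $-1$. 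The same holds for $\phi_{A,1}'''(0)\sim A_1'(0)/A_1(2,-1)$. Combined with \eqref{est-AS--3} this yields $|\Phi_{3,\alpha}'''(0)|\lesssim e^{-\kappa}$, as in \eqref{est-sol-nh-b0}.

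For the wall values, first compute the defect
\[
Orr(\phi_{A,1})=Reg(\phi_{A,1})+\big(\text{the error terms of \eqref{eq-Green-app}}\big)\,\pa_y^2\phi_{A,1}.
\]
Using the asymptotics of $\phi_{A,1}$ and its derivatives listed after \eqref{eq-unifi-prim-Airy}, the pieces split as follows.
\begin{itemize}
\item The $Reg$ part is $O(\langle\kappa c_r\rangle^{5/4}\langle\kappa\eta\rangle^{-5/4}\mathcal W^{-1})$ in size.
\item The $\varepsilon$-error part is $O(\varepsilon\kappa^2\langle\kappa\eta\rangle^{-1/4}\cdots)$.
\item The $c_i$-term $ic_i((\eta_r')^2-1)\pa_y^2\phi_{A,1}$ is the largest. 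Since $(\eta_r')^2-1=O(|y-y_c|)$, it is of size $|c_i|\,|\eta|\,\kappa^2\langle\kappa c_r\rangle^{5/4}\langle\kappa\eta\rangle^{-1/4}$.
\end{itemize}
Now evaluate $\Phi^{[1]}_{3,\alpha}(-1)=-\int_{-1}^0G_{A-}(x,-1)\,Orr(\phi_{A,1})(x)\,dx$. At $y=-1$ only the branch $x>y$ contributes. The localized part $A_1(x)A_2(2,-1)$ gains $\langle\kappa c_r\rangle^{-5/4}$ from $A_2(2,-1)$. The nonlocal part gives $a_1(x)(-1-y_c)$, whose size is about $\kappa c_r\langle\kappa\eta(x)\rangle^{-1}$ times $(\kappa\varepsilon)^{-1}$, together with $a_2$ and $a_1(x)(x-y_c)$. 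I will integrate these against the defect, tracking the weight through Lemma \ref{lem-Airy-Airy-est}. Because $\mathcal W^{-1}$ makes the source concentrate in an $O(\kappa^{-1}(\kappa c_r)^{-1/2})$ neighbourhood of $-1$, the $Reg$ contributions should produce $\varepsilon^{1/3}|\kappa c_r|^{-1/2}$. The $c_i$ contributions should produce $|c_i||\kappa c_r|^{3/2}$, since $|\eta|\sim c_r$ and the powers of $\kappa$ combine. The derivative at $-1$ is handled the same way with $\pa_yG_{A-}(x,-1)$, which costs the extra factor $\kappa\langle\kappa c_r\rangle^{1/2}$ relative to the value, matching \eqref{est-AS--1}. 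This gives $|\kappa c_r|^{-3/2}+|c_i|\kappa|\kappa c_r|^{1/2}$ once it is normalized against $\phi_{A,1}'(-1)\sim\kappa(\kappa c_r)^{1/2}$.

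Finally, the higher iterates are smaller by factors $(|c_i|+\varepsilon^{1/3})^{j-1}$ by Lemma \ref{lem-sol-nh-OS} and Lemma \ref{lem-AS--0} with the $\langle\kappa\eta\rangle^{-s}$ weights, so they are absorbed into the same bounds.

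The main obstacle is the nonlocal term $a_1(x)(-1-y_c)$ at $y=-1$. Its prefactor $\kappa c_r$ is large, and it decays only polynomially in $x$. Bounding it crudely loses a factor $\kappa c_r$. I would first try exploiting the exponential localization of the source near $-1$, where $\langle\kappa\eta(x)\rangle\sim\kappa c_r$ cancels the prefactor. If that is not enough, I would use the Scorer-function expansion of $a_{1,M}$ from Lemma \ref{lem-a1-a2-exp} to extract the cancellation.
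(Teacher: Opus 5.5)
Your plan is essentially the paper's proof. The paper also reduces to $\Phi^{[1]}_{3,\alpha}$, with the higher iterates geometrically smaller by Lemma \ref{lem-sol-nh-OS}. It singles out $u_p''\phi_{A,1}$ and $ic_i\bigl((\eta_r')^2-1\bigr)\partial_y^2\phi_{A,1}$ as the dominant defects and bounds their images at $y=-1$ directly from Lemma \ref{lem-AS--0}. It uses $s=0$ and $s=\frac34$ respectively, together with $\mathcal W(-1)=1$ and $\langle\kappa\eta(-1)\rangle\sim\langle\kappa c_r\rangle$. The weight $\mathcal W$ in that lemma is exactly the exponential localization of the source near $y=-1$ that you propose for the nonlocal $a_1$ term, so the Scorer-function fallback is not needed.

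Two bookkeeping points need correcting when you write it out.
\begin{itemize}
\item Since $A_2(2,-1)=A_2(1,-1)=0$, the kernels at the wall are $G_{A-}(x,-1)=\frac{2\pi i}{(\eta_r'(x))^{1/2}\kappa\varepsilon}\bigl(-a_2(x)-a_1(x)(x+1)\bigr)$ and $\partial_yG_{A-}(x,-1)=\frac{2\pi i}{(\eta_r'(x))^{1/2}\kappa\varepsilon}\,a_1(x)$. They have no localized part at all, rather than a localized part that gains $\langle\kappa c_r\rangle^{-5/4}$.
\item The derivative bound is absolute and comes from \eqref{est-AS--1}; no normalization by $\phi_{A,1}'(-1)$ is involved. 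It differs from the value bound by the factor $\varepsilon^{-\frac13}\langle\kappa c_r\rangle^{-1}$, not $\kappa\langle\kappa c_r\rangle^{\frac12}$.
\end{itemize}
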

\begin{proof}

Recall that
\begin{align*}
  Orr\left(\phi_{A,1}\right)=&\left(-u_p''-\alpha^2\left(u-c \right)+i\varepsilon\alpha^4\right)\phi_{A,1}\\ 
  &+i\varepsilon  \left(\pa_y^2\left(\eta_r'(y)\right)^{-\frac{1}{2}}\right)\left(\eta_r'(y)\right)^{\frac{1}{2}} \pa_y^2\phi_{A,1}(y)-i2\varepsilon\alpha^2\pa_y^2\phi_{A,1}(y)\\
  &+i c_i\left(\left(\eta_r'(y)\right)^2-1\right)  \pa_y^2\phi_{A,1}(y).
\end{align*}
Here the potential troublesome terms are $u_p''\phi_{A,1}$ and $c_i\left(\left(\eta_r'(y)\right)^2-1\right)  \pa_y^2\phi_{A,1}(y)$. By using Lemma \ref{lem-AS--0}, we have
\begin{align*}
  \left|AirySolver_{m-}\left(\phi_{A,1}\right)(-1)\right|\lesssim \varepsilon^{\frac{1}{3}}|\kappa c_r|^{-\frac{1}{2}},\\
  \left|\pa_yAirySolver_{m-}\left(\phi_{A,1}\right)(-1)\right|\lesssim  |\kappa c_r|^{-\frac{3}{2}}.
\end{align*}

Note that
\begin{align*}
  \left|c_i\left(\left(\eta_r'(y)\right)^2-1\right)  \pa_y^2\phi_{A,1}(y)\right|\lesssim |c_i|\kappa \left\langle \kappa c_r \right\rangle^{\frac{5}{4}}\left\langle \kappa\eta(y) \right\rangle^{\frac{3}{4}}\frac{1}{\mathcal W(y)}.
\end{align*}
By using Lemma \ref{lem-AS--0}, we have
\begin{align*}
  \left|AirySolver_{m-}\left(c_i\left(\left(\eta_r'(y)\right)^2-1\right) \phi_{A,1}''(y)\right)(-1)\right|\lesssim |c_i||\kappa c_r|^{\frac{3}{2}},\\
  \left|\pa_yAirySolver_{m-}\left(c_i\left(\left(\eta_r'(y)\right)^2-1\right) \phi_{A,1}''(y)\right)(-1)\right|\lesssim |c_i|\kappa|\kappa c_r|^{\frac{1}{2}}.
\end{align*}
Therefore, we have 
\begin{align*}
  \left|\Phi_{3,\alpha}^{[1]}(-1)\right|\lesssim \varepsilon^{\frac{1}{3}}|\kappa c_r|^{-\frac{1}{2}}+|c_i||\kappa c_r|^{\frac{3}{2}},\quad  \left|\pa_y\Phi_{3,\alpha}^{[1]}(-1)\right|\lesssim |\kappa c_r|^{-\frac{3}{2}}+|c_i|\kappa|\kappa c_r|^{\frac{1}{2}},\\
  \left|\pa_y\Phi_{3,\alpha}^{[1]}(0)\right|=0,\quad \left|\pa_y^3\Phi_{3,\alpha}^{[1]}(0)\right|\lesssim e^{-\kappa}.
\end{align*}

By Lemma \ref{lem-sol-nh-OS} we can see that $\Phi_{3,\alpha}^{[j]}$ with $j\ge2$ have better estimates. Then the result of this lemma follows directly.
  
\end{proof}

The lemma shows that the boundary values of $\Phi_{3,\alpha}$ at $y=-1$ are determined, to leading order, by the Airy profile $\phi_{A,1}$. Next we will show that after taking derivative, the lower order terms are still small enough.
\begin{lemma}\label{lem-Phi3--1-cr}
   It holds that
  \begin{align}
    \left|\pa_{c_r}\Phi_{3,\alpha}(-1)-\pa_{c_r}\phi_{A,1}(-1)\right|\lesssim  |\kappa c_r|^{-\frac{1}{2}}+\varepsilon^{-\frac{1}{3}}|c_i||\kappa c_r|^{\frac{3}{2}}\left|\ln c_0\right|,\label{est-Phi3--1-cr-0}\\
      \left|\pa_{c_r}\pa_y\Phi_{3,\alpha}(-1)-\pa_{c_r}\pa_y\phi_{A,1}(-1)\right|\lesssim \varepsilon^{-\frac{1}{3}} |\kappa c_r|^{-\frac{3}{2}}+\varepsilon^{-\frac{2}{3}}|c_i||\kappa c_r|^{\frac{1}{2}}\left|\ln c_0\right|,\label{est-Phi3--1-cr-1}
  \end{align}
  and
    \begin{align}
    \left|\pa_{c_i}\Phi_{3,\alpha}(-1)-\pa_{c_i}\phi_{A,1}(-1)\right|\lesssim  |\kappa c_r|^{-\frac{1}{2}}+ |\kappa c_r|^{\frac{3}{2}}\left|\ln c_0\right|,\label{est-Phi3--1-ci-0}\\
      \left|\pa_{c_i}\pa_y\Phi_{3,\alpha}(-1)-\pa_{c_i}\pa_y\phi_{A,1}(-1)\right|\lesssim \varepsilon^{-\frac{1}{3}} |\kappa c_r|^{-\frac{3}{2}}+\varepsilon^{-\frac{1}{3}} |\kappa c_r|^{\frac{1}{2}}\left|\ln c_0\right|.\label{est-Phi3--1-ci-1}
  \end{align}
  For fixed $\alpha$, we have
      \begin{align}
    \left|\pa_{\nu}\Phi_{3,\alpha}(-1)-\pa_{\nu}\phi_{A,1}(-1)\right|\lesssim \frac{1}{\nu}\left(\varepsilon^{\frac{1}{3}}|\kappa c_r|^{-\frac{1}{2}}+|c_i||\kappa c_r|^{\frac{3}{2}} \right)\left|\ln c_0\right|,\label{est-Phi3--1-nu-0}\\
      \left|\pa_{\nu}\pa_y\Phi_{3,\alpha}(-1)-\pa_{\nu}\pa_y\phi_{A,1}(-1)\right|\lesssim \frac{1}{\nu}\left(|\kappa c_r|^{-\frac{3}{2}}+|c_i|\kappa|\kappa c_r|^{\frac{1}{2}}\right)\left|\ln c_0\right|.\label{est-Phi3--1-nu-1}
  \end{align}
    For fixed $\nu$ or $\varepsilon$, we have
      \begin{align}
    \left|\pa_{\alpha^2}\Phi_{3,\alpha}(-1)-\pa_{\alpha^2}\phi_{A,1}(-1)\right|\lesssim  \frac{1}{\alpha^2}\left(\varepsilon^{\frac{1}{3}}|\kappa c_r|^{-\frac{1}{2}}+|c_i||\kappa c_r|^{\frac{3}{2}} \right)\left|\ln c_0\right|,\label{est-Phi3--1-alpha-0}\\
      \left|\pa_{\alpha^2}\pa_y\Phi_{3,\alpha}(-1)-\pa_{\alpha^2}\pa_y\phi_{A,1}(-1)\right|\lesssim \frac{1}{\alpha^2}\left(|\kappa c_r|^{-\frac{3}{2}}+|c_i|\kappa|\kappa c_r|^{\frac{1}{2}}\right)\left|\ln c_0\right|.\label{est-Phi3--1-alpha-1}
  \end{align}
\end{lemma}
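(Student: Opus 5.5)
We differentiate the series $\Phi_{3,\alpha}=\sum_{j\ge0}\Phi_{3,\alpha}^{[j]}$ term by term in the parameter. We show that $\pa_{c_r}\Phi_{3,\alpha}^{[1]}$ and its $y$-derivative at $y=-1$ obey the stated bounds, and that the higher iterates $j\ge2$ are smaller by factors $(|c_i|+\varepsilon^{1/3})^{j-1}$, exactly as in Lemma \ref{lem-Phi3--1} and Lemma \ref{lem-sol-nh-OS}.

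Writing $\Phi_{3,\alpha}^{[1]}=-AirySolver_{m-}(Orr(\phi_{A,1}))$, the parameter derivative splits into two pieces. The first is $AirySolver_{m-}(\pa_{c_r}Orr(\phi_{A,1}))$. The second is the commutator piece $\int(\pa_{c_r}G_{A-})(x,y)\,Orr(\phi_{A,1})(x)\,dx$.

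\textbf{First piece.} We note that $\pa_{c_r}$ falling on an Airy factor $\mathrm{Ai}(k,e^{\frac{\pi}{6}i}\kappa\eta)$ produces $\kappa\pa_{c_r}\eta\,\mathrm{Ai}(k-1,\cdot)$. This loses at most a factor $\kappa\langle\kappa\eta\rangle^{1/2}$ pointwise, i.e.\ $\varepsilon^{-1/3}$ times a weight. However, $\phi_{A,1}=A_1(2,y)/A_1(2,-1)$ is normalized at $y=-1$. So we differentiate the quotient and use the paired growing/decaying cancellation of Lemma \ref{lem-d-cr-a1B}: $\pa_{c_r}$ of $\mathcal A_1(2,\kappa\eta(y))/\mathcal A_1(2,\kappa\eta(-1))$ carries the factor
\[
\kappa^{3/2}\Bigl((e^{\frac{\pi}{6}i}\eta(y))^{1/2}\pa_{c_r}\eta(y)-(e^{\frac{\pi}{6}i}\eta(-1))^{1/2}\pa_{c_r}\eta(-1)\Bigr),
\]
which is $\lesssim \kappa|\kappa\eta(y)-\kappa\eta(-1)|\,\langle\kappa c_r\rangle^{-1/2}$ plus lower order. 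This extra polynomial factor is absorbed by the exponential gap between $\mathcal W$ and the true Airy decay, since $\mathcal W$ only uses the exponent $\frac13$ instead of $\frac23$. Hence $\pa_{c_r}\phi_{A,1}$ remains in $L^\infty_{\mathcal W}$ with norm $O(1)$, up to the weight $\langle\kappa\eta\rangle$ losses that Lemma \ref{lem-AS--0} tolerates through its parameter $s$. The same applies to $\pa_{c_r}\phi_{A,1}''$ and to the coefficients $u_p''$, $c_i((\eta_r')^2-1)$, $\varepsilon\alpha^2$. Applying \eqref{est-AS--0}--\eqref{est-AS--1} and evaluating at $y=-1$, where $\langle\kappa\eta(-1)\rangle\sim\langle\kappa c_r\rangle$, reproduces the bounds of Lemma \ref{lem-Phi3--1}. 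The $c_i$-term loses an extra $\varepsilon^{-1/3}$ (respectively $\varepsilon^{-2/3}$ for the $y$-derivative), because there $\pa_{c_r}$ genuinely hits $\phi_{A,1}''$ near the critical layer. The $|\ln c_0|$ factor comes from integrating $1/|x-y_c+ic_0|$-type singularities created when $\pa_{c_r}$ falls on $\eta_r$ near $y_c$.

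\textbf{Second piece.} We decompose $G_{A-}=G_{A,M}+E_-$. The localized part is handled as in the estimate of $I_{GMS,0,c_r}$ in Lemma \ref{lem-acc-psi-1-a-cr}, now with the weight $\mathcal W$: the derivative of $A_1(2,y)A_2(x)$ produces the factor $\kappa|\eta_r(x)-\eta_r(y)|$, which is harmless by the off-diagonal decay of Lemma \ref{lem-Airy-Airy-est}. The nonlocal part on $x>y$ involves $\pa_{c_r}a_1$ and $\pa_{c_r}a_2$. For these we use the expansions of Lemma \ref{lem-a1-a2-exp} (derivatives of $\mathrm{Hi}$ decay polynomially) and Lemma \ref{lem-d-cr-a1B} for $a_{1,B}$, together with the fact that $\mathcal W(y)$ at $y=-1$ is $1$ while the source decays like $1/\mathcal W$.

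\textbf{Other parameters.} The $c_i$-derivatives follow identically, since $\pa_{c_i}\eta=-i/u_p'(y_c)$ is simpler; the only loss is the term where $\pa_{c_i}$ removes the explicit $c_i$, giving the stated $|\kappa c_r|^{3/2}$ contributions. For $\nu$ (fixed $\alpha$) and $\alpha^2$, we use $\pa_\nu\kappa=-\kappa/(3\nu)$ and the identity $\pa_\nu\mathrm{Ai}(k,e^{\frac{\pi}{6}i}\kappa\eta)=-\frac{1}{3\nu}e^{\frac{\pi}{6}i}\kappa\eta\,\mathrm{Ai}(k-1,\cdot)$. In quotient form these again lose only $\frac1\nu\kappa|\eta(y)-\eta(-1)|$, absorbed as before, yielding the bounds of Lemma \ref{lem-Phi3--1} multiplied by $\nu^{-1}|\ln c_0|$ (respectively $\alpha^{-2}|\ln c_0|$).

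\textbf{Main obstacle.} We expect the main obstacle to be the first piece near $y=-1$: showing that differentiating the normalized profile and the solver does not generate the naive loss $\kappa^{3/2}c_r^{1/2}$. This requires the growing/decaying pairing cancellation to be carried through the weighted estimates.
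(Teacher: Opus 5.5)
Your proposal follows essentially the paper's route: you split $\pa_{c_r}\Phi_{3,\alpha}^{[1]}$ into the part where the derivative hits $G_{A-}$ and the part where it hits the source $Orr(\phi_{A,1})$, bound the differentiated profile in $L^\infty_{\mathcal W}$ via the growing/decaying pairing of Lemma~\ref{lem-d-cr-a1B}, conclude with Lemma~\ref{lem-AS--0}, and treat the iterates $j\ge2$ as smaller. One correction: the pairing gives $\|\pa_{c_r}\phi_{A,1}\|_{L^\infty_{\mathcal W}}\lesssim c_r^{-1}$ (the paper writes $|\ln c_0|\,c_r^{-1}$), not $O(1)$, because after the weight gap absorbs the exponent difference the prefactor $\kappa\langle\kappa c_r\rangle^{-1}$ in your own expression survives; consequently the source piece does not reproduce Lemma~\ref{lem-Phi3--1} but gives $\langle\kappa c_r\rangle^{-3/2}$ and $c_r^{-1}\langle\kappa c_r\rangle^{-3/2}$ at $y=-1$, which is still within the stated bounds since $\kappa c_r\gtrsim1$ in $\mathbb H$, with the leading terms $|\kappa c_r|^{-1/2}$ and $\varepsilon^{-\frac13}|\kappa c_r|^{-3/2}$ coming from the kernel derivative as in the paper.
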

\begin{proof}
We only need to study the estimates for $\Phi_{3,\alpha}^{[1]}$. 

It holds that
\begin{align*}
  \pa_{c_r}AirySolver_{m-}\left(f\right)(y)=\int^0_{-1}\left(\pa_{c_r}G_{A-}(x,y)\right)f(x)dx+\int^0_{-1}G_{A-}(x,y)\pa_{c_r}f(x)dx.
\end{align*}
Combining the techniques used in Lemma \ref{lem-acc-psi-1-a-cr} and Lemma \ref{lem-AS--0}, one can easily check that
\begin{align*}
  \left.\int^0_{-1}\left(\pa_{c_r}G_{A-}(x,y)\right)\phi_{A,1}(x)dx\right|_{y=-1}\lesssim |\kappa c_r|^{-\frac{1}{2}},\\
  \left.\pa_y\int^0_{-1}\left(\pa_{c_r}G_{A-}(x,y)\right)\phi_{A,1}(x)dx\right|_{y=-1}\lesssim\varepsilon^{-\frac{1}{3}} |\kappa c_r|^{-\frac{3}{2}},
\end{align*}
and
\begin{align*}
  \left.\int^0_{-1}\left(\pa_{c_r}G_{A-}(x,y)\right)c_i\left(\left(\eta_r'(x)\right)^2-1\right) \phi_{A,1}''(x)dx\right|_{y=-1}\lesssim \varepsilon^{-\frac{1}{3}}|c_i||\kappa c_r|^{\frac{3}{2}}\left|\ln c_0\right|,\\
  \left.\pa_y\int^0_{-1}\left(\pa_{c_r}G_{A-}(x,y)\right)c_i\left(\left(\eta_r'(x)\right)^2-1\right) \phi_{A,1}''(x)dx\right|_{y=-1}\lesssim\varepsilon^{-\frac{2}{3}}|c_i||\kappa c_r|^{\frac{1}{2}}\left|\ln c_0\right|,
\end{align*}

By using the technique in Lemma \ref{lem-d-cr-a1B}, we have
\begin{align*}
  \left|\pa_{c_r}\phi_{A,1}(y)\right|\lesssim \frac{\left|\ln c_0\right|}{c_r}\frac{1}{\mathcal W},\quad \left|\pa_{c_r}\pa_y^2\phi_{A,1}(y)\right|\lesssim \kappa^3\left|\ln c_0\right|\frac{1}{\mathcal W},
\end{align*}
and then
\begin{align*}
  \pa_{c_r}\left(c_i\left(\left(\eta_r'(x)\right)^2-1\right) \phi_{A,1}''(y)\right)\lesssim |c_i|\kappa^2|\kappa c_r|\left|\ln c_0\right|\frac{1}{\mathcal W}.
\end{align*}

It follows of Lemma \ref{lem-AS--0} that
\begin{align*}
  \left|AirySolver_{m-}\left(\pa_{c_r}\phi_{A,1}\right)(-1)\right|\lesssim |\kappa c_r|^{-\frac{3}{2}}\left|\ln c_0\right|,\\
   \left|\pa_yAirySolver_{m-}\left(\pa_{c_r}\phi_{A,1}\right)(-1)\right|\lesssim \frac{1}{c_r}|\kappa c_r|^{-\frac{3}{2}}\left|\ln c_0\right|,
\end{align*}
and
\begin{align*}
  \left|AirySolver_{m-}\left(\pa_{c_r}\left(c_i\left(\left(\eta_r'(x)\right)^2-1\right) \phi_{A,1}''\right)\right)(-1)\right|\lesssim  |c_i|\kappa|\kappa c_r|^{\frac{1}{2}}\left|\ln c_0\right|,\\
   \left|\pa_yAirySolver_{m-}\left(\pa_{c_r}\left(c_i\left(\left(\eta_r'(x)\right)^2-1\right) \phi_{A,1}''\right)\right)(-1)\right|\lesssim |c_i|\kappa^2|\kappa c_r|^{-\frac{1}{2}}\left|\ln c_0\right|.
\end{align*}
Then \eqref{est-Phi3--1-cr-0} and \eqref{est-Phi3--1-cr-0} follows directly. The estimates \eqref{est-Phi3--1-ci-0} and \eqref{est-Phi3--1-ci-0} can be derived in the same way.

For fixed $\alpha$, by using the technique in Lemma \ref{lem-d-cr-a1B}, we have
\begin{align*}
  \left|\pa_{\nu}\phi_{A,1}(y)\right|\lesssim \frac{1}{\nu}\left|\phi_{A,1}(y)\right|+\frac{\kappa^{\frac{3}{2}}}{\nu} \left| \left(e^{\frac{\pi}{6}i} \eta(y)\right)^{\frac{3}{2}}-\left(e^{\frac{\pi}{6}i} \eta(-1)\right)^{\frac{3}{2}} \right|\left|\phi_{A,1}(y)\right|\lesssim \frac{\left|\ln c_0\right|}{\nu}\frac{1}{\mathcal W}.
\end{align*}
Then by using the similar techniques to \eqref{est-acc-psi-1-a-nu-2}, we get \eqref{est-Phi3--1-nu-0} and \eqref{est-Phi3--1-nu-1}. 

The estimates \eqref{est-Phi3--1-alpha-0} and \eqref{est-Phi3--1-alpha-1} can be deduced in the same way.

\end{proof}

Combining the above estimates, we see that the boundary behavior of $\Phi_{3,\alpha}$ at $y=-1$ is governed by $\phi_{A,1}$. At the opposite boundary $y=0$, the fast mode is exponentially small in the sense described above.

\subsection{Expansion of  $\frac{\Phi_{3,\alpha}(-1)}{\Phi_{3,\alpha}'(-1)}$}
In this subsection, we derive an expansion for $\frac{\Phi_{3,\alpha}(-1)}{\Phi_{3,\alpha}'(-1)}$.

\begin{lemma}\label{lem-res-airy}
  It holds that 
  \begin{align*}
    \frac{\Phi_{3,\alpha}(-1)}{\Phi_{3,\alpha}'(-1)}=\frac{A_1(2,-1)}{A_1(1,-1)}+Res_{airy}=&\frac{\mathcal A_1(2,\kappa \eta(-1))}{\kappa \eta_r'(-1)\mathcal A_1(1,\kappa \eta(-1))}+Res_{airy},
  \end{align*} 
  where the remainder $Res_{airy}$ satisfies
  \begin{align}\label{est-res-airy}
    \left|Res_{airy}\right|\lesssim \frac{ 1}{\kappa^2\left\langle \kappa c_r \right\rangle}+|c_i|c_r,
  \end{align}
  \begin{align}\label{est-res-airy-cr}
    \left|\pa_{c_r}Res_{airy}\right|\lesssim  \left(\frac{1}{\kappa \left\langle \kappa c_r \right\rangle}+|c_i|\kappa c_r\right)|\ln c_0|,
  \end{align}
    \begin{align}\label{est-res-airy-ci}
    \left|\pa_{c_i}Res_{airy}\right|\lesssim \left(\frac{1}{\kappa \left\langle \kappa c_r \right\rangle}+c_r\right)|\ln c_0|.
  \end{align}
  For fixed $\alpha$, it holds that
     \begin{align}\label{est-res-airy-nu}
    \left|\pa_{\nu}Res_{airy}\right|\lesssim \left(\frac{ 1}{\nu\kappa^2\left\langle \kappa c_r \right\rangle}+\frac{1}{\nu}|c_i|c_r\right)|\ln c_0|.
  \end{align} 
    For fixed $\nu$ or $\varepsilon$, it holds that
     \begin{align}\label{est-res-airy-alpha}
    \left|\pa_{\alpha^2}Res_{airy}\right|\lesssim \left(\frac{ 1}{\alpha^2\kappa^2\left\langle \kappa c_r \right\rangle}+\frac{1}{\alpha^2}|c_i|c_r\right)|\ln c_0| .
  \end{align} 
\end{lemma}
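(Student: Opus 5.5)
The plan is to split the quotient into its leading Airy part and a correction controlled by Lemma \ref{lem-Phi3--1} and Lemma \ref{lem-Phi3--1-cr}. By the normalization \eqref{eq-unifi-prim-Airy}, $\phi_{A,1}(-1)=1$ and $\phi_{A,1}'(-1)=A_1(1,-1)/A_1(2,-1)$, so
\begin{align*}
  \frac{\Phi_{3,\alpha}(-1)}{\Phi_{3,\alpha}'(-1)}=\frac{1+\delta_0}{\phi_{A,1}'(-1)+\delta_1},\qquad \delta_0=\Phi_{3,\alpha}(-1)-1,\quad \delta_1=\Phi_{3,\alpha}'(-1)-\phi_{A,1}'(-1).
\end{align*}
Expanding gives
\begin{align*}
  \frac{A_1(2,-1)}{A_1(1,-1)}+\frac{\delta_0}{\phi_{A,1}'(-1)}-\frac{\delta_1}{\phi_{A,1}'(-1)^2}+O\left(\frac{\delta_0^2+\delta_1^2/\phi_{A,1}'(-1)^2}{\phi_{A,1}'(-1)}\right).
\end{align*}
The second form of the leading term comes from inserting the expansions of $A_1(1,-1)$ and $A_1(2,-1)$ in Lemma \ref{lem:pri-Airy-expan}. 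The boundary terms at $0$ are exponentially small, of size $e^{-c\kappa}$ relative to the rest, because $\eta(0)\sim1$. The second-order integration-by-parts pieces, namely those carrying $\eta_r''(-1)/\kappa^2$ and the integral remainders bounded by Lemma \ref{lem-Airy-int-est}, change the ratio only by $O(\kappa^{-2}\langle\kappa c_r\rangle^{-1})$. That error is absorbed into $Res_{airy}$.

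The key size input is the Airy asymptotics of Lemma \ref{lem:Airy-class-est} at $\kappa\eta(-1)\approx-\kappa c_r/2$. They give $|\phi_{A,1}'(-1)|\sim\kappa\langle\kappa c_r\rangle^{1/2}$, so $1/\phi_{A,1}'(-1)\sim \kappa^{-1}\langle\kappa c_r\rangle^{-1/2}$. Substituting the bounds of Lemma \ref{lem-Phi3--1}, the $\delta_0$ term is bounded by $\varepsilon^{1/3}\kappa^{-1}\langle\kappa c_r\rangle^{-1}+|c_i|\kappa^{-1}\langle\kappa c_r\rangle$. The $\delta_1$ term is bounded by $\kappa^{-2}\langle\kappa c_r\rangle^{-5/2}+|c_i|\kappa^{-1}$. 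Since $\varepsilon^{1/3}\sim\kappa^{-1}$ and $\kappa^{-1}\langle\kappa c_r\rangle\lesssim\kappa^{-1}+c_r$, both are bounded by $\kappa^{-2}\langle\kappa c_r\rangle^{-1}+|c_i|(c_r+\kappa^{-1})$. This is \eqref{est-res-airy}, where the $|c_i|\kappa^{-1}$ piece merges into $|c_i|c_r$ because $c_r\gtrsim\varepsilon^{1/3}$ in $\mathbb H$. The quadratic terms are smaller.

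For the parameter derivatives I would differentiate this representation. The derivative falls either on $\delta_0,\delta_1$, which are controlled by Lemma \ref{lem-Phi3--1-cr}, or on the factors $1/\phi_{A,1}'(-1)$, which are controlled by the paired growing--decaying cancellation of Lemma \ref{lem-d-cr-a1B}. The latter gives $|\pa_{c_r}\phi_{A,1}'(-1)|\lesssim |\phi_{A,1}'(-1)|\,|\ln c_0|/c_r$. With these inputs:
\begin{itemize}
\item the $\pa_{c_r}\delta_0$ term gives $\kappa^{-1}\langle\kappa c_r\rangle^{-1}+|c_i|\kappa c_r|\ln c_0|$, up to harmless powers of $\langle\kappa c_r\rangle$;
\item the $\pa_{c_r}\delta_1$ term gives $\varepsilon^{-1/3}\kappa^{-2}\langle\kappa c_r\rangle^{-5/2}+\dots$;
\item differentiating the factors $1/\phi_{A,1}'(-1)$ costs only $|\ln c_0|/c_r\lesssim\kappa|\ln c_0|$.
\end{itemize}
Together these give \eqref{est-res-airy-cr}. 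The $c_i$, $\nu$ and $\alpha^2$ derivatives follow the same way from the corresponding parts of Lemma \ref{lem-Phi3--1-cr}. For $\nu$ and $\alpha^2$ the loss is only $\nu^{-1}|\ln c_0|$, respectively $\alpha^{-2}|\ln c_0|$, since $\eta$ does not depend on $\nu$.

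The main obstacle is the $c_r$-derivative of the $\delta_1/\phi_{A,1}'(-1)^2$ term. In Lemma \ref{lem-Phi3--1-cr} the derivative of $\delta_1$ carries the large factor $\varepsilon^{-2/3}|c_i|\langle\kappa c_r\rangle^{1/2}$. Dividing by $\phi_{A,1}'(-1)^2\sim\kappa^2\langle\kappa c_r\rangle$ must leave exactly $|c_i|\kappa c_r|\ln c_0|$, and this requires tracking the powers of $\langle\kappa c_r\rangle$ carefully in both regimes, $\kappa c_r\sim1$ and $\kappa c_r\gg1$. If the straightforward bound falls short, I would instead use the Airy-asymptotic form $\phi_{A,1}'(-1)\approx -\kappa\,(e^{\pi i/6}\kappa\eta(-1))^{1/2}e^{\pi i/6}$ to differentiate the leading ratio explicitly, keeping only $\delta$ as a perturbation.
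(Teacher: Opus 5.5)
Your proposal is correct and follows essentially the same route as the paper. The paper likewise splits $Res_{airy}=Res_{airy,1}+Res_{airy,2}$:
\begin{itemize}
\item it bounds $Res_{airy,1}$ by normalizing the iterates $\Phi_{3,\alpha}^{[j]}$ with $\phi_{A,1}(-1)=1$ and $|\phi_{A,1}'(-1)|\sim\kappa\langle\kappa c_r\rangle^{1/2}$, then invoking Lemmas \ref{lem-Phi3--1} and \ref{lem-Phi3--1-cr};
\item it bounds $Res_{airy,2}=O\bigl(\mathcal A_1(3,\kappa\eta(-1))/(\kappa^2\mathcal A_1(1,\kappa\eta(-1)))+(\mathcal A_1(2,\kappa\eta(-1))/(\kappa\mathcal A_1(1,\kappa\eta(-1))))^2\bigr)$ via Lemma \ref{lem:pri-Airy-expan}.
\end{itemize}
The obstacle you flag is not real. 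In $\mathbb H$ one has $c_r\ge\varepsilon^{1/3}/10^3$, so $\kappa c_r\gtrsim1$, and the term $|c_i|\langle\kappa c_r\rangle^{-1/2}|\ln c_0|$ coming from $\pa_{c_r}\delta_1/\phi_{A,1}'(-1)^2$ is trivially bounded by $|c_i|\kappa c_r|\ln c_0|$.
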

\begin{proof}
  Recall that $\phi_{A,1}(-1)=1$ and $\phi_{A,1}'(-1)=O \left(\kappa \left\langle \kappa c_r \right\rangle^{\frac{1}{2}}\right)$. We write
 \begin{align*}
    \frac{\Phi_{3,\alpha}(-1)}{\Phi_{3,\alpha}'(-1)}=&\frac{\phi_{A,1}(-1)+\sum_{j=1}^{+\infty}\Phi_{3,\alpha}^{[j]}(-1)}{\phi_{A,1}'(-1)+\sum_{j=1}^{+\infty}\Phi_{3,\alpha}^{[j]\prime}(-1)}=\frac{\phi_{A,1}(-1)}{\phi_{A,1}'(-1)}+Res_{airy,1},
  \end{align*}
where
\begin{align*}
  Res_{airy,1}=O\left( \frac{\sum_{j=1}^{+\infty}\Phi_{3,\alpha}^{[j]}(-1)}{\kappa \left\langle \kappa c_r \right\rangle^{\frac{1}{2}}} \right)  +O\left(\frac{\sum_{j=1}^{+\infty}\Phi_{3,\alpha}^{[j]\prime}(-1)}{\kappa^2 \left\langle \kappa c_r \right\rangle} \right).
\end{align*}

By using Lemma \ref{lem-Phi3--1}, we get
\begin{align*}
  \left|Res_{airy,1}\right|\lesssim \frac{ 1}{\kappa^2\left\langle \kappa c_r \right\rangle}+|c_i|c_r.
\end{align*} 

It is clear that $\pa_{c_r}\phi_{A,1}(-1)=0$ and $\pa_{c_r}\phi_{A,1}'(-1)=O\left(\frac{\kappa^2}{ \left\langle \kappa c_r \right\rangle^{\frac{1}{2}}}\right)$. Then by using Lemma \ref{lem-Phi3--1-cr}, one can easily check that the derivative estimates for $Res_{airy,1}$ satisfies \eqref{est-res-airy-cr}-\eqref{est-res-airy-alpha}.

Next, we give the analysis for the main term $\frac{\phi_{A,1}(-1)}{\phi_{A,1}'(-1)}$.

By Lemma \ref{lem:pri-Airy-expan}, we have
\begin{align*}
  \frac{\phi_{A,1}(-1)}{\phi_{A,1}'(-1)}=\frac{A_1(2,-1)}{A_1(1,-1)}=&\frac{\mathcal A_1(2,\kappa \eta(-1))}{\kappa \eta_r'(-1)\mathcal A_1(1,\kappa \eta(-1))}+Res_{airy,2},
\end{align*}
where
\begin{align*}
  Res_{airy,2}=O \left(\frac{\mathcal A_1(3,\kappa \eta(-1))}{\kappa^2\mathcal A_1(1,\kappa \eta(-1))}+ \left(\frac{\mathcal A_1(2,\kappa \eta(-1))}{\kappa \mathcal A_1(1,\kappa \eta(-1))}\right)^2 \right).
\end{align*}
Then by using the technique in Lemma \ref{lem-d-cr-a1B}, one can easily check that
\begin{align*}
  \left|Res_{airy,2}\right|\lesssim \frac{1}{\kappa^2\left\langle \kappa c_r \right\rangle},\qquad \left|\pa_{c_r}Res_{airy,2}\right|+\left|\pa_{c_i}Res_{airy,2}\right|\lesssim \frac{1}{\kappa\left\langle \kappa c_r \right\rangle^2},
\end{align*}
for fixed $\alpha$
\begin{align*}
  \left|\pa_{\nu}Res_{airy,2}\right|\lesssim \frac{1}{\nu\kappa^2\left\langle \kappa c_r \right\rangle},
\end{align*}
and for fixed $\nu$
\begin{align*}
  \left|\pa_{\alpha^2}Res_{airy,2}\right|\lesssim \frac{1}{\alpha^2\kappa^2\left\langle \kappa c_r \right\rangle}.
\end{align*}

Let 
\begin{align*}
  Res_{airy}=Res_{airy,1}+Res_{airy,2},
\end{align*}
we complete the proof of this lemma.

\end{proof}

 From  \eqref{eq-Airy-prim-1-prop}, it holds that 
\begin{align*}
  \frac{\mathcal A_1(2,\kappa \eta(-1))}{\kappa \eta_r'(-1)\mathcal A_1(1,\kappa \eta(-1))}=\frac{e^{-\frac{\pi}{6}i}\mathrm{Ai}(2,e^{\frac{\pi}{6}i}\kappa \eta(-1))}{\kappa \eta_r'(-1)\mathrm{Ai}(1,e^{\frac{\pi}{6}i}\kappa \eta(-1))}.
\end{align*}

When $\left|\kappa\eta(-1)\right|$ is sufficiently large, the Airy asymptotic expansion yield the following expansion of the main term.
\begin{lemma}\label{lem-Phi3-main--1}
  For $\left|\kappa\eta(-1)\right|\ge \max(M,100)$, where $M$ is the constant given in Remark \ref{rmk:Airy-class-est}, it holds that
  \begin{align}\label{est-Phi3-main--1}
    \frac{e^{-\frac{\pi}{6}i}\mathrm{Ai}(2,e^{\frac{\pi}{6}i}\kappa \eta(-1))}{\kappa \eta_r'(-1)\mathrm{Ai}(1,e^{\frac{\pi}{6}i}\kappa \eta(-1))}=-\frac{e^{\frac{\pi}{4}i}}{\kappa }\frac{1}{\left(\frac{\kappa c}{2}\right)^{\frac{1}{2}}} \left(1+Res_{airy,err}\right),
  \end{align}
  where $|Res_{airy,err}|\le \frac{1}{10}$.
\end{lemma}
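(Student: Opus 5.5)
We set $z=e^{\frac{\pi}{6}i}\kappa\eta(-1)$ and expand both Airy primitives with the large-argument asymptotics of Lemma \ref{lem:Airy-class-est}. First we locate $z$. By Lemma \ref{lem-Langer}, $\eta(-1)=-\frac{c}{2}+O(c^2)$, where $\eta_i=-c_i/u_p'(y_c)$ and $c\in\mathbb H$. Hence
\begin{align*}
  \kappa\eta(-1)=-\frac{\kappa c}{2}\bigl(1+O(c)\bigr).
\end{align*}
We have $\kappa c_r\gtrsim 10^{-3}$, and the hypothesis $|\kappa\eta(-1)|\ge\max(M,100)$ forces $|z|\ge M$. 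We also need $\arg z$ in $S_z$. Since $|c_i|\lesssim \varepsilon^{1/3}$ and $|\kappa\eta(-1)|\ge 100$, the phase $\arg(-\eta(-1))$ is close to $0$, hence $\arg z$ is close to $\frac{\pi}{6}-\pi=-\frac{5\pi}{6}$. This is the situation noted in Remark \ref{rmk:Airy-class-est}, so $z\in S_z$ with the uniform $\delta_*$.

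Next we apply Lemma \ref{lem:Airy-class-est} with $k=1,2$:
\begin{align*}
  \mathrm{Ai}(1,z)=-\frac{z^{-3/4}e^{-\frac23 z^{3/2}}}{2\sqrt\pi}\bigl(1+O(|z|^{-3/2})\bigr),\qquad
  \mathrm{Ai}(2,z)=\frac{z^{-5/4}e^{-\frac23 z^{3/2}}}{2\sqrt\pi}\bigl(1+O(|z|^{-3/2})\bigr).
\end{align*}
Taking the quotient, the exponentials cancel and
\begin{align*}
  \frac{\mathrm{Ai}(2,z)}{\mathrm{Ai}(1,z)}=-z^{-1/2}\bigl(1+O(|z|^{-3/2})\bigr).
\end{align*}
Then we compute the principal branch of $z^{-1/2}$ consistently with the branch used in the Airy expansion. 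We write $\kappa\eta(-1)=\frac{\kappa c}{2}e^{-i\pi}(1+O(c))$, so that
\begin{align*}
  z=\frac{\kappa c}{2}e^{-\frac{5\pi}{6}i}\bigl(1+O(c)\bigr),\qquad
  z^{-1/2}=\Bigl(\frac{\kappa c}{2}\Bigr)^{-1/2}e^{\frac{5\pi}{12}i}\bigl(1+O(c)\bigr).
\end{align*}
Multiplying by $e^{-\frac{\pi}{6}i}$ gives the phase $e^{\frac{\pi}{4}i}$. Finally, using $\eta_r'(-1)=1+O(c_r)$, we obtain
\begin{align*}
  -\frac{e^{\frac{\pi}{4}i}}{\kappa}\Bigl(\frac{\kappa c}{2}\Bigr)^{-1/2}\bigl(1+Res_{airy,err}\bigr),\qquad
  Res_{airy,err}=O(c)+O\bigl(|\kappa\eta(-1)|^{-3/2}\bigr).
\end{align*}

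It remains to bound $|Res_{airy,err}|\le\frac1{10}$. The term $O(c)$ is tiny since $|c|\lesssim\varepsilon^{1/5}$. For the term $O(|z|^{-3/2})$, we use the explicit second coefficients in Lemma \ref{lem:Airy-class-est}, namely $\frac{21}{144}-1$ and $-\frac{15}{144}-2$, together with the uniform $O(|z|^{-3})$ tails. With $|z|\ge100$ we have $|z|^{-3/2}\le10^{-3}$, so the contribution is at most about $3\cdot10^{-3}$, well below $\frac1{10}$.

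The main obstacle is the branch and phase bookkeeping. We must confirm that $\arg z$ stays in the sector of validity, away from the Stokes lines $\pm\frac{\pi}{3}$, uniformly for $c_i$ of either sign in $\mathbb H$. We must also confirm that the branch of $z^{-1/2}$ is the one for which the expansion holds, since a wrong branch would flip the sign. We would check this by continuity in $c_i$ from the real case $c_i=0$, where $\arg z=-\frac{5\pi}{6}$ exactly.
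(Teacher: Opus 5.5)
Your route is the same as the paper's. You take the quotient of the two asymptotic forms in Lemma \ref{lem:Airy-class-est}, let $e^{-\frac{2}{3}z^{3/2}}$ cancel, and then use $\eta_r'(-1)=1+O(c_r)$ and $\eta(-1)=-\frac{c}{2}+O(c^2)$ from Lemma \ref{lem-Langer}. The phase $e^{-\frac{\pi}{6}i}e^{\frac{5\pi}{12}i}=e^{\frac{\pi}{4}i}$ and your branch choices are correct. The bound $|Res_{airy,err}|\le\frac{1}{10}$ really comes from taking $M$ large, because the constants in the $O(\cdot)$ of Lemma \ref{lem:Airy-class-est} are not explicit; your count using $\frac{21}{144}-1$ and $-\frac{15}{144}-2$ is only heuristic.

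The step that fails is your check that $z=e^{\frac{\pi}{6}i}\kappa\eta(-1)$ lies in $S_z$. The facts $|c_i|\lesssim\varepsilon^{1/3}$ and $|\kappa\eta(-1)|\ge 100$ do not make $\arg(-\eta(-1))$ small. The implicit constant in the first is $2\max(M,100)$, which is as large as the threshold in the second. Take the corner of $\mathbb H$ with $c_i=2\max(M,100)\varepsilon^{1/3}$ and $c_r=10^{-3}\varepsilon^{1/3}$:
\begin{itemize}
\item $|\kappa\eta(-1)|\approx 2^{-2/3}\varepsilon^{-1/3}|c|\approx 2^{1/3}\max(M,100)$, so the hypothesis of the lemma holds;
\item $\arg c\approx\frac{\pi}{2}$, hence $\arg z=\arg c-\frac{5\pi}{6}+o(1)\approx-\frac{\pi}{3}$, which is exactly the ray removed from $S_z$.
\end{itemize}
Shrinking $\delta$ does not help, because $M=M(\delta)$ grows and the $c_i$-range of $\mathbb H$ grows with it. This regime is genuinely used: Case 2 in the proof of Lemma \ref{lem-sol-curve} invokes the present lemma precisely when $c_i\ge\max(M,100)\varepsilon^{1/3}\ge c_r$. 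The paper's proof is silent here and relies on Remark \ref{rmk:Airy-class-est}, whose claim that $\arg z$ stays near $-\frac{5\pi}{6}$ also fails in this corner. Continuity in $c_i$ from $c_i=0$ fixes the branch, but it does not fix the sector.

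One repair is to note that excluding $\pm\frac{\pi}{3}$ from $S_z$ is an artifact of deriving the expansions from the Scorer-function representations. The classical expansion of $\mathrm{Ai}(1,z)=-\int_z^\infty\mathrm{Ai}(t)\,dt$ holds uniformly on $|\arg z|\le\pi-\delta$. Near $\arg z=-\frac{\pi}{3}$ this is elementary: integrate by parts along the steepest-descent path $\frac{2}{3}t^{3/2}=\frac{2}{3}z^{3/2}+s$, $s\ge0$, on which $|t|\gtrsim|z|$ there, and you get relative error $O(|z|^{-3/2})$. The same argument applied to $\mathrm{Ai}(2,z)=-\int_z^\infty\mathrm{Ai}(1,t)\,dt$ handles $k=2$. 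Since $\arg z\in[-\frac{5\pi}{6}-o(1),-\frac{\pi}{3})$ throughout $\mathbb H$, your computation then goes through unchanged.

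The other repair is to add the hypothesis $c_i\le Cc_r$. This keeps $\arg z\le-\frac{5\pi}{6}+\arctan C+o(1)$, a fixed angle below $-\frac{\pi}{3}$. That suffices for Case 1 of Lemma \ref{lem-sol-curve}, where $c_i\le 2c_r$, but Case 2 would then need a separate argument.
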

\begin{proof}
    From the expansion in Lemma \ref{lem:Airy-class-est}, we have
  \begin{align*}
    \frac{e^{-\frac{\pi}{6}i}\mathrm{Ai}(2,e^{\frac{\pi}{6}i}\kappa \eta(-1))}{\kappa \eta_r'(-1)\mathrm{Ai}(1,e^{\frac{\pi}{6}i}\kappa \eta(-1))}=&-\frac{e^{-\frac{\pi}{6}i}}{\kappa \eta_r'(-1)}\frac{1}{\left(e^{\frac{\pi}{6}i}\kappa \eta(-1)\right)^{\frac{1}{2}}} \left(1+O \left(\frac{1}{\left\langle \kappa\eta(-1)\right\rangle^{\frac{3}{2}}}\right)  \right)\\
    =&-\frac{e^{\frac{\pi}{4}i}}{\kappa \eta_r'(-1)}\frac{1}{\left(-\kappa \eta(-1)\right)^{\frac{1}{2}}} \left(1+O \left(\frac{1}{\left\langle \kappa\eta(-1)\right\rangle^{\frac{3}{2}}}\right)  \right),
  \end{align*}
Recall Lemma \ref{lem-Langer}. It holds that
\begin{align*}
   \eta_r'(-1)=1+O(c_r),\quad \eta(-1)=-\frac{c}{2}+O(c_r^2).
\end{align*}
And \eqref{est-Phi3-main--1} follows directly.

\end{proof}

Next, we derive the parameter derivatives of the main Airy ratio.
\begin{lemma}\label{lem-Phi3-main--1-cr}
  It holds that
  \begin{align}\label{est-Phi3-main--1-cr-0}
    \pa_{c_r} \frac{\mathcal A_1(2,\kappa \eta(-1))}{\kappa \eta_r'(-1)\mathcal A_1(1,\kappa \eta(-1))}=-\frac{1}{2} \left(1-\frac{\mathrm{Ai}(e^{\frac{\pi}{6}i}\kappa \eta(-1))\mathrm{Ai}(2,e^{\frac{\pi}{6}i}\kappa \eta(-1))}{\left(\mathrm{Ai}(1,e^{\frac{\pi}{6}i}\kappa \eta(-1))\right)^2}\right)+O \left(c_r\right),
  \end{align}
  \begin{align}\label{est-Phi3-main--1-ci-0}
    \pa_{c_i} \frac{\mathcal A_1(2,\kappa \eta(-1))}{\kappa \eta_r'(-1)\mathcal A_1(1,\kappa \eta(-1))}=-i\frac{1}{2} \left(1-\frac{\mathrm{Ai}(e^{\frac{\pi}{6}i}\kappa \eta(-1))\mathrm{Ai}(2,e^{\frac{\pi}{6}i}\kappa \eta(-1))}{\left(\mathrm{Ai}(1,e^{\frac{\pi}{6}i}\kappa \eta(-1))\right)^2}\right)+O \left(c_r\right).
  \end{align}

  For $\kappa c_r\gg 1$ and $|c_i|\lesssim c_r$, it holds that
    \begin{align}\label{est-Phi3-main--1-cr}
     \pa_{c_r}\frac{e^{-\frac{\pi}{6}i}\mathrm{Ai}(2,e^{\frac{\pi}{6}i}\kappa \eta(-1))}{\kappa \eta_r'(-1)\mathrm{Ai}(1,e^{\frac{\pi}{6}i}\kappa \eta(-1))}=\frac{1}{4}\frac{e^{\frac{\pi}{4}i}}{ \left(\frac{\kappa c_r}{2}\right)^{\frac{3}{2}} } \left(1+O\left(c_r+\frac{|c_i|}{c_r}+\frac{1}{\left\langle \kappa c_r\right\rangle^{\frac{3}{2}}}\right)\right),
  \end{align}
     \begin{align}\label{est-Phi3-main--1-ci}
     \pa_{c_i}\frac{e^{-\frac{\pi}{6}i}\mathrm{Ai}(2,e^{\frac{\pi}{6}i}\kappa \eta(-1))}{\kappa \eta_r'(-1)\mathrm{Ai}(1,e^{\frac{\pi}{6}i}\kappa \eta(-1))}=i\frac{1}{4}\frac{e^{\frac{\pi}{4}i}}{ \left(\frac{\kappa c_r}{2}\right)^{\frac{3}{2}} } \left(1+O\left(c_r+\frac{|c_i|}{c_r}+\frac{1}{\left\langle \kappa c_r\right\rangle^{\frac{3}{2}}}\right)\right).
  \end{align} 
  For fixed $\alpha$, it holds that
      \begin{align}\label{est-Phi3-main--1-nu}
     \pa_{\nu}\frac{e^{-\frac{\pi}{6}i}\mathrm{Ai}(2,e^{\frac{\pi}{6}i}\kappa \eta(-1))}{\kappa \eta_r'(-1)\mathrm{Ai}(1,e^{\frac{\pi}{6}i}\kappa \eta(-1))}=-\frac{c_r}{4\nu}\frac{e^{\frac{\pi}{4}i}}{ \left(\frac{\kappa c_r}{2}\right)^{\frac{3}{2}} }\left(1+O\left(c_r+\frac{|c_i|}{c_r}+\frac{1}{\left\langle \kappa c_r\right\rangle^{\frac{3}{2}}}\right)\right).
  \end{align}
  For fixed $\nu$, it holds that
        \begin{align}\label{est-Phi3-main--1-alpha}
     \pa_{\alpha^2}\frac{e^{-\frac{\pi}{6}i}\mathrm{Ai}(2,e^{\frac{\pi}{6}i}\kappa \eta(-1))}{\kappa \eta_r'(-1)\mathrm{Ai}(1,e^{\frac{\pi}{6}i}\kappa \eta(-1))}=\frac{c_r}{8\alpha^2}\frac{e^{\frac{\pi}{4}i}}{ \left(\frac{\kappa c_r}{2}\right)^{\frac{3}{2}} }\left(1+O\left(c_r+\frac{|c_i|}{c_r}+\frac{1}{\left\langle \kappa c_r\right\rangle^{\frac{3}{2}}}\right)\right).
  \end{align}
\end{lemma}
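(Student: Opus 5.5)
Write $z=e^{\frac{\pi}{6}i}\kappa\eta(-1)$ and $R(z)=\mathrm{Ai}(2,z)/\mathrm{Ai}(1,z)$. The quantity to differentiate is
\[
Q=\frac{e^{-\frac{\pi}{6}i}R(z)}{\kappa\eta_r'(-1)}.
\]
Since $\partial_z\mathrm{Ai}(k,z)=\mathrm{Ai}(k-1,z)$, we have
\[
R'(z)=1-\frac{\mathrm{Ai}(z)\mathrm{Ai}(2,z)}{(\mathrm{Ai}(1,z))^2}.
\]
The plan is to compute each parameter derivative by the chain rule, splitting it into the part where the derivative hits $z$ and the part where it hits the prefactor $(\kappa\eta_r'(-1))^{-1}$. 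In both parts I will use the Langer transform facts from Lemma \ref{lem-Langer}: $\eta(-1)=-\frac{c}{2}+O(c^2)$, $\eta_r'(-1)=1+O(c_r)$, $\partial_{c_r}\eta_r(-1)=-\frac{1}{u_p'(y_c)}+O(c_r)=-\frac12+O(c_r)$, and $\partial_{c_i}\eta_i=-\frac{1}{u_p'(y_c)}$. I will also use $|\partial_{c_r}\kappa|\lesssim\kappa$, with $\partial_{c_r}\kappa=O(\kappa c_r)$ because $u_p'(y_c)=2\sqrt{1-c_r}$.

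For \eqref{est-Phi3-main--1-cr-0}, the term where the derivative hits $z$ is
\[
\frac{e^{-\frac{\pi}{6}i}R'(z)\,e^{\frac{\pi}{6}i}\,\partial_{c_r}(\kappa\eta(-1))}{\kappa\eta_r'(-1)}
= R'(z)\big(\partial_{c_r}\eta(-1)+\eta(-1)\,\partial_{c_r}\kappa/\kappa\big)(1+O(c_r)).
\]
This gives $-\frac12R'(z)+O(c_r)\,|R'(z)|$. Here $R'$ is bounded uniformly for $z$ on the relevant ray: for large $|z|$ this follows from Lemma \ref{lem:Airy-class-est}, and on compact sets because $\mathrm{Ai}(1,\cdot)$ has no zeros on this ray. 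For the term where the derivative hits the prefactor, I bound $|R(z)|/\kappa\lesssim \kappa^{-1}\langle\kappa c_r\rangle^{-1/2}$ and multiply by $|\partial_{c_r}(\kappa\eta_r'(-1))|/(\kappa\eta_r')\lesssim c_r+1$. This contributes $O(\kappa^{-1})$, which is not obviously $O(c_r)$. I expect this to be $O(c_r)$ because $c_r\gtrsim\varepsilon^{1/3}\sim\kappa^{-1}$ in $\mathbb H$. The $c_i$ case is identical: $\partial_{c_i}(\kappa\eta(-1))=-i\kappa/u_p'(y_c)$ produces the factor $-\frac{i}{2}$, and $\kappa,\eta_r$ do not depend on $c_i$.

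For \eqref{est-Phi3-main--1-cr} and \eqref{est-Phi3-main--1-ci}, with $\kappa c_r\gg1$ and $|c_i|\lesssim c_r$, I insert the expansions of Lemma \ref{lem:Airy-class-est} into $R'(z)$. The ratio $\mathrm{Ai}\,\mathrm{Ai}(2)/\mathrm{Ai}(1)^2$ equals
\[
1-\left(\tfrac{15}{144}+2\right)z^{-3/2}-\tfrac{15}{144}z^{-3/2}-2\left(\tfrac{21}{144}-1\right)z^{-3/2}+O(z^{-3}),
\]
and the constants collapse so that $R'(z)=\frac12 z^{-3/2}+O(z^{-3})$. Since $\arg z$ is near $-\frac56\pi$ here, $z^{-3/2}=e^{\frac{5\pi}{4}i}(\kappa|\eta(-1)|)^{-3/2}$ on the branch consistent with Lemma \ref{lem-Phi3-main--1}. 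Multiplying by $-\frac12$ and using $\kappa|\eta(-1)|=\frac{\kappa c_r}{2}(1+O(c_r+|c_i|/c_r))$ yields $\frac14 e^{\frac{\pi}{4}i}(\kappa c_r/2)^{-3/2}$ with the stated relative error. The branch bookkeeping is where I expect the main risk of sign error, so I will check the result against a direct $c_r$-differentiation of \eqref{est-Phi3-main--1}, $-e^{\frac{\pi}{4}i}\kappa^{-1}(\kappa c/2)^{-1/2}$, which gives the same leading term.

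In this regime the absolute error $O(c_r)$ from the first part must be replaced by a relative one. For this I redo the chain rule keeping the factor $R'(z)$, so every error is multiplied by $|R'|\sim(\kappa c_r)^{-3/2}$. The prefactor term is $O(|R|\,c_r/\kappa)$, which is $O(c_r)$ relative to $(\kappa c_r)^{-3/2}$ since $|R|\sim(\kappa c_r)^{-1/2}$.

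For \eqref{est-Phi3-main--1-nu} and \eqref{est-Phi3-main--1-alpha}, $\eta$ is independent of $\nu$, and with $\alpha$ fixed $\partial_\nu\kappa=-\kappa/(3\nu)$. Then $\partial_\nu z=-z/(3\nu)$, and the $z$-term is
\[
-\frac{1}{3\nu}\,zR'(z)\,\frac{e^{-\frac{\pi}{6}i}}{\kappa\eta_r'(-1)}\approx-\frac{1}{6\nu}\,\frac{e^{-\frac{\pi}{6}i}z^{-1/2}}{\kappa}.
\]
The prefactor term is $+\frac{1}{3\nu}Q$, with $Q\approx-\frac{e^{-\frac{\pi}{6}i}z^{-1/2}}{\kappa}$. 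Together these give $-\frac{1}{2\nu}Q$, which should be rewritten as $-\frac{c_r}{4\nu}e^{\frac{\pi}{4}i}(\kappa c_r/2)^{-3/2}$ using $\kappa^{-1}(\kappa c_r/2)^{-1/2}=\frac{c_r}{2}(\kappa c_r/2)^{-3/2}$. For fixed $\nu$, $\varepsilon=\nu/|\alpha|$ gives $\partial_{\alpha^2}\kappa=\kappa/(6\alpha^2)$, which is $-\frac{\nu}{2\alpha^2}$ times the $\nu$-derivative. This produces the coefficient $\frac{c_r}{8\alpha^2}$, with the same relative errors.
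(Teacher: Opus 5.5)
Your proof follows the paper's argument. You differentiate $Q=e^{-\frac{\pi}{6}i}R(z)/(\kappa\eta_r'(-1))$ by the chain rule, splitting it into the part hitting $z=e^{\frac{\pi}{6}i}\kappa\eta(-1)$, which produces $R'(z)=1-\mathrm{Ai}\,\mathrm{Ai}(2,\cdot)/\mathrm{Ai}(1,\cdot)^2$, and the part hitting the prefactor. You then use the Airy expansions to get $R'(z)=\frac12 z^{-3/2}+O(z^{-3})$, and $\partial_\nu\kappa=-\kappa/(3\nu)$ for the $\nu$- and $\alpha^2$-derivatives. Your remark that the prefactor term, of size $|R|/\kappa\lesssim\kappa^{-1}$, is $O(c_r)$ because $c_r\gtrsim\varepsilon^{1/3}$ on $\mathbb H$ spells out a step the paper leaves implicit.

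Two slips need fixing.

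\textbf{The size of $\partial_{c_r}\kappa$.} Since $\partial_{c_r}\kappa/\kappa=-\frac{1}{6(1-c_r)}$, you have $\partial_{c_r}\kappa\sim\kappa$, not $O(\kappa c_r)$; likewise $\partial_{c_r}\eta_r'(-1)=O(1)$. So for $\kappa c_r\gg1$ the prefactor term is $O(|R|/\kappa)$, not $O(|R|c_r/\kappa)$. This is harmless: $|R|/\kappa\sim\kappa^{-1}(\kappa c_r)^{-1/2}=c_r(\kappa c_r)^{-3/2}$ is still a relative $O(c_r)$ error. In the $z$-term, $\eta(-1)\partial_{c_r}\kappa/\kappa$ is $O(c_r)$ either way.

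\textbf{A sign in the $\nu$-derivative.} Because $Q\approx-e^{-\frac{\pi}{6}i}z^{-1/2}/\kappa$, your $z$-term $-\frac{1}{6\nu}e^{-\frac{\pi}{6}i}z^{-1/2}/\kappa$ equals $+\frac{1}{6\nu}Q$. Adding the prefactor term $+\frac{1}{3\nu}Q$ gives
$$+\frac{1}{2\nu}Q=-\frac{1}{2\nu}\frac{\eta(-1)}{\eta_r'(-1)}z^{-3/2},$$
not $-\frac{1}{2\nu}Q$. It is $+\frac{1}{2\nu}Q$ that equals $-\frac{c_r}{4\nu}e^{\frac{\pi}{4}i}(\kappa c_r/2)^{-3/2}$. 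Your final formula, and hence the coefficient $\frac{c_r}{8\alpha^2}$, is correct. The intermediate line should still be fixed, because this sign decides the direction in which $c_i$ crosses zero at the upper neutral point.
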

\begin{proof}

Recall Lemma \ref{lem-Langer} that $\pa_{c_r}\eta(-1)=-\frac{1}{2}+O(c_r)$. We have
  \begin{align*}
    &\pa_{c_r}\frac{e^{-\frac{\pi}{6}i}\mathrm{Ai}(2,e^{\frac{\pi}{6}i}\kappa \eta(-1))}{\kappa \eta_r'(-1)\mathrm{Ai}(1,e^{\frac{\pi}{6}i}\kappa \eta(-1))}\\
    =&\frac{\pa_{c_r}\left(\kappa \eta(-1)\right)}{\kappa \eta_r'(-1)}\frac{\left(\mathrm{Ai}(1,e^{\frac{\pi}{6}i}\kappa \eta(-1))\right)^2-\mathrm{Ai}(e^{\frac{\pi}{6}i}\kappa \eta(-1))\mathrm{Ai}(2,e^{\frac{\pi}{6}i}\kappa \eta(-1))}{\left(\mathrm{Ai}(1,e^{\frac{\pi}{6}i}\kappa \eta(-1))\right)^2}+O \left( \frac{\mathrm{Ai}(2,e^{\frac{\pi}{6}i}\kappa \eta(-1))}{\kappa \mathrm{Ai}(1,e^{\frac{\pi}{6}i}\kappa \eta(-1))} \right)\\
    =&-\frac{1}{2} \left(1-\frac{\mathrm{Ai}(e^{\frac{\pi}{6}i}\kappa \eta(-1))\mathrm{Ai}(2,e^{\frac{\pi}{6}i}\kappa \eta(-1))}{\left(\mathrm{Ai}(1,e^{\frac{\pi}{6}i}\kappa \eta(-1))\right)^2}\right)\left(1+O(c_r)\right)+O \left( \frac{\mathrm{Ai}(2,e^{\frac{\pi}{6}i}\kappa \eta(-1))}{\kappa \mathrm{Ai}(1,e^{\frac{\pi}{6}i}\kappa \eta(-1))} \right).
  \end{align*}

From the expansion in Lemma \ref{lem:Airy-class-est}, we have
\begin{align*}
  \frac{\mathrm{Ai}(e^{\frac{\pi}{6}i}\kappa \eta(-1))\mathrm{Ai}(2,e^{\frac{\pi}{6}i}\kappa \eta(-1))}{\left(\mathrm{Ai}(1,e^{\frac{\pi}{6}i}\kappa \eta(-1))\right)^2}=1-\frac{1}{2} \frac{1}{\left(e^{\frac{\pi}{6}i}\kappa \eta(-1)\right)^{\frac{3}{2}}}+O\left( \frac{1}{\left\langle \kappa c_r\right\rangle^3} \right).
\end{align*}
Therefore, we have
\begin{align*}
  \pa_{c_r}\frac{e^{-\frac{\pi}{6}i}\mathrm{Ai}(2,e^{\frac{\pi}{6}i}\kappa \eta(-1))}{\kappa \eta_r'(-1)\mathrm{Ai}(1,e^{\frac{\pi}{6}i}\kappa \eta(-1))}=-\frac{1}{4}\frac{1}{\left(e^{\frac{\pi}{6}i}\kappa \eta(-1)\right)^{\frac{3}{2}}}+O\left( \frac{c_r}{\left\langle \kappa c_r\right\rangle^{\frac{3}{2}}}+ \frac{1}{\left\langle \kappa c_r\right\rangle^3}\right).
\end{align*}

Then for $\kappa c_r\gg 1$, and $|c_i|\lesssim c_r$, it holds that
\begin{align*}
  -\frac{1}{4}\frac{1}{\left(e^{\frac{\pi}{6}i}\kappa \eta(-1)\right)^{\frac{3}{2}}}=\frac{1}{4}\frac{e^{\frac{\pi}{4}i}}{ \left(\frac{\kappa c_r}{2}\right)^{\frac{3}{2}} } \left(1+O(c_r+\frac{|c_i|}{c_r})\right).
\end{align*}

Similarly, we have the $c_i$-derivative \eqref{est-Phi3-main--1-ci}.

For $\nu$-derivative, there is a little difference. For fixed $\alpha$, we have
  \begin{align*}
    &\pa_{\nu}\frac{e^{-\frac{\pi}{6}i}\mathrm{Ai}(2,e^{\frac{\pi}{6}i}\kappa \eta(-1))}{\kappa \eta_r'(-1)\mathrm{Ai}(1,e^{\frac{\pi}{6}i}\kappa \eta(-1))}\\
    =&\frac{\pa_{\nu}\left(\kappa \eta(-1)\right)}{\kappa \eta_r'(-1)}\frac{\left(\mathrm{Ai}(1,e^{\frac{\pi}{6}i}\kappa \eta(-1))\right)^2-\mathrm{Ai}(e^{\frac{\pi}{6}i}\kappa \eta(-1))\mathrm{Ai}(2,e^{\frac{\pi}{6}i}\kappa \eta(-1))}{\left(\mathrm{Ai}(1,e^{\frac{\pi}{6}i}\kappa \eta(-1))\right)^2}-\frac{\pa_{\nu}\kappa }{\kappa^2} \frac{e^{-\frac{\pi}{6}i}\mathrm{Ai}(2,e^{\frac{\pi}{6}i}\kappa \eta(-1))}{\eta_r'(-1)\mathrm{Ai}(1,e^{\frac{\pi}{6}i}\kappa \eta(-1))}\\
    =&-\frac{1}{3}\frac{1}{\nu}\frac{\eta(-1)}{\eta_r'(-1)}\frac{1}{2}\frac{1+O \left(\frac{1}{\left\langle \kappa c_r\right\rangle^{\frac{3}{2}}}\right)}{\left(e^{\frac{\pi}{6}i}\kappa \eta(-1)\right)^{\frac{3}{2}}} -\frac{1}{3}\frac{1}{\nu}\frac{e^{-\frac{\pi}{6}i}}{\kappa \eta_r'(-1)}\frac{1+O \left(\frac{1}{\left\langle \kappa c_r\right\rangle^{\frac{3}{2}}}\right)}{\left(e^{\frac{\pi}{6}i}\kappa \eta(-1)\right)^{\frac{1}{2}}} \\
    =&-\frac{1}{2}\frac{1}{\nu}\frac{\eta_r(-1)}{\eta_r'(-1)}\frac{1}{\left(e^{\frac{\pi}{6}i}\kappa \eta(-1)\right)^{\frac{3}{2}}}\left( 1+O \left(\frac{1}{\left\langle \kappa c_r\right\rangle^{\frac{3}{2}}}\right) \right)\\
    =&-\frac{c_r}{4\nu}\frac{e^{\frac{\pi}{4}i}}{ \left(\frac{\kappa c_r}{2}\right)^{\frac{3}{2}} } \left(1+O \left(c_r+\frac{|c_i|}{c_r}+\frac{1}{\left\langle \kappa c_r\right\rangle^{\frac{3}{2}}}\right)\right).
  \end{align*}
The $\alpha^2$-derivative can be deduced in the same way.

\end{proof}

\subsection{Construction of $\Phi_{4,\alpha}$}
In this subsection, we construct the right fast mode $\Phi_{4,\alpha}$ starting from the Airy profile $\phi_{A,2}$.

By the normalization of $\phi_{A,2}$, we have
\begin{align*}
  \phi_{A,2}(0)=1,\qquad \phi_{A,2}(-1)=0,\quad \phi_{A,2}'(-1)=0.
\end{align*}
Formally, the Airy asymptotic expansion give
\begin{align*}
  \phi_{A,2}(y)\sim& \kappa^{\frac{5}{4}}\left\langle \kappa\eta(y) \right\rangle^{-\frac{5}{4}}e^{\frac{2}{3}\frac{1}{\sqrt2}|\kappa\eta_r(y)|^{\frac{1}{2}}\kappa\eta_r(y)-\frac{2}{3}\frac{1}{\sqrt2}|\kappa \eta_r(0)|^{\frac{1}{2}}\kappa \eta_r(0)},\\
  \pa_y\phi_{A,2}(y)\sim & \kappa^{\frac{9}{4}} \left\langle \kappa\eta(y) \right\rangle^{-\frac{3}{4}}e^{\frac{2}{3}\frac{1}{\sqrt2}|\kappa\eta_r(y)|^{\frac{1}{2}}\kappa\eta_r(y)-\frac{2}{3}\frac{1}{\sqrt2}|\kappa \eta_r(0)|^{\frac{1}{2}}\kappa \eta_r(0)},\\
  \pa_y^2\phi_{A,2}(y)\sim & \kappa^{\frac{13}{4}} \left\langle \kappa\eta(y) \right\rangle^{-\frac{1}{4}}e^{\frac{2}{3}\frac{1}{\sqrt2}|\kappa\eta_r(y)|^{\frac{1}{2}}\kappa\eta_r(y)-\frac{2}{3}\frac{1}{\sqrt2}|\kappa \eta_r(0)|^{\frac{1}{2}}\kappa \eta_r(0)},\\
  \pa_y^3\phi_{A,2}(y)\sim & \kappa^{\frac{17}{4}} \left\langle \kappa\eta(y) \right\rangle^{\frac{1}{4}}e^{\frac{2}{3}\frac{1}{\sqrt2}|\kappa\eta_r(y)|^{\frac{1}{2}}\kappa\eta_r(y)-\frac{2}{3}\frac{1}{\sqrt2}|\kappa \eta_r(0)|^{\frac{1}{2}}\kappa \eta_r(0)}.
\end{align*}

As in the construction of $\Phi_{3,\alpha}$, we introduce an approximate Green function adapted to the right fast mode:
\begin{equation}\label{eq-Green-Airy+}
   G_{A+}(x,y)=i \frac{2\pi}{\left(\eta_r'(x)\right)^{\frac{1}{2}}\kappa\varepsilon}\left\{
    \begin{array}{ll}
       A_1(2,y) A_2(x)+a_2(x)-a_1(x)(y-y_c)+a_1(x)(x-y_c),&x<y;\\ 
       A_1(x) A_2(2,y),&x>y,
    \end{array}
  \right.
\end{equation}
and define the corresponding modified Airy solver by
\begin{align}\label{eq-modified-Airysolver+}
  AirySolver_{m+}(f)(y)=\int^0_{-1}G_{A+}(x,y)f(x)dx,
\end{align}
which satisfies
\begin{align*}
  AirySolver_{m+}(f)(-1)=0,\quad \pa_yAirySolver_{m+}(f)(-1)=0.
\end{align*}

We will use $AirySolver_{m+}$ to carry out the Airy iteration starting from $\phi_{A,2}$. Here we also introduce a weight function
\begin{align}\label{eq-weight-MW-+}
  \widetilde{\mathcal W}(y)=\left|e^{\frac{1}{3}\left( \left(e^{\frac{5\pi i}{6}}\kappa\eta(y)\right)^{\frac{3}{2}}-\left(e^{\frac{5\pi i}{6}}\kappa\eta(0)\right)^{\frac{3}{2}}\right)}\right|,
\end{align}
and the weighted norm
\begin{align*}
  \left\|f\right\|_{L^\infty_{\widetilde{\mathcal W}}}=\left\|f(y)\widetilde{\mathcal W}\right\|_{L^\infty}.
\end{align*}

The proof is identical to that of Lemma \ref{lem-AS--0}, and is omitted.
\begin{lemma}\label{lem-AS--0-+}
  For any $s\ge0$, it holds that
  \begin{align}
     \left\|\left\langle \kappa\eta \right\rangle^{\frac{1}{2}-s}AirySolver_{m+}(f)\right\|_{L^\infty_{\widetilde{\mathcal W}}}\lesssim \varepsilon^{\frac{1}{3}}\left\|\left\langle \kappa\eta \right\rangle^{-s}f\right\|_{L^\infty_{\widetilde{\mathcal W}}},\label{est-AS--0-+}\\
     \left\|\left\langle \kappa\eta \right\rangle^{\frac{3}{2}-s}\pa_yAirySolver_{m+}(f)\right\|_{L^\infty_{\widetilde{\mathcal W}}}\lesssim \left\|\left\langle \kappa\eta \right\rangle^{-s}f\right\|_{L^\infty_{\widetilde{\mathcal W}}},\label{est-AS--1-+}\\
     \left\|\left\langle \kappa\eta \right\rangle^{1-s}\pa_y^2AirySolver_{m+}(f)\right\|_{L^\infty_{\widetilde{\mathcal W}}}\lesssim \varepsilon^{-\frac{1}{3}} \left\|\left\langle \kappa\eta \right\rangle^{-s}f\right\|_{L^\infty_{\widetilde{\mathcal W}}},\label{est-AS--2-+}\\
     \left\|\left\langle \kappa\eta \right\rangle^{\frac{1}{2}-s}\pa_y^3AirySolver_{m+}(f)\right\|_{L^\infty_{\widetilde{\mathcal W}}}\lesssim \varepsilon^{-\frac{2}{3}} \left\|\left\langle \kappa\eta \right\rangle^{-s}f\right\|_{L^\infty_{\widetilde{\mathcal W}}}\label{est-AS--3-+}.
  \end{align} 
\end{lemma}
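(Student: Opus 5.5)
The plan is to mirror the proof of Lemma \ref{lem-AS--0} with the roles of the two boundaries exchanged. I split $G_{A+}=G_{A,M}+E_+$. Here $G_{A,M}$ is the localized part $A_1(2,y)A_2(x)$ for $x<y$ and $A_1(x)A_2(2,y)$ for $x>y$. The nonlocal part $E_+$ is supported only on $x<y$ and consists of $a_2(x)-a_1(x)(y-y_c)+a_1(x)(x-y_c)$, times the prefactor $i2\pi/((\eta_r')^{1/2}\kappa\varepsilon)$.

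\textbf{Reduction to kernel integrals.} I multiply the integrand by $\widetilde{\mathcal W}(y)=\widetilde{\mathcal W}(x)\cdot\widetilde{\mathcal W}(y)/\widetilde{\mathcal W}(x)$ and insert $\langle\kappa\eta(x)\rangle^{s}\langle\kappa\eta(x)\rangle^{-s}$. This reduces everything to bounding
\[
\int_{-1}^0 |K(x,y)|\,\langle\kappa\eta(x)\rangle^{s}\,\Bigl|e^{\frac13\left((e^{\frac{5\pi i}{6}}\kappa\eta(y))^{\frac32}-(e^{\frac{5\pi i}{6}}\kappa\eta(x))^{\frac32}\right)}\Bigr|\,dx .
\]
Here $K$ runs over $G_{A+}$ and its $y$-derivatives up to order three. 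The key input is the $e^{\frac{5\pi}{6}i}$ analogue of Lemma \ref{lem-Airy-Airy-est}, which gives decay of the form $e^{-\frac1C|\kappa\eta_r(x)-\kappa\eta_r(y)|(|\kappa\eta(x)|^{1/2}+|\kappa\eta(y)|^{1/2})}$ for $y\le x$. This follows from the same three-case angular argument, or simply by the reflection $\eta\mapsto-\bar\eta$. Since the weight only uses one third of the Airy exponent, the product of the Airy bounds (Lemmas \ref{lem:pri-Airy-expan}, \ref{lem-Airy-int-est}, \ref{lem:Airy-class-est}) with the weight ratio still decays at that rate. This holds both for $x>y$, where $A_1(x)A_2(2,y)$ decays toward $0$, and for $x<y$, where $A_1(2,y)A_2(x)$ decays toward $-1$.

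\textbf{Localized part.} For $G_{A,M}$ I bound $|G_{A,M}|\lesssim \langle\kappa\eta(x)\rangle^{-1/4}\langle\kappa\eta(y)\rangle^{-5/4}e^{-\cdots}$ and repeat cases 1.1, 1.2 and 2 from Lemma \ref{lem-AS--0}. The cases are $|y-y_c|\le 1/\kappa$ with $|x-y|$ small or large, and $|y-y_c|\ge1/\kappa$. They give $\kappa^{-1}\langle\kappa\eta(y)\rangle^{s-2}$, which is compatible with the claimed $\varepsilon^{1/3}\langle\kappa\eta(y)\rangle^{s-1/2}$.

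\textbf{Nonlocal part.} For $E_+$ I use Lemma \ref{lem-est-a1-a2}. The term $a_2/(\kappa\varepsilon)$ is harmless, being of order $\langle\kappa\eta\rangle^{-3/2}$ apart from the exponentially localized boundary pieces. The term $a_1(x)(x-y_c)/(\kappa\varepsilon)$ is $O(1)$. The worst term, as before, is $a_1(x)(y-y_c)/(\kappa\varepsilon)\sim \kappa\eta(y)/\langle\kappa\eta(x)\rangle$. Now it appears only for $x<y$. The weight ratio $\widetilde{\mathcal W}(y)/\widetilde{\mathcal W}(x)$ decays in $|x-y|$ for $x<y$, since $\widetilde{\mathcal W}$ increases toward $0$. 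So the integral $I_{\widetilde{\mathcal W},2}$ is controlled exactly like $I_{\mathcal W,2}$, giving $\kappa^{-1}\langle\kappa\eta(y)\rangle^{s-1/2}$.

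\textbf{Main obstacle.} I expect the hardest point to be that $a_1$ and $a_2$ are not themselves exponentially small relative to $\widetilde{\mathcal W}$. The boundary pieces $a_{1,B}$ and $a_{2,B}$ contain $\mathrm{Ai}(e^{\frac{\pi}{6}i}\kappa\eta(x))$ and $\mathrm{Ai}(e^{\frac{5\pi}{6}i}\kappa\eta(x))$ with coefficients evaluated at $-1$ and $0$. One must check that the factor $\mathrm{Ai}(e^{\frac{5\pi}{6}i}\kappa\eta(x))\mathcal A_1(1,\kappa\eta(0))$ does not grow faster than $1/\widetilde{\mathcal W}$ allows. Near $x=0$ it is bounded by $\langle\kappa\eta(0)\rangle^{-3/4}$, since the product of growing and decaying Airy exponents cancels as in Lemma \ref{lem-est-a1-a2}. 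For $x$ far from $0$ the weight ratio supplies the decay. The derivative estimates \eqref{est-AS--1-+}--\eqref{est-AS--3-+} follow by differentiating the kernel in $y$. Each $y$-derivative on $A_1(2,y)$ or $A_2(2,y)$ costs $\kappa\langle\kappa\eta(y)\rangle^{1/2}$. On the nonlocal part, $\pa_y$ kills $a_2$ and $a_1(x)(x-y_c)$ and leaves $a_1(x)/(\kappa\varepsilon)\sim\kappa/\langle\kappa\eta(x)\rangle$. Higher derivatives see only the localized part.
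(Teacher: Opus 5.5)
Your proposal takes the same route as the paper, which omits the proof as identical to that of Lemma~\ref{lem-AS--0}: split off the localized kernel, observe that for $G_{A+}$ all nonlocal terms sit on $x<y$, and let the weight ratio localize $x$ near $y$ there.

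One justification needs fixing. $\widetilde{\mathcal W}$ is \emph{decreasing} in $y$, not increasing toward $0$. It equals $1$ at $y=0$ and is exponentially large at $y=-1$, because $\Re\,(e^{\frac{5\pi i}{6}}\kappa\eta(y))^{3/2}$ decreases in $y$. This is what gives $\widetilde{\mathcal W}(y)/\widetilde{\mathcal W}(x)\le 1$ for $x<y$; if $\widetilde{\mathcal W}$ increased toward $0$, that ratio would be $\ge 1$ on the nonlocal side. Your stated conclusion is therefore correct, but the reason you give contradicts it.

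Correspondingly, the reflection $\eta_r\mapsto-\eta_r$ reverses the orientation in Lemma~\ref{lem-Airy-Airy-est}. The decay of the weight ratio therefore holds for $x\le y$, not $y\le x$. On $x>y$, the $\tfrac23$ decay of $A_1(x)A_2(2,y)$ beats the $\tfrac13$ growth of the weight ratio, as you say.
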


\begin{lemma}\label{lem-sol-nh-OS-+}
  For any $f\in L^\infty_{\widetilde{\mathcal W}}$, there exists a solution $\Phi_f$ to \eqref{eq-inhome-OS} satisfying
  \begin{align}\label{est-sol-nh-OS-+}
    \left\|\Phi_{f} \right\|_{L^\infty_{\widetilde{\mathcal W}}}\lesssim \varepsilon^{\frac{1}{3}}\left\|f\right\|_{L^\infty_{\widetilde{\mathcal W}}},
  \end{align}
  and
  \begin{align}\label{est-sol-nh-b0-+}
     \Phi_{f}(-1)=0,\qquad \pa_y\Phi_{f}(-1)=0.
  \end{align}
\end{lemma}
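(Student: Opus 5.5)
The plan is to mirror the proof of Lemma \ref{lem-sol-nh-OS}, with $AirySolver_{m-}$ replaced by $AirySolver_{m+}$, the weight $\mathcal W$ replaced by $\widetilde{\mathcal W}$, and Lemma \ref{lem-AS--0} replaced by Lemma \ref{lem-AS--0-+}.

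First, from the definition of $G_{A+}$ and \eqref{eq-phiapp4}, we record the identity
\begin{align*}
  Orr\left(AirySolver_{m+}(f)\right)=f+\mathcal R\left(AirySolver_{m+}(f)\right),
\end{align*}
where
\begin{align*}
  \mathcal R(\phi)=&\left(-u_p''-\alpha^2(u_p-c)+i\varepsilon\alpha^4\right)\phi+i\varepsilon\left(\pa_y^2(\eta_r')^{-\frac12}\right)(\eta_r')^{\frac12}\pa_y^2\phi\\
  &-2i\varepsilon\alpha^2\pa_y^2\phi+ic_i\left((\eta_r')^2-1\right)\pa_y^2\phi .
\end{align*}
This holds because the nonlocal terms of $G_{A+}$, now placed on the side $x<y$, are chosen so that $G_{A+}$, $\pa_yG_{A+}$ and $\pa_y^2G_{A+}$ are continuous across $y=x$. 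Hence $G_{A+}$ satisfies the same approximate Green function equation \eqref{eq-Green-app}.

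Next we bound $\mathcal R$ in $L^\infty_{\widetilde{\mathcal W}}$ using Lemma \ref{lem-AS--0-+} with $s=0$.
\begin{itemize}
\item The zeroth-order part is controlled by \eqref{est-AS--0-+} and costs $\varepsilon^{\frac13}$.
\item The $\varepsilon\pa_y^2$ terms are controlled by \eqref{est-AS--2-+} and cost $\varepsilon\cdot\varepsilon^{-\frac13}=\varepsilon^{\frac23}$.
\item The $c_i$ term needs the factor $(\eta_r')^2-1=O(|\eta|)$. We combine it with $\langle\kappa\eta\rangle^{1}\pa_y^2$ from \eqref{est-AS--2-+} (taking $s=0$, or $s=\frac12$ if a weight must be absorbed as in the $\Phi_{3,\alpha}$ case) and use $|\eta|/\langle\kappa\eta\rangle\lesssim\kappa^{-1}$. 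This gives a cost of $|c_i|$ times a bounded factor.
\end{itemize}
Altogether, $\|\mathcal R(AirySolver_{m+}(f))\|_{L^\infty_{\widetilde{\mathcal W}}}\lesssim(\varepsilon^{\frac13}+|c_i|)\|f\|_{L^\infty_{\widetilde{\mathcal W}}}$. Since $c\in\mathbb H$ forces $|c_i|\lesssim\varepsilon^{\frac13}$, this factor is small, provided the implicit constant times $\max(M,100)$ is small. Where that fails, we first rescale the weight exponent or use the smallness of $\kappa^{-1}$ coming from the $\langle\kappa\eta\rangle$ gain.

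We then iterate exactly as in Lemma \ref{lem-sol-nh-OS}:
\begin{align*}
  \Phi_f^{[0]}&=AirySolver_{m+}(f),\\
  \Phi_f^{[1]}&=-AirySolver_{m+}\left(Orr(\Phi_f^{[0]})-f\right),\\
  \Phi_f^{[j]}&=-AirySolver_{m+}\left(\mathcal R(\Phi_f^{[j-1]})\right),\quad j\ge1 .
\end{align*}
By induction, $\|\Phi_f^{[j]}\|_{L^\infty_{\widetilde{\mathcal W}}}\lesssim(\varepsilon^{\frac13}+|c_i|)^j\varepsilon^{\frac13}\|f\|_{L^\infty_{\widetilde{\mathcal W}}}$. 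The same bounds hold in the derivative-weighted norms of Lemma \ref{lem-AS--0-+}, so the series $\Phi_f=\sum_j\Phi_f^{[j]}$ converges together with three $y$-derivatives. The telescoping partial sums satisfy $Orr(\sum_{j\le N}\Phi_f^{[j]})=f+\mathcal R(\Phi_f^{[N]})$, which tends to $f$, so $Orr(\Phi_f)=f$. Summing the bounds gives \eqref{est-sol-nh-OS-+}.

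For the boundary conditions, recall that $A_2(1,-1)=A_2(2,-1)=0$ by \eqref{eq-Airy-modi-2}. At $y=-1$ only the branch $x>y$ of $G_{A+}$ contributes, namely $A_1(x)A_2(2,y)$, and it vanishes together with its $y$-derivative there. Hence every iterate satisfies $\Phi_f^{[j]}(-1)=\pa_y\Phi_f^{[j]}(-1)=0$. Passing to the limit, which is justified by the convergence of $\Phi_f$ and $\pa_y\Phi_f$ in the weighted sup norm, gives \eqref{est-sol-nh-b0-+}.

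The main obstacle is the contraction step for the $c_i((\eta_r')^2-1)\pa_y^2$ term, where the $\langle\kappa\eta\rangle$ weights must be balanced in the right way. It is also the place to check that Lemma \ref{lem-AS--0-+} genuinely holds with the nonlocal term $a_1(x)(y-y_c)$ on the side $x<y$. There the weight $\widetilde{\mathcal W}$ grows toward $y=0$, and the bound $|a_1(x)|\lesssim\kappa^{-1}\langle\kappa\eta(x)\rangle^{-1}$ combined with Lemma \ref{lem-Airy-Airy-est}, in its mirrored form, should close the estimate.
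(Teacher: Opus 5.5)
Your proposal is correct and is essentially the paper's argument: the paper omits this proof, which is the proof of Lemma \ref{lem-sol-nh-OS} with $AirySolver_{m+}$, $\widetilde{\mathcal W}$ and Lemma \ref{lem-AS--0-+} in place of their counterparts, with the boundary conditions coming from $A_2(1,-1)=A_2(2,-1)=0$ exactly as you say (and your recursion $\Phi_f^{[j]}=-AirySolver_{m+}(\mathcal R(\Phi_f^{[j-1]}))$ is the correct telescoping form). Two small corrections: the contraction factor $\varepsilon^{\frac13}+|c_i|\lesssim\varepsilon^{\frac13}$ is already small for $\varepsilon$ small with $M$ fixed, so your hedge about $\max(M,100)$ is unnecessary; and $\widetilde{\mathcal W}$ is normalized to $1$ at $y=0$ and grows toward $y=-1$, not toward $y=0$.
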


Then we define
\begin{align*}
  \Phi_{4,\alpha}(y)=\sum_{j=0}^{+\infty}\Phi_{4,\alpha}^{[j]}(y),
\end{align*}
where
\begin{align*}
  \Phi_{4,\alpha}^{[0]}=\phi_{A,2}(y),\quad \Phi_{4,\alpha}^{[1]}(y)=-AirySolver_{m+}\left(Orr\left(\phi_{A,2}(y)\right)\right),\\
  \Phi_{4,\alpha}^{[j]}(y)=-AirySolver_{m+}\left(Orr\left(\Phi_{4,\alpha}^{[j-1]}\right)-Orr\left(\Phi_{4,\alpha}^{[j-2]}\right)\right),\text{ for }j\ge2.
\end{align*}

\subsection{Boundary values of $\Phi_{4,\alpha}$ at $y=-1$ and $y=0$}
From the definition of $\Phi_{4,\alpha}(y)$ and $\phi_{A,2}(y)$, it is clear that $\Phi_{4,\alpha}(-1)=0$, and $\pa_y\Phi_{4,\alpha}(-1)=0$.

SAs for $\Phi_{3,\alpha}$, the boundary behavior of $\Phi_{4,\alpha}$ at $y=0$ is governed, to leading order, by the Airy profile $\phi_{A,2}$. The following estimates are obtained by the same argument as in Lemma \ref{lem-Phi3--1} and Lemma \ref{lem-Phi3--1-cr}.

\begin{lemma}\label{lem-Phi4--1}
  It holds that
  \begin{align*}
    \left|\pa_y\Phi_{4,\alpha}(0)-\pa_y\phi_{A,2}(0)\right|\lesssim  \varepsilon^{\frac{1}{2}}+|c_i|\varepsilon^{-\frac{1}{2}} ,\\
      \left|\pa_y^3\Phi_{4,\alpha}(0)-\pa_y^3\phi_{A,2}(0)\right|\lesssim  \varepsilon^{-\frac{1}{2}}+|c_i|\varepsilon^{-\frac{3}{2}}.
  \end{align*}
\end{lemma}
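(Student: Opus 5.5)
The plan mirrors the proof of Lemma \ref{lem-Phi3--1}, with the roles of the two walls exchanged. Write $\Phi_{4,\alpha}=\phi_{A,2}+\sum_{j\ge1}\Phi_{4,\alpha}^{[j]}$. It suffices to bound $\pa_y\Phi_{4,\alpha}^{[1]}(0)$ and $\pa_y^3\Phi_{4,\alpha}^{[1]}(0)$ by the stated quantities. The higher iterates are smaller by the contraction factor $(|c_i|+\varepsilon^{\frac13})$ of Lemma \ref{lem-sol-nh-OS-+}, so they contribute lower-order terms.

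\textbf{Step 1: the source term.} As for $\phi_{A,1}$, one has
\begin{align*}
  Orr(\phi_{A,2})=&\left(-u_p''-\alpha^2(u_p-c)+i\varepsilon\alpha^4\right)\phi_{A,2}+i\varepsilon\left(\pa_y^2(\eta_r')^{-\frac12}\right)(\eta_r')^{\frac12}\pa_y^2\phi_{A,2}\\
  &-2i\varepsilon\alpha^2\pa_y^2\phi_{A,2}+ic_i\left((\eta_r')^2-1\right)\pa_y^2\phi_{A,2}.
\end{align*}
Two terms are potentially troublesome: $u_p''\phi_{A,2}$ and the $c_i$ term. Near $y=0$ we have $|\kappa\eta(0)|\sim\kappa$, and the formal asymptotics give $|\phi_{A,2}|\lesssim 1/\widetilde{\mathcal W}$ and $|\pa_y^2\phi_{A,2}|\lesssim\kappa^{2}\langle\kappa\eta\rangle^{-\frac14}\kappa^{\frac54}/\widetilde{\mathcal W}$. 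Since $(\eta_r')^2-1=O(|y-y_c|)=O(1)$ near $y=0$, the $c_i$ term is of size $|c_i|\kappa^2\langle\kappa\eta\rangle^{\frac{3}{4}}$ in the weighted norm, after normalizing with $\langle\kappa\eta(0)\rangle\sim\kappa$.

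\textbf{Step 2: derivatives of $AirySolver_{m+}$ at $y=0$.} Here I would use \eqref{est-AS--1-+} and \eqref{est-AS--3-+} with a suitable choice of $s$, evaluated at $y=0$. At that point $\widetilde{\mathcal W}(0)=1$ and $\langle\kappa\eta(0)\rangle\sim\kappa\sim\varepsilon^{-\frac13}$.
\begin{itemize}
\item For $f=\phi_{A,2}$ take $s=0$. Then \eqref{est-AS--1-+} gives $|\pa_y AirySolver_{m+}(f)(0)|\lesssim\kappa^{-\frac32}=\varepsilon^{\frac12}$, and \eqref{est-AS--3-+} gives $|\pa_y^3 AirySolver_{m+}(f)(0)|\lesssim\varepsilon^{-\frac23}\kappa^{-\frac12}=\varepsilon^{-\frac12}$.
\item For the $c_i$ term take $s=\frac34$, so that $\|\langle\kappa\eta\rangle^{-s}f\|_{L^\infty_{\widetilde{\mathcal W}}}\lesssim|c_i|\kappa^2$. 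This yields $|\pa_y|\lesssim|c_i|\kappa^2\kappa^{-\frac34}$, which is at most $|c_i|\varepsilon^{-\frac12}$. It also yields $|\pa_y^3|\lesssim|c_i|\kappa^2\varepsilon^{-\frac23}\kappa^{\frac14}$, which is at most $|c_i|\varepsilon^{-\frac32}$ after a slightly generous bookkeeping of the $\kappa$ powers.
\end{itemize}
The remaining terms ($\alpha^2$, $\varepsilon\alpha^4$, and the $\varepsilon\pa_y^2$ terms) carry extra factors of $\varepsilon^{\frac13}$ or better, so they are absorbed.

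\textbf{Step 3: iterates.} For $j\ge2$, Lemma \ref{lem-sol-nh-OS-+} together with Lemma \ref{lem-AS--0-+} gives the same bounds multiplied by $(|c_i|+\varepsilon^{\frac13})^{j-1}$. The series therefore converges, and the bounds on $\Phi_{4,\alpha}^{[1]}$ persist for the full remainder.

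\textbf{Main obstacle.} The difficulty lies in the power counting of Step 2 at the endpoint $y=0$. There the weight $\widetilde{\mathcal W}$ equals $1$ but $\langle\kappa\eta\rangle$ is maximal. The polynomial factors $\langle\kappa\eta\rangle^{\frac32-s}$ and $\langle\kappa\eta\rangle^{\frac12-s}$ must be tracked carefully, and the normalization of $\phi_{A,2}$ at $0$ must be applied consistently; otherwise a stray power of $\kappa$ appears. If the direct exponent count comes out short, I would fall back on the explicit kernel. Near $x=y=0$ the kernel $G_{A+}$ consists only of $A_1(x)A_2(2,y)$ for $x>y$, and $A_1(x)$ is exponentially small there. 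The local contribution would then be estimated by hand, as in Case 1.1 of Lemma \ref{lem-est-AS}, gaining $\kappa^{-\frac32}$ from integration over the boundary layer of width $\kappa^{-\frac32}$.
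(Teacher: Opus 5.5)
Your route is the paper's: mirror Lemma \ref{lem-Phi3--1} by applying Lemma \ref{lem-AS--0-+} to the troublesome parts of $Orr(\phi_{A,2})$, evaluate at $y=0$ where $\widetilde{\mathcal W}(0)=1$ and $\langle\kappa\eta(0)\rangle\sim\kappa$, and control the higher iterates through Lemma \ref{lem-sol-nh-OS-+}. Your exponent bookkeeping needs one correction, though the conclusion survives: since $\partial_y^2\phi_{A,2}\sim\kappa^{3}$ near $y=0$, the $c_i$ source satisfies $\|\langle\kappa\eta\rangle^{-3/4}c_i((\eta_r')^2-1)\partial_y^2\phi_{A,2}\|_{L^\infty_{\widetilde{\mathcal W}}}\lesssim|c_i|\kappa^{9/4}$ rather than $|c_i|\kappa^{2}$, which yields exactly $|c_i|\kappa^{3/2}=|c_i|\varepsilon^{-1/2}$ and $|c_i|\varepsilon^{-2/3}\kappa^{5/2}=|c_i|\varepsilon^{-3/2}$ (so there is no slack to invoke); likewise the viscous term $\varepsilon\,(\partial_y^2(\eta_r')^{-1/2})(\eta_r')^{1/2}\partial_y^2\phi_{A,2}$ is not $\varepsilon^{1/3}$-smaller near $y=0$ but of size $\varepsilon\kappa^3\sim1$, the same as $u_p''\phi_{A,2}$, which is harmless because it contributes only $\varepsilon^{1/2}$ and $\varepsilon^{-1/2}$.
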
 

\begin{lemma}\label{lem-Phi4--1-cr}
   It holds that
  \begin{align}
    \left|\pa_{c_r}\pa_y\Phi_{4,\alpha}(0)-\pa_{c_r}\pa_y\phi_{A,2}(0)\right|\lesssim  \varepsilon^{-\frac{1}{6}}+|c_i|\varepsilon^{-\frac{5}{6}}\left|\ln c_0\right|,\label{est-Phi4--1-cr-0}\\
      \left|\pa_{c_r}\pa_y^3\Phi_{4,\alpha}(0)-\pa_{c_r}\pa_y^3\phi_{A,2}(0)\right|\lesssim \varepsilon^{-\frac{5}{6}}+|c_i|\varepsilon^{-\frac{11}{6}}\left|\ln c_0\right|,\label{est-Phi4--1-cr-1}
  \end{align}
  and
    \begin{align}
    \left|\pa_{c_i}\pa_y\Phi_{4,\alpha}(0)-\pa_{c_i}\pa_y\phi_{A,2}(0)\right|\lesssim  \varepsilon^{-\frac{1}{6}} + \varepsilon^{-\frac{1}{2}}\left|\ln c_0\right|,\label{est-Phi4--1-ci-0}\\
      \left|\pa_{c_i}\pa_y^3\Phi_{4,\alpha}(0)-\pa_{c_i}\pa_y^3\phi_{A,2}(0)\right|\lesssim \varepsilon^{-\frac{1}{2}}+\varepsilon^{-\frac{3}{2}}\left|\ln c_0\right|.\label{est-Phi4--1-ci-1}
  \end{align}
  For fixed $\alpha$, we have
      \begin{align}
    \left|\pa_{\nu}\pa_y\Phi_{4,\alpha}(0)-\pa_{\nu}\pa_y\phi_{A,2}(0)\right|\lesssim \frac{1}{\nu}\left( \varepsilon^{\frac{1}{2}}+|c_i|\varepsilon^{-\frac{1}{2}}\right)\left|\ln c_0\right|,\label{est-Phi4--1-nu-0}\\
      \left|\pa_{\nu}\pa_y^3\Phi_{4,\alpha}(0)-\pa_{\nu}\pa_y^3\phi_{A,2}(0)\right|\lesssim \frac{1}{\nu}\left(\varepsilon^{-\frac{1}{2}}+|c_i|\varepsilon^{-\frac{3}{2}}\right)\left|\ln c_0\right|.\label{est-Phi4--1-nu-1}
  \end{align}
    For fixed $\nu$ or $\varepsilon$, we have
      \begin{align}
    \left|\pa_{\alpha^2}\pa_y\Phi_{4,\alpha}(0)-\pa_{\alpha^2}\pa_y\phi_{A,2}(0)\right|\lesssim  \frac{1}{\alpha^2}\left( \varepsilon^{\frac{1}{2}}+|c_i|\varepsilon^{-\frac{1}{2}}\right)\left|\ln c_0\right|,\label{est-Phi4--1-alpha-0}\\
      \left|\pa_{\alpha^2}\pa_y^3\Phi_{4,\alpha}(0)-\pa_{\alpha^2}\pa_y^3\phi_{A,2}(0)\right|\lesssim \frac{1}{\alpha^2}\left(\varepsilon^{-\frac{1}{2}}+|c_i|\varepsilon^{-\frac{3}{2}}\right)\left|\ln c_0\right|.\label{est-Phi4--1-alpha-1}
  \end{align}
\end{lemma}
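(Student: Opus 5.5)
The plan is to reduce everything to the first correction $\Phi_{4,\alpha}^{[1]}=-AirySolver_{m+}(Orr(\phi_{A,2}))$ and to differentiate the kernel representation term by term, exactly as in Lemma \ref{lem-Phi3--1-cr}. Lemma \ref{lem-sol-nh-OS-+} gives a contraction factor $(|c_i|+\varepsilon^{1/3})$ per step in $L^\infty_{\widetilde{\mathcal W}}$. If each parameter derivative costs at most a fixed power of $\varepsilon^{-1}$ per step, then all higher iterates $j\ge2$ are dominated by the $j=1$ bounds.

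Concretely, I write
\[
\pa_{p}\Phi_{4,\alpha}^{[1]}=-\int_{-1}^0\bigl(\pa_pG_{A+}(x,y)\bigr)Orr(\phi_{A,2})(x)\,dx-\int_{-1}^0G_{A+}(x,y)\,\pa_p Orr(\phi_{A,2})(x)\,dx,\qquad p\in\{c_r,c_i,\nu,\alpha^2\},
\]
and apply $\pa_y$, $\pa_y^3$ at $y=0$. The source $Orr(\phi_{A,2})$ consists of three kinds of terms:
\begin{itemize}
\item the regular part $(-u_p''-\alpha^2(u_p-c)+i\varepsilon\alpha^4)\phi_{A,2}$;
\item the terms $\varepsilon(\ldots)\phi_{A,2}''$ and $\varepsilon\alpha^2\phi_{A,2}''$;
\item the term $ic_i((\eta_r')^2-1)\phi_{A,2}''$, which is the source of the $|c_i|$ contributions.
\end{itemize}
Near $y=0$ we have $\eta(0)\sim1$, so $\langle\kappa\eta\rangle\sim\kappa$. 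With the normalization $\phi_{A,2}(0)=1$, the undifferentiated bounds of Lemma \ref{lem-Phi4--1} follow from Lemma \ref{lem-AS--0-+} with the weights $\langle\kappa\eta\rangle^{3/2}$ and $\langle\kappa\eta\rangle^{1/2}$ evaluated at $y=0$. They are what I would take as the baseline.

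Next, bound the derivatives of the profile. By the technique of Lemma \ref{lem-d-cr-a1B}, namely the pairing of the growing factor $\mathrm{Ai}(e^{5\pi i/6}\kappa\eta(y))$ with the normalizing factor at $0$, the phase difference $\kappa^{3/2}\bigl((e^{5\pi i/6}\eta(y))^{3/2}-(e^{5\pi i/6}\eta(0))^{3/2}\bigr)$ differentiates to something of size $\kappa^{3/2}|\eta_r(y)-\eta_r(0)|$. The weight $\widetilde{\mathcal W}$ leaves a spare half of the exponent, which absorbs this factor up to $|\ln c_0|$. This gives:
\begin{itemize}
\item $|\pa_{c_r}\phi_{A,2}|\lesssim\kappa^{1/2}|\ln c_0|/\widetilde{\mathcal W}$; the loss comes from $\pa_{c_r}\kappa\eta$ at scale $\kappa\,\langle\kappa\rangle^{1/2}$ relative to the factor $\langle\kappa\eta\rangle^{-1}$.
\item $|\pa_\nu\phi_{A,2}|\lesssim\nu^{-1}|\ln c_0|/\widetilde{\mathcal W}$, since $\eta$ is $\nu$-independent.
\item For $\pa_{\alpha^2}$, the analogous bound with $\alpha^{-2}$ in place of $\nu^{-1}$.
\end{itemize}
These losses explain the target bounds: $\varepsilon^{-1/6}$ relative to $\varepsilon^{1/2}$ is a factor $\kappa^2$ for $c_r$, a factor $1/\nu$ for $\nu$, and a factor $1/\alpha^2$ for $\alpha^2$. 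Feeding the bounds into Lemma \ref{lem-AS--0-+} handles the second integral.

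The main obstacle is the first integral, $\pa_pG_{A+}$. For $x<y$, $G_{A+}$ contains the nonlocal terms $a_2(x)-a_1(x)(y-y_c)+a_1(x)(x-y_c)$, and these lack exponential localization. I would use the decompositions \eqref{eq-a1-exp}, \eqref{eq-a2-exp} together with the Scorer representations of Lemma \ref{lem-a1-a2-exp}. Under $\pa_{c_r}$, the main parts $a_{1,M}$ and $a_{2,M}$ lose one power of $\langle\kappa\eta\rangle$ times $\kappa$, while the boundary parts $a_{1,B}$, $a_{2,B}$ are controlled by Lemma \ref{lem-d-cr-a1B} near $x=0$. Because the source is weighted by $1/\widetilde{\mathcal W}$, which is exponentially small away from $y=0$, only $x\in[-C|\ln c_0|\kappa^{-3/2},0]$ contributes non-negligibly. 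There Lemma \ref{lem-d-cr-a1B} gives the $\kappa^{-2+3k/2}|\ln c_0|$ bounds, which is where the single $|\ln c_0|$ arises.

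For the local part $A_1(2,y)A_2(x)$ and $A_1(x)A_2(2,y)$, I would use \eqref{eq-d-cr-A1k} and \eqref{eq-d-cr-A2k}, as in the estimate of \eqref{eq-I-GM,0,c_r}. The $\nu$- and $\alpha^2$-cases are easier: as in Lemma \ref{lem-acc-psi-1-a-ci}, the extra factor $\kappa|\eta_r(x)-\eta_r(y)|$ is absorbed by the exponential. Combining the two integrals and summing the iteration then yields \eqref{est-Phi4--1-cr-0}--\eqref{est-Phi4--1-alpha-1}.
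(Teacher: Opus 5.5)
Your overall route is the one the paper intends: reduce to $\Phi_{4,\alpha}^{[1]}$, differentiate the kernel representation, treat $\pa_p\phi_{A,2}$ by the pairing of Lemma \ref{lem-d-cr-a1B}, and treat the nonlocal part by Lemma \ref{lem-d-cr-a1B} and Lemma \ref{lem-a1-a2-exp}. The paper itself only says the estimates follow by the same argument as Lemmas \ref{lem-Phi3--1} and \ref{lem-Phi3--1-cr}. Your plan does yield \eqref{est-Phi4--1-cr-0}. Since $A_1(1,0)=0$, one has $\pa_yG_{A+}(x,0)=-2\pi i\,a_1(x)(\eta_r'(x))^{-1/2}(\kappa\varepsilon)^{-1}$, so $\pa_y\Phi_{4,\alpha}^{[1]}(0)$ is purely nonlocal and its $c_r$-derivative costs at most a logarithm.

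The gap is in the third-derivative estimate \eqref{est-Phi4--1-cr-1}. At $y=0$ the nonlocal terms of $G_{A+}$ are affine in $y$ and disappear under $\pa_y^3$, so
\[
\pa_y^3AirySolver_{m+}(f)(0)=\frac{2\pi i}{\kappa\varepsilon}\,\pa_y\bigl((\eta_r')^{-1/2}A_1\bigr)(0)\int_{-1}^0\frac{A_2(x)f(x)}{(\eta_r'(x))^{1/2}}\,dx .
\]
This is a purely local term. Its integrand is concentrated in $|x|\lesssim\kappa^{-3/2}$, where $\langle\kappa\eta\rangle\sim\kappa$.

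You propose to bound the $c_r$-derivative of the local kernel with the per-factor estimates \eqref{eq-d-cr-A1k}--\eqref{eq-d-cr-A2k}, as in \eqref{eq-I-GM,0,c_r}. Here each factor costs $\kappa\langle\kappa\eta\rangle^{1/2}\sim\kappa^{3/2}$. This is sharp for each factor separately, because $\pa_{c_r}(\kappa\eta(0))\sim\kappa$ does not vanish. The result is $\varepsilon^{-1/2}\kappa^{3/2}\sim\varepsilon^{-1}$ and $|c_i|\varepsilon^{-2}$, instead of the required $\varepsilon^{-5/6}$ and $|c_i|\varepsilon^{-11/6}|\ln c_0|$. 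The same loss breaks the $c_i$-source part of \eqref{est-Phi4--1-ci-1} once $|c_i|\gg\varepsilon^{1/2}|\ln c_0|$. Note that the admissible loss for $\pa_y^3$ is only $\kappa$; the factor $\kappa^2$ you read off from the targets applies only to the $\pa_y$ estimate.

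The missing step is to apply the growing--decaying pairing to the product $\pa_y((\eta_r')^{-1/2}A_1)(0)\,A_2(x)$ itself. You already use this pairing for $\pa_p\phi_{A,2}$ and for $\pa_\nu$. To leading order, the logarithmic $c_r$-derivatives of the two factors are $-e^{\frac{\pi}{4}i}(\kappa\eta(0))^{\frac12}\pa_{c_r}(\kappa\eta(0))$ and $+e^{\frac{\pi}{4}i}(\kappa\eta(x))^{\frac12}\pa_{c_r}(\kappa\eta(x))$, and these cancel at $x=0$. What remains is $O(1+\kappa^{3/2}|x|)$. This is absorbed by the localization $e^{-c\kappa^{3/2}|x|}$ of $A_1'(0)A_2(x)\phi_{A,2}(x)$, so the derivative costs only $O(1)$. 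The same works for $\pa_{c_i}$, since $\pa_{c_i}(\kappa\eta)=-\kappa/u_p'(y_c)$. With this cancellation, \eqref{est-Phi4--1-cr-1} and \eqref{est-Phi4--1-ci-1} hold with room to spare; without it, your stated steps do not reach them.
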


\section{Dispersion relation}\label{sec-dispersion_relation}
In this section, we derive the dispersion relation for the even Orr--Sommerfeld modes subject to the no-slip boundary conditions. 

We look for a nontrivial
solution of the homogeneous Orr--Sommerfeld equation in the form
\begin{align*}
  \Phi(y)=B_1 \Phi_{1,\alpha}+B_2 \Phi_{2,\alpha}+B_3 \Phi_{3,\alpha}+B_4 \Phi_{4,\alpha}.
\end{align*}
Imposing the even boundary conditions \eqref{bc-even}, namely the no-slip
conditions at $y=-1$ and the symmetry conditions at $y=0$, gives the
following Wronskian condition:
\begin{align}\label{eq-wron-cond}
  W(c_r,c_i,\alpha,\nu)=\det\left(
    \begin{array}{cccc}
      \Phi_{1,\alpha}(-1)&\Phi_{2,\alpha}(-1)&\Phi_{3,\alpha}(-1)&\Phi_{4,\alpha}(-1)\\       
      \Phi_{1,\alpha}'(-1)&\Phi_{2,\alpha}'(-1)&\Phi_{3,\alpha}'(-1)&\Phi_{4,\alpha}'(-1)\\       
      \Phi_{1,\alpha}'(0)&\Phi_{2,\alpha}'(0)&\Phi_{3,\alpha}'(0)&\Phi_{4,\alpha}'(0)\\       
      \Phi_{1,\alpha}'''(0)&\Phi_{2,\alpha}'''(0)&\Phi_{3,\alpha}'''(0)&\Phi_{4,\alpha}'''(0)
    \end{array}
  \right)=0.
\end{align}
This equation is the eigenvalue dispersion relation. Its zeros determine the possible values of \(c=c_r+i c_i\).

We first fix $\varepsilon$ and look for two functions
$c_r(\alpha^2)$ and $c_i(\alpha^2)$ of $\alpha^2$ such that
\begin{align*}
  W(c_r(\alpha^2),c_i(\alpha^2),\alpha, \nu)=0.
\end{align*}
Since $\varepsilon$ is fixed, the dependence on $\nu$ will be suppressed in the following argument. Therefore, when no confusion can
arise, we abbreviate
$W(c_r(\alpha^2),c_i(\alpha^2),\alpha,\nu)$ as
$W(c_r(\alpha^2),c_i(\alpha^2),\alpha)$.

From the information in hand, we give the formal scale of each element in \eqref{eq-wron-cond}
\begin{align}\label{eq-matrix-scale}
  \left(
    \begin{array}{cccc}
      \Phi_{1,\alpha}(-1)&\Phi_{2,\alpha}(-1)&\Phi_{3,\alpha}(-1)&\Phi_{4,\alpha}(-1)\\       
      \Phi_{1,\alpha}'(-1)&\Phi_{2,\alpha}'(-1)&\Phi_{3,\alpha}'(-1)&\Phi_{4,\alpha}'(-1)\\       
      \Phi_{1,\alpha}'(0)&\Phi_{2,\alpha}'(0)&\Phi_{3,\alpha}'(0)&\Phi_{4,\alpha}'(0)\\       
      \Phi_{1,\alpha}'''(0)&\Phi_{2,\alpha}'''(0)&\Phi_{3,\alpha}'''(0)&\Phi_{4,\alpha}'''(0)
    \end{array}
  \right)\sim  \left(
    \begin{array}{cccc}
      -\hat c+\frac{4\alpha^2}{15}&1&1&0\\       
      1&|\ln c_0|&\kappa \left\langle \kappa c_r \right\rangle^{\frac{1}{2}}&0\\       
      0&\frac{1}{1-\hat c}&0&\kappa^{\frac{3}{2}}\\       
      \left|\ln c_0\right|&\left|\ln c_0\right|&e^{-\kappa}&\kappa^{\frac{9}{2}}
    \end{array}
  \right).
\end{align}
From the above matrix, we see that the eigenspace is at most one-dimensional.

We now reduce the Wronskian to its leading part. Expanding the determinant in \eqref{eq-wron-cond} along the fourth row and keeping the dominant contribution containing $\Phi_{4,\alpha}'''(0)$, we obtain
\begin{align*}
  W(c_r,c_i,\alpha^2)=&\Phi_{4,\alpha}'''(0)\det\left(
    \begin{array}{ccc}
      \Phi_{1,\alpha}(-1)&\Phi_{2,\alpha}(-1)&\Phi_{3,\alpha}(-1)\\       
      \Phi_{1,\alpha}'(-1)&\Phi_{2,\alpha}'(-1)&\Phi_{3,\alpha}'(-1)\\       
      \Phi_{1,\alpha}'(0)&\Phi_{2,\alpha}'(0)&\Phi_{3,\alpha}'(0)
    \end{array}
  \right)+Res_0(c_r,c_i,\alpha^2)\\
  =&-\frac{\Phi_{4,\alpha}'''(0)\Phi_{1,\alpha}'(-1)\Phi_{3,\alpha}'(-1)}{1-\hat c} \left(\frac{\Phi_{1,\alpha}(-1)}{\Phi_{1,\alpha}'(-1)}-  \frac{\Phi_{3,\alpha}(-1)}{\Phi_{3,\alpha}'(-1)}\right)+Res_0(c_r,c_i,\alpha^2),
\end{align*}
where $Res_0$ collects all the remaining cofactors and satisfies
\begin{align*}
  \left|Res_0(c_r,c_i,\alpha^2)\right|\lesssim \kappa^{\frac{5}{2}}\left\langle \kappa c_r \right\rangle^{\frac{1}{2}}\left|\ln c_0\right|.
\end{align*}
 
We therefore introduce the normalized dispersion function
\begin{align*}
  F(c_r,c_i,\alpha^2)=F_r(c_r,c_i,\alpha^2)+iF_i(c_r,c_i,\alpha^2) =-\frac{1-\hat c}{\Phi_{4,\alpha}'''(0)\Phi_{1,\alpha}'(-1)\Phi_{3,\alpha}'(-1)}W(c_r,c_i,\alpha^2).
\end{align*}
Equivalently,
\begin{align}\label{eq-F}
  F(c_r,c_i,\alpha^2)=\frac{\Phi_{1,\alpha}(-1)}{\Phi_{1,\alpha}'(-1)}-  \frac{\Phi_{3,\alpha}(-1)}{\Phi_{3,\alpha}'(-1)}+Res_1(c_r,c_i,\alpha^2),
\end{align}
with
\begin{align*}
  \left|Res_1(c_r,c_i,\alpha^2)\right|\lesssim \kappa^{-3}\left|\ln c_0\right|.
\end{align*} 
By Lemma \ref{lem-sol-Ray-hom}, Lemma \ref{lem-Ray-endpoint-expansion}, Lemma \ref{lem-res-ray}, Lemma \ref{lem-Phi1-0-cr}, Lemma \ref{lem-Phi2-cr}, Lemma \ref{lem-res-airy}, Lemma \ref{lem-Phi3-main--1-cr}, and Lemma \ref{lem-Phi4--1-cr}, we can see that the corresponding derivatives of $Res_1(c_r,c_i,\alpha^2)$ remain lower order compared
with the derivatives of the leading term in \eqref{eq-F}.

Therefore, it suffices to study the zero points of $F$. For each fixed \(\varepsilon\), the following result
gives the existence of the upper and lower neutral branches and localizes the
possible values of \(c\) for each \(\alpha\). This will be the key step in the
proof of the main theorem.
\begin{lemma}\label{lem-sol-curve}
  For $\varepsilon$ sufficiently small, there exists a unique solution curve $\left(c_r(\alpha^2), c_i(\alpha^2)\right)$ in the region $\mathbb H$ such that 
  \begin{align*}
  F(c_r(\alpha^2),c_i(\alpha^2),\alpha^2)=0.
\end{align*}
Moreover, there exist $\widetilde{\mathcal K}_-(\varepsilon)\sim\ 
    \mathcal K_-(\varepsilon)\sim 
    \mathcal K_+(\varepsilon)\sim 
    \widetilde{\mathcal K}_+(\varepsilon)\sim1$ depending continuously on $\varepsilon$, such that $\widetilde {\mathcal K}_-(\varepsilon)< {\mathcal K}_-(\varepsilon)$, ${\mathcal K}_+(\varepsilon)<\widetilde {\mathcal K}_+(\varepsilon)$,
\begin{align*}
  c_i \left({\mathcal K}_-(\varepsilon)\varepsilon^{\frac{1}{3}} \right)=0,\qquad c_i \left( \widetilde {\mathcal K}_-(\varepsilon)\varepsilon^{\frac{1}{3}} \right)=-\frac{c_0}{2},  
\end{align*}
and
\begin{align*}
 c_i \left({\mathcal K}_+(\varepsilon)\varepsilon^{\frac15} \right)=0,\qquad c_i \left(\widetilde {\mathcal K}_+(\varepsilon)\varepsilon^{\frac15} \right)=-\frac{c_0}{2}.
\end{align*}
Along this curve, we have $c_r(\alpha^2)\sim \alpha^2$. In addition, in the intermediate regime $\varepsilon^{\frac{1}{3}}\ll \alpha^2\ll \varepsilon^{\frac{1}{5}}$, one has $c_i (\alpha^2)\sim \frac{1}{\kappa^{\frac{3}{2}}c_r^{\frac{1}{2}}}$. 
\end{lemma}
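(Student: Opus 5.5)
The zero set of $F$ in \eqref{eq-F} will be studied by writing it as a perturbation of an explicit two-dimensional real map. Substituting Lemma \ref{lem-Ray-endpoint-expansion} (in the form of Remark \ref{rmk-Ray-endpoint-expansion}), Lemma \ref{lem-res-ray}, Lemma \ref{lem-res-airy} and Lemma \ref{lem-Phi3-main--1} into \eqref{eq-F} gives
\begin{align*}
F=-\frac{c_r+ic_i}{2}+\frac{2}{15}\alpha^2-\frac{4\pi}{15^2}\alpha^4 i+\frac{e^{\frac{\pi}{4}i}}{\kappa}\Big(\frac{\kappa c}{2}\Big)^{-\frac12}\big(1+Res_{airy,err}\big)+Res,
\end{align*}
with $Res$ collecting $Res_{ray}$, $Res_{airy}$, $Res_1$ and the lower-order Rayleigh terms. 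For $\kappa c_r\lesssim1$ I keep the exact Airy ratio $\mathcal A_1(2,\kappa\eta(-1))/(\kappa\eta_r'(-1)\mathcal A_1(1,\kappa\eta(-1)))$ instead of its asymptotic form. The approach is to fix $\varepsilon$ and $\alpha^2$, and solve $F_r=0$, $F_i=0$ in $(c_r,c_i)$ using the 2D implicit function theorem, or equivalently a degree/contraction argument on a rectangle in $\mathbb H$. Then I read off the sign changes of $c_i$ as $\alpha^2$ varies.

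\emph{Step 1: localization.} First I rule out zeros in $\mathbb H$ away from $c_r\sim\alpha^2$. From $F_r=0$ we get $-\frac{c_r}{2}+\frac{2}{15}\alpha^2+O(\kappa^{-3/2}c_r^{-1/2})+Res_r=0$. The bound \eqref{est-res-ray} gives $Res\ll c_r+\alpha^2$ on $\mathbb H$, so this forces $c_r=\frac{4}{15}\alpha^2(1+o(1))$.

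\emph{Step 2: existence and uniqueness.} The Jacobian of $(F_r,F_i)$ in $(c_r,c_i)$ is, to leading order, $-\frac12 I$. This follows from Lemma \ref{lem-Ray-endpoint-expansion}, from \eqref{est-Phi3-main--1-cr-0}--\eqref{est-Phi3-main--1-ci-0} (whose Airy contribution is $O(1/\langle\kappa c_r\rangle^{3/2})$ times a bounded factor), and from the residual derivative bounds \eqref{est-res-ray-cr}, \eqref{est-res-ray-ci}, \eqref{est-res-airy-cr} and \eqref{est-res-airy-ci}. Hence the map is a small perturbation of an invertible affine map on the localized rectangle, and a contraction gives a unique zero, depending continuously (indeed $C^1$) on $\alpha^2$.

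\emph{Step 3: the imaginary part.} The imaginary part reads
\begin{align*}
c_i\approx 2\Big(\kappa^{-\frac32}(c_r/2)^{-\frac12}\sin\tfrac{\pi}{4}-\tfrac{4\pi}{15^2}\alpha^4\Big)+O(\text{res}).
\end{align*}
With $c_r\sim\alpha^2$, the first term dominates when $\alpha^2\ll\varepsilon^{1/5}$ and the second when $\alpha^2\gg\varepsilon^{1/5}$, which gives $c_i\sim\kappa^{-3/2}c_r^{-1/2}$ in the intermediate regime. Near $\alpha^2\sim\varepsilon^{1/3}$ the exact Airy ratio has argument passing through the critical value: for $\kappa c_r$ small, $c_i<0$ (Lin's lower branch). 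This sign statement at $\kappa c_r=O(1)$ is checked from the explicit function $z\mapsto \mathrm{Ai}(2,z)/\mathrm{Ai}(1,z)$ at $z=-e^{\frac{\pi}{6}i}\kappa c/2$, treated as a fixed $O(1)$ computation.

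\emph{Step 4: crossings.} Continuity and the intermediate value theorem then produce $\mathcal K_\pm,\widetilde{\mathcal K}_\pm\sim1$. Uniqueness of each crossing comes from monotonicity: $\partial_{\alpha^2}c_i\neq0$ there, obtained by implicit differentiation, using \eqref{est-Phi3-main--1-alpha} together with the $\alpha^2$-derivative bounds.

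\emph{Main obstacle.} The hardest point is the upper branch, $c_r\sim\varepsilon^{1/5}$. There both terms in $F_i$ are of size $\varepsilon^{2/5}$, so I need $|Res_i|\ll\varepsilon^{2/5}$, and the $c_r$-derivative residuals must be $\ll\varepsilon^{1/5}$. I expect to verify this by plugging $\alpha^2\sim c_r\sim\varepsilon^{1/5}$, $\kappa c_r\sim\varepsilon^{-2/15}$ and $c_0=\varepsilon^{1/2}$ into \eqref{est-res-ray}--\eqref{est-res-ray-cr} and \eqref{est-res-airy}. The $\varepsilon^{-1/6}(c_r+\alpha^2)\alpha^2$ term gives $\varepsilon^{7/30}$, which is acceptable against $\varepsilon^{1/5}$ only marginally. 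If that fails, I would refine it using the $c_i=0$ specialization noted after \eqref{est-acc-psi-1-b-err-cr}.
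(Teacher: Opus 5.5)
Your argument matches the paper's in the regime $\kappa c_r\gg1$: the real-part balance, the balance in $F_i$ between $\frac{4\pi}{15^2}\alpha^4$ and $\kappa^{-3/2}c_r^{-1/2}$, and injectivity of $c\mapsto F$ because the Airy part has small Lipschitz constant compared with $-\frac c2$ (this is the paper's Step~4 uniqueness argument). Your upper-branch bookkeeping is also fine, and not marginal: $\varepsilon^{7/30}$ beats $\varepsilon^{1/5}$ by the power $\varepsilon^{1/30}$, which absorbs the logarithms.

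The genuine gap is the lower-branch regime $\kappa c_r=O(1)$. This is exactly where $\mathcal K_-$ and $\widetilde{\mathcal K}_-$ live: at the lower neutral point $-\kappa\eta(-1)\approx2.3$, far below the threshold $\max(M,100)$ of Lemma~\ref{lem-Phi3-main--1}. Step~1 is false there as stated. The Airy term is not $O(\kappa^{-3/2}c_r^{-1/2})\ll c_r$. In the exact Tietjens--Hankel form $F\approx-\frac{c}{2\,\mathrm{Han}(\kappa c/2)}+\frac{2}{15}\alpha^2$ it is of the same size as $c_r$, and indeed $c_r\approx0.61\,\alpha^2$ at the lower neutral point, not $\frac{4}{15}\alpha^2(1+o(1))$.

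Step~2 fails there too. By \eqref{est-Phi3-main--1-cr-0}--\eqref{est-Phi3-main--1-ci-0}, $\pa_{c_r}F=-\frac12AR(z)+\dots$ and $\pa_{c_i}F=-\frac i2AR(z)+\dots$, with $AR(z)=\mathrm{Ai}(z)\mathrm{Ai}(2,z)/\mathrm{Ai}(1,z)^2$ and $z=e^{\frac{\pi}{6}i}\kappa\eta(-1)$. The smallness $1-AR(z)=O(|z|^{-3/2})$ is only available for $|z|\ge M$. For $|z|<M$ your ``bounded factor'' gives no smallness. Hence:
\begin{itemize}
\item the Jacobian is not close to $-\frac12I$;
\item its determinant $\frac14|AR(z)|^2$ is not controlled by any perturbative estimate;
\item since $F$ is genuinely nonlinear in $c$ on the scale $\varepsilon^{1/3}$, no contraction about an affine map is available, and local invertibility alone would not give uniqueness of the zero in $\mathbb H$.
\end{itemize}

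Consequently you have no continuous curve through this regime. Both your intermediate-value argument for $\mathcal K_-,\widetilde{\mathcal K}_-$ and your monotonicity step presuppose one. You also lack uniqueness of the curve in $\mathbb H$. What is missing is non-perturbative special-function input for the Jacobian and for injectivity, not only for the sign of $c_i$.

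The paper supplies this input numerically:
\begin{itemize}
\item Miles' tabulated Tietjens/Hankel values ($\mathrm{Han}_i(2.3)\approx0$, $\mathrm{Han}_i'(2.3)\approx1.21$) locate the lower neutral point, via the implicit function theorem in $c_r$ at $c_i=0$ followed by a sign change of $F_i$.
\item The plots of $|AR|$ and $\Re AR$ give $|AR(z)|\sim1$ and $\Re AR(z)>0$ on the relevant set. This yields a nondegenerate Jacobian and $\pa_{\alpha^2}c_r>0$, so the curve can be continued from the lower neutral point.
\item Uniqueness is obtained by pushing any second zero curve, via $\pa_{\alpha^2}\mathring c_r>0$, into the intermediate regime where your injectivity argument applies.
\end{itemize}

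Note also that your check ``$c_i<0$ for $\kappa c_r$ small'' lands outside the set $c_i\ge-\frac{c_0}{2}$ on which $F$ is constructed, since $c_0=\varepsilon^{1/2}\ll\varepsilon^{1/3}$. So $\widetilde{\mathcal K}_-$ has to come from transversality of the lower crossing instead, which places it just below $\mathcal K_-$.
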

\begin{proof}
  We first localize the possible triples $\left(c_r, c_i, \alpha^2)\right)$ for which $F(c_r,c_i,\alpha^2)=0$. The discussion is divided according to the size of \(c_r\). 

  By Lemma \ref{lem-sol-Ray-hom}, Lemma \ref{lem-Ray-endpoint-expansion}, and Lemma \ref{lem-res-ray}, we have
\begin{align}
  \Re \left(\frac{\Phi_{1,\alpha}(-1)}{\Phi_{1,\alpha}'(-1)}\right)=&\frac{-c_r}{2}+\frac{2}{15}\alpha^2+O\left(\alpha^2 c+\alpha^4\left|\ln c_0\right|+\left(1+\frac{\alpha^2}{c_r}\right)\left(\varepsilon^{\frac{1}{3}}\alpha^2+c_0 \right)|\ln c_0|^2\right),\label{expan-Phi1-Re}\\
  \Im \left(\frac{\Phi_{1,\alpha}(-1)}{\Phi_{1,\alpha}'(-1)}\right)=&\frac{-c_i}{2}-\frac{1}{3}c_i\alpha^2+\frac{4}{15^2}\alpha^4\arg(-c)\label{expan-Phi1-Im}\\
  &+O \left(\alpha^4 c\left|\ln c_0\right|+\alpha^2 c^2\left|\ln c_0\right|+\left(1+\frac{\alpha^2}{c_r}\right)\left(\varepsilon^{\frac{1}{3}}\alpha^2+c_0 \right)|\ln c_0|^2\right).\nonumber
\end{align}

\paragraph{Step 1: Localization of possible zeros.}

\paragraph{1) Case $c_r\ge \max(M,100)\varepsilon^{\frac{1}{3}}$.} By Lemma \ref{lem-res-airy} and Lemma \ref{lem-Phi3-main--1}, we have
\begin{align}\label{expan-Phi3}
   \left(\frac{\Phi_{3,\alpha}(-1)}{\Phi_{3,\alpha}'(-1)}\right)=&-\frac{e^{\frac{\pi}{4}i}}{\kappa }\frac{1}{\left(\frac{\kappa c}{2}\right)^{\frac{1}{2}}} \left(1+Res_{airy,err}\right) +O \left( \frac{ 1}{\kappa^2\left\langle \kappa c_r \right\rangle}+|c_i|c_r\right).
\end{align}
Consequently,
\begin{align*}
  \left|\Re\left(\frac{\Phi_{3,\alpha}(-1)}{\Phi_{3,\alpha}'(-1)}\right)\right|+\left|Res_1(c_r,c_i,\alpha^2)\right|\lesssim\kappa^{-\frac{3}{2}}c_r^{-\frac{1}{2}}+ \kappa^{-3}\left|\ln c_0\right|\ll |c_r|.
\end{align*}
Therefore, if $F_r(c_r,c_i,\alpha^2)=0$, the leading terms in
\eqref{expan-Phi1-Re} must balance. Hence
\begin{align}\label{balance-cr-alpha}
  \frac{c_r}{2}\approx \frac{2}{15}\alpha^2.
\end{align}
In particular, in this range any zero of \(F\) satisfies $c_r\sim \alpha^2$. 

We now study the imaginary part under the balance $\frac{c_r}{2}\approx\frac{2}{15}\alpha^2$. We divided the problem into two cases.
\paragraph{1.1) Case $c_r\ll \varepsilon^{\frac{1}{5}}$.} In this regime, 
\begin{align*}
  \left|\frac{e^{\frac{\pi}{4}i}}{\kappa }\frac{1}{\left(\frac{\kappa c}{2}\right)^{\frac{1}{2}}}\right|\gg c_r^2\sim \alpha^4.
\end{align*}
Suppose first that
\begin{align*}
  \left| \Im \left( \frac{e^{\frac{\pi}{4}i}}{\kappa }\frac{1}{\left(\frac{\kappa c}{2}\right)^{\frac{1}{2}}} \right) \right|\ll \frac{1}{\kappa^{\frac{3}{2}}|c|^{\frac{1}{2}}}.
\end{align*}
Then the phase of $\left(\kappa c\right)^{\frac{1}{2}}$ must be close to \(\pi/4\),  or
equivalently \(c_i\gg c_r\). In this situation the term $-\frac{c_i}{2}$ dominates the imaginary part of \(F\), and we obtain
\begin{align*}
  \left|F_i(c_r,c_i,\alpha^2)+\frac{c_i}{2}\right|\ll|c_i|.
\end{align*}
Thus \(F_i(c_r,c_i,\alpha^2)\neq 0\), and hence such a point cannot be a zero
of \(F\).

It follows that, for \(F_i(c_r,c_i,\alpha^2)=0\), it is necessary that
\begin{align*}
  \left| \Im \left( \frac{e^{\frac{\pi}{4}i}}{\kappa }\frac{1}{\left(\frac{\kappa c}{2}\right)^{\frac{1}{2}}} \right) \right|\sim \frac{1}{\kappa^{\frac{3}{2}}|c|^{\frac{1}{2}}}.
\end{align*}
In particular, \(|c_i|\lesssim c_r\). Under this condition all the remaining
terms in \(F_i\) are lower order, and therefore
\begin{align*}
  \left|F_i(c_r,c_i,\alpha^2)-\Im \left( \frac{e^{\frac{\pi}{4}i}}{\kappa }\frac{1}{\left(\frac{\kappa c}{2}\right)^{\frac{1}{2}}} \left(1+Res_{airy,err}\right) \right)+\frac{c_i}{2}\right|\ll \left| \Im \left( \frac{e^{\frac{\pi}{4}i}}{\kappa }\frac{1}{\left(\frac{\kappa c}{2}\right)^{\frac{1}{2}}} \right) \right|.
\end{align*}
Thus, at a zero of $F_i(c_r,c_i,\alpha^2)$,
\begin{align*}
  \frac{c_i}{2}\approx\Im \left( \frac{e^{\frac{\pi}{4}i}}{\kappa }\frac{1}{\left(\frac{\kappa c}{2}\right)^{\frac{1}{2}}}\left(1+Res_{airy,err}\right) \right).
\end{align*}
Since in the present regime $\kappa c_r\ge \max(M,100)$, this gives $|c_i|\le \frac{1}{\kappa}\le\frac{c_r}{100}$, and
\begin{align*}
  c_i\sim \frac{1}{\kappa^{\frac{3}{2}}c_r^{\frac{1}{2}}}.
\end{align*}

\paragraph{1.2) Case $c_r\sim \varepsilon^{\frac{1}{5}}$.} Write $c_r=C_{c_r,+}\varepsilon^{\frac{1}{5}}$. In the transition from the previous regime, \(C_{c_r,+}\) grows from
\(o(1)\) to \(O(1)\). By the same analysis to the previous case, we have $|c_i|\lesssim\frac{1}{\kappa^{\frac{3}{2}}c_r^{\frac{1}{2}}}\ll c_r$. Therefore,
\begin{align*}
  \frac{4}{15^2}\alpha^4\arg(-c)=-\frac{4\pi}{15^2}\alpha^4+O \left(\alpha^4 \frac{|c_i|}{c_r}\right).
\end{align*}
Recalling the definition of $Res_{airy,err}$ in Lemma \ref{lem-Phi3-main--1},  we also have
\begin{align*}
  \Im \left( \frac{e^{\frac{\pi}{4}i}}{\kappa }\frac{1}{\left(\frac{\kappa c}{2}\right)^{\frac{1}{2}}}\left(1+Res_{airy,err}\right) \right)= \frac{1}{\kappa^{\frac{3}{2}}c_r^{\frac{1}{2}}}+O \left(\frac{|c_i|}{\kappa^{\frac{3}{2}}c_r^{\frac{3}{2}}}+\frac{c_r^{\frac{1}{2}}}{\kappa^{\frac{3}{2}}}+\frac{1}{\kappa^3c_r^2}\right).
\end{align*}

Together with the balance $\frac{c_r}{2}\approx\frac{2}{15}\alpha^2$, this implies
\begin{align*}
  \alpha^4\sim \frac{1}{\kappa^{\frac{3}{2}}c_r^{\frac{1}{2}}}\sim \varepsilon^{\frac{2}{5}}.
\end{align*}

Using $\kappa=\varepsilon^{-\frac{1}{3}}2^{\frac{1}{3}} \left(1+O(c_r)\right)$, we obtain
\begin{equation}\label{eq-exp-Fi-upb}
  \begin{aligned}    
      F_i(c_r,c_i,\alpha^2)=&\frac{-c_i}{2}-\frac{4\pi}{15^2}\alpha^4+\frac{1}{\kappa^{\frac{3}{2}}c_r^{\frac{1}{2}}}+o\left(\varepsilon^{\frac{2}{5}}\right)=\frac{-c_i}{2}-\frac{\pi}{4}c_r^2+\frac{1}{\sqrt2}\varepsilon^{\frac{1}{2}}c_r^{-\frac{1}{2}}+o\left(\varepsilon^{\frac{2}{5}}\right)\\
  =& \frac{-c_i}{2}-\frac{\pi}{4}C_{c_r,+}^2\varepsilon^{\frac{2}{5}}+\frac{C_{c_r,+}^{-\frac{1}{2}}}{\sqrt2}\varepsilon^{\frac{2}{5}}+o\left(\varepsilon^{\frac{2}{5}}\right).
  \end{aligned}
\end{equation} 
Therefore, a necessary condition for \(F_i=0\)  in this transition regime is $|c_i|\lesssim \varepsilon^{\frac{2}{5}}$.

\paragraph{2) Case $ c_r\le \max(M,100)\varepsilon^{\frac{1}{3}}$.}
Let $c_r=C_{c_r,-}\varepsilon^{\frac{1}{3}}$, where $C_{c_r,-}$ decreases from $\max(M,100)$. 
From the analysis in Case 1, at the point $\kappa c_r=\max(M,100)$ we have $c_i\le \frac{c_r}{100}$.

We claim that, as  $C_{c_r,-}$ decreases, no zero of \(F\) can have $c_i\ge\max(M,100)\varepsilon^{\frac{1}{3}}$. 
Indeed, if this were the case, then by Lemma  \ref{lem-Phi3-main--1},
\begin{align*}
  \left| \Im \left( \frac{e^{\frac{\pi}{4}i}}{\kappa }\frac{1}{\left(\frac{\kappa c}{2}\right)^{\frac{1}{2}}} \right) \right|
  \le \frac{1}{\kappa^{\frac{3}{2}}c_i^{\frac{1}{2}}}
  <\frac{c_i}{100}.
\end{align*}
The Airy contribution is then too small to balance the leading term $-\frac{c_i}{2}$ in $F_i$. Hence $F_i(c_r,c_i,\alpha^2)\neq 0$, a
contradiction. Thus, in this lower branch regime, any zero of $F$ must satisfy $c_i\lesssim \varepsilon^{\frac13}.$ 

\paragraph{Step 2: Construction of the solution curve near the lower branch.}

We now construct the solution curve
\[
  \alpha^2\mapsto (c_r(\alpha^2),c_i(\alpha^2)).
\]
We start from the lower branch, where \(c_r\sim \varepsilon^{\frac13}\).
Following the approach of C.C. Lin \cite{LinCC1955}, we introduce the
Tietjens function and the Hankel function which are analytic functions:
\begin{align*}
  \mathrm{Ti}\left(-\kappa \eta(-1)\right)=\frac{ \mathcal A_1(2,\kappa \eta(-1)) } {\kappa \eta(-1)\mathcal A_1(1,\kappa \eta(-1))},
\end{align*}
and
\begin{align*}
  \mathrm{Han}\left(-\kappa \eta(-1)\right)= \left(1-Ti \left(-\kappa \eta(-1)\right)\right)^{-1}=\left(1-\frac{ \mathcal A_1(2,\kappa \eta(-1)) } {\kappa \eta(-1)\mathcal A_1(1,\kappa \eta(-1))}\right)^{-1}.
\end{align*}

Then we write
\begin{align*}
  F(c_r,c_i,\alpha^2)=&-\frac{\eta(-1)}{\eta_r'(-1)}-\frac{c}{2}+\frac{\eta(-1)}{\eta_r'(-1)} \left(1-\frac{\mathcal A_1(2,\kappa \eta(-1))}{\kappa \eta(-1)\mathcal A_1(1,\kappa \eta(-1))}\right)+\frac{2}{15}\alpha^2+O\left(\alpha^2 c+c_0|\ln c_0|^2+c^2\right)\\
  =&-\frac{c}{2\mathrm{Han}\left( \frac{\kappa c}{2}\right)}+\frac{2}{15}\alpha^2+O\left(\alpha^2 c+c_0|\ln c_0|^2+c^2\right),
\end{align*}
where we use the fact that $-\frac{\eta(-1)}{\eta_r'(-1)}-\frac{c}{2}=O(c^2)$.

We first set \(c_i=0\). According to the numerical data in
\cite{Miles1960}, the condition \(F_i=0\) can hold only when
\[
  \frac{\kappa c_r}{2}\approx 2.3,
\]
where
\[
  \mathrm{Han}_i(2.3)\approx 0,
  \qquad
  \mathrm{Han}_r(2.3)\approx 2.294.
\]
Then $F_r(c_r,0,\alpha^2)=0$ gives
\begin{align*}
  c_r\approx 0.6117\alpha^2\approx 3.65\varepsilon^{\frac{1}{3}}.
\end{align*}
We now seek a function \(\widetilde c_r(\alpha^2)\) satisfying $F_r(\tilde c_r(\alpha^2),0,\alpha^2)=0$ in a neighborhood of the point  $\alpha^2\approx 5.967$. Recall that
\begin{align*}
  F_r(c_r,0,\alpha^2)=-\frac{c_r}{2}\frac{\mathrm{Han}_r\left(\frac{\kappa c_r}{2}\right)}{\mathrm{Han}_r^2\left(\frac{\kappa c_r}{2}\right)+\mathrm{Han}_i^2\left(\frac{\kappa c_r}{2}\right)}+\frac{2}{15}\alpha^2+err,
\end{align*}
where $err=O\left(\alpha^2 c+c_0|\ln c_0|^2+c^2\right)$. By Lemma \ref{lem-Ray-endpoint-expansion}, Remark \ref{rmk-Ray-endpoint-expansion}, Lemma \ref{lem-res-ray}, and Lemma \ref{lem-res-airy},
\begin{align*}
  \left|\pa_{c_r}err\right|\lesssim \varepsilon^{\frac{1}{3}}\left|\ln c_0\right|^3.
\end{align*}
Using $\mathrm{Han}_r'(2.3)\approx -0.6242$ and $\mathrm{Han}_i'(2.3)\approx 1.213$, we obtain around $\alpha^2\approx 5.967\varepsilon^{\frac{1}{3}}$ that
\begin{align*}
  \pa_{c_r}F_r(c_r,0,\alpha^2)\approx-0.49 .
\end{align*}
Similarly,
\begin{align*}
  \pa_{\alpha^2}F_r(c_r,0,\alpha^2)\approx \frac{2}{15}.
\end{align*}
Therefore, by the implicit function theorem, there exists a unique function $\tilde c_r(\alpha^2)$ near $\alpha^2\approx 5.967\varepsilon^{\frac{1}{3}}$ such that
\begin{align*}
  F_r(\widetilde c_r(\alpha^2),0,\alpha^2)=0.
\end{align*}
Moreover, near $\alpha^2\approx 5.967\varepsilon^{\frac{1}{3}}$,
\begin{align*}
  \pa_{\alpha^2}\widetilde c_r(\alpha^2)\approx 0.27.
\end{align*}
Since \(\mathrm{Han}_i'(2.3)\approx 1.213\), the quantity $F_i(\widetilde c_r(\alpha^2),0,\alpha^2)$ changes sign as \(\alpha^2\) varies. Hence there exists ${\mathcal K}_-(\varepsilon)\approx 5.967$ such that
\begin{align*}
  \left.F_i \left(\tilde c_r(\alpha^2),0,\alpha^2\right)\right|_{\alpha^2=K_{\alpha^2,-}(\varepsilon)\varepsilon^{\frac{1}{3}}}=0.
\end{align*}
This gives the lower neutral point.

\paragraph{Step 3: Extension by the implicit function theorem.}

We next extend the solution curve from the lower neutral point. By Lemma \ref{lem-Ray-endpoint-expansion}, Remark \ref{rmk-Ray-endpoint-expansion}, Remark \ref{rmk-Ray-endpoint-expansion}, Lemma \ref{lem-res-ray}, Lemma \ref{lem-res-airy}, and Lemma \ref{lem-Phi3-main--1-cr} we have
\begin{align*}
  \pa_{c_r}F(c_r,c_i,\alpha^2)=-\frac{1}{2} \frac{\mathrm{Ai}(e^{\frac{\pi}{6}i}\kappa \eta(-1))\mathrm{Ai}(2,e^{\frac{\pi}{6}i}\kappa \eta(-1))}{\left(\mathrm{Ai}(1,e^{\frac{\pi}{6}i}\kappa \eta(-1))\right)^2}+O \left( \left(c_r+\alpha^2+\frac{\alpha^4}{c_r}\right)|\ln c_0|^3\right),\\
  \pa_{c_i}F(c_r,c_i,\alpha^2)=-i\frac{1}{2} \frac{\mathrm{Ai}(e^{\frac{\pi}{6}i}\kappa \eta(-1))\mathrm{Ai}(2,e^{\frac{\pi}{6}i}\kappa \eta(-1))}{\left(\mathrm{Ai}(1,e^{\frac{\pi}{6}i}\kappa \eta(-1))\right)^2}+O \left( \left(c_r+\alpha^2+\frac{\alpha^4}{c_r}\right)|\ln c_0|^3\right),
\end{align*}
Therefore, 
\begin{align*}
\det \left(
    \begin{array}{cc}
      \pa_{c_r}F_r&\pa_{c_i}F_r\\       
      \pa_{c_r}F_i &\pa_{c_i}F_i
    \end{array}
  \right)=\frac{1}{4} \left|\frac{\mathrm{Ai}(e^{\frac{\pi}{6}i}\kappa \eta(-1))\mathrm{Ai}(2,e^{\frac{\pi}{6}i}\kappa \eta(-1))}{\left(\mathrm{Ai}(1,e^{\frac{\pi}{6}i}\kappa \eta(-1))\right)^2}\right|^2+O \left( \left(c_r+\alpha^2+\frac{\alpha^4}{c_r}\right)|\ln c_0|^3\right).
\end{align*}
For $\left|\kappa c\right|\ge M$, by Lemma \ref{lem:Airy-class-est}, it is clear that $\det \left(
    \begin{array}{cc}
      \pa_{c_r}F_r&\pa_{c_i}F_r\\       
      \pa_{c_r}F_i &\pa_{c_i}F_i
    \end{array}
  \right)\approx \frac{1}{4}$.  For the case $\left|\kappa c\right|< M$ we use the specific value of the Airy quotient
\begin{align*}
  AR(z)=\frac{\mathrm{Ai}(z)\mathrm{Ai}(2,z)}{\mathrm{Ai}^2(1,z)}.
\end{align*}
The plot of $\left|AR(z)\right|$ shows that this quotient does not vanish in the
relevant region:
\begin{figure}[htbp]
    \centering
    \begin{subfigure}{0.48\textwidth}
        \centering
        \includegraphics[width=\textwidth]{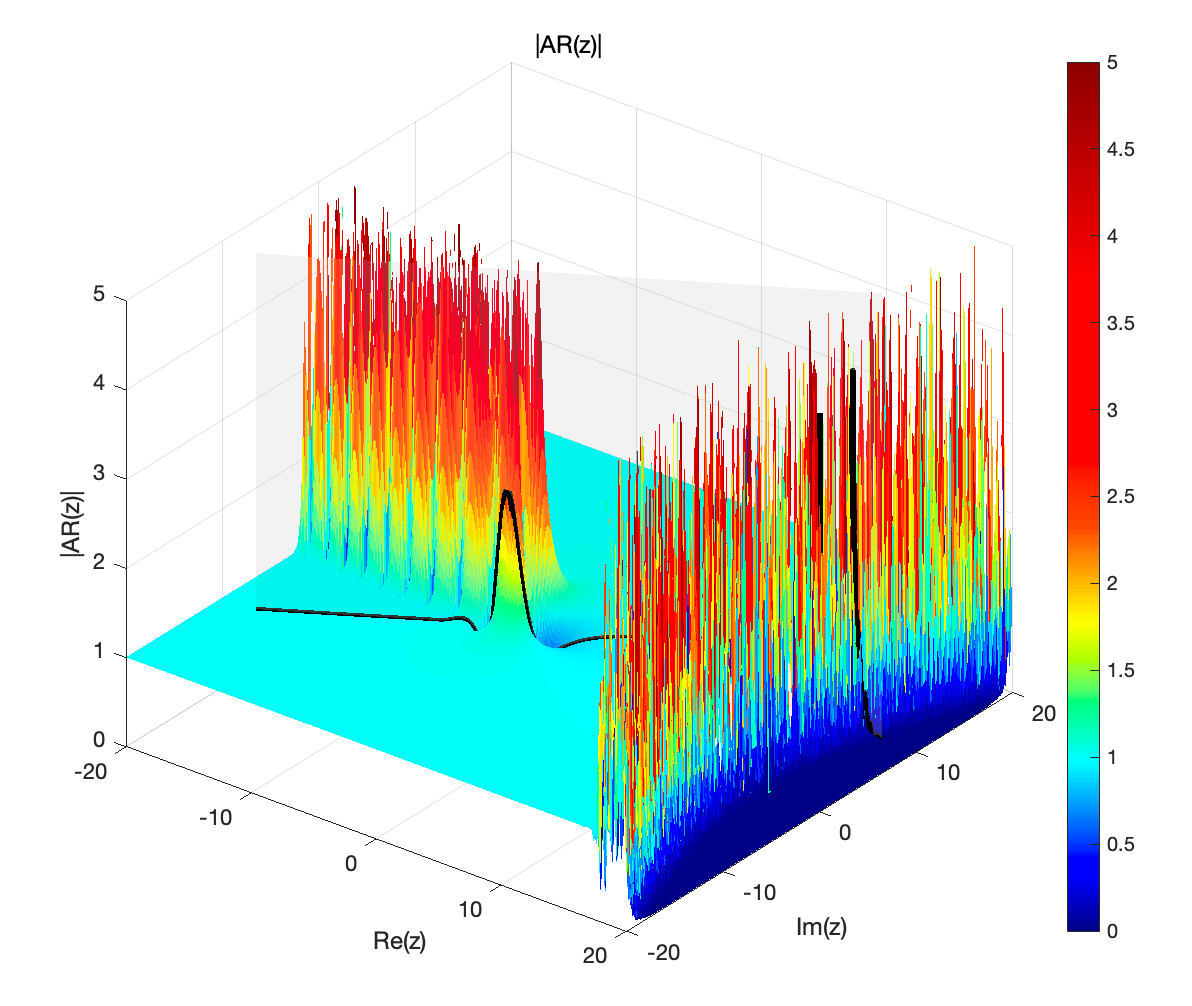}
        \label{fig:small-range}
    \end{subfigure}
    \hfill 
    \begin{subfigure}{0.48\textwidth}
        \centering
        \includegraphics[width=\textwidth]{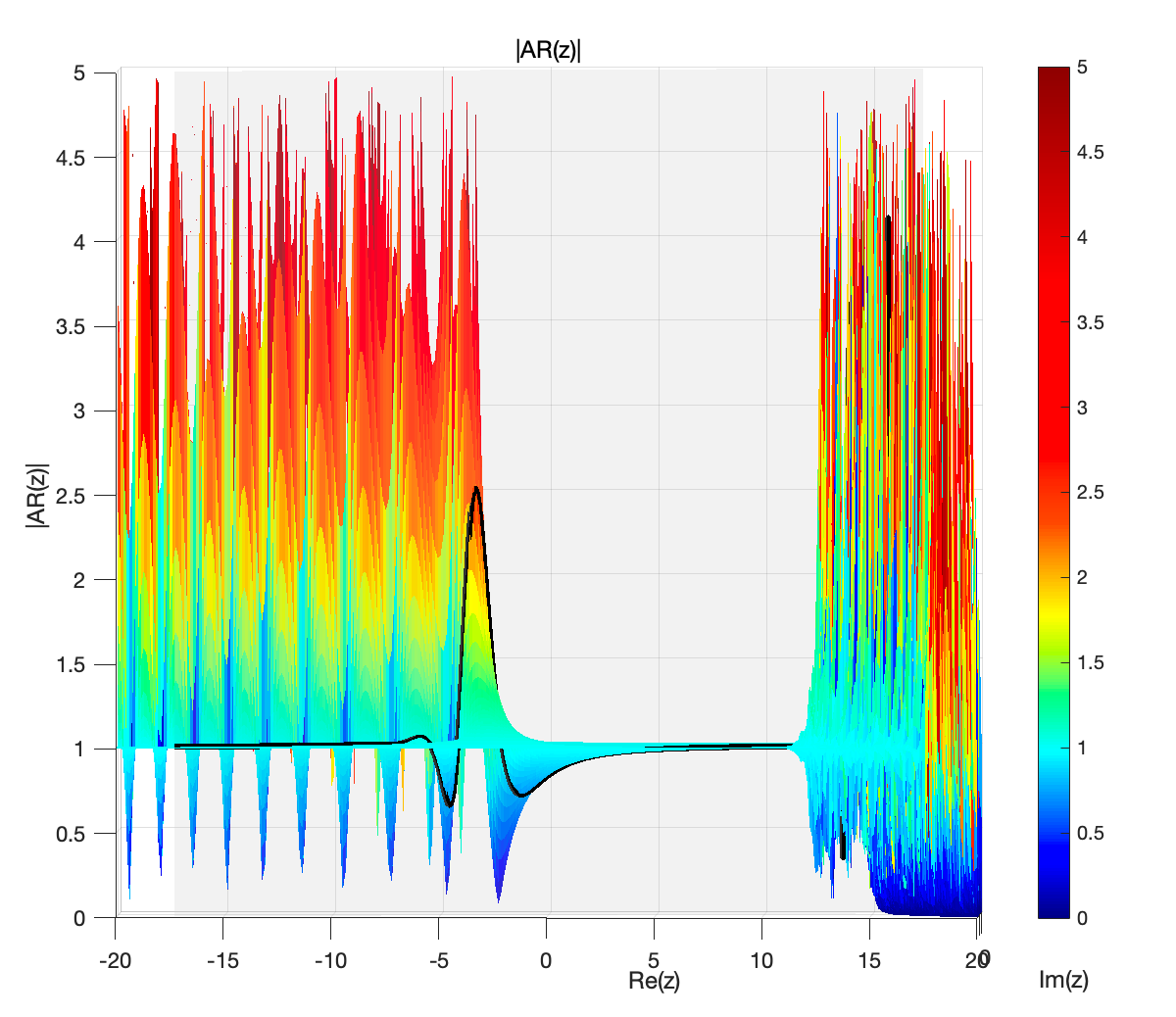}
        \label{fig:large-range}
    \end{subfigure}
    \caption{Plot of $\left|AR(z)\right|$. Black curve is the value for $\arg (z)=-\frac{5}{6}\pi$.}
    \label{fig:comparison}
\end{figure}

Consequently, for \((c_r,c_i,\alpha^2)\in\mathbb H\), 
\begin{align*}
  \left|\frac{\mathrm{Ai}(e^{\frac{\pi}{6}i}\kappa \eta(-1))\mathrm{Ai}(2,e^{\frac{\pi}{6}i}\kappa \eta(-1))}{\left(\mathrm{Ai}(1,e^{\frac{\pi}{6}i}\kappa \eta(-1))\right)^2}\right|\sim1.
\end{align*}
The implicit function theorem
therefore extends the solution curve $\left(c_r(\alpha^2), c_i(\alpha^2)\right)$ from the lower neutral point. 

Moreover, by Lemma \ref{lem-Ray-endpoint-expansion}, Remark \ref{rmk-Ray-endpoint-expansion}, Lemma \ref{lem-res-ray}, Lemma \ref{lem-res-airy}, and Lemma \ref{lem-Phi3-main--1-cr} we have 
\begin{align*}
  \pa_{\alpha^2}c_r=&-\frac{\pa_{c_i}F_i\pa_{\alpha^2}F_r-\pa_{c_i}F_r\pa_{\alpha^2}F_i}{\det \left(
    \begin{array}{ll}
      \pa_{c_r}F_r&\pa_{c_i}F_r\\       
      \pa_{c_r}F_i &\pa_{c_i}F_i
    \end{array}
  \right)}=\frac{\frac{1}{15} \Re\frac{\mathrm{Ai}(e^{\frac{\pi}{6}i}\kappa \eta(-1))\mathrm{Ai}(2,e^{\frac{\pi}{6}i}\kappa \eta(-1))}{\left(\mathrm{Ai}(1,e^{\frac{\pi}{6}i}\kappa \eta(-1))\right)^2}+O \left( \left(c_r+\alpha^2+\frac{\alpha^4}{c_r}\right)|\ln c_0|^3\right)}{\det \left(
    \begin{array}{ll}
      \pa_{c_r}F_r&\pa_{c_i}F_r\\       
      \pa_{c_r}F_i &\pa_{c_i}F_i
    \end{array}
  \right)}.
\end{align*}
For $\left|\kappa\eta(-1)\right|\ge M$, this gives $\pa_{\alpha^2}c_r(\alpha^2)\approx \frac{4}{15}$. For \(|\kappa\eta(-1)|<M\), the sign is determined by the real part of
\(AR(z)\). The plot of \(\Re AR(z)\) shows that it remains positive in the
relevant region:
\begin{figure}[htbp]
    \centering
    \begin{subfigure}{0.48\textwidth}
        \centering
        \includegraphics[width=\textwidth]{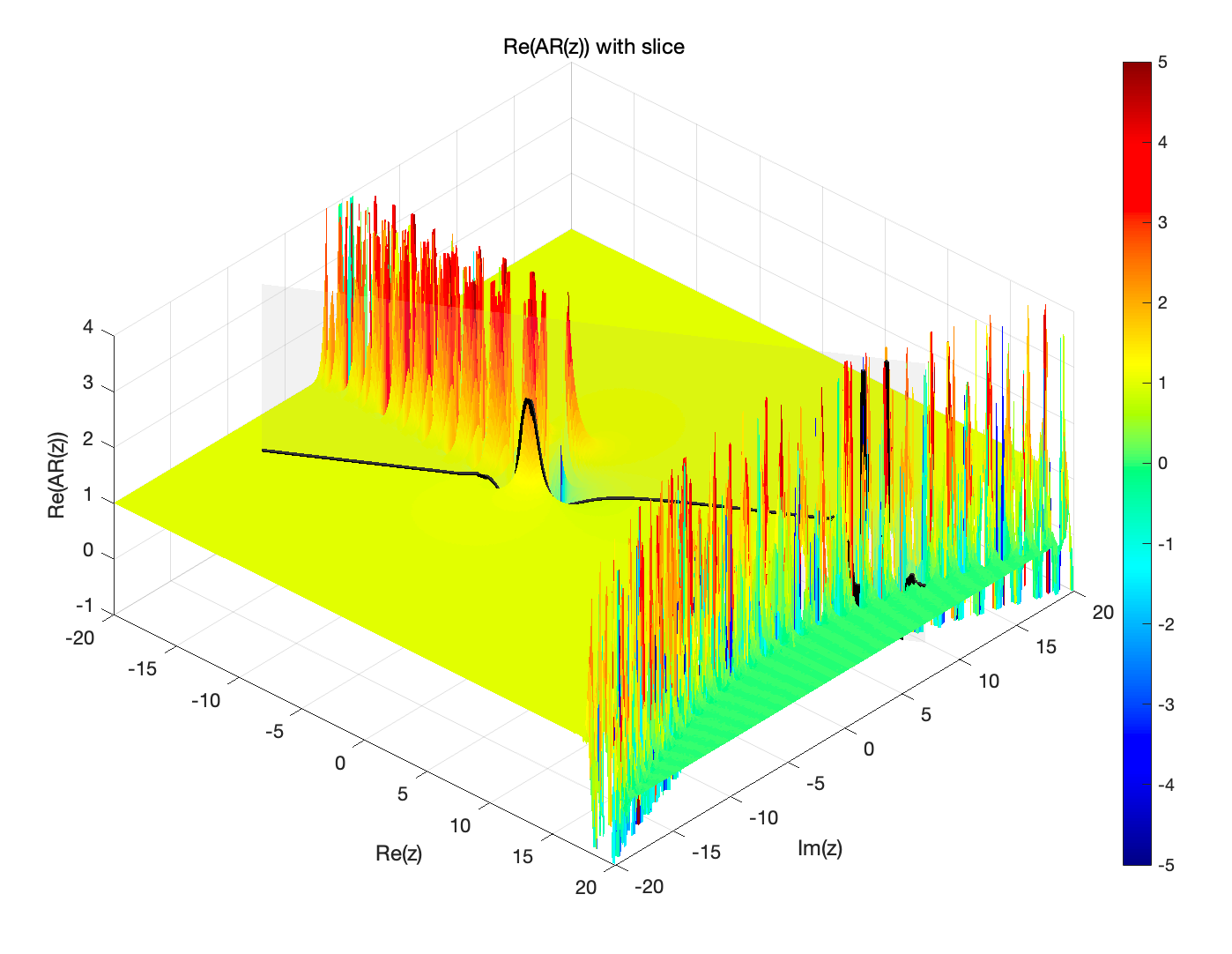}
        \label{fig:small-range}
    \end{subfigure}
    \hfill 
    \begin{subfigure}{0.48\textwidth}
        \centering
        \includegraphics[width=\textwidth]{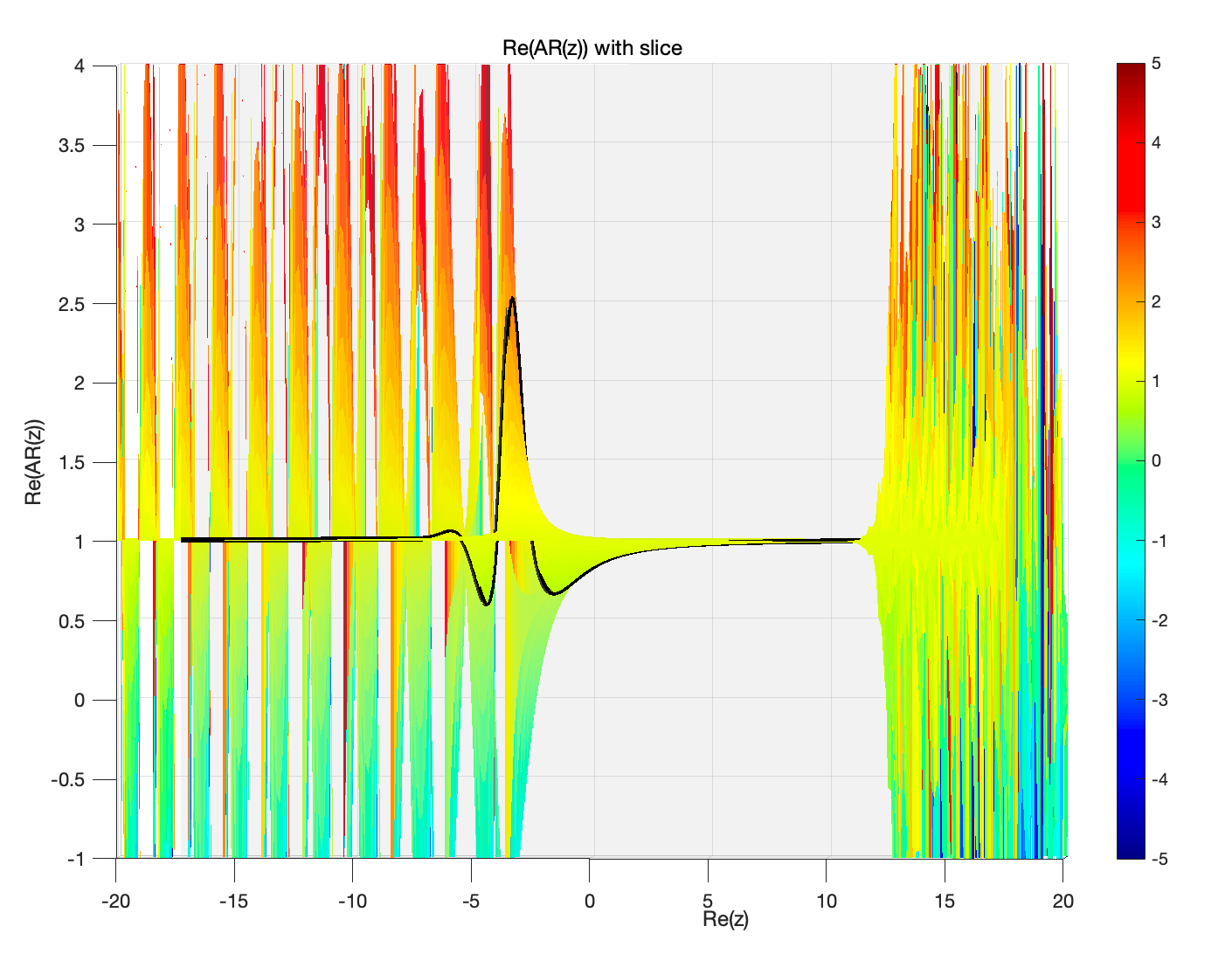}
        \label{fig:large-range}
    \end{subfigure}
    \caption{Plot of $\Re AR(z)$. Black curve is the value for $\arg (z)=-\frac{5}{6}\pi$.}
    \label{fig:comparison}
\end{figure}

Hence, along the solution curve,
\begin{align}\label{cr-monotone}
  \pa_{\alpha^2}c_r(\alpha^2)>0,
  \qquad
  \pa_{\alpha^2}c_r(\alpha^2)=O(1).
\end{align}

We now identify the part of the curve lying below the lower neutral point.
Since the Hankel function is analytic, for \(c_i=-c_0/2\) we have
\begin{align*}
  \mathrm{Han}\left( \frac{\kappa c}{2}\right)=\mathrm{Han}\left( \frac{\kappa c_r}{2}\right)\left(1+O(\frac{c_0}{c_r})\right).
\end{align*}
Together with \(\mathrm{Han}_i'(2.3)\approx 1.213\), this implies that there
exists $\widetilde {\mathcal K}_-(\varepsilon)$ such that $\widetilde {\mathcal K}_-(\varepsilon)\sim1$, $\widetilde {\mathcal K}_-(\varepsilon)< {\mathcal K}_-(\varepsilon)$, $c_i \left(\widetilde K_{\alpha^2,-}(\varepsilon)\varepsilon^{\frac{1}{3}}\right)=-\frac{c_0}{2}$.

As \(\alpha^2\) increases from $\mathcal K_-(\varepsilon)\varepsilon^{\frac13}$, the
monotonicity \eqref{cr-monotone} implies that \(c_r(\alpha^2)\) increases.
Combining this with the localization obtained in Step 1, we find that, in the
intermediate regime $\varepsilon^{\frac{1}{3}}\ll\alpha^2\ll \varepsilon^{\frac{1}{5}}$, one has
\begin{align*}
  c_r \left( \alpha^2\right)\sim\alpha^2,\quad c_i\sim \varepsilon^{\frac{1}{2}} |\alpha|^{-1}.
\end{align*}

Finally, consider the upper branch where $\alpha^2\sim \varepsilon^{\frac{1}{5}}$. At a zero of \(F_i \left( c_r(\alpha^2), c_i(\alpha^2), \alpha^2\right)\), the expansion \eqref{eq-exp-Fi-upb} gives 
\begin{align*}
  \frac{-c_i(\alpha^2)}{2}-\frac{4\pi}{15^2}\alpha^4+\frac{1}{\kappa^{\frac{3}{2}}c_r(\alpha^2)^{\frac{1}{2}}}+o\left(\varepsilon^{\frac{2}{5}}\right)=0.
\end{align*}
Since \(\pa_{\alpha^2}c_r(\alpha^2)\approx \frac{4}{15}\) in this regime, the leading terms above have a simple crossing. Hence there exist $\mathcal K_+(\varepsilon)\sim \widetilde{\mathcal K}_+(\varepsilon)\sim 1$ and $\widetilde {\mathcal K}_+(\varepsilon)>{\mathcal K}_+(\varepsilon)$ such that
\begin{align*}
 c_i \left({\mathcal K}_+(\varepsilon)\varepsilon^{\frac15} \right)=0,\qquad c_i \left(\widetilde {\mathcal K}_+(\varepsilon)\varepsilon^{\frac15} \right)=-\frac{c_0}{2}.
\end{align*}
The point $\left(c_r \left(K_{\alpha^2,+}(\varepsilon)\varepsilon^{\frac15}\right),0  \right)$ is the upper neutral point. 

\paragraph{Step 4: Uniqueness of the solution curve.}
It remains to prove uniqueness. Suppose, to the contrary, that there is another
solution curve  $\left(\mathring c_r(\alpha^2), \mathring c_i(\alpha^2)\right)$ in \(\mathbb H\). By the non-degeneracy estimate above and the implicit
function theorem, the curve is locally unique near any of its points. Moreover,
the same computation as in \eqref{cr-monotone} gives $\pa_{\alpha^2}\mathring c_r(\alpha^2)>0$ and $\pa_{\alpha^2}\mathring c_r(\alpha^2)=O(1)$. Hence, for \(\alpha^2\gg \varepsilon^{\frac13}\), we must have $\mathring c_r(\alpha^2)\gg \varepsilon^{\frac{1}{3}}$. 

By the localization result in Step 1, any zero in this range satisfies
\[
  \mathring c_r(\alpha^2)\sim \alpha^2,
  \qquad
  |\mathring c_i(\alpha^2)|
  \lesssim
  \frac{1}{\kappa^{\frac32}\mathring c_r^{\frac12}}(\alpha^2).
\]

Choose \(\alpha^2\) in the intermediate regime $\varepsilon^{\frac13}\ll \alpha^2\ll \varepsilon^{\frac15}$, and suppose that $c(\alpha^2)\neq \mathring c(\alpha^2)$. 

Since both points are zeros of \(F\), we have
\begin{align*}
  &
  \left.
  \frac{\Phi_{1,\alpha}(-1)}{\Phi_{1,\alpha}'(-1)}
  \right|_{c=c(\alpha^2)}
  -
  \left.
  \frac{\Phi_{1,\alpha}(-1)}{\Phi_{1,\alpha}'(-1)}
  \right|_{c=\mathring c(\alpha^2)}
  \\
  &=
  \left.
  \left(
    \frac{\Phi_{3,\alpha}(-1)}{\Phi_{3,\alpha}'(-1)}
    -Res_1(c_r,c_i,\alpha^2)
  \right)
  \right|_{c=c(\alpha^2)}
  -
  \left.
  \left(
    \frac{\Phi_{3,\alpha}(-1)}{\Phi_{3,\alpha}'(-1)}
    -Res_1(c_r,c_i,\alpha^2)
  \right)
  \right|_{c=\mathring c(\alpha^2)}.
\end{align*}
However, by Lemma \ref{lem-Ray-endpoint-expansion}, Remark \ref{rmk-Ray-endpoint-expansion}, Lemma \ref{lem-res-ray}, Lemma \ref{lem-res-airy}, and Lemma \ref{lem-Phi3-main--1-cr}, we have
\begin{align*}
  \left.\frac{\Phi_{1,\alpha}(-1)}{\Phi_{1,\alpha}'(-1)}\right|_{c=c(\alpha^2)}-\left.\frac{\Phi_{1,\alpha}(-1)}{\Phi_{1,\alpha}'(-1)}\right|_{c=\mathring c(\alpha^2)}=O \left(c(\alpha^2)-\mathring c(\alpha^2)\right),
\end{align*}
and 
\begin{align*}
  \left.\left(\frac{\Phi_{3,\alpha}(-1)}{\Phi_{3,\alpha}'(-1)}-Res_1(c_r,c_i,\alpha^2)\right)\right|_{c=c(\alpha^2)}-\left.\left(\frac{\Phi_{3,\alpha}(-1)}{\Phi_{3,\alpha}'(-1)}-Res_1(c_r,c_i,\alpha^2)\right)\right|_{c=\mathring c(\alpha^2)}=o \left(c(\alpha^2)-\mathring c(\alpha^2)\right).
\end{align*}
This contradiction gives the uniqueness of the solution curve. 
\end{proof}

Next, we prove the main theorem.
\begin{proof}[Proof of Theorem \ref{thm1}]
    In Lemma \ref{lem-sol-curve}, the parameter $\varepsilon$ is fixed, and the solution curve is parametrized by $\alpha^2$. We now fix $\alpha>0$ sufficiently small. Then $\varepsilon=\frac{\nu}{\alpha}$ becomes a function of $\nu$.

We first translate the result of Lemma \ref{lem-sol-curve} into the fixed-$\alpha$ formulation. We look for  a solution curve
  \[
    \nu\mapsto (c_r(\nu),c_i(\nu))
  \]
  such that
  \begin{align*}
    F(c_r(\nu),c_i(\nu),\alpha^2,\nu)=0.
  \end{align*}
We emphasize that this curve has a different meaning from the curve $\alpha^2\mapsto (c_r(\alpha^2),c_i(\alpha^2))$ constructed in  Lemma \ref{lem-sol-curve}.

 We first show that, for fixed sufficiently small $\alpha$, the lower neutral point exists. By Lemma \ref{lem-sol-curve}, for each fixed $\varepsilon$ there exists a lower neutral point at
\[
  \alpha^2=\mathcal K_-(\varepsilon)\varepsilon^{\frac13},
\]
where $\mathcal K_-(\varepsilon)\sim 1$ is continuous in $\varepsilon$. More
precisely, there exist
\[
  c_r\sim \varepsilon^{\frac13},
  \qquad
  c_i=0,
\]
such that
\begin{align*}
  F\left(
    c_r,
    c_i,
    \mathcal K_-(\varepsilon)\varepsilon^{\frac13},
    \nu
  \right)=0,
  \qquad
  \nu=\alpha\varepsilon,
\end{align*}
where $\alpha^2=\mathcal K_-(\varepsilon)\varepsilon^{\frac13}$ in the
fixed-$\varepsilon$ formulation.

We now fix $\alpha$ and regard
\[
  \varepsilon=\frac{\nu}{\alpha}
\]
as a function of $\nu$. To locate the lower neutral point in the fixed-$\alpha$
formulation, we compare the two curves in the $(\varepsilon,\alpha^2)$-plane:
\[
  \left(\frac{\nu}{\alpha},\alpha^2\right)
  \quad\text{and}\quad
  \left(
    \frac{\nu}{\alpha},
    \mathcal K_-\left(\frac{\nu}{\alpha}\right)
    \left(\frac{\nu}{\alpha}\right)^{\frac13}
  \right).
\]
Since $\mathcal K_-(\varepsilon)$ is continuous and comparable to $1$, by
varying $\nu$ there exists $\nu_{*,-}$ such that these two curves intersect,
namely
\begin{align}\label{eq-lower-neutral-fixed-alpha}
  \alpha^2
  =
  \mathcal K_-\left(\frac{\nu_{*,-}}{\alpha}\right)
  \left(\frac{\nu_{*,-}}{\alpha}\right)^{\frac13}.
\end{align}
Equivalently,
\begin{align*}
  \nu_{*,-}
  =
  \left[
    \mathcal K_-\left(\frac{\nu_{*,-}}{\alpha}\right)
  \right]^{-3}
  \alpha^7
  \eqdef
  J_{\nu,-}(\alpha)\alpha^7.
\end{align*}
Since $\mathcal K_-(\varepsilon)\sim 1$, we have $J_{\nu,-}(\alpha)\sim 1$.

Thus the lower neutral point is located at
\begin{align*}
  \nu=\nu_{*,-}=J_{\nu,-}(\alpha)\alpha^7.
\end{align*} 
At this point,
\begin{align*}
  c_r(\nu_{*,-})  \sim  \left(\frac{\nu_{*,-}}{\alpha}\right)^{\frac13}  \sim  \alpha^2\sim \nu_{*,-}^{\frac27},  \qquad  c_i(\nu_{*,-})=0.
\end{align*}

Then, for $\alpha^{11}\ll\nu\ll \alpha^7$, we have
\begin{align*}
  \left(\frac{\nu}{\alpha}\right)^{\frac{1}{3}}\ll\alpha^2\ll\left(\frac{\nu}{\alpha}\right)^{\frac{1}{5}}.
\end{align*}
By the result of Lemma \ref{lem-sol-curve}, we have
\begin{align*}
  c_r(\nu)\sim \alpha^2,\qquad c_i(\nu)\sim \nu^{\frac{1}{2}}\alpha^{-\frac{3}{2}}>0.
\end{align*}
Similar to Lemma \ref{lem-sol-curve}, at $\alpha^{11}\ll\nu\ll \alpha^7$, using contradiction argument, we can show the uniqueness of the solution curve $\left(c_r(\nu), c_i(\nu)\right)$.

Similar to the lower neutral point, there exists $\nu_{*,+}$ such that
\begin{align*}
  \alpha^2=K_{\alpha^2,+}(\frac{\nu_{*,+}}{\alpha}),
\end{align*}
and
  \begin{align*}
     \nu_{*,+}=K_{\alpha^2,+}^{-5}\left(\frac{\nu_{*,+}}{\alpha}\right)\alpha^{11}\eqdef J_{\nu,+}\left(\alpha\right)\alpha^{11}.
  \end{align*} 
Then we get the upper neutral point where
\begin{align*}
  c_r(\nu_{*,+})\sim \alpha^2\sim \nu_{*,+}^{\frac{2}{11}},\qquad c_i\left(\nu_{*,+}\right)=0.
\end{align*}

Last, we show that there exists only one lower neutral point and one upper neutral point, and prove the transversal crossing condition.

For fixed $\alpha$, the implicit function theorem gives, along the solution curve,
\begin{align}\label{eq-dervi-c-nu}
 \pa_{\nu}c_r=-\frac{\pa_{c_i}F_i\pa_{\nu}F_r-\pa_{c_i}F_r \pa_{\nu}F_i }{\det \left(
    \begin{array}{ll}
      \pa_{c_r}F_r&\pa_{c_i}F_r\\       
      \pa_{c_r}F_i &\pa_{c_i}F_i
    \end{array}
  \right)},\qquad    \pa_{\nu}c_i=-\frac{- \pa_{c_r}F_i\pa_{\nu}F_r+\pa_{c_r}F_r \pa_{\nu}F_i }{\det \left(
    \begin{array}{ll}
      \pa_{c_r}F_r&\pa_{c_i}F_r\\       
      \pa_{c_r}F_i &\pa_{c_i}F_i
    \end{array}
  \right)}.
\end{align}

We first analyze the lower branch $\nu\sim \alpha^7$. In this regime, we use the formulation
\begin{align*}
  F(c_r,c_i,\alpha^2,\nu)=&-\frac{\eta(-1)}{\eta_r'(-1)}-\frac{c}{2}+\frac{\eta(-1)}{\eta_r'(-1)}\frac{1}{\mathrm{Han}\left( -\kappa \eta(-1)\right)} +\frac{2}{15}\alpha^2 +O\left(\alpha^2 c+c_0|\ln c_0|^2+c^2\right).
\end{align*}
Here we fix $c_0=\alpha^3$. Around the lower branch, it is clear that $c_0\sim \left(\frac{\nu}{\alpha}\right)^{\frac{1}{2}}$. The value of $c_0$ will not influence the solution curve $\left(c_r(\nu), c_i(\nu)\right)$. As the different choice of $c_0$ just influence the construction of the linearly independent solutions, but not change whether there exists eigenfunction.

At the lower neutral point, where $c_i=0$, we have $-\kappa \eta(-1)\approx 2.3$, and
\begin{align*}
  \mathrm{Han}_i\left(-\kappa \eta(-1)\right)=o(1),\ \mathrm{Han}_r\left(-\kappa \eta(-1)\right)\approx2.29,\ \mathrm{Han}_r'\left(-\kappa \eta(-1)\right)\approx-0.62,\ \mathrm{Han}_i'\left(-\kappa \eta(-1)\right)\approx1.21.
\end{align*}

 By Lemma \ref{lem-Ray-endpoint-expansion}, Remark \ref{rmk-Ray-endpoint-expansion}, Lemma \ref{lem-res-ray}, Lemma \ref{lem-res-airy}, and Lemma \ref{lem-Phi3-main--1-cr} we have
\begin{align*}  
  \pa_{\nu}F_r\approx -\frac{2^{\frac{1}{3}}}{12} c_r^2\nu^{-\frac{4}{3}}\alpha^{\frac{1}{3}}\Re \frac{\mathrm{Han}'(-\kappa \eta(-1))}{\left(\mathrm{Han}\left(-\kappa \eta(-1)\right)\right)^2}\approx -\frac{1}{3} \frac{1}{\kappa\nu} \left(\frac{\kappa c_r}{2}\right)^2\Re \frac{\mathrm{Han}'(-\kappa \eta(-1))}{\left(\mathrm{Han}\left(-\kappa \eta(-1)\right)\right)^2}\sim \nu^{-\frac{5}{7}}.
\end{align*}
\begin{align*}  
  \pa_{\nu}F_i\approx-\frac{2^{\frac{1}{3}}}{12} c_r^2\nu^{-\frac{4}{3}}\alpha^{\frac{1}{3}}\Im \frac{\mathrm{Han}'(-\kappa \eta(-1))}{\left(\mathrm{Han}\left(-\kappa \eta(-1)\right)\right)^2}\approx -\frac{1}{3} \frac{1}{\kappa\nu} \left(\frac{\kappa c_r}{2}\right)^2\Im \frac{\mathrm{Han}'(-\kappa \eta(-1))}{\left(\mathrm{Han}\left(-\kappa \eta(-1)\right)\right)^2}\sim-\nu^{-\frac{5}{7}}.
\end{align*}
\begin{align*}  
  \pa_{c_r}F_r\approx \Re \left(-\frac{1}{2} \frac{1}{\mathrm{Han}\left(-\kappa \eta(-1)\right)}+\frac{c_r\kappa}{4}\frac{\mathrm{Han}'(-\kappa \eta(-1))}{\left(\mathrm{Han}\left(-\kappa \eta(-1)\right)\right)^2}\right)\sim -1,
\end{align*}
\begin{align*}  
  \pa_{c_r}F_i\approx\frac{c_r\kappa}{4}\Im\frac{\mathrm{Han}'(-\kappa \eta(-1))}{\left(\mathrm{Han}\left(-\kappa \eta(-1)\right)\right)^2}\sim 1,
\end{align*}
\begin{align*}  
  \pa_{c_i}F_r\approx-\frac{c_r\kappa}{4}\Im\frac{\mathrm{Han}'(-\kappa \eta(-1))}{\left(\mathrm{Han}\left(-\kappa \eta(-1)\right)\right)^2}\sim -1,
\end{align*}
\begin{align*}  
  \pa_{c_i}F_i\approx \Re \left(-\frac{1}{2}\frac{1}{\mathrm{Han}\left(-\kappa \eta(-1)\right)}+\frac{c_r\kappa}{4}\frac{\mathrm{Han}'(-\kappa \eta(-1))}{\left(\mathrm{Han}\left(-\kappa \eta(-1)\right)\right)^2}\right)\sim -1.
\end{align*}
From the above results, at $c_i=0$, we have
\begin{align*}
  & - \pa_{c_r}F_i\pa_{\nu}F_r+\pa_{c_r}F_r \pa_{\nu}F_i\\
  \approx& \left(-\frac{1}{2}\Re \frac{1}{\mathrm{Han}\left(-\kappa \eta(-1)\right)}\right)\left(-\frac{1}{3} \frac{1}{\kappa\nu} \left(\frac{\kappa c_r}{2}\right)^2\Im \frac{\mathrm{Han}'(-\kappa \eta(-1))}{\left(\mathrm{Han}\left(-\kappa \eta(-1)\right)\right)^2}\right)\sim\nu^{-\frac{5}{7}}.
\end{align*}
Therefore, at $\nu=\nu_{*,-}$, where $c_i=0$, it holds that, 
\begin{align*}
  \pa_{\nu}c_i\sim -\nu^{-\frac{5}{7}}\sim-\alpha^{-5}.
\end{align*}
The same estimate remains valid in a small neighborhood of the lower branch. Hence $c_i$ crosses zero transversely and is strictly decreasing with respect to $\nu$ near the lower neutral point.  This proves that the lower neutral point is unique. It also shows that there exists $\widetilde J_{\nu,-}(\alpha)>J_{\nu,-}(\alpha)$ such that
  \begin{align*}
    -\frac12\alpha^3 =c_i\left(\widetilde J_{\nu,-}(\alpha)\alpha^7\right)<c_i(\nu)<c_i\left(J_{\nu,-}(\alpha)\alpha^7\right)=0, \text{ for }    J_{\nu,-}(\alpha)\alpha^7 <\nu <\widetilde J_{\nu,-}(\alpha)\alpha^7.
  \end{align*}
  Similarly, from \eqref{eq-dervi-c-nu}, on the lower branch, we also have
  \begin{align*}
    \pa_{\nu}c_r \sim \nu^{-\frac57} \sim \alpha^{-5}.
  \end{align*} 

We now analyze the upper branch where $\nu\sim \alpha^{11}$. Similar to the lower branch, here we fixed $c_0=\alpha^5\sim \left(\frac{\nu}{\alpha}\right)^{\frac{1}{2}}$. Recall \eqref{expan-Phi1-Re}, \eqref{expan-Phi1-Im}, and \eqref{expan-Phi3}. By Lemma \ref{lem-Ray-endpoint-expansion}, Remark \ref{rmk-Ray-endpoint-expansion}, Lemma \ref{lem-res-ray}, Lemma \ref{lem-res-airy}, and Lemma \ref{lem-Phi3-main--1-cr},  at $\nu=\nu_{*,+}$, where $c_i=0$, we have
\begin{align*}  
  \pa_{\nu}F_r\approx \frac{c_r}{2\nu}\frac{1}{ \left(\kappa c_r\right)^{\frac{3}{2}} }\sim\nu^{-\frac{7}{11}},\quad \pa_{\nu}F_i\approx \frac{c_r}{2\nu}\frac{1}{ \left(\kappa c_r\right)^{\frac{3}{2}} }\sim\nu^{-\frac{7}{11}},
\end{align*}
\begin{align*}  
  \pa_{c_r}F_r\approx -\frac{1}{2},\qquad \pa_{c_r}F_i\approx-\frac{1}{2}\frac{1}{ \left(\kappa c_r\right)^{\frac{3}{2}} }\sim- \nu^{\frac{2}{11}},
\end{align*}
\begin{align*}  
  \left|\pa_{c_i}F_r\right|\lesssim c_r |\ln c_0|\lesssim \nu^{\frac{2}{11}} |\ln \nu|,\qquad \pa_{c_i}F_i\approx-\frac{1}{2}.
\end{align*} 
Substituting the above estimates into \eqref{eq-dervi-c-nu}, we obtain
\begin{align*}
  \pa_{\nu}c_r\sim \nu^{-\frac{7}{11}}\sim \alpha^{-7},\qquad \pa_{\nu}c_i\sim \nu^{-\frac{7}{11}}\sim \alpha^{-7}.
\end{align*}

Thus $c_i$ crosses zero transversely and is strictly increasing with respect  to $\nu$ near the upper neutral point. Therefore the upper neutral point is  unique. Moreover, similar to the lower branch, on the upper branch, there exists $\widetilde J_{\nu,+}<J_{\nu,+}\left(\alpha\right)$ such that
  \begin{align*}
  -\frac{\alpha^5}{2}=c_i \left( \widetilde J_{\nu,+}(\alpha)\alpha^{11} \right)< c_i\left(\nu\right)< c_i \left(J_{\nu,+}(\alpha)\alpha^{11} \right)=0, \text{ for }\widetilde J_{\nu,+}(\alpha)\alpha^{11}<\nu< J_{\nu,+}(\alpha)\alpha^{11}.
\end{align*}

Then we finishes the proof of this theorem.
\end{proof}

\begin{proof}[Proof of Theorem \ref{thm2}]
  The proof of Theorem \ref{thm2} is the same to the proof of Theorem \ref{thm1}. Here we only give the derivative estimates on the lower and upper branch.

  For fixed $\nu$, there exists $J_{\alpha,-}\left(\nu\right)$ and $J_{\alpha,+}\left(\nu\right)$ such that for $\alpha^2=J_{\alpha,-}\left(\nu\right)\nu^{\frac{2}{7}}$ and $\alpha^2=J_{\alpha,+}\left(\nu\right)\nu^{\frac{2}{11}}$ reach the lower neutral point and the upper  neutral point respectively.
  It remains to record the signs of the crossings at the lower and upper neutral points. Along the solution curve, the implicit function theorem gives
  \begin{align}\label{eq-deriv-c-alpha2}
    \pa_{\alpha^2}c_r
    =&-\frac{\pa_{c_i}F_i\pa_{\alpha^2}F_r-\pa_{c_i}F_r\pa_{\alpha^2}F_i}{\det \left(
    \begin{array}{cc}
      \pa_{c_r}F_r&\pa_{c_i}F_r\\
      \pa_{c_r}F_i&\pa_{c_i}F_i
    \end{array}
    \right)},\quad
    \pa_{\alpha^2}c_i
    =-\frac{-\pa_{c_r}F_i\pa_{\alpha^2}F_r+\pa_{c_r}F_r\pa_{\alpha^2}F_i}{\det \left(
    \begin{array}{cc}
      \pa_{c_r}F_r&\pa_{c_i}F_r\\
      \pa_{c_r}F_i&\pa_{c_i}F_i
    \end{array}
    \right)}.
  \end{align}
 On the lower branch, $\alpha^2\sim \nu^{\frac{2}{7}}$, and $c_r\approx 0.6117\alpha^2$. By using the properties of Hankel function, we have
  \begin{align*}  
  \pa_{\alpha^2}F_r\approx \frac{2}{15} +\frac{1}{6} \frac{1}{\kappa\alpha^2} \left(\frac{\kappa c_r}{2}\right)^2\Re \frac{\mathrm{Han}'(-\kappa \eta(-1))}{\left(\mathrm{Han}\left(-\kappa \eta(-1)\right)\right)^2}\approx \frac{2}{15}-0.014\sim1,
\end{align*}
\begin{align*}  
  \pa_{\alpha^2}F_i\approx\frac{1}{6} \frac{1}{\kappa\alpha^2} \left(\frac{\kappa c_r}{2}\right)^2\Im \frac{\mathrm{Han}'(-\kappa \eta(-1))}{\left(\mathrm{Han}\left(-\kappa \eta(-1)\right)\right)^2}\sim 1.
\end{align*} 
Then it is clear that
\begin{align*}
  \left|\pa_{\alpha^2}c_r\right|\sim1,\qquad \pa_{\alpha^2}c_i\sim 1.
\end{align*}

On the upper branch $\alpha^2\sim \nu^{\frac{2}{11}}$, it holds that
  \begin{align*}  
  \pa_{\alpha^2}F_r\approx \frac{2}{15},\qquad \pa_{\alpha^2}F_i\approx -\frac{8\pi}{15^2}\alpha^2-\frac{c_r}{4\alpha^2}\frac{1}{ \left(\kappa c_r\right)^{\frac{3}{2}} }\sim -\alpha^2.
\end{align*}
Then we have
\begin{align*}
  \pa_{\alpha^2}c_r\sim 1,\qquad \pa_{\alpha^2}c_i\sim -\alpha^2\sim -\nu^{\frac{2}{11}}.
\end{align*}
\end{proof}
 \begin{remark} \label{rmk-eigenfunction}
The matrix \eqref{eq-matrix-scale} also gives the formal relative sizes of the coefficients $B_j$. From the third row of \eqref{eq-matrix-scale}, the terms involving $B_2$ and $B_4$ must balance, and therefore 
\begin{align*}
 B_2:B_4\sim \kappa^{\frac{3}{2}}:1. 
 \end{align*} 
 Having fixed this balance, the dominant contribution in the fourth row comes from the $B_4$-term. The only remaining term capable of balancing it is the $B_1$-term, which yields 
 \begin{align*} 
 B_1:B_4\sim \kappa^{\frac{9}{2}}:|\ln c_0|. 
 \end{align*} 
 Finally, the second row gives the balance between the $B_1$- and $B_3$-terms: 
 \begin{align*} 
 B_1:B_3\sim \kappa \left\langle \kappa c_r \right\rangle^{\frac{1}{2}}:1. 
 \end{align*} 
 Consequently, 
 \begin{align*} 
 B_1:B_2:B_3:B_4 \sim \kappa^{\frac{9}{2}}: \kappa^{\frac{3}{2}}|\ln c_0|: \frac{\kappa^{\frac{7}{2}}}{\left\langle \kappa c_r \right\rangle^{\frac{1}{2}}}: |\ln c_0|. 
 \end{align*} 
 Thus, at the level of leading-order balance, the leading-order neutral mode is dominated by the slow mode $\Phi_{1,\alpha}$.
\end{remark}
\begin{remark}\label{rmk-odd-Wron}
If one considers the odd modes, then the boundary conditions are
  \begin{align*}
    \Phi(-1)=\Phi'(-1)=\Phi(0)=\Phi''(0)=0.
  \end{align*}
  By using the same technique in this paper, one can see that the corresponding boundary matrix for  $\Phi_{1,\alpha}, \Phi_{2,\alpha}, \Phi_{3,\alpha}, \Phi_{4,\alpha}$ has the scale
  \begin{align}\label{eq-matrix-scale-odd}
  \left(
    \begin{array}{cccc}
      \Phi_{1,\alpha}(-1)&\Phi_{2,\alpha}(-1)&\Phi_{3,\alpha}(-1)&\Phi_{4,\alpha}(-1)\\       
      \Phi_{1,\alpha}'(-1)&\Phi_{2,\alpha}'(-1)&\Phi_{3,\alpha}'(-1)&\Phi_{4,\alpha}'(-1)\\       
      \Phi_{1,\alpha}(0)&\Phi_{2,\alpha}(0)&\Phi_{3,\alpha}(0)&\Phi_{4,\alpha}(0)\\       
      \Phi_{1,\alpha}''(0)&\Phi_{2,\alpha}''(0)&\Phi_{3,\alpha}''(0)&\Phi_{4,\alpha}''(0)
    \end{array}
  \right)\sim  \left(
    \begin{array}{cccc}
      -\hat c+\frac{4\alpha^2}{15}&1&1&0\\       
      1&|\ln c_0|&\kappa \left\langle \kappa c_r \right\rangle^{\frac{1}{2}}&0\\       
      1&\alpha^2+c_r&0&1\\       
      \left|\ln c_0\right|&\left|\ln c_0\right|&e^{-\kappa}&\kappa^{3}
    \end{array}
  \right).
\end{align}
Thus the corresponding Wronskian could not be zero in the T--S eigen region $\mathbb H$.
\end{remark}

\section{Traveling waves bifurcation}
In this section, we prove Theorem \ref{thm-bif}. 
For completeness, and to make the bifurcation argument self-contained, 
we include the details of the proof. The proof is based on the 
Lyapunov--Schmidt reduction framework used in \cite{Kagei2019}, 
adapted to the present incompressible setting.

We first reformulate the traveling-wave problem as a time-independent system.  We then recall the spectral structure of the critical Orr--Sommerfeld  mode and construct the corresponding projections. Finally, we perform the Lyapunov--Schmidt reduction and 
solve the reduced nonlinear system by a contraction argument.
\subsection{Traveling-wave formulation}
We rewrite \eqref{NS-pert-nonlinear} as 
\begin{align}\label{eq-eigen-nonlinear-1}
 \pa_t\omega+L_\nu\omega=\mathcal N(\nu,\omega),
\end{align}
where $L_\nu\omega=u_p\pa_x \omega-u_p''\pa_x\phi-\nu\Delta\omega$, and $\mathcal N(\nu,\omega)=-u\cdot\nabla\omega$.

Our aim is to find an $x$-periodic traveling-wave solution with period $\frac{2\pi}{\alpha^{[0]}}$ of the form
\begin{align*}
  \phi(t,x,y)=\tilde\phi(x-c t,y),
\end{align*}
where $\tilde\phi(x,y)$ is defined on $\mathbb T_{\frac{2\pi}{\alpha^{[0]}}}\times[-1,1]$ and $c\in \mathbb R$. 

Substituting this ansatz into \eqref{eq-eigen-nonlinear-1}, we obtain
\begin{align}\label{eq-eigen-nonlinear-2}
  \mathcal L_{c,\nu}\omega=\mathcal N(\nu,\omega),
\end{align}
where
\begin{align*}
  \mathcal L_{c,\nu}=L_\nu-c\pa_x.
\end{align*}
Equivalently, this equation reads
\begin{align*}
  -c\pa_x\omega+u_p\pa_x \omega-u_p''\pa_x\phi-\nu\Delta\omega=-u\cdot\nabla\omega.
\end{align*}

It is natural to perform Galilean coordinate transformation and reduce the problem to the following system independent of $t$:
\begin{align}\label{eq-reformulation-nonlin}
  -c\pa_x\tilde\omega+u_p\pa_x \tilde\omega-u_p''\pa_x\tilde\phi-\nu\Delta\tilde\omega=-\tilde u\cdot\nabla\tilde\omega,
\end{align}
where $\tilde u= \left(-\pa_y \tilde\phi,\pa_x\tilde\phi\right)$, and $\tilde \omega= \Delta\tilde\phi$.

The corresponding system for velocity is 
\begin{align}\label{eq-reformulation-nonlin-u}
  -c\pa_x\tilde u+u_p\pa_x \tilde u+\tilde u^{(2)}u_p'\bm e_1-\nu\Delta \tilde u+\nabla\tilde p=-\tilde u\cdot \nabla\tilde u.
\end{align}

For notational simplicity, we drop the tildes and write 
$(\phi,\omega,u,p)$ instead of $(\tilde\phi,\tilde\omega,\tilde u,\tilde p)$.

The solution of \eqref{eq-reformulation-nonlin-u} satisfies the non-slip boundary condition
\begin{align*}
  u(x,\pm1)=0.
\end{align*}
We write $\omega(x,y)=\sum_{\alpha=k\alpha^{[0]},k\in\mathbb Z}\hat\omega(\alpha,y)e^{ik\alpha^{[0]}x}$, $\phi(x,y)=\sum_{\alpha=k\alpha^{[0]},k\in\mathbb Z}\hat\phi(\alpha,y)e^{ik\alpha^{[0]}x}$. The boundary condition for $\alpha\neq0$ is
\begin{align}\label{bc-tw-1}
  \hat\phi(\alpha,\pm1)=0,\qquad\hat\phi'(\alpha,\pm1)=0.
\end{align}
and for  $\alpha=0$ is
\begin{align}\label{bc-tw-2}
  \hat\phi(0,-1)=\hat\phi'(0,\pm1)=\pa_y^2\hat\phi(0,1)-\pa_y^2\hat\phi(0,-1)=0.
\end{align}
\begin{remark}
The zero Fourier mode $\alpha=0$ corresponds to the shear-flow component. Since the vorticity formulation is obtained by taking the curl of the velocity equation, it may lose the information of a possible constant streamwise forcing. Therefore, for the velocity formulation \eqref{eq-reformulation-nonlin-u}, we need to impose an additional compatibility condition to ensure that no constant external force remains. More precisely, we require that the pressure $p(x,y)$ is periodic in $x$ and satisfies $\int_{\mathbb T_{\frac{2\pi}{\alpha^{[0]}}}} \pa_xp(x,y) d x=0$. 

For a traveling-wave solution, integrating the streamwise momentum equation in $x$ gives
\begin{align*}
  \nu \int_{\mathbb T_{\frac{2\pi}{\alpha^{[0]}}}}\pa_y^3\phi(x,y)  dx=-\int_{\mathbb T_{\frac{2\pi}{\alpha^{[0]}}}}\pa_y\phi(x,y)\pa_x\pa_y\phi(x,y)-\pa_x\phi(x,y)\pa_y^2\phi(x,y) dx.
\end{align*}
Integrating the right-hand side further in $y$, and using integration by parts gives
\begin{align*}
  &\int_{\mathbb T_{\frac{2\pi}{\alpha^{[0]}}}\times[-1,1]}\pa_y\phi(x,y)\pa_x\pa_y\phi(x,y)-\pa_x\phi(x,y)\pa_y^2\phi(x,y) dxdy\\
  =&\int_{\mathbb T_{\frac{2\pi}{\alpha^{[0]}}}\times[-1,1]}2\pa_y\phi(x,y)\pa_x\pa_y\phi(x,y) dxdy=\int_{\mathbb T_{\frac{2\pi}{\alpha^{[0]}}}\times[-1,1]}\pa_x \left(\pa_y\phi(x,y)\right)^2 dxdy=0.
\end{align*}
Thus, in order to exclude a constant forcing of the form $C\bm e_1$, it suffices to impose the zero-mode compatibility condition
\begin{align*}
  \int_{[-1,1]}\pa_y^3\hat\phi(0,y)  dy= \pa_y^2\hat\phi(0,1)-\pa_y^2\hat\phi(0,-1)=0.
\end{align*}
Finally, for the zero mode we do not need to impose both values of $\hat\phi(0,y)$ at the boundary. Indeed, adding a constant to the stream function $\hat\phi(0,y)$ does not change the velocity field and does not affect any term in \eqref{eq-reformulation-nonlin}. To fix this indeterminacy, we impose the normalization $\hat\phi(0,-1)=0$.
\end{remark}

\subsection{Spectrum properties of the Orr-Sommerfeld operator}
\subsubsection{The critical eigenspace}
By the assumptions of Theorem \ref{thm-bif}, $0$ is an isolated semisimple eigenvalue of $\mathcal L_{c_r^{[0]},\nu^{[0]}}$. The null space is
\begin{align*}
  N \left(\mathcal L_{c_r^{[0]},\nu^{[0]}}\right)=\text{span} (\omega_+,\omega_-), \quad \omega_-=\overline{\omega_+}.
\end{align*}
where
\begin{align*}
  \omega_+(x,y)=  \Delta \left(\phi_{\nu^{[0]},\alpha^{[0]}}(y)e^{i\alpha^{[0]}x}\right)=\left(\pa_y^2-\left(\alpha^{[0]}\right)^2\right)\phi_{\nu^{[0]},\alpha^{[0]}}(y)e^{i\alpha^{[0]}x}, 
\end{align*}
and $\phi_{\nu^{[0]},\alpha^{[0]}}(y)=\phi_{\nu^{[0]},\alpha^{[0]},r}(y)+i\phi_{\nu^{[0]},\alpha^{[0]},i}(y)$ denotes the Orr--Sommerfeld eigenfunction at the neutral point $\left(\nu^{[0]},\alpha^{[0]},c_r^{[0]}\right)$ obtained in Theorem \ref{thm1}. This eigenfunction is constructed in the previous sections as a nontrivial linear combination of the four modes $\Phi_{1,\alpha^{[0]}},\Phi_{2,\alpha^{[0]}},\Phi_{3,\alpha^{[0]}}$, and $\Phi_{4,\alpha^{[0]}}$ satisfying
\begin{align*}
  \phi_{\nu^{[0]},\alpha^{[0]}}(\pm1)=\pa_y\phi_{\nu^{[0]},\alpha^{[0]}}(\pm1)=0.
\end{align*}
We denote
\begin{align*}
  \omega_{\nu^{[0]},\alpha^{[0]}}(y)= \omega_{\nu^{[0]},\alpha^{[0]},r}(y)+i\omega_{\nu^{[0]},\alpha^{[0]},i}(y)=\left(\pa_y^2-\left(\alpha^{[0]}\right)^2\right)\phi_{\nu^{[0]},\alpha^{[0]}}(y).
\end{align*}

We define the following real eigenfunction
\begin{align}
  \omega_1(x,y)=\Re \left( \omega_{\nu^{[0]},\alpha^{[0]}}(y)e^{i\alpha^{[0]} x}\right)= \omega_{\nu^{[0]},\alpha^{[0]},r}(y)\cos \left(\alpha^{[0]} x\right)- \omega_{\nu^{[0]},\alpha^{[0]},i}(y)\sin \left(\alpha^{[0]} x\right),\label{eq-eigen-omega1}\\
  \omega_2(x,y)=\Im \left( \omega_{\nu^{[0]},\alpha^{[0]}}(y)e^{i\alpha^{[0]} x}\right)= \omega_{\nu^{[0]},\alpha^{[0]},r}(y)\sin \left(\alpha^{[0]} x\right)+ \omega_{\nu^{[0]},\alpha^{[0]},i}(y)\cos \left(\alpha^{[0]} x\right).\label{eq-eigen-omega2}
\end{align}

\subsubsection{The adjoint Orr--Sommerfeld eigenfunctions}
We now derive the adjoint Orr--Sommerfeld eigenfunctions. To this end, we first recall the linearized velocity operator around the shear flow $\left(u_p,0\right)^\top$: 
\begin{align*}
  L_{u_p}(u)=-\nu\Delta u +P_{L}\left(\left(u_p(y),0\right)^\top\cdot\nabla u+u\cdot\nabla \left(u_p(y),0\right)^\top\right),
\end{align*}
where $P_{L}$ is the Leray projection.

Taking the $L^2$ inner product of $\left(L_{u_p}+\lambda\right)u$ with a divergence-free test function $u^*$, we have
\begin{align*}
  \left\langle (L_{u_p}+\lambda)u,u^*\right\rangle=\left\langle u,(L_{u_p}^*+\bar \lambda)u^*\right\rangle.
\end{align*}
Writing $u=\left(-\pa_y\phi,\pa_x\phi\right)^\top$ and $u^*=\left(-\pa_y\phi^*,\pa_x\phi^*\right)^\top$, we obtain
\begin{align*}
  \left\langle (L_{u_p}+\lambda)u,u^*\right\rangle=&\left\langle (L_{u_p}+\lambda) \left(\begin{array}{l}
      -\pa_y \phi\\
      \pa_x \phi
    \end{array}\right),\left(\begin{array}{l}
      -\pa_y \phi^*\\
      \pa_x \phi^*
    \end{array}\right)\right\rangle\\
    =&\left\langle -\pa_y(L_{u_p}^{(1)}+\lambda)\pa_y \phi-\pa_x(L_{u_p}^{(2)}+\lambda)\pa_x \phi,\phi^*\right\rangle.
\end{align*}
Taking Fourier transform in $x$, we have
\begin{align*}
  &\mathcal F_x \left( -\pa_y(L_{u_p}^{(1)}-\lambda)\pa_y \phi-\pa_x(L_{u_p}^{(2)}-\lambda)\pa_x \phi \right)\\
  =&- \left(\lambda \left(\pa_y^2-\alpha^2\right)\phi+u_pi\alpha\left(\pa_y^2-\alpha^2\right)\phi-u_p''i\alpha\phi-\nu\left(\pa_y^2-\alpha^2\right)^2\phi\right).
\end{align*}
Thus the adjoint problem can be studied at the level of the Orr--Sommerfeld operator. Motivated by the energy pairing, it is natural to pair the vorticity $\omega$ with the adjoint stream function $\phi^*$.

For the Orr-Sommerfeld operator
\begin{align*}
  Orr_{\alpha,\nu}(\omega)=i\varepsilon\left(\pa_y^2-\alpha^2\right)\omega+u_p\omega-u_p''\Delta_\alpha^{-1}\omega =i\varepsilon\left(\pa_y^2-\alpha^2\right)^2\phi+u_p\left(\pa_y^2-\alpha^2\right)\phi-u_p''\phi.
\end{align*}
Direct calculation shows that, for $\phi^*$ such that
\begin{align}\label{eq-ad-phi}
  \phi^*(-1)=\phi^{*\prime}(-1)=\phi^{*\prime}(0)=\phi^{*\prime\prime\prime}(0)=0,
\end{align}
we have
\begin{equation}\label{eq-derive-ad-OS}
  \begin{aligned}    
  &\int_{-1}^0 \left( Orr_{\alpha,\nu}-c\right)(\omega)\overline{\phi^*}dy\\
   =&\int_{-1}^0 \left(i\varepsilon\left(\pa_y^2-\alpha^2\right)^2\phi+\left(u_p-c\right)\left(\pa_y^2-\alpha^2\right)\phi-u_p''\phi\right)\overline{\phi^*}dy\\
  =&\int_{-1}^0  i\varepsilon\left(\pa_y^2-\alpha^2\right)\phi\cdot \overline{\left(\pa_y^2-\alpha^2\right)\phi^*}+\left(\pa_y^2-\alpha^2\right)\phi\cdot \overline{\left(u_p-\bar c\right)\phi^*}-\phi \overline{u_p''\phi^*}dy\\
  =&\int_{-1}^0  \phi\cdot \overline{\left(-i\varepsilon\left(\pa_y^2-\alpha^2\right)^2\phi^*+\left(\pa_y^2-\alpha^2\right)\left(\left(u_p-\bar c\right)\phi^*\right)-u_p''\phi^*\right)}dy\\
  =&\int_{-1}^0 \phi\cdot\overline{\left(Orr_{\alpha,\nu}^*-\bar c\right)(\omega^*)}dy. 
  \end{aligned}
\end{equation}
Therefore, the adjoint Orr-Sommerfeld operator is 
\begin{align*}
  Orr_{\alpha,\nu}^*(\omega^*)=&-i\varepsilon\left(\pa_y^2-\alpha^2\right)\omega^*+\Delta_\alpha \left(u_p\Delta_\alpha^{-1}\omega^*\right)-u_p''\Delta_\alpha^{-1}\omega^*\\
  =&-i\varepsilon\left(\pa_y^2-\alpha^2\right)^2\phi^*+\left(\pa_y^2-\alpha^2\right)(u_p\phi^*)-u_p''\phi^*.
\end{align*}

If $c$ is a simple eigenvalue of $Orr_{\alpha,\nu}$, it follows from the
standard spectral theory of linear operators (see \cite{Kato1966}) that $\bar c$ is a simple eigenvalue of the adjoint operator $Orr_{\alpha,\nu}^*$. As the eigenfunction $\phi_{\nu,\alpha}(y)$ we got in the T-S region is even in $y\in[-1,1]$, from \eqref{eq-ad-phi} and \eqref{eq-derive-ad-OS} it is clear that the adjoint eigenfunction $\phi_{\nu,\alpha}^*(y)$ is also even in $y\in[-1,1]$.
 
Accordingly, we normalize the adjoint eigenfunction by the vorticity--stream-function pairing
\begin{align}\label{eq-choice-phi*}
  \int_{-1}^1  \left(\pa_y^2-\alpha^2\right) \phi_{\nu,\alpha}(y)\cdot \overline{ \phi_{\nu,\alpha}^*(y)} dy=1.
\end{align}
It follows that
\begin{align}\label{eq-choice-phi*-r}
\int_{-1}^1  \left(\pa_y^2-\alpha^2\right)\phi_{\nu,\alpha,r}(y)\cdot \phi_{\nu,\alpha,r}^*(y)+\left(\pa_y^2-\alpha^2\right)\phi_{\nu,\alpha,i}(y)\cdot \phi_{\nu,\alpha,i}^*(y) dy=1,\\
  \int_{-1}^1  \left(\pa_y^2-\alpha^2\right)\phi_{\nu,\alpha,r}(y)\cdot \phi_{\nu,\alpha,i}^*(y)-\left(\pa_y^2-\alpha^2\right)\phi_{\nu,\alpha,i}(y)\cdot \phi_{\nu,\alpha,r}^*(y) dy=0.\label{eq-choice-phi*-i}
\end{align}

\subsubsection{Derivative of the eigenvalue curve $c(\nu)$}
In Theorem \ref{thm1}, we have provided the value for $\pa_{\nu}c(\nu)$ at the neutral point, now we show that these values can be represented by the eigenfunction and the adjoint eigenfunction. This has been pointed out in \cite{JS1972,Iooss1972}.

\begin{lemma}\label{lem-der-nu-c-phi}
Fix $\alpha>0$, and let $\left(c_r(\nu),c_i(\nu)\right)$ be the eigenvalue curve obtained in Theorem \ref{thm1}. Then
\begin{align}
  \pa_\nu c_i=\int_{-1}^1 \frac{1}{\alpha}\left(\pa_y^2-\alpha^2\right)^2 \phi_{\nu,\alpha,r}\cdot   \phi_{\nu,\alpha,r}^*(y)+\frac{1}{\alpha}\left(\pa_y^2-\alpha^2\right)^2 \phi_{\nu,\alpha,i}\cdot   \phi_{\nu,\alpha,i}^*(y) dy,
\end{align}
and
\begin{align}
  \pa_\nu c_r=\int_{-1}^1 \frac{1}{\alpha}\left(\pa_y^2-\alpha^2\right)^2 \phi_{\nu,\alpha,r}\cdot   \phi_{\nu,\alpha,i}^*(y)-\frac{1}{\alpha}\left(\pa_y^2-\alpha^2\right)^2 \phi_{\nu,\alpha,i}\cdot  \phi_{\nu,\alpha,r}^*(y) dy.
\end{align}
\end{lemma}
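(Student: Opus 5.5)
This is the standard first-order eigenvalue perturbation formula. The plan is to differentiate the Orr--Sommerfeld eigenvalue equation in $\nu$ along the curve and pair the result with the adjoint eigenfunction; the solvability condition then isolates $\pa_\nu c$.

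\textbf{Step 1: differentiate the eigenvalue equation.} With $\alpha$ fixed, write the eigenvalue equation as
\begin{align*}
  \frac{i\nu}{\alpha}\left(\pa_y^2-\alpha^2\right)^2\phi_{\nu,\alpha}+\left(u_p-c(\nu)\right)\left(\pa_y^2-\alpha^2\right)\phi_{\nu,\alpha}-u_p''\phi_{\nu,\alpha}=0,
\end{align*}
using $\varepsilon=\nu/\alpha$. The boundary conditions \eqref{bc-ori} are independent of $\nu$. Since the eigenvalue is simple (Theorem \ref{thm1}), both $c(\nu)$ and $\phi_{\nu,\alpha}$ depend smoothly on $\nu$. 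Smoothness of $\phi_{\nu,\alpha}$ can be seen from its construction as a combination of $\Phi_{j,\alpha}$ with coefficients given by cofactors of \eqref{eq-wron-cond}, or from Kato's analytic perturbation theory \cite{Kato1966}. Differentiating in $\nu$ gives
\begin{align*}
  \left(Orr_{\alpha,\nu}-c\right)\left(\pa_y^2-\alpha^2\right)\pa_\nu\phi
  =\pa_\nu c\,\left(\pa_y^2-\alpha^2\right)\phi-\frac{i}{\alpha}\left(\pa_y^2-\alpha^2\right)^2\phi,
\end{align*}
and $\pa_\nu\phi$ still satisfies the no-slip conditions.

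\textbf{Step 2: pair with the adjoint and normalize.} Pair this equation with $\overline{\phi^*_{\nu,\alpha}}$ over $[-1,1]$. The integration by parts in \eqref{eq-derive-ad-OS} holds on the full channel with no-slip conditions at both walls, since $\phi^*$ is the even extension satisfying \eqref{eq-ad-phi}. It moves the operator onto $\left(Orr^*_{\alpha,\nu}-\bar c\right)\phi^*=0$, so the left-hand side vanishes. The normalization \eqref{eq-choice-phi*} then yields
\begin{align*}
  \pa_\nu c=\frac{i}{\alpha}\int_{-1}^1\left(\pa_y^2-\alpha^2\right)^2\phi_{\nu,\alpha}\,\overline{\phi^*_{\nu,\alpha}}\,dy .
\end{align*}

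\textbf{Step 3: take real and imaginary parts.} Expand $\phi=\phi_r+i\phi_i$ and $\overline{\phi^*}=\phi^*_r-i\phi^*_i$, and write $I$ for the integral above. Then $\pa_\nu c_r=-\Im I/\alpha$ and $\pa_\nu c_i=\Re I/\alpha$, and the signs should be checked against the conventions in Lemma \ref{lem-der-nu-c-phi}. Computing directly, $\Re I=\int\left(\Delta_\alpha^2\phi_r\,\phi^*_r+\Delta_\alpha^2\phi_i\,\phi^*_i\right)$, which gives the stated formula for $\pa_\nu c_i$. Also $\Im I=\int\left(\Delta_\alpha^2\phi_i\,\phi^*_r-\Delta_\alpha^2\phi_r\,\phi^*_i\right)$, so $-\Im I$ matches the stated formula for $\pa_\nu c_r$.

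\textbf{Main obstacle: justification, not algebra.} The delicate step is justifying that $\pa_\nu\phi$ exists in $H^4$ with the boundary conditions preserved, and that the boundary terms in the integration by parts vanish. The formulas may also need the normalization \eqref{eq-choice-phi*} to be maintained along the curve, or be applied only at a fixed $\nu$, which is sufficient.
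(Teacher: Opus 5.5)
Your proposal is correct and follows the paper's proof essentially step for step. Differentiate the eigenvalue equation in $\nu$. Pair it with the normalized adjoint eigenfunction $\phi^*_{\nu,\alpha}$, so that the $\pa_\nu\phi$ term vanishes by the adjoint identity \eqref{eq-derive-ad-OS}. The normalization \eqref{eq-choice-phi*} then gives $\pa_\nu c=\frac{i}{\alpha}\int_{-1}^1(\pa_y^2-\alpha^2)^2\phi_{\nu,\alpha}\,\overline{\phi^*_{\nu,\alpha}}\,dy$, and taking real and imaginary parts finishes the proof; your sign bookkeeping in Step 3 is right. The regularity issues you flag are not addressed in the paper either, which performs the same formal computation at each fixed $\nu$.
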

\begin{proof}
  
For fixed $\alpha>0$, $\phi_{\nu,\alpha}$ satisfies
\begin{align*}
  i \frac{\nu}{\alpha}\left(\pa_y^2-\alpha^2\right)^2 \phi_{\nu,\alpha}+\left(u_p-c\right)\left(\pa_y^2-\alpha^2\right) \phi_{\nu,\alpha}-u_p'' \phi_{\nu,\alpha}=0.
\end{align*}
Taking $\nu$-derivative on the above equation, we have
\begin{align*}
  &i \frac{1}{\alpha}\left(\pa_y^2-\alpha^2\right)^2 \phi_{\nu,\alpha}+i \frac{\nu}{\alpha}\left(\pa_y^2-\alpha^2\right)^2\pa_\nu \phi_{\nu,\alpha}\\
  &-\pa_\nu c\left(\pa_y^2-\alpha^2\right) \phi_{\nu,\alpha}+\left(u_p-c\right)\left(\pa_y^2-\alpha^2\right)\pa_\nu \phi_{\nu,\alpha}-u_p''\pa_\nu \phi_{\nu,\alpha}=0,
\end{align*}
then we do inner product with $ \phi_{\nu,\alpha}^*(y)$ to get
\begin{align*}
  &\int_{-1}^1  i \frac{1}{\alpha}\left(\pa_y^2-\alpha^2\right)^2 \phi_{\nu,\alpha}\cdot \overline{ \phi_{\nu,\alpha}^*(y)} dy-\int_{-1}^1  \pa_\nu c\left(\pa_y^2-\alpha^2\right) \phi_{\nu,\alpha}\cdot \overline{ \phi_{\nu,\alpha}^*(y)} dy\\
  &+\int_{-1}^1 \left(Orr_{\alpha,\nu}-c\right) \left(\Delta_\alpha\pa_\nu \phi_{\nu,\alpha}\right)\cdot \overline{ \phi_{\nu,\alpha}^*(y)} dy=0.
\end{align*}
Note that $\bar c$ is the eigenvalue for $Orr_{\alpha,\nu}^*$, it holds that
\begin{align*}
  \int_{-1}^1 \left(Orr_{\alpha,\nu}-c\right) \left(\Delta_\alpha\pa_\nu \phi_{\nu,\alpha}\right)\cdot \overline{ \phi_{\nu,\alpha}^*(y)} dy=\int_{-1}^1\pa_\nu \phi_{\nu,\alpha}(y)\cdot \overline{\left(Orr_{\alpha,\nu}^*-\bar c\right) \left(\Delta_\alpha\phi_{\nu,\alpha}^*\right)} dy=0.
\end{align*}
From the above calculation and \eqref{eq-choice-phi*}, we have 
\begin{align*}
  \int_{-1}^1  i \frac{1}{\alpha}\left(\pa_y^2-\alpha^2\right)^2 \phi_{\nu,\alpha}\cdot \overline{ \phi_{\nu,\alpha}^*(y)} dy=\int_{-1}^1  \pa_\nu c\left(\pa_y^2-\alpha^2\right) \phi_{\nu,\alpha}\cdot \overline{ \phi_{\nu,\alpha}^*(y)} dy=\pa_\nu c=\pa_\nu c_r+i\pa_\nu c_i.
\end{align*}
It follows that
\begin{align*}
  \Re \left(\int_{-1}^1  i \frac{1}{\alpha}\left(\pa_y^2-\alpha^2\right)^2 \phi_{\nu,\alpha}\cdot \overline{ \phi_{\nu,\alpha}^*(y)} dy\right)=\pa_\nu c_r,\\
  \Im \left(\int_{-1}^1  i \frac{1}{\alpha}\left(\pa_y^2-\alpha^2\right)^2 \phi_{\nu,\alpha}\cdot \overline{ \phi_{\nu,\alpha}^*(y)} dy\right)=\pa_\nu c_i.
\end{align*}
The results of this lemma follows directly.
\end{proof}
\subsubsection{Spectral projections}
We define the adjoint operator $\mathcal L_{c,\nu}^*$ through the duality relation
\begin{align*}
  \int_{\mathbb T_{\frac{2\pi}{\alpha^{[0]}}}\times[-1,1]} \mathcal L_{c,\nu} \left( \Delta \phi \right)\cdot\bar \phi^* dxdy=\int_{\mathbb T_{\frac{2\pi}{\alpha^{[0]}}}\times[-1,1]} \phi\cdot \overline{\mathcal L_{c,\nu}^* \left( \Delta \phi^* \right)} dxdy.
\end{align*}

For $\left(c_r^{[0]},\nu^{[0]}\right)$, we define the following real dual eigenfunction
\begin{align}
  \phi_1^*(x,y)= \phi_{\nu^{[0]},\alpha^{[0]},r}^*(y)\cos \left(\alpha^{[0]} x\right)- \phi_{\nu^{[0]},\alpha^{[0]},i}^*(y)\sin \left(\alpha^{[0]} x\right),\\
  \phi_2^*(x,y)= \phi_{\nu^{[0]},\alpha^{[0]},r}^*(y)\sin \left(\alpha^{[0]} x\right)+ \phi_{\nu^{[0]},\alpha^{[0]},i}^*(y)\cos \left(\alpha^{[0]} x\right),
\end{align}
which satisfy,
\begin{align*}
  \mathcal L_{c_r^{[0]},\nu^{[0]}}^* \left(\Delta\phi_1^* \right)=\mathcal L_{c_r^{[0]},\nu^{[0]}}^* \left(\Delta\phi_2^* \right)=0.
\end{align*}

By \eqref{eq-eigen-omega1}, \eqref{eq-eigen-omega2}, and \eqref{eq-choice-phi*}, one can easily check that 
\begin{align*}
  &\int_{\mathbb T_{\frac{2\pi}{\alpha^{[0]}}}\times[-1,1]} \omega_1(x,y)\phi_1^*(x,y) dxdy=\frac{\pi}{\alpha^{[0]}},\quad \int_{\mathbb T_{\frac{2\pi}{\alpha^{[0]}}}\times[-1,1]} \omega_1(x,y)\phi_2^*(x,y) dxdy=0,\\
  &\int_{\mathbb T_{\frac{2\pi}{\alpha^{[0]}}}\times[-1,1]} \omega_2(x,y)\phi_2^*(x,y) dxdy=\frac{\pi}{\alpha^{[0]}},\quad \int_{\mathbb T_{\frac{2\pi}{\alpha^{[0]}}}\times[-1,1]} \omega_2(x,y)\phi_1^*(x,y) dxdy=0.
\end{align*}

We introduce the spectral projection coefficients
\begin{align*}
  P_1\omega=&[\omega]_1=\frac{\alpha^{[0]}}{\pi}\left\langle \omega, \phi_{1}^* \right\rangle =\frac{\alpha^{[0]}}{\pi}\int_{\mathbb T_{\frac{2\pi}{\alpha^{[0]}}}\times[-1,1]} \omega(x,y)\phi_1^*(x,y) dxdy ,\\
  P_2\omega=&[\omega]_2=\frac{\alpha^{[0]}}{\pi}\left\langle \omega, \phi_{2}^* \right\rangle =\frac{\alpha^{[0]}}{\pi}\int_{\mathbb T_{\frac{2\pi}{\alpha^{[0]}}}\times[-1,1]} \omega(x,y)\phi_2^*(x,y) dxdy .
\end{align*}
Then define the projection operator
\begin{align*}
  P\omega=(P_1\omega)\omega_1+(P_2\omega)\omega_2,
\end{align*}
and $Q=I-P$.
\begin{lemma}\label{lem-project-1-2}
  It holds that
  \begin{align}\label{eq-project-1-2-0}
    P_1\omega_2=0,\quad P_2\omega_1=0,\quad P_1\pa_x\omega_1=0,\quad P_2\pa_x\omega_2=0,\quad P_2\pa_x\omega_1=-\alpha^{[0]},
  \end{align}
  and
    \begin{align}\label{eq-project-1-2-deri}
    P_1\Delta\omega_1= \alpha^{[0]}\pa_\nu c_i,\qquad P_2\Delta\omega_1= \alpha^{[0]}\pa_\nu c_r.
  \end{align}
  For any $f\in Ran(Q)$, it holds that
  \begin{align*}
    P_1f=P_2f=P_1\pa_xf=P_2\pa_xf=0.
  \end{align*}
\end{lemma}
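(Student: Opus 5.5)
The plan is to reduce every identity to a one-dimensional integral in $y$ by integrating out $x$ first, and then to use the normalization \eqref{eq-choice-phi*-r}--\eqref{eq-choice-phi*-i} together with Lemma \ref{lem-der-nu-c-phi}. Write $\omega_{\nu^{[0]},\alpha^{[0]}}=\omega_r+i\omega_i$ and $\phi^*=\phi^*_r+i\phi^*_i$, with all functions evaluated at $(\nu^{[0]},\alpha^{[0]})$. Over one period we have
\[
\int\cos^2(\alpha^{[0]}x)\,dx=\int\sin^2(\alpha^{[0]}x)\,dx=\frac{\pi}{\alpha^{[0]}},
\qquad
\int\sin(\alpha^{[0]}x)\cos(\alpha^{[0]}x)\,dx=0 .
\]
Hence for two functions $A(y)\cos+B(y)\sin$ and $C(y)\cos+D(y)\sin$, the pairing equals $\frac{\pi}{\alpha^{[0]}}\int_{-1}^1(AC+BD)\,dy$.

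\textbf{First group of identities.} With $\omega_2=\omega_i\cos+\omega_r\sin$ and $\phi_1^*=\phi_r^*\cos-\phi_i^*\sin$, we get
\[
P_1\omega_2=\int_{-1}^1(\omega_i\phi_r^*-\omega_r\phi_i^*)\,dy=0
\]
by \eqref{eq-choice-phi*-i}. The identity $P_2\omega_1=0$ follows the same way.

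For the $\pa_x$ identities, observe that $\pa_x\omega_1=-\alpha^{[0]}\omega_2$ and $\pa_x\omega_2=\alpha^{[0]}\omega_1$, since $\omega_1+i\omega_2=\omega e^{i\alpha^{[0]}x}$. This gives $P_1\pa_x\omega_1=-\alpha^{[0]}P_1\omega_2=0$, $P_2\pa_x\omega_2=\alpha^{[0]}P_2\omega_1=0$, and $P_2\pa_x\omega_1=-\alpha^{[0]}P_2\omega_2=-\alpha^{[0]}$, using the normalization computed just before the lemma.

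\textbf{Derivative identities \eqref{eq-project-1-2-deri}.} Since $\Delta\omega_1=\Re\big((\pa_y^2-\alpha^2)^2\phi\,e^{i\alpha x}\big)$, the same trigonometric reduction gives
\[
P_1\Delta\omega_1=\alpha^{[0]}\int_{-1}^1\Big(\tfrac1\alpha(\pa_y^2-\alpha^2)^2\phi_r\,\phi_r^*+\tfrac1\alpha(\pa_y^2-\alpha^2)^2\phi_i\,\phi_i^*\Big)\,dy .
\]
Here the factor $\frac{\alpha^{[0]}}{\pi}\cdot\frac{\pi}{\alpha^{[0]}}=1$ is rewritten as $\alpha\cdot\frac1\alpha$ so that the integral matches Lemma \ref{lem-der-nu-c-phi} exactly; that lemma then yields $P_1\Delta\omega_1=\alpha^{[0]}\pa_\nu c_i$.

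For $P_2$, the pairing of $A\cos+B\sin$ (with $A=(\cdots)_r$, $B=-(\cdots)_i$) against $\phi_2^*=\phi_i^*\cos+\phi_r^*\sin$ produces $\int(A\phi_i^*+B\phi_r^*)$. This is exactly the $\pa_\nu c_r$ formula of Lemma \ref{lem-der-nu-c-phi}, so $P_2\Delta\omega_1=\alpha^{[0]}\pa_\nu c_r$.

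\textbf{Identities for $f\in Ran(Q)$.} From the first group, $P_j\omega_k=\delta_{jk}$, so $P$ is a projection and $P_jQ=P_j-P_j=0$. Hence $P_1f=P_2f=0$.

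For $\pa_x f$, integrate by parts in $x$; this is legitimate by periodicity, and no boundary terms in $y$ arise. We get $\langle\pa_xf,\phi_1^*\rangle=-\langle f,\pa_x\phi_1^*\rangle$. A direct computation gives $\pa_x\phi_1^*=-\alpha^{[0]}\phi_2^*$ and $\pa_x\phi_2^*=\alpha^{[0]}\phi_1^*$. Therefore $P_1\pa_xf=\alpha^{[0]}P_2f=0$ and $P_2\pa_xf=-\alpha^{[0]}P_1f=0$.

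\textbf{Main obstacle.} The delicate point is the sign and factor bookkeeping in \eqref{eq-project-1-2-deri}: matching the real and imaginary parts of $(\pa_y^2-\alpha^2)^2\phi\,e^{i\alpha x}$ against $\phi_1^*,\phi_2^*$, and reconciling the prefactor $\frac{\alpha^{[0]}}{\pi}$ with the $\frac1\alpha$ in Lemma \ref{lem-der-nu-c-phi}. The plan is to check this carefully through the complex pairing $\int(\cdots)\overline{\phi^*}$, using the fact that $\Re$ of a complex product corresponds to the cosine/sine orthogonality above.
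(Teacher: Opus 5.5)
Your proposal is correct and follows essentially the same route as the paper: trigonometric orthogonality in $x$ plus the normalization \eqref{eq-choice-phi*-i} for the first group, then $\pa_x\omega_1=-\alpha^{[0]}\omega_2$, then direct reduction of $P_j\Delta\omega_1$ to the formulas of Lemma \ref{lem-der-nu-c-phi}, and finally integration by parts in $x$ with $\pa_x\phi_1^*=-\alpha^{[0]}\phi_2^*$ for $f\in Ran(Q)$. Your explicit check that $P_jQ=0$, via $P_j\omega_k=\delta_{jk}$, fills in the step $P_1f=P_2f=0$ that the paper uses without stating.
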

\begin{proof}
  From \eqref{eq-choice-phi*-i}, and the fact that $\int_{\mathbb T_{\frac{2\pi}{\alpha^{[0]}}}}\sin\left(\alpha^{[0]} x\right)\cos\left(\alpha^{[0]} x\right)dx=0$, we have
  \begin{align*}
    P_1\omega_2=&\frac{\alpha^{[0]}}{\pi}\int_{\mathbb T_{\frac{2\pi}{\alpha^{[0]}}}\times[-1,1]}\omega_2(x,y)\phi_1^*(x,y) dxdy\\
      =&\frac{\alpha^{[0]}}{\pi}\int_{\mathbb T_{\frac{2\pi}{\alpha^{[0]}}}\times[-1,1]}  \left(\Delta_\alpha \phi_{\nu^{[0]},\alpha^{[0]},r}(y)\sin \left(\alpha^{[0]} x\right)+\Delta_\alpha \phi_{\nu^{[0]},\alpha^{[0]},i}(y)\cos \left(\alpha^{[0]} x\right)\right)\\
  &\qquad\qquad\qquad\qquad\qquad\qquad\cdot \left( \phi_{\nu^{[0]},\alpha^{[0]},r}^*(y)\cos \left(\alpha^{[0]} x\right)- \phi_{\nu^{[0]},\alpha^{[0]},i}^*(y)\sin \left(\alpha^{[0]} x\right)\right) dxdy\\
  =&0.
  \end{align*}
It is clear that $\pa_x\omega_1=-\alpha^{[0]}\omega_2$. Then based on the result, we have
  \begin{align*}
  P_1\pa_x\omega_1=-\alpha^{[0]}P_1\omega_2=0.
\end{align*}
The rest results of \eqref{eq-project-1-2-0} can be derived in the same way.

Next, we turn to \eqref{eq-project-1-2-deri}. We have
\begin{align*}
  P_1\Delta\omega_1=&\frac{\alpha^{[0]}}{\pi}\int_{\mathbb T_{\frac{2\pi}{\alpha^{[0]}}}\times[-1,1]} \Delta\omega_1(x,y)\phi_1^*(x,y) dxdy\\
  =&\frac{\alpha^{[0]}}{\pi}\int_{\mathbb T_{\frac{2\pi}{\alpha^{[0]}}}\times[-1,1]}  \left(\Delta_\alpha^2 \phi_{\nu^{[0]},\alpha^{[0]},r}(y)\cos \left(\alpha^{[0]} x\right)-\Delta_\alpha^2 \phi_{\nu^{[0]},\alpha^{[0]},i}(y)\sin\left(\alpha^{[0]} x\right)\right)\\
  &\qquad\qquad\qquad\qquad\qquad\qquad\cdot \left( \phi_{\nu^{[0]},\alpha^{[0]},r}^*(y)\cos \left(\alpha^{[0]} x\right)- \phi_{\nu^{[0]},\alpha^{[0]},i}^*(y)\sin \left(\alpha^{[0]} x\right)\right) dxdy\\
  =&\int_{-1}^1\left( \Delta_\alpha^2 \phi_{\nu^{[0]},\alpha^{[0]},r}(y) \phi_{\nu^{[0]},\alpha^{[0]},r}^*(y) +\Delta_\alpha^2 \phi_{\nu^{[0]},\alpha^{[0]},i}(y) \phi_{\nu^{[0]},\alpha^{[0]},i}^*(y) \right)dy.
\end{align*}
Then by Lemma \ref{lem-der-nu-c-phi}, we have 
\begin{align*}
  P_1\Delta\omega_1=\alpha^{[0]}\pa_\nu c_i.
\end{align*}
The result for $P_2\Delta\omega_1$ can be deduced in the same way.

For $f\in Ran(Q)$, we have
\begin{align*}
  P_1\pa_xf=&\frac{\alpha^{[0]}}{\pi}\int_{\mathbb T_{\frac{2\pi}{\alpha^{[0]}}}\times[-1,1]} \pa_xf(x,y)\phi_1^*(x,y) dxdy=-\frac{\alpha^{[0]}}{\pi}\int_{\mathbb T_{\frac{2\pi}{\alpha^{[0]}}}\times[-1,1]} f(x,y)\pa_x\phi_1^*(x,y) dxdy\\
  =&\frac{(\alpha^{[0]})^2}{\pi}\int_{\mathbb T_{\frac{2\pi}{\alpha^{[0]}}}\times[-1,1]} f(x,y)\phi_2^*(x,y) dxdy=0.
\end{align*}
For the same reason, we have
\begin{align*}
  P_2\pa_xf=0.
\end{align*}
\end{proof}
\subsection{Lyapunov--Schmidt reduction}
\subsubsection{Expansion of the system} 
 
For $s$ sufficiently small, we seek a solution of the following form:
\begin{equation}\label{eq-s-expan}
  \begin{aligned}    
   \omega_s=&s\omega^{[1]}+s^2\omega^{[2]},\quad \omega^{[2]}\in Ran(Q),\\
   c_s=&c_r^{[0]}+sc_r^{[1]}+s^2c_r^{[2]},\\
   \nu_s=&\nu^{[0]}+s\nu^{[1]}+s^2\nu^{[2]}.    
  \end{aligned}
\end{equation}
In the rest of the proof, we take
\begin{align*}
  \omega^{[1]}=\omega_1.
\end{align*}
The choice $\omega^{[1]}=\omega_1$ fixes the phase of the bifurcating branch; the phase-shifted branch is generated by translations in $x$.

Substituting \eqref{eq-s-expan} into \eqref{eq-reformulation-nonlin}, we obtain
\begin{equation}\label{eq-exp-full}
  \begin{aligned}    
    &\left(u_p-c_r^{[0]}-sc_r^{[1]}-s^2c_r^{[2]}\right)\pa_x \left(s\omega_1+s^2\omega^{[2]}\right)\\
    &-u_p''\pa_x \left(s\phi_1+s^2\phi^{[2]}\right)-\left(\nu^{[0]}+s\nu^{[1]}+s^2\nu^{[2]}\right)\Delta \left(s\omega_1+s^2\omega^{[2]}\right)\\
    =&\pa_y\left(s\phi_1+s^2\phi^{[2]}\right)\pa_x\left(s\omega_1+s^2\omega^{[2]}\right)-\pa_x\left(s\phi_1+s^2\phi^{[2]}\right)\pa_y\left(s\omega_1+s^2\omega^{[2]}\right).
  \end{aligned}
\end{equation}
Our aim is to show that, for each sufficiently small $s$, there exist $\omega^{[2]}\in Ran(Q)$, $c_r^{[1]}$, $c_r^{[2]}$, $\nu^{[1]}$, $\nu^{[2]}\in \mathbb R$, such that \eqref{eq-exp-full} holds.

Collecting powers of $s$, we decompose \eqref{eq-exp-full} into the following equations.

At order $s$, we obtain
\begin{align*}
  \left(u_p-c_r^{[0]}\right)\pa_x \left(s\omega_1\right)-u_p''\pa_x \left(s\phi_1\right)-\nu^{[0]}\Delta \left(s\omega_1\right)=0.
\end{align*}
By the definition of $\omega_1$, this equation is identically satisfied.

At order $s^2$, we obtain
\begin{align*}
  &-s^2c_r^{[1]}\pa_x\omega_1-s^2\nu^{[1]}\Delta\omega_1+s^2 \left( \left(u_p-c_r^{[0]}\right)\pa_x \omega^{[2]}-u_p''\pa_x \phi^{[2]}-\nu^{[0]}\Delta \omega^{[2]}\right)\\
  =&s^2 \left(\pa_y\phi_1\pa_x\omega_1-\pa_x\phi_1\pa_y\omega_1\right), 
\end{align*}
Recalling the definition of $\mathcal L_{c_r^{[0]},\nu^{[0]}}$, we can rewrite the above equation to
\begin{align*}
  -s^2c_r^{[1]}\pa_x\omega_1-s^2\nu^{[1]}\Delta\omega_1+s^2 \mathcal L_{c_r^{[0]},\nu^{[0]}}\omega^{[2]}=s^2 \left(\pa_y\phi_1\pa_x\omega_1-\pa_x\phi_1\pa_y\omega_1\right).
\end{align*} 

At order $s^3$, we obtain
\begin{align*}
  &-s^3c_r^{[1]}\pa_x\omega^{[2]}-s^3\nu^{[1]}\Delta\omega^{[2]}-s^3c_r^{[2]}\pa_x\omega_1-s^3\nu^{[2]}\Delta\omega_1\\
  =&s^3 \left(\pa_y\phi_1\pa_x\omega^{[2]}-\pa_x\phi_1\pa_y\omega^{[2]}\right)+s^3 \left(\pa_y\phi^{[2]}\pa_x\omega_1-\pa_x\phi^{[2]}\pa_y\omega_1\right).
\end{align*}

At order $s^4$, we obtain the genuinely nonlinear terms
\begin{align*}
  -s^4c_r^{[2]}\pa_x\omega^{[2]}-s^4\nu^{[2]}\Delta\omega^{[2]}=s^4 \left(\pa_y\phi^{[2]}\pa_x\omega^{[2]}-\pa_x\phi^{[2]}\pa_y\omega^{[2]}\right).
\end{align*}

\subsubsection{Projection in each subspace} 
We now project the expanded equation onto the eigenspace and its complement. Since the order-$s$ equation is identically satisfied, we start with the order-$s^2$ equation. 
\begin{lemma}\label{lem-nu1-cr1}
  It holds that
  \begin{align*}
    \nu^{[1]}=c_r^{[1]}=0.
  \end{align*}
\end{lemma}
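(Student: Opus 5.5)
We project the order-$s^2$ equation onto the critical eigenspace, i.e. apply $P_1$ and $P_2$ to
\begin{align*}
  -c_r^{[1]}\pa_x\omega_1-\nu^{[1]}\Delta\omega_1+\mathcal L_{c_r^{[0]},\nu^{[0]}}\omega^{[2]}=\pa_y\phi_1\pa_x\omega_1-\pa_x\phi_1\pa_y\omega_1 .
\end{align*}
There are three ingredients. First, the term $\mathcal L_{c_r^{[0]},\nu^{[0]}}\omega^{[2]}$ is annihilated by both projections, by the duality relation defining $\mathcal L^*_{c,\nu}$ together with $\mathcal L^*_{c_r^{[0]},\nu^{[0]}}(\Delta\phi_j^*)=0$ for $j=1,2$. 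This requires that the boundary conditions of $\omega^{[2]}$ match those used in \eqref{eq-derive-ad-OS}, which holds because $\phi^{[2]}$ satisfies \eqref{bc-tw-1}. Second, by Lemma \ref{lem-project-1-2}, the linear terms project to
\begin{align*}
  P_1:\ -\nu^{[1]}\alpha^{[0]}\pa_\nu c_i, \qquad P_2:\ c_r^{[1]}\alpha^{[0]}-\nu^{[1]}\alpha^{[0]}\pa_\nu c_r ,
\end{align*}
where we use $P_1\pa_x\omega_1=0$ and $P_2\pa_x\omega_1=-\alpha^{[0]}$. Third, the quadratic term vanishes under both projections.

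The key step is the third one. The nonlinearity $\pa_y\phi_1\pa_x\omega_1-\pa_x\phi_1\pa_y\omega_1$ is a product of two functions, each containing only the Fourier modes $e^{\pm i\alpha^{[0]}x}$. The product therefore contains only the modes $e^{0}$ and $e^{\pm 2i\alpha^{[0]}x}$. The dual functions $\phi_1^*$ and $\phi_2^*$ are supported on the modes $\pm\alpha^{[0]}$, so the $x$-integration over $\mathbb T_{\frac{2\pi}{\alpha^{[0]}}}$ kills every pairing. This orthogonality comes purely from trigonometric identities and requires no estimates.

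The projected equations thus reduce to the linear system
\begin{align*}
  \nu^{[1]}\,\pa_\nu c_i=0,\qquad c_r^{[1]}-\nu^{[1]}\pa_\nu c_r=0 .
\end{align*}
By the transversal crossing estimates \eqref{est-ci-nu-low}--\eqref{est-ci-nu-up} of Theorem \ref{thm1}, $\pa_\nu c_i\ne0$ at the neutral point, so $\nu^{[1]}=0$, and then $c_r^{[1]}=0$.

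The main point needing care is the first ingredient: making sure the adjoint pairing is legitimate on the full channel, with $\phi^*$ even and satisfying the no-slip conditions. We also need Lemma \ref{lem-der-nu-c-phi}, applied through \eqref{eq-project-1-2-deri}, to identify $P_1\Delta\omega_1$ with $\alpha^{[0]}\pa_\nu c_i$. With the normalization \eqref{eq-choice-phi*}, this identification is exactly what was established earlier.
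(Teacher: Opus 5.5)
Your proof is correct. Its skeleton is the same as the paper's: project the order-$s^2$ equation with $P_1$ and $P_2$, kill $P_j\mathcal L_{c_r^{[0]},\nu^{[0]}}\omega^{[2]}$ by duality against $\phi_j^*$, evaluate the linear terms with Lemma \ref{lem-project-1-2}, and conclude from $\pa_\nu c_i\neq0$.

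The one genuine difference is how you dispose of the quadratic term.

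\begin{itemize}
\item The paper uses parity in $y$. Since $\phi_{\nu,\alpha}$ and $\phi^*_{\nu,\alpha}$ are even, $\pa_y\phi_1$ and $\pa_y\omega_1$ are odd, while $\pa_x\phi_1$, $\pa_x\omega_1$ and $\phi_j^*$ are even. Hence $\pa_y\phi_1\pa_x\omega_1-\pa_x\phi_1\pa_y\omega_1$ is odd in $y$ and integrates to zero against $\phi_j^*$ over $[-1,1]$.
\item You use harmonic orthogonality in $x$. The product of two first-harmonic fields contains only the modes $0,\pm2\alpha^{[0]}$, and these are orthogonal on $\mathbb T_{\frac{2\pi}{\alpha^{[0]}}}$ to $\cos(\alpha^{[0]}x)$ and $\sin(\alpha^{[0]}x)$.
\end{itemize}

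Your argument is the standard non-resonance step of Hopf bifurcation. It uses no $y$-symmetry of the eigenfunction or the adjoint eigenfunction, so it would also work for odd modes or a non-symmetric profile. The paper's argument depends on the evenness of the T--S mode and of $\phi^*_{\nu,\alpha}$, which the paper asserts in Section 9. In exchange, it does not care about the $x$-harmonic content of the factors.

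Your viewpoint also tidies up the duality step. Since $\phi_j^*$ only sees the $\pm\alpha^{[0]}$ Fourier components of $\omega^{[2]}$, only those components need to satisfy the no-slip conditions \eqref{bc-tw-1}. The zero mode of $\phi^{[2]}$ satisfies \eqref{bc-tw-2} instead, so your sentence ``$\phi^{[2]}$ satisfies \eqref{bc-tw-1}'' holds only for $k\neq0$; the $x$-orthogonality makes this harmless.

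Similarly, the identity you actually need is the full-channel integration by parts over $[-1,1]$, with no-slip conditions for both $\phi^{[2]}$ and $\phi_j^*$ at both walls. The half-channel computation \eqref{eq-derive-ad-OS} is not enough, because the $\pm\alpha^{[0]}$ component of $\omega^{[2]}$ need not be even in $y$. You already flagged this as the point needing care, and the paper relies on the same duality relation, so it is not a gap.
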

\begin{proof}

Applying $P_1$ to the order $s^2$ equation and using Lemma \ref{lem-project-1-2}, we obtain
\begin{align}\label{eq-proj-1-s2}
  &-s^2\nu^{[1]}\alpha^{[0]}\pa_\nu c_i +s^2 P_1\mathcal L_{c_r^{[0]},\nu^{[0]}}\omega^{[2]}=s^2 P_1\left(\pa_y\phi_1\pa_x\omega_1-\pa_x\phi_1\pa_y\omega_1\right).
\end{align}

By the definition of $\phi_1^*$, we have
\begin{align*}
  \int_{\mathbb T_{\frac{2\pi}{\alpha^{[0]}}}\times[-1,1]} \mathcal L_{c_r^{[0]},\nu^{[0]}}\omega^{[2]}\cdot \phi_1^*(x,y) dxdy=\int_{\mathbb T_{\frac{2\pi}{\alpha^{[0]}}}\times[-1,1]}\phi^{[2]}\cdot \mathcal L_{c_r^{[0]},\nu^{[0]}}^*\left(\Delta \phi_1^*\right)dxdy =0.
\end{align*}

Moreover, $\pa_y\phi_1$ and $\pa_y\omega_1$ are odd in $y$, while $\pa_x\phi_1$, $\pa_x\omega_1$, and $\phi_1^*$ are even in $y$. Hence
\begin{align*}
 \int_{\mathbb T_{\frac{2\pi}{\alpha^{[0]}}}\times[-1,1]} \left(\pa_y\phi_1\pa_x\omega_1-\pa_x\phi_1\pa_y\omega_1\right)\cdot \phi_1^*(x,y) dxdy=0.
\end{align*}

It follows from \eqref{eq-proj-1-s2} that
\begin{align*}
  -s^2\nu^{[1]}\alpha^{[0]}\pa_\nu c_i=0.
\end{align*}
By the transversal crossing condition in Theorem \ref{thm1}, $\pa_\nu c_i\neq0$. Hence $\nu^{[1]}=0$.

Next, applying $P_2$ to the order-$s^2$ equation, we similarly obtain
\begin{align*}
  &s^2c_r^{[1]}\alpha^{[0]}-s^2\nu^{[1]}\alpha^{[0]}\pa_\nu c_r+s^2 P_2\mathcal L_{c_r^{[0]},\nu^{[0]}}\omega^{[2]}=s^2 P_2\left(\pa_y\phi_1\pa_x\omega_1-\pa_x\phi_1\pa_y\omega_1\right).
\end{align*}
As above,
\begin{align*}
  P_2\mathcal L_{c_r^{[0]},\nu^{[0]}}\omega^{[2]}=P_2\left(\pa_y\phi_1\pa_x\omega_1-\pa_x\phi_1\pa_y\omega_1\right)=0.
\end{align*}
Thus
\begin{align*}
  s^2c_r^{[1]}\alpha^{[0]}-s^2\nu^{[1]}\alpha^{[0]}\pa_\nu c_r=0.
\end{align*}
Since $\nu^{[1]}=0$, we conclude that $c_r^{[1]}=0$.
\end{proof}

From Lemma \ref{lem-nu1-cr1}, the $P_1$ and $P_2$ projection on \eqref{eq-exp-full} is equivalent on the order $s^3$ and order $s^4$ equations. Using Lemma \ref{lem-project-1-2} and Lemma \ref{lem-nu1-cr1}, and applying $P_1$ to \eqref{eq-exp-full}, we obtain
\begin{align*}
  \nu^{[2]}\alpha^{[0]}\pa_\nu c_i=&- P_1 \left( \pa_y\phi_1\pa_x\omega^{[2]}-\pa_x\phi_1\pa_y\omega^{[2]} + \pa_y\phi^{[2]}\pa_x\omega_1-\pa_x\phi^{[2]}\pa_y\omega_1  \right)\\
  &-s\nu^{[2]}P_1\Delta\omega^{[2]}-sP_1\left(\pa_y\phi^{[2]}\pa_x\omega^{[2]}-\pa_x\phi^{[2]}\pa_y\omega^{[2]}\right).
\end{align*}
We denote the right hand side by $\mathcal N_1(\nu^{[2]},c_r^{[2]},\omega^{[2]})$.

Applying $P_2$ on \eqref{eq-exp-full}, we have
\begin{align*}
  \nu^{[2]}\alpha^{[0]}\pa_\nu c_r-c_r^{[2]}\alpha^{[0]}=&- P_2 \left( \pa_y\phi_1\pa_x\omega^{[2]}-\pa_x\phi_1\pa_y\omega^{[2]} + \pa_y\phi^{[2]}\pa_x\omega_1-\pa_x\phi^{[2]}\pa_y\omega_1  \right)\\
  &-s\nu^{[2]}P_2\Delta\omega^{[2]}-sP_2\left(\pa_y\phi^{[2]}\pa_x\omega^{[2]}-\pa_x\phi^{[2]}\pa_y\omega^{[2]}\right).
\end{align*}
We denote the right hand side by $\mathcal N_2(\nu^{[2]},c_r^{[2]},\omega^{[2]})$.

Finally, we apply $Q$ to \eqref{eq-exp-full}. In the complementary equation, the order-$s^2$ terms must be retained. Since $\mathcal L_{c_r^{[0]},\nu^{[0]}}\omega^{[2]}\in Ran(Q)$, we obtain
\begin{align*}
  \mathcal L_{c_r^{[0]},\nu^{[0]}}\omega^{[2]}=&Q  \left(\pa_y\phi_1\pa_x\omega_1-\pa_x\phi_1\pa_y\omega_1\right) \\
  &+s\nu^{[2]} Q\Delta\omega_1+sQ \left(\pa_y\phi_1\pa_x\omega^{[2]}-\pa_x\phi_1\pa_y\omega^{[2]}+\pa_y\phi^{[2]}\pa_x\omega_1-\pa_x\phi^{[2]}\pa_y\omega_1\right)\\
  &+s^2 \left( c_r^{[2]} Q\pa_x\omega^{[2]}+\nu^{[2]} Q\Delta\omega^{[2]} \right)+s^2 Q \left( \pa_y\phi^{[2]}\pa_x\omega^{[2]}-\pa_x\phi^{[2]}\pa_y\omega^{[2]} \right).
\end{align*}
We define the right hand side by $\mathcal N_3(\nu^{[2]},c_r^{[2]},\omega^{[2]})$.

\subsection{Solution operator}
Based on the projections derived in the previous subsection, \eqref{eq-exp-full} reduces to the following system:
\begin{equation*}
  \left(
    \begin{array}{ccc}
      \alpha^{[0]}\pa_\nu c_i&0&0\\
      \alpha^{[0]}\pa_\nu c_r&-\alpha^{[0]}&0\\
      0&0&\mathcal L_{c_r^{[0]},\nu^{[0]}}
    \end{array}
  \right)\cdot \left(
    \begin{array}{l}      
      \nu^{[2]}\\
      c_r^{[2]}\\
      \omega^{[2]}
    \end{array}
  \right) =\left(
    \begin{array}{l}      
      \mathcal N_1\\
      \mathcal N_2\\
      \mathcal N_3
    \end{array}
  \right).
\end{equation*}
Our aim is to find $(\nu^{[2]},c_r^{[2]},\omega^{[2]})$ that solves the following nonlinear system
\begin{equation}\label{eq-system-nonlinear}
\left(
    \begin{array}{l}      
      \nu^{[2]}\\
      c_r^{[2]}\\
      \omega^{[2]}
    \end{array}
  \right) =\left(
    \begin{array}{ccc}
      \frac{1}{\alpha^{[0]}\pa_\nu c_i}&0&0\\
      \frac{\pa_\nu c_r}{\alpha^{[0]}\pa_\nu c_i}&\frac{-1}{\alpha^{[0]}}&0\\
      0&0&\mathcal L_{c_r^{[0]},\nu^{[0]}}^{-1}
    \end{array}
  \right)\cdot \left(
    \begin{array}{l}      
      \mathcal N_1\\
      \mathcal N_2\\
      \mathcal N_3
    \end{array}
  \right).
\end{equation}
By the transversal crossing condition, $\pa_\nu c_i\neq0$ at the critical point. Therefore, the finite-dimensional part of the linear system is invertible.

It remains to justify the bounded invertibility of $\mathcal L_{c_r^{[0]},\nu^{[0]}}$ on $Ran(Q)$. We first give the following lemma.
\begin{lemma}\label{lem-toy-ellip}
  For  $\hat f(\alpha,y)\in L^2([-1,1])$, $\alpha=k\alpha^{[0]}$ with $k\neq0$, there exists unique $\hat \phi(\alpha,y)$ solves
  \begin{equation}\label{eq-ellip-omega-al}
  \begin{aligned}    
     \left(\pa_y^2-\alpha^2\right)\hat \omega(\alpha,y)=\hat f(\alpha,y),\quad  \left(\pa_y^2-\alpha^2\right)\hat \phi(\alpha,y)=\hat \omega(\alpha,y),\\
  \hat \phi(\alpha,\pm1)=\pa_y\hat \phi(\alpha,\pm1)=0. 
  \end{aligned}
\end{equation}
Moreover, it holds that
\begin{align*}
  \left\|\hat \omega(\alpha,y)\right\|_{H^2_\alpha}+\left\|\hat \phi(\alpha,y)\right\|_{H^2_\alpha}\lesssim \left\|\hat f(\alpha,y)\right\|_{L^2},
\end{align*}
where
\begin{align*}
  \left\|\hat \omega(\alpha,y)\right\|_{H^2_\alpha}=(1+|\alpha|^2)\left\|\hat \omega(\alpha,y)\right\|_{L^2}+\left\|\pa_y^2\hat \omega(\alpha,y)\right\|_{L^2}.
\end{align*}
\end{lemma}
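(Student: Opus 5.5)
The plan is to treat \eqref{eq-ellip-omega-al} as the clamped fourth-order problem $\left(\pa_y^2-\alpha^2\right)^2\hat\phi=\hat f$ with $\hat\phi(\pm1)=\pa_y\hat\phi(\pm1)=0$. I would solve it by a variational (Lax--Milgram) argument on $H^2_0(-1,1)$, and only afterwards recover $\hat\omega=\left(\pa_y^2-\alpha^2\right)\hat\phi$. The formulation $\left(\pa_y^2-\alpha^2\right)\hat\omega=\hat f$ carries no boundary condition on $\hat\omega$. All four boundary conditions fall on $\hat\phi$, so existence and uniqueness cannot be obtained by two successive second-order Dirichlet problems. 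The coupled clamped problem is the natural object.

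\textbf{Step 1: existence and uniqueness.} Define the bilinear form
\begin{align*}
B(\phi,\psi)=\int_{-1}^1\left(\pa_y^2-\alpha^2\right)\phi\,\overline{\left(\pa_y^2-\alpha^2\right)\psi}\,dy
\end{align*}
on $H^2_0$. Expanding and integrating by parts with zero boundary traces gives
\begin{align*}
B(\phi,\phi)=\|\phi''\|_{L^2}^2+2\alpha^2\|\phi'\|_{L^2}^2+\alpha^4\|\phi\|_{L^2}^2.
\end{align*}
By Poincar\'e on $H^2_0(-1,1)$ this is coercive. The right-hand side $\psi\mapsto\int \hat f\bar\psi$ is bounded by $\|\hat f\|_{L^2}\|\psi\|_{L^2}$. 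Lax--Milgram then yields a unique weak solution $\hat\phi\in H^2_0$. Standard one-dimensional regularity, that is, reading off $\hat\phi''''=\hat f+2\alpha^2\hat\phi''-\alpha^4\hat\phi$ in the distributional sense, upgrades $\hat\phi$ to $H^4$. Setting $\hat\omega=\left(\pa_y^2-\alpha^2\right)\hat\phi\in H^2$ gives a solution. For uniqueness, the difference of two solutions solves the homogeneous problem, so $B(\phi,\phi)=0$ and hence $\phi=0$.

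\textbf{Step 2: uniform-in-$\alpha$ estimates.} Testing with $\psi=\hat\phi$ gives $\|\hat\omega\|_{L^2}^2=B(\hat\phi,\hat\phi)\le\|\hat f\|_{L^2}\|\hat\phi\|_{L^2}$. Since $B(\hat\phi,\hat\phi)\ge(\alpha^4+c)\|\hat\phi\|_{L^2}^2$, with $c>0$ coming from Poincar\'e (which matters because $|\alpha|$ may be small), I get
\begin{align*}
\|\hat\phi\|_{L^2}\lesssim(1+\alpha^4)^{-1}\|\hat f\|_{L^2},\qquad \|\hat\omega\|_{L^2}\lesssim(1+\alpha^2)^{-1}\|\hat f\|_{L^2}.
\end{align*}
The same identity also controls $\alpha^2\|\hat\phi'\|_{L^2}^2$ and $\|\hat\phi''\|_{L^2}^2$. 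From the equation, $\pa_y^2\hat\omega=\hat f+\alpha^2\hat\omega$, so
\begin{align*}
\|\pa_y^2\hat\omega\|_{L^2}\le\|\hat f\|_{L^2}+\alpha^2\|\hat\omega\|_{L^2}\lesssim\|\hat f\|_{L^2}.
\end{align*}
Together with $(1+\alpha^2)\|\hat\omega\|_{L^2}\lesssim\|\hat f\|_{L^2}$, this gives the $H^2_\alpha$ bound for $\hat\omega$. For $\hat\phi$, I would use $(1+\alpha^2)\|\hat\phi\|_{L^2}\lesssim\|\hat f\|_{L^2}$ together with $\pa_y^2\hat\phi=\hat\omega+\alpha^2\hat\phi$, which yields $\|\pa_y^2\hat\phi\|_{L^2}\lesssim\|\hat f\|_{L^2}$.

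\textbf{Main obstacle.} The main obstacle is keeping every constant independent of $\alpha=k\alpha^{[0]}$, uniformly in $k$. This requires the exact $\alpha$-weighted energy identity above, rather than a generic elliptic estimate. It also requires the lower bound $B\gtrsim(1+\alpha^4)\|\cdot\|_{L^2}^2$, which combines the Poincar\'e constant of the fixed interval for small $|\alpha|$ with the $\alpha^4$ term for large $|\alpha|$.
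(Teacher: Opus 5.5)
Your argument is correct, and it takes a different route from the paper's.

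\textbf{The paper's route.} The paper works with the explicit Dirichlet Green function $G_\alpha(z,y)$ of $\partial_y^2-\alpha^2$ on $[-1,1]$. It solves two successive Dirichlet problems to obtain $\hat\omega_{Diri}$ and $\hat\phi_{Diri}$. It then restores the Neumann conditions $\partial_y\hat\phi(\alpha,\pm1)=0$ by adding the homogeneous solutions $\sinh(\alpha(1\pm y))$ to $\hat\omega$, through the auxiliary $\hat\phi_a,\hat\phi_b$, and solving a $2\times 2$ matching system. Both the solvability of that system and the $\alpha$-uniform $H^2_\alpha$ bound are left as ``easy to check.''

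\textbf{What your route buys.} Your variational formulation on $H^2_0$ imposes all four boundary conditions at once. Existence and uniqueness then follow from coercivity, with no need to check that the matching matrix is nondegenerate. The exact identity $B(\phi,\phi)=\|\phi''\|^2+2\alpha^2\|\phi'\|^2+\alpha^4\|\phi\|^2$ gives the bounds with constants independent of $k$. This uniformity is exactly what is needed when the estimate is summed over all Fourier modes in Lemma~\ref{lem-inver-Lcnu}.

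\textbf{What the paper's route buys.} It gives an explicit representation formula. The two viewpoints are also consistent. Since $\partial_y\hat\phi(\pm1)=\pm\int_{-1}^1\frac{\sinh(\alpha(1\pm z))}{\sinh 2\alpha}\hat\omega(z)\,dz$ for the Dirichlet inverse, the matching conditions say precisely that $\hat\omega\perp\sinh(\alpha(1\pm y))$. That is, $\hat\omega$ lies in the range of $\partial_y^2-\alpha^2$ on $H^2_0$, which your weak formulation encodes implicitly. This observation also gives $\|\hat\omega\|_{L^2}\le\|\hat\omega_{Diri}\|_{L^2}$, which would be the quickest way to make the paper's version uniform in $\alpha$.
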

\begin{proof}
  To solve this elliptic equation, for $\alpha\neq0$, we introduce the following Green function with Dirichlet boundary condition
\begin{equation}
  G_{\alpha}(z,y)=-\frac{1}{\alpha \sinh(2\alpha)}\left\{
    \begin{array}{ll}
      \sinh(\alpha(1+z))\sinh(\alpha(1-y)),&z\le y,\\
      \sinh(\alpha(1+y))\sinh(\alpha(1-z)),&z\ge y,
    \end{array}
  \right.
\end{equation}
which satisfies
\begin{align*}
  \left(\pa_y^2-\alpha^2\right)G_{\alpha}(z,y)=\delta_z(y),\quad G_{\alpha}(z,\pm1)=0.
\end{align*}

We first define
\begin{align*}
  \hat \omega_{Diri}(\alpha,y)=\int^1_{-1}G_{\alpha}(z,y)f(z)dz,
\end{align*}
and then
\begin{align*}
  \hat \phi_{Diri}(\alpha,y)=\int^1_{-1}G_{\alpha}(z,y)\hat \omega_{Diri}(z)dz.
\end{align*}
Therefore,
\begin{align*}
  \pa_y\hat \phi_{Diri}(\alpha,\pm1)=\pm\int^1_{-1}\frac{\sinh(\alpha(1\pm z))}{\sinh(2\alpha)}\hat \omega_{Diri}(z)dz.
\end{align*}

Then we will introduce $\hat \phi_{a}(\alpha,y)$ and $\hat \phi_{b}(\alpha,y)$ to correct the boundary condition. Indeed,
\begin{align*}
  \hat \phi_{a}(\alpha,y)=\int^1_{-1}G_{\alpha}(z,y)\sinh(\alpha(1+z))dz,\\
  \hat \phi_{b}(\alpha,y)=\int^1_{-1}G_{\alpha}(z,y)\sinh(\alpha(1-z))dz
\end{align*}
which satisfies
\begin{align*}
  \left(\pa_y^2-\alpha^2\right)^2\hat \phi_{a}(\alpha,y)=\left(\pa_y^2-\alpha^2\right)^2\hat \phi_{b}(\alpha,y)=0.
\end{align*}

After matching the boundary condition, we get $\omega$ that satisfies \eqref{eq-ellip-omega-al}. It is easy to check that
\begin{align*}
  \left\|\hat \omega\right\|_{H^2_{\alpha}}\lesssim \left\|\hat f\right\|_{L^2}.
\end{align*}
\end{proof}

\begin{lemma}\label{lem-toy-ellip-0}
  For  $\hat f(0,y)\in L^2([-1,1])$, there exists unique $\hat \phi(0,y)$ solves
  \begin{equation}\label{eq-ellip-omega-al-0}
  \begin{aligned}    
     \pa_y^2\hat \omega(0,y)=\hat f(0,y),\quad  \pa_y^2\hat \phi(0,y)=\hat \omega(0,y),\\
  \hat\phi(0,-1)=\hat\phi'(0,\pm1)=\pa_y^2\hat\phi(0,1)-\pa_y^2\hat\phi(0,-1)=0. 
  \end{aligned}
\end{equation}
Moreover, it holds that
\begin{align*}
  \left\|\hat \omega(0,y)\right\|_{H^2}+\left\|\hat \phi(0,y)\right\|_{H^2}\lesssim \left\|\hat f(0,y)\right\|_{L^2}.
\end{align*}
\end{lemma}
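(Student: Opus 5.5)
The zero-mode problem is a fourth-order ODE $\pa_y^4\hat\phi(0,\cdot)=\hat f(0,\cdot)$ on $[-1,1]$ with four linear conditions. I would solve it explicitly by double integration and then fix the integration constants from the boundary conditions.

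\textbf{Construction.} Set
\begin{align*}
  F_1(y)=\int_{-1}^y\hat f(0,z)\,dz,\qquad F_2(y)=\int_{-1}^yF_1(z)\,dz .
\end{align*}
Then every solution of $\pa_y^2\hat\omega(0,\cdot)=\hat f(0,\cdot)$ has the form $\hat\omega(0,y)=F_2(y)+a+by$. Integrating $\pa_y^2\hat\phi(0,\cdot)=\hat\omega(0,\cdot)$ gives
\begin{align*}
  \hat\phi(0,y)=\int_{-1}^y\int_{-1}^z\hat\omega(0,x)\,dx\,dz+d+e(y+1).
\end{align*}
The four unknowns $(a,b,d,e)$ are fixed one at a time.
\begin{itemize}
  \item $\hat\phi(0,-1)=0$ gives $d=0$.
  \item $\hat\phi'(0,-1)=0$ gives $e=0$.
  \item The compatibility condition $\pa_y^2\hat\phi(0,1)-\pa_y^2\hat\phi(0,-1)=\hat\omega(0,1)-\hat\omega(0,-1)=0$ reads $F_2(1)+2b=0$, so $b=-F_2(1)/2$.
  \item $\hat\phi'(0,1)=\int_{-1}^1\hat\omega(0,x)\,dx=0$ reads $\int_{-1}^1F_2\,dx+2a=0$ (the $by$ term integrates to zero), so $a=-\tfrac12\int_{-1}^1F_2\,dx$.
\end{itemize}
This gives existence, and the solution is explicit.

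\textbf{Uniqueness.} Take the difference of two solutions, i.e. the case $\hat f(0,\cdot)=0$. The general solution is then a cubic $\hat\phi=c_0+c_1y+c_2y^2+c_3y^3$. The compatibility condition forces $c_3=0$, since $\hat\phi''(1)-\hat\phi''(-1)=12c_3$. Then $\hat\phi'(\pm1)=0$ gives $c_1\pm2c_2=0$, hence $c_1=c_2=0$. Finally $\hat\phi(0,-1)=0$ gives $c_0=0$. Equivalently, the $4\times4$ linear system for the constants is triangular with nonzero pivots.

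\textbf{Estimates.} By Cauchy--Schwarz, $\|F_1\|_{L^\infty}\lesssim\|\hat f\|_{L^2}$, hence $\|F_2\|_{L^\infty}\lesssim\|\hat f\|_{L^2}$, and so $|a|+|b|\lesssim\|\hat f\|_{L^2}$. Therefore $\|\hat\omega(0,\cdot)\|_{L^2}\lesssim\|\hat f\|_{L^2}$, and $\|\pa_y^2\hat\omega(0,\cdot)\|_{L^2}=\|\hat f\|_{L^2}$, which bounds $\|\hat\omega(0,\cdot)\|_{H^2}$. Since $d=e=0$, $\hat\phi(0,\cdot)$ is the double integral of $\hat\omega(0,\cdot)$, which gives $\|\hat\phi(0,\cdot)\|_{H^2}\lesssim\|\hat\omega(0,\cdot)\|_{L^2}$ (in fact an $H^4$ bound).

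\textbf{Main obstacle.} There is no real analytic difficulty; the only point requiring care is checking that the boundary conditions decouple as above. The one place to be careful is that the compatibility condition involves $\hat\omega(0,\pm1)$ and determines $b$ before $a$. Unlike Lemma \ref{lem-toy-ellip}, no Green function is needed, because $\alpha=0$.
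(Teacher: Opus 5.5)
Your proof is correct, and it takes a more elementary route than the paper's.

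\textbf{The paper's route.} The paper mirrors the $\alpha\neq0$ case of Lemma \ref{lem-toy-ellip}. It uses the Dirichlet Green function $G_0(z,y)$ of $\pa_y^2$ to build $\hat\omega_{Diri},\hat\phi_{Diri}$, and corrects them with $\hat\phi_a,\hat\phi_b$. This yields a clamped solution $\hat\phi^\diamond$ with $\hat\phi^\diamond(0,\pm1)=\pa_y\hat\phi^\diamond(0,\pm1)=0$. It then adds two elements of $\ker\pa_y^4$:
\begin{itemize}
  \item $\hat\psi_c=-y+\frac{y^3}{3}$, which preserves $\pa_y\hat\phi(0,\pm1)=0$ and shifts $\pa_y^2\hat\phi(0,1)-\pa_y^2\hat\phi(0,-1)$ by $4$;
  \item $\hat\psi_d=1$, which restores $\hat\phi(0,-1)=0$.
\end{itemize}

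\textbf{Your route.} You integrate from $y=-1$, so both conditions at $-1$ hold automatically, and the remaining two conditions give explicit formulas for $b$ and $a$. The system is in fact diagonal, not merely triangular: $a$ cancels in $\hat\omega(0,1)-\hat\omega(0,-1)$, and $b$ drops out of $\int_{-1}^1\hat\omega(0,x)\,dx$. So the order in which you fix them is irrelevant.

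\textbf{Comparison.} Your approach gives an explicit solution and immediate bounds ($|a|+|b|\lesssim\|\hat f\|_{L^2}$ by Cauchy--Schwarz). It also gives a genuine uniqueness proof via the cubic kernel, which the paper leaves implicit, and it avoids checking solvability of the intermediate clamped problem. The paper's approach keeps the zero mode uniform with the $\alpha\neq0$ modes. It also shows that the zero-mode conditions differ from no-slip only through the one-dimensional corrections $\hat\psi_c$ and $\hat\psi_d$. By uniqueness, the two constructions give the same solution.
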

\begin{proof}
 The proof is similar to Lemma \ref{lem-toy-ellip}.

We use the following Green function:
\begin{equation}
  G_{0}(z,y)=-\frac{1}{2}\left\{
    \begin{array}{ll}
      (1+z)(1-y),&z\le y,\\
      (1+y)(1-z),&z\ge y,
    \end{array}
  \right.
\end{equation}
to define $\hat \omega_{Diri}(0,y)$, $\hat \phi_{Diri}(0,y)$, $\hat \phi_{a}(0,y)$, $\hat \phi_{b}(0,y)$. Using these functions, we first construct $\hat \phi^\diamond(0,y)$ such that
\begin{align*}
  \hat \phi^\diamond(0,\pm1)=\pa_y\hat \phi^\diamond(0,\pm1)=0.
\end{align*}

 Moreover, we introduce $\hat \psi_{c}(0,y)=-y+\frac{y^3}{3}$, and $\hat \psi_{d}(0,y)=1$, and use $\hat \psi_{c}$ and $\hat \psi_{d}$ to correct the boundary condition of $\hat \phi^\diamond(0,y)$. 

 Finally, we get $\hat \phi(0,y)$ such that
 \begin{align*}
   \pa_y^2\hat \phi(0,1)-\pa_y^2\hat \phi(0,-1)=\pa_y\hat \phi(0,\pm1)=\hat \phi(0,-1)=0.
 \end{align*}

\end{proof}

\begin{lemma}\label{lem-inver-Lcnu}
The operator $\mathcal L_{c_r^{[0]},\nu^{[0]}}$ is invertible on $Ran(Q)$. More precisely, for $f\in Ran(Q)$, it holds that
  \begin{align*}
    \left\|\mathcal L_{c_r^{[0]},\nu^{[0]}}^{-1}f\right\|_{H^2 \left(\mathbb T_{\frac{2\pi}{\alpha^{[0]}}}\times[-1,1]\right)}\le C \left\|f\right\|_{L^2 \left(\mathbb T_{\frac{2\pi}{\alpha^{[0]}}}\times[-1,1]\right)}.
  \end{align*}
\end{lemma}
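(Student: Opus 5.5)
We propose to argue mode by mode in the Fourier variable. For $f\in Ran(Q)$ we write $f=\sum_k \hat f(k\alpha^{[0]},y)e^{ik\alpha^{[0]}x}$ and look for $\omega=\Delta\phi$ with the same expansion. The equation $\mathcal L_{c_r^{[0]},\nu^{[0]}}\omega=f$ then decouples into the ODEs
\[
i\alpha(u_p-c_r^{[0]})\Delta_\alpha\hat\phi-i\alpha u_p''\hat\phi-\nu^{[0]}\Delta_\alpha^2\hat\phi=\hat f(\alpha,y),\qquad \alpha=k\alpha^{[0]},
\]
with boundary conditions \eqref{bc-tw-1} for $k\neq0$ and \eqref{bc-tw-2} for $k=0$.

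The case $k=0$ reduces to $-\nu^{[0]}\pa_y^4\hat\phi=\hat f(0,\cdot)$, which Lemma \ref{lem-toy-ellip-0} solves with an $H^2$ bound. For $k\neq0$, dividing by $-\nu^{[0]}$ writes the operator as $\Delta_\alpha^2$ plus a lower-order perturbation $K_\alpha$. Here $K_\alpha$ contains at most two derivatives of $\hat\phi$ with coefficients of size $|\alpha|/\nu^{[0]}$. By Lemma \ref{lem-toy-ellip}, the solution map $\Delta_\alpha^{-2}$ (with clamped boundary conditions) is bounded from $L^2$ to $\{\hat\phi:\ \hat\omega\in H^2\}$. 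Hence $K_\alpha\Delta_\alpha^{-2}$ is compact on $L^2$, and $\Delta_\alpha^2-K_\alpha$ is Fredholm of index zero. Invertibility is therefore equivalent to injectivity. Injectivity means that $c_r^{[0]}$ is not an Orr--Sommerfeld eigenvalue for the wavenumber $k\alpha^{[0]}$. For $|k|\ge2$ this is exactly the simplicity assumption of Remark \ref{rmk-hopf-spectral-assumptions}, which is guaranteed by Theorem \ref{thm2} together with the Almog--Helffer result, and we invoke it. For $k=\pm1$ the kernel is spanned by $\omega_\pm$. On $Ran(Q)$ we use the Fredholm alternative: the range of $\mathcal L-0$ restricted to mode $\pm1$ is the annihilator of the adjoint kernel, namely $\phi_{1}^*,\phi_2^*$. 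By Lemma \ref{lem-project-1-2}, $f\in Ran(Q)$ satisfies $P_1f=P_2f=0$, so a solution exists. We make it unique by imposing $P\omega=0$, which is possible because $0$ is semisimple: the pairing \eqref{eq-choice-phi*} is nondegenerate, so $\ker\cap Ran(Q)=\{0\}$.

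The main obstacle is to make the bounds uniform in $k$, so that the modes can be summed into an $H^2(\mathbb T\times[-1,1])$ estimate. For finitely many $k$, the Fredholm inverse gives bounds $C_k$ directly. For $|k|$ large, we instead use an energy estimate. We multiply the ODE by $\overline{\hat\phi}$ and integrate by parts, using the boundary conditions. The dissipative part gives $\nu^{[0]}\|\Delta_\alpha\hat\phi\|_{L^2}^2$. The convective terms are $i\alpha$ times quantities whose non-real part is bounded by $|\alpha|\,\|u_p'\|_\infty\|\hat\phi'\|\|\hat\phi\|$ and by $|\alpha|\|u_p''\|\|\hat\phi\|^2$. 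These are absorbed into $\nu^{[0]}\|\Delta_\alpha\hat\phi\|^2\ge\nu^{[0]}\alpha^4\|\hat\phi\|^2$ once $|\alpha|^3\gg\|u_p\|_{C^2}/\nu^{[0]}$. This yields $\|\hat\omega\|_{L^2}\lesssim\|\hat f\|_{L^2}$ uniformly for large $|k|$. Feeding this back into Lemma \ref{lem-toy-ellip}, applied to $\Delta_\alpha^2\hat\phi=\text{(lower order)}+\hat f/\nu^{[0]}$, upgrades the estimate to $\|\hat\omega\|_{H^2_\alpha}\lesssim\|\hat f\|_{L^2}$, with the $|\alpha|\|\hat\omega\|$-type terms controlled by interpolation. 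Summing over $k$ with Parseval gives the claimed bound, with $C$ depending on $\nu^{[0]}$ and $\alpha^{[0]}$.
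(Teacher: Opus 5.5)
Your proposal is correct and follows essentially the same route as the paper. You decompose in Fourier modes in $x$. You get a mode-wise $L^2$ bound from the spectral information (non-resonance for $|k|\ge 2$, and removal of the critical modes by $Q$ for $k=\pm1$), together with a uniform energy estimate for large $|k|$. You then upgrade to $H^2_\alpha$ via Lemmas \ref{lem-toy-ellip} and \ref{lem-toy-ellip-0} and sum over modes.

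What you add fills in steps the paper leaves implicit rather than changing the argument: the compact-perturbation/Fredholm justification of solvability on each mode, and the explicit Fredholm alternative at $k=\pm1$ using $P_1f=P_2f=0$ and the normalization \eqref{eq-choice-phi*}.
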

\begin{proof}
  We decompose $\mathcal L_{c_r^{[0]},\nu^{[0]}}$ into Fourier modes in $x$,
  \begin{align*}
     \mathcal L_{c_r^{[0]},\nu^{[0]}}=\sum_{\alpha=k\alpha^{[0]}}\mathcal L_{c_r^{[0]},\nu^{[0]},\alpha}.
   \end{align*} 

Let
\begin{align*}
  \mathcal L_{c_r^{[0]},\nu^{[0]},\alpha}\hat \omega(\alpha,y)=\hat f(\alpha,y),
\end{align*}
which is
\begin{align}\label{eq-omega-OS}
  -\nu^{[0]}\left(\pa_y^2-\alpha^2\right)\hat \omega(\alpha,y)+i\alpha\left(u_p-c_r^{[0]}\right)\hat \omega(\alpha,y)-i\alpha u_p''\hat \phi(\alpha,y)=\hat f(\alpha,y).
\end{align}

Since $f\in Ran(Q)$, the critical modes $\omega_1,\omega_2$ are removed. Hence, by the spectral properties established above, for each fixed Fourier mode $\alpha$ there exists $C(\alpha)$ such that
\begin{align}\label{eq-est-l2-l2}
  \left\|\hat \omega(\alpha,y)\right\|_{L^2}\le C(\alpha)\left\|\hat f(\alpha,y)\right\|_{L^2}.
\end{align}
For $|k|$ sufficiently large, the energy estimate gives a uniform bound. For the remaining finitely many modes, invertibility on $Ran(Q)$ gives finite constants. Taking the maximum over this finite set and the high-mode uniform bound, we obtain a constant $C_{\max}$ independent of $\alpha$.

Next, we rewrite \eqref{eq-omega-OS} to
\begin{align}\label{eq-omega-ellip}
  -\nu^{[0]}\left(\pa_y^2-\alpha^2\right)\hat \omega(\alpha,y)=\hat f(\alpha,y)-i\alpha\left(u_p-c_r^{[0]}\right)\hat \omega(\alpha,y)+i\alpha u_p''\hat \phi(\alpha,y).
\end{align}
Then by Lemma \ref{lem-toy-ellip} and Lemma \ref{lem-toy-ellip-0}, it holds that
\begin{align*}
  \left\|\hat \omega(\alpha,y)\right\|_{H^2_\alpha}\le C(\nu^{[0]}) \left(\left\|\hat \omega(\alpha,y)\right\|_{L^2}+\left\|\hat f(\alpha,y)\right\|_{L^2}\right).
\end{align*}
where $C(\nu^{[0]})$ depends only on $\nu^{[0]}$.

Then by \eqref{eq-est-l2-l2}, we have
\begin{align*}
  \left\|\hat \omega(\alpha,y)\right\|_{H^2_\alpha}\le C(\nu^{[0]}) \left(C_{max}+1\right)\left\|\hat f(\alpha,y)\right\|_{L^2}.
\end{align*}

Summing over all Fourier modes gives the desired $H^2$ estimate and completes the proof.
\end{proof}

\subsection{The fixed-point argument and completion of the proof}
We are now ready to solve for $(\nu^{[2]},c_r^{[2]},\omega^{[2]})$ and complete the proof of Theorem \ref{thm-bif}.

\begin{proof}[Proof of Theorem \ref{thm-bif}]

We define
\begin{align*}
  c_r^{[2],\left\{0\right\}}=0,\qquad \nu^{[2],\left\{0\right\}}=0,\qquad  \omega^{[2],\left\{0\right\}}=\mathcal L_{c_r^{[0]},\nu^{[0]}}^{-1}Q  \left(\pa_y\phi_1\pa_x\omega_1-\pa_x\phi_1\pa_y\omega_1\right).
\end{align*}

For $k\ge1$, we define
\begin{equation*}
\left(
    \begin{array}{l}      
      \nu^{[2],\left\{k\right\}}\\
      c_r^{[2],\left\{k\right\}}\\
      \omega^{[2],\left\{k\right\}}
    \end{array}
  \right) =\left(
    \begin{array}{ccc}
      \frac{1}{\alpha^{[0]}\pa_\nu c_i}&0&0\\
      \frac{\pa_\nu c_r}{\alpha^{[0]}\pa_\nu c_i}&\frac{-1}{\alpha^{[0]}}&0\\
      0&0&\mathcal L_{c_r^{[0]},\nu^{[0]}}^{-1}
    \end{array}
  \right)\cdot \left(
    \begin{array}{l}      
      \mathcal N_1\left(\nu^{[2],\{k-1\}},c_r^{[2],\{k-1\}},\omega^{[2],\{k-1\}}\right)\\
      \mathcal N_2\left(\nu^{[2],\{k-1\}},c_r^{[2],\{k-1\}},\omega^{[2],\{k-1\}}\right)\\
      \mathcal N_3\left(\nu^{[2],\{k-1\}},c_r^{[2],\{k-1\}},\omega^{[2],\{k-1\}}\right)
    \end{array}
  \right).
\end{equation*}

For $M_1,M_2>0$, define
\begin{align*}
  \mathcal S=\left\{
  \left(\nu^{[2]},c_r^{[2]},\omega^{[2]}\right):
  \nu^{[2]},c_r^{[2]}\in\mathbb R,\ 
  \omega^{[2]}\in Ran(Q),\ 
  \left|c_r^{[2]}\right|+\left|\nu^{[2]}\right|\le M_1,\ 
  \left\|\omega^{[2]}\right\|_{H^2}\le M_2
  \right\}.
\end{align*}
Here $\omega^{[2]}=\Delta\phi^{[2]}$, where $\phi^{[2]}$ satisfies the boundary conditions \eqref{bc-tw-1} and \eqref{bc-tw-2}.

For $\left(\nu^{[2],\left\{k-1\right\}},c_r^{[2],\left\{k-1\right\}},\omega^{[2],\left\{k-1\right\}}\right)\in \mathcal S$, from Lemma \ref{lem-inver-Lcnu}, we can see that
\begin{align*}
  \left\|\omega^{[2],\left\{k\right\}}\right\|_{H^2}\lesssim \left\|\omega^{[2],\left\{0\right\}}\right\|_{H^2}+s M_1+s M_2+s^2M_1M_2+s^2M_2^2,\\
  \left|c_r^{[2],\left\{k\right\}}\right|+\left|\nu^{[2],\left\{k\right\}}\right|\lesssim M_2+sM_1M_2+sM_2^2.
\end{align*}
We first choose $M_2$ sufficiently large so that it dominates $\left\|\omega^{[2],\{0\}}\right\|_{H^2}$. Then we choose $M_1$ sufficiently large depending on $M_2$. Finally, choosing $s>0$ sufficiently small, the above estimates imply that the map sends $\mathcal S$ into itself.

Next show the contraction of the iteration map in $\mathcal S$. It holds that
\begin{align*}
  &\omega^{[2],\left\{k\right\}}-\omega^{[2],\left\{k-1\right\}}\\
  =&\mathcal L_{c_r^{[0]},\nu^{[0]}}^{-1} \left(\mathcal N_3\left(\nu^{[2],\left\{k-1\right\}},c_r^{[2],\left\{k-1\right\}},\omega^{[2],\left\{k-1\right\}}\right)-\mathcal N_3\left(\nu^{[2],\left\{k-2\right\}},c_r^{[2],\left\{k-2\right\}},\omega^{[2],\left\{k-2\right\}}\right)\right)\\
  =&\mathcal L_{c_r^{[0]},\nu^{[0]}}^{-1} \bigg(s \left(\nu^{[2],\left\{k-1\right\}}-\nu^{[2],\left\{k-2\right\}}\right)Q\Delta\omega_1\\
  &\qquad+s Q\left(\pa_y\phi_1\pa_x \left(\omega^{[2],\left\{k-1\right\}}-\omega^{[2],\left\{k-2\right\}}\right)-\pa_x\phi_1\pa_y\left(\omega^{[2],\left\{k-1\right\}}-\omega^{[2],\left\{k-2\right\}}\right) \right)\\
  &\qquad+s Q\left(\pa_y\left(\phi^{[2],\left\{k-1\right\}}-\phi^{[2],\left\{k-2\right\}}\right)\pa_x\omega_1-\pa_x\left(\phi^{[2],\left\{k-1\right\}}-\phi^{[2],\left\{k-2\right\}}\right)\pa_y\omega_1\right)\\
  &\quad+s^2\left( \left(c_r^{[2],\left\{k-1\right\}}-c_r^{[2],\left\{k-2\right\}}\right) Q\pa_x\omega^{[2],\left\{k-1\right\}}+c_r^{[2],\left\{k-2\right\}}Q\pa_x\left(\omega^{[2],\left\{k-1\right\}}-\omega^{[2],\left\{k-2\right\}}\right)\right)\\
  &\quad+s^2\left( \left(\nu^{[2],\left\{k-1\right\}}-\nu^{[2],\left\{k-2\right\}}\right) Q\Delta\omega^{[2],\left\{k-1\right\}}+\nu^{[2],\left\{k-2\right\}}Q\Delta\left(\omega^{[2],\left\{k-1\right\}}-\omega^{[2],\left\{k-2\right\}}\right)\right)\\
  &\quad+s^2Q\left( \pa_y\left(\phi^{[2],\left\{k-1\right\}}-\phi^{[2],\left\{k-2\right\}}\right)\pa_x\omega^{[2],\left\{k-1\right\}}+\pa_y\phi^{[2],\left\{k-2\right\}}\pa_x\left(\omega^{[2],\left\{k-1\right\}}-\omega^{[2],\left\{k-2\right\}}\right)\right)\\
  &\quad-s^2Q\left( \pa_x\left(\phi^{[2],\left\{k-1\right\}}-\phi^{[2],\left\{k-2\right\}}\right)\pa_y\omega^{[2],\left\{k-1\right\}}+\pa_x\phi^{[2],\left\{k-2\right\}}\pa_y\left(\omega^{[2],\left\{k-1\right\}}-\omega^{[2],\left\{k-2\right\}}\right)\right)\bigg).
\end{align*}

Thus
\begin{align*}
  &\left\|\omega^{[2],\left\{k\right\}}-\omega^{[2],\left\{k-1\right\}}\right\|_{H^2}\\
  \lesssim& s \left(\left|\nu^{[2],\left\{k-1\right\}}-\nu^{[2],\left\{k-2\right\}}\right|+\left|c_r^{[2],\left\{k-1\right\}}-c_r^{[2],\left\{k-2\right\}}\right|+\left\|\omega^{[2],\left\{k-1\right\}}-\omega^{[2],\left\{k-2\right\}}\right\|_{H^2}\right).
\end{align*}
Similarly, we have
\begin{align*}
  \left|\nu^{[2],\left\{k\right\}}-\nu^{[2],\left\{k-1\right\}}\right| \lesssim \left\|\omega^{[2],\left\{k-1\right\}}-\omega^{[2],\left\{k-2\right\}}\right\|_{H^2}+s  \left|\nu^{[2],\left\{k-1\right\}}-\nu^{[2],\left\{k-2\right\}}\right|,
\end{align*}
and
\begin{align*}
  \left|c_r^{[2],\left\{k\right\}}-c_r^{[2],\left\{k-1\right\}}\right| \lesssim \left\|\omega^{[2],\left\{k-1\right\}}-\omega^{[2],\left\{k-2\right\}}\right\|_{H^2}+s \left(\left|\nu^{[2],\left\{k-1\right\}}-\nu^{[2],\left\{k-2\right\}}\right|+\left|c_r^{[2],\left\{k-1\right\}}-c_r^{[2],\left\{k-2\right\}}\right|\right). 
\end{align*}

For solution $\left(\nu^{[2]},c_r^{[2]},\omega^{[2]}\right)$, we define the following product norm:
\begin{align*}
  \left\|\left(\nu^{[2]},c_r^{[2]},\omega^{[2]}\right)\right\|_{PN}=s^{\frac{1}{2}}|\nu^{[2]}|+s^{\frac{1}{2}}|c_r^{[2]}|+\left\|\omega^{[2]}\right\|_{H^2},
\end{align*}
then we can see that
\begin{align*}
  &\left\|\left(\nu^{[2],\left\{k\right\}}-\nu^{[2],\left\{k-1\right\}},c_r^{[2],\left\{k\right\}}-c_r^{[2],\left\{k-1\right\}},\omega^{[2],\left\{k\right\}}-\omega^{[2],\left\{k-1\right\}}\right)\right\|_{PN}\\
  \lesssim&s^{\frac{1}{2}}\left\|\left(\nu^{[2],\left\{k-1\right\}}-\nu^{[2],\left\{k-2\right\}},c_r^{[2],\left\{k-1\right\}}-c_r^{[2],\left\{k-2\right\}},\omega^{[2],\left\{k-1\right\}}-\omega^{[2],\left\{k-2\right\}}\right)\right\|_{PN},
\end{align*}
by taking $s$ small enough, we have
\begin{align*}
  &\left\|\left(\nu^{[2],\left\{k\right\}}-\nu^{[2],\left\{k-1\right\}},c_r^{[2],\left\{k\right\}}-c_r^{[2],\left\{k-1\right\}},\omega^{[2],\left\{k\right\}}-\omega^{[2],\left\{k-1\right\}}\right)\right\|_{PN}\\
  \le&\frac{1}{2}\left\|\left(\nu^{[2],\left\{k-1\right\}}-\nu^{[2],\left\{k-2\right\}},c_r^{[2],\left\{k-1\right\}}-c_r^{[2],\left\{k-2\right\}},\omega^{[2],\left\{k-1\right\}}-\omega^{[2],\left\{k-2\right\}}\right)\right\|_{PN}.
\end{align*}
Then by fixed point theorem, we can see that there exists $\left(\nu^{[2]},c_r^{[2]},\omega^{[2]}\right)$ that solve the nonlinear system \eqref{eq-system-nonlinear}.

Then we complete the proof.

\end{proof}

\section*{Acknowledgements}
H. Li is partially supported by NSF of China under Grant 12501287.  Y. Wang is partially supported by NSF of China under Grant 12471200. Z. Zhang is partially supported by NSF of China under Grant 12288101.

\section*{Data Availability}
The manuscript has no associated data.
\section*{Ethics declarations}
{\bf Conflict of interest:} The authors declare that they have no conflict of interest.

\bibliographystyle{siam.bst} 
\bibliography{references.bib}

\end{document}